\renewcommand{\tocsection}[3]{%
 \indentlabel{\@ifnotempty{#2}{\bfseries\ignorespaces#1 #2\quad}}\bfseries#3}
\renewcommand{\tocsubsection}[3]{%
  \indentlabel{\@ifnotempty{#2}{\ignorespaces#1 #2\quad}}#3}
\newcommand\@dotsep{4.5}
\def\@tocline#1#2#3#4#5#6#7{\relax
  \ifnum #1>\c@tocdepth 
  \else
    \par \addpenalty\@secpenalty\addvspace{#2}%
    \begingroup \hyphenpenalty\@M
    \@ifempty{#4}{%
      \@tempdima\csname r@tocindent\number#1\endcsname\relax
    }{%
      \@tempdima#4\relax
    }%
    \parindent\z@ \leftskip#3\relax \advance\leftskip\@tempdima\relax
    \rightskip\@pnumwidth plus1em \parfillskip-\@pnumwidth
    #5\leavevmode\hskip-\@tempdima{#6}\nobreak
    \leaders\hbox{$\m@th\mkern \@dotsep mu\hbox{.}\mkern \@dotsep mu$}\hfill
    \nobreak
    \hbox to\@pnumwidth{\@tocpagenum{\ifnum#1=1\bfseries\fi#7}}\par
    \nobreak
    \endgroup
  \fi}
\renewcommand\csname r@tocindent0\endcsname{0pt}
\def\l@subsection{\@tocline{2}{0pt}{2.5pc}{5pc}{}}
\newtheorem{theorem}{Theorem}[section]
\newtheorem{corollary}{Corollary}[theorem]
\newtheorem{definition}[theorem]{Definition}
\newtheorem{example}[theorem]{Example}
\newtheorem{lemma}[theorem]{Lemma}
\newtheorem{problem}[theorem]{Problem}
\newtheorem{conjecture}[theorem]{Conjecture}
\newtheorem{notation}[theorem]{Notation}
\newtheorem{proposition}[theorem]{Proposition}
\newtheorem{remark}[theorem]{Remark}
\numberwithin{equation}{section}
\numberwithin{figure}{section}
\def \P{\mathbb{P}}
\newcommand{\I}{\bf{I}}
\newcommand{\II}{\bf{II}}
\newcommand{\III}{\bf{III}}
\newcommand{\IV}{\bf{IV}}
\newcommand{\bI}{\mathbb{I}}
\newcommand{\bo}{\bm{0}}
\newcommand{\bp}{\bar{\partial}}
\newcommand{\bv}{\bm{v}}
\newcommand{\bx}{\bm{x}}
\newcommand{\by}{\bm{y}}
\newcommand{\C}{\mathbb C}
\newcommand{\Ca}{\mathcal{C}}
\newcommand{\cN}{\mathcal N}
\newcommand{\cO}{\mathcal{O}}
\newcommand{\cV}{\mathcal V}
\newcommand{\cX}{X}
\newcommand{\cP}{\mathcal P}
\newcommand{\dC}{\mathbb{C}}
\newcommand{\DelH}{\Delta}
\newcommand{\dN}{\mathbb{N}}
\newcommand{\dR}{\mathbb{R}}
\newcommand{\dT}{\mathbb{T}}
\newcommand{\dZ}{\mathbb{Z}}
\newcommand{\Err}{\mathrm{Err}_{CY}}
\newcommand{\Errt}{\mathrm{Err}_{t}}
\newcommand{\fr}{\mathfrak{r}}
\newcommand{\fs}{\mathfrak{s}}
\newcommand{\fS}{\mathfrak{S}}
\newcommand{\ft}{\mathfrak{t}}
\newcommand{\hX}{\widehat{\mathcal X}}
\newcommand{\M}{\mathcal M}
\newcommand{\nmv}{\underline{\nu}}
\newcommand{\p}{\partial}
\newcommand{\R}{\mathbb R}
\newcommand{\rmi}{\tilde{r}_-}
\newcommand{\tO}{\widetilde{O}}
\newcommand{\sq}{\sqrt{-1}}
\newcommand{\U}{\mathcal U}
\newcommand{\vf}{\varphi}
\newcommand{\vr}{\varrho}
\newcommand{\wI}{C^{1, \alpha}_{\delta, \nu,\mu}}
\newcommand{\wII}{C^{2, \alpha}_{\delta, \nu,\mu}}
\newcommand{\wIII}{C^{3, \alpha}_{\delta, \nu,\mu}}
\newcommand{\y}{\tilde y}
\newcommand{\Z}{\mathbb Z}
\newcommand{\e}{\bm{e}}
\DeclareMathOperator{\Area}{Area}
\DeclareMathOperator{\diam}{Diam}
\DeclareMathOperator{\dvol}{dvol}
\DeclareMathOperator{\Exp}{Exp}
\DeclareMathOperator{\FT}{\mathfrak{T}}
\DeclareMathOperator{\IIs}{II}
\DeclareMathOperator{\Res}{Res}
\DeclareMathOperator{\Ric}{Ric}
\DeclareMathOperator{\Rm}{Rm}
\DeclareMathOperator{\Tr}{Tr}
\DeclareMathOperator{\Vol}{Vol}
\begin{document}

\title{Complex structure degenerations and collapsing of Calabi-Yau metrics} 
\author{Song Sun}
\address{Institute for Advanced Study in Mathematics, Zhejiang University, Hangzhou 310058, China}
\address{
Department of Mathematics, University of California, Berkeley, CA 94720, U.S.A.} 
\email{songsun@zju.edu.cn}

\author{Ruobing Zhang}
 \thanks{The first author was supported by NSF Grant DMS-1708420, an Alfred P. Sloan Fellowship, and the Simons Collaboration Grant on Special Holonomy in Geometry, Analysis and Physics. \\ The second author is supported by NSF Grants DMS-1906265 and DMS-2304818.}

\address{Department of Mathematics, University of Wisconsin-Madison, Madison, WI, 53706}
\email{rzhang573@wisc.edu}

\begin{abstract} In this paper, we make progress on understanding the collapsing behavior of Calabi-Yau metrics on a degenerating family of polarized Calabi-Yau manifolds. In the case of a family of smooth Calabi-Yau hypersurfaces in projective space degenerating into the transversal union of two smooth Fano hypersurfaces in a generic way,  we obtain a complete result in all dimensions establishing explicit and precise relationships between the metric collapsing and complex structure degenerations. This result is  new even in complex dimension two.  This is achieved via gluing and singular perturbation techniques, and a key geometric ingredient involving the construction of certain (not necessarily smooth) K\"ahler metrics with torus symmetry. We also discuss possible extensions of this result to more general settings. 
\end{abstract}

\maketitle{}

\tableofcontents

\section{Introduction}
\label{s:introduction}

\subsection{Background and main results}
\label{ss:1.1}
Let $n\geq 2$ be a positive integer and $\Delta$ be the unit disc in $\C$. Let  $p: (\mathcal{X}, \mathcal L)\rightarrow \Delta$ be a flat polarized degenerating family of $n$-dimensional Calabi-Yau varieties. More precisely, we assume that $\mathcal X$ is normal with $X_t\equiv p^{-1}(t)$ smooth for $t\neq 0$ and $X_0$ singular,  the relatively canonical line bundle $K_{\mathcal X/\Delta}$ is trivial, and $\mathcal L$ is relatively ample. Yau's proof  of the Calabi conjecture \cite{Yau}  yields  for each $t\neq 0$ a unique smooth Ricci-flat K\"ahler metric (\emph{the Calabi-Yau metric}) $\omega_{CY,t}$ on $X_t$ in the cohomology class $2\pi c_1(\mathcal L|_{X_t})$. The following is a folklore problem:    
\begin{problem}
\label{pb:1-1} What is the limiting geometric behavior of $(X_t,\omega_{CY,t})$ in the Gromov-Hausdorff sense as $|t|\to0$, and how to describe its connection with the algebraic geometry associated to this degeneration?
\end{problem}
 A particularly intriguing and challenging situation is when \emph{collapsing} occurs, or equivalently, when  the diameters of $(X_t,\omega_{CY, t})$ tend to infinity. In this case if we rescale the diameters to be fixed then the Gromov-Hausdorff limit is a lower dimensional space.   In the special case of large complex structure limits, the above problem is related to  the limiting version of the SYZ Conjecture, as proposed by Gross-Wilson \cite{GW} and Kontsevich-Soibelman \cite{KonSo}. Based on the main results in this paper (Theorem \ref{t:main-theorem}), we will propose a more general conjecture (Conjecture \ref{cj:generalized-SYZ}) for general degenerations.  

The goal in this paper is to give an answer to Problem \ref{pb:1-1} for a
 special class of complex structure degenerations. More precisely, we give a complete description of the collapsing geometry of the family of Calabi-Yau metrics {along the degeneration of complex structures}. We mainly focus on the special class of examples below, but the crucial techniques involved apply to more abstract situations, and the strategy can be extended to deal with more general classes of collapsing (see the discussions in  Section \ref{s:discussions}).

Let $f_1, f_2, f$ be homogeneous polynomials in $n+2$ variables of degree $d_1$, $d_2$ and 
$d_1+d_2=n+2$ respectively. Let $\mathcal X\subset \C\P^{n+1}\times \Delta$ be the family of Calabi-Yau hypersurfaces $X_t$ in $\C\P^{n+1}$ defined by the equation $F_t(x)=0$ (see Figure \ref{f: the degeneration}), where 
 \begin{equation} \label{eqn1.1}
 F_t(x)\equiv f_1(x)f_2(x)+tf(x),\quad x\in\dC\P^{n+1},
 \end{equation}
and $t$ is a complex parameter on the unit disc $\Delta\subset \C$. The projection map $p: \mathcal X\rightarrow \Delta$ is the natural one. The relative ample line bundle $\mathcal L$ comes from  restriction of the natural $\cO(1)$ bundle over $\C\P^{n+1}$. 
We further assume that $f_1, f_2, f$ are sufficiently general so that the following hold:
\begin{enumerate}[(i)]
\item   $Y_1=\{f_1(x)=0\}$ and $Y_2=\{f_2(x)=0\}$ are smooth hypersurfaces in $\C\P^{n+1}$; 
\item $X_t$ is smooth for $t\neq 0$ and $|t|\ll1$;
\item $D=\{f_1(x)=f_2(x)=0\}$ is a smooth complete intersection in $\C\P^{n+1}$;
\item $H=\{f_1(x)=f_2(x)=f(x)=0\}$ is a smooth complete intersection in $\C\P^{n+1}$. 
\end{enumerate}
Then $Y_1 \cap Y_2 = D$ and $H \subset D$. Naturally we may view $Y_1, Y_2$ and $D$ as subvarieties in $X_0\subset \mathcal X$, and $X_0=Y_1\cup Y_2$,  as illustrated in Figure \ref{fig1.1}. By adjunction formula, $D$ is also Calabi-Yau, and sits as an  anti-canonical divisor in both $Y_1$ and $Y_2$.  Moreover, for $i\in\{1,2\}$, the normal bundle $L_i$ of $D$ in $Y_i$ is given by $\cO(d_{3-i})|_D$.
Notice the total space $\mathcal X$ has singularities  along $H\times \{0\}$, and transverse to $H\times \{0\}$ the singularities are locally modeled on a three dimensional ordinary double point. The dual intersection complex of the singular fiber $X_0$  is a one dimensional interval.   

For $i\in\{1, 2\}$, it has been shown by Tian-Yau \cite{TY} that $Y_i\setminus D$ admits a complete Ricci-flat K\"ahler metric $\omega_{TY, i}$ with interesting asymptotics governed by the \emph{Calabi model space} (c.f. Section \ref{ss:Calabi model space}) , and the latter in turn depends on the Calabi-Yau metric on $D$ in the cohomology class $2\pi \cO(1)|_D$.   Notice that the construction of Tian-Yau can be viewed as a generalization of Yau's proof of the Calabi conjecture to the non-compact case, but it remains an interesting question to understand in what sense the metrics $\omega_{TY, i}$ are uniquely or canonically associated to the pair $(Y_i, D)$. 

\begin{figure}
\label{fig1.1}
\begin{tikzpicture}[scale=0.7]
\draw (-5, 3) to [out=-75, in=75] (-5, -3); 
\draw (-5, 3) to (-3.5, 4); 
\draw (-5, -3) to  (-3.5, -2); 
\draw (-3.5, 4) to [out=-75, in=75]  (-3.5, -2); 
\draw (-3, 3) to [out=-75, in=75]  (-3, -3); 
\draw (-3, 3) to (-1.5, 4); 
\draw (-3, -3) to (-1.5, -2); 
\draw (-1.5, 4) to [out=-75, in=75]  (-1.5, -2); 
\draw (3.0, 3) to [out=-75, in=75] (3.0, -3); 
\draw (3.0, 3) to (4.5, 4); 
\draw (3, -3) to  (4.5, -2); 
\draw (4.5, 4) to [out=-75, in=75]  (4.5, -2); 
\draw (1.0, 3) to [out=-75, in=75]  (1.0, -3); 
\draw (1.0, 3) to (2.5, 4); 
\draw (1.0, -3) to (2.5, -2); 
\draw (2.5, 4) to [out=-75, in=75]  (2.5, -2); 
\draw (-1, 3) to (0.5, 4); 
\draw (-1, 3) to [out=-65, in=80] (-0.7, -0.5);

\draw (-0.7, -0.5) to [out=-70, in=65] (-1, -3);

\draw (-1, -3) to (0.5, -2); 
\draw (0.5, 4) to  [out=-65, in=80]  (0.8, 0.5);
\draw  (0.8, 0.5) to [out=-70, in=65] (0.5, -2);
\draw[blue, very thick] (-0.7, -0.5) to (0.8, 0.5);
\node[red] at (0.05, 0) {$\bullet$};
\node[red] at (-1.75, 0.1) {$\bullet$};
\node[red] at (-3.75, 0.4) {$\bullet$};
\node[red] at (2.25, 0.2) {$\bullet$};
\node[red] at (4.25, 0.6) {$\bullet$};

\node at (-4, 2.5) {$X_t$};
\node at (0, 2.5) {$Y_1$}; 
\node at (0, -1.5) {$Y_2$}; 
\node[blue] at (0.3, 0.7) {$D$};

\node[red] at (-6, 1.4) {$H\times\{t\}$};
\draw[->] (-5.2, 1.3) to (-3.9, 0.5);
\node at (0, -4.5) {$X_0=Y_1\cup_D Y_2$};
\draw[->] (0, -4) to (0, -2.5);
\draw[thick, red, dashed] plot[smooth] coordinates {(-3.75, 0.4) (-1.75, 0.1) (0.05, 0) (2.25, 0.2) (4.25, 0.6)};
\end{tikzpicture}
\caption{The algebraic family $\mathcal X$}
 \label{f: the degeneration}
\end{figure}
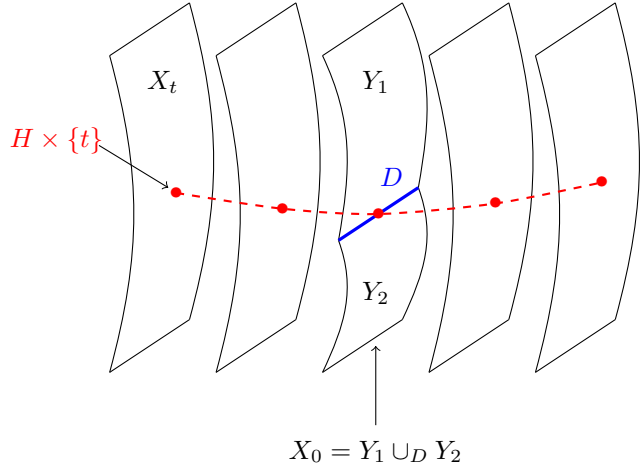

The main result of this paper provides an answer to Problem \ref{pb:1-1} for the above family.  Roughly speaking, we prove that for $|t|$ sufficiently small, the Calabi-Yau metrics $\omega_{CY, t}$ on $X_t$ can be constructed by gluing the Tian-Yau metrics on $Y_1\setminus D$ and $Y_2\setminus D$, together with an approximate Calabi-Yau metric on a neck region.  Let us denote  the {\it  renormalized metric} and  the {\it renormalized measure} as follows,
\begin{align}
\tilde\omega_{CY,t} \equiv  (\diam_{\omega_{CY, t}}(X_t))^{-2}\cdot \omega_{CY, t}
,\quad d\nmv_t\equiv (\Vol_{\omega_{CY, t}}(X_t))^{-1}\cdot\dvol_{\omega_{CY, t}}, 
\end{align}
so that $\tilde\omega_{CY, t}$ has unit diameter and $d\nmv_t$ is a probability measure. 
The more precise version of the main result is as follows (see Figure \ref{f:glued-manifold} for a geometric description).
\begin{theorem}
\label{t:main-theorem} 
The following statements hold:

\begin{enumerate}
\item (Diameter estimate) There exists a constant $C>0$ such that for $0<|t|\ll1$ we have 
\begin{equation}
C^{-1}\cdot  (\log|t|^{-1})^{\frac{1}{2}}\leq \diam_{\omega_{CY, t}}(X_t)\leq C \cdot (\log|t|^{-1})^{\frac{1}{2}}.
\end{equation}

 \item (Convergence) As $|t|\rightarrow 0$, the spaces $(X_{t}, \tilde\omega_{CY,t}, d\nmv_t)$ converge, in the measured Gromov-Hausdorff sense,  to $(\bI, dx^2, d\nmv_{\bI})$, where $\bI= [0,1]\subset\dR$ is a unit interval endowed with the standard metric $dx^2$ and the singular measure has an explicit expression    
   \begin{align}
d\nmv_{\bI} &=C_{n,d_1,d_2}\cdot \mathscr{V}_{\bI}(x)dx,\\ \mathscr{V}_{\bI}(x)&=
\begin{cases}
(\frac{x}{d_1})^{\frac{n-1}{n+1}}, & x\in[0,\frac{d_1}{d_1+d_2}],
\\
(\frac{1-x}{d_2})^{\frac{n-1}{n+1}}, &  x \in[\frac{d_1}{d_1+d_2}, 1],
\end{cases}\end{align}
for some constant $C_{n,d_1,d_2}>0$ depending only on $n$, $d_1$ and $d_2$.

 \item (Singular fibration) For $0<|t|\ll 1$, there is a continuous surjective map $\mathcal{F}_t: X_t\rightarrow\bI$ with the following properties: 
 \begin{enumerate}[(a)]
 \item (Almost distance preserving)  For all $p,q\in X_t$,
 \begin{equation}\Big||\mathcal{F}_t(p)-\mathcal{F}_t(q)|-d_{\tilde\omega_{CY,t}}(p,q)\Big|<\tau(t),\quad \lim\limits_{t\to0}\tau(t)=0.\end{equation} \item (Regular fiber) For each $x\in (0,1)\setminus\{\frac{d_1}{d_1 + d_2}\}$,  $\mathcal{F}_t^{-1}(x)$ is an $S^1$-bundle over $D$ with the first Chern class \begin{align}
 c_1(\mathcal{F}_t^{-1}(x))
=
\begin{cases}
c_1(\cO(d_2)|_D), & x\in (0,\frac{d_1}{d_1+d_2}),
\\
c_1(\cO(-d_1)|_D), & x\in  (\frac{d_1}{d_1+d_2},1).
\end{cases}
 \end{align}

 \item (Singular fiber and deepest bubble) The fiber $\mathcal{F}_t^{-1}(\frac{d_1}{d_1 + d_2})$ is a singular $S^1$-fibration over $D$ with vanishing circles along $H\subset D$. Suitable rescalings around the vanishing circles on $\mathcal{F}_t^{-1}(\frac{d_1}{d_1 + d_2})$ converge to the Riemannian product $\C_{TN}^2\times\C^{n-2}_{flat}$, where $\dC_{TN}^2$ 
is the Taub-NUT space and $\C^{n-2}_{flat}$ is the Euclidean space. 
 \item (End bubble) Suitable rescalings around the  ends $x=0$ and $x=1$ converge to the complete  Tian-Yau metrics $\omega_{TY, 1}$ and $\omega_{TY, 2}$ on $Y_1\setminus D$ and $Y_2\setminus D$ respectively. 

\end{enumerate}

\end{enumerate}
\end{theorem}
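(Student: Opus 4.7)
The plan is to prove Theorem \ref{t:main-theorem} by a gluing and singular perturbation strategy: for each small $|t|$ I would construct an explicit approximate Calabi-Yau metric $\omega_t^{\rm app}$ on $X_t$, then solve a complex Monge-Amp\`ere equation in suitable weighted H\"older spaces to correct it to the genuine Calabi-Yau metric $\omega_{CY,t}$, while keeping control of the metric structure at every scale in order to read off the conclusions. The entire argument is organized around three distinct scales in $X_t$: the two Tian-Yau ends, an intermediate neck whose length is of order $(\log|t|^{-1})^{1/2}$, and a deepest bubble region near $H$ of size $\sim |t|^{\alpha}$ for some explicit $\alpha$.

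First I would decompose $X_t$, using the natural $S^1$-action available in a neighborhood of $D \subset X_0$, into overlapping pieces: (i) two end regions that are diffeomorphic to large compact subsets of $Y_i \setminus D$, carrying suitably rescaled Tian-Yau metrics $\omega_{TY,i}$; (ii) a neck region modeled on a truncated Calabi model space over $D$, carrying an explicit $S^1$-invariant K\"ahler metric that interpolates between the two Tian-Yau asymptotic models and has the normal bundles $L_i = \cO(d_{3-i})|_D$ as its $S^1$-fibration on either side of the central cross-section; (iii) a deepest piece near $H$, where transverse to $H$ the total space $\mathcal X$ looks like the three dimensional ordinary double point, and where the model is an $S^1$-symmetric K\"ahler metric on the resolution/smoothing of this conifold whose small-scale geometry is exactly $\dC^2_{TN} \times \dC^{n-2}_{\rm flat}$. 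A partition-of-unity gluing then produces $\omega_t^{\rm app}$, and a direct computation of the fiberwise Calabi-Yau volumes yields the explicit neck length $\sim (\log|t|^{-1})^{1/2}$ together with the formula for $\mathscr V_{\bI}(x)$ in part (2).

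Next I would bound the Ricci potential $\Err_{CY}(\omega_t^{\rm app})$ in the weighted norm $\wa$, choosing the weights $\delta,\nu,\mu$ so that each of the three scales contributes its own decay rate (polynomial at the Tian-Yau ends, exponential in the neck, Taub-NUT rate near $H$). Combined with a uniform invertibility estimate for the weighted Laplacian on $(X_t,\omega_t^{\rm app})$, the standard fixed point argument solves
\[
(\omega_t^{\rm app} + \sq\,\p\bp\vf_t)^n = e^{\Err_{CY}}(\omega_t^{\rm app})^n,
\]
with $\vf_t$ small in $\wII$. This produces $\omega_{CY,t}$ as a small perturbation of $\omega_t^{\rm app}$ in the appropriate scale-invariant sense on each region, so all metric quantities of interest persist after the perturbation. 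The diameter estimate (1) is then immediate from the neck length; the measured Gromov-Hausdorff convergence to $(\bI, dx^2, d\nmv_{\bI})$ in (2) follows from collapsing the $S^1$- and $D$-factors; the map $\mathcal F_t$ in (3) is the composition of the approximate $S^1$-fibration with the interval projection, with the Chern class jump at $x = d_1/(d_1+d_2)$ coming from the orientation reversal between the two sides and the normal bundles $L_1, L_2$; and the bubble statements (3c), (3d) hold by construction, since the two local models used in the gluing are precisely $\dC^2_{TN} \times \dC^{n-2}_{\rm flat}$ near $H$ and $\omega_{TY,i}$ near the ends.

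The main obstacle, and the place where the paper's novelty must sit, is the construction of the deepest bubble model and its matching with the neck. One needs an $S^1$-invariant K\"ahler metric (not a priori smooth, since the underlying complex variety near $H$ has singularities one must desingularize in a controlled way) that (a) solves the Monge-Amp\`ere equation to an order sufficient for the weighted analysis, (b) is asymptotic on both sides to the Calabi model with the correct Chern classes $\cO(d_2)|_D$ and $\cO(-d_1)|_D$, and (c) degenerates at the vanishing circles over $H$ exactly to $\dC^2_{TN} \times \dC^{n-2}_{\rm flat}$. Simultaneously, one must verify that the linearized operator remains uniformly invertible across the collapse of the $S^1$ fibers, which is where the Taub-NUT geometry, rather than a flat cylinder, is essential. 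Every other step is a careful but ultimately standard weighted analysis; this $T^1$-symmetric local model and its compatibility with the global gluing is the technical core of the proof.
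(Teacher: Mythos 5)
Your outline reproduces the paper's overall strategy (Tian--Yau ends, a neck, weighted Schauder/implicit function theorem, reading off the geometry from the approximate metric), but the step you yourself flag as ``the main obstacle'' is exactly the part you never construct, and it is where the actual proof lives. There is no separate ``deepest bubble piece'' glued in near $H$: the Taub--NUT$\,\times\,\C^{n-2}$ behavior must be built into a single neck metric, and this neck is \emph{not} a truncated Calabi model space over $D$. It is produced by reducing the Calabi--Yau equation under the $S^1$-action to a nonlinear Gibbons--Hawking type ansatz on $Q=D\times\R$, linearizing in the adiabatic parameter $T$, and feeding in a \emph{Green's current} $G_P=\psi\wedge dz$ with Dirac singularity along $P=H\times\{0\}$ (whose existence, local expansion near $P$, and exponential decay as $z\to\pm\infty$ require their own analysis). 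The resulting K\"ahler structure on the singular $S^1$-fibration $\M_T$ is only $C^{2,\alpha}$ across the fixed locus and only approximately Calabi--Yau, so condition (a) of your wish list (solving Monge--Amp\`ere ``to sufficient order'' by a clean local model) is not how it goes; instead one must prove a regularity-scale/weighted error estimate for this explicit ansatz and carry the whole region through the perturbation. Without an actual construction of this object, statements (3b)--(3d) and the measure formula in (2) have nothing to be computed from.

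Two further ingredients you omit would also block the argument as proposed. First, you glue directly on $X_t$, but the total space $\mathcal X$ is singular along $H\times\{0\}$, and the neck's underlying complex manifold (an open set of the conic bundle $\{s_1\otimes s_2+s_3f=0\}$ in $\P(L_1\oplus L_2\oplus\C)$) does not sit inside the original family; the paper must first perform the base change $t\mapsto t^{n+2}$ and a birational modification to get $\hX$ with a three-component central fiber, and then match parameters via $|t|=e^{-A_-/d_1}=e^{-A_+/d_2}$ so that the K\"ahler potentials of the neck and of the Tian--Yau ends agree up to exponentially small errors on the overlaps (the gluing is carried out at the level of potentials in the fixed polarization class, which is what lets one invoke uniqueness to identify the perturbed metric with $\omega_{CY,t}$). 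Second, the ``uniform invertibility estimate for the weighted Laplacian'' is not standard across the multi-scale collapse: it is proved by a blow-up/contradiction argument whose rescaled limits are $\C^2_{TN}\times\C^{n-2}$, $\R^3\times\C^{n-2}$, $D\times\R$, the Calabi model spaces, and the complete Tian--Yau spaces, and it crucially uses Liouville theorems on the Calabi model and Tian--Yau spaces (the latter imported from a companion paper), not generic weighted elliptic theory. As written, your proposal identifies the right scales and conclusions but leaves both the neck construction and the injectivity estimate as assertions, so it does not constitute a proof.
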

\begin{figure}
\begin{tikzpicture}[scale=0.8]
\draw (0,0) to [out = 90, in = 180] (1,.5) to (2,.5);
\draw (2,.5) to [out = 0, in = 235] (4,2);
\draw (4,2) to [out = 55, in = 180] (6,3);
\draw (6,3) to [out = 0, in = 125] (8,2);
\draw (8,2) to [out = -55, in = 180] (10,.5);
\draw (10,.5) to (11,.5);
\draw (11,.5) to [out = 0, in = 90] (12,0);
\draw (0,0) to [out = 270, in = 180] (1,-.5) to (2,-.5);
\draw (2,-.5) to [out = 0, in = 125] (4,-2);
\draw (4,-2) to [out = -55, in = 180] (6,-3);
\draw (6,-3) to [out = 0, in = -125] (8,-2);
\draw (8,-2) to [out = 55, in = 180] (10,-.5);
\draw (10,-.5) to (11,-.5);
\draw (11,-.5) to [out = 0, in = -90] (12,0);
\draw[blue](2,0) ellipse (.2 and .5);
\draw[red](2,0) ellipse (.2 and .05);

\draw[blue] (3,0) ellipse (.1 and .82);

\draw[red](3,0) ellipse (.1 and .05);
\draw[blue](4,0) ellipse (.05 and 2);

\draw[red ](4,0) ellipse (.05 and .02);
\draw[blue](10,0) ellipse (.2 and .5);
\draw[red](10,0) ellipse (.2 and .05);

\draw[blue](9,0) ellipse (.1 and .82);

\draw[red](9,0) ellipse (.1 and .05);
\draw[blue](8,0) ellipse (.05 and 2);

\draw[red ](8,0) ellipse (.05 and .02);

\draw[red](5,0) ellipse (.03 and .02);
\draw[blue](5,0) ellipse (.03 and 2.82);
\draw[red](7,0) ellipse (.03 and .02);
\draw[blue](7,0) ellipse (.03 and 2.82);

\draw (1.2,-.05) arc [ radius = .5, start angle = 45, end angle = 135];

\draw (11.5,-.05) arc [ radius = .5, start angle = -45, end angle = -135];
\draw[very thick] (6,-1.5) -- (6, 1.5);
\node at (6.25, 0) {$H$};

\draw (0,-5) -- (12,-5);
\draw (0,-5.1)   -- (0,-4.9);
\draw (12,-5.1)   -- (12,-4.9);
\draw (6,-5.1)   -- (6,-4.9);
\draw[->, very thin] (0,-.3) -- (0,-4.5);
\draw[->, very thin] (1.25,-.6) -- (.2,-4.5);
\draw[->, very thin] (3,-.9) -- (3,-4.5);
\draw[->, very thin] (6,-3.1) -- (6,-4.5);
\draw[->, very thin] (5,-2.9) -- (5.8,-4.5);
\draw[->, very thin] (12,-.3) -- (12,-4.5);
\draw[->, very thin] (10.75,-.6) -- (11.8,-4.5);
\draw[->, very thin] (9,-.9) -- (9,-4.5);
\draw[->, very thin] (6,-3.1) -- (6,-4.5);
\draw[->, very thin] (7,-2.9) -- (6.2,-4.5);
\draw[->, thick] (-2,1.5) -- (-2,-4.5);
\node at (-1.5,-1.5) {$\mathcal{F}_t$};

\node at (1,.8) {$Y_1\setminus D$};
\node at (11,.8) {$Y_2\setminus D$};
\node at (6,3.3) {$\M$};
\node at (0,-5.5) {$x=0$};
\node at (6,-5.5) {$x=\frac{d_1}{d_1+d_2}$};
\node at (-2,2.0) {$X_t$};
\node at (-2,-5.0) {$\bI$};
\node at (12,-5.5) {$x=1$};
\end{tikzpicture}
\caption{The collapsing Calabi-Yau metric $\omega_{CY,t}$ on $X_t$}
 \label{f:glued-manifold}
\end{figure}
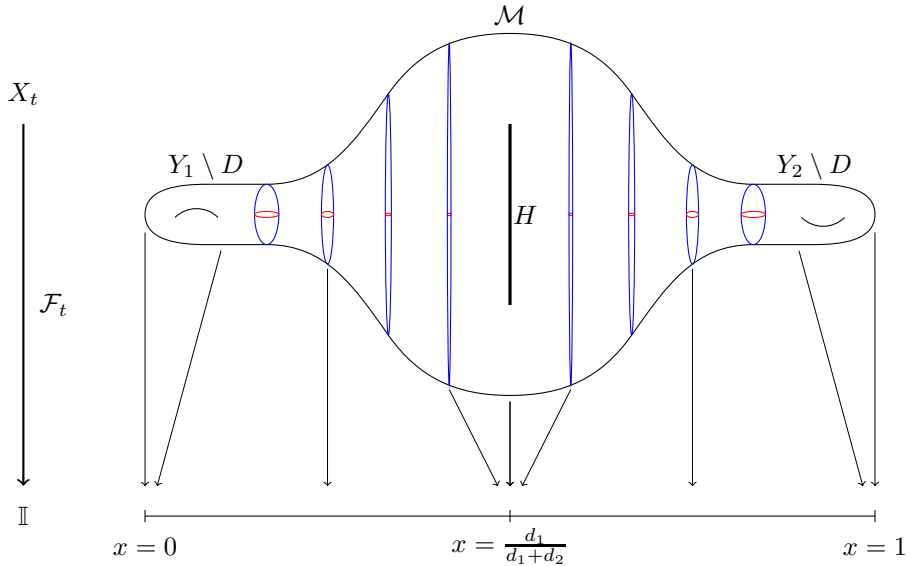
\begin{remark}
A consequence of the theorem is that the Tian-Yau metrics $\omega_{TY, i}$ on $Y_i\setminus D$, though not a priori canonical by construction, is indeed canonically associated to the above degeneration family of compact Calabi-Yau manifolds.   
\end{remark}

\begin{remark}
Using the gluing construction in this paper, we obtain a fairly precise description of the multi-scale collapsing of the Calabi-Yau metrics $(X_t, \tilde\omega_{CY, t})$ as $t\rightarrow 0$. For each $x\in (0, 1)\subset \bI$,  not only does $\mathcal{F}^{-1}_{t}(x)$ collapse as $t\rightarrow0$, each $S^1$-fiber of $\mathcal{F}^{-1}_{t}(x)$ collapses also, but at a faster rate.
This iterated collapsing in effect gives rise to various bubbles of different geometric properties. 
See Section \ref{ss:regularity-scales} for detailed studies on the rescaled limit geometries. \end{remark}

\begin{remark}
 Transverse to the divisor $H\subset D$, the singular fibration $\mathcal{F}_t$ is topologically modeled on the composition of the Hopf fibration
 \begin{equation}\C^2\rightarrow \C\oplus \R\simeq \R^3, \quad  (z_1, z_2)\mapsto \Big(z_1z_2, \frac{1}{2}(|z_1|^2-|z_2|^2)\Big), \end{equation}
and the projection map 
\begin{equation}\C\oplus \R\rightarrow \R,\quad  (y, z)\mapsto z.\end{equation} 
\end{remark}

\begin{remark} 
In Theorem \ref{t:main-theorem},
the singularities of $d\nmv_{\bI}$ coincide with the singular loci of the map $\mathcal F_t$.  The explicit formula of $d\underline\nu_{\bI}$ has applications in understanding the properties of general collapsing limits with lower Ricci curvature bounds. See Section \ref{ss:renormalized-measure} for more discussions. 
\end{remark}

It is worth emphasizing that, in the volume collapsing case, Theorem \ref{t:main-theorem} provides the first {\it explicit and precise} descriptions on the metric limiting behavior and singularity formation along an algebraically degenerating family, beyond the easy case of abelian varieties.
Prior to our complete answer to Problem \ref{pb:1-1} for the special family obtained in this paper, an initial progress was established in \cite{HSVZ} which focused on a gluing construction of  hyperk\"ahler metrics for $n=2$.  However, \cite{HSVZ} only described the construction at the level of {\it Riemannian metrics} without studying the relationships with the complex structure degenerations. So both the results in Theorem \ref{t:main-theorem} and the ideas in the proof are essentially new even in the case of complex dimension two.

In this paper, the formulation and the proof of Theorem \ref{t:main-theorem} naturally require us to directly work on a fixed algebraic degeneration family and perform the metric construction at the level of K\"ahler potentials. This framework puts us into a much more rigid situation and leads to a primary challenge, which demands a number of new ideas and techniques.

A more challenging part of this paper, compared to the 2 dimensional case studied in \cite{HSVZ},  lies in the  construction of the neck region which are significantly harder. This involves studying the reduction of the Calabi-Yau equation with an $S^1$-symmetry, and in our setting we need to allow the $S^1$-action to have fixed points, which  means we need to study the solutions of the reduced equation with singularities which generically are of real codimension 3. 
 When $n=2$, this reduction gives rise to a \emph{linear} PDE (Gibbons-Hawksing ansatz), and the singularities are \emph{isolated}. When $n>2$, the reduced equation is a PDE, however this time \emph{non-linear}, an equation we call the \emph{non-linear Gibbons-Hawking ansatz}. Moreover,  we need to study the solutions with non-isolated singularities, which involves new technical ingredients (see Sections \ref{s:Greens-currents}-\ref{s:neck}).    
A fundamental strategy is to exploit the adiabatic feature of the problem, i.e., the smallness of the $S^1$-orbits, to construct \emph{approximate} solutions to the nonlinear Gibbons-Hawking ansatz via \emph{linearization}.

\subsection{Outline of the proof of the main theorem}

The proof of Theorem \ref{t:main-theorem} consists of four parts.

The first part involves an algebraic modification of the family $\mathcal X$ (see Section \ref{ss:algebraic-geometry}). The overarching strategy in the proof of Theorem \ref{t:main-theorem} is to start with the Tian-Yau metrics on $X_0\setminus D=(Y_1\setminus D)\cup (Y_2\setminus D)$, and graft them to nearby fibers $X_t$ for $|t|$ small to get K\"ahler metrics which are \emph{approximately} Calabi-Yau. However, the existence of singularities of the total space $\mathcal X$ along $H$ imposes  difficulties in performing a reasonable construction. To overcome this issue, we first modify the family $\mathcal X\rightarrow \Delta$ to obtain another family $\hX\rightarrow\Delta$ by using base change and birational modifications (see Figure \ref{f: the modified family}). The new family $\hX$ essentially agrees with $\mathcal X$ away from $X_0$, and the new central fiber $\widehat{X}_0$ comprises of a chain of \emph{three} components, with the two end components isomorphic to $Y_1, Y_2$ respectively, and the middle component $\mathcal N$ is given by a conic bundle over $D$, and  is realized as a natural hypersurface in the projective bundle $\P(L_1\oplus L_2\oplus \mathbb C)$ cut out by the equation $s_1\otimes s_2+s_3f(x)=0$ (viewing $f$ as a section of $L_1\otimes L_2$).  Recall that  $L_i$ is the normal bundle of $D$ in $Y_i$ for $i \in \{1,2\}$. The conic fibers degenerate precisely along the divisor $H$ in $D$. The component $\mathcal N$ intersects transversally with $Y_1, Y_2$ along $D_1, D_2$, which are both naturally isomorphic to $D$.   Notice that $\hX$ is not necessarily smooth. Instead it has singularities along $D_1\cup D_2$ which is of codimension $2$. However, it turns out that working with $\hX$ is the correct thing to do.

The second part involves the \emph{construction of the neck region}. We aim at constructing Calabi-Yau metrics on the smooth locus of the central fiber of $\hX$. For the two end components, these are provided by the complete Tian-Yau metrics.  For the middle component, with a moment's thought, one realizes that  it is not possible to construct a complete Calabi-Yau metric on $\mathcal N^0=\mathcal N\setminus (D_1\cup D_2)$. The reason is that  if such a metric existed, it would have two ends, and hence Cheeger-Gromoll splitting theorem would imply that the complete Ricci-flat manifold $\mathcal N^0$ must isometrically split off a line, which is not compatible with the complex geometry of $\mathcal N^0$. Instead we will try to construct a family of incomplete Calabi-Yau metrics defined on larger and larger open subsets in $\mathcal N^0$. The fact that $\mathcal N$ has a natural  holomorphic $\C^*$-action suggests us to look for Calabi-Yau metrics with an $S^1$-symmetry. 

In complex dimension 2, the hyperk\"ahler structure of the neck region was achieved in \cite{HSVZ} using the classical Gibbons-Hawking ansatz and studying the reduced linear equations, where the underlying complex manifold of the neck was not identified. In higher dimensions, the technologies employed in the construction are much more involved due to the appearance of the much wilder singularity behavior, and the framework is substantially    
different from the hyperk\"ahler triple method in \cite{HSVZ}. This is based on the study of higher dimensional generalizations of the Gibbons-Hawking ansatz, see Section \ref{s:torus-symmetries}. An  essential difference from the case in complex dimension $2$ is that the corresponding reduced equations in higher dimensions are still nonlinear. Via a linearization we are led to study certain solutions to a linear elliptic PDE with singularities along a submanifold. The existence and local regularity of such solutions, namely \emph{Green's currents}, are studied in detail in Section \ref{s:Greens-currents},  which will be used to construct a family of incomplete K\"ahler metrics on open subsets of $\mathcal N^0$. The fact that the singularities of the Green's currents are non-isolated causes essential difficulties in understanding the regularity of the K\"ahler metrics. In actuality we only prove the metrics are $C^{2,\alpha}$ and this suffices for our purpose. Another difference in higher dimensions is that these metrics are only \emph{approximately} Calabi-Yau. Since we work on the fixed algebraic family $\hX$, this requires us to obtain a precise formula and establish some uniform estimates for the K\"ahler potentials of these K\"ahler metrics, which is crucial for our gluing construction. In Section \ref{ss:glued-metrics}, we graft the incomplete approximately Calabi-Yau metrics constructed in Section \ref{s:neck} and the complete Tian-Yau metrics on $Y_i\setminus D$ to $C^{1,\alpha}$-K\"ahler metrics on $X_t$ for $|t|\ll1$, which are approximately Calabi-Yau.

The third part involves using \emph{weighted analysis} and the implicit function theorem to perturb the above approximately Calabi-Yau metrics on $X_t$ to genuine Calabi-Yau metrics. A crucial point is to prove some uniform weighted estimates, see Proposition \ref{p:estimate-L-neck} and Proposition \ref{p:global-injectivity-estimate} for both the  incomplete and complete versions. To do this, including the analysis on the regularity scales in Section \ref{s:neck}, another crucial analytic input we need is a Liouville theorem on the Tian-Yau spaces in higher dimensions, which is proved in  \cite{SZ-Liouville}.
One can directly see from the implicit function theorem that the Gromov-Hausdorff collapsing behavior of the Calabi-Yau metrics.

Finally, to prove Theorem \ref{t:main-theorem}, we  start with the family of approximately Calabi-Yau metrics constructed above and the Tian-Yau metrics on the two end components of $\widehat X_0$. We then perturb them to approximately Calabi-Yau metrics on the nearby fibers $\widehat X_t$ for $|t|\ll1$. In this step we need to work at the level of K\"ahler potentials and use suitable cut-off functions. 
Finally we perturb them to  genuine Calabi-Yau metrics on $\widehat X_t$ in the K\"ahler class $2\pi c_1(\mathcal O(1)|_{\widehat X_t})$, so by well-known uniqueness of Calabi-Yau metrics we know the latter must agree with $\omega_{CY, t}$. This means that we have obtained an approximate geometric description of the metrics $\omega_{CY, t}$ for $|t|\ll1$. Then conclusion of Theorem \ref{t:main-theorem} follows.

\subsection{Organization of the paper}

We briefly overview the remainder of the paper.

In Section \ref{s:torus-symmetries}, we  study the Calabi-Yau equation with torus symmetry and particularly we will focus on the reduced equations which yield certain singular behavior. We also formulate the linearization of such singular equations in Section \ref{ss:linearized equation}. This serves as model objects in understanding the degenerations.

In Section \ref{s:Greens-currents}, we discuss both the existence and local regularity theory for the linearized equations with singularities along a submanifold as formulated in Section \ref{ss:linearized equation}. To do this,  this section will provide detailed analysis on the existence and local regularity theory of Green's currents. 

In Section \ref{s:neck}, we construct a family of degenerating family of  K\"ahler metrics which are approximately Calabi-Yau on the neck region. In Sections \ref{ss:kaehler-structures}-\ref{ss:complex-geometry}, we will identify the underlying complex manifold and carry out detailed analysis on the K\"ahler potentials of the metrics.
To precisely understand the metric degenerations,  we study in Section \ref{ss:regularity-scales} the regularity of the metrics in a quantitative fashion and classify the various rescaled limiting geometries for this family of metrics. In Section \ref{ss:neck-weighted-analysis}, we set up the weighted spaces and prove several fundamental weighted estimates. 
Section \ref{ss:perturbation of complex structures}
focuses on the estimates for the perturbation of the complex structures. This is necessary since our actual degenerating family of complex structures is only a perturbation of the neck region.

In Section \ref{s:neck-perturbation}, we apply the implicit function theorem and weighted estimates to show that the family of approximately Calabi-Yau metrics on the neck can be perturbed to genuine Calabi-Yau metrics, all of which will be done in Sections \ref{ss:perturbation-framework}-\ref{ss:incomplete weighted analysis}.
Here an important subtlety is that, to perturb those incomplete K\"ahler metrics, we will invoke Neumann boundary conditions instead of Dirichlet boundary conditions, which more naturally fits in with the formulation of Proposition \ref{p:neck-uniform-injectivity}. 
 In Section \ref{ss:renormalized-measure},  we also discuss the renormalized limit measures and their singular behavior, which may be of its own interest.

In Section \ref{s:gluing}, we will complete the proof of 
Theorem \ref{t:main-theorem}.
There are three main points in the this section. The algebraic modification as mentioned above is formulated in Section \ref{ss:algebraic-geometry}. 
In Section \ref{ss:glued-metrics}, we will construct the glued metrics on $X_t$  which are approximately Calabi-Yau.  Finally, we will perform the perturbation in Section \ref{ss:global analysis}. It is worth mentioning that in the proof of  Theorem \ref{t:main-theorem} we do not use  genuine Calabi-Yau metrics on the neck region as in Theorem \ref{t:neck-CY-metric}. But the details of the final perturbation step  is essentially the same as the proof of Theorem \ref{t:neck-CY-metric}. With the detailed perturbation argument on the neck region, we can safely omit the details in the final perturbation step.

In Section \ref{s:discussions}, we will give some possible extensions of our results and we will propose a conjecture  (Conjecture \ref{cj:generalized-SYZ}) based on Theorem \ref{t:main-theorem}. 

\subsection{Acknowledgements}
 We would like to thank Lorenzo Foscolo, Mark Haskins, and Shouhei Honda for helpful discussions. We thank Ronan Conlon for comments which improved the presentation. 
We are also grateful to Hans-Joachim Hein and Jeff Viaclovsky for stimulating discussions on the study of collapsing hyperk\"ahler metrics on K3 surfaces which led to an earlier joint paper \cite{HSVZ}. We thank Yang Li for communications regarding the draft of his preprint \cite{Li} and the first draft of the current paper in January 2019. 
Substantial parts of this paper were written when the second author was visiting Princeton University, Academia Sinica, and ShanghaiTech University in the academic year 2018-2019. He  would like to thank those institutions for their hospitality and support.

\section{Calabi-Yau metrics with torus symmetry}
\label{s:torus-symmetries}
In this section, we discuss the Calabi-Yau metrics which are preserved by a compact torus action, and the symmetry reduction of the Calabi-Yau equation.  We will explain why we expect these metrics to provide local models for the collapsing of Calabi-Yau metrics when the complex structure degenerates. The ideas of this section will be used in Section \ref{s:neck} to construct the approximately Calabi-Yau neck region. 

In Section \ref{ss:dimension reduction} we explain the motivation and study the dimension reduction of the Calabi-Yau equation for the $S^1$-action. In Sections \ref{ss:Calabi model space}-\ref{ss:2d standard model}, we will give some exact solutions to the reduced equation, which will serve as important local models in later sections. In Section \ref{ss:linearized equation} we consider the linearized equation and explain a natural class of singular solutions given by Green's currents.   Section \ref{ss:higher rank torus} is to discuss the Calabi-Yau metrics admitting a torus action of higher rank. 

\subsection{Motivation and dimension reduction of the Calabi-Yau equation}
\label{ss:dimension reduction}

We begin by recalling the familiar theory in complex dimension two. In this case, Calabi-Yau metrics are locally hyperk\"ahler and such metrics with free  $S^1$-action are locally given by the classical \emph{Gibbons-Hawking ansatz}, in terms of a positive harmonic function on a domain in $\R^3$. Notice that in the classical Gibbons-Hawking ansatz, the hyperk\"ahler metrics admit an $S^2$-family of parallel compatible complex structures. A priori there is no preferred choice. If we do make a choice of complex structure, then the base $\R^3$ also has a natural splitting into $\C\oplus \R$, and we refer to Section \ref{ss:2d standard model} for further discussions. When the $S^1$-action is not free,  the fixed points correspond to simple poles, i.e., Dirac type singularities of the harmonic function,  locally given by $\frac{1}{2r}$ plus a smooth function, where $r$ is the Euclidean distance to the origin  $0^3\in\dR^3$. The local topological model for the $S^1$-fibration near a singularity is the standard Hopf fibration $\pi: \R^4\rightarrow \R^3$. Applying the Gibbons-Hawking ansatz to the entire $\R^3$ with a positive Green's function $G_{\lambda}\equiv\frac{1}{2r}+\lambda$ for $\lambda>0$, one obtains a homothetic scaling family of the \emph{Taub-NUT} metrics $g_{\lambda}$ on $\R^4$, so that $g_{\lambda}$ converges to the Euclidean metric on $\R^4$ when $\lambda\rightarrow 0$, and to the Euclidean metric on $\R^3$ when $\lambda\rightarrow\infty$. 

We can also apply the Gibbons-Hawking ansatz to three dimensional flat manifolds with slower than cubic volume growth, but then there will not exist any non-trivial global positive harmonic function with only simple poles.
Nevertheless, the Gibbons-Hawking construction still yields various interesting families of \emph{incomplete} hyperk\"ahler metrics. Important examples are given by the Green's functions on $S^1\times \R^2$ (the Ooguri-Vafa metric, c.f. \cite{GW})  and $\dT^2\times \R$ (c.f.  \cite{HSVZ}). These metrics are important in understanding the collapsing behaviors of hyperk\"ahler metrics on K3 surfaces \cite{GW, HSVZ}.

In higher dimensions, the algebro-geometric consideration concerning complex structure degenerations also suggests the significance of Calabi-Yau metrics with torus symmetry. We will explain the motivation in a local model situation.  Let $p:\mathcal{N}\to\Delta\subset \dC$ be a degenerating family of smooth complex algebraic varieties  $\mathcal{N}_t\equiv p^{-1}(t)$ such that as $t\to0$, $\mathcal{N}_t$ degenerates into $\mathcal N_0$ which is a union of irreducible components. Our primary observation is that, in the generic situation, near a point on $\mathcal N_0$ where the $(k+1)$-components intersect transversally, the degenerating family is locally modeled by the equation 
\begin{equation}z_0\cdot \ldots \cdot z_{k}=t\cdot(f(z_{k+1}, \cdots, z_{n})+g),\end{equation}
where $g$ is contained in the analytic ideal generated by $z_0 , \ldots ,  z_k$. Near a point with $z_0=\ldots=z_{k}=0$, this can be further approximated by omitting the term $g$, which results in a $(\C^*)^k$-fibration 
\begin{equation}z_0\cdot \ldots \cdot z_k=t\cdot f(z_{k+1}, \ldots, z_n)\end{equation}
over an $(n-k)$-dimensional base. The fibers are orbits of the $(\C^*)^k$-action, where $(\C^*)^k$ is naturally a subgroup in 
$(\C^*)^{k+1}=\{(\lambda_0, \cdots, \lambda_k)|\lambda_i\in \C^*\}$
defined by the relation $\lambda_0\cdots\lambda_k=1$.

More generally, one can consider a  complex manifold $D$ and $(k+1)$ holomorphic line bundles $L_0, \ldots, L_k$ over $D$. Let us denote  $E\equiv\bigoplus\limits_{j=0}^k L_j$ and fix a holomorphic section $f$ of the tensor product $L_0\otimes \cdots\otimes L_k\cong \det (E)$. Then we can consider the hypersurface $\mathcal N$ in $E\times \C$ cut-out by the equation 
\begin{equation}s_0\otimes\ldots\otimes s_k=t\cdot f(x),\end{equation}
where $(x, [s_0, \cdots, s_k])$ is a point in $E$ and $t\in \C$. We can view $\mathcal N$ as a family of hypersurfaces in $E$ parametrized by $t\in \C$. There is a natural $\C^*$ action on $\mathcal N$ given by 
\begin{equation}
\lambda(\zeta)\cdot \Big(x, [s_0, \cdots, s_k], t\Big) \equiv \Big(x, [\zeta s_0, \cdots, \zeta s_k], \zeta^{k+1} t\Big).
\end{equation}
It induces isomorphisms between $\mathcal N_t$ and $\mathcal N_1$ for all $t\neq 0$, and it preserves $\mathcal N_0$.

 For simplicity we only consider the generic case when the zeroes of $f$ form smooth hypersurface, then for $t\neq 0$, $\mathcal N_t$ is smooth but the projection map $\pi_t: \mathcal N_t\rightarrow D$ is still singular precisely along the union of $\Pi_{ij}\equiv \{(x, [s_0, \cdots, s_k]\in \mathcal N|s_i=s_j=0, f(x)=0\}$ for all pairs $(i, j)$ with $i\neq j$. Notice this union is also the singular set of the total space $\mathcal N$. When $t=0$, $\mathcal N_0$ is simply the union of the zero sections of $L_j$.

Suppose that the base $D$ has a holomorphic volume form $\Omega_D$ and a Calabi-Yau metric $\omega_D$, satisfying the equation $\omega_D^n=C\Omega_D\wedge\bar\Omega_D$. Then one can easily write down a $(\C^*)^k$-invariant holomorphic volume form $\Omega_t$ on $\mathcal N_t$ for $t\neq 0$, which is given by 
\begin{equation}
\Omega_t=\sum_{j=0}^k (-1)^j \frac{ds_0}{s_0}\wedge\cdots \widehat{\frac{ds_j}{s_j}}\wedge\cdots\wedge \frac{ds_k}{s_k}\wedge \pi_t^*\Omega_D.
\end{equation}
Here the notation $\frac{ds_j}{s_j}(j=0, \cdots k)$ should be understood after choosing a local holomorphic section of $L_j$, but it is easy to see that this definition of $\Omega_t$ does not depend on the particular choice. Also a priori $\Omega_t$ is defined away from the singular fibers of the projection map $\pi_t$, and it is not difficult to see that $\Omega_t$ extends to a nowhere vanishing holomorphic volume form on $\mathcal N_t$. 

Let $T^k=(S^1)^k\subset (\C^*)^k$ be the obvious maximal compact subgroup. Naturally one would ask for $T^k$-invariant Calabi-Yau metrics $\omega_t$ on (part of) $\mathcal N_t$, satisfying the equation $\omega_t^n=C\Omega_t\wedge \bar\Omega_t$, and we are then led to study dimension reduction of the Calabi-Yau equation under the $T^k$ action. This has been written down by Matessi \cite{Matessi} and we now explain the details in the case $k=1$, and we briefly discuss the case of general $k$ in Section \ref{ss:higher rank torus}.

\

Let $(X, \omega, J)$ be an $n$-dimensional K\"ahler manifold admitting an $S^1$-action which is holomorphic and Hamiltonian  with a moment map function $z$, i.e., 
\begin{equation}\label{momentmap}
dz=\xi\lrcorner\omega,
\end{equation}
where $\xi$ is the vector field generating the $S^1$-action.
 We now assume in addition that the $S^1$-action is free. Locally in a neighborhood of an $S^1$-orbit we can complexify the $S^1$-action and obtain a complex quotient $D$ which is an $(n-1)$-dimensional complex manifold. Then the local $S^1$-quotient $Q$ can be identified as a differentiable manifold with  $D\times I$, where $I$ is an interval with coordinate function $z$. 
 
 Denote by $\{w_1, \cdots, w_{n-1}\}$ the local holomorphic coordinates on $D$. Then they can be viewed as local holomorphic functions on $X$. Let $t$ be an arbitrary local function with $\xi(t)=1$,  Then $\{z, t, w_1, \cdots, w_{n-1}\}$ gives a local coordinate system on $X$, and we have $\xi=\p_t$. Let us write $w_i=x_i+\sqrt{-1} y_i$. Then we can express the complex structure $J$ on $X$ in terms of the local coordinates as 
\begin{equation}
Jdx_i=dy_i, \quad Jdy_i=-dx_i, \quad Jdz=h^{-1}\Theta,
\end{equation}
where $h>0$ is a local function and $\Theta$ is a local 1-form which can be written as
\begin{equation}
\Theta=-dt+\theta,
\end{equation}
such that $\theta$ does not have $dt$ component. The above negative sign appears due to the fact that 
\begin{equation}
(Jdz)(\p_t)=-dz(J\p_t)=-\omega(\xi, J\xi)<0.
\end{equation}
This also gives an intrinsic geometric interpretation for $h^{-1}$, as the squared norm of the Killing field $\xi$. In particular, $h$ is $S^1$-invariant and hence it descends to a function on $Q$.  

By the $S^1$-invariance
\begin{equation}\mathcal L_{\xi} (Jdz)=0,\quad  \mathcal L_{\xi}(dt)=0,
\end{equation}
we obtain
$\mathcal L_{\xi}\theta=0$.
Therefore, $\theta$ can be also viewed as a 1-form on $Q$.

We can write the K\"ahler form $\omega$ on $X$ as 
\begin{equation}
\omega=dz\wedge (-dt+\theta)+\tilde\omega,
\end{equation}
where $\tilde\omega$ is a $(1,1)$-form without $dz$ or $dt$ component. This is due to \eqref{momentmap} and the fact that $\omega$ is of type $(1,1)$.
Since $\mathcal L_{\xi}\omega=0$ we also have $\mathcal L_{\xi}\tilde\omega=0$, so the coefficients of $\tilde\omega$ also descend to $Q$. In particular, we may view $\tilde\omega=\tilde\omega(z)$ as a family of $(1,1)$-forms  on $D$. The condition $d\omega=0$ is equivalent to 
\begin{equation}
\label{omegaequation}
\begin{cases}
d_D\tilde\omega(z)=0\\
\p_z\tilde\omega(z)=d_D\theta, 
\end{cases}
\end{equation}
where $d_D$ denotes the differential along $D$.  

Now we consider the integrability condition of the complex structure $J$. One can check that 
\begin{equation}J\p_t=h^{-1}\theta_z\p_t+h^{-1}\p_z, \end{equation}
so the holomorphic vector field generating the $\C^*$-action is given by 
\begin{equation}\xi^{1,0}=\frac{1}{2}(\p_t-\sq J\p_t)=\frac{1}{2}(1-\sq h^{-1}\theta_z)\p_t-\frac{1}{2}\sq h^{-1}\p_z.
\end{equation}
The dual holomorphic $(1,0)$-form is
\begin{equation}\kappa=\sq(hdz+\sq\Theta+\kappa'),\end{equation}
where $\kappa'$ only involves $dx_i, dy_i$. The integrability condition for $J$ can be expressed as
\begin{equation}
d\kappa\wedge \kappa\wedge dw_1\wedge\cdots\wedge dw_{n-1}=0,\end{equation}
which is equivalent to
\begin{equation}\label{thetaequation}
\begin{cases}
d_D\theta\wedge dw_1\wedge \cdots\wedge dw_{n-1}=0\\
\p_z\theta=-d_D^ch , 
\end{cases}
\end{equation}
where $d_D^c\equiv J_D \circ d_D$. 
The first equation follows from the second equation in \eqref{omegaequation} which implies $d_D\Theta$ is of type $(1,1)$ on $D$. Notice that \eqref{omegaequation} and \eqref{thetaequation} together can be re-organized as  a system
\begin{equation}\label{omegahequation}
\begin{cases}\p_z^2\tilde\omega+d_Dd_D^ch=0\\
d\Theta=\p_z\tilde\omega-dz\wedge d_D^ch.
\end{cases}
\end{equation}
It is not difficult to globalize the above discussion and the upshot is that a K\"ahler form on $X$ with a free $S^1$-action gives rise to a family of K\"ahler forms $\tilde\omega(z)$ on a complex manifold $D$, together with a positive function $h$ on $D\times I$, satisfying \eqref{omegahequation}. This is the familiar procedure in \emph{K\"ahler reduction}. The 1-form $-\sq\Theta$ can be viewed as a family of connection 1-forms on the natural $S^1$-bundle over $D$, so as a consequence $\p_z\tilde\omega=d_D\Theta$ determines an integral cohomology class in $2\pi H^2(D; \Z)$. 

Conversely, suppose that we are given $(\tilde\omega(z),h)$ that satisfies \eqref{omegahequation}, and suppose that the deRham class $[\p_z\tilde\omega_z]$ lies in $2\pi H^2(D; \Z)$. Then by general theory we can choose a connection 1-form $-\sqrt{-1}\Theta$ on an $S^1$-bundle $X$ over $D\times I$ satisfying \eqref{omegahequation}, and this gives rise to a K\"ahler structure on $X$ with $S^1$-action.
The isomorphism class of the resulting K\"ahler structures on $X$ depends only on the gauge equivalent classes of the connection 1-form, so is unique if the first Betti number of $D$ vanishes.

Now we specialize to study Calabi-Yau metrics with $S^1$-symmetry, so we additionally assume that $X$ has a  holomorphic volume form $\Omega$. 
Denote the holomorphic $(n-1)$-form on $X$
\begin{equation}
\tilde\Omega=\xi^{1,0}\lrcorner \Omega.
\end{equation}
The fact that $\Omega$ is $S^1$-invariant and holomorphic implies that $\tilde\Omega$ descends to a holomorphic $(n-1, 0)$-form $\Omega_D$ on $D$, and we also have 
$\Omega=\kappa\wedge\Omega_D$.
By definition, 
\begin{align}\omega^{n}&=-n\cdot dz\wedge dt\wedge \tilde\omega^{n-1},
\\
\Omega\wedge\bar\Omega&=2\sq (-1)^{n-1} h\cdot  dz\wedge dt\wedge \Omega_D\wedge\bar\Omega_D.
\end{align}
So the Calabi-Yau equation on $X$ 
\begin{equation}\frac{\omega^{n}}{n!}=\frac{(\sq)^{n^2}}{2^n}\cdot\Omega\wedge\bar\Omega
\end{equation}
becomes  
\begin{equation}\label{volumeformequation}
\frac{\tilde\omega^{n-1}}{(n-1)!}=\frac{(\sq)^{(n-1)^2}}{2^{n-1}}\cdot h\cdot \Omega_D\wedge\bar\Omega_D.
\end{equation}
Combining \eqref{omegahequation} and \eqref{volumeformequation}, we obtain that
\begin{equation} \label{nonlinearGH}
\p_z^2\tilde\omega+d_Dd_D^c\frac{2^{n-1}\tilde\omega^{n-1}}{(\sq)^{(n-1)^2}\Omega_D\wedge\bar\Omega_D}=0.
\end{equation}
Again it is easy to see this discussion can be globalized so we get a complex Calabi-Yau manifold $(D, \Omega_D)$ together with a family of K\"ahler forms $\tilde\omega(z)$ satisfying \eqref{nonlinearGH}. 
Conversely, the study of $n$-dimensional Calabi-Yau manifolds $(X, \omega, \Omega)$ with a free $S^1$-action is reduced to the study of the equation \eqref{nonlinearGH}. 

Now we make a few observations. First, when $n=2$, equation \eqref{nonlinearGH} is reduced to a linear equation. Notice that $\frac{\sq}{2}\Omega_D\wedge\bar\Omega_D$ is a flat K\"ahler form when $n=2$. Then we can write 
\begin{equation}
\tilde\omega=\frac{\sq}{2}\cdot  V \cdot \Omega_D\wedge\bar\Omega_D,
\end{equation}
for a real-valued function $V$ on $Q=D\times I$. Then equation \eqref{volumeformequation} is equivalent to 
\begin{equation} \label{3dGH}
\p_z^2V-\Delta_D V=0,
\end{equation}
where $\Delta_D=d_D^*d_D$ is the Hodge Laplace operator with respect to the above flat metric on $D$. Equation \eqref{3dGH} is now exactly the Laplace equation on $Q$, and the above discussion is reduced to the classical Gibbons-Hawking ansatz which produces hyperk\"ahler 4-manifolds. The minor  difference is that here we have made a distinguished choice of the complex structure so that $Q$  naturally splits as $D\times I$. 

When $n>2$, \eqref{nonlinearGH} is a non-linear equation, which is much harder to deal with. We  call \eqref{nonlinearGH} the \emph{non-linear Gibbons-Hawking ansatz} for the Calabi-Yau metrics with $S^1$-symmetry This equation was first written down by Matessi \cite{Matessi}, in a slightly different form.

\subsection{Calabi model spaces}
\label{ss:Calabi model space}
In general it is not easy to directly solve equation \eqref{nonlinearGH}, but we can easily see some special solutions, which will be important for us. 

Suppose that $(D, \Omega_D)$ is  an $(n-1)$-dimensional compact Calabi-Yau manifold, and $\omega_D$ is a Calabi-Yau metric on $D$ with $[\omega_D]\in 2\pi H^2(D;\Z)$, satisfying 
\begin{equation}
\frac{\omega_D^{n-1}}{(n-1)!}=\frac{(\sq)^{(n-1)^2}}{2^{n-1}} \Omega_D\wedge\bar\Omega_D.
\end{equation}
If we set
\begin{equation}
\label{e:Calabi model solution}
\begin{cases}
\tilde\omega(z)=z\cdot \omega_D \\
h=z^{n-1},
\end{cases}
\end{equation}
then as long as $z>0$, $(\tilde\omega, h)$ clearly satisfies \eqref{nonlinearGH} and the integrality condition $[\p_z\tilde\omega(z)]\in 2\pi H^2(D;\Z)$ is also achieved. So the above gives incomplete Calabi-Yau metrics in dimension $n$. 

This metric has already appeared in K\"ahler geometry, which is usually expressed in terms of a K\"ahler potential. To explain this, 
we fix a holomorphic line bundle $L_D$ with first Chern class $\frac{1}{2\pi} [\omega_D]$, and also fix a hermitian metric on $L_D$ whose curvature form is $-\sq\omega_D$. Then we consider the subset $\mathcal{C}^n$ of the total space of $L_D$ consisting of all elements $\xi$ with $0<|\xi|< 1$. It is endowed with a holomorphic volume form $\Omega_{\mathcal{C}^n}$ and a Ricci-flat K\"ahler metric $\omega_{\mathcal{C}^n}$ which is incomplete as $|\xi| \to 1$ and complete as $|\xi| \to 0$. The holomorphic volume form $\Omega_{\mathcal{C}^n}$ is given by (as in Section \ref{ss:complex-geometry})
\begin{equation}
\Omega_{\mathcal{C}^n}=\sq\cdot\frac{d\xi}{\xi}\wedge \Omega_D.
\end{equation}
The metric  $\omega_{\mathcal{C}^n}$ is given by the \emph{Calabi ansatz}
 \begin{equation}\label{calabiansatz}\omega_{\mathcal{C}^n}=\frac{n}{n+1} \sq\p\bp (-{\log |\xi|^2})^{\frac{n+1}{n}}. \end{equation}
It is straightforward to check that
 \begin{equation}\omega_{\mathcal{C}^n}^n=\frac{1}{n\cdot 2^{n-1}}(\sq)^{n^2} \Omega_{\mathcal{C}^n}\wedge\overline\Omega_{\mathcal{C}^n}.\end{equation}
Clearly the Calabi-Yau structure $(\omega_{\mathcal{C}^n}, \Omega_{\mathcal{C}^n})$ is invariant under the natural $S^1$-action on $L_D$. Applying the $S^1$-reduction as in Section \ref{ss:dimension reduction}, we get that the moment map is given by
\begin{equation}
z=(-{\log |\xi|^2})^{1/n},
\end{equation}
and the reduced family of K\"ahler metrics on $D$ is given by 
\begin{equation}
\tilde\omega=z\cdot\omega_D.
\end{equation}
The function $h$ is 
\begin{equation}
h=\frac{2}{n}\cdot z^n.
\end{equation}
So we see this  gives rise to the above solution to \eqref{e:Calabi model solution} (up to a multiplicative constant on $h$), We call the space $(\mathcal{C}^n, \omega_{\mathcal{C}^n}, \Omega_{\mathcal{C}^n})$ a \emph{Calabi model space}. In  Remark \ref{r:Calabi model potential} we will see the formula \eqref{calabiansatz} can also be recovered from \eqref{e:Calabi model solution}, and this works in a more general situation. 

Now in the above Calabi ansatz the connection 1-form $-\sqrt{-1}\Theta$ is given by the Chern connection 1-form on $L_D$. We claim that by varying the holomorphic structures on $L_D$ we obtain all possible gauge equivalence classes of $\Theta$. This follows from the fact that there is a natural isomorphism between the group  $\mathcal S_h$ of  the isomorphism classes of holomorphic line bundles with $c_1=0\in H^2(D;\R)$ and the group $\mathcal S_f$ of  gauge equivalence classes of flat $U(1)$ connections on $D$. Abstractly, we know the first group fits into an exact sequence
\begin{equation}
0\rightarrow \frac{H^1(D;\cO)}{H^1(D;\Z)}\rightarrow \mathcal S_h\rightarrow H^2_{\text{tor}}\rightarrow 0
\end{equation}
obtained from the exponential sequence on $D$, 
where $H^2_{tor}$ denotes the torsion subgroup in $H^2(D;\Z)$. The second group is given by $\text{Hom}(\pi_1(D), S^1)=\text{Hom}(H_1(D; \Z), S^1)$, and it
 fits into a short exact sequence
\begin{equation}
0\rightarrow \frac{H^1(D; \R)}{H^1(D;\Z)}\rightarrow \mathcal S_f\rightarrow \text{Hom}(H_{1, \text{tor}}, S^1)\rightarrow 0 ,
\end{equation}
where $H_{1, tor}$ is the torsion subgroup in $H_1(D; \Z)$. The isomorphism between $\mathcal S_h$ and $\mathcal S_f$ induces an isomorphism on the torsion quotients, which coincides with the isomorphism\begin{equation}
H^2_{\text{tor}}\simeq \text{Ext}(H_1(D;\Z), \Z)\simeq \text{Hom}(H_{1, tor}, S^1)
\end{equation}
given by the universal coefficient theorem. 

We mentioned above that gauge equivalent choices of the connection 1-form $-\sqrt{-1}\Theta$ yield isomorphic Calabi-Yau structures on $\mathcal{C}^n$. Now we observe that for different choices of gauge equivalence classes which differ only by an element in the identity component of $\mathcal S_f$, the resulting Calabi-Yau structures are also isomorphic, via a diffeomorphism that covers a holomorphic isometry on $D$.

To see this, let us fix a choice of $\Theta$. Then given any vector field $Z$ on $D$, let $\hat{Z}$ be the horizontal lift of $Z$ to the $S^1$-bundle with respect to the connection form $-\sqrt{-1}\Theta$. The infinitesimal variation of $\Theta$ along the flow of $\hat{Z}$ is $\mathcal  L_{\hat{Z}} \Theta=\hat{Z}\lrcorner d\Theta$, which is the pull-back of the form  $Z\lrcorner \omega_D$ on $D$. 
For a different choice $\Theta'$, modulo gauge equivalence we may assume $\Theta'-\Theta$ is a harmonic 1-form on $D$, so it must be parallel by Bochner's theorem. Let $Z$ be the vector field on $D$ that satisfies $Z\lrcorner \omega_D=\Theta'-\Theta$. Then $Z$ is also parallel, so $Z$ is holomorphic and Killing. In particular, $\mathcal{L}_{Z}\omega_D=0$.  Then it follows that $\Theta'$ and $\Theta$ are related by the flow of the lifted vector field $\hat{Z}$.

\subsection{Two dimensional standard model spaces}
\label{ss:2d standard model}
In the Gibbons-Hawking ansatz, to get non-trivial topology one often needs to allow the $S^1$-action to have fixed points. This corresponds to the harmonic function $V$ having  Dirac type singularities. For the convenience of later discussions, let us briefly recall the relevant formulae in this model situation, using our  description with a preferred complex structure.

Let $\C^2$ be equipped with the standard holomorphic coordinates $(u_1, u_2)$  and the flat K\"ahler metric 
\begin{equation}
\begin{cases}
\omega_{\C^2}=\frac{\sq}{2}(du_1\wedge d\bar u_1+du_2\wedge d\bar u_2)\\
\Omega_{\C^2}=du_1\wedge du_2.
\end{cases}
\end{equation}
Consider the $S^1$-action on $\C^2$,
\begin{equation}e^{\sq t}\cdot (u_1, u_2)\equiv(e^{-\sq t}u_1, e^{\sq t}u_2)\label{e:2D-model-action}
\end{equation}
with an infinitesimal generator
\begin{equation}\p_t=-\sq(u_1\p_{u_1}-u_2\p_{u_2})+\sq(\bar u_1\p_{\bar u_1}-\bar u_2\p_{\bar u_2}).
\end{equation}
Then we have a moment map $z$ for the $S^1$-action with respect to $\omega_{\C^2}$ and a complex moment map 
$y$ for the complexified $\C^*$-action with respect to $\Omega_{\C^2}$. So we obtain the standard Hopf map \begin{align}\pi: \C^2\rightarrow Q_0\equiv \C\times \R,\quad (u_1,u_2)\mapsto (y, z),\end{align} where 
\begin{equation}
\begin{cases}
z=\frac{1}{2}(|u_1|^2-|u_2|^2)\\
y=u_1u_2.
\end{cases}
\end{equation}
Then the holomorphic quotient is $D_0=\C$ with holomorphic coordinate $y=y_1+\sq y_2$. Let us define \begin{equation} \label{modelquantities}
\begin{cases}
\tilde\omega_0\equiv\frac{\sq}{4r}dy\wedge d\bar y\\
\Omega_0\equiv dy\\
h_0\equiv\frac{1}{2r}\\
V_0\equiv\frac{1}{2r},
\end{cases}
\end{equation}
where $r=\sqrt{y_1^2+y_2^2+z^2}$ is the standard radial function on $Q_0$, and we have the relation 
\begin{equation}
r=\frac{1}{2}(|u_1|^2+|u_2|^2).
\end{equation}
The connection 1-form $-\sqrt{-1}\Theta_0$ on $\C^2$ can also be written down explicitly as
\begin{equation}
\Theta_0=h_0 \cdot J_0(dz)=\sq\cdot \frac{u_1d\bar u_1-\bar u_1 du_1+\bar u_2du_2-u_2d\bar u_2}{2(|u_1|^2+|u_2|^2)}.\label{e:model-connection}
\end{equation}
Let us define the curvature 2-form on $Q_0$,
\begin{equation}
\Upsilon_0\equiv \p_z\tilde\omega_0-dz\wedge d_{\C}^ch_0.
\end{equation}
So we have that
\begin{equation}\label{modelcurvature}
\Upsilon_0=-\frac{\sq}{4r^3} (zdy\wedge d\bar y+yd\bar y\wedge dz-\bar y dy\wedge  dz).
\end{equation}
From our above discussion we have the following holds
\begin{equation}
\label{e:d Theta0}
\begin{cases}d\Theta_0=\Upsilon_0\\
\tilde \omega_0+dz\wedge \Theta_0=\omega_{\C^2},
\end{cases}
\end{equation}
where we have implicitly viewed a form on $Q_0$ as a form on $\C^2$ using  the pull-back $\pi^*$. In other words, the flat metric on $\C^2$ together with the above $S^1$-action can be recovered via the Gibbons-Hawking ansatz applied to the function $V_0=\frac{1}{2r}$ on $Q_0=\C\times \R$. 

The above flat metric admits a one-parameter non-flat perturbation, which corresponds to  replacing $V_0$ in the Gibbons-Hawking ansatz by $V_0+\lambda$ for a positive constant $\lambda$ . We define
\begin{equation}
\begin{cases}
\tilde\omega_{0, \lambda}\equiv (\frac{1}{2r}+\lambda)\frac{\sq}{2}dy\wedge d\bar y\\
h_{0, \lambda}\equiv\frac{1}{2r}+\lambda,\\
\end{cases}
\end{equation}
which yields a family of \emph{Taub-NUT metrics} $(\omega_{TN, \lambda}, \Omega_{TN, \lambda})$ on $\R^4$ with 
\begin{equation}
\label{Taub-NUT}
\begin{cases}\omega_{TN, \lambda}\equiv (\frac{1}{2r}+\lambda)\frac{\sq}{2}dy\wedge d\bar y+dz\wedge \Theta_0\\
\Omega_{TN, \lambda}\equiv \sq ((\frac{1}{2r}+\lambda)dz+\Theta_0)\wedge dy.
\end{cases}
\end{equation}
We can still view these metrics as defined on $\R^4$ with coordinates $u_1, u_2, \bar u_1, \bar u_2$ via the above Hopf map, but the coordinate functions $u_1, u_2$ are no longer holomorphic. Indeed one can write down explicitly the holomorphic volume form 
\begin{equation}
\Omega_{TN, \lambda}=du_1\wedge du_2+\frac{\lambda}{2}dz\wedge dy.
\end{equation}
We also have 
\begin{equation}\label{e: modelholomorphic1form}
h_{0, \lambda}\cdot dz+\sq \Theta_{0}=\lambda\cdot dz+\frac{1}{2}(\frac{du_1}{u_1}-\frac{du_2}{u_2}).
\end{equation}
LeBrun \cite{LeBrun} showed that if we make a (non-holomorphic) coordinate change on $\C^2$,
\begin{equation}
\begin{cases}
\eta_+=u_1e^{\frac{\lambda}{2}(|u_1|^2-|u_2|^2)}\\
\eta_-=u_2e^{\frac{\lambda}{2}(|u_2|^2-|u_1|^2)},
\end{cases}
\end{equation}
then we have 
$\Omega_{TN, \lambda}=d\eta_+\wedge d\eta_-$. 
So the underlying complex manifold is still bi-holomorphic to $\C^2$ with holomorphic coordinates $\eta_{+}$ and $\eta_-$, and one can write down a global K\"ahler potential 
\begin{equation}
\label{e:TaubNUT potential}
\begin{cases}
\omega_{TN, \lambda}=\sq \p\bp \varphi_{\lambda}
\\ 
\varphi_{\lambda}=\frac{1}{2}(|u_1|^2+|u_2|^2)+\frac{T}{4}(|u_1|^4+|u_2|^4).
\end{cases}
\end{equation}
 Again in  Remark \ref{r:TaubNUT potential} we will see this follows from a more  general fact.

\subsection{Linearized equation and singularities}
\label{ss:linearized equation}
Now we return to the higher dimensional situation. One interesting local model is given by the product of a flat space $\C^{n-2}$ with trivial $S^1$-action and the above 2-dimensional model $(\C^2, \omega_{\C^2}, \Omega_{\C^2})$. Let $Q_0\equiv \dC\times\dR$ and $Q\equiv\C^{n-2}\times Q_0$. Then the singular set of $\tilde\omega$ and $h$ is a real codimension-3 subspace $P=\C^{n-2}\oplus\{0\}\subset Q$, and they both have transversal Dirac type singularities along $P$.   In our applications, we need to consider the non-linear situation. So $D$ is an $n-1$ dimensional complex manifold and $H\subset D$ is a smooth complex hypersurface, and we want our solution $(\tilde\omega, h)$ to the equation \eqref{nonlinearGH} to satisfy a distributional equation on $Q$ of the form 
\begin{equation} \label{distributionequation}
\Big(\p_z^2\tilde\omega+d_Dd_D^c\frac{2^{n-1}\tilde\omega^{n-1}}{(\sq)^{(n-1)^2}\Omega_D\wedge\bar\Omega_D}\Big)\wedge dz=2\pi \cdot \delta_{P},
\end{equation}
where $P\equiv H\times\{0\}$ and $\delta_P$ is a degree $3$ current given by integration along $P$.  This equation has appeared in the literature \cite{Zharkov} in a slightly different form. A solution  to this equation, with suitable regularity, will give rise to a Calabi-Yau metric with an $S^1$-action whose fixed point locus is a complex codimension-2 submanifold and transverse to which the action is given the model \eqref{e:2D-model-action}.  This is exactly what we are motivated to search for from the algebro-geometric discussion at the beginning of this section. 

Unfortunately, solving the equation \eqref{distributionequation} in general seems very difficult.
Motivated by recent results in the study of adiabatic limits of $G_2$-manifolds \cite{Don, FHN}, we attempt to study the equation when the $S^1$-orbit is very small. Again suppose $(D, \omega_D, \Omega_D)$ is $(n-1)$-dimensional Calabi-Yau, then for $T$ large we know there are trivial constant solutions to \eqref{nonlinearGH}with $\tilde\omega=T\omega_D$ and $h=T^{n-1}$. Now we look for a perturbation $\tilde\omega=T\omega_D+\psi$ for $T$ large. To the first order we know $\psi$ must satisfy the linearized equation at $T\omega_D$, hence 
\begin{equation}
\p_z^2\psi+T^{n-2}d_Dd_D^c \Big(\Tr_{\omega_D}\psi\Big)=0, 
\end{equation}
which by K\"ahler identities is equivalent to 
\begin{equation}
\p_z^2\psi-T^{n-2}d_Dd_D^*\psi=0.
\end{equation}
Up to a scaling of the $z$ variable this is equivalent to the equation 
\begin{equation}
\p_z^2\psi-d_Dd_D^*\psi=0. 
\end{equation}
If $\psi$ also satisfies $d_D\psi=0$,  then  $\psi\wedge dz$ is a harmonic $3$-form on the product $Q=D\times \R_z$. Again, the interesting case is when $\psi$ has singularities, and we want to study the case when the singular set is of the form $H\times \{0\}\subset Q$ for $H$ a smooth hypersurface in $D$, and correspondingly $\psi\wedge dz$ satisfies 
\begin{equation}
\Delta_Q (\psi \wedge dz) = 2\pi\cdot \delta_P.
\end{equation}
This is a generalization of Green's functions to 3-forms and we call $\psi \wedge dz$  a \emph{Green's current}, which is our main object of study in Section \ref{s:Greens-currents}. 

When $n=2$, the above Green's current is simply a Green's function which has been used in \cite{HSVZ} to obtain  exact solutions to a family of incomplete Calabi-Yau metrics by Gibbons-Hawking construction. In higher dimensions, using Green's currents, we can apply \eqref{omegahequation} to define a family of approximately Calabi-Yau metrics.  This is our main object  in Section \ref{s:neck}.

\subsection{Higher rank torus symmetry}\label{ss:higher rank torus}

Now we assume that an $n$-dimensional K\"ahler manifold $(X, \omega, J)$ admits an $T^k$-action which is holomorphic and Hamiltonian. We first assume the action is free. Let $(z_1, \cdots, z_k)$ be the moment map. Then similar discussion to that in Section \ref{ss:dimension reduction} yields locally a family of K\"ahler forms $\tilde\omega$ on the complex quotient, parametrized by $(z_1, \ldots z_k)\in \R^k$, a family of connection $1$-forms $-\sq\Theta_j(j=1, \cdots, k)$ and a positive definite  $k\times k$ real symmetric matrix $W=(W_{ij})$ with the inverse matrix
$W^{ij}=\langle \p_{t_i}, \p_{t_j}\rangle$  such that the following system of equations hold 
\begin{equation}
\begin{cases}\p_{z_j}\tilde\omega = d_D\Theta_j
\\
\p_{z_j}\Theta_i=-d^c_D W_{ij}
\\
\p_{z_l}W_{ij}=\p_{z_j}W_{il}.
\end{cases}\end{equation}
 As before the first two equations combine to give an equation on $(\tilde\omega, W_{ij})$
\begin{equation}
\label{e:higher Tk}
\p_{z_i}\p_{z_j}\tilde\omega+d_Dd_D^cW_{ij}=0. 
\end{equation}
Now suppose the complex quotient $D$ is Calabi-Yau with a holomorphic volume form $\Omega_D$, then the Calabi-Yau equation on $X$ becomes 

\begin{equation}
\frac{\tilde{\omega}^{n-k}}{(n-k)!}=\frac{(\sq)^{(n-k)^2}}{2^{n-k}} \cdot  \det(W_{ij}) \cdot \Omega_D\wedge\bar\Omega_D.
\end{equation}
This equation has been derived by Matessi and Zharkov (see \cite{Matessi}, \cite{Zharkov}). Again when the $T^k$-action is not free one should replace \eqref{e:higher Tk} by a distributional equation. We will discuss a simple example in Section \ref{ss:general-situations}.  In the most extreme case when $k=n$ is the complex dimension of $X$, this becomes the real Monge-Amp\`ere equation 
$\det(W_{ij})=C$.

\section{Green's currents}
\label{s:Greens-currents}
In this section, we study in detail some existence and regularity theory of {\it Green's currents}. Our main motivation for studying these arises from Section \ref{s:torus-symmetries}, where we see that the Green's currents appear as Dirac type singular solutions to the linearization of dimension reduced Calabi-Yau equation by   the $S^1$-symmetry. It is possible that this study will also have applications to other geometric problems, especially to those concerning adiabatic limits. 

This section is organized as follows. In Section \ref{ss:geodesic-coordinates} we recall the generalized geodesic normal coordinates for an embedded submanifold. In Section \ref{ss:real-case} we discuss the definition, local existence and regularity properties of Green's currents in the general Riemannian setting. In Section \ref{ss:complex-greens-currents}, we refine these results in the special case related to K\"ahler geometry. In Section \ref{ss:global-existence}, we  prove a global existence result which will be immediately used in Section \ref{s:neck}.

\subsection{Normal coordinates for an embedded submanifold}\label{ss:geodesic-coordinates}

We start our discussion by introducing the basic notions of the {\it normal exponential map} and {\it normal coordinates} with respect to an embedded submanifold. 
This part seems to be standard in Riemannian geometry, in the language of Fermi coordinates (see \cite{Gray} for example). Since we cannot find a reference for the precise formulae we will need, we include detailed 
discussions and proofs in this section. 

Let $(Q,g)$ be an oriented Riemannian manifold of dimension $m$ and let $P\subset Q$ be a closed embedded oriented submanifold of codimension $k_0$ in $Q$.
In our later applications, we only need the case $k_0=3$.    
Denote by ${N}$ the normal bundle of $P$ in $Q$, equipped with the induced fiberwise Riemannian inner products. For any $p\in P$, denote by $N(p)$ the fiber of $N$ over $p$. 
The {\it normal exponential map} of $P$ in $Q$ is defined as 
\begin{equation}\Exp_P: {N}\rightarrow Q,\ (p, v)\mapsto \Exp_p(v),\ v\in N(p),\end{equation}
where $\Exp_p:T_pQ\to Q$ is the usual exponential map at $p\in Q$.
By implicit function theorem, we know that $\Exp_P: {N}\to Q$ is a diffeomorphism from some neighborhood of the zero section in ${N}$ to some tubular neighborhood of  $P$ in $Q$.

Now we define the {\it normal coordinates} for $P$.  
Fix $p\in P$. First we choose local  coordinates $\{x_1', \ldots, x_{m-k_0}'\}$ in a small neighborhood $U\subset P$ of $p$ such that at $p$
\begin{equation}\langle \p_{x_i'}, \p_{x_j'}\rangle=\delta_{ij},\  1\leq i,j\leq m-k_0.\end{equation}  We also assume that $\p_{x_1'}\wedge\cdots\wedge \p_{x_{m-k_0}'}$ is compatible with the orientation on $U$. 
Next, we pick local orthonormal sections $\{e_1,  \ldots, e_{k_0}\}$ of the normal bundle $N$
such that 
\begin{equation}
\langle e_{\alpha}, e_{\beta}\rangle=\delta_{\alpha\beta}, \ 1\leq \alpha,\beta\leq k_0,
\end{equation}
on $U$. Again we assume that $e_1\wedge\cdots\wedge e_{k_0}$ is compatible with the orientation on $N$, i.e., $e_1\wedge \cdots \wedge e_{k_0}\wedge \p_{x_1'}\wedge\cdots \p_{x_{m-k_0}'}$ is compatible with the orientation on $Q$. Then we can find $\epsilon>0$ such that $\Exp_p$ restricts to a diffeomorphism from 
\begin{equation}\fS_0=\{(q, v)\in N|q\in U,\ |v|<\epsilon\}\end{equation}
to a neighborhood $\mathcal U$ of $p$. In particular, $U=\mathcal U\cap P$. 

\begin{definition}[Normal coordinates] For any $(q,v)\in \mathfrak{S}_0$ with $v=\sum\limits_{\alpha=1}^{k_0}v_{\alpha}e_{\alpha}$, the local normal coordinates are defined as follows 
\begin{align}
\begin{cases}
x_j(\Exp_P(q,v))\equiv x_j'(q), & 1\leq j\leq m-k_0, 
\\
y_{\beta}(\Exp_P(q,v))\equiv v_{\beta}, & 1\leq\beta\leq k_0. 
\end{cases}
\end{align}
\end{definition}
By definition for each fixed point $(y_1,\ldots, y_{k_0},x_1,\ldots, x_{m-k_0})$ in $\mathcal{U}$, the curve \begin{equation}\vartheta(t)\equiv  (ty_1,\ldots, ty_{k_0},x_1,\ldots, x_{m-k_0}),\label{e:normal-geodesic}\end{equation} is a {\it normal geodesic} which is orthogonal to $U\subset P$.
Let 
\begin{equation}
\p_{y_1},\ldots, \p_{y_{k_0}}, \p_{x_1},\ldots, \p_{x_{m-k_0}}
\end{equation}
be the induced coordinate vector fields. Then  $\p_{y_\alpha}|_{y=0}=e_\alpha$ when both are viewed as sections of $N$ over $U$. 
For $1\leq i,j\leq m-k_0$
and $1\leq\alpha\leq k_0$, we denote 
\begin{equation}g_{ij}\equiv\langle\p_{x_i}, \p_{x_j}\rangle,\ g_{i\alpha}\equiv\langle\p_{x_i}, \p_{y_\alpha}\rangle, \ g_{\alpha\beta}\equiv\langle\p_{y_\alpha}, \p_{y_\beta}\rangle.\end{equation}
Obviously we have 
\begin{equation}g_{ij}(0, 0)=\delta_{ij},\   g_{\alpha\beta}(x, 0)=\delta_{\alpha\beta},\ g_{i\alpha}(x,0)=0,\end{equation}
 for all $x\in U\subset P$. 
The second fundamental form  of the embedding $U\hookrightarrow\mathcal U$ can be written as $\IIs=\IIs_{ij}^{\alpha} \p_{y_\alpha}\otimes (dx_i\otimes dx_j)$, where 
\begin{equation}\IIs^\alpha_{ij}\equiv\langle\nabla_{\p_{x_i}}e_\alpha, \p_{x_j}\rangle\end{equation}
Denote by $\overrightarrow{H}=H^{\alpha}e_{\alpha}$ the mean curvature vector. Then we have  
$H^\alpha\equiv g^{ij}\IIs^\alpha_{ij}$.

Let us define the function $r$ 
in the normal coordinates as follows, \begin{equation}r(x,y)\equiv|y|=\Big(\sum\limits_{\alpha=1}^{k_0} y_{\alpha}^2\Big)^{\frac{1}{2}}.\label{e:normal-distance-function}\end{equation}
  The following lemma gives an extension of the usual Gauss Lemma.
  
   \begin{lemma}[Generalized Gauss Lemma] \label{l:generalized-Gauss} In the above notations, for any $p\in P$, let $\{x_i, y_{\alpha}\}$ be a normal coordinate system defined in  a neighborhood  $\mathcal{U}$ of $p$ in $Q$ with $U=\mathcal{U}\cap P$.  
Let $r$ be the function defined by \eqref{e:normal-distance-function}. Then $r$ satisfies the following properties in $\mathcal{U}$:  
 \begin{enumerate}
 \item $\nabla r = \partial_r$ holds in $\mathcal{U}\setminus U$. In particular,  $r$ is the normal distance function
  in $\mathcal{U}$, i.e., $r(q)=d(q, P)$ for all $q\in \mathcal{U}$.

 \item $\partial_r$ is orthogonal to $\p_{x_i}$'s   in  $\mathcal{U}\subset Q$, and hence \begin{equation}\label{Gausslemma}
\langle\partial_{x_i},r\partial_r\rangle=\langle \partial_{x_i}, \sum_{\alpha=1}^{k_0} y_\alpha \p_{y_\alpha}\rangle=\sum_{\alpha=1}^{k_0} y_\alpha g_{i\alpha}=0, \ 1\leq  i\leq m-k_0.
\end{equation}

 \end{enumerate}
\end{lemma}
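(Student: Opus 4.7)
The plan is to introduce the Euler vector field $R \equiv \sum_{\alpha=1}^{k_0} y_\alpha \partial_{y_\alpha}$ in the normal coordinates (which equals $r\partial_r$ away from $P$) and reduce both assertions to two Gauss-type radial identities. By construction, for each $x\in U$ and each unit vector $v\in\dR^{k_0}$ the curve $\vartheta(t) = (x, tv)$ is a unit-speed geodesic orthogonal to $P$ at $t=0$; writing $T \equiv \dot\vartheta$ and noting that $R|_{\vartheta(t)} = tT$, the geodesic equation $\nabla_T T = 0$ together with unit speed immediately yields the preliminary identities
\begin{equation}
\nabla_R R = R, \qquad |R|^2 = r^2
\end{equation}
throughout $\mathcal{U}$.

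For part (2), set $f_i \equiv \langle \partial_{x_i}, R\rangle = \sum_\alpha y_\alpha g_{i\alpha}$ and exploit the identity $[\partial_{x_i}, R] = 0$, which holds because the coefficients of $R$ do not involve $x$. Combined with torsion-freeness of $\nabla$ this gives $\nabla_R \partial_{x_i} = \nabla_{\partial_{x_i}} R$, so
\begin{equation}
R(f_i) = \langle \nabla_{\partial_{x_i}} R, R\rangle + \langle \partial_{x_i}, \nabla_R R\rangle = \tfrac{1}{2}\partial_{x_i}|R|^2 + f_i = f_i.
\end{equation}
Along each normal geodesic this reads $r\partial_r f_i = f_i$, which forces $f_i$ to be linear in $r$. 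But $g_{i\alpha}(x,0) = 0$ already yields $f_i = O(r^2)$, and the only function that is both linear in $r$ and $O(r^2)$ is the zero function. This establishes \eqref{Gausslemma} and hence part (2).

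For part (1), a parallel argument applied to $h_\alpha \equiv \langle R, \partial_{y_\alpha}\rangle - y_\alpha$ using $[R, \partial_{y_\alpha}] = -\partial_{y_\alpha}$ together with $\nabla_R R = R$ and $|R|^2 = r^2$ yields $R(h_\alpha) = 0$; combined with the initial condition $h_\alpha(x,0) = 0$, which follows from $g_{\alpha\beta}(x,0) = \delta_{\alpha\beta}$, this gives $\langle R, \partial_{y_\alpha}\rangle = y_\alpha$ throughout $\mathcal{U}$. Feeding this and part (2) into $g(\partial_r, \cdot) = \tfrac{1}{r} g(R, \cdot)$ shows $g(\partial_r, V) = dr(V)$ for every coordinate vector $V$, whence $\nabla r = \partial_r$ on $\mathcal{U}\setminus U$. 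Since then $|\nabla r| = 1$ and $r$ vanishes on $P$, the standard argument comparing the length of any curve to the total variation of $r$ along it, together with the normal geodesic as an explicit minimizer, identifies $r$ with the normal distance $d(\cdot,P)$ in $\mathcal{U}$. The only conceptual subtlety is the systematic passage from the non-coordinate field $\partial_r$ (which is not even defined on $P$) to the coordinate-linear Euler field $R$: the commutator identities $[\partial_{x_i}, R] = 0$ and $[R, \partial_{y_\alpha}] = -\partial_{y_\alpha}$ are what reduce the whole statement to two elementary ODEs along normal geodesics, with the base coordinates $x$ playing a purely passive role.
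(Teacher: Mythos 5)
Your proof is correct. There is nothing in the paper to compare it against: the authors explicitly omit the argument ("straightforward computations"), and their surrounding machinery (e.g.\ the Jacobi-field derivation of Lemma \ref{l:metric-expansion}) suggests they had in mind the classical exponential-map/first-variation computation behind the usual Gauss lemma. Your route through the Euler field $R=\sum_\alpha y_\alpha\p_{y_\alpha}$ is a clean, self-contained alternative and checks out line by line: the unit-speed normal geodesics give $\nabla_R R=R$ and $|R|^2=r^2$, the commutators $[\p_{x_i},R]=0$ and $[R,\p_{y_\alpha}]=-\p_{y_\alpha}$ convert the two assertions into the radial ODEs $R(f_i)=f_i$ and $R(h_\alpha)=0$, and the initial data ($g_{i\alpha}(x,0)=0$, respectively the vanishing of the relevant quantities along $U$) force $f_i\equiv 0$ and $h_\alpha\equiv 0$, which together say $g(R,\cdot)=r\,dr$ and hence $\nabla r=\p_r$ off $U$. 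Two cosmetic remarks only. First, the initial condition $h_\alpha(x,0)=0$ is immediate because $R$ vanishes identically on $U$; the appeal to $g_{\alpha\beta}(x,0)=\delta_{\alpha\beta}$ is superfluous (though harmless). Second, in the final step $r(q)=d(q,P)$ the competing curve may leave $\mathcal U$, where the coordinate function $r$ is undefined; this is closed by the standard tubular-neighborhood argument using the globally defined normal radius on $\Exp_P(\{|v|<\epsilon\})$ (a curve exiting the tube must traverse at least the level $\{r=\epsilon\}$ before returning to $P$, so it is longer than the radial geodesic), possibly after shrinking $\mathcal U$ -- a point worth one sentence, but certainly within the scope of what the authors call straightforward.
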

The proof is by straightforward computations, so we omit the details. 

For  the convenience of later discussions, we introduce several notations concerning the {\it normal  regularity order} near the submanifold $P$. It will be frequently used throughout the paper. 
\begin{definition}[Normal regularity order] \label{d:normal-regularity}
Let  $T(x,y)$ be a tensor field locally defined on $\mathcal{U}$ which is $C^{\infty}$ on  $\mathcal U\setminus U$. Then for every $k\in\dN$, we define the following notion as $r\rightarrow 0$: 
\begin{enumerate}
\item
$T(x,y)=O'(r^k)$  if for each $\epsilon>0$
 \begin{align}
 \Big|\p_t^I\p_{n}^{J}T(x,y)\Big|=\begin{cases}
 O(1),  & |J|\leq k-1,\\
 O(r^{k-|J|-\epsilon}), & |J|\geq k,
 \end{cases}\end{align}
 for all multi-indices $I$ and $J$. Here we work on the normal coordinate system and take the usual derivatives on the coefficients of $T$. The tangential derivative $\p_t$ denotes one of the $\p_{x_j}$'s, and the normal derivative $\p_n$ denotes one of the $\p_{y_\alpha}$'s. 

\item $T(x,y)=r^kO'(1)$
 if $r^{-k}T(x,y)=O'(1)$. 
 \item 
$T(x,y) = \tO(r^k)$ if $T(x,y)\in C^{\infty}(\mathcal{U})$ and 
$T(x,y)=r^kO'(1)$. In other words,  $T(x,y)$ is smooth on $\mathcal{U}$
and has vanishing normal derivatives along $U$ up to order $k-1$.

\end{enumerate}
\end{definition}

 Notice that that defining condition does not depend on the choice of the local coordinates $\{y_\alpha, x_i\}$, as long as they satisfy that $y_\alpha=0$ along $P$ for $1\leq\alpha\leq k_0$. We also give the following instructive example. 
 \begin{example} \label{ex:regularity} In the above notations,  we have the following: 
 \begin{enumerate} 
 \item If $T=O'(1)$, then $T \in L^q(\mathcal{U})$ for any $q>1$.
 \item  If $T\in C^{\infty}(\mathcal{U})$, then  $T=O'(r^k)$ for all $k\in\dZ_+$.
 \item  If $T=r^k O'(1)$ for some $k\in\dZ_+$, then $T=O'(r^k)$ and $T$ vanishes along $U$. 

 \item  For any $k\in\dZ_+$ and $\alpha\in(0,1)$, if $T=O'(r^k)$, then $T$ is $C^{k-1, \alpha}(\mathcal{U})$.

 \end{enumerate}

 \end{example}

\begin{remark}
The purpose of introducing this notation is because we will frequently meet the terms such as $r^{-j}\cdot y_{\alpha_1}\cdots y_{\alpha_l}$ which naturally lie in $O'(r^{l-j})$. The reason to allow the defect of $\epsilon$ is because such tensors are well-behaved under the $W^{2,q}$-elliptic estimate in Lemma \ref{l:higher-regularity}. 
\end{remark}

The following metric tensor expansion will be used in our regularity analysis.
\begin{lemma} \label{l:metric-expansion}
In the above normal coordinates, we have the following expansions of the metric tensor $g$ of $Q$ along the normal directions,
\begin{align}
g_{\alpha\beta}|_{(x,y)}&=\delta_{\alpha\beta}-\frac{1}{3}\Rm_{\alpha\gamma\xi\beta}\Big|_{(x,0)}y_{\gamma} y_{\xi} +\tO(r^3),
\\
g_{ij}|_{(x, y)}&=g^P_{ij}(x)+2\IIs^\alpha_{ij}\Big|_{(x,0)} y_{\alpha} -(\Rm_{i\gamma \xi j}   -  \langle \nabla_{\p_{x_i}} \p_{y_\gamma}, \nabla_{\p_{x_j}} \p_{y_\xi}\rangle)\Big|_{(x,0)}y_\gamma y_\xi+\tO(r^3),
\\
g_{i\alpha}|_{(x,y)}&=\langle \nabla_{\p_{x_i}}\p_{y_\gamma}, \p_{y_\alpha} \rangle\Big|_{(x,0)} y_\gamma -\frac{2}{3}\Rm_{i\gamma\xi\alpha}\Big|_{(x,0)}y_{\gamma}y_{\xi}  + \tO(r^3), 
\end{align}
where $g^P=(g^P_{ij})$ is the restriction of the metric $g$ to $U\subset P$,  $\Rm$ is the Riemann curvature tensor of $g$.

\end{lemma}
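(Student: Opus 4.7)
The strategy is to Taylor-expand $g_{\alpha\beta}$, $g_{i\alpha}$, $g_{ij}$ along the normal geodesics emanating from $P$, using the Jacobi equation to control the second-order terms and the Gauss Lemma of Lemma \ref{l:generalized-Gauss} to rule out certain first-order terms. Since the normal coordinates are smooth in a tubular neighborhood of $P$, the remainders are automatically smooth, so it suffices to identify the coefficients up to $y^2$ to obtain the stated $\tO(r^3)$ error.

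First I would set up the appropriate Jacobi fields. Fix $x\in U$ and $v=\sum v_\alpha e_\alpha\in N(x)$, and consider the geodesic $\gamma_v(t)=\Exp_P(x,tv)$ with tangent $T=\dot\gamma_v$. By construction of the normal coordinates, $\partial_{y_\alpha}|_{\gamma_v(t)}$ is the variation field of the one-parameter family of geodesics $s\mapsto \Exp_P(x, t(v+se_\alpha))$, hence a Jacobi field along $\gamma_v$ with initial values $\partial_{y_\alpha}|_{t=0}=e_\alpha$ and $\nabla_T \partial_{y_\alpha}|_{t=0}=0$ (by symmetry of the geodesic variation and since $v$ is constant in $x$). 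Similarly, $\partial_{x_i}|_{\gamma_v(t)}$ is a Jacobi field with initial values $\partial_{x_i}|_{t=0}=\partial_{x_i'}|_x$ and $\nabla_T \partial_{x_i}|_{t=0}=\nabla_{\partial_{x_i}} \dot\gamma_v|_{t=0}=\sum v_\alpha \nabla_{\partial_{x_i}}e_\alpha$, a quantity encoded by the second fundamental form $\IIs$ (tangential projection) and the normal connection (normal projection).

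Next I would expand the three metric components. For $g_{\alpha\beta}$, the exact identity $g_{\alpha\beta}y_\beta = y_\alpha$ from the generalized Gauss Lemma forces the linear-in-$y$ term to vanish and constrains the quadratic term; differentiating $\langle J,J\rangle$ twice along $\gamma_v$ for $J=\partial_{y_\alpha}$ and using the Jacobi equation
\begin{equation}
\tfrac{d^2}{dt^2}\langle J,J\rangle = 2\langle \nabla_T J, \nabla_T J\rangle - 2\langle R(J,T)T, J\rangle
\end{equation}
at $t=0$, combined with the initial conditions above, yields the coefficient $-\tfrac{1}{3}\Rm_{\alpha\gamma\xi\beta}$ after symmetrization in $(\gamma,\xi)$, giving the first formula. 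For $g_{i\alpha}$, the Gauss Lemma $\sum y_\alpha g_{i\alpha}=0$ combined with $g_{i\alpha}(x,0)=0$ restricts the allowed terms; the linear coefficient is computed directly from $\partial_{y_\gamma}g_{i\alpha}|_{y=0}=\langle\nabla_{e_\gamma}\partial_{x_i},e_\alpha\rangle+\langle\partial_{x_i},\nabla_{e_\gamma}e_\alpha\rangle=\langle\nabla_{\partial_{x_i}}\partial_{y_\gamma},\partial_{y_\alpha}\rangle|_{y=0}$ (using $[\partial_{x_i},\partial_{y_\gamma}]=0$ and $\partial_{x_i}\perp e_\alpha$ on $P$), while the quadratic term is extracted from the polarized Jacobi equation for the mixed pair $(\partial_{x_i},\partial_{y_\alpha})$ and simplified to $-\tfrac{2}{3}\Rm_{i\gamma\xi\alpha}$ using the Gauss Lemma constraint. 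For $g_{ij}$, the linear term $2\IIs^\alpha_{ij}y_\alpha$ comes from $\partial_{y_\alpha}g_{ij}|_{y=0}=\langle\nabla_{e_\alpha}\partial_{x_i},\partial_{x_j}\rangle+\langle\partial_{x_i},\nabla_{e_\alpha}\partial_{x_j}\rangle = 2\IIs^\alpha_{ij}$ since $\partial_{x_i}$ restricted to $P$ are tangent vectors and $\nabla_{e_\alpha}\partial_{x_i}|_P$ has tangential projection equal to $\IIs^\alpha_{ij}$ contracted with the tangent basis. The quadratic term follows from evaluating $\tfrac{d^2}{dt^2}\langle\partial_{x_i},\partial_{x_j}\rangle|_{t=0}$ via the Jacobi equation and the initial derivatives computed above; the curvature contribution $-\Rm_{i\gamma\xi j}$ comes from the $R(J,T)T$ term while the $\langle\nabla_{\partial_{x_i}}\partial_{y_\gamma},\nabla_{\partial_{x_j}}\partial_{y_\xi}\rangle$ term comes from $\langle\nabla_T J_i,\nabla_T J_j\rangle$ at $t=0$.

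Finally I would verify the remainders lie in $\tO(r^3)$ in the sense of Definition \ref{d:normal-regularity}. Since the metric is smooth on $\mathcal U$, the third- and higher-order Taylor remainders in $y$ are smooth functions vanishing at order $3$ along $U$, hence belong to $\tO(r^3)$ by definition. The principal obstacle is the bookkeeping in the mixed case $g_{i\alpha}$: one must carefully separate the tangential and normal projections of $\nabla_{\partial_{x_i}} e_\alpha$ and correctly apply the Gauss Lemma constraint $y_\alpha g_{i\alpha}=0$ to eliminate spurious quadratic contributions so that only the stated curvature term remains. Everything else is a direct polarization of the standard Jacobi field expansion for the exponential map, adapted from a point to the normal exponential of $P$.
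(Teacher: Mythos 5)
Your overall route is the same as the paper's (Jacobi fields along the normal geodesics, plus the generalized Gauss lemma), but the mechanism you describe for extracting the quadratic coefficients in $g_{\alpha\beta}$ and $g_{i\alpha}$ breaks down. The variation field of $s\mapsto \Exp_P(x,t(v+se_\alpha))$ is $t\,\p_{y_\alpha}$, not $\p_{y_\alpha}$: in coordinates the variation has $y_\beta=t(v_\beta+s\delta_{\alpha\beta})$, so differentiating in $s$ produces the factor $t$. Hence $\p_{y_\alpha}$ itself is not a Jacobi field along $\gamma_v$ (it equals $J_\alpha(t)/t$ for the Jacobi field $J_\alpha(t)=t\,\p_{y_\alpha}$, which vanishes at $t=0$), and you are not entitled to insert $J=\p_{y_\alpha}$ into $\tfrac{d^2}{dt^2}\langle J,J\rangle=2|\nabla_TJ|^2-2\langle \Rm(J,T)T,J\rangle$. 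Worse, even if one formally did so with your initial data $J(0)=e_\alpha$, $\nabla_TJ(0)=0$, the second-derivative computation returns the quadratic coefficient $-\Rm_{\alpha\gamma\xi\beta}v_\gamma v_\xi$, i.e.\ with constant $1$ rather than $\tfrac13$; symmetrizing in $(\gamma,\xi)$ cannot manufacture the missing factor. The $\tfrac13$ (and the $\tfrac23$ in $g_{i\alpha}$) arise precisely because $J_\alpha(0)=0$: one must expand $\langle J_\alpha,J_\beta\rangle$ to fourth order in $t$ (resp.\ $\langle J_i,J_\alpha\rangle$ to third order), using $J_\alpha''(0)=0$ and $J_\alpha'''(0)=-\Rm(e_\alpha,v)v$ obtained by differentiating the Jacobi equation along $\gamma_v$, and only then divide by the scale factors, since $g_{\alpha\beta}=t^{-2}\langle J_\alpha,J_\beta\rangle$ and $g_{i\alpha}=t^{-1}\langle J_i,J_\alpha\rangle$. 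This is exactly how the paper's proof proceeds, and it is the step your proposal is missing.

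Your treatment of $g_{ij}$ is sound, because $\p_{x_i}$ genuinely is a Jacobi field along $\gamma_v$ with $J_i(0)=\p_{x_i}$ and $J_i'(0)=v_\alpha\nabla_{\p_{x_i}}\p_{y_\alpha}$; there the plain second-derivative computation gives the linear term $2\IIs^\alpha_{ij}v_\alpha$ and the quadratic term $-\big(\Rm_{i\gamma\xi j}-\langle\nabla_{\p_{x_i}}\p_{y_\gamma},\nabla_{\p_{x_j}}\p_{y_\xi}\rangle\big)v_\gamma v_\xi$, as stated. The Gauss-lemma identities you invoke ($g_{\alpha\beta}y_\beta=y_\alpha$, $g_{i\alpha}y_\alpha=0$) are correct and do kill the linear term of $g_{\alpha\beta}$, but they only constrain, and cannot determine, the quadratic coefficients; once the Jacobi-field expansion is carried out at the correct order they are not needed. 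So the gap to repair is the order of the Taylor expansion of the vanishing Jacobi fields, not the overall strategy.
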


\begin{proof}
This can be proved using the Jacobi fields. For any fixed point $q=(0^{k_0},x_1,\ldots, x_{m-k_0})\in U\subset P$, we choose a unit vector $v=\sum\limits_{\alpha=1}^{m-k_0}v_{\alpha}\p_{y_{\alpha}}\in N(q)\cong \R^{k_0}$ with $|v|=1$. Let $\vartheta$ be the  radial geodesic in $\mathcal{U}$ 
\begin{equation}\vartheta(t)\equiv \Exp_P(q,tv) = \Exp_q(tv).\end{equation}
In particular $\vartheta(0)=q$, and $\vartheta'(0)=v$. In the normal coordinates, the geodesic $\vartheta$ can  represented as 
$\vartheta(t)=(tv_1,\ldots, tv_{k_0}, x_1,\ldots,x_{m-k_0})$. 

For each $1\leq \alpha\leq k_0$ and $1\leq i\leq m - k_0$, we define the geodesic variations
\begin{align}
\sigma_{\alpha}(t,s) &\equiv ((tv_1,\ldots, t(v_{\alpha}+s), \ldots, tv_{k_0}, x_1,\ldots,x_{m-k_0}),  
\\
\sigma_i(t,s) &\equiv (tv_1,\ldots, tv_{k_0}, x_1, \ldots, x_i + s, \ldots x_{m-k_0}).
\end{align}
Then the variation fields of $\sigma_{\alpha}(t,s)$ and $\sigma_i(t,s)$ at $s=0$ give 
the following Jacobi fields along the radial geodesic $\vartheta(t)$ respectively:
\begin{align}
\begin{cases}
J_{\alpha}(t)=t\cdot  \p_{y_{\alpha}}, & 1\leq  \alpha\leq k_0
\\
J_{i}(t)= \p_{x_i}, & 1\leq i \leq m- k_0.
\end{cases}
\end{align}
By definition,
\begin{equation}
J_{\alpha}(0)=0,\ J_i(0)=\p_{x_i}.
\end{equation}
Taking covariant derivatives at $t=0$ we get
\begin{equation}
J_{\alpha}'(0) = \p_{y_{\alpha}},\ J_i'(0)=v_{\alpha}\nabla_{\p_{x_i}}\p_{y_{\alpha}}. 
\end{equation}
Then applying the Jacobi equation along the geodesic $\vartheta$
\begin{align}
 \begin{cases}
 J_{\alpha}''+\Rm(J_{\alpha},\vartheta')\vartheta'=0,\\
  J_{i}''+\Rm(J_{i},\vartheta')\vartheta'=0,\\
 \end{cases}
 \end{align}
 where $\Rm(X,Y)Z\equiv \nabla_X\nabla_YZ - \nabla_Y\nabla_X Z - \nabla_{[X,Y]}Z$ is the Riemann curvature tensor of $g$, we obtain 
\begin{align}
J''_\alpha(0)=0, \ J'''_\alpha(0)=-v_\gamma v_\xi \Rm(\p_{y_\alpha},\p_{y_\gamma})\p_{y_\xi},\\
J''_i(0)=-v_\gamma v_\xi \Rm(\p_{x_i},\p_{y_\gamma})\p_{y_\xi}.
\end{align}
Therefore, 
\begin{align}g_{\alpha\beta}&=t^{-2}g(J_\alpha, J_\beta)=\delta_{\alpha\beta}-\frac{1}{3}\Rm_{\alpha\gamma\xi\beta}v_\gamma v_\xi t^2+\tO(t^3),
\\
g_{ij}&=g(J_i, J_j)=g^P_{ij}+2 v_\alpha\IIs_{ij}^\alpha t-\Big(\Rm_{i\gamma\xi j}  - \langle \nabla_{\p_{x_i}} \p_{y_\gamma}, \nabla_{\p_{x_j}} \p_{y_\xi}\rangle\Big)v_\gamma v_\xi t^2+\tO(t^3),
\\
g_{i\alpha}&=t^{-1}g(J_i, J_\alpha)= \langle \nabla_{\p_{x_i}}\p_{y_\gamma}, \p_{y_\alpha}\rangle v_\gamma t-\frac{2}{3}\Rm_{i\gamma\xi\alpha}v_{\gamma}v_{\xi}t^2+\tO(t^3).\end{align}
Notice $y_{\alpha} = tv_{\alpha}$, and the conclusion follows.
\end{proof}

As a digression we briefly discuss the intrinsic meaning of the above expansion. The point is that locally the Riemannian metric $g$ is approximated by a Riemannian metric $g_N$ on the normal bundle $N$ up to the first order. Notice we have the natural projection $\pi: N\rightarrow P$ and $N$ is a Riemannian vector bundle together with an induced ``normal" connection, given by the normal component of the Levi-Civita connection of $g$. The latter hence gives rise to a distribution of horizontal subspaces at each point of $N$, which in our coordinates is spanned by $\p_{x_i}-A_{i\alpha\beta}y_\beta \p_{y_\alpha}$, where 
\begin{equation} \label{eqn2-10}
A_{i\alpha\beta}(x)\equiv\langle \nabla_{\p_{x_i}}\p_{y_\beta}, \p_{y_\alpha}\rangle|_{y=0}
\end{equation}
is a smooth function on $U$. 
 We define $g_N$ so that at each point of $N$, the vertical and horizontal subspaces are orthogonal and on the vertical part is given by the bundle metric on $N$, and on the horizontal part is given by the perturbation of the base metric $g_P$ using the second fundamental form. In this way we get a coordinate free description of the above expansion up to the first order.
 
 We also define 
\begin{equation}A_{ij\alpha\beta}\equiv \frac{1}{2}(\p_{x_i} A_{j\alpha\beta}-\p_{x_j}A_{i\alpha\beta}).\end{equation}
Then the curvature of the normal connection is given by 
\begin{equation}\Omega_{ij\alpha\beta}\equiv A_{ij\alpha\beta}+\frac{1}{2}(A_{i\alpha\gamma}A_{j\gamma\beta}-A_{i\beta\gamma}A_{j\gamma\alpha}).\end{equation}

\subsection{Green's currents for Riemannian submanifolds} \label{ss:real-case}

First we  recall and introduce the basic terminology. Let $(Q, g)$ be an oriented Riemannian manifold of dimension $m$.  Denote by $\Omega^l_0(Q)$  the space of differential $l$-forms with compact support in $Q$. 
\begin{definition}
[$k$-current] A $k$-current on $Q$ is a linear functional \begin{equation}
 T: \Omega^{m-k}_0(Q)\to \dR, \quad  \chi\mapsto (T, \chi),
 \end{equation}
which is continuous in the sense of distributions, i.e., suppose that $\chi_j\in \Omega_0^{m-k}(Q)$ is a sequence of differential forms with all derivatives uniformly converging to $0$ as $j\to\infty$, then  
$\lim\limits_{j\to\infty}(T, \chi_j)= 0$.  
\end{definition}
The notion of currents unifies the notion of differential forms and submanifolds. In particular, a locally integrable $k$-form $\beta$ can be naturally viewed as a $k$-current via the pairing
\begin{equation}
(\beta, \chi)\equiv \int_Q\beta\wedge\chi, \ \ \ \ \chi\in \Omega^{m-k}_0(Q), 
\end{equation}
and an oriented submanifold $P$ of \emph{codimension}-$k$ also defines a $k$-current $\delta_P$ via 
\begin{equation}
(\delta_P, \chi)\equiv  \int_P \chi, \ \ \ \ \chi\in \Omega^{m-k}_0(Q).
\end{equation}

The usual exterior differential operator $d$ and the Hodge star operator $*$ acting on differential forms naturally extend to currents. Given a $k$-current $T$ and $\chi\in\Omega_0^{m-k}(Q)$, then we define 
\begin{align}
(dT, \chi) & \equiv  (-1)^{k+1}(T, d\chi), 
\\(*T, \chi) & \equiv (-1)^{k(m-k)}(T, *\chi).\end{align}
Let $d^*$  be the codifferential operator and denote by $\DelH \equiv dd^*+d^*d$ the Hodge Laplacian, then it follows that for every $k$-current $T$ and $\chi\in \Omega_0^{m-k}(Q)$,
\begin{align}
(d^*T, \chi)& =(-1)^k(T, d^*\chi),
\\ 
(\DelH T, \chi)&=(T, \DelH \chi).\end{align}
A $k$-current $T$ is called {\it harmonic} if $\DelH T=0$. It follows from the standard elliptic regularity theory that a harmonic $k$-current can be represented by a smooth harmonic $k$-form.

Now let $P\subset Q$ be a (not necessarily closed)  embedded oriented  submanifold. Although the following discussion applies to more general setting, for our purpose in this paper we will only consider the case when $P$ is of codimension-$3$ in $Q$. The importance of the codimension-$3$ case  in our setting is related to the fact that there is a Hopf fibration $\R^4\rightarrow \R^3$ which is a singular $S^1$-fibration with a smooth total space and a codimension-3 discriminant locus on the base. The codimension-3 condition also appears in other geometric settings, for example, Hitchin's theory of Gerbes \cite{Hitchin}.

\begin{definition}[Green's current] Suppose that $P$ is of codimension-3 in $Q$. 
A Green's current $G_P$ for $P$ in $Q$ is a locally integrable $3$-form which  solves the following current equation on $Q$,
\begin{equation}\label{Greencurrent}
\Delta G_P=2\pi \cdot \delta_P.
\end{equation}
\end{definition}
\begin{example}The above normalization constant is chosen such that in the case $Q\equiv \dR^3$ and $P\equiv 0^3\in\dR^3$, then 
\begin{equation}G_P=\frac{1}{2|y|}dy_1\wedge dy_2\wedge dy_3\end{equation}
solves the current equation $\Delta_0 G_P=2\pi\cdot \delta_P$ for the standard Hodge Laplacian $\Delta_0$ on $\dR^3$. \end{example}

In particular, $G_P$ is harmonic  and hence smooth outside $P$. Notice that a Green's current $G_P$ for $P$ is unique up to the addition of a harmonic $3$-form. So the singular behavior near $P$ does not depend on the particular choice of $G_P$. Also it is clear that if $Q'\subset Q$ is an open submanifold, then the restriction of $G_P$ to $Q'$ is a Green's current for $P'=P\cap Q'$ in $Q'$, so that we can study the regularity problem in a local fashion.  Our goal of this subsection is to understand the local existence and regularity  of $G_P$ via the approximation by the standard model, i.e., the product Euclidean space $\R^3\times \R^{n-3}$.    

To begin with, we have the following simple regularity result for $d(G_P)$. 

\begin{proposition}\label{regularityd}
Given a Green's current $G_P$ for $P$ in $Q$,  its differential $d(G_P)$ extends to a smooth $4$-form across $P$. 
\end{proposition}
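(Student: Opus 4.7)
My plan is to reduce the question to a harmonic current statement and then invoke elliptic regularity for the Hodge Laplacian. The governing identity is $\Delta G_P = 2\pi \delta_P$ from the definition of a Green's current. Since the exterior derivative $d$ commutes with the Hodge Laplacian $\Delta = dd^* + d^*d$, applying $d$ to both sides formally yields $\Delta(dG_P) = 2\pi \, d\delta_P$. If I can show that $d\delta_P = 0$ as a current, then $dG_P$ is a harmonic $4$-current on $Q$, and standard elliptic regularity (Weyl's lemma applied to the Hodge Laplacian, viewed locally as an elliptic system on the coefficients) will give that $dG_P$ is represented by a smooth $4$-form on all of $Q$, including at points of $P$.

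The main (and essentially only) step to verify is the distributional identity $d\delta_P = 0$. Given $\chi \in \Omega_0^{m-4}(Q)$, by definition
\begin{equation*}
(d\delta_P, \chi) = (-1)^{3+1}(\delta_P, d\chi) = \int_P i_P^*(d\chi) = \int_P d(i_P^*\chi),
\end{equation*}
where $i_P:P\hookrightarrow Q$ denotes the inclusion and I used $i_P^* \circ d = d_P \circ i_P^*$. Since the statement is local I may assume (by choosing the support of $\chi$ inside a small neighborhood of a given point of $P$) that $i_P^*\chi$ has compact support in $P$; because $P$ is a submanifold without boundary, Stokes' theorem on $P$ then gives $\int_P d(i_P^*\chi) = 0$. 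Hence $d\delta_P=0$ as claimed.

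With $d\delta_P=0$ in hand, the current $dG_P$ satisfies $\Delta(dG_P) = 0$ on $Q$. I expect no obstacle at this final step: the Hodge Laplacian is elliptic with smooth coefficients, so by the standard elliptic regularity theorem for distributional solutions, the coefficients of $dG_P$ (in any smooth local frame of $\Lambda^4 T^*Q$) are smooth functions on $Q$. In particular $dG_P$ extends smoothly across $P$, finishing the proof. The only conceptual subtlety worth flagging is the verification that $d$ and $\Delta$ may be legitimately interchanged on currents, which follows directly from the definitions $(dT,\chi) = \pm(T, d\chi)$ and $(\Delta T, \chi) = (T, \Delta \chi)$ together with the identity $d\Delta = \Delta d$ on smooth test forms.
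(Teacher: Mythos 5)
Your proof is correct and follows essentially the same route as the paper: both arguments commute $d$ with $\Delta$ on currents, reduce to showing $(\Delta(dG_P),\chi)=2\pi\int_P d\chi=0$ via Stokes' theorem on $P$ (the paper localizes to a ball $B_r(p)$ with $\overline{B_r(p)}\cap P\subset\subset P$ to kill the boundary term, which is the precise version of your ``choose the support of $\chi$ small'' step), and then conclude smoothness from elliptic regularity for the harmonic current $dG_P$. No gap to report.
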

\begin{proof}
This is a local result so we can work with the geodesic ball $B_r(p)$ for any $p\in P$ such that $\overline{B_r(p)}\subset \subset Q$ and $\overline{B_r(p)}\cap P \subset \subset P$. Given  any test form $\chi\in \Omega^{m-4}_0(B_r(p))$ we have 
\begin{equation}(\Delta (d(G_P)), \chi)=(d\Delta G_P, \chi)=(\Delta G_P, d\chi)=(2\pi \delta_P, d\chi)=2\pi\int_P  d\chi=2\pi\int_{\p B_r(p)\cap P} \chi=0. \end{equation}
Therefore, $d(G_P)$ is a harmonic $4$-current in $B_r(p)$ and hence it is smooth in $B_r(p)$. 
\end{proof}

From now on, let us fix a point $p\in P$ and let  $\{x_j, y_\alpha\}$ be a normal coordinate system for $P$ in a neighborhood $\mathcal U$ of $p$ in $Q$ with $U=\mathcal U\cap P$. We also need the following notation.
\begin{notation}
\label{n:differential-form-notation}  Given $k\in\dZ_+$ and $0\leq q\leq 3$, the notations $\Pi_q^{(k)}$   always represents a general $q$-form for, which is defined in a neighborhood of $p$ in $Q$, and can be expressed in normal coordinates as
\begin{equation}\Pi_q^{(k)}\equiv A_{i,\alpha_1,\ldots, \alpha_{q-1}}^{(k)}(x,y)(dy_{\alpha_1}\wedge\ldots  \wedge dy_{\alpha_{q-1}})\wedge dx_i + B_{\alpha_1,\ldots, \alpha_q}^{(k)}(x,y)dy_{\alpha_1}\wedge\ldots\wedge  dy_{\alpha_q},\label{e:3-form-polynomial-coe}\end{equation}
where $A_{i,\alpha_1,\ldots, \alpha_{q-1}}^{(k)}(x,y)$ and $B_{\alpha_1,\ldots, \alpha_q}^{(k)}(x,y)$ are  homogeneous polynomial functions in $y$ of degree $k$ with coefficient functions smooth on $U\subset P$.\end{notation}

\begin{remark}
 Notice that this expression depends on the choice of local coordinates, but under a change of coordinates, a $p$-form $\Pi_p^{(k)}$ still yields such an expression, modulo a term of order $\tO(r^{k+1})$. 
\end{remark}

Now we are ready to state the first main theorem of this section, which gives a local existence for Green's current, and describes its leading singular behavior. 

\begin{theorem} \label{t:Green-expansion}  Given the above oriented codimension-3 submanifold $P$ in $Q$. For any $p\in P$, shrinking $\mathcal U$ if necessary. Then there exists a  Green's current $G_{U}$ for $U=\mathcal U\cap P$ which satisfies 
\begin{align}
\Delta G_U = 2\pi\cdot \delta_U \quad \text{in} \ \mathcal{U},
\end{align}
and in normal coordinates we have the local expansion 
\begin{align} \label{e:Green expansion}
G_U 
=&\frac{1}{2r}(1-\frac{H^\alpha y_\alpha}{2}) dy_1\wedge dy_2\wedge dy_3+\frac{1}{2r}y_{\beta}A_{i\alpha\beta}dx_i \wedge dy_{\widehat{\alpha}}
-\frac{1}{4} A_{ij\alpha\beta}r\cdot dy_{\widehat{\alpha\beta}}\wedge dx_i\wedge dx_j\nonumber
\\+& \frac{3}{16}(A_{i\alpha, \alpha+1}A_{j\alpha, \alpha+2}-A_{i\alpha, \alpha+2}A_{j\alpha, \alpha+1})d(ry_\alpha)\wedge dx_i\wedge dx_j
+r^{-3}\Pi_3^{(4)}+O'(r^2),\end{align}
 where $\overrightarrow{H}=H^{\alpha}e_{\alpha}$ is the mean curvature of $P\subset Q$ and the $3$-form $\Pi_3^{(4)}$ is defined in \eqref{e:3-form-polynomial-coe}.

\end{theorem}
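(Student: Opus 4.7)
The plan is to construct $G_U$ by an approximation-plus-correction scheme in the normal coordinates $\{x_i, y_\alpha\}$ of Section~\ref{ss:geodesic-coordinates}. First I would build an explicit approximate Green's current $\tilde G$ in $\mathcal U$ whose Laplacian differs from $2\pi\,\delta_U$ by a remainder $f$ that lies in $L^q(\mathcal U)$ for all $q<\infty$, and then solve $\Delta h = -f$ by standard $L^q$-elliptic theory on the ball (with, say, Dirichlet data on $\partial\mathcal U$). Setting $G_U := \tilde G + h$ then yields the required Green's current. Because the correction $h$ is $W^{2,q}$ hence $C^{1,\alpha}$, it contributes only to the $O'(r^2)$ remainder in \eqref{e:Green expansion}, so every explicit term in the stated expansion must already be present in $\tilde G$.

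To construct $\tilde G$, I would start with the flat Euclidean model $G^{(0)} := \frac{1}{2r}\,dy_1\wedge dy_2\wedge dy_3$, which satisfies $\Delta_0 G^{(0)} = 2\pi\,\delta_U$ for the product flat Laplacian $\Delta_0$ on $\R^3\times U$. Using Lemma~\ref{l:metric-expansion}, the Hodge Laplacian associated to $g$ admits an expansion $\Delta = \Delta_0 + L_1 + L_2 + \cdots$, where $L_k$ has coefficients built from the $k$-th order data of the metric (Christoffel symbols, the volume-form expansion $\sqrt{\det g}$, and their derivatives) with normal regularity order increasing in $k$. Applying $\Delta$ to $G^{(0)}$ then produces a singular error whose leading pieces are explicit tensor expressions of the form $r^{-3}\Pi^{(k)}_3$ with coefficients involving $H^\alpha$, the connection coefficients $A_{i\alpha\beta}$, and the normal curvature $A_{ij\alpha\beta}$. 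I would then iteratively add correction forms $G^{(j)}$ such that $\Delta_0 G^{(j)}$ cancels the leading singularity of the accumulated error at each stage. Each $G^{(j)}$ is obtained by explicit inversion of $\Delta_0$ on a homogeneous singular expression (using that $r^{-1}$ is a fundamental solution on $\R^3$, together with the explicit $\Delta_0$-action on $y_\alpha/r$ and $r\,y_\alpha$). Matching coefficients reproduces precisely the terms in \eqref{e:Green expansion}: the mean-curvature correction $-\frac{H^\alpha y_\alpha}{4r}\,dy_1\wedge dy_2\wedge dy_3$ cancels the first-order contribution of $\sqrt{\det g}$; the term $\frac{y_\beta A_{i\alpha\beta}}{2r}\,dx_i\wedge dy_{\widehat{\alpha}}$ cancels the normal-connection contribution; the $r$-linear piece $-\frac{A_{ij\alpha\beta}\,r}{4}\,dy_{\widehat{\alpha\beta}}\wedge dx_i\wedge dx_j$ absorbs the error coming from the normal-bundle curvature; and the bilinear expression in $A_{i\alpha\beta}A_{j\gamma\delta}$ provides the next correction.

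After finitely many iterations, the accumulated remainder attains sufficient normal regularity that it is $O'(1)$ and hence lies in $L^q(\mathcal U)$ for all $q<\infty$; solving $\Delta h = -f$ on a shrunk neighborhood then completes the construction. The term $r^{-3}\Pi^{(4)}_3$ appearing in \eqref{e:Green expansion} simply gathers the next batch of explicit polynomial singular corrections whose precise form is not needed in later applications. The main obstacle will be the bookkeeping in this iterative correction: the Hodge Laplacian in normal coordinates generates many cross terms from Christoffel symbols, coefficient derivatives, and the metric perturbations of Lemma~\ref{l:metric-expansion}, and one must carefully identify which geometric invariant each singular error term corresponds to and then compute its $\Delta_0$-preimage. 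A secondary but essential technical point is to verify that the normal regularity order of the accumulated remainder strictly improves at each step so that the process terminates after finitely many iterations; the notation of Definition~\ref{d:normal-regularity} is designed precisely to make this bookkeeping feasible.
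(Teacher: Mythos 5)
Your overall scheme (explicit singular ansatz, finitely many explicit corrections obtained by inverting the flat Laplacian on homogeneous singular terms, then elliptic theory for the remaining Poisson problem) is essentially the route the paper takes: the paper's $\phi_1=(-1)^{m+1}\frac{1}{2r}*\dvol_T$ is just a geometrically packaged version of your flat model plus its first corrections, and its Step 3 inverts $\Delta_0$ on model expressions exactly as you propose (via the explicit formulas for $\Delta_0(y_\alpha/r)$, $\Delta_0(r)$, and the isomorphism $f\mapsto r^5\Delta_0(r^{-3}f)$ on degree-4 polynomials). So the construction and the identification of the explicit terms in \eqref{e:Green expansion} are fine in outline.

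The genuine gap is your final regularity claim: you assert that because the correction $h$ solving $\Delta h=-f$ with $f\in L^q$ is $W^{2,q}$, hence $C^{1,\alpha}$, it ``contributes only to the $O'(r^2)$ remainder.'' But $C^{1,\alpha}$ is strictly weaker than $O'(r^2)$ in the sense of Definition \ref{d:normal-regularity}: $T=O'(r^2)$ requires $\p_t^I\p_n^J T$ to be bounded for \emph{all} tangential multi-indices $I$ when $|J|\leq 1$, and to blow up no faster than $r^{2-|J|-\epsilon}$ for $|J|\geq 2$, i.e.\ uniform control of arbitrarily many tangential derivatives and a quantified rate for mixed higher derivatives. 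Standard $W^{2,q}$/Schauder theory on the ball gives none of this beyond two derivatives. This is precisely why the paper devotes all of Step 5 (Lemma \ref{l:higher-regularity}) to the equation $\Delta T=v$ with $v=O'(1)$: it runs a rescaled elliptic estimate at scale $r_x=d(x,P)$ combined with a double induction on the normal order $k$ and the tangential order $\ell$ to prove $T=O'(r^2)$. Without this, the statement of Theorem \ref{t:Green-expansion} as written is not established, and the loss is not cosmetic: the later arguments (the expansion of $\psi$ in Proposition \ref{p:complex-Green-expansion}, Corollary \ref{c:psipower}, the estimate $d\Theta_1-\tau^*\Upsilon=O'(s)$ in Proposition \ref{p: prop4-4}, and ultimately the $C^{2,\alpha}$ regularity of the neck K\"ahler structure) all differentiate the remainder and rely on the $O'$-calculus, so a merely $C^{1,\alpha}$ error term would not suffice. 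To close the gap you need to supply the analogue of Lemma \ref{l:higher-regularity}: a weighted/rescaled regularity argument showing that your correction $h$ (and the accumulated remainder at the stage where you stop iterating) satisfies the full $O'(r^2)$ estimate, not just interior $C^{1,\alpha}$.
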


Here and in the following we use the notation that for $\alpha\in \{1, 2, 3\}$, $\widehat{\alpha}=(\alpha+1, \alpha+2)$ (with the convention $3+1=1$), $dy_{\alpha\beta}\equiv dy_\alpha\wedge dy_\beta$, and that
\begin{align}
dy_{\widehat{\alpha\beta}}\equiv\begin{cases}
dy_{\alpha+2},   & \beta=\alpha+1,\\
-dy_{\alpha+1}, & \beta=\alpha+2,\\
0, & \beta=\alpha.
\end{cases}
\end{align}

\begin{remark}
Any Green's current $G_P$ for $P$ yields the same singular expansion \eqref{e:Green expansion} near $p\in P$.
\end{remark}

\begin{remark}
Theorem \ref{t:Green-expansion} is certainly not optimal. Our choice of order of expansion is dictated by our applications, and the above exactly suits our purposes; see Proposition \ref{p:complex-Green-expansion},  Remark \ref{r:C2alpharegularity} and Remark \ref{r:C1alphavsC2alpha}. In general one would expect a ``poly-homogeneous" expansion.
Similarly
in the proof we will not keep track of the explicit form of $\Pi_3^{(4)}$ because it is not needed in our applications. However, it is possible to obtain the precise expression with more work. Given the above expansion, there are also some constraint for $\Pi_3^{(4)}$ following from the fact that $d(G_{U})$ is smooth by Proposition \ref{regularityd}.
\end{remark}

Before starting the proof of Theorem \ref{t:Green-expansion}, we need some preliminary computations. For the convenience of our calculations,   we introduce three $1$-forms 
\begin{equation}\eta_\alpha \equiv dy_\alpha+p_{i\alpha}dx_i \label{e:definition-of-eta-alpha}\end{equation} such that 
\begin{equation}\label{eqn3-29}
\langle\eta_\alpha, dx_j\rangle=0\end{equation}
for all $j$ and $\alpha$ at all points of $\mathcal U$.  Then the linear span of the $\eta_\alpha$'s is orthogonal to the linear span of the $dx_i$'s. 

The following elementary lemma is  crucial in the proof of Theorem \ref{t:Green-expansion}.

\begin{lemma} \label{l:coe-crossing} The coefficient $p_{i\alpha}$ in \eqref{e:definition-of-eta-alpha} has the expansion
 \begin{equation}p_{k\alpha}=g_{k\alpha}+\tO(r^3), \quad 1\leq k\leq m-3,\ 1\leq \alpha\leq 3.
 \end{equation}
\end{lemma}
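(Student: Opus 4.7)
The defining orthogonality conditions \eqref{eqn3-29} expand, via the induced inner product on cotangent vectors, into the linear system
\begin{equation*}
\sum_i p_{i\alpha}\, g^{ij} \;=\; -\,g^{\alpha j}, \qquad 1\le j\le m-3,\ 1\le\alpha\le 3.
\end{equation*}
My plan is to solve this system explicitly using block matrix inversion, and then read off the leading behavior using the metric expansion of Lemma \ref{l:metric-expansion}. To organize the calculation, write the metric tensor in the blocks
\begin{equation*}
g=\begin{pmatrix} A & B \\ B^T & C\end{pmatrix},\qquad A=(g_{ij}),\; B=(g_{i\alpha}),\; C=(g_{\alpha\beta}),
\end{equation*}
so that the Schur-complement formula for $g^{-1}$ gives the cotangent blocks
\begin{equation*}
(g^{ij})=(A-BC^{-1}B^T)^{-1},\qquad (g^{i\alpha})=-(A-BC^{-1}B^T)^{-1}BC^{-1}.
\end{equation*}
Substituting these into the linear system and cancelling the common factor of $(A-BC^{-1}B^T)^{-1}$, one obtains the clean closed-form expression
\begin{equation*}
p_{i\alpha}\;=\;\sum_{\beta} g_{i\beta}\,(C^{-1})^{\beta\alpha}.
\end{equation*}

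With this explicit formula in hand, the rest is a direct consequence of Lemma \ref{l:metric-expansion}. That lemma shows $g_{i\beta}=A_{i\beta\gamma}(x)y_\gamma+\tO(r^2)$, so $g_{i\beta}=\tO(r)$; and $g_{\alpha\beta}=\delta_{\alpha\beta}+\tO(r^2)$, whence $(C^{-1})^{\beta\alpha}=\delta^{\beta\alpha}+\tO(r^2)$ by smoothly inverting a matrix that is $I$ to leading order. Splitting the expression for $p_{i\alpha}$ into the identity piece plus the perturbation,
\begin{equation*}
p_{i\alpha}\;=\;g_{i\alpha}\;+\;\sum_\beta g_{i\beta}\bigl((C^{-1})^{\beta\alpha}-\delta^{\beta\alpha}\bigr)\;=\;g_{i\alpha}\;+\;\tO(r)\cdot\tO(r^2),
\end{equation*}
and the claim follows provided one verifies the multiplicative property $\tO(r^a)\cdot\tO(r^b)=\tO(r^{a+b})$. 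This last point is the only minor technicality: it reduces to the Leibniz rule together with the observation that if $u=\tO(r^a)$ and $v=\tO(r^b)$ then $r^{-a}u$ and $r^{-b}v$ are both in $O'(1)$, and a product of two $O'(1)$-quantities is again $O'(1)$ (up to an arbitrary $\varepsilon$-loss in exponent), since all derivatives combine additively in the worst case.

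There is no real obstacle; the whole lemma is a bookkeeping exercise once one recognizes the Schur-complement identity $p=BC^{-1}$. The mildly delicate point, worth a line of justification, is the smoothness of $p_{i\alpha}$ (which is immediate from the formula $p=BC^{-1}$, since $C$ is invertible on a neighborhood of $P$) together with the $\tO$-multiplicativity recalled above; both are needed to upgrade the raw vanishing-order estimate $O(r^3)$ to the strong smooth-plus-vanishing notion $\tO(r^3)$ demanded by Definition \ref{d:normal-regularity}.
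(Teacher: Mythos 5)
Your proof is correct. The endpoints are the same as in the paper — you start from the linear system $\sum_i p_{i\alpha}g^{ij}=-g^{\alpha j}$ coming from \eqref{eqn3-29} and finish with the metric expansion of Lemma \ref{l:metric-expansion} — but the middle step is genuinely different, and in fact cleaner. You solve the system \emph{exactly} via the block (Schur-complement) inversion formulas, arriving at the closed form $p_{i\alpha}=\sum_\beta g_{i\beta}(C^{-1})^{\beta\alpha}$ (equivalently: $\eta_\alpha$ is the metric dual of the vector field $\sum_\beta (C^{-1})^{\alpha\beta}\p_{y_\beta}$, which makes the orthogonality to the $dx_j$ manifest), so that the whole lemma reduces to $C^{-1}=I+\tO(r^2)$ together with the product rule $\tO(r^a)\cdot\tO(r^b)=\tO(r^{a+b})$; your justifications of both points (smooth invertibility of $C$ near $P$, and the Leibniz-rule argument for products of $O'(1)$ quantities with the $\epsilon$-loss already built into Definition \ref{d:normal-regularity}) are adequate. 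The paper never writes this exact formula: it instead expands the full inverse metric by a truncated Neumann-type series to get \eqref{e:inverse-crossing}--\eqref{e:inverse normal}, writes $p_{k\alpha}=-g^{j\alpha}\hat g_{jk}$ with $(\hat g_{ij})$ the inverse of the cotangent block $(g^{ij})$, and then must separately establish $\hat g_{ij}-g_{ij}=\tO(r^2)$ before combining the pieces. Your route sidesteps the inverse-of-a-submatrix subtlety entirely; the paper's route has the side benefit that the intermediate expansions of $g^{ij}$, $g^{i\alpha}$, $g^{\alpha\beta}$ are needed again later in the section (for instance in the proof of \eqref{etaorthogonal} and of Lemma \ref{l:v-T-N}), so the paper derives them at this point in any case.
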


\begin{proof}
We write the full matrix expression of the metric $g$ as
\begin{equation}
   g=  \left[ {\begin{array}{cc}
   g_{\alpha\beta}& 0 \\
  0 & g_{ij} \\
  \end{array} } \right]+
  \left[ {\begin{array}{cc}
   0 & S \\
   S^t & 0 \\
  \end{array} } \right],
\end{equation}
where $S=(g_{\alpha i})=\tO(r)$. We denote by $(h_{\alpha\beta})$ and $(h_{ij})$ the inverse matrix of $(g_{\alpha\beta})$ and $(g_{ij})$ respectively. 
Then by elementary consideration
\begin{eqnarray}
   g^{-1} &=& \left[ {\begin{array}{cc}
  h_{\alpha\beta}& 0 \\
  0 & h_{ij} \\
  \end{array} } \right]-\left[ {\begin{array}{cc}
   h_{\alpha\beta}& 0 \\
  0 & h_{ ij} \\
  \end{array} } \right]
  \left[ {\begin{array}{cc}
   0 & S \\
   S^t & 0 \\
  \end{array} } \right]\left[ {\begin{array}{cc}
   h_{\alpha\beta}& 0 \\
  0 & h_{ij} \\
  \end{array} } \right]
  \nonumber\\
  &+&  \left[ {\begin{array}{cc}
 h_{\alpha\beta}& 0 \\
  0 & h_{ij} \\
  \end{array} } \right] \left[ {\begin{array}{cc}
   0 & S \\
   S^t & 0 \\
  \end{array} } \right]\left[ {\begin{array}{cc}
   h_{\alpha\beta}& 0 \\
  0 & h_{ ij} \\
  \end{array} } \right]
  \left[ {\begin{array}{cc}
   0 & S \\
   S^t & 0 \\
  \end{array} } \right]\left[ {\begin{array}{cc}
   h_{\alpha\beta}& 0 \\
  0 & h_{ ij} \\
  \end{array} } \right]
+\tO(r^3).
\end{eqnarray}
Notice that the third term does not have off-diagonal contributions, so the matrix $g^{-1}=(g^{IJ})$ satisfies
\begin{align}g^{i\alpha} &=-h_{ij}g_{j\beta}h_{\beta\alpha}+\tO(r^3)
\label{e:inverse-crossing}
\\
 g^{ij} &= h_{ij}+\tO(r^2),\label{eqn3-35}
\\
g^{\alpha\beta} &= h_{\alpha\beta}+\tO(r^2)=\delta_{\alpha\beta}+\tO(r^2),\label{e:inverse normal}
\end{align}
where we used  Lemma \ref{l:metric-expansion}.
By \eqref{eqn3-29} we have
$p_{i\alpha}g^{ij}+g^{j\alpha}=0$. 
Let $(\hat{g}_{ij})$ be the inverse of the matrix $(g^{ij})$
with $1\leq i,j\leq m-3$, i.e., $g^{ij}\hat{g}_{jk}=\delta_{ik}$. Then
\begin{equation}
p_{k\alpha} = -g^{j\alpha}\hat{g}_{jk}.\label{e:coe-coor}
\end{equation}
We claim that for any $1\leq i,j \leq m-3$,
\begin{equation}
\hat{g}_{ij} - g_{ij} =\tO(r^2).
\end{equation}
In fact, since $
g^{ij}g_{jk} + g^{i\alpha}g_{\alpha k} =\delta_{ik}$, 
 we get 
\begin{equation}
\hat{g}_{li}(g^{ij}g_{jk} + g^{i\alpha}g_{\alpha k})=\hat{g}_{lk}.\end{equation}
So this implies that
\begin{equation}
\hat{g}_{lk} - g_{lk} = \hat{g}_{li} g^{i\alpha} g_{\alpha k} = \tO(r^2), \label{e:db-inverse-order}
\end{equation}
which proves the claim.
Therefore, combining  \eqref{e:inverse-crossing},\eqref{e:inverse normal}, \eqref{e:coe-coor} and  \eqref{e:db-inverse-order}, we obtain
\begin{eqnarray}
p_{k\alpha}&=& 
-(g_{kj}+\tO(r^2))g^{j\alpha}
\nonumber\\
&=& -g_{kj}g^{j\alpha} + \tO(r^3)
\nonumber\\
&=&h_{\alpha\beta}g_{k\beta}+\tO(r^3)
\nonumber\\
&=&g_{k\alpha}+\tO(r^3).\end{eqnarray}
\end{proof}

The following symmetry property of $p_{i\alpha}$ will also frequently used in our calculations below.
By Lemma \ref{l:metric-expansion} and Lemma \ref{l:coe-crossing}, we may write 
\begin{equation} \label{e:symmetry-of-coefficient-p}
p_{i\alpha}=A_{i\alpha\beta}y_\beta+\frac{1}{2}B_{i\alpha\beta\gamma} y_\beta y_\gamma+\tO(r^3).
\end{equation}
Here the connection term  $A_{i\alpha\beta}\equiv \langle\nabla_{\p_{x_i}}\p_{y_{\beta}}, y_{\alpha}\rangle|_{(x,0)}$
is skew-symmetric in $\alpha$, $\beta$, and the curvature term  $B_{i\alpha\beta\gamma}\equiv -\frac{2}{3}(R_{i\beta\gamma\alpha}+R_{i\gamma\beta\alpha})$ is symmetric in $\beta$, $\gamma$.

\begin{lemma}
For every $1\leq \alpha,\beta\leq 3$,
\begin{equation} \label{etaorthogonal}
\langle \eta_\alpha, \eta_\beta\rangle=\delta_{\alpha\beta}+\tO(r^2).
\end{equation}
\end{lemma}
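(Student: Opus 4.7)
The plan is to compute $\langle \eta_\alpha, \eta_\beta\rangle$ directly from the definition $\eta_\alpha = dy_\alpha + p_{i\alpha}\,dx_i$, using the inverse metric coefficients. Expanding, we get
\begin{equation*}
\langle \eta_\alpha,\eta_\beta\rangle = g^{\alpha\beta} + p_{i\alpha}\,g^{i\beta} + p_{j\beta}\,g^{j\alpha} + p_{i\alpha}p_{j\beta}\,g^{ij}.
\end{equation*}
The key simplification uses the defining orthogonality condition \eqref{eqn3-29} for the $\eta_\alpha$'s: since $\langle \eta_\alpha, dx_k\rangle = g^{\alpha k} + p_{i\alpha}g^{ik} = 0$, we can replace $g^{\alpha k}$ by $-p_{i\alpha}g^{ik}$ in both cross-terms. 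After this substitution the three non-diagonal terms collapse to a single $-p_{i\alpha}p_{j\beta}g^{ij}$ contribution, so
\begin{equation*}
\langle \eta_\alpha,\eta_\beta\rangle = g^{\alpha\beta} - p_{i\alpha}p_{j\beta}\,g^{ij}.
\end{equation*}

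The next step is to estimate each factor to the right order. From Lemma \ref{l:metric-expansion} we have $g^{\alpha\beta} = \delta_{\alpha\beta} + \tO(r^2)$ (cf.\ \eqref{e:inverse normal}) and $g^{ij} = \tO(1)$. From Lemma \ref{l:coe-crossing} we have $p_{i\alpha} = g_{i\alpha} + \tO(r^3)$, and since $g_{i\alpha}|_{y=0}=0$ the expansion in Lemma \ref{l:metric-expansion} gives $g_{i\alpha} = \tO(r)$, hence $p_{i\alpha} = \tO(r)$. Multiplying, the correction term satisfies $p_{i\alpha}p_{j\beta}g^{ij} = \tO(r^2)$, and combining with the expansion of $g^{\alpha\beta}$ yields the claim.

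There is no substantial obstacle here: all the hard work has already been done in Lemmas \ref{l:metric-expansion} and \ref{l:coe-crossing}, and the role of the orthogonality condition is purely algebraic. The only care needed is to ensure the error term is genuinely in the class $\tO(r^2)$ (smooth with vanishing normal derivative at $r=0$) rather than merely $O'(r^2)$; this follows because the product $p_{i\alpha}p_{j\beta}g^{ij}$ is a product of smooth functions on $\mathcal{U}$ (the $p_{i\alpha}$ are smooth by construction) that vanishes to second order in $y$ along $U$, and $g^{\alpha\beta}-\delta_{\alpha\beta}$ has the same property by Lemma \ref{l:metric-expansion}.
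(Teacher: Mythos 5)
Your proof is correct and follows essentially the same route as the paper: expand $\langle \eta_\alpha,\eta_\beta\rangle$ by bilinearity and count orders using $p_{i\alpha}=\tO(r)$ (Lemma \ref{l:coe-crossing} together with $g_{i\alpha}(x,0)=0$) and $g^{\alpha\beta}=\delta_{\alpha\beta}+\tO(r^2)$ from \eqref{e:inverse normal}. The only, harmless, difference is that you first invoke the orthogonality condition \eqref{eqn3-29} to collapse the three off-diagonal terms into the exact identity $\langle\eta_\alpha,\eta_\beta\rangle=g^{\alpha\beta}-p_{i\alpha}p_{j\beta}g^{ij}$ before estimating, whereas the paper estimates the cross terms directly via $\langle dx_i,dy_\beta\rangle=g^{i\beta}=\tO(r)$.
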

\begin{proof}
By definition 
$\langle \eta_\alpha, \eta_\beta\rangle=\langle dy_\alpha+p_{i\alpha} dx_i, dy_\beta+p_{j\beta}dx_j\rangle$. By \eqref{e:inverse normal} we get
\begin{equation}\langle dy_\alpha, dy_\beta\rangle=\delta_{\alpha\beta}+\tO(r^2).\end{equation}
Also we have $p_{i\alpha}=\tO(r)$ and $\langle dx_i, dy_\beta\rangle=\tO(r)$ for all $i$ and $\alpha$. The conclusion then follows.
\end{proof}

Using the above differential $1$-forms $\eta_{\alpha}$'s, we can decompose the volume form $\dvol_g$ in the horizontal and vertical directions, which will substantially simplify the computations regarding the Hodge Laplacian. 
The volume form of $g$ is given by 
\begin{equation}\dvol_g=\sqrt{\det(g)}\cdot dy_1\wedge dy_2\wedge dy_3\wedge dx_1\wedge \cdots\wedge dx_{m-3},\end{equation}
where we have used the orientation fixed above. 
We define  the normal and tangential volume forms by 
\begin{eqnarray}\begin{cases}
{\dvol}_N \equiv \eta_1\wedge \eta_2\wedge \eta_3\\
{\dvol}_T \equiv \sqrt{\det(g_{ij}^P)}\cdot dx_1\wedge \cdots\wedge dx_{m-3}.
\end{cases}
\end{eqnarray}
By the expansion formula \eqref{e:symmetry-of-coefficient-p}, the normal volume form $\dvol_N$ has the following expansion,
\begin{eqnarray} \label{e:dvol-N-expansion}
{\dvol}_N=dy_1\wedge dy_2\wedge dy_3+\Big(A_{i\alpha\beta} y_{\beta}+\frac{1}{2}B_{i\alpha\beta\gamma}y_\beta y_\gamma \Big) dx_i\wedge  dy_{\widehat\alpha}\nonumber \\
+A_{i,\alpha+1,\beta}A_{j, \alpha+2, \gamma} y_\beta y_\gamma dy_{\alpha}\wedge dx_i\wedge dx_j+
\tO(r^3).
\end{eqnarray}
In addition, by the definition of $\eta_{\alpha}$'s, it holds that for each $1\leq \alpha \leq 3$,
\begin{equation}
\eta_{\alpha} \wedge \dvol_T = dy_{\alpha} \wedge \dvol_T,
\end{equation}
and hence
\begin{equation}
\dvol_N\wedge \dvol_T = dy_1\wedge dy_2\wedge dy_3 \wedge \dvol_T =  \frac{\sqrt{\det(g_{ij}^P)}}{\sqrt{\det(g)}}\cdot \dvol_g.
\end{equation}

\begin{lemma}\label{l:v-T-N}Denote by $*:\Omega^k(Q)\to\Omega^{m-k}(Q)$ the Hodge $*$-operator. Then we have the following:
\begin{enumerate}
\item The tangential volume form $\dvol_T$ satisfies
\begin{equation}
*\dvol_T=(-1)^{m+1}\dvol_N\cdot (1-H^{\alpha}y_{\alpha}+\tO(r^2)).
\end{equation}
\item For any $\alpha\in\{1,2,3\}$,
\begin{equation}
*(\eta_{\alpha}\wedge \dvol_T) = \lambda_{1}\eta_{\widehat{\alpha}} + \lambda_{2}\eta_{\widehat{\alpha+1}} + \lambda_{3}\eta_{\widehat{\alpha+2}},
\end{equation}
where $\lambda_1=1-H^{\alpha}y_{\alpha}+\tO(r^2)$, $\lambda_2=\tO(r^2)$, $\lambda_3=\tO(r^2)$.
\end{enumerate}

\end{lemma}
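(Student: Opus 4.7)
The proof rests on the orthogonal splitting $T^*_qQ=\mathrm{span}\{\eta_1,\eta_2,\eta_3\}\oplus\mathrm{span}\{dx_1,\ldots,dx_{m-3}\}$ at every $q\in\mathcal{U}$, which holds by the defining property \eqref{eqn3-29} of the $\eta_\alpha$. Since the Hodge star respects orthogonal decompositions of the cotangent bundle, a form of pure bidegree with respect to this splitting is sent to the tensor product of complementary wedge powers. Because $\dvol_T$ is the top exterior power on the $(m-3)$-dimensional horizontal factor, $*\dvol_T=C\cdot\dvol_N$ for some function $C$; and because $\eta_\alpha\wedge\dvol_T$ lies in $\Lambda^1(N^*)\otimes\Lambda^{m-3}(H^*)$, its Hodge dual lies in $\Lambda^2(N^*)$, which is exactly spanned by $\eta_{\widehat 1},\eta_{\widehat 2},\eta_{\widehat 3}$. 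This fixes the structural form of both assertions; only the scalar coefficients remain.

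The coefficients will be extracted from the defining identity $\chi\wedge*\beta=\langle\chi,\beta\rangle\dvol_g$, via two preliminary computations. First, the volume ratio identity recorded just before the lemma combined with a block-matrix determinant expansion using Lemma \ref{l:metric-expansion} (the cross-block $g_{i\alpha}=\tO(r)$ contributes only at order $\tO(r^2)$, while $\det(g_{ij})_{1\le i,j\le m-3}=\det(g^P_{ij})(1+2H^\alpha y_\alpha+\tO(r^2))$) gives
\[\dvol_N\wedge\dvol_T=\tfrac{\sqrt{\det g^P}}{\sqrt{\det g}}\,\dvol_g=(1-H^\alpha y_\alpha+\tO(r^2))\dvol_g.\]
Second, using \eqref{eqn3-35} one obtains $\langle\dvol_T,\dvol_T\rangle=\det(g^P_{ij})\det((g^{ij})_{m-3\times m-3})=1-2H^\alpha y_\alpha+\tO(r^2)$; and the orthogonality \eqref{eqn3-29} together with \eqref{etaorthogonal} immediately yields the multiplicative splitting $\langle\eta_\alpha\wedge\dvol_T,\eta_\beta\wedge\dvol_T\rangle=\langle\eta_\alpha,\eta_\beta\rangle\langle\dvol_T,\dvol_T\rangle=(1-2H^\alpha y_\alpha)\delta_{\alpha\beta}+\tO(r^2)$.

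For part (1), applying $\dvol_T\wedge*\dvol_T=\langle\dvol_T,\dvol_T\rangle\dvol_g$ to the ansatz $*\dvol_T=C\dvol_N$ and swapping $\dvol_N$ past $\dvol_T$ (with sign $(-1)^{3(m-3)}=(-1)^{m+1}$) yields $C=(-1)^{m+1}(1-H^\alpha y_\alpha+\tO(r^2))$. For part (2), the elementary identity $\eta_\beta\wedge\eta_{\widehat\gamma}=\delta_{\beta\gamma}\dvol_N$ (for $\beta,\gamma\in\{1,2,3\}$) lets me test the ansatz $*(\eta_\alpha\wedge\dvol_T)=\lambda_1\eta_{\widehat\alpha}+\lambda_2\eta_{\widehat{\alpha+1}}+\lambda_3\eta_{\widehat{\alpha+2}}$ against $\eta_\beta\wedge\dvol_T$ for $\beta\in\{\alpha,\alpha+1,\alpha+2\}$ separately: the diagonal test extracts $\lambda_1=1-H^\alpha y_\alpha+\tO(r^2)$, while the two off-diagonal tests force $\lambda_2,\lambda_3=\tO(r^2)$ directly, since $\langle\eta_\alpha,\eta_\beta\rangle=\tO(r^2)$ for $\alpha\neq\beta$. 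The only real difficulty is bookkeeping, in particular ensuring that the discrepancy between the coefficient $(1+2H^\alpha y_\alpha)$ in $\det(g_{ij})$ and the coefficient $(1+H^\alpha y_\alpha)$ in $\sqrt{\det g}$ is tracked consistently throughout, since this is precisely what delivers the clean mean-curvature term in the leading correction.
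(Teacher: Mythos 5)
Your proof is correct and follows essentially the same route as the paper: the orthogonality \eqref{eqn3-29} forces the structural ansatz, and the coefficients are extracted by wedging against $\dvol_T$ and $\eta_\beta\wedge\dvol_T$ and expanding the determinants via Lemma \ref{l:metric-expansion} and \eqref{etaorthogonal}. The only difference is organizational — you pre-compute $\langle\dvol_T,\dvol_T\rangle$ and $\dvol_N\wedge\dvol_T$ separately and divide, whereas the paper packages the same data into the single formula $\lambda=|\dvol_T|^2\sqrt{\det g}/\sqrt{\det g^P}$ — so no substantive comment is needed.
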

\begin{proof}
First, we prove item (1). By \eqref{eqn3-29} we have 
\begin{equation}(-1)^{m+1}*\dvol_T=\lambda\cdot\dvol_N\end{equation}
for a function $\lambda>0$. The function $\lambda$ is given by 
 \begin{equation}\lambda=\frac{|\dvol_T|^2\sqrt{\det(g)}}{\sqrt{\det(g_{ij}^P)}}=\sqrt{\det(g)}\det(g^{ij})\sqrt{\det(g_{ij}^P)}.\label{e:coe-def}\end{equation}
Now we compute the expansion of $\lambda$. Applying the expansions of $g_{ij}$, $g_{\alpha\beta}$ and $g_{i\alpha}$ in Lemma \ref{l:metric-expansion}, one can directly obtain the following,
 \begin{align}\det(g)&=\det(g_{\alpha\beta})\cdot\det(g_{ij})+\tO(r^2),\label{e:v}
 \\\det(g_{\alpha\beta})&=1+\tO(r^2),\label{e:vertical-v} 
 \\\det(g_{ij})&=\det(g_{ij}^P)\cdot(1+2H^\alpha y_\alpha)+\tO(r^2).\label{e:horizontal-v}\end{align}
Plugging \eqref{e:vertical-v} and \eqref{e:horizontal-v} into \eqref{e:v},
\begin{equation}
\det(g) = \det(g_{ij}^P)\cdot (1 + 2H^{\alpha} y_{\alpha}) + \tO(r^2).\label{e:g-expansion}
\end{equation} 
Let $(h_{ij})$ be the inverse of the matrix $(g_{ij})$.  Since $g^{ij}=h_{ij} + \tO(r^2)$ by (\ref{eqn3-35}), so it follows that
 \begin{equation}\det(g^{ij})=\det(h_{ij})+\tO(r^2)=(\det(g_{ij}))^{-1}+\tO(r^2).
 \end{equation}
 Plugging \eqref{e:horizontal-v} into the above, 
 \begin{equation}\det(g^{ij})=\det(g_{ij}^P)^{-1}\cdot (1-2H^{\alpha}y_{\alpha})+\tO(r^2).\label{e:inv-expansion}\end{equation}
Therefore, substituting  \eqref{e:g-expansion} and \eqref{e:inv-expansion} into \eqref{e:coe-def}, 
\begin{equation}
\lambda = 1 - H^{\alpha} y_{\alpha} + \tO(r^2),
\end{equation}
which completes the proof of item (1).

Now we prove item (2). 
For each $\alpha\in\{1,2,3\}$, we can write 
\begin{equation}
*(\eta_\alpha\wedge \dvol_T)=\lambda_1 \cdot \eta_{\widehat{\alpha}}+\lambda_2\cdot \eta_{\widehat{\alpha+1}}+\lambda_3 \cdot \eta_{\widehat{\alpha+2}}.\end{equation}
Taking point-wise wedge product with $\eta_\alpha\wedge \dvol_T$, and noticing $\eta_{\widehat{\alpha+1}}\wedge \eta_{\alpha}$, $\eta_{\widehat{\alpha+2}}\wedge \eta_{\alpha}$  are both  zero,  we obtain
\begin{equation}
\lambda_1 \cdot \eta_{\widehat{\alpha}} \wedge \eta_\alpha\wedge \dvol_T = (\eta_\alpha\wedge \dvol_T) \wedge  *(\eta_\alpha\wedge \dvol_T).
\end{equation}
Therefore, by \eqref{eqn3-29} and \eqref{etaorthogonal} we get 
\begin{equation}
\lambda_1 =\frac{|\eta_{\alpha}\wedge\dvol_T|^2\sqrt{\det(g)}}{\sqrt{\det(g_{ij}^P)}}=1-H^{\alpha}y_{\alpha}+\tO(r^2),
\end{equation}
Similarly taking wedge product with $\eta_{\alpha+1}\wedge \dvol_T$ and $\eta_{\alpha+2}\wedge \dvol_T$ respectively, and  again by \eqref{etaorthogonal} we obtain
that 
\begin{equation}
\lambda_2=\tO(r^2), \lambda_3=\tO(r^2). 
\end{equation}
These imply that
\begin{equation}
*(\eta_\alpha\wedge \dvol_T) =\lambda_1 \cdot \eta_{\widehat{\alpha}} +\lambda_2 \cdot \eta_{\widehat{\alpha+1}} + \lambda_3 \cdot \eta_{\widehat{\alpha+2}},
\end{equation}
where $\lambda_1 = 1+H^{\alpha}y_{\alpha}+\tO(r^2)$, $\lambda_2 = \tO(r^2)$ and $\lambda_3 = \tO(r^2)$.
\end{proof}

Now we proceed to prove Theorem \ref{t:Green-expansion}. This will be done in several steps. 

\

\noindent {\bf Step 1:}
We start 
by defining a 3-form 
\begin{equation}
\phi_1\equiv(-1)^{m+1}\frac{1}{2r} *\dvol_T. \label{e:def-phi-1}\end{equation}
By item (1) of Lemma \ref{l:v-T-N}, immediately we have 
\begin{align}
\phi_1=\frac{\dvol_N}{2r}\cdot (1-H^{\alpha}y_{\alpha}+\tO(r^2)).\label{e:phi-1-expansion}
\end{align}
Then applying the expansion of $\dvol_N$ in \eqref{e:dvol-N-expansion}, $\phi_1$ has a further expansion, 
\begin{align}\phi_1&=\frac{1-H^\alpha y_\alpha}{2r}dy_1\wedge dy_2 \wedge dy_3+\frac{1}{2r}A_{i\alpha\beta} y_\beta dx_i\wedge dy_{\widehat{\alpha}}\nonumber\\
&+\frac{1}{2r}A_{i,\alpha+1,\beta}A_{j, \alpha+2, \gamma} y_\beta y_\gamma dy_{\alpha} \wedge dx_i\wedge dx_j+r^{-1}\Pi_3^{(2)}+O'(r^2)\nonumber\\
&=\frac{1-H^\alpha y_\alpha}{2r}dy_1\wedge dy_2 \wedge dy_3+\frac{1}{2r}A_{i\alpha\beta} y_\beta dx_i\wedge dy_{\hat\alpha}\nonumber\\
&+\frac{1}{4}(A_{i\alpha, \alpha+1}A_{j\alpha, \alpha+2}-A_{i\alpha, \alpha+2}A_{j\alpha, \alpha+1})y_\alpha \cdot dr\wedge dx_i\wedge dx_j+r^{-1}\Pi_3^{(2)}+O'(r^2),
\label{e:phi1expansion-2}
\end{align}
where $\Pi_3^{(2)}$ is the $3$-form introduced in Notation \ref{n:differential-form-notation} and 
the last step follows from the following lemma:
\begin{lemma}[Re-arrangement Lemma]
\label{l:rearrange}
\begin{equation}
 A_{i,\alpha+1,\beta}A_{j, \alpha+2, \gamma} y_\beta y_\gamma dy_{\alpha} \wedge dx_i\wedge dx_j=\frac{1}{2}(A_{i\alpha, \alpha+1}A_{j\alpha, \alpha+2}-A_{i\alpha, \alpha+2}A_{j\alpha, \alpha+1})y_\alpha \cdot rdr\wedge dx_i\wedge dx_j.
\end{equation}
\end{lemma}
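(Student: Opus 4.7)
The plan is to recast both sides in terms of $3$-dimensional vector calculus in the normal variables $y=(y_1,y_2,y_3)$. The skew-symmetry $A_{i\alpha\beta}=-A_{i\beta\alpha}$ from \eqref{e:symmetry-of-coefficient-p} allows me, for each tangential index $i$, to encode $A_{i\mu\nu}$ as a vector $\vec a_i\in\R^3$ via the Hodge dual $a_i^{\gamma}\equiv\tfrac{1}{2}\epsilon^{\gamma\mu\nu}A_{i\mu\nu}$, equivalently $A_{i\mu\nu}=\epsilon_{\mu\nu\gamma}a_i^{\gamma}$. The core computational identity is then $\sum_{\beta}A_{i\mu\beta}y_\beta=(\vec y\times\vec a_i)_\mu$, which converts the left hand side into a purely vector-algebraic quantity in the $y$ directions.

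With this encoding, the left hand side reads $\sum_\alpha b_{\alpha+1}c_{\alpha+2}\,dy_\alpha\wedge dx_i\wedge dx_j$, where $\vec b\equiv\vec y\times\vec a_i$ and $\vec c\equiv\vec y\times\vec a_j$. First I would antisymmetrize in $i,j$, which is legitimate because of the wedge $dx_i\wedge dx_j$; this replaces the coefficient of $dy_\alpha\wedge dx_i\wedge dx_j$ by $\tfrac{1}{2}(b_{\alpha+1}c_{\alpha+2}-b_{\alpha+2}c_{\alpha+1})=\tfrac{1}{2}(\vec b\times\vec c)_\alpha$. Next I would apply the BAC-CAB identity together with $\vec y\cdot(\vec y\times\vec a_i)=0$ to conclude $(\vec y\times\vec a_i)\times(\vec y\times\vec a_j)=[\vec y\cdot(\vec a_i\times\vec a_j)]\,\vec y$, and then invoke the elementary identity $r\,dr=y_\alpha\,dy_\alpha$ (from differentiating $r^2=y_\alpha y_\alpha$) to collapse the antisymmetrized LHS to $\tfrac{1}{2}[\vec y\cdot(\vec a_i\times\vec a_j)]\,r\,dr\wedge dx_i\wedge dx_j$. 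Finally, unpacking $a_i^{\alpha+1}=-A_{i,\alpha,\alpha+2}$ and $a_i^{\alpha+2}=A_{i,\alpha,\alpha+1}$, which follow directly from the definition of $a_i^\gamma$ and the skew-symmetry of $A$, yields $\vec y\cdot(\vec a_i\times\vec a_j)=y_\alpha(A_{i\alpha,\alpha+1}A_{j\alpha,\alpha+2}-A_{i\alpha,\alpha+2}A_{j\alpha,\alpha+1})$, matching the right hand side exactly.

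The calculation is routine vector algebra, so there is no genuine analytic obstacle. The main nuisance is bookkeeping the cyclic convention $\alpha+1,\alpha+2\pmod{3}$ together with the signs produced by the skew-symmetry of $A$ and by antisymmetrizing in $(i,j)$. The vector reformulation is designed precisely so that each of these signs enters only once, with the cross and dot product identities handling the remaining combinatorics automatically.
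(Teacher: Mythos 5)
Your proposal is correct. It proves exactly the identity of Lemma \ref{l:rearrange} (understood, as in the paper, with the implicit summation over all indices and after wedging with $dx_i\wedge dx_j$), and each step checks out: the encoding $A_{i\mu\nu}=\epsilon_{\mu\nu\gamma}a_i^{\gamma}$ is consistent with the skew-symmetry in \eqref{e:symmetry-of-coefficient-p}, the contraction $A_{i\mu\beta}y_\beta=(\vec y\times\vec a_i)_\mu$ is right, the identity $(\vec y\times\vec a_i)\times(\vec y\times\vec a_j)=[\vec y\cdot(\vec a_i\times\vec a_j)]\,\vec y$ follows from BAC--CAB plus $\vec y\cdot(\vec y\times\vec a_i)=0$, and the final unpacking $a_i^{\alpha+1}=-A_{i\alpha,\alpha+2}$, $a_i^{\alpha+2}=A_{i\alpha,\alpha+1}$ reproduces the stated coefficient. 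The one step worth stating more carefully is the antisymmetrization: the wedge $dx_i\wedge dx_j$ lets you replace the coefficient by its skew part in $(i,j)$, and because $\vec b$ and $\vec c$ are the \emph{same} construction $\vec y\times\vec a_\bullet$ applied to $i$ and $j$, swapping $i\leftrightarrow j$ swaps $\vec b\leftrightarrow\vec c$, so skewness in $(i,j)$ is equivalent to skewness in the component slots $(\alpha+1,\alpha+2)$; this is what turns the coefficient into $\tfrac12(\vec b\times\vec c)_\alpha$. Your route differs from the paper's in packaging rather than substance: the paper expands the four surviving terms explicitly, relabels the cyclic subscripts, skew-symmetrizes in $(i,j)$ by hand, and recognizes $y_\alpha dy_\alpha+y_{\alpha+1}dy_{\alpha+1}+y_{\alpha+2}dy_{\alpha+2}=r\,dr$, whereas your Hodge-dual/cross-product formulation lets the triple-product identity absorb all of that sign and index bookkeeping at once. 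The gain of your version is transparency and robustness against sign errors; the paper's version has the minor advantage of staying entirely in the index notation that is used throughout the surrounding computation (e.g.\ in Lemma \ref{l:key-cancellation}), so the intermediate expressions can be reused there directly.
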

\begin{proof}
First by writing out the terms and re-arranging the subscripts and using the skew symmetry of $A_{i\alpha\beta}$ we get
\begin{eqnarray}
&&A_{i,\alpha+1,\beta}A_{j, \alpha+2, \gamma} y_\beta y_\gamma dy_{\alpha}\nonumber \\
&=&\Big(A_{i, \alpha+1, \alpha}A_{j, \alpha+2, \alpha} y_\alpha^2+A_{i, \alpha+1, \alpha}A_{j, \alpha+2, \alpha+1} y_\alpha y_{\alpha+1}\nonumber\\
&&+A_{i, \alpha+1, \alpha+2}A_{j, \alpha+2, \alpha} y_\alpha y_{\alpha+2} +A_{i, \alpha+1, \alpha+2}A_{j, \alpha+2, \alpha+1} y_{\alpha+1}y_{\alpha+2}\Big)dy_\alpha\nonumber\\
&=&A_{i, \alpha, \alpha+1}A_{j, \alpha, \alpha+2} y_\alpha^2dy_\alpha-A_{i\alpha, \alpha+2} A_{j\alpha, \alpha+1} y_\alpha y_{\alpha+2}dy_{\alpha+2}\nonumber\\
&&-A_{i, \alpha, \alpha+2}A_{j, \alpha, \alpha+1}y_\alpha y_{\alpha+1}dy_{\alpha+1}-A_{i, \alpha, \alpha+1}A_{j, \alpha, \alpha+1}y_\alpha y_{\alpha+1}dy_{\alpha+2}.
\label{eqn3-98}\end{eqnarray}
So we have
\begin{equation} A_{i,\alpha+1,\beta}A_{j, \alpha+2, \gamma} y_\beta y_\gamma dy_{\alpha} \wedge dx_i\wedge dx_j=\frac{1}{2} (A_{i,\alpha+1,\beta}A_{j, \alpha+2, \gamma}-A_{j,\alpha+1,\beta}A_{i, \alpha+2, \gamma}) y_\beta y_\gamma dy_{\alpha} \wedge dx_i\wedge dx_j.\end{equation}
Correspondingly by skew-symmetrizing each term of (\ref{eqn3-98}) with respect to $i$ and $j$, we get
\begin{eqnarray}
&&\frac{1}{2} (A_{i,\alpha+1,\beta}A_{j, \alpha+2, \gamma}-A_{j,\alpha+1,\beta}A_{i, \alpha+2, \gamma})y_\beta y_\gamma dy_\alpha\nonumber\\
&=&\frac{1}{2}(A_{i, \alpha, \alpha+1}A_{j, \alpha, \alpha+2}- A_{i\alpha, \alpha+2}A_{j\alpha, \alpha+1}) y_\alpha (y_\alpha dy_\alpha+y_{\alpha+1}dy_{\alpha+1}+y_{\alpha+2}dy_{\alpha+2})\nonumber\\
&=& \frac{1}{2} (A_{i\alpha, \alpha+1}A_{j\alpha, \alpha+2}-A_{i\alpha, \alpha+2}A_{j\alpha, \alpha+1})y_\alpha \cdot rdr.
\end{eqnarray}
\end{proof}
\

\noindent {\bf Step 2:} In this step will explicitly compute the singular terms in the expansion of $\DelH\phi_1$. Mainly, we will prove the following proposition.

\begin{proposition}
\label{p:Delta-phi-1-expansion}
Let $\phi_1$ be the $3$-form defined in \eqref{e:def-phi-1}, then we have 
\begin{align} 
\Delta\phi_1=& -\frac{H^\alpha y_\alpha}{2r^3}dy_1\wedge dy_2\wedge dy_3-\Omega_{ij\alpha\beta} (\frac{1}{4r} dy_{\widehat{\alpha\beta}} +\frac{1}{4r^2} y_{\widehat{\alpha\beta}}dr )\wedge dx_i\wedge dx_j
\nonumber\\
&+A_{ij\alpha\beta}(\frac{1}{4r^2}y_{\widehat{\alpha\beta}}dr-\frac{1}{4r}dy_{\widehat{\alpha\beta}})\wedge dx_i\wedge dx_j +r^{-5} \Pi_3^{(4)}+O'(1).
\label{e:Delta-phi-1}
\end{align}

\end{proposition}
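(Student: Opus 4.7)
The plan is to compute $\DelH\phi_1 = dd^*\phi_1 + d^*d\phi_1$ by exploiting the two descriptions of $\phi_1$: the Hodge-dual expression $\phi_1 = (-1)^{m+1}\frac{1}{2r}{*}\dvol_T$, which is convenient for $d^*\phi_1$, and the coordinate expansion \eqref{e:phi1expansion-2}, which is convenient for $d\phi_1$. For the first piece, since $\dvol_T = \sqrt{\det(g_{ij}^P)}\,dx_1\wedge\cdots\wedge dx_{m-3}$ involves only the tangential $dx_i$'s, one has $d\dvol_T = 0$ in $\mathcal U$, so $d({*}\phi_1) = -\frac{1}{2r^2}dr\wedge\dvol_T$. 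Using $d^*\phi_1 = -{*}d{*}\phi_1$, the identity $dr\wedge\dvol_T = \frac{y_\alpha}{r}dy_\alpha\wedge\dvol_T = \frac{y_\alpha}{r}\eta_\alpha\wedge\dvol_T$ (since $dx_i\wedge\dvol_T=0$), and Lemma \ref{l:v-T-N}(2), I obtain an explicit $2$-form expression for $d^*\phi_1$ whose leading singularity is $\frac{y_\alpha}{2r^3}\eta_{\widehat\alpha}$ modulated by $(1-H^\beta y_\beta)$. Differentiating once more yields $dd^*\phi_1$; the derivative of the radial factor $y_\alpha/r^3$ is responsible for the $r^{-3}$-level singularities, while the derivative of the mean-curvature factor produces an $r^{-2}$ contribution that matches $-\frac{H^\alpha y_\alpha}{2r^3}dy_1\wedge dy_2\wedge dy_3$ after regrouping.

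Next I compute $d\phi_1$ directly from \eqref{e:phi1expansion-2}. The top term $\frac{1-H^\alpha y_\alpha}{2r}\,dy_1\wedge dy_2\wedge dy_3$ is already top degree in the normal variables, so only tangential derivatives $\p_{x_i}$ produce new $4$-form components, which at the singular orders we keep reduce to expressions built from $\p_{x_i}H^\alpha$ and from the coefficients $A_{i\alpha\beta}$. The middle term $\frac{1}{2r}A_{i\alpha\beta}y_\beta\,dx_i\wedge dy_{\widehat\alpha}$ contributes, via $\p_{x_j}$ and $\p_{y_\gamma}$ respectively, the two derivative tensors $A_{ij\alpha\beta} = \frac12(\p_{x_i}A_{j\alpha\beta}-\p_{x_j}A_{i\alpha\beta})$ and the quadratic-in-$A$ combinations already appearing in \eqref{e:phi1expansion-2}. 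I then apply $d^* = -{*}d{*}$ again, using Lemma \ref{l:v-T-N} to dualize the various $\eta$-factors; the remaining contributions from the lower-order error terms in \eqref{e:phi1expansion-2} fall into $r^{-5}\Pi_3^{(4)}$ or $O'(1)$ after one further differentiation by the book-keeping rules of Definition \ref{d:normal-regularity}.

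Finally I add $dd^*\phi_1$ and $d^*d\phi_1$, invoking the re-arrangement Lemma \ref{l:rearrange} to combine quadratic-$A$ terms, and the algebraic identity $\Omega_{ij\alpha\beta} = A_{ij\alpha\beta} + \frac12(A_{i\alpha\gamma}A_{j\gamma\beta} - A_{i\beta\gamma}A_{j\gamma\alpha})$ to assemble the curvature of the normal connection. The flat-model contributions (which would give a putative $r^{-3}dy_1\wedge dy_2\wedge dy_3$ piece) cancel exactly between $dd^*\phi_1$ and $d^*d\phi_1$, as they must since the Euclidean Green's current is harmonic off $P$; what survives is precisely the mean-curvature term $-\frac{H^\alpha y_\alpha}{2r^3}dy_1\wedge dy_2\wedge dy_3$ together with the two curvature-type $r^{-2}$ and $r^{-1}$ terms displayed in \eqref{e:Delta-phi-1}.

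The main obstacle will be organizing the bookkeeping: the metric expansions of Lemma \ref{l:metric-expansion} and the inverse-metric expansion of Lemma \ref{l:coe-crossing} introduce $A_{i\alpha\beta}$, $\IIs^\alpha_{ij}$, $H^\alpha$ and full Riemann-curvature corrections at several orders, and many apparent singular contributions of order $r^{-3}$ and $r^{-2}$ must be shown to cancel or to regroup into $\Omega_{ij\alpha\beta}$ and $A_{ij\alpha\beta}$. The cleanest way to manage this, which I will follow, is to systematically split every form along the $\eta_\alpha$-vs-$dx_i$ decomposition using Lemma \ref{l:v-T-N}, so that Hodge stars act transparently, and then absorb anything with homogeneous $y$-polynomial numerator of degree $\ge 4$ into the $r^{-5}\Pi_3^{(4)}$ remainder and anything smooth into $O'(1)$.
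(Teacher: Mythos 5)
Your plan is essentially the paper's own proof: it computes $d^*d\phi_1$ from the coordinate expansion of $\phi_1$ and $dd^*\phi_1$ from $*\phi_1=\tfrac{1}{2r}\dvol_T$, dualizes via the $\eta_\alpha$/$dx_i$ splitting and Lemma \ref{l:v-T-N}, and regroups derivative and quadratic $A$-terms into $A_{ij\alpha\beta}$ and $\Omega_{ij\alpha\beta}$ with the same $O'$/$\Pi_3^{(k)}$ bookkeeping and rearrangement identities. One small correction to your summary: the dangerous singular terms cancel \emph{within} $dd^*\phi_1$ (this is the paper's Cancellation Lemma \ref{l:key-cancellation}; in the flat model $d\phi_1$ already vanishes, so $d^*d\phi_1$ contributes no flat-model singularity), not between $dd^*\phi_1$ and $d^*d\phi_1$, but this does not affect the viability of your outline.
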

 
\begin{proof}

The proof consists of two steps. 
The first step focuses on the computation for $d^*d\phi_1$. Starting with the expansion of $\phi_1$ in \eqref{e:phi-1-expansion}, we have \begin{equation}
d\phi_1 = \frac{-1+H_{\alpha}y_{\alpha}+\tO(r^2)}{2r^3} \cdot rdr\wedge \dvol_N +  \frac{1}{2r} d\Big((1-H^{\alpha}y_{\alpha}+{\tO}(r^2)) \dvol_N\Big).\label{e:d-phi-1}
\end{equation}
To deal with the first term, we use Lemma \ref{l:coe-crossing} and \eqref{Gausslemma} in Lemma \ref{l:generalized-Gauss}, then \begin{eqnarray} \label{eqn2-14}
y_\alpha \eta_\alpha&=&y_{\alpha}dy_{\alpha}-y_{\alpha}p_{i\alpha}dx_i\nonumber\\
&=&rdr-y_\alpha g_{i\alpha}dx_i+\tO(r^4)
\nonumber\\
&=&rdr+\tO(r^4),\label{e:gauss-cancel}
\end{eqnarray}
which yields
\begin{equation}
rdr\wedge \dvol_N=(y_\alpha\eta_\alpha)\wedge \dvol_N+\tO(r^4)=\tO(r^4).\label{e:normal-times-vertical-v}
\end{equation}
So it follows that
\begin{equation}
d\phi_1 = \frac{1}{2r} d\Big((1-H^{\alpha}y_{\alpha}+\tO(r^2)) \dvol_N\Big) + O'(r).
\end{equation}
It is easy to see that
\begin{equation}d(\tO(r^2)\dvol_N)=\tO(r^2).\end{equation}
So we obtain 
\begin{equation}
d\phi_1= \frac{1}{2r} d\Big((1-H^{\alpha}y_{\alpha}) \dvol_N\Big) + O'(r).\label{e:d-phi-1-dv-N}
\end{equation}
Next, let us compute the expansion for $d(\dvol_N)$. By definition, \begin{equation}d(\dvol_N)=d(\eta_1\wedge\eta_2\wedge\eta_3)=d\eta_\alpha\wedge\eta_{ \widehat\alpha}.\end{equation}
By \eqref{e:symmetry-of-coefficient-p},
\begin{eqnarray}
\label{detaalpha}
d\eta_\alpha&=&
d(p_{i\alpha})\wedge dx_i
\nonumber\\
&=&A_{i\alpha\beta}dy_{\beta} \wedge dx_i+A_{ji\alpha\beta}  y_{\beta} dx_j\wedge dx_i+B_{i\alpha\beta\gamma}y_{\beta}dy_{\gamma}\wedge  dx_i+\tO(r^2).\end{eqnarray}
So we have 
\begin{equation}d(\dvol_N)=A_{i\alpha\beta}dy_{\beta} \wedge dx_i \wedge \eta_{\widehat{\alpha}}+ y_\beta (A_{ji\alpha\beta}  dx_j\wedge dx_i+B_{i\alpha\beta\gamma} dy_\gamma \wedge dx_i )\wedge dy_{\widehat\alpha}
+\tO(r^2).
\label{e:dv-N-splitting}\end{equation}
Now we need to rearrange the above expansion. Since $A_{i\alpha\beta}$ is skew symmetric in $\alpha$ and $\beta$, we have for $\alpha\in \{1, 2, 3\}$, 
\begin{equation}A_{i\alpha\alpha}=0,\end{equation} so the leading order in the first term vanishes, hence 
\begin{eqnarray}A_{i\alpha\beta}dy_\beta\wedge  dx_i \wedge \eta_{\widehat{\alpha}}
&=&A_{i\alpha\beta}A_{j\mu\gamma}y_{\gamma}dy_\beta \wedge dx_i\wedge dx_j\wedge dy_{\widehat{\alpha\mu}}+\tO(r^2)
\nonumber\\
&=&A_{i\alpha\beta}A_{j\beta\gamma}y_\gamma dy_{\widehat\alpha}\wedge dx_i\wedge dx_j+ \tO(r^2)
\nonumber\\
&=&\frac{1}{2}(A_{i\alpha\beta}A_{j\beta\gamma}-A_{i\gamma\beta}A_{j\beta\alpha})y_\gamma dy_{\widehat\alpha}\wedge dx_i\wedge dx_j+ \tO(r^2).
\end{eqnarray}
Therefore, 
\begin{equation}
d(\dvol_N) =\Omega_{ij\alpha\beta}\cdot y_\beta \cdot dy_{\widehat\alpha}\wedge dx_i\wedge dx_j+B_{i\alpha\beta\alpha}\cdot y_\beta \cdot dy_1\wedge dy_2\wedge dy_3\wedge dx_i+\tO(r^2).\label{e:1-d-N}
\end{equation}
By \eqref{e:dvol-N-expansion} we have
\begin{equation} \label{e:H-d-N}
d(H^{\alpha}y_{\alpha})\wedge \dvol_N=(H^\alpha A_{i\alpha\beta}-\partial_i(H^\beta))\cdot y_{\beta}\cdot dy_1\wedge dy_2\wedge dy_3\wedge dx_i+\tO(r^2).
\end{equation}
Now substituting \eqref{e:1-d-N} and \eqref{e:H-d-N} into \eqref{e:d-phi-1-dv-N},
\begin{align} \label{e:d-phi-1-exp}
d\phi_1
 &= \frac{1}{2r}\Omega_{ij\alpha\beta}\cdot y_\beta \cdot  dy_{\widehat\alpha}\wedge dx_i\wedge dx_j\nonumber\\
 &+\frac{1}{2r}\Big(B_{i\alpha\beta\alpha}-(H^\alpha A_{i\alpha\beta}-\partial_i(H^\beta)\Big)\cdot y_\beta  \cdot dy_1\wedge dy_2\wedge dy_3\wedge dx_i+O'(r).
 \end{align}
Now we need to take $d^*$ of this. Notice that the leading order of $d^*d\phi_1$ can be obtained using the flat Euclidean model, so we obtain
\begin{eqnarray} \label{e:d*dphi1-curvature-to-be-simplified}
d^*d\phi_1&=& \Omega_{ij\alpha\beta} (\frac{1}{2r} dy_{\widehat{\beta\alpha}} -\frac{1}{2r^3} y_\mu y_\beta dy_{\widehat{\mu\alpha}})dx_i\wedge dx_j+r^{-3}\Pi_3^{(2)}+O'(1).
\end{eqnarray}

In our next step, let us compute $dd^*\phi_1$.  First, 
\begin{equation}*\phi_1=\frac{1}{2r}\dvol_T.\end{equation}
Notice that $d(\dvol_T)=0$, so 
\begin{equation}d*\phi_1=-\frac{1}{2r^3} \cdot rdr\wedge \dvol_T.\end{equation}
By \eqref{e:gauss-cancel}, $rdr= y_{\alpha}\eta_{\alpha}+{\tO}(r^4)$, then
\begin{equation}
d*\phi_1=-\frac{y_{\alpha}}{2r^3}{}\eta_{\alpha}\wedge \dvol_T +O'(r).\end{equation}
Applying item (2) of Lemma \ref{l:v-T-N}, 
\begin{equation}*d*\phi_1=-\frac{y_{\alpha}}{2r^3}{}(1-H^{\beta}y_{\beta} + \tO(r^2))\eta_{\widehat{\alpha}} + O'(r).\end{equation} 
So it follows  that
\begin{eqnarray} \label{eqn2-19}
d^*\phi_1
&=&- *d*\phi_1= \frac{y_{\alpha}}{2r^3}{}(1-H^{\beta}y_{\beta})\eta_{\widehat{\alpha}}{} + \frac{\tO(r^2)}{r^3}y_\alpha dy_{\widehat\alpha} + O'(r).
\end{eqnarray}
Taking $d$ and applying Lemma \ref{l:generalized-Gauss},
\begin{align}
dd^*\phi_1=&(1-H^{\beta}y_{\beta})\Big(-\frac{3y_{\alpha}}{2r^5}{} rdr\wedge \eta_{\widehat{\alpha}}+\frac{1}{2r^3} d(y_\alpha \eta_{\widehat{\alpha}})\Big){}
-\frac{H^{\beta}y_{\alpha}}{2r^3}\eta_{\widehat{\alpha}}\wedge dy_{\beta}+r^{-5}\Pi_3^{(4)}+O'(1)
\nonumber\\
=&(1-H^{\beta}y_{\beta})\Big(-\frac{3y_{\alpha}}{2r^5}{} rdr\wedge \eta_{\widehat{\alpha}}+\frac{1}{2r^3} d(y_\alpha \eta_{\widehat{\alpha}})\Big)
-\frac{H^{\alpha}y_{\alpha}}{2r^3}dy_1\wedge dy_2\wedge dy_3 
\nonumber\\
+&r^{-5}\Pi_3^{(4)}+O'(1).\label{e:dd^*}\end{align}
Now we simplify this expression. By \eqref{eqn2-14}, 
\begin{equation}
-\frac{3y_{\alpha}}{2r^5}{} rdr\wedge \eta_{\widehat{\alpha}} =- \frac{3y_\alpha y_\beta}{2r^5} {}\eta_\beta \wedge \eta_{\widehat{\alpha}}+O'(1)=-\frac{3}{2r^3}{}\dvol_N+{O}'(1).\label{e:first-term}\end{equation}
Also
\begin{eqnarray}
\frac{1}{2r^3} d(y_\alpha \eta_{\widehat{\alpha}}) &=& \frac{1}{2r^3}dy_{\alpha}\wedge \eta_{\widehat{\alpha}} +\frac{1}{2r^3}y_{\alpha}d\eta_{\widehat{\alpha}}
\nonumber\\
&=&\frac{1}{2r^3}(\eta_{{\alpha}} - p_{i\alpha} dx_i)\wedge \eta_{\widehat{\alpha}} +\frac{1}{2r^3}y_\alpha (d{\eta_{\alpha+1}}\wedge {\eta_{\alpha+2}}-{\eta_{\alpha+1}}\wedge d{\eta_{\alpha+2}})
\nonumber\\
&=& \frac{3}{2r^3}\dvol_N -\frac{1}{2r^3}(A_{i\alpha\beta}y_\beta+\frac{1}{2}B_{i\alpha\beta\gamma}y_\beta y_\gamma)dx_i\wedge\eta_{\widehat{\alpha}}
\nonumber\\
&+&\frac{1}{2r^3}y_\alpha (d{\eta_{\alpha+1}}\wedge {\eta_{\alpha+2}}-{\eta_{\alpha+1}}\wedge d{\eta_{\alpha+2}})+{O'}(1).
\end{eqnarray}
So it follows that
\begin{align}
dd^*\phi_1 =& -\frac{1}{2r^3}\Big(A_{i\alpha\beta}\cdot y_{\beta}\cdot dx_i\wedge\eta_{\widehat{\alpha}} -y_\alpha (d{\eta_{\alpha+1}}\wedge {\eta_{\alpha+2}}-{\eta_{\alpha+1}}\wedge d{\eta_{\alpha+2}})\Big)
\nonumber\\
-&\frac{H^{\alpha}y_{\alpha}}{2r^3}dy_1\wedge dy_2\wedge dy_3+r^{-5}\Pi_3^{(4)}+O'(1).\label{e:dd*phi1-before-key-cancellation}
\end{align}

Next, we will show a crucial cancellation for the first term of the above $dd^*\phi_1$, which gives a further order improvement.

\begin{lemma}[Cancellation Lemma]\label{l:key-cancellation} The following holds:
\begin{align} 
& A_{i\alpha\beta}\cdot y_{\beta}\cdot dx_i\wedge \eta_{\widehat{\alpha}}
-y_{\alpha} (d{\eta_{\alpha+1}}\wedge{\eta_{\alpha+2}}-{\eta_{\alpha+1}}\wedge d{\eta_{\alpha+2}})
\nonumber\\
=& -A_{ij\alpha\beta} y_\beta y_\mu dy_{\widehat{\mu\alpha}}\wedge dx_i\wedge dx_j
+\Pi_3^{(2)}+{\tO}(r^3).\end{align}
\end{lemma}

\begin{proof}
Directly applying the definition of $\eta_{\alpha}$,  we have
\begin{align} \label{e:cancellation-first-term}
& A_{i\alpha\beta}\cdot y_{\beta}\cdot  dx_i\wedge \eta_{\widehat{\alpha}}\nonumber\\
=& A_{i\alpha\beta} y_\beta dx_i\wedge dy_{\widehat{\alpha}}+ A_{i\alpha\beta} y_\beta y_\gamma (A_{j, \alpha+1, \gamma} dy_{\alpha+2}-A_{j, \alpha+2, \gamma} dy_{\alpha+1})\wedge dx_i\wedge dx_j+\tO(r^3).
\end{align}
 By \eqref{detaalpha},  we get 
\begin{align} \label{e:eta-d-eta}
&y_\alpha (d{\eta_{\alpha+1}}\wedge{\eta_{\alpha+2}}-{\eta_{\alpha+1}}\wedge d{\eta_{\alpha+2}})\nonumber\\
=& y_\alpha (A_{i, \alpha+1, \beta}dy_\beta \wedge dx_i \wedge dy_{\alpha+2}-A_{i, \alpha+2, \beta}dy_\beta \wedge dx_i \wedge dy_{\alpha+1})\nonumber\\
+&y_\alpha y_\gamma (A_{i, \alpha+1, \beta} A_{j, \alpha+2, \gamma}-A_{i, \alpha+2, \beta} A_{j, \alpha+1, \gamma} )dy_\beta\wedge dx_i\wedge dx_j\nonumber\\
+&y_\alpha y_\beta (A_{ij, \alpha+1, \beta} dy_{\alpha+2}-A_{ij, \alpha+2, \beta} dy_{\alpha+1})\wedge dx_i \wedge dx_j\nonumber\\
+&\Pi_3^{(2)}+\tO(r^3).
\end{align}
Rearranging the subscripts of the first groups of terms in \eqref{e:eta-d-eta}, 
\begin{eqnarray}
&&y_\alpha (A_{i, \alpha+1, \beta}dy_\beta \wedge dx_i \wedge dy_{\alpha+2}-A_{i, \alpha+2, \beta}dy_\beta \wedge dx_i \wedge dy_{\alpha+1})\\
&=& y_\alpha (A_{i, \alpha+1, \alpha}dy_\alpha \wedge dx_i \wedge dy_{\alpha+2}-A_{i, \alpha+2, \alpha}dy_\alpha \wedge dx_i \wedge dy_{\alpha+1})\\
&=& y_{\alpha+2} A_{i, \alpha, \alpha+2}dy_{\alpha+2} \wedge dx_i \wedge dy_{\alpha+1}-y_{\alpha+1}A_{i, \alpha, \alpha+1}dy_{\alpha+1} \wedge dx_i \wedge dy_{\alpha+2}\\
&=& A_{i\alpha\beta} y_\beta dx_i \wedge dy_{\widehat{\alpha}},
\end{eqnarray}
which matches the first term of \eqref{e:cancellation-first-term}. As in the proof of Lemma \ref{l:rearrange}, one can see that the second groups of terms in \eqref{e:cancellation-first-term} and \eqref{e:eta-d-eta} are both equal to 
\begin{equation}
 (A_{i\alpha, \alpha+1}A_{j\alpha, \alpha+2}-A_{i\alpha, \alpha+2}A_{j\alpha, \alpha+1})y_\alpha \cdot rdr\wedge dx_i\wedge dx_j. 
\end{equation}
Next, the third group of terms in \eqref{e:eta-d-eta} can be rewritten as follows,  
\begin{eqnarray}
&&y_\alpha y_\beta (A_{ij, \alpha+1, \beta} dy_{\alpha+2}-A_{ij, \alpha+2, \beta} dy_{\alpha+1})\wedge dx_i \wedge dx_j
\nonumber\\
&=&
A_{ij\alpha\beta} y_\beta (y_{\alpha+2} dy_{\alpha+1}-y_{\alpha+1}dy_{\alpha+2})\wedge dx_i\wedge dx_j
\nonumber\\
&=&A_{ij\alpha\beta} y_\beta y_\mu dy_{\widehat{\mu\alpha}}\wedge dx_i\wedge dx_j.
\end{eqnarray}
The conclusion just follows.
\end{proof}

Let us return to the expansion of 
$dd^*\phi_1$ given by 
\eqref{e:dd*phi1-before-key-cancellation}. Applying Lemma \ref{l:key-cancellation}, we have 
\begin{align}
dd^*\phi_1 &= -\frac{H^\alpha y_\alpha}{2r^3}dy_1\wedge dy_2\wedge dy_3+\frac{1}{2r^3} A_{ij\alpha\beta} y_\beta y_\mu dy_{\widehat{\mu\alpha}}\wedge dx_i\wedge dx_j\nonumber\\
&+r^{-5} \Pi_3^{(4)}+O'(1). \label{e:dd*phi1-A-to-be-simplified}
\end{align}

To further simplify,   we need the following lemma.  
\begin{lemma}[Re-arrangement Lemma] The following identities hold:
\begin{align}
\Omega_{ij\alpha\beta} (\frac{1}{2r} dy_{\widehat{\beta\alpha}} -\frac{1}{2r^3} y_\mu y_\beta dy_{\widehat{\mu\alpha}}) &=-\Omega_{ij\alpha\beta} (\frac{1}{4r} dy_{\widehat{\alpha\beta}} +\frac{1}{4r^2} y_{\widehat{\alpha\beta}}dr ), \label{e:curvature-simplification}
\\
\frac{1}{2r^3} A_{ij\alpha\beta} y_\beta y_\mu dy_{\widehat{\mu\alpha}}&=A_{ij\alpha\beta}(\frac{1}{4r^2}y_{\widehat{\alpha\beta}}dr-\frac{1}{4r}dy_{\widehat{\alpha\beta}}).\label{e:connection-simplification}
\end{align}
\end{lemma}
\begin{proof}We only prove \eqref{e:curvature-simplification} because the other equality follows from the same computations. 
Using the fact that $\Omega_{ij\alpha\beta}=-\Omega_{ij\beta\alpha}$, we can write out the left hand side as 
\begin{align}
&\Omega_{ij\alpha\beta} (\frac{1}{2r} dy_{\widehat{\beta\alpha}} -\frac{1}{2r^3} y_\mu y_\beta dy_{\widehat{\mu\alpha}})\nonumber\\
=&
\Omega_{ij\alpha, \alpha+1}\Big(-\frac{1}{r} dy_{\alpha+2} -\frac{1}{2r^3} (y_{\alpha+2} y_{\alpha+1} dy_{y_{\alpha+1}}-y_{\alpha+1}^2dy_{\alpha+2} )+\frac{1}{2r^3} (y_\alpha^2 dy_{\alpha+2}-y_\alpha y_{\alpha+2} dy_{\alpha})\Big)\nonumber\\
=&\Omega_{ij\alpha, \alpha+1}\Big(-\frac{1}{2r}dy_{\alpha+2}-\frac{1}{2r^2} y_{\alpha+2}dr\Big)\nonumber\\
=& -\frac{1}{2}\Omega_{ij\alpha, \beta}\Big(\frac{1}{2r}dy_{\widehat{\alpha\beta}}+\frac{1}{2r^2} y_{\widehat{\alpha\beta}}dr\Big).
\end{align}
\end{proof}
Applying the above lemma, \eqref{e:d*dphi1-curvature-to-be-simplified} and \eqref{e:dd*phi1-A-to-be-simplified} can 
be simplified as follows,
\begin{align}
d^*d\phi_1 &= -\Omega_{ij\alpha\beta}(\frac{1}{4r}dy_{\widehat{\alpha\beta}}+\frac{1}{4r^2}y_{\widehat{\alpha\beta}}dr)\wedge dx_i\wedge dx_j + r^{-3}\Pi_3^{-2} + O'(1),
\\
dd^*\phi_1 &=  -\frac{H^\alpha y_\alpha}{2r^3}dy_1\wedge dy_2\wedge dy_3+ A_{ij\alpha\beta} (\frac{1}{4r^2}y_{\widehat{\alpha\beta}}dr-\frac{1}{4r}dy_{\widehat{\alpha\beta}})\wedge dx_i\wedge dx_j\nonumber\\
&+r^{-5} \Pi_3^{(4)}+O'(1).
\end{align}
Therefore,
\begin{align}
\Delta \phi_1 = &  (d^*d+dd^*)\phi_1 
\nonumber\\
= & -\frac{H^\alpha y_\alpha}{2r^3}dy_1\wedge dy_2\wedge dy_3-\Omega_{ij\alpha\beta} (\frac{1}{4r} dy_{\widehat{\alpha\beta}} +\frac{1}{4r^2} y_{\widehat{\alpha\beta}}dr )\wedge dx_i\wedge dx_j
\nonumber\\
&+A_{ij\alpha\beta}(\frac{1}{4r^2}y_{\widehat{\alpha\beta}}dr-\frac{1}{4r}dy_{\widehat{\alpha\beta}})\wedge dx_i\wedge dx_j +r^{-5} \Pi_3^{(4)}+O'(1).
\end{align}
The proof is done. 
\end{proof}

\noindent{\bf Step 3:} In this step we modify $\phi_1$ to kill the unbounded terms on the right hand side of \eqref{e:Delta-phi-1}.
We  first we recall some elementary computations involving the standard Euclidean Hodge Laplacian.
\begin{lemma}\label{l:euclidean-laplacian}
Let $\Delta_0$ be the standard Hodge Laplacian on the Euclidean space $\R^3$, then the following holds:
\begin{enumerate}
\item Let $\{y_1,y_2,y_3\}$ be the Cartesian coordinates of $\R^3$, then 
\begin{align}
\begin{cases}
\Delta_0r=-\frac{2}{r},\\
\Delta_0(\frac{y_\alpha}{r})=\frac{2y_\alpha}{r^3},\\
\Delta_0(\frac{y_\alpha y_\beta}{r})=\frac{4y_\alpha y_\beta}{r^3}, &  \alpha\neq \beta,\\
\Delta_0((\frac{y_\alpha^2}{r}-r))=\frac{4y_\alpha^2}{r^3}.
\end{cases}
\end{align}
\item Denote by $\mathcal P_4$ the space of all homogeneous degree 4 polynomials on $\R^3$,   then the operator 
\begin{equation}\square: \mathcal P_4\rightarrow \mathcal P_4; f\mapsto r^{5}\Delta_0(r^{-3}f)\end{equation}
is an isomorphism.

\end{enumerate}
\end{lemma}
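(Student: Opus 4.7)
My plan is to treat the two parts of the lemma separately, each by elementary calculation plus a standard representation-theoretic input for part (2).

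For part (1), the key input is that on functions in $\R^3$ the Hodge Laplacian equals minus the usual Laplacian, i.e.\ $\Delta_0 f = -\sum_\alpha \partial_{y_\alpha}^2 f$. Combined with the Leibniz formula
\begin{equation*}
\Delta_0(fg)=f\,\Delta_0 g+g\,\Delta_0 f-2\langle\nabla f,\nabla g\rangle,
\end{equation*}
the four identities will drop out from the harmonicity of $1/r$ and $y_\alpha$ on $\R^3\setminus\{0\}$. Explicitly, $\Delta_0 r=-2/r$ is a direct computation from $\partial_{y_\alpha}r=y_\alpha/r$. For $y_\alpha/r$ and $y_\alpha y_\beta/r$ with $\alpha\neq\beta$, both factors are harmonic in the usual sense, so only the cross term $-2\langle\nabla(y_\alpha),\nabla(1/r)\rangle=2y_\alpha/r^3$ (resp.\ $4y_\alpha y_\beta/r^3$) survives. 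The last identity is the only slightly subtler one: since $\Delta y_\alpha^2=2$, one finds $\Delta(y_\alpha^2/r)=2/r-4y_\alpha^2/r^3$, and combining with $\Delta_0 r = -2/r$ gives exactly $\Delta_0(y_\alpha^2/r-r)=4y_\alpha^2/r^3$.

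For part (2), the first step is to derive a clean algebraic formula for $\square$. Writing $\Delta_0=-\Delta$ and applying Leibniz to $r^{-3}f$, using $\Delta r^{-3}=6r^{-5}$ on $\R^3\setminus\{0\}$, $\nabla r^{-3}=-3r^{-5}\vec{y}$, and Euler's identity $\vec y\cdot\nabla f=4f$ for $f\in\mathcal P_4$, one computes
\begin{equation*}
r^5\Delta(r^{-3}f)=r^2\Delta f-18f,
\end{equation*}
hence $\square f=18f-r^2\Delta f$. In particular $\square$ does map $\mathcal P_4$ into $\mathcal P_4$, as claimed. The second step is to exhibit $\square$ as diagonalizable with nonzero eigenvalues via the classical harmonic decomposition $\mathcal P_4=\mathcal H_4\oplus r^2\mathcal H_2\oplus r^4\mathcal H_0$. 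On $\mathcal H_4$ we have $\Delta f=0$, so $\square f=18f$; for $f=r^2 g$ with $g\in\mathcal H_2$, a short computation gives $\Delta(r^2g)=14g$, whence $\square(r^2g)=4r^2g$; and on $r^4\mathcal H_0=\R\cdot r^4$, using $\Delta r^4=20r^2$, one gets $\square(r^4)=-2r^4$. All three eigenvalues $18,4,-2$ are nonzero, so $\square$ is an isomorphism.

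Neither part poses a substantive obstacle: part (1) is pure computation, and the only mild thing to watch in part (2) is the sign convention $\Delta_0=-\Delta$, since a sign slip would spoil the formula $\square f=18f-r^2\Delta f$ and hence the eigenvalue analysis. Once that identity is in hand, the harmonic decomposition immediately finishes the proof.
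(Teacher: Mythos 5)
Your proof is correct, and both parts check out: the Leibniz computations in part (1) are accurate (with the correct sign convention $\Delta_0=-\sum_\alpha\partial_{y_\alpha}^2$ on functions), and in part (2) the identity $r^5\Delta(r^{-3}f)=r^2\Delta f-18f$, hence $\square f=18f-r^2\Delta f$, is right, as are the eigenvalues $18$, $4$, $-2$ on the summands of $\mathcal P_4=\mathcal H_4\oplus r^2\mathcal H_2\oplus r^4\mathcal H_0$. For part (1) your route is essentially the paper's (a direct calculation; the paper merely suggests organizing it via restrictions to $S^2$ and homogeneous extensions). For part (2), however, you take a genuinely different route: the paper also first checks that $\square$ maps $\mathcal P_4$ to $\mathcal P_4$ via the same Leibniz expansion, but then proves injectivity by a soft PDE argument — if $\square f=0$ then $u=r^{-3}f$ is harmonic on $\R^3\setminus\{0\}$ with $u=O(r)$ at both $0$ and $\infty$, so by removable singularity and the Liouville theorem $u$ is linear, forcing $f\equiv 0$ — and concludes by finite-dimensionality. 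Your spectral argument via the harmonic decomposition is more explicit: it diagonalizes $\square$ outright and in effect produces the explicit inverse that the paper only remarks is possible, at the modest cost of invoking the classical decomposition of $\mathcal P_4$ and doing three eigenvalue computations; the paper's argument avoids any decomposition but relies on the removable singularity and Liouville theorems. The one point to keep watching, as you note, is the Hodge-versus-analyst sign convention, which you have handled consistently throughout.
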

\begin{proof}
Item (1) is a direct calculation. An convenient way to see this is to use the following two facts: 
\begin{enumerate}
\item A homogeneous polynomial degree $k$ polynomial restricts to an eigenfunction of the Hodge-Laplacian $\Delta_{S^2}$ on the unit sphere $S^2$, with eigenvalue $k(k+1)$. 
\item Given an eigenfunction $h$ of $\Delta_{S^2}$ on the unit sphere with eigenvalue $k$, for any $l$, we can extend $h$ to a homogeneous function $h_l$ on $\R^3\setminus\{0\}$ of degree $l$, and
\begin{equation}\Delta_0 h_l=r^{-2}(k-l(l+1))h_l.\end{equation}
\end{enumerate}

For the second item it is possible to write down an explicit inverse to $\Delta_0$.  Here we provide a quick abstract proof. First we notice $\square:\mathcal{P}_4\to\mathcal{P}_4$ is a well-defined linear map. This follows from the standard computations
\begin{eqnarray*}
r^5\Delta_0(r^{-3}f)&=&r^5\Delta_0(r^{-3})\cdot f-2 r^5\nabla (r^{-3})\cdot \nabla f+r^{2}\Delta_0 f\\
&=& -6 f-3\nabla (r^2)\cdot \nabla f + r^2\Delta_0f.
\end{eqnarray*}
Since each term in the above formula is a polynomial in $\mathcal{P}_4$, so $\square f\in \mathcal{P}_4$. 

Now to prove $\square$ is an isomorphism  it suffices to prove it has a trivial kernel. Let $u\equiv r^{-3}f$, then $u=O(r)$ for both $r\to0$ and $r\to\infty$. If $\Delta_0(u)=0$, then $u$ is harmonic on $\R^3\setminus\{0\}$. The removable singularity theorem implies that $u$ extends smoothly on $\R^3$. Since $u=O(r)$ as $r\to\infty$, by Liouville theorem,  $u$ is a linear function. Since $f\in\mathcal{P}_4$, we conclude $f\equiv 0$. The proof is done. 
\end{proof}

Next, we want to find a bounded correction $3$-form 
$\mathfrak{B}_0=O'(1)$ such that  
\begin{equation}\Delta(\phi_1+\mathfrak{B}_0)=O'(1)\ \text{on}\ \mathcal{U}\setminus P.\end{equation}
Now the main part is to eliminate the unbounded terms in $\Delta\phi_1$ which relies on the following explicit calculations for $\Delta\mathfrak{B}_0$.
In fact, the leading terms of $\Delta\mathfrak{B}_0$ are exactly given by the Euclidean Laplacian $\Delta_0$ acting on the normal components
such that the explicit computations in Lemma \ref{l:euclidean-laplacian} can be effectively used in our context. 
Precisely, we have the following lemma.

\begin{lemma}
\label{l:laplacian-error} Let $\Delta_0$ be the Hodge Laplacian on $\dR^3$, then the following holds:
\begin{enumerate}
\item 
Denote by $\nu_y $ one of the following differential forms $dy_{\alpha}$, $dy_{\alpha}\wedge dy_{\beta}$ or $dy_1\wedge dy_2\wedge dy_3$. Similarly, let $\tau_x$ be a  tangential $p$-form given by $\tau_x \equiv dx_1\wedge\ldots \wedge dx_{\alpha_p}$ with $0\leq p\leq m-3$.  
Let $\omega  \equiv f(x)h(y) \nu_y \wedge  \tau_x$, where $f(x)$ is a smooth function defined on $U$ and $h(y)=O'(|y|^k)$ for some $k\in\dZ_+$. Then
\begin{equation}
\Delta \omega - f(x)\cdot \Delta_0(h(y))\cdot \nu_y\wedge \tau_x = O'(|y|^{k-1}).
\end{equation}

\item Let $f$ be a smooth function defined on $U\subset P$ 
and let
$\omega \equiv f(x)\cdot \frac{y_{\alpha}}{r}dy_1\wedge dy_2\wedge dy_3$. Then
\begin{align}
\Delta \omega = 2r^{-2}\omega+ r^{-5}\Gamma_3^{(4)}+O'(1),
\end{align}
where the $3$-form $\Gamma_3^{(4)}$ has the form of  \eqref{e:3-form-polynomial-coe} in Notation \ref{n:differential-form-notation}.
\end{enumerate}

 \end{lemma}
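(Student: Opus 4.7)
The plan is to compare $\Delta=\Delta_g$ with the Hodge Laplacian of an approximate product metric. Define $g_0$ on $\mathcal U$ by $(g_0)_{\alpha\beta}=\delta_{\alpha\beta}$, $(g_0)_{ij}=g^P_{ij}(x)$, $(g_0)_{i\alpha}=0$, so that $g_0$ is the product of the flat Euclidean metric on the normal fibers with the Riemannian metric $g^P$ on $U\subset P$. By Lemma~\ref{l:metric-expansion}, $g=g_0+h$ with $h_{\alpha\beta}=\tO(r^2)$, $h_{ij}=2\IIs^{\alpha}_{ij}y_\alpha+\tO(r^2)$, and $h_{i\alpha}=A_{i\alpha\beta}y_\beta+\tO(r^2)$, so every component of $h$ vanishes at least to first order along $P$.

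To prove part (1), I note first that the Hodge Laplacian $\Delta^{g_0}$ respects the product structure, and since $\nu_y$ has constant coefficients in normal coordinates one has $\Delta_0(h\nu_y)=(\Delta_0 h)\nu_y$ on the $\R^3$-factor, so that
$$
\Delta^{g_0}\omega \;=\; f(x)(\Delta_0 h)(y)\,\nu_y\wedge\tau_x \;+\; h(y)\,\nu_y\wedge \Delta^P(f\tau_x),
$$
where $\Delta^P$ denotes the Hodge Laplacian of $(U,g^P)$. The first summand is precisely the leading term in the statement; the second is smooth in $x$ and bounded pointwise by $|h(y)|$, hence lies in $O'(|y|^k)\subset O'(|y|^{k-1})$. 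It remains to bound $(\Delta^g-\Delta^{g_0})\omega$. Its coefficients are smooth functions of $g$, $g^{-1}$, $\partial g$ involving either $g-g_0=O(r)$ or $\partial(g-g_0)=O(1)$, always as a second-order linear operator. Since $h=O'(|y|^k)$ satisfies $|\partial_t^I\partial_n^J h|=O(r^{k-|J|-\epsilon})$ for $|J|\ge k$ and is bounded for $|J|\le k-1$, the worst contribution is $O(r)\cdot O(r^{k-2-\epsilon})+O(1)\cdot O(r^{k-1-\epsilon})=O(r^{k-1-\epsilon})$; further differentiation shifts the indices in the manner prescribed by Definition~\ref{d:normal-regularity}. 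Tangential derivatives fall only on smooth factors and contribute terms of even higher order. This gives $(\Delta^g-\Delta^{g_0})\omega=O'(|y|^{k-1})$.

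For part (2), set $h(y)=y_\alpha/r$, which is $O'(1)$ with $k=0$. Lemma~\ref{l:euclidean-laplacian}(1) gives $\Delta_0(y_\alpha/r)=2y_\alpha/r^3$, so part (1) already yields $\Delta^g\omega=2r^{-2}\omega+O'(r^{-1})$. To refine the $O'(r^{-1})$ error to the form $r^{-5}\Gamma_3^{(4)}+O'(1)$, I would re-inspect $(\Delta^g-\Delta^{g_0})\omega$ and retain only those contributions that are genuinely homogeneous of degree $-1$ in $y$. The second normal derivatives of $y_\alpha/r$ are homogeneous of degree $-2$; multiplying them by the linear-in-$y$ parts $2\IIs^\alpha_{ij}y_\alpha$ of $h_{ij}$ and $A_{i\alpha\beta}y_\beta$ of $h_{i\alpha}$ (together with the corresponding leading perturbations of $g^{-1}$) gives terms homogeneous of degree $-1$ with smooth coefficients on $U$, i.e.\ of the shape $r^{-5}\Gamma_3^{(4)}$ in Notation~\ref{n:differential-form-notation}. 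All remaining pieces, which involve $\tO(r^2)$ corrections in $h$, the zeroth-order Weitzenb\"ock curvature term, tangential derivatives of $f$ or $\tau_x$, or first normal derivatives of $y_\alpha/r$ multiplied by $O(1)$ coefficients, lie in $O'(1)$.

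The main technical nuisance is tracking each error term and its further derivatives to confirm that they satisfy the bounds of Definition~\ref{d:normal-regularity}. This is routine because the coefficients of $\Delta^g-\Delta^{g_0}$, written out in normal coordinates, are real-analytic polynomials in $y$ modulo smooth remainders, with coefficients that are smooth functions on $U$; each additional normal differentiation therefore costs precisely one factor of $r^{-1-\epsilon}$ while tangential differentiation is free, so the estimates propagate to all orders without loss.
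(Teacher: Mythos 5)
Your route — comparing $\Delta_g$ with the Hodge Laplacian of the product metric $g_0=\delta_{\alpha\beta}\,dy_\alpha^2\oplus g^P$ and estimating the difference operator via Lemma \ref{l:metric-expansion} — is genuinely different from the paper's proof, which never introduces $g_0$ but instead computes $d^*d\omega$ and $dd^*\omega$ directly from the expansions of $\eta_\alpha$, $\dvol_N$, $\dvol_T$ and Lemma \ref{l:v-T-N}. For part (1) your argument is sound: the Künneth splitting of $\Delta^{g_0}$ isolates $f\,\Delta_0(h)\,\nu_y\wedge\tau_x$, the term $h\,\nu_y\wedge\Delta^P(f\tau_x)$ is $O'(|y|^k)$, and since the principal coefficients of $\Delta^g-\Delta^{g_0}$ vanish along $P$ while the lower-order ones are smooth and bounded, the counting in Definition \ref{d:normal-regularity} does close up to give $O'(|y|^{k-1})$; one side benefit of your bookkeeping is that the singular terms proportional to $\p_if$, which in the paper appear separately in $d^*d\omega$ and $dd^*\omega$ and must cancel, never arise (tangential derivatives of the component $f\,y_\alpha/r$ are bounded).

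In part (2), however, your sorting of the error terms is wrong in a way that matters. First, the mechanism you name as the source of $r^{-5}\Gamma_3^{(4)}$ does not occur: the linear-in-$y$ parts of $h_{ij}$ and $h_{i\alpha}$ enter $g^{ij}-g_0^{ij}$ and $g^{i\alpha}$, which multiply $\p_i\p_j$ and $\p_i\p_\alpha$ respectively, so at most one normal derivative hits $y_\alpha/r$ there and those products are bounded; the coefficient that pairs with two normal derivatives is $g^{\alpha\beta}-\delta_{\alpha\beta}=\tO(r^2)$, again giving only $O'(1)$. Second, and conversely, the terms you dismiss as ``first normal derivatives of $y_\alpha/r$ multiplied by $O(1)$ coefficients lie in $O'(1)$'' are exactly the unbounded part of the error: the first-order (Christoffel-difference) coefficients of $\Delta^g-\Delta^{g_0}$ do not vanish along $P$ — their values there are built from $\IIs^\alpha_{ij}$ and $A_{i\alpha\beta}$ — and $\p_\beta(y_\alpha/r)\sim r^{-1}$, so these terms are of size $r^{-1}$ and cannot be $O'(1)$. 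They are the counterpart of the $H^\beta$- and $A_{i\gamma\beta}$-terms ($\FT_2,\FT_3$) in the paper's computation. The statement survives because the constant-in-$y$ part of such a coefficient times the degree $-1$ homogeneous derivative is smooth-in-$x$ times (quadratic in $y$)$/r^3=r^{-5}\times$(quartic), i.e.\ of the shape \eqref{e:3-form-polynomial-coe}, and the component mixing introduces at most one tangential index so the $dy\wedge dy\wedge dx$/$dy\wedge dy\wedge dy$ format of $\Gamma_3^{(4)}$ is respected; but to make your proof correct you must redo this step, assigning these first-order contributions to $r^{-5}\Gamma_3^{(4)}$ and checking (as you do elsewhere) that the remainders after subtracting their homogeneous leading parts are $O'(1)$.
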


\begin{proof}

First, we prove item (1). 
By definition, $\Delta=dd^* + d^*d$.
We only prove the case $\nu_y= dy_{\alpha}$ for $1\leq \alpha\leq 3$ and $1\leq p\leq m-3$. The proof of the remaining cases is identical. 

First, we compute $d^*d\omega$. 
\begin{align}
d\omega &= f(x) \cdot \frac{\p h(y)}{\p y_{\beta}} dy_{\beta}\wedge dy_{\alpha}\wedge \tau_x + \frac{\p f(x)}{\partial x_j} \cdot h(y)\cdot dx_j\wedge \nu_y\wedge \tau_x,\end{align}
which implies that
\begin{align}
*d\omega &=(-1)^p f(x)\cdot\frac{\p h(y)}{\p y_{\beta}} dy_{\widehat{\beta\alpha}}\wedge *_T(\tau_x) + O'(|y|^k)
\nonumber\\
&=(-1)^p f(x)\Big(\frac{\p h(y)}{\p y_{\alpha-1}}dy_{\alpha+1} -\frac{\p h(y)}{\p y_{\alpha+1}}dy_{\alpha-1}\Big)\wedge *_T(\tau_x)+ O'(|y|^k).
\end{align}
Differentiating the above equality,
\begin{align}
d*d\omega & =(-1)^p f\cdot\Big(\frac{\p^2h}{\p y_{\alpha}\p y_{\alpha-1}}dy_{\alpha}\wedge dy_{\alpha+1} 
- \frac{\p^2h}{\p y_{\alpha}\p y_{\alpha+1}}dy_{\alpha}\wedge dy_{\alpha-1}
\nonumber\\
&+ \Big(\frac{\p^2h}{\p y_{\alpha-1}^2}+\frac{\p^2h}{\p y_{\alpha+1}^2}\Big)dy_{\alpha-1}\wedge dy_{\alpha+1}\Big)\wedge *_T(\tau_x)+ O'(|y|^{k-1}).
\end{align}
Then it follows that
\begin{align}
d^*d\omega & = (-1)^{mp+m+1}*d*d\omega
\nonumber\\
&=f\cdot\Big(\frac{\p^2h}{\p y_{\alpha}\p y_{\alpha-1}}dy_{\alpha-1}+\frac{\p^2h}{\p y_{\alpha}\p y_{\alpha+1}}dy_{\alpha+1}-\Big(\frac{\p^2h}{\p y_{\alpha-1}^2}+\frac{\p^2h}{\p y_{\alpha+1}^2}\Big)dy_{\alpha}\Big)\wedge \tau_x
\nonumber\\
& + O'(|y|^{k-1}).\label{e:d*d-omega}
\end{align}
On the other hand, 
\begin{align}
*\omega = f(x)h(y)dy_{\widehat{\alpha}}\wedge *_T(\tau_x),
\end{align}
which implies
\begin{equation}
d*\omega = f(x)\frac{\p h}{\p y_{\alpha}}\dvol_N\wedge *_T(\tau_x) + O'(|y|^k).
\end{equation}
So it follows that
\begin{equation}
d^*\omega = (-1)^{mp+1}*d*\omega= -f\cdot\frac{\p h}{\p y_{\alpha}}\cdot\tau_x + O'(|y|^k),
\end{equation}
and hence
\begin{equation}
dd^*\omega = - f\cdot \Big( \frac{\p^2h}{\p y_{\alpha-1}\p y_{\alpha}} dy_{\alpha-1} + \frac{\p^2h}{\p y_{\alpha}^2}dy_{\alpha}+\frac{\p^2h}{\p y_{\alpha+1}\p y_{\alpha}}dy_{\alpha+1}\Big)\cdot dy_{\alpha} \wedge \tau_x + O'(|y|^{k-1}).\label{e:dd*-omega}
\end{equation}
 Therefore, combining \eqref{e:d*d-omega} and \eqref{e:dd*-omega},
 \begin{equation}
 \Delta\omega= (d^*d+dd^*)\omega = - f\cdot (\Delta_0 h(y) ) dy_{\alpha}\wedge \tau_x + O'(|y|^{k-1}),
 \end{equation}
 where $\Delta_0(h(y))=-\frac{\p^2 h}{\p y_1^2}-\frac{\p^2 h}{\p y_2^2}-\frac{\p^2 h}{\p y_3^2}$. 
The proof of (1) is done.

Now we prove item (2). Let $\omega = f\cdot \frac{y_{\alpha}}{r}dy_1\wedge dy_2\wedge dy_3$ and the first step is to compute the term $d^*d\omega$.
By Lemma \ref{l:generalized-Gauss},
$dr=\frac{y_{\gamma}dy_{\gamma}}{r}$, then 
\begin{equation}d(\frac{y_{\alpha}}{r}) \wedge dy_1\wedge dy_2\wedge dy_3=0.\end{equation}
This implies that 
\begin{equation}
d\omega = \frac{\p f}{\p x_i} \cdot \frac{y_{\alpha}}{r} dx_i\wedge dy_1\wedge dy_2\wedge dy_3,\end{equation}
and hence
\begin{equation}
*d\omega =  -  \frac{\p f}{\p x_i} \cdot \frac{y_{\alpha}}{r} *_{T}(dx_i) + O'(r).
\end{equation}
Differentiating the above equality and applying Lemma \ref{l:generalized-Gauss} again,
\begin{align}
d*d\omega = -\frac{\p f}{\p x_i}\Big(\frac{dy_{\alpha}}{r} - \frac{ y_{\alpha}y_{\beta}\cdot dy_{\beta}}{r^3}
\Big)*(dx_i)
+O'(1).
\end{align}
It follows that 
\begin{align}
*d*d\omega  = (-1)^{m+1}\cdot \frac{\p f}{\p x_i}\cdot\frac{dy_{\widehat{\alpha}}\wedge dx_i}{r} + (-1)^m\frac{\p f}{\p x_i}\cdot\frac{ y_{\alpha}y_{\beta}}{r^3}\cdot dy_{\widehat{\beta}} \wedge  dx_i + O'(1).
\end{align}
Therefore,
\begin{align}
d^*d\omega &= (-1)^{m+1}*d*d\omega 
\nonumber\\
&= \frac{\p f}{\p x_i}\cdot\frac{dy_{\widehat{\alpha}}\wedge dx_i}{r} - \frac{\p f}{\p x_i}\cdot\frac{ y_{\alpha}y_{\beta}}{r^3}\cdot dy_{\widehat{\beta}} \wedge  dx_i +  O'(1). \label{e:d*d-w}
\end{align}

Now we compute $dd^*\omega$. 
By Lemma \ref{l:v-T-N} and the expansion of $\dvol_N$ in  \eqref{e:dvol-N-expansion},
\begin{align}
*(dy_1\wedge dy_2\wedge dy_3)
&= * (\dvol_N + A_{i\gamma\beta} y_{\gamma}dy_{\widehat{\gamma}}\wedge dx_i) + \tO(r^2)
\nonumber\\
&= (1+H^{\beta}y_{\beta})\dvol_T+A_{i\gamma\beta}y_{\gamma}dy_{\gamma}\wedge *_T(dx_i) + \tO(r^2),
\end{align}
so we have
\begin{align}
*\omega &= f\cdot \frac{y_{\alpha}}{r}\cdot \dvol_T + f\cdot H^{\beta}\cdot \frac{y_{\alpha}y_{\beta}}{r}\cdot \dvol_T+ f\cdot A_{i\gamma\beta}\frac{y_{\alpha}y_{\gamma}}{r}dy_{\gamma}\wedge *_T(dx_i) + O'(r^2)
\nonumber\\
&\equiv \FT_1 + \FT_2 + \FT_3 + O'(r^2).
\end{align}
By collecting the leading terms,  it is easy to compute the leading term in the above equality,
\begin{align}
d*d(\FT_1)&=\frac{\p f}{\p x_i}\Big( \frac{dx_i\wedge dy_{\widehat{\alpha}}}{r} - \frac{y_{\alpha}y_{\beta}dx_i\wedge dy_{\widehat{\beta}}}{r^3}\Big)-2\omega + O'(1)
\\
d*d(\FT_2)&= r^{-5}\Pi_3^{(4)},\ d*d(\FT_3)= r^{-5}\Pi_3^{(4)}.
\end{align}
Therefore,
\begin{equation}
dd^*\omega =  -\frac{\p f}{\p x_i}\cdot \frac{dx_i\wedge dy_{\widehat{\alpha}}}{r} + \frac{\p f}{\p x_i}\cdot \frac{y_{\alpha}y_{\beta}}{r^3} \cdot dx_i\wedge dy_{\widehat{\beta}} + 2\omega + r^{-5} \Pi_3^{(4)} + O'(1).\label{e:dd*-w}
\end{equation}
By \eqref{e:d*d-w} and \eqref{e:dd*-w} we obtain the expansion
\begin{align}
\Delta\omega = (d^*d + dd^*)\omega =  2 \omega + r^{-5} \Pi_3^{(4)} + O'(1).
\end{align}
So the proof is done. 
\end{proof}

Now we finish Step 2 by proving the following

\begin{proposition}
There is some $3$-form $\Lambda_3^{(4)}$ (given in  the form in Notation \ref{n:differential-form-notation}) such that
if we choose \begin{align}
\mathfrak{B}_0 & \equiv \frac{H^\alpha y_\alpha}{4r}dy_1\wedge dy_2\wedge dy_3
\nonumber\\
&+ \Big(\Omega_{ij\alpha\beta}(\frac{1}{16}y_{\widehat{\alpha\beta}}dr-\frac{3}{16} rdy_{\widehat{\alpha\beta}})-\frac{1}{16}A_{ij\alpha\beta}(y_{\widehat{\alpha\beta}} dr+rdy_{\widehat{\alpha\beta}})\Big)\wedge dx_i\wedge dx_j + r^{-3}\Lambda_3^{(4)},
\end{align}
then the corrected $3$-form  \begin{align} 
\phi_2 \equiv & \phi_1 + \mathfrak{B}_0
\label{e:phi_2-correction}
\end{align} satisfies
$\Delta \phi_2 = O'(1)$
in  $\mathcal{U}\setminus P$ 
and has the expansion 
\begin{align}
\phi_2 = & \frac{1}{2r}(1-\frac{H^\alpha y_\alpha}{2}) dy_1\wedge dy_2\wedge dy_3+\frac{1}{2r}y_{\beta}A_{i\alpha\beta}dx_i \wedge dy_{\widehat{\alpha}}
-\frac{1}{4} A_{ij\alpha\beta}r\cdot dy_{\widehat{\alpha\beta}}\wedge dx_i\wedge dx_j\nonumber
\\&+ \frac{3}{16}(A_{i\alpha, \alpha+1}A_{j\alpha, \alpha+2}-A_{i\alpha, \alpha+2}A_{j\alpha, \alpha+1})d(ry_\alpha)\wedge dx_i\wedge dx_j
+r^{-3}\Pi_3^{(4)}+O'(r^2).
\end{align}

\end{proposition}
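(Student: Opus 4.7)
The plan is to construct $\mathfrak{B}_0$ term by term, by prescribing one explicit counterterm for each of the singular pieces identified in the expansion \eqref{e:Delta-phi-1} of $\Delta\phi_1$ from Proposition \ref{p:Delta-phi-1-expansion}, each designed so that its Hodge Laplacian cancels that piece modulo $O'(1)$. The three named singular pieces are the mean-curvature term of order $r^{-2}$, and the curvature and connection terms of order $r^{-1}$; these match in form the three blocks of $\mathfrak{B}_0$ in the ansatz. The remaining $r^{-5}\Pi_3^{(4)}$ contribution in \eqref{e:Delta-phi-1}, together with any further $r^{-5}\Pi_3^{(4)}$ produced by $\Delta$ of the other blocks, will be absorbed into a final correction $r^{-3}\Lambda_3^{(4)}$ whose existence is guaranteed by item (2) of Lemma \ref{l:euclidean-laplacian}.

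To compute $\Delta$ acting on each block of the ansatz, I would invoke Lemma \ref{l:laplacian-error}. Part (1) says that on a form $f(x)h(y)\nu_y\wedge \tau_x$ with normal factor $h = O'(|y|^k)$, the full Hodge Laplacian agrees with the flat Euclidean Laplacian applied componentwise to $h$, up to an $O'(|y|^{k-1})$ remainder, so the explicit identities in Lemma \ref{l:euclidean-laplacian}(1) reduce everything to elementary algebra. For the first summand of $\mathfrak{B}_0$, part (2) of Lemma \ref{l:laplacian-error} gives at once $\Delta\bigl(\tfrac{H^\alpha y_\alpha}{4r}dy_1\wedge dy_2\wedge dy_3\bigr) = \tfrac{H^\alpha y_\alpha}{2r^3}dy_1\wedge dy_2\wedge dy_3 + r^{-5}\Pi_3^{(4)} + O'(1)$, which cancels the mean-curvature singularity of $\Delta\phi_1$. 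For the $\Omega$- and $A$-blocks, writing $dr = r^{-1}y_\gamma dy_\gamma$ and using $\Delta_0(\tfrac{y_\mu y_\nu}{r}) = \tfrac{4y_\mu y_\nu}{r^3}$ for $\mu\neq\nu$, $\Delta_0(\tfrac{y_\mu^2}{r}-r) = \tfrac{4y_\mu^2}{r^3}$ and $\Delta_0 r = -\tfrac{2}{r}$, a short calculation yields
\begin{equation*}
\Delta_0\bigl(\tfrac{1}{16}y_{\widehat{\alpha\beta}}dr - \tfrac{3}{16}rdy_{\widehat{\alpha\beta}}\bigr) = \tfrac{y_{\widehat{\alpha\beta}}}{4r^2}dr + \tfrac{1}{4r}dy_{\widehat{\alpha\beta}},
\end{equation*}
and an analogous identity for $-\tfrac{1}{16}\bigl(y_{\widehat{\alpha\beta}}dr + rdy_{\widehat{\alpha\beta}}\bigr)$, so that the numerical coefficients $\tfrac{1}{16}$, $-\tfrac{3}{16}$, $-\tfrac{1}{16}$ appearing in the ansatz are dictated precisely by the requirement that these outputs match the $\Omega$- and $A$-terms on the right of \eqref{e:Delta-phi-1}.

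After this stage the residual singular contribution to $\Delta(\phi_1 + \mathfrak{B}_0 - r^{-3}\Lambda_3^{(4)})$ is of type $r^{-5}\Pi_3^{(4)}$ only. To kill it I would invoke the isomorphism statement in Lemma \ref{l:euclidean-laplacian}(2): applying the inverse of $\square:\mathcal{P}_4\to\mathcal{P}_4$ componentwise to the collected degree-four polynomial coefficients produces a $3$-form $\Lambda_3^{(4)}$ of the shape in Notation \ref{n:differential-form-notation} for which $\Delta_0(r^{-3}\Lambda_3^{(4)})$ equals the residual $r^{-5}\Pi_3^{(4)}$, and then Lemma \ref{l:laplacian-error}(1) with $k=1$ upgrades this to $\Delta(r^{-3}\Lambda_3^{(4)}) = r^{-5}\Pi_3^{(4)} + O'(1)$, giving $\Delta\phi_2 = O'(1)$ on $\mathcal U\setminus P$. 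The expansion of $\phi_2$ is then an algebraic consequence of adding $\mathfrak{B}_0$ to \eqref{e:phi1expansion-2}: the $\tfrac{H^\alpha y_\alpha}{4r}dy_1\wedge dy_2\wedge dy_3$ summand merges with the first term of $\phi_1$ to give $\tfrac{1}{2r}(1-\tfrac{H^\alpha y_\alpha}{2})dy_1\wedge dy_2\wedge dy_3$; the $\Omega_{ij\alpha\beta}$-block splits via $\Omega_{ij\alpha\beta} = A_{ij\alpha\beta} + \tfrac{1}{2}(A_{i\alpha\gamma}A_{j\gamma\beta} - A_{i\beta\gamma}A_{j\gamma\alpha})$, so that its $A_{ij\alpha\beta}$-part together with the $-\tfrac{1}{16}A_{ij\alpha\beta}(y_{\widehat{\alpha\beta}}dr+rdy_{\widehat{\alpha\beta}})$-block collapses to $-\tfrac{1}{4}A_{ij\alpha\beta}\,rdy_{\widehat{\alpha\beta}}\wedge dx_i\wedge dx_j$, while its quadratic-in-$A$ part combines with the $AA$-term already present in $\phi_1$ after the antisymmetrization in Lemma \ref{l:rearrange} to produce the stated $\tfrac{3}{16}d(ry_\alpha)$-coefficient. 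The hard part is not any single idea but the bookkeeping: one has to propagate the various $O'$, $\tO$, and $\Pi_3^{(k)}$ remainders carefully so that the $O'(r^2)$ error target for $\phi_2$ is not contaminated by the $\Pi_3^{(2)}$-type and $\tO(r^3)$ leftovers inherited from \eqref{e:phi1expansion-2} and \eqref{e:Delta-phi-1}.
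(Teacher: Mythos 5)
Your proposal is correct and follows essentially the same route as the paper: the same three correction blocks (mean-curvature, $\Omega$-, and $A$-terms) with the coefficients $\tfrac{1}{16}$, $-\tfrac{3}{16}$, $-\tfrac{1}{16}$ fixed by the flat-Laplacian identities of Lemma \ref{l:euclidean-laplacian} transferred via Lemma \ref{l:laplacian-error}, the residual $r^{-5}\Pi_3^{(4)}$ terms absorbed by $r^{-3}\Lambda_3^{(4)}$ through the isomorphism $\square$, and the final expansion of $\phi_2$ obtained by splitting $\Omega_{ij\alpha\beta}$ into its $A_{ij\alpha\beta}$ and quadratic-in-$A$ parts exactly as in the paper's rewriting of $\mathfrak{b}_2$. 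No substantive differences.
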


\begin{proof}

Let $\mathfrak{b}_0 \equiv \frac{H^\alpha y_\alpha}{4r}dy_1\wedge dy_2\wedge dy_3$. Then item (2) of Lemma \ref{l:laplacian-error} tells us that
\begin{equation}
\Delta\mathfrak{b}_0 = \frac{H^\alpha y_\alpha}{2r^3}dy_1\wedge dy_2\wedge dy_3 + r^{-5}\Gamma_3^{(4)} + O'(1).
\end{equation}
Let $\Pi^{(4)}$ be the $3$-form in the expansion of $\Delta\phi_1$ given by \eqref{e:Delta-phi-1} in Proposition \ref{p:Delta-phi-1-expansion}. 

Next,  Lemma \ref{l:euclidean-laplacian} and Lemma \ref{l:laplacian-error} tell us that 
there are $3$-forms $\widehat{\Gamma}_3^{(4)}$
and $\widehat{\Pi}_3^{(4)}$ which are also of the form as in \eqref{e:3-form-polynomial-coe} such that
\begin{align}
\Delta(r^{-3}\widehat{\Gamma}_3^{(4)}) = - r^{-5}\Gamma_3^{(4)} + O'(1), 
\\
\Delta (r^{-3}\widehat{\Pi}_3^{(4)}) = -r^{-5}\Pi_3^{(4)} + O'(1).
\end{align}
Now let
$\mathfrak{b}_1 \equiv  r^{-3}\widehat{\Gamma}_3^{(4)} + r^{-3}\widehat{\Pi}_3^{(4)}$. Then the correction term $\mathfrak{b}_0 + \mathfrak{b}_1$ is chosen as the above such that $\Delta(\mathfrak{b}_0 + \mathfrak{b}_1)$ in fact eliminates the $O'(r^{-2})$-term and implicit 
$O'(r^{-1})$-terms in the expansion of $\Delta\phi_1$ (see Proposition \ref{p:Delta-phi-1-expansion}).

In the following, we will make a further correction such that those explicit $O'(r^{-1})$-terms will be cancelled out as well. In fact, 
we define 
\begin{equation}
\mathfrak{b}_2 \equiv \Big(\Omega_{ij\alpha\beta}(\frac{1}{16}y_{\widehat{\alpha\beta}}dr-\frac{3}{16} rdy_{\widehat{\alpha\beta}})-\frac{1}{16}A_{ij\alpha\beta}(y_{\widehat{\alpha\beta}} dr+rdy_{\widehat{\alpha\beta}})\Big)\wedge dx_i\wedge dx_j,
\end{equation}
applying Lemma \ref{l:euclidean-laplacian} and Lemma \ref{l:laplacian-error} again,  then 
\begin{equation}
\Delta\mathfrak{b}_2 =\Omega_{ij\alpha\beta} \Big(\frac{1}{4r} dy_{\widehat{\beta\alpha}} +\frac{y_{\widehat{\alpha\beta}}}{4r^2} dr\Big)\wedge dx_i\wedge dx_j
-A_{ij\alpha\beta}\Big(\frac{y_{\widehat{\alpha\beta}}}{4r^2}dr-\frac{1}{4r}dy_{\widehat{\alpha\beta}}\Big)\wedge dx_i\wedge dx_j,
\end{equation}
and hence
$\Delta(\phi_1+\mathfrak{b}_0 + \mathfrak{b}_1 + \mathfrak{b}_2) = O'(1)$. Therefore, 
it suffices to choose the correction term 
\begin{equation}
\mathfrak{B}_0 \equiv \mathfrak{b}_0 + \mathfrak{b}_1 + \mathfrak{b}_2,
\end{equation}
which gives
$\Delta(\phi_1+\mathfrak{B}_0) = 
O'(1)$.

Notice that, $\mathfrak{b}_2$ has a further cancellation,
\begin{align}
\mathfrak{b}_2 = &\Big(\Omega_{ij\alpha\beta}(\frac{1}{16}y_{\widehat{\alpha\beta}}dr-\frac{3}{16} rdy_{\widehat{\alpha\beta}})-\frac{1}{16}A_{ij\alpha\beta}(y_{\widehat{\alpha\beta}} dr+rdy_{\widehat{\alpha\beta}})\Big)\wedge dx_i\wedge dx_j,
\nonumber\\
=& -\frac{1}{4}A_{ij\alpha\beta}rdy_{\widehat{\alpha\beta}}\wedge dx_i\wedge dx_j - \frac{1}{16}(A_{i,\alpha,\alpha+1}A_{j,\alpha,\alpha+2}-A_{i,\alpha,\alpha+2}A_{j,\alpha,\alpha+1})y_{\alpha}dr\wedge dx_i\wedge dx_j 
\nonumber\\
& +  \frac{3}{16}(A_{i,\alpha,\alpha+1}A_{j,\alpha,\alpha+2}-A_{i,\alpha,\alpha+2}A_{j,\alpha,\alpha+1})rdy_{\alpha} \wedge dx_i\wedge dx_j.
\end{align}
Therefore, 
\begin{align}
\phi_2 = &
\phi_1 + \mathfrak{B}_0
\nonumber\\
=& \frac{1}{2r}(1-\frac{H^\alpha y_\alpha}{2}) dy_1\wedge dy_2\wedge dy_3+\frac{1}{2r}y_{\beta}A_{i\alpha\beta}dx_i \wedge dy_{\widehat{\alpha}}
-\frac{1}{4} A_{ij\alpha\beta}r\cdot dy_{\widehat{\alpha\beta}}\wedge dx_i\wedge dx_j\nonumber
\\&+ \frac{3}{16}(A_{i\alpha, \alpha+1}A_{j\alpha, \alpha+2}-A_{i\alpha, \alpha+2}A_{j\alpha, \alpha+1})d(ry_\alpha)\wedge dx_i\wedge dx_j
+r^{-3}\Pi_3^{(4)}+O'(r^2),
\end{align}
and 
$\Delta \phi_2 = O'(1)$ on $\mathcal{U}\setminus P$.
\end{proof}

\noindent{\bf Step 4:} In this step we compute $\Delta\phi_2$ as a current on $\mathcal U$. 

\begin{lemma}\label{l:almost-Green's-current}
For any $p\in P$, let  $\mathcal{U}$ be a neighborhood of $p$ in $Q$ and let $U = \mathcal{U}\cap P$, then we have
\begin{equation}
\DelH\phi_2=2\pi\delta_U+O'(1).
\end{equation}
\end{lemma}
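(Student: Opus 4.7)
The plan is to verify the distributional identity by pairing $\Delta\phi_2$ with a test form $\chi \in \Omega_0^{m-3}(\mathcal U)$. Since $\phi_2 = O'(r^{-1})$ lies in $L^1_{\mathrm{loc}}$, we have
$(\Delta \phi_2, \chi) = (\phi_2, \Delta \chi) = \lim_{\epsilon \to 0^+} \int_{\mathcal U_\epsilon} \phi_2 \wedge \Delta \chi,$
where $\mathcal U_\epsilon \equiv \{r \geq \epsilon\} \cap \mathcal U$. On $\mathcal U_\epsilon$ the form $\phi_2$ is smooth, so Green's identity yields
$\int_{\mathcal U_\epsilon} \phi_2 \wedge \Delta \chi = \int_{\mathcal U_\epsilon} (\Delta \phi_2)_{\mathrm{class}} \wedge \chi + \mathcal I_\epsilon,$
where $\mathcal I_\epsilon$ is a bilinear boundary integral over $\{r = \epsilon\}$ built from $\phi_2$, $d\phi_2$, $\chi$, and $d\chi$ (coming from the two integrations by parts for $\Delta = dd^* + d^*d$). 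By Step 3 the classical Laplacian $(\Delta \phi_2)_{\mathrm{class}}$ on $\mathcal U \setminus U$ is $O'(1)$, so the interior term contributes exactly the desired $O'(1)$-current $\int_{\mathcal U} (\Delta \phi_2)_{\mathrm{class}} \wedge \chi$. It remains to show $\lim_{\epsilon \to 0} \mathcal I_\epsilon = 2\pi \int_U \chi$.

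Next, I would split $\phi_2 = G_0 + \psi_0$ with $G_0 \equiv \frac{1}{2r} dy_1 \wedge dy_2 \wedge dy_3$ the sole unbounded summand and $\psi_0$ the remaining terms in \eqref{e:phi_2-correction}, each of which is $O'(r^k)$ for some $k \geq 0$ and in particular bounded. For $\psi_0$, Definition \ref{d:normal-regularity} implies that $\psi_0$ and its first derivatives grow at most like $\epsilon^{-\epsilon'}$ on $\{r = \epsilon\}$ for any $\epsilon' > 0$, while $\mathrm{Vol}(\{r = \epsilon\} \cap \mathrm{supp}(\chi)) = O(\epsilon^2)$ since $U$ has real codimension three in $Q$. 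Hence the contribution of $\psi_0$ to $\mathcal I_\epsilon$ is $O(\epsilon^{2-\epsilon'})$, vanishing as $\epsilon \to 0$.

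For the leading piece $G_0$, the computation reduces to the classical Euclidean model. In normal coordinates, write $\Delta_g = \Delta_e + L$, where $\Delta_e$ denotes the model Euclidean Hodge Laplacian with respect to the coordinate frame and $L$ is a differential operator whose coefficients vanish along $U$ by Lemma \ref{l:metric-expansion}. On the flat product $\dR^m = \dR^3_y \times \dR^{m-k_0}_x$, Fubini together with the classical identity $\Delta_{\dR^3}(1/(2|y|)) = 2\pi \delta_{0^3}$ yields $\Delta_e G_0 = 2\pi \delta_U$ as currents. The perturbation $L G_0$ is a classical $O'(1)$ term on $\mathcal U \setminus U$ and therefore contributes only to the $O'(1)$-error, by the same boundary estimate as for $\psi_0$. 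Combining these pieces gives $\lim_{\epsilon \to 0} \mathcal I_\epsilon = 2\pi \int_U \chi$, and hence $\Delta \phi_2 = 2\pi \delta_U + O'(1)$ as currents on $\mathcal U$.

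The main technical difficulty is the explicit bookkeeping in the boundary integral $\mathcal I_\epsilon$: one must isolate the genuine Euclidean Dirac contribution from the curvature-induced corrections on $\{r = \epsilon\}$, which deviates from the product Euclidean sphere bundle by $O(\epsilon^3)$ via Lemma \ref{l:metric-expansion}, and one must verify that the bounded tail $\psi_0$, whose derivatives are only controlled up to $r^{-\epsilon'}$, contributes to the distributional Laplacian solely through its pointwise classical Laplacian. The fine $O'(\cdot)$-calibration of Definition \ref{d:normal-regularity} is set up precisely so that these estimates close.
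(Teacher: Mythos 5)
Your overall route is the same as the paper's: pair $\Delta\phi_2$ with a test form $\chi$, integrate by parts, absorb the classical Laplacian on $\mathcal U\setminus U$ (which is $O'(1)$ by Step 3) into the error term, and show that the boundary integral over $\{r=\epsilon\}$ converges to $2\pi\int_U\chi$. The paper does this a bit more economically: it integrates by parts only once, observes that $d\phi_2=O'(1)$ so the $d^*d$-piece produces no boundary contribution, and evaluates the remaining boundary term over the normal sphere bundle $S^2_\epsilon$ directly from the explicit expansion $d^*\phi_2=-\frac{y_\alpha}{2r^3}\,dy_{\widehat\alpha}+\zeta$ with $\zeta=O'(r^{-1})$ (see \eqref{eqn2-19}).

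There is, however, one step in your boundary computation that fails as stated: the claim that, writing $\Delta_g=\Delta_e+L$ in normal coordinates, the perturbation $L G_0$ of $G_0=\frac{1}{2r}dy_1\wedge dy_2\wedge dy_3$ is a classical $O'(1)$ term. It is not. By Lemma \ref{l:metric-expansion} the metric agrees with the flat product only to first order in $y$, so $L$ has second-order coefficients of size $O(r)$ and lower-order coefficients of size $O(1)$; applied to $G_0$, whose second derivatives are $O(r^{-3})$, this produces terms of size $O(r^{-2})$ and $O(r^{-1})$ — concretely the mean-curvature term $-\frac{H^\alpha y_\alpha}{2r^3}dy_1\wedge dy_2\wedge dy_3$ and the connection/curvature terms recorded in Proposition \ref{p:Delta-phi-1-expansion}. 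Cancelling exactly these unbounded terms is the whole purpose of the correction $\mathfrak{B}_0$ in Step 3, so they cannot be dismissed as part of the $O'(1)$ error. Fortunately your argument does not really need this claim: the interior term is already controlled by citing Step 3 for the full $\phi_2$, and what the $G_0$-part of $\mathcal{I}_\epsilon$ requires is only a comparison of \emph{boundary integrands}: the metric Hodge star and codifferential applied to $G_0$ and restricted to $\{r=\epsilon\}$ differ from their flat-model counterparts by relative corrections of size $O(\epsilon)$, so since the singular integrand is $O(\epsilon^{-2})$ and the boundary has area $O(\epsilon^2)$ over $\operatorname{supp}\chi$, the flat model captures the limit $2\pi\int_U\chi$ up to an $O(\epsilon)$ discrepancy. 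That is the statement you should prove (and it is exactly what the paper's explicit formula for $d^*\phi_2$ delivers), rather than an interior bound on $LG_0$. A second, minor slip: the first derivatives of the bounded tail $\psi_0$ are only $O(r^{-1-\epsilon'})$, not $O(r^{-\epsilon'})$ (e.g. the term $\frac{1}{2r}y_\beta A_{i\alpha\beta}\,dx_i\wedge dy_{\widehat\alpha}$); its boundary contribution still vanishes, but at rate $O(\epsilon^{1-\epsilon'})$, so the estimate should be restated accordingly.
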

\begin{proof}
Let $\chi\in \Omega_0^{m-3}(\mathcal{U})$ be a compactly supported test form. Then applying integration by parts once, 
\begin{equation}(\phi_2, \DelH\chi)=\int_{\mathcal{U}} \phi_2\wedge (dd^*+d^*d)\chi=\int_{\mathcal{U}} d\phi_2\wedge d^*\chi-\int_{\mathcal{U}} d^*\phi_2\wedge d\chi.\end{equation}
Notice that the integration by parts works here because $\phi_2=O(r^{-1})$. Also notice that \eqref{e:d-phi-1-exp} and \eqref{e:phi_2-correction} imply $d\phi_2=O'(1)$, so
\begin{equation}\int_{\mathcal{U}} d\phi_2\wedge d^*\chi=\int_{\mathcal{U}} d^*d\phi_2\wedge \chi. \end{equation}
On the other hand,  by (\ref{eqn2-19}), 
\begin{equation}d^*\phi_2=-\frac{y_\alpha}{2r^{3}} {} dy_{\hat\alpha}+\zeta,\end{equation}
where $\zeta$ is a $2$-form satisfying $\zeta=O'(r^{-1})$.
 Denote by $S_{\epsilon}^2$ the normal geodesic sphere bundle $\{r=\epsilon\}$, then we get that 
\begin{eqnarray}-\int_{\mathcal U} d^*\phi_2\wedge d\chi
&=&\int_{\mathcal U} dd^* \phi_2 \wedge \chi + \int_{S_{\epsilon}^2} d^*\phi_2\wedge \chi
\nonumber\\
&=&\int_{\mathcal U} dd^* \phi_2 \wedge \chi + \lim_{\epsilon\rightarrow 0} \frac{1}{2\epsilon^3}{}\int_{S_\epsilon} (y_\alpha dy_{\hat\alpha} +\epsilon^2 \zeta)\wedge \chi.\end{eqnarray}
By direct calculation of the last term on the right hand side we obtain
\begin{equation}
-\int_{\mathcal U} d^*\phi_2\wedge d\chi=\int_{\mathcal U} dd^* \phi_2 \wedge \chi+2\pi{}\int_{P}\chi.
\end{equation}
This concludes the proof. 
\end{proof}

\noindent{\bf Step 5:} Now we study the regularity of the solution of $\Delta T=v$ with $v=O'(1)$.

\begin{lemma}\label{l:higher-regularity} 
Let $p\in P$ and let $\mathcal{U}$ be a neighborhood of of $p$ in $Q$. Assume that $v$ is a $3$-form on $\mathcal{U}$ which satisfies $v=O'(1)$ and $v\in C^{\infty}(\mathcal{U}\setminus U)$, where $U = \mathcal{U}\cap P$. Then the equation \begin{equation}\Delta  T= v, \label{e:current-poisson} \end{equation} 
has a weak solution $T \in W^{2,p}(\mathcal{V})$ for any $p>1$ which  satisfies $T=O'(r^2)$ and $T\in C^{\infty}(\mathcal{V}\setminus U)$. Here  $\mathcal{V} \subset\subset \mathcal{U}$ is a smaller neighborhood of $p$ in $Q$, and $r$ is the distance to $P$. 

\end{lemma}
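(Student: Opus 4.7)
The plan is to first establish existence via standard $L^p$ elliptic theory on an auxiliary domain, then verify smoothness away from $U$ by interior regularity, and finally establish the sharp asymptotic $T = O'(r^2)$ near $U$ by a rescaling argument that exploits the asymmetric structure of the $O'$ notation.

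For existence, fix an intermediate smoothly bounded neighborhood $\mathcal V \subset\subset \mathcal W \subset\subset \mathcal U$ and solve the Dirichlet problem $\Delta T = v$ on $\mathcal W$ with $T|_{\partial \mathcal W} = 0$. Since $v = O'(1)$ gives $|v| \leq C_\epsilon r^{-\epsilon}$ for every $\epsilon > 0$, we have $v \in L^p(\mathcal W)$ for all $p > 1$, and standard Calderon--Zygmund theory for the Hodge Laplacian on forms produces a unique weak solution $T \in W^{2,p}(\mathcal W)$ for every $p > 1$. Morrey embedding then gives $T \in C^{1,\alpha}$ for every $\alpha \in (0,1)$, so that $T$ and $\nabla T$ are uniformly bounded. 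Smoothness of $T$ on $\mathcal V \setminus U$ is immediate from standard interior elliptic regularity, since $v$ is smooth there.

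For the sharp asymptotic, fix $q \in \mathcal V \setminus U$ with $\rho \equiv r(q)$ small and consider the ball $B_{\rho/4}(q) \subset \mathcal V \setminus U$, on which $r \sim \rho$. Rescale by setting $\tilde T(\tilde x) \equiv T(q + \rho \tilde x)$ and $\tilde v(\tilde x) \equiv \rho^2 v(q + \rho \tilde x)$, so that $\tilde \Delta \tilde T = \tilde v$ on $B_{1/4}(0)$, with $\tilde \Delta$ converging to the Euclidean Laplacian on $\dR^m \cong \dR^3 \times \dR^{m-3}$ as $\rho \to 0$. The hypothesis $v = O'(1)$ translates into the uniform bound $|\tilde\partial_t^I \tilde\partial_n^J \tilde v| \leq C_{I,J} \rho^{2+|I|-\epsilon}$ on $B_{1/4}$, independently of $\rho$, and combined with the $C^{1,\alpha}$ bound on $T$ from the first step, interior Schauder estimates on the rescaled ball yield uniform control of $\|\tilde T\|_{C^{k,\alpha}(B_{1/8})}$ for every $k$. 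Iterating this scheme after pure tangential differentiation of the equation, which preserves the $O'(1)$ character of the right hand side since the commutator $[\partial_t, \Delta]$ contributes only lower order terms that are bounded in $T$ and $\nabla T$, ultimately produces $|\partial_t^I \partial_n^J T(q)| = O(\rho^{2-|J|-\epsilon})$ for $|J| \geq 2$ and $O(1)$ for $|J| \leq 1$, which is precisely $T = O'(r^2)$.

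The main obstacle is extracting the sharp normal-only weight in the conclusion: a naive rescaling that treats tangential and normal directions symmetrically yields only $|\nabla^k T| = O(r^{1-k})$, which is off by one order in the normal direction. The remedy is the tangential bootstrap described above, together with the fact that the transverse three dimensional Poisson kernel gains two orders in the normal direction, so that solving against a bounded source yields solutions that are essentially $C^{1,1-\epsilon}$ in the normal variables. The unavoidable $r^{-\epsilon}$ loss reflects the classical failure of sharp $C^2$ estimates when the right hand side is only in $L^\infty$ rather than $C^{0,\alpha}$, and is compatible with all of the uses to which the lemma is put later in the paper.
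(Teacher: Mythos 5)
Your existence step matches the paper's, and you correctly diagnose the central difficulty: a rescaling that uses only the $C^{1,\alpha}$ (or $C^0$) bound on $T$ as input to Schauder estimates on the ball $B_{\rho/4}(q)$ can never do better than $|\nabla^k T|=O(r^{1-k})$, one order short of $T=O'(r^2)$, which demands $|\partial_t^I\partial_n^2 T|=O(r^{-\epsilon})$. But the remedy you propose does not close this gap. First, the assertion that $[\partial_t,\Delta]$ contributes only terms bounded in $T$ and $\nabla T$ is false: in the normal coordinates the Hodge Laplacian has variable coefficients, so the commutator is a genuine second-order operator, and differentiating the equation tangentially gives $\Delta(\partial_t T)=\partial_t v+Q_1*\partial^2 T+Q_2*\partial T$, where $Q_1*\partial^2 T$ is at that stage only known to lie in $L^q$, not $L^\infty$. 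Second, and more seriously, the tangential bootstrap only shows that tangential derivatives are ``free''; it does nothing to improve the purely normal estimate. Even granting your commutator claim, each pass of your scheme feeds a $C^0$/$C^1$ bound on a tangentially differentiated solution into Schauder at scale $\rho$, so the output for two normal derivatives is still $O(r^{-1})$, never $O(r^{-\epsilon})$. The appeal to the transverse three-dimensional Poisson kernel ``gaining two orders'' is exactly the statement to be proved, not an argument for it, so the final claimed bound does not follow from the steps described.

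The missing ingredient, which is how the paper proceeds, is to feed the \emph{global} Calder\'on--Zygmund information into the rescaling: since $v\in L^q$ for every $q>1$, one has $\Phi\equiv\partial^2 T\in L^q(\mathcal U_1)$ for every $q$, uniformly across the singular set. Differentiating the equation twice gives $\Delta\Phi+Q_1*\partial\Phi+Q_2*\Phi=\partial^2 v-Q_3*\partial T-Q_4*T=O(r^{-2-\epsilon})$; after rescaling by $r_x$ the right-hand side becomes $O(r_x^{-\epsilon})$, while the rescaled $L^q$ norm of $\Phi$ on the unit ball is bounded by $C_q r_x^{-m/q}$ ($m=\dim Q$), a power that is made smaller than any fixed $\epsilon$ by taking $q$ large. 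Applying the $W^{2,q}$ estimate and Sobolev embedding at that scale then yields $|\Phi|+r_x|\partial\Phi|\le C_\epsilon r_x^{-\epsilon}$, which is precisely the one-order gain your argument lacks; a double induction (first on the total derivative order, then on the number of tangential derivatives, with the $\partial^2 T$ commutator terms absorbed using the estimates already established) gives $|\partial_t^\ell\partial^k T|=O(r^{2-k-\epsilon})$ for $k\ge 2$ together with boundedness of $\partial_t^\ell T$ and $\partial_t^\ell\partial T$, i.e.\ $T=O'(r^2)$. If you replace your Schauder step by this rescaled $W^{2,q}$-plus-Sobolev step, and rerun your tangential bootstrap with the commutator terms treated as $L^q$ (not bounded) data, your outline becomes essentially the paper's proof.
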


\begin{proof}   Since the assumption implies $v\in L^q(\mathcal{U})$ for any $q>1$,  by the standard elliptic theory, equation \eqref{e:current-poisson} has a solution $T$ which is $W^{2,q}$ for any $q>1$, and $T$ is $C^{\infty}$ away from $U$.  Applying the $W^{2,q}$-regularity to $\Delta T=v$ and using the Sobolev embedding, we have that \begin{equation}T\in W^{2,q}(\mathcal{U}_1)\cap  C^{1, \alpha}(\mathcal{U}_1)\label{e:sobolev-weaker}\end{equation} for any $q>1$ and $0<\alpha<1$, where  $\mathcal{U}_1\subset \subset \mathcal{U}$ a smaller neighborhood of $p$ in $Q$. 
 Therefore,  $|T|=O(1)$ and  $|\p T|=O(1)$.
It remains to show $T=O'(r^2)$. 
The proof consists of two primary steps.

\begin{flushleft}{\bf  Step 1:}\end{flushleft} 

{\it Let us fix any fixed point $p\in P$. In the normal coordinate system around $p$, we will show that} 
\begin{equation}
  |\p^k T|=O(r^{2-(k+\epsilon)}), \label{e:general-nabla}
\end{equation}
{\it where $k\geq 2$ and $\epsilon>0$ are arbitrary, and $\p^k$ denotes the mixed derivatives on the coefficient functions of $T$ of order $k$.}

The regularity \eqref{e:general-nabla} will be proved by the induction on the derivative order $k\geq 2$.

In the base step, we compute $\Delta\Phi$, where $\Phi\equiv \p^2 T$.
Differentiating the equation $\Delta T = v$ by $\p^2$, it schematically yields  
\begin{equation}\DelH\Phi + Q_1* \p\Phi +Q_2*\Phi+Q_3*\p T + Q_4* T=\p^2 v = O'(r^{-2}),\label{e:initial-eq}\end{equation}
where $Q_i$'s involve the derivatives of the metric $g$, and the shorthand $A*B$ denotes a linear combination of $A_I\cdot B_J$ with coefficient functions $A_I$, $B_J$ respectively. Notice that $T \in C^{1,\alpha}(\mathcal{U}_{1})$, so equation \eqref{e:initial-eq} can be rewritten as 
\begin{equation}
\DelH\Phi + Q_1 * \p\Phi + Q_2 * \Phi = \p^2 v-Q_3*\p T - Q_4*T.\label{e:Del-Phi-original}
\end{equation}
Since $\p^2v-Q_3*\p T - Q_4* T= O(r^{-2-\epsilon})$ which is not $L^q$-integrable around $U$ for $q$ sufficiently large, the desired regularity of $\Phi$ does not directly follow from the standard $W^{2,q}$-estimate.

To obtain the refined estimate for $\Phi$, we will need the following rescaling argument.  One can choose a smaller neighborhood $\mathcal U_2\subset\subset\mathcal{U}_1$ such that $B_{r_x/2}(x)\subset \mathcal{U}_1$ for every $x\in \mathcal{U}_2$, where $r_x\equiv r(x)\in(0,1)$. For every fixed $x\in\mathcal{U}_2\setminus U$, let us take the rescaled metric 
$\tilde{g} = (r_x)^{-2} \cdot g$ and the dilation $\widetilde{\Phi}(y)\equiv  \Phi(r_x\cdot y)$ with $y\in B_{1/2}^{\tilde{g}}(x)$. Then equation \eqref{e:Del-Phi-original} in $B_{r_x/2}(x)\subset \mathcal{U}_1$ becomes  
\begin{equation}\widetilde{\Delta} \widetilde{\Phi}+r_x\cdot\widetilde{Q}_1*\widetilde{\p}\widetilde{\Phi}+(r_x)^2\cdot\widetilde{Q}_2*\widetilde{\Phi}= \tilde{\eta} \quad \text{in} \  B_{1/2}^{\tilde{g}}(x),\label{e:rescaled-Phi-eq}\end{equation}
where $|\tilde{\eta}|_{L^{\infty}(B_{\frac{1}{2}^{\tilde{g}}(x)})}\leq C_{\epsilon}\cdot r^{-\epsilon}$ for each $\epsilon>0$. 
Notice that the $L^{\infty}$-norms of the coefficients  in \eqref{e:rescaled-Phi-eq} are uniformly bounded (independent of $x$), and   \eqref{e:sobolev-weaker} implies $|\Phi|_{L^q(\mathcal{U}_1)}=|\p^2T|_{L^q(\mathcal{U}_1)}\leq C_q$ for any $q>1$. Using simple rescaling, we have that for every $q>1$,
$|\widetilde{\Phi}|_{L^q(B_1^{\tilde{g}}(x))}\leq C_q\cdot (r_x)^{-\frac{n}{q}}$, where $C_q>0$ is a uniform constant independent of $x\in\mathcal{U}_2\setminus U$.
Applying the $W^{2,q}$-estimate to \eqref{e:rescaled-Phi-eq} and applying the Sobolev embedding, we have that for each $q>1$\begin{align}|\widetilde{\Phi}|_{W^{2,q}(B_{1/2}^{\tilde{g}}(x))} \leq C_q \cdot (r_x)^{-\frac{n}{q}}.\label{e:Phi-(2,q)}\end{align}
For any fixed $\epsilon>0$, letting $q>1$ be sufficiently large and applying the Sobolev embedding, we obtain that 
\begin{align}
 |\widetilde{\Phi}|_{C^{1,\alpha}(B_{1/4}^{\tilde{g}}(x))}\leq C_{\alpha,\epsilon}\cdot (r_x)^{-\epsilon}.
\end{align}
Rescaling back to the original metric $g$, we have that \begin{equation}|\Phi|_{L^{\infty}(B_{r_x/4}(x))}+|r_x\cdot \p \Phi|_{L^{\infty}(B_{r_x/4}(x))}\leq  C_{\epsilon} \cdot (r_x)^{-\epsilon},\label{e:Phi-base-step}\end{equation}
where $C_{\epsilon}>0$ is independent of the base point $x\in \mathcal{U}_{2}\setminus U$. 
This completes the base step.

Based on the initial step $k=2$, we are now ready to finish the induction step. For any fixed integer $k\geq 3$, suppose that the estimate in item (1) holds for all $2\leq j \leq k-1$. That is, for every $\epsilon>0$, $2\leq j \leq k-1$ and $q>1$, there is a smaller neighborhood $\mathcal{U}_{j}\subset\subset \mathcal{U}$ on which the following estimates hold:\begin{align}
&|\p^{j-2} \Phi|=O(r^{2-j-\epsilon}),\label{e:hypothesis-1}
\end{align}
where $C_{k,q,\epsilon}>0$ is independent of the choice of the base point $x\in \mathcal{U}_{j}\setminus U$. Then we will show that for all $\epsilon>0$,  $|\p^k T|=O(r^{2-(k+\epsilon)})$. 
Differentiating equation \eqref{e:initial-eq} by $\p^{k-3}$, we have that for every $\epsilon>0$, 
\begin{equation}
\Delta(\p^{k-3}\Phi) + \sum\limits_{j=0}^{k-2}Q_j * \p^j \Phi = O(r^{1-k-\epsilon}),  \label{e:Delta-nabla-(k-2)}
\end{equation}
where $Q_j$'s arise from the derivatives of the metric coefficients. 

As before, to obtain the higher regularity of $\Phi$, we will study \eqref{e:Delta-nabla-(k-2)} under the rescaling $\tilde{g} = (r_x)^{-2}\cdot g$ and the dilation $\widetilde{\Phi}(y) = \Phi(r_x\cdot y)$ for every fixed $x\in \mathcal{U}_k\setminus U$, where $\mathcal{U}_k$ is some smaller neighborhood.
Applying the induction hypothesis \eqref{e:hypothesis-1} and absorbing the lower order terms of the \eqref{e:Delta-nabla-(k-2)} in the rescaled form, we have that for every $\epsilon>0$,
\begin{equation}
\widetilde{\Delta}(\widetilde{\p}^{k-3}\widetilde{\Phi}) + r_x \cdot \widetilde{Q}_{j-1} * \widetilde{\p}(\widetilde{\p}^{k-3} \widetilde{\Phi}) + \widetilde{Q}_{j-3} * (\widetilde{\partial}^{k-3}\widetilde{\Phi}) = O(r^{-\epsilon}).\end{equation}
Then the elliptic regularity implies that for each $\epsilon>0$ and $q>1$, 
\begin{equation}
|\widetilde{\p}^{k-3}\widetilde{\Phi}|_{W^{2,q}(B_{1/4}^{\tilde{g}}(x))} \leq C_{k,q,\epsilon}\cdot (r_x)^{-\epsilon},
\end{equation}
where $C_{k,q,\epsilon}>0$ is independent of the base point $x\in\mathcal{U}_k\setminus U$. Applying the Sobolev embedding and scaling back to the original metric $g$,  we have that for each $\epsilon>0$,
\begin{equation}|\p^k T|=|\p^{k-2}\Phi|_{L^{\infty}(B_{r_x/4}(x))}\leq C_{k,\epsilon}\cdot (r_x)^{2-k-\epsilon}\end{equation} where $C_{k,\epsilon}>0$ is independent of the base point $x\in\mathcal{U}_k\setminus U$. 
The proof of \eqref{e:general-nabla} is done.

 \begin{flushleft}{\bf Step 2:}\end{flushleft} 

{\it We prove that  for each $\epsilon>0$, $k\geq 2$, $\ell \in \dZ_+$, } 
\begin{equation} |\p_t^{\ell}T|=O(1),\quad  |\p_t^{\ell}\p T|=O(1),\quad 
|\p_t^{\ell} \p^k T|=O(r^{2-k-\epsilon}), \label{e:mixed-nabla}
\end{equation}
{\it  where $\p_t^{\ell}$ denotes the derivatives in the tangential directions of order $\ell$. This will be proved by induction on the both $\ell$ and $k$.}
 
Before going through the proof, we first outline the induction scheme and several sub-steps that we will establish: 
\begin{itemize}
\item Step 2.0: For every $\ell\in\dZ_+$, the following tangential regularity holds for $T$ and $\p T$,    \begin{align}|\p_t^{\ell}T|=O(1),
\quad |\p_t^{\ell}\p T|=O(1). \label{e:tangential-T-pT}\end{align}

\item Step 2.1: For every $\epsilon>0$ and $k\geq 2$, the following holds
\begin{align}
|\p_t\p^k T| = O(r^{2-k-\epsilon}).\label{e:first-order-tangential}
\end{align}

\item Step 2.2: Under the induction hypothesis that the higher regularity 
\begin{align}
|\p_t^i\p^k T| = O(r^{2-k-\epsilon})
\label{e:main-induction-hypothesis}\end{align}
 holds for every $\epsilon>0$, $k\geq 2$,  $1\leq i \leq \ell -1$, we will prove that for every $\epsilon>0$ and $k\geq 2$,
 \begin{align}
 |\p_t^{\ell}\p^k T| = O(r^{2-k-\epsilon}).\label{e:mixed-regularity-induction-step}
 \end{align}

\end{itemize}

{\bf Step 2.0:}  {\it To prove \eqref{e:tangential-T-pT}, it suffices to show $\p_t^{\ell} T\in W^{2,q}$ for any $q>1$.}

To this end,  differentiating $\Delta T=v$ by the tangential derivative $\p_t$, we have that
\begin{equation}
\Delta(\p_t T) +  Q_1 * \p^2 T + Q_2 * \p T = \p_t v = O'(1),\label{e:tangential-T}
\end{equation}
where $Q_1$ and $Q_2$ involve the derivatives of the metric.
Applying the elliptic regularity to \eqref{e:tangential-T}, in some smaller neighborhood $\mathcal{U}'\subset \subset \mathcal{U}$, we have that for any $q>1$
\begin{align}\p_t T\in W^{2,q}(\mathcal{U}').\label{e:tangent-T-(2,q)}\end{align}  Furthermore, given any $\ell\in\dZ_+$, computing $\p_t^{\ell}(\Delta T) =\p_t^{\ell} v$, we have
\begin{align} 
\Delta(\p_t^{\ell} T) + \sum\limits_{j=0}^{\ell-1} \Big(E_j*\p^2 \p_t^j T + F_j*\p\p_t^j T + H_j*\p_t^j T\Big)=\p_t^{\ell} v= O'(1).
\end{align}
Based on \eqref{e:tangent-T-(2,q)} and simple induction argument, we obtain 
 $\p_t^{\ell} T \in W^{2,q}(\mathcal{U}'')$
 for all $q>1$ and $\ell\in\dZ_+$ in a smaller neighborhood $\mathcal{U}''\subset\subset \mathcal{U}'$.

{\bf Step 2.1:} {\it This step is to prove the estimate \eqref{e:first-order-tangential} by induction on $k\geq 2$.}

 Let $\Phi = \p^2 T$ and the base step here is to prove \eqref{e:first-order-tangential} for $k=2$. 
Using \eqref{e:general-nabla}, it is straightforward to check that
\begin{align}
\DelH(\p_t \Phi) + Q_1 * \p(\p_t\Phi) + Q_2 *(\p_t\Phi)=\sum\limits_{j=0}^4Q_j'*\p^j T + \p_t\p^2v = O(r^{-2-\epsilon}) ,\label{e:Delta-p_t-Phi}\end{align} for all $\epsilon>0$. Since by \eqref{e:tangent-T-(2,q)}, $\p_t\Phi=\p_t\p^2 T\in L^q$ for any $q>1$, applying the previous rescaling argument, we have that \begin{align}|\p_t\Phi|_{L^{\infty}(B_{r_x/2}(x))}+r_x\cdot |\p_t\p\Phi|_{L^{\infty}(B_{r_x/2}(x))}\leq C_{\epsilon}\cdot (r_x)^{-\epsilon}\label{e:first-tangential-Phi}\end{align} in terms of the original metric $g$,
where $C_{\epsilon}>0$ is independent of the choice 
of the base point $x$. Next, computing $\Delta(\p_t\p^{k-3}\Phi)$ for any $k\geq 3$, it follows that 
\begin{align}
\Delta(\p_t\p^{k-3}\Phi)
=\p^{k-3}\Delta(\p_t\Phi) + \sum\limits_{j=1}^{k-2}Q_j*\p_t\p^j\Phi.\label{e:Delta-1-(k-1)}
\end{align}
Applying the induction hypothesis $|\p_t\p^j\Phi|=O(r^{-j-\epsilon})$ for $0\leq j\leq k-3$, equation \eqref{e:Delta-1-(k-1)} can be rewritten as the following for any 
\begin{align}
\Delta(\p_t\p^{k-3}\Phi) + Q_1 * \p(\p_t\p^{k-3}\Phi) + Q_2*(\p_t\p^{k-3}\Phi) = O(r^{1-k-\epsilon}).
\end{align}
The same rescaling argument implies that
$|\p_t\p^kT|=|\p_t\p^{k-2}\Phi|=O(r^{2-k-\epsilon})$ for every $\epsilon>0$. This completes Step 2.1.

{\bf Step 2.2:} {\it We will prove \eqref{e:mixed-regularity-induction-step} under the induction hypothesis \eqref{e:main-induction-hypothesis}.
}

The proof is by induction on $k\geq 2$ and the arguments consist of two parts.

To begin with, we will show that the following regularity\begin{align}
|\p_t^{\ell}\Phi | =O(r^{-\epsilon})\label{e:base-case-of-(2.0)}
\end{align}
holds for any $\ell\in\dZ_+$ and $\epsilon>0$. Recall that the case $\ell=1$ 
has been proved in \eqref{e:first-tangential-Phi}.
Now let us compute $\Delta(\p_t^{\ell}\Phi)$ for $\ell>2$. By straightforward computations,
\begin{align}
\Delta(\p_t^{\ell}\Phi) = \p_t^{\ell}\Delta\Phi + \sum\limits_{\ell'=1}^{\ell-1}(E_j*\p^2\p_t^{\ell'}\Phi + F_j*\p\p_t^{\ell'}\Phi).\label{e:Del-tangential-ell-Phi}
\end{align}
Notice that by \eqref{e:initial-eq} and the lower regularity \eqref{e:tangential-T-pT}
\begin{align}
\p_t^{\ell}\Delta\Phi =   Q_1*\p(\p_t^{\ell}\Phi)+Q_2*(\p_t^{\ell}\Phi) + O(r^{-2-\epsilon}).\label{e:tangential-Delta-Phi}
\end{align}
Plugging \eqref{e:tangential-Delta-Phi} into \eqref{e:Del-tangential-ell-Phi}, and applying
the induction hypothesis \eqref{e:main-induction-hypothesis} to the above lower order terms, we eventually have that
\begin{align}
\Delta(\p_t^{\ell}\Phi) + Q_1*\p(\p_t^{\ell}\Phi)+Q_2*(\p_t^{\ell}\Phi) =O(r^{-2-\epsilon})\end{align}
for every $\epsilon>0$.
Since we have proved in Step 2.0 that $|\p_t^{\ell}\Phi|\in L^q$ for any $q>1$, the previous rescaling argument implies that for every $\epsilon>0$,
\begin{align}
|\p_t^{\ell}\Phi|=O(r^{-\epsilon}), \quad |\p_t^{\ell}\p\Phi|=O(r^{-1-\epsilon}).
\end{align}
This completes the proof of \eqref{e:base-case-of-(2.0)}. 

Let $k\geq 3$ be any fixed positive integer. 
Based on \eqref{e:base-case-of-(2.0)} and the hypothesis $|\p_t^{\ell}\p^j \Phi| = O(r^{-j-\epsilon})$
for every $\epsilon>0$, $\ell\in\dZ_+$ and $0\leq j\leq k-3$, we proceed to prove the induction step. That is, 
we will show that \begin{align}
|\p_t^{\ell}\p^{k-2}\Phi| = O(r^{2-k-\epsilon}).
\end{align}

The proof is the same as before. Computing the Laplcian of $\p_t^{\ell}\p^{k-3}\Phi$,  we have that
\begin{align}
\Delta(\p_t^{\ell}\p^{k-3}\Phi)  & =  \p^{k-3}\Delta(\p_t^{\ell}\Phi) + Q_1 * \p (\p_t^{\ell}\p^{k-3}\Phi) + Q_2 * (\p_t^{\ell}\p^{k-3}\Phi) +\sum\limits_{j=1}^{k-4} F_j*\p_t^{\ell}\p_j\Phi.
\end{align}
Using the induction hypothesis, one can eventually obtain
\begin{align}
\Delta(\p_t^{\ell}\p^{k-3}\Phi) + Q_1 * \p (\p_t^{\ell}\p^{k-3}\Phi) + Q_2 * (\p_t^{\ell}\p^{k-3}\Phi) = O(r^{1-k-\epsilon})\end{align}
for every $\epsilon>0$. The same rescaling argument implies that
\begin{align}
|\p_t^{\ell}\p^{k-3}\Phi| = O(r^{3-k-\epsilon}), \quad |\p_t^{\ell}\p^{k-2}\Phi| = O(r^{2-k-\epsilon}).
\end{align}
which proves \eqref{e:mixed-regularity-induction-step}. Therefore, the proof is done. 
\end{proof}

For our purpose later, we also need the following lemma.

\begin{lemma}Let $\Delta$ denote the Hodge Laplacian on $Q$, then 
\begin{equation}\Delta(r^2)=-6+\tO(r^2).\end{equation}
\end{lemma}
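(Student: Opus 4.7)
The plan is a direct pointwise computation in the normal coordinates $(x_i,y_\alpha)$ around a fixed $p\in P$, using only the Gauss lemma together with the determinant expansion already assembled in this section. First, item (1) of Lemma \ref{l:generalized-Gauss} gives $\nabla r=\partial_r$ on $\mathcal{U}\setminus P$, and since $r\partial_r = y_\alpha \partial_{y_\alpha}$ in normal coordinates, the vector field
\[
\nabla(r^2) \;=\; 2r\,\partial_r \;=\; 2 y_\alpha\, \partial_{y_\alpha}
\]
extends smoothly across $P$. This sidesteps the pointwise singularity of $\nabla r$ at $P$ and produces a genuinely globally defined vector field on $\mathcal U$ to which one can safely apply the divergence formula.

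Next I feed this into the Riemannian divergence formula, which collapses to
\[
\Delta(r^2) \;=\; -\frac{1}{\sqrt{\det g}}\,\partial_{y_\alpha}\!\bigl(2 y_\alpha\sqrt{\det g}\,\bigr) \;=\; -6 \;-\; 2 y_\alpha\,\partial_{y_\alpha}\!\bigl(\log\sqrt{\det g}\,\bigr),
\]
where the $-6 = -2k_0$ is the flat-model contribution coming from the three normal directions via $\partial_{y_\alpha}(y_\alpha)=3$. To handle the remainder I substitute the determinant expansion $\det g = \det(g^P_{ij})(1+2H^\alpha y_\alpha) + \tO(r^2)$, already derived in the proof of Lemma \ref{l:v-T-N} (equation \eqref{e:g-expansion}). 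Taking the logarithm gives $\log\sqrt{\det g} = \tfrac12\log\det(g^P_{ij}) + H^\alpha y_\alpha + \tO(r^2)$, hence $2 y_\alpha\, \partial_{y_\alpha}(\log\sqrt{\det g}) = 2H^\alpha y_\alpha + \tO(r^2)$. In the geometric setting in which this lemma will be applied, namely $P = H\times\{0\} \subset D\times I$ with $H$ a complex hypersurface inside a K\"ahler manifold $D$, $P$ is minimal so $\overrightarrow{H}\equiv 0$ and the linear correction drops out, leaving $\Delta(r^2) = -6 + \tO(r^2)$.

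There is no serious obstacle in this argument. The only step requiring some care is verifying that the remainder really is $\tO(r^2)$ in the strong sense (smooth on $\mathcal U$ with vanishing first-order normal derivatives along $P$) rather than merely bounded by $O(r^2)$; this follows because the $\tO(r^2)$ error in the determinant expansion is built from smooth tensorial data (Riemann curvature, second fundamental form and its covariant derivatives), and the operations of $\partial_{y_\alpha}$ followed by multiplication by $y_\alpha$ preserve both smoothness and the second-order vanishing along $P$.
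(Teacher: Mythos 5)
Your computation is correct, and it takes a genuinely different route from the paper. The paper stays inside the Hodge--theoretic machinery of this subsection: using \eqref{e:gauss-cancel} it writes $dr^2=2y_\alpha\eta_\alpha+\tO(r^4)$, applies the Hodge star in the coframe $\{\eta_\alpha,dx_i\}$ to get $*dr^2=2y_\alpha\eta_{\widehat\alpha}\wedge\dvol_T+\tO(r^3)$, and then computes $d*dr^2=6\dvol_g+\tO(r^2)$, so that $\Delta(r^2)=-*d*dr^2=-6+\tO(r^2)$. You instead observe that $\nabla(r^2)=2y_\alpha\p_{y_\alpha}$ is a smooth vector field across $P$ and apply the divergence formula together with the determinant expansion \eqref{e:g-expansion}. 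Your route is more elementary (it never needs Lemma \ref{l:v-T-N} or the coframe $\eta_\alpha$), and it has the virtue of keeping the mean-curvature contribution explicit: in the general codimension-three setting it gives $\Delta(r^2)=-6-2H^\alpha y_\alpha+\tO(r^2)$, after which you kill the linear term by invoking minimality of $P$ in the intended application.

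The one point to be clear about is the status of that linear term. The lemma is stated in the general Riemannian setting of this subsection, with no minimality hypothesis, so strictly speaking your argument proves it only under the extra assumption $\overrightarrow{H}=0$, which you import from the later K\"ahler application ($P=H\times\{0\}$ with $H$ a complex hypersurface of $D$, hence minimal -- exactly the observation the paper makes in the proof of Proposition \ref{p:complex-Green-expansion}). However, the term you find is genuinely present for non-minimal $P$: for a round circle of radius $R$ in $\dR^4$ one checks directly that $\Delta(r^2)=-6-\tfrac{2}{R}y_1+O(r^2)$, in agreement with your formula, so the unrestricted statement cannot hold as written. The paper's own proof silently discards precisely this contribution (the factor $\sqrt{\det g}/\sqrt{\det(g^P_{ij})}=1+H^\alpha y_\alpha+\tO(r^2)$ hidden in $*\eta_\alpha$, and $\dvol_N\wedge\dvol_T=(1-H^\alpha y_\alpha+\tO(r^2))\dvol_g$) as if it were of higher order. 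So your treatment is, if anything, the more careful one; to make it a complete proof of the lemma as it is actually used, either record the conclusion as $-6-2H^\alpha y_\alpha+\tO(r^2)$ in general, or state explicitly that in every application in this paper $P$ is minimal, so the displayed form holds there.
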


\begin{proof}
This follows from similar, and simpler arguments as above. First,
\begin{equation}dr^2=2rdr=2y_\alpha \eta_\alpha+\tO(r^4),\end{equation}
so it follows that 
\begin{align}*dr^2&=2y_\alpha \eta_{\widehat\alpha}\wedge \dvol_T+\tO(r^3)\\
d*dr^2&=6\dvol_N\wedge \dvol_T+\tO(r^2)=6\dvol_g+\tO(r^2).\end{align}
Hence $\Delta(r^2)=d^*dr^2=-*d*dr^2=-6+\tO(r^2)$.\end{proof}

\subsection{Green's currents on a cylinder} \label{ss:complex-greens-currents}

In this subsection we assume $Q\equiv D\times\dR$ is a Riemannian product of a K\"ahler manifold $(D, \omega_D, J_D)$ of complex dimension $n-1$, and the real line $\R$ with coordinate $z$.
Given a smooth divisor $H\subset D$, let  $P\equiv H\times \{0\}\subset D\times\{0\}$. In our discussion sometimes we also naturally identify $H$ with $P$. 
The results of this subsection will be purely local so $D$ and $H$ are not necessarily compact. Our goal is to prove a few local expansion results, which will be used in Section \ref{s:neck}. 

The splitting of a line $\dR$ allows us to study the normal exponential map in $Q$ in terms of the normal exponential map in $D$.  Notice that the normal bundle of $P$ in $Q$ is naturally a Riemannian direct sum 
\begin{equation}{N}=N_0\oplus \R_z,\end{equation}
where $N_0$ is the normal bundle of $H$ in $D$ given as the orthogonal complement $(TH)^{\perp}$ of $TH$ in $TD|_H$ (with respect to $\omega_D$). So  $N_0$ is naturally a  hermitian line bundle. We also naturally identify $N_0$ with the holomorphic normal bundle $(TD|_H)/TH$, as complex line bundles. Therefore, $N_0$ can be viewed as a holomorphic hermitian line bundle. The {\it normal exponential map} of $H$ in $D$ is defined by 
\begin{equation}
\Exp_H: N_0\to D,\ (p,v)\mapsto\Exp_p(v),\end{equation}
 which gives a local diffeomorphism from a neighborhood of the zero section in $N_0$ to a tubular neighborhood of $H$ in $D$. Immediately, \begin{equation}
\label{eqn3-1}
d\text{Exp}_H: TN_0|_{H} \longrightarrow TD|_{H}\end{equation}
is the identity map under the natural isomorphisms $TN_0|_{H}\cong N_0\oplus TH$ and $TD|_{H}\cong N_0\oplus TH
$. 

Given any point $p\in H$, we may choose local holomorphic coordinates $\{w_i\}_{i=1}^{n-1}$ on $D$, centered at $p$, such that $H$ is locally defined by $w_1=0$, and the coordinate vector fields $\p_{w_i}$ are orthonormal at $p$. 
Then $dw_1, w_2, \ldots, w_{n-1}$ induces local holomorphic coordinates on $N_0$, which we denote by $\{\zeta, w_2', \ldots, w_{n-1}'\}$. 
 Given any $(p,v)\in N_0$, its coordinates are by definition given as
\begin{align}
\begin{cases}
\zeta= (dw_1)_p(v), \\
w_j'=w_j(p), & j\geq 2.
\end{cases}\end{align}
Under the normal exponential map $\Exp_H$, these coordinates can also be viewed as local (non-holomorphic) coordinates on $D$, and when restricted to $H$ we have $w_j'=w_j(j\geq 2)$ and $d\zeta=dw_1$.  In particular, $\{w_2', \cdots, w_{n-1}'\}$ still give holomorphic coordinates on $H$. 

Similarly using $\Exp_H$, the coordinate vector field $\p_\zeta$, originally defined on the normal bundle $N_0$, can also be viewed as a local (non-holomorphic) vector field on $D$. When restricted to $H$, the vector field $\p_\zeta$ can hence be  identified with the local section $\sigma$ of $(TH)^{\perp}\subset TD|_H$ given by the orthogonal projection of the holomorphic vector field $\p_{w_1}$. 
Then we obtain a local unitary frame $e$ of $(TH)^{\perp}$ given by
\begin{equation}e\equiv\sigma/|\sigma|.
\end{equation}
These  generate fiber coordinates $y, \bar y$ on $N_0$ such that 
\begin{equation}y =|\sigma|\cdot \zeta.\end{equation}  
In this way we obtain local coordinates $\{y,\bar{y}, y_3=z, w_2', \bar w_2', \ldots, w_{n-1}', \bar w_{n-1}'\}$ in a neighborhood of $p$ in $Q$. To match with the notation in the previous subsection, with respect to the local orthonormal basis $\{\sigma+\bar\sigma, \sqrt{-1}(\sigma-\bar\sigma), \p_z\}$, the normal geodesic coordinates are given by $\{y_1=Re(y), y_2=Im(y), y_3=z\}$, and 
$r^2=|y|^2+z^2.$
Also, the convention for the orientation is given such that 
\begin{equation}2^{-(n-1)} \sqrt{-1} dy\wedge d\bar y\wedge  dz \wedge  \prod\limits_{j=2}^{n-1}(\sqrt{-1}dw_j' \wedge d\bar w_j')\end{equation} defines a positive volume form. In the following, we  will also use $\langle\cdot, \cdot\rangle$  to denote the hermitian inner product on $(1,0)$-type vectors. The relation with the Riemannian inner product is seen as 
\begin{equation}\label{unitary}\langle \xi, \xi\rangle=2\langle Re(\xi), Re(\xi)\rangle.\end{equation}
What the notation means will be clear in the context. 

By making $P$ and $Q$ smaller we get local existence of Green's current $G_P$ for $P$ in $Q$, by 
Theorem \ref{t:Green-expansion}, with the expansion given there. In our case the formula can be written in terms of the above complex coordinates  as 

\begin{proposition}\label{p:complex-Green-expansion}
Let $G_P$ be a Green's current  for $P\equiv H\times\{0\}$ in $Q$, then locally
\begin{equation}G_P=\psi\wedge dz+\mathcal{R},\end{equation}
 where $\mathcal{R}\in C^{\infty}$ and $\psi$ is family of real-valued   $(1,1)$-forms on $D$ parametrized by $z$,  satisfying 
 $\psi(-z)-\psi(z)\in C^\infty.$ Moreover, in terms of the above local coordinates we can write
\begin{equation}
\psi=\frac{\sqrt{-1}}{4r} dy\wedge d\bar y+\frac{1}{2r}(yd\bar y+\bar yd y)\wedge\Gamma +r\cdot d\Gamma+r^{-3}\Pi_2^{(4)}+ O'(r^2),\label{e:singular-2-form-expansion}
\end{equation}
where  $\Gamma$ is a  smooth real-valued $1$-form locally defined on $H$ given by 
\begin{equation}\Gamma(v)\equiv-\frac{\sq}{2} \langle\nabla_{v}\p_y, \p_y\rangle\Big|_H, \ v\in T_pH,\end{equation}
and $\Pi_2^{(4)}$ is the $2$-form given by Notation \ref{n:differential-form-notation} and each term in it contains at least one of the $dy$ or $d\bar{y}$.

\end{proposition}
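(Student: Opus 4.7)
The plan is to apply Theorem \ref{t:Green-expansion} to obtain a local Green's current $G_U$ for $U=\mathcal{U}\cap P$, then specialize its expansion using the K\"ahler product structure of $Q=D\times\R$ to identify the $(dz)$-component with $\psi\wedge dz$ and verify the stated formula. I will work in the real normal coordinates $\{y_1,y_2,y_3,x_1,\ldots\}$ of Section \ref{ss:geodesic-coordinates}, relabel $y=y_1+\sq y_2$ and $z=y_3$, and systematically extract $dy_3=dz$ components.

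The first step consists of structural vanishings forced by the hypotheses. Since $\p_z$ is parallel on the product $Q=D\times\R$, the $z$-components of the second fundamental form and mean curvature of $P$ in $Q$ vanish, and the connection coefficient $A_{i\alpha\beta}=\langle\nabla_{\p_{x_i}}\p_{y_\beta},\p_{y_\alpha}\rangle|_{y=0}=0$ whenever $3\in\{\alpha,\beta\}$, leaving only $A_{i12}=-A_{i21}$ (and analogous vanishings for $A_{ij\alpha\beta}$ and $\Omega_{ij\alpha\beta}$). Because $H$ is a complex submanifold of the K\"ahler manifold $D$ it is minimal, so the remaining mean-curvature components also vanish. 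Consequently Term D of the expansion in Theorem \ref{t:Green-expansion} is identically zero (each summand contains an index $3$ in its $A$ factors), and Term A reduces to $\frac{1}{2r}\,dy_1\wedge dy_2\wedge dy_3$.

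Next I decompose $G_U=\psi\wedge dz+\mathcal{R}$ by collecting the monomials that contain $dz$. Converting via $dy_1\wedge dy_2=\frac{\sq}{2}dy\wedge d\bar y$, the surviving Term A contributes $\frac{\sq}{4r}dy\wedge d\bar y$ to $\psi$; using $y_1\,dy_1+y_2\,dy_2=\frac{1}{2}(y\,d\bar y+\bar y\,dy)$, Term B contributes $\frac{A_{i12}}{4r}\,dx_i\wedge(y\,d\bar y+\bar y\,dy)$, which I rewrite as $\frac{1}{2r}(y\,d\bar y+\bar y\,dy)\wedge\Gamma$ with $\Gamma=-\frac{1}{2}A_{i12}\,dx_i$. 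A direct hermitian computation using $\p_y=\frac{1}{2}(\p_{y_1}-\sq\p_{y_2})$ and $J\p_{y_1}=\p_{y_2}$ then confirms this $\Gamma$ matches the intrinsic formula $-\frac{\sq}{2}\langle\nabla_v\p_y,\p_y\rangle|_H$ once the unitary frame is normalized consistently. Term C rewrites as $r\cdot d\Gamma$ via the algebraic identity $A_{ij12}\,dx_i\wedge dx_j=-2\,d\Gamma$, immediate from $A_{ij\alpha\beta}=\frac{1}{2}(\p_{x_i}A_{j\alpha\beta}-\p_{x_j}A_{i\alpha\beta})$. The higher-order remainder $r^{-3}\Pi_3^{(4)}+O'(r^2)$ contributes $r^{-3}\Pi_2^{(4)}+O'(r^2)$ to $\psi$ after extracting its $dz$ part, and each summand indeed carries at least one of $dy$ or $d\bar y$.

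Finally, for the parity I use the isometric involution $\iota:(x,z)\mapsto(x,-z)$, which reverses the orientation of $Q$ while preserving $P$ pointwise and preserving its orientation. The orientation sign for pulling back currents under an orientation-reversing diffeomorphism gives $\iota^*\delta_P=-\delta_P$, so $\iota^*G_P+G_P$ is harmonic and therefore smooth. Writing $G_P=\psi\wedge dz+\mathcal{R}$ and using $\iota^*dz=-dz$, the smoothness of $\iota^*G_P+G_P$ yields $\psi(z)-\psi(-z)\in C^\infty$ on the $(dz)$-part, which is the claimed parity statement; smoothness of $\mathcal{R}$ then follows by combining the vanishing of all singular no-$dz$ terms from the previous step with the freedom to absorb a smooth harmonic ambiguity into $\mathcal{R}$. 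The $(1,1)$-structure of $\psi$ follows because $y=|\sigma|\zeta$ is holomorphic to leading order along $H$ (with $\zeta$ a fiber coordinate on the holomorphic line bundle $N_0$), so the building blocks $dy\wedge d\bar y$ and $d(|y|^2)$ are $(1,1)$-compatible modulo smooth $(2,0)+(0,2)$ corrections absorbed into $O'(r^2)$. The main technical obstacle throughout is the careful bookkeeping needed to match the complicated real-coordinate expansion of Theorem \ref{t:Green-expansion} with the compact complex-coordinate formula, particularly tracking the numerical factor in $\Gamma$ which depends sensitively on the hermitian inner-product conventions for the normal bundle.
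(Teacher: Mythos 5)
Your computational core follows the paper's own route: you apply Theorem \ref{t:Green-expansion}, use that $\p_z$ is parallel (so $A_{i\alpha\beta}=0$ whenever $3\in\{\alpha,\beta\}$) and that $H$ is minimal in $D$ (being a complex submanifold of a K\"ahler manifold), extract the $dz$-components, and identify $\Gamma=-\tfrac12 A_{i12}dx_i$ and the coefficient of $r$ with $d\Gamma$; this part, and the parity argument via the reflection $z\mapsto -z$ (which, modulo care with the sign convention for pulling back currents under an orientation-reversing map, is the same as observing that $\psi(-z)\wedge dz$ is again a Green's current for $P$), is correct and matches the paper.

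The genuine gap is in the structural claims that $\mathcal{R}\in C^{\infty}$ and that $\psi$ is exactly a family of $(1,1)$-forms. You try to read both off the expansion of Theorem \ref{t:Green-expansion}, but that expansion only controls finitely many orders: its remainder $r^{-3}\Pi_3^{(4)}+O'(r^2)$ is merely of H\"older class, so ``the vanishing of all singular no-$dz$ terms'' shows at best that the no-$dz$ part of $G_P$ is $O'(r)$, i.e.\ $C^{0,\alpha}$, nowhere near $C^{\infty}$; likewise absorbing $(2,0)+(0,2)$ pieces ``into $O'(r^2)$'' only establishes the expansion formula modulo error, not that $\psi$ itself can be chosen of type $(1,1)$ with a smooth complement. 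Both facts need a global distributional argument, as in the paper: writing a test form as $\chi=\beta\wedge dz+\gamma$ and using that on the metric product $\Delta$ preserves the splitting into $dz$- and no-$dz$ parts, one gets
\begin{equation}
(\mathcal{R}_1,\Delta\chi)=(\mathcal{R}_1,\Delta(\beta\wedge dz))=(G_P,\Delta(\beta\wedge dz))=2\pi\int_P\beta\wedge dz=0,
\end{equation}
since $dz|_P=0$; hence the no-$dz$ part $\mathcal{R}_1$ is harmonic as a current and therefore smooth. For the $(1,1)$-type, one uses that $\delta_P$ is $J_D$-invariant (because $H\subset D$ is a complex submanifold), so $J_D(G_P)$ is again a Green's current for $P$, whence the anti-$J_D$-invariant part $\psi_2=\tfrac12(\psi_1-J_D\psi_1)$ of the $dz$-coefficient is smooth and may be absorbed into $\mathcal{R}$. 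Without these two arguments (or substitutes of equal strength) the proposition's regularity and type statements are not proved by your proposal.
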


\begin{proof}
This essentially follows from the fact that $P$ is located on the slice $\{z=0\}$ and $H$ is a complex submanifold of $D$. Indeed, we can decompose 
$G_P=\psi_1\wedge dz+\mathcal{R}_1,$
where $\mathcal{R}_1$ does not involve $dz$.
Given any compactly supported test form $\chi\in\Omega_0^{2n-4}(Q)$, we can write 
$\chi=\beta\wedge dz+\gamma,$
where $\gamma$ does not involve $dz$. Immediately,
\begin{align}
(\mathcal R_1, \DelH\gamma)&=\int_Q\mathcal R_1\wedge \DelH\gamma=0,
\\
(\psi_1\wedge dz, \DelH(\beta\wedge dz)) &= \int_Q\psi_1\wedge dz\wedge  \DelH(\beta\wedge dz)=0.
\end{align}
So it follows that \begin{equation}(\mathcal R_1, \DelH\chi)=(\mathcal R_1, \DelH(\beta\wedge dz))=(G_P, \DelH(\beta\wedge dz))=2\pi\int_P(\beta\wedge dz)=0.\end{equation}
This implies that $\DelH\mathcal R_1=0$ in the distributional sense. By the elliptic regularity, we have $\mathcal R_1\in C^{\infty}$. 

Now we write $\psi_1=\psi+\psi_2,$
where $\psi$ is $J_D$-invariant, i.e., of type $(1,1)$ in $D$, and $\psi_2$ is anti-$J_D$-invariant. Since $H$ is a complex submanifold of $D$,  the Dirac current $\delta_P$ is $J_D$-invariant, and hence $J_D(G_P)$ is also a Green's current for $P$. So we see that $\psi_2=\frac{1}{2}(G_P-J(G_P))$ is smooth. Then we have that 
\begin{equation}G_P=\psi\wedge dz+\mathcal{R} ,\end{equation}
where $\mathcal R=\mathcal R_1+\psi_2\wedge dz$ is smooth. 
Similarly, since  $\delta_P$ is invariant under $z\mapsto -z$, the difference $\psi(z)-\psi(-z)$ is smooth. 

To see the expansion of $\psi$, notice that $H$ is  K\"ahler, and hence it is a minimal submanifold of $D$. So the mean curvature  of $H$ in $D$ vanishes. Also notice that $\p_z$ is parallel on $Q$ so $A_{i\alpha\beta}=0$ if either $\alpha=3$ or $\beta=3$. This implies that 
\begin{equation}
\psi=\frac{\sqrt{-1}}{4r} dy\wedge d\bar y+\frac{1}{2r}(yd\bar y+\bar yd y)\wedge\Gamma +r\cdot \mathbb A+r^{-3}\Pi_2^{(4)}+ \tO(r^2),
\end{equation}
where 
\begin{align}
\Gamma =-\frac{1}{2}A_{i12}dx_i\quad  \text{and}\quad  \mathbb{A} =-\frac{1}{4}A_{ij\alpha\beta}dx_i\wedge dx_j.\end{align}
In particular, $\mathbb A=d\Gamma$. Re-writing 
$A_{i12}=\langle \nabla_{\p_{x_i}}\p_{y_2}, \p_{y_1}\rangle$ 
in terms of the coordinates $y, \bar y$ and keeping in mind \eqref{unitary}, we obtain the desired formula for $\Gamma$. 
\end{proof}

Proposition \ref{p:complex-Green-expansion} has a quick corollary which will be used in our later calculations. 
\begin{corollary}\label{c:psipower}
For any positive integer $k\geq 2$, we have
$
\psi^k = O'(r^{k-1}).
$

\end{corollary}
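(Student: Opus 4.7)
The plan is to expand $\psi^k$ using the explicit expansion in Proposition \ref{p:complex-Green-expansion} and exploit the nilpotency of the normal-differential factors under the wedge product. Split
\begin{equation*}
\psi = \Psi_{\mathrm{top}} + \Psi_{\mathrm{side}} + \Psi_{\mathrm{tan}},
\end{equation*}
according to how many of the normal differentials $\{dy, d\bar y\}$ appear in each 2-form component of $\psi$ on $D$: $\Psi_{\mathrm{top}}$ collects all terms carrying the factor $dy\wedge d\bar y$, $\Psi_{\mathrm{side}}$ those carrying exactly one of $dy$ or $d\bar y$, and $\Psi_{\mathrm{tan}}$ the purely tangential (in $D$) terms. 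Reading off the expansion in Proposition \ref{p:complex-Green-expansion} and using Notation \ref{n:differential-form-notation} to track the contributions of $r^{-3}\Pi_2^{(4)}$ and the $O'(r^2)$ remainder, one obtains the pointwise bounds $|\Psi_{\mathrm{top}}| = O(r^{-1})$ (dominated by $\frac{\sqrt{-1}}{4r}dy\wedge d\bar y$), $|\Psi_{\mathrm{side}}| = O(1)$ (dominated by $\frac{1}{2r}(y d\bar y+\bar y dy)\wedge\Gamma$), and $|\Psi_{\mathrm{tan}}| = O(r)$ (dominated by $r\cdot d\Gamma$), together with the matching higher-order normal regularity from Definition \ref{d:normal-regularity}.

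The crucial algebraic observations are: (i) $\Psi_{\mathrm{top}}\wedge\Psi_{\mathrm{top}} = 0$, since $(dy\wedge d\bar y)^{\wedge 2}=0$; (ii) $\Psi_{\mathrm{top}}\wedge\Psi_{\mathrm{side}} = 0$, since the product forces a repeated $dy$ or $d\bar y$; and (iii) $\Psi_{\mathrm{side}}^{\wedge 3}=0$, because $\Psi_{\mathrm{side}}^{\wedge 2}$ is forced to carry a full $dy\wedge d\bar y$ factor (any nonzero wedge of two one-normal forms picks up both differentials), after which one more $\Psi_{\mathrm{side}}$ produces a repeated normal differential. Consequently, in the multinomial expansion
\begin{equation*}
\psi^{\wedge k} = \sum_{a+b+c=k}\binom{k}{a,b,c}\,\Psi_{\mathrm{top}}^{\wedge a}\wedge\Psi_{\mathrm{side}}^{\wedge b}\wedge\Psi_{\mathrm{tan}}^{\wedge c},
\end{equation*}
only monomials with $a\in\{0,1\}$ and $b\in\{0,1,2\}$ survive, and $a=1$ further forces $b=0$ by (ii). A case-by-case size check then shows that every surviving monomial has pointwise order at least $r^{k-2}$: the extremal case $(a,b,c)=(1,0,k-1)$ contributes $r^{-1}\cdot r^{k-1}=r^{k-2}$, while every monomial with $a=0$ has order $r^{k-b}\geq r^{k-2}$. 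In particular $\psi^{\wedge k}=O(r^{k-2})=O(1)$ for $k\geq 2$, which already delivers the required boundedness on $\psi^k$.

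To upgrade this to the full regularity $\psi^k\in O'(r^{k-1})$, I would verify the higher normal-derivative bounds of Definition \ref{d:normal-regularity} by applying Leibniz to each surviving monomial. The key point is that any normal derivative hitting the singular $\frac{1}{r}$ factor in $\Psi_{\mathrm{top}}$ is compensated by the $O'(r)$ regularity of $\Psi_{\mathrm{tan}}$ in the extremal monomial $\Psi_{\mathrm{top}}\wedge\Psi_{\mathrm{tan}}^{\wedge(k-1)}$ (whose leading piece $\tfrac{\sqrt{-1}}{4} dy\wedge d\bar y\wedge(d\Gamma)^{\wedge(k-1)}\cdot r^{k-2}$ is in fact smooth to leading order); while the non-extremal surviving monomials already start at order $r^{k-b}$ with $b\leq 2$ and carry compatible derivative weights through the explicit polynomial-in-$y$ structure of $\Pi_2^{(4)}$ and the $O'(r^2)$ remainder. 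The main obstacle is essentially bookkeeping — ensuring that the $\epsilon$-losses allowed in Definition \ref{d:normal-regularity} do not compound across the $k$ factors — but this is routine once the wedge cancellations above are in place.
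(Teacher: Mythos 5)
Your wedge-algebra reductions (i)--(iii) are fine, and they do give the value bound $|\psi^{\wedge k}|=O(r^{k-2})$. But that is strictly weaker than the statement being proved: $O'(r^{k-1})$ is not mere boundedness, it encodes bounded normal derivatives up to order $k-2$ and only an $\epsilon$-loss at order $k-1$, and this extra regularity is exactly what the corollary is later used for (in Proposition \ref{p:CY-error-small} one needs the $\Phi_k$'s to have bounded $C^{0,\alpha}$ norm). Your deferred ``routine Leibniz bookkeeping'' does not reach it: estimating the surviving monomials only through $|\Psi_{\mathrm{top}}|=O(r^{-1})$, $|\Psi_{\mathrm{side}}|=O(1)$, $|\Psi_{\mathrm{tan}}|=O(r)$ and differentiating leaves an uncancelled factor $r^{-1}$ at derivative order $k-1$ -- already at the first derivative when $k=2$ -- and a bounded function with gradient of size $r^{-1}$ need not be H\"older, so the conclusion genuinely fails at the level of precision the corollary asserts. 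The $O'$ classes are also not closed under naive products, so one cannot multiply order estimates factor by factor; the argument has to exploit exact cancellations in the explicit terms.

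Those cancellations are the content of the paper's proof and are missing from yours. The paper splits $\psi=\FT_1+\FT_2+\FT_3$ with the singular mixed term $\FT_2=\frac{1}{2r}(y\,d\bar y+\bar y\,dy)\wedge\Gamma$ isolated from the regular one-normal-differential terms; then $\FT_2\wedge\FT_2=0$ because the same $1$-form $\Gamma$ repeats, and together with $\FT_1^2=\FT_1\wedge\FT_2=0$ this collapses the expansion to $\psi^{\wedge k}=k(\FT_1+\FT_2)\wedge\FT_3^{\wedge(k-1)}+\FT_3^{\wedge k}$, in which at most one singular factor survives. Your coarser split by the number of normal differentials hides this: $\Psi_{\mathrm{side}}$ lumps $\FT_2$ with the $O'(r)$ side terms coming from $r^{-3}\Pi_2^{(4)}$ and the remainder, you only record $\Psi_{\mathrm{side}}^{\wedge 3}=0$, and your $(a,b,c)=(0,2,k-2)$ monomial is never upgraded -- with your bounds it stays at $r^{k-2}$ with the bad derivative behavior just described. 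For the extremal monomial, the operative fact is the exact identity $\frac{1}{4r}\,dy\wedge d\bar y\wedge(r\,d\Gamma)=\frac{1}{4}\,dy\wedge d\bar y\wedge d\Gamma$, a smooth form, combined with $dy\wedge d\bar y\wedge\Pi_2^{(4)}=0$; this is the computation $\FT_1\wedge\FT_3=O'(r)$ that drives the paper's proof. Your remark that the leading piece is ``smooth to leading order'' points in this direction but is never used quantitatively, so as written the proposal proves $\psi^{\wedge k}=O(r^{k-2})$ and not the stated $\psi^{\wedge k}=O'(r^{k-1})$.
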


\begin{proof}
By Proposition \ref{p:complex-Green-expansion}, we may write $\psi =  \FT_1 + \FT_2 + \FT_3$, where
\begin{align}
\FT_1 & \equiv \frac{\sqrt{-1}}{4r} dy\wedge d\bar y,
\\
\FT_2 & \equiv \frac{1}{2r}(yd\bar y+\bar yd y)\wedge\Gamma=O'(1),
\\
\FT_3 & \equiv r\cdot d\Gamma+r^{-3}\Pi_2^{(4)}+ O'(r^2)=O'(r).
\end{align}
Immediately we have that
$(\FT_1)^2 = (\FT_2)^2=\FT_1\wedge \FT_2=0$ and for all $k\geq 1$, $\FT_2\wedge (\FT_3)^{k}=(\FT_3)^k=O'(r^k)$. 
Moreover, 
\begin{equation}
\FT_1\wedge \FT_3 = \frac{\sqrt{-1}}{4}dy\wedge d\bar{y}\wedge d\Gamma +\frac{\sqrt{-1}}{4r^4}dy\wedge d\bar{y}\wedge \Pi_2^{(4)} + O'(r).
\end{equation}
Notice that $\frac{\sqrt{-1}}{4}dy\wedge d\bar{y}\wedge d\Gamma$ is a smooth term and by definition $dy\wedge d\bar{y}\wedge\Pi_2^{(4)}=0$. Then 
$
\FT_1\wedge \FT_3 =  O'(r).
$
So for all $k\geq 1$,  
$
\FT_1\wedge (\FT_3)^k=O'(r^k).
$
Now by direct calculation, 
\begin{eqnarray}
\psi^{k}=k\Big((\FT_1)\wedge(\FT_3)^{k-1} +  (\FT_2) \wedge (\FT_3)^{k-1}\Big) + (\FT_3)^k.
\end{eqnarray}
The conclusion then follows. 
\end{proof}

Notice that the above local coordinates $\{y, \bar y\}$ are not canonical, and depend on the initial choice of the local coordinates $\{w_i\}_{i=1}^{n-1}$ on $D$. However,  a different choice of local holomorphic coordinates on $D$ will induce the coordinates $\tilde y, \bar{\tilde y}$ on fibers of $N_0$ such that 
\begin{equation}y=e^{\sq\phi}\cdot\tilde y \label{e:y-coordinate-change}\end{equation}
for some real function $\phi$ on $H$. In particular, we have the transformations
\begin{align}
dy\wedge d\bar y&=d\y \wedge d\bar\y-\sq d|y|^2\wedge d\phi, 
\\
\Gamma&=\widetilde{\Gamma}-\frac{1}{2}d\phi.\label{e:Gamma-transform}
\end{align}
This suggests viewing $\Gamma$ as a connection 1-form on the normal bundle. Indeed this is exactly the case. 

\begin{lemma} \label{l:lemmagamma}
$2\sq\cdot\Gamma$ is the Chern connection 1-form of the normal bundle $N_0$ with respect to the above hermitian holomorphic structure, in the local holomorphic frame $\sigma$. In other words, 
\begin{equation}\Gamma=\frac{1}{2}d^c_H \log |\sigma|.\end{equation}
\end{lemma}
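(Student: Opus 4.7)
The key observation is that the restriction $\partial_y|_H$ equals the natural unit holomorphic frame $e = \sigma/|\sigma|$ of $N_0$: this follows from $y = |\sigma|\cdot\zeta$ (so that $\partial_y = |\sigma|^{-1}\partial_\zeta$ as coordinate vector fields along $H$) together with the identification $\partial_\zeta|_H = \sigma$. Therefore, for $v \in T_pH$ the defining formula for $\Gamma$ reduces to
\begin{equation*}
\Gamma(v) = -\frac{\sqrt{-1}}{2}\langle \nabla^D_v e, e\rangle,
\end{equation*}
where $\nabla^D$ is the Levi-Civita connection of $D$ and the hermitian pairing on $(1,0)$-vectors is as in the convention following \eqref{unitary}. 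This recasts the lemma as the computation of the hermitian pairing $\langle\nabla^D_v e, e\rangle$ in the unit frame.

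The next step is to invoke the standard fact that, for a K\"ahler submanifold $H \subset D$, the Chern connection $\nabla^C$ of $N_0$ (equipped with its induced hermitian metric and holomorphic structure) coincides with the normal part of the restricted Levi-Civita connection. Metric compatibility is automatic; the $(0,1)$-part condition $\nabla^C_{\bar v}\sigma = 0$ for the holomorphic section $\sigma$ follows from $\nabla^D_{\bar v}\partial_{w_1} = 0$ (holomorphic vector fields are parallel along anti-holomorphic directions on a K\"ahler manifold), together with the vanishing of the $T^{0,1}H \otimes T^{1,0}H$ component of the second fundamental form of the K\"ahler embedding $H \hookrightarrow D$. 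Since $\langle e, e\rangle \equiv 1$, the tangential part of $\nabla^D_v e$ is annihilated by the hermitian pairing with $e$, and so $\langle \nabla^D_v e, e\rangle = A(v)$, where $A$ is the Chern connection 1-form on $N_0$ expressed in the unit frame $e$.

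A direct line-bundle computation then finishes the proof. In the holomorphic frame $\sigma$, the Chern connection 1-form of the hermitian holomorphic line bundle $N_0$ is $\theta^C = \partial_H \log|\sigma|^2$; changing to the unit frame $e = |\sigma|^{-1}\sigma$ modifies this by $-d\log|\sigma|$, giving
\begin{equation*}
A = \partial_H \log|\sigma|^2 - d\log|\sigma| = (\partial_H - \bar\partial_H)\log|\sigma| = \sqrt{-1}\,d^c_H\log|\sigma|,
\end{equation*}
which is a purely imaginary $1$-form as expected for a $U(1)$-connection in a unit frame. Substituting gives $\Gamma(v) = -\frac{\sqrt{-1}}{2}\cdot \sqrt{-1}\,d^c_H\log|\sigma|(v) = \frac{1}{2}d^c_H\log|\sigma|(v)$, as claimed. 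The argument is conceptual rather than technical: $\Gamma$ is, up to a constant factor, the Chern connection 1-form of $N_0$ in the unit frame $e$, so the lemma reduces to the textbook formula $\theta^C = \partial\log|\sigma|^2$. The only mildly delicate point is the identification of the induced normal connection with the Chern connection, which uses the K\"ahler structure in an essential way.
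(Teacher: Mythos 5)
Your proof is correct and follows essentially the same route as the paper: the paper's key step — that $\nabla_{\bar v}\sigma$ is tangential along $H$ by the K\"ahler condition, so that $\langle \nabla_{\p_{w_k}}\p_y,\p_y\rangle=\p_{w_k}\log|\sigma|$ — is precisely your identification of the normal connection with the Chern connection, and your frame-change computation $A=\p_H\log|\sigma|^2-d\log|\sigma|=\sqrt{-1}\,d^c_H\log|\sigma|$ reproduces the paper's final assembly of $\Gamma=\tfrac{1}{2}d^c_H\log|\sigma|$. The only difference is packaging: you cite the standard ``normal connection $=$ Chern connection'' fact for K\"ahler submanifolds and the $\p\log|\sigma|^2$ formula, whereas the paper rederives both inline in the coordinates $\{w_k\}$.
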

\begin{proof}
By definition,  
$\sigma=f\p_y=\p_{w_1}-\sum_{j\geq 2}\mu_j \p_{w_j},$
where $f=|\sigma|>0$ is local real valued function on $H$, and $\mu_2, \cdots, \mu_{n-1}$ are local complex valued function on $H$. The key property we will use is that along $H$,  $\nabla_{\p_{\bar w_k}} \sigma$ is tangential to $H$ for $k\geq 2$. In fact, the K\"ahler condition implies $\nabla_{\bp_{w_k}}\p_{w_j}=0$ for all $j$, and hence
\begin{equation}\nabla_{\p_{\bar w_k}}\sigma=\nabla_{\p_{\bar w_k}}\Big(\p_{w_1}-\sum_{j=2}^{n-1}\mu_j\p_{w_j}\Big)=-\sum_{j= 2}^{n-1}\p_{\bar w_k}(\mu_j) \p_{w_j}. \end{equation}
Therefore, 
\begin{equation}\p_{w_k}f=\p_{w_k} \langle \p_y, \sigma \rangle= \langle \nabla_{ \p_{w_k} }\p_y, f\p_y\rangle+\langle \p_y, \nabla_{\p_{\bar w_k}} \sigma\rangle=f\langle \nabla_{\p_{w_k}}\p_y, \p_y\rangle,\end{equation}
and hence
\begin{equation}\langle \nabla_{\p_{w_k}}\p_y, \p_y\rangle=f^{-1}\p_{w_k}f=\p_{w_k}(\log f).\end{equation}
Differentiating  $|\p_y|^2=1$, we get
\begin{equation}\langle \nabla_{\p_{w_k}}\p_y, \p_y\rangle+\langle \p_y, \nabla_{\p_{\bar w_k}}\p_y\rangle=0,\end{equation}
which implies
\begin{equation}\langle\nabla_{\p_{\bar w_k}}\p_y, \p_y\rangle=-\p_{\bar w_k} \log f.\end{equation}
Therefore,
\begin{align}\Gamma
&=-\frac{\sq}{2} \Big(\sum_{k\geq 2}\langle \nabla_{\p_{w_k}}\p_y, \p_y\rangle dw_k+\sum_{k\geq 2}\langle \nabla_{\p_{\bar w_k}}\p_y, \p_y\rangle d\bar w_k\Big)
\nonumber\\
&=-\frac{\sq}{2} \Big(\sum_{k\geq 2}\p_{w_k}(\log f) dw_k - \sum_{k\geq 2}\p_{\bar w_k}(\log f) d\bar w_k\Big)
\nonumber\\
&=-\frac{\sq}{2}(\p_H \log f-\bp_H \log f)
\nonumber\\
&=\frac{1}{2}d^c_H\log f.\end{align}
\end{proof}

For later applications, we need a few more local expansion results.  We will also use the notation $O'$ and $\tO$ in Definition \ref{d:normal-regularity} when we discuss the expansion in a neighborhood of $H$ in $D$, and the distance function is locally given by $|y|$. Notice the following expansions are given in the local (non-holomorphic) coordinates $\{y, \bar y, w_2', \bar w_2', \cdots, w_{n-1}, \bar w_{n-1}'\}$, and by definition we have $w_j'|_H=w_j|_H$ for $j\geq 2$. 
 
\begin{proposition} \label{p:d^c|y|^2}
Locally near the point $p\in H$ we have 
\begin{equation} \label{eqn3-4}
d^c_D|y|^2=\sq(yd\bar y-\bar ydy)+4|y|^2\cdot\Gamma+\tO(|y|^3).
\end{equation}
\end{proposition}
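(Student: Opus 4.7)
My plan is to verify the identity by a direct computation in the normal exponential coordinates $(y, \bar y, w_2', \bar w_2', \ldots, w_{n-1}', \bar w_{n-1}')$ near $p \in H$, working with the convention $d^c_D = \sq(\bp_D - \p_D)$ on functions (equivalent to the paper's $d^c_D = J_D \circ d_D$, and consistent with the identification in Lemma~\ref{l:lemmagamma} of $2\sq\,\Gamma$ as the Chern connection 1-form of $N_0$ in the unitary frame $e=\sigma/|\sigma|$). Starting from $d|y|^2 = y\,d\bar y + \bar y\,dy$, the task reduces to determining the $(1,0)$ and $(0,1)$ components of the non-holomorphic coordinate functions $y,\bar y$ to first order in $(y,\bar y)$.

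First I would check that along all of $H$, the vector field $\p_y$ is of type $(1,0)$: at any $p' \in H$, $\p_y|_{p'}$ equals the unit orthogonal projection of the holomorphic vector field $\p_{w_1}|_{p'}$ onto $(T_{p'}H)^\perp$, and since this real 2-plane is $J_D$-invariant and orthogonal projection commutes with $J_D$, the projection preserves the $(1,0)$-type. Dually $dy|_H \in T^{*1,0}D$, which yields $\p y|_H = dy|_H$, $\bp y|_H = 0$, and their conjugates.

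Next I would derive the first-order corrections off $H$ using the K\"ahler condition $\nabla J_D = 0$. Writing $J_D = J_0 + J_1 + \tO(|y|^2)$ in the normal coordinates, the identity $J_D^2 = -I$ forces $J_1$ to anticommute with $J_0$ and hence to swap the $(1,0)$/$(0,1)$ decomposition. A Jacobi-field analysis along normal geodesics from $H$ (parallel to the derivation of Lemma~\ref{l:metric-expansion}) shows that this first-order variation is governed precisely by the Chern connection of the holomorphic hermitian normal bundle $N_0$. Invoking Lemma~\ref{l:lemmagamma}, which gives $2\sq\,\Gamma$ as the connection $1$-form in the frame $e$, one obtains the first-order expansions
\[
\bp y \;=\; -2\sq\,y\,\Gamma^{0,1} + \tO(|y|^2), \qquad \p\bar y \;=\; 2\sq\,\bar y\,\Gamma^{1,0} + \tO(|y|^2),
\]
where $\Gamma = \Gamma^{1,0} + \Gamma^{0,1}$ denotes the type decomposition on $H$; the conjugate identities follow by complex conjugation.

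Finally, applying the Leibniz rule $\p|y|^2 = \bar y\,\p y + y\,\p\bar y$ (and its conjugate) and using $\p y = dy - \bp y + \tO(|y|^2)$, $\bp\bar y = d\bar y - \p\bar y + \tO(|y|^2)$, one computes
\[
\p|y|^2 \;=\; \bar y\,dy + 2\sq|y|^2\,\Gamma + \tO(|y|^3), \qquad \bp|y|^2 \;=\; y\,d\bar y - 2\sq|y|^2\,\Gamma + \tO(|y|^3),
\]
and hence
\[
d^c_D|y|^2 \;=\; \sq(\bp - \p)|y|^2 \;=\; \sq(y\,d\bar y - \bar y\,dy) + 4|y|^2\,\Gamma + \tO(|y|^3),
\]
which is exactly the claimed formula. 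The main technical obstacle is organizing the first-order expansion of the complex structure in the non-holomorphic normal coordinates so that the numerical coefficients combine to give the coefficient $4$ in front of $|y|^2\,\Gamma$; once Lemma~\ref{l:lemmagamma} is in hand, this reduces to a direct bookkeeping exercise in K\"ahler geometry.
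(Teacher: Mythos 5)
Your final Leibniz-rule bookkeeping is fine and the expansions you write down are in fact consistent with the paper, but there is a genuine gap at the pivotal step: the formulas $\bp y=-2\sq\,y\,\Gamma^{0,1}+\tO(|y|^2)$ and $\p\bar y=2\sq\,\bar y\,\Gamma^{1,0}+\tO(|y|^2)$ are asserted, not proved, and they carry the entire content of the proposition. The appeal to ``a Jacobi-field analysis parallel to Lemma~\ref{l:metric-expansion}'' does not deliver them: that lemma expands the \emph{metric} in normal coordinates, not the complex structure, and passing from $\nabla J_D=0$ to an expansion of $J_D$ (equivalently, of the $(0,1)$-part of $dy$) in the non-holomorphic coordinates requires comparing the coordinate frame along the normal geodesics with a parallel frame. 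In that comparison the first-order frame rotation is $J_i'(0)=v_\alpha\nabla_{\p_{x_i}}\p_{y_\alpha}$, which contains \emph{both} the normal connection of $N_0$ and the second fundamental form of $H$; you would have to check that the second-fundamental-form part does not enter the component you need, and that what survives is exactly the Chern connection with the precise factor $2\sq$. None of this is carried out, and it is not a ``bookkeeping exercise'': it is the same amount of work as the computation the paper actually does. There is also a hidden subtlety your sketch glosses over: in the coordinates $\{y,\bar y,w_2',\ldots\}$ the forms $dw_j'|_H$, $d\bar w_j'|_H$ differ from $dw_j|_H$, $d\bar w_j|_H$ by multiples of $dy$, $d\bar y$ (Lemma~\ref{l:w_1-expansion}), so ``$\Gamma^{0,1}$ on $H$'' is not unambiguously a pullback form, and terms of the schematic shape $y\,d\bar y$ in $\bp y$ would contribute at order $|y|^2$ to \eqref{eqn3-4}; your argument needs to account for, or rule out, such terms.

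For comparison, the paper's proof never expands $J_D$ or $\bp y$ at all: it uses the exact identity $d^c_D|w_1|^2=\sq(w_1 d\bar w_1-\bar w_1 dw_1)$ for the holomorphic coordinate $w_1$, together with the expansion $w_1=a_1y+a_2y^2+\tO(|y|^3)$, $a_1=|\sigma|^{-1}$, from Lemma~\ref{l:w_1-expansion}; it computes $d^c_D|w_1|^2$ a second time from $|w_1|^2=a_1^2|y|^2+a_1(a_2y+\bar a_2\bar y)|y|^2+\tO(|y|^4)$, compares \eqref{eqn1001} with \eqref{eqn1000} (the $a_2$-terms cancel), and solves for $d^c_D|y|^2$; Lemma~\ref{l:lemmagamma} enters only at the very end to convert $d^c_H\log|\sigma|$ into $2\Gamma$. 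If you want to keep your type-decomposition framework, the quickest legitimate way to close your gap is the same device in a different guise: differentiate $0=\bp w_1=a_1\,\bp y+y\,\bp a_1+\tO(|y|^2)$ to get $\bp y=y\,\bp_H\log|\sigma|+\tO(|y|^2)$, and then invoke Lemma~\ref{l:lemmagamma}; with that established (and the $d\bar y$-component issue above addressed), your computation does reproduce \eqref{eqn3-4}.
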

The proof  relies on 
the following expansions 
of the holomorphic coordinate functions $w_j$.

\begin{lemma}\label{l:w_1-expansion}
We have the expansion
\begin{align}\begin{cases}w_1=a_1y+a_2y^2+\tO(|y|^3)
\\
w_j=w_j'+c_jy+d_jy^2+ \tO(|y|^3), & j\geq 2,
\end{cases}
\end{align}
where $a_1=|\sigma|^{-1}>0$, $a_2$, $c_j$, $d_j$ are local smooth functions on $H$.
\end{lemma}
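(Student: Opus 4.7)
The plan is to read off the expansion of $w_j\circ\Exp_H$ from the geodesic Taylor formula
\begin{equation*}
f(\Exp_q(V))=f(q)+(df)_q(V)+\tfrac{1}{2}(\nabla^2 f)_q(V,V)+O(|V|^3),
\end{equation*}
applied to $f=w_j$ at $q\in H$ close to $p$, and $V\in(T_qH)^\perp$. The key observation is that, under the identification $d\Exp_H|_{H}=\Id$, the coordinate vector $\partial_y|_q$ corresponds to the $(1,0)$-vector $e|_q=\sigma|_q/|\sigma|_q\in T^{1,0}_qD$, so the real vector $V$ with complex fiber coordinate $y=y_1+\sq y_2$ decomposes as $V=y\cdot e|_q+\bar y\cdot\bar e|_q\in T^{1,0}_qD\oplus T^{0,1}_qD$.

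Since each $w_j$ is holomorphic, $dw_j$ is of type $(1,0)$, and hence
\begin{equation*}
(dw_j)_q(V)=y\cdot dw_j(e)=\begin{cases}y\cdot|\sigma|^{-1},& j=1,\\ -y\cdot\mu_j/|\sigma|,& j\geq 2,\end{cases}
\end{equation*}
using $\sigma=\partial_{w_1}-\sum_{j\geq 2}\mu_j\partial_{w_j}$. For the Hessian term I use that on a K\"ahler manifold $\Gamma^k_{\bar i j}\equiv 0$ and $\partial_{\bar i}\partial_k w_j=0$, so a direct computation gives $(\nabla^2 w_j)(\partial_{\bar i},\partial_k)=(\nabla^2 w_j)(\partial_{\bar i},\partial_{\bar k})=0$, i.e., $\nabla^2 w_j$ is purely of type $(2,0)$. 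Therefore
\begin{equation*}
(\nabla^2 w_j)_q(V,V)=y^2\,(\nabla^2 w_j)_q(e,e)=-y^2\,\Gamma^j_{kl}(q)\,e^k(q)\,e^l(q),
\end{equation*}
again with no $\bar y$ dependence.

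Assembling the two orders yields exactly the form in the lemma, with $a_1=|\sigma|^{-1}$, $a_2=-\tfrac12\Gamma^1_{kl}e^ke^l$, $c_j=-\mu_j/|\sigma|$, $d_j=-\tfrac12\Gamma^j_{kl}e^ke^l$; each of these is smooth on $H$ near $p$ since $\sigma$, $\mu_j$, the Christoffel symbols and the frame $e$ vary smoothly. Smoothness of $\Exp_H$ near the zero section makes $w_j\circ\Exp_H$ smooth in $(y,\bar y,w_2',\bar w_2',\ldots)$, and having Taylor-matched through total degree $2$ in $y,\bar y$, the remainder has vanishing $(y,\bar y)$-derivatives up to order $2$ along $H$, so belongs to $\tO(|y|^3)$.

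The conceptual heart of the argument is the suppression of the $\bar y$, $y\bar y$ and $\bar y^2$ coefficients at orders $1$ and $2$; this is a direct consequence of the K\"ahler condition applied to holomorphic functions (which pins $\nabla^2 w_j$ to type $(2,0)$) rather than of any fine estimate. In contrast, from order $3$ onward $\nabla^k w_j$ does acquire non-trivial $(p,q)$-components with $q\geq 1$, so mixed terms such as $y^2\bar y$ genuinely appear in the Taylor expansion; these are compatible with, and absorbed into, the $\tO(|y|^3)$ error.
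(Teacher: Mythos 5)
Your proof is correct and follows essentially the same route as the paper: both exploit the normal exponential map (so radial curves are geodesics and the quadratic term is the Hessian) together with the K\"ahler condition to kill the $\bar y$, $y\bar y$ and $\bar y^2$ coefficients along $H$ — the paper phrases this as the coordinate computations $\p_{\bar y}w_j|_H=\p_y\p_{\bar y}w_j|_H=\p_{\bar y}^2 w_j|_H=0$ using $\nabla_{\p_{\bar y}}\p_y=\nabla_{\p_{\bar y}}\p_{\bar y}=0$ along $H$, while you package the same facts as $\nabla^2 w_j$ being of type $(2,0)$. Your explicit formulas for $a_2$, $c_j$, $d_j$ in terms of Christoffel symbols are a harmless bonus that the paper does not need.
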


\begin{proof}
By definition, $\sigma$ is the orthogonal projection of $\p_{w_1}$ onto $(TH)^{\perp}$, so  we have that
\begin{equation}\label{eqn3-188}a_1^{-1}\p_y|_H=\p_{w_1}+\sum_{j= 2}^{n-1} b_j \p_{w_j},\end{equation}
where $a_1=|\sigma|^{-1}>0$ and $b_j$ are smooth functions on $H$. Now write 
\begin{equation}\label{eqn2-22}
\p_y=\sum_{j=1}^{n-1} \frac{\p w_j}{\p y} \p_{w_j}+\sum_{j= 1}^{n-1} \frac{\p \bar w_j}{\p { y}} \p_{\bar w_j}.
\end{equation}
Then we get
\begin{align}\label{eqn2-34}
\frac{\p \bar w_j}{\p y}\Big|_H=0, \  j\geq 1,
\end{align}
which in particular implies 
\begin{equation}
\frac{\p  w_j}{\p \bar{y}}\Big|_H=\overline{\frac{\p \bar w_j}{\p y}}\Big|_H=0,
 \  j\geq 1.
\end{equation}
Now by the definition of the normal exponential map,  we have that at $p$, 
\begin{equation}\nabla_{\p_y}\p_y=\nabla_{\p_{\bar y}}\p_y=\nabla_{\p_{\bar y}}\p_{\bar y}=0. \end{equation}
Using the K\"ahler condition we have
\begin{equation}\nabla_{\p_{w_j}}{\p_{\bar w_k}}=\nabla_{\p_{\bar w_j}}\p_{w_k}=0, \ \ j, k\geq 1.\end{equation}
Then by \eqref{eqn2-22} we get
\begin{equation}
\frac{\p^2 w_j}{\p y\p \bar y}\Big|_H=\frac{\p ^2 w_j}{\p\bar y^2}\Big|_H=0, \ \ \ j\geq 1.
\end{equation}
Thus the conclusion follows. 
\end{proof}

\begin{proof}[Proof of Proposition \ref{p:d^c|y|^2}]
Given the above lemma we first obtain that 
\begin{align}
d\bar w_1&=a_1 d\bar y+\bar y (da_1+2\bar a_2d\bar y)+\tO(|y|^2),
\\
w_1d\bar w_1&=a_1^2yd\bar y+|y|^2a_1da_1+a_1y(2\bar a_2\bar y+a_2 y)d\bar y+\tO(|y|^3).
\end{align}
Hence 
\begin{equation} \label{eqn1001}
d_D^c |w_1|^2=\sq a_1^2(yd\bar y-\bar ydy)+\sq a_1\bar a_2 \bar y (2yd\bar y-\bar ydy)-\sq a_1 a_2 y(2\bar y dy-yd\bar y)+\tO(|y|^3).
\end{equation}
On the other hand, we have 
\begin{equation}
|w_1|^2=a_1^2|y|^2+a_1(a_2y+\bar a_2 \bar y)|y|^2+\tO(|y|^4).
\end{equation}
So
\begin{equation}\label{eqn1000}
d_D^c |w_1|^2=a_1^2 d_D^c|y|^2+|y|^2 d_D^c a_1^2+d_D^c (a_1(a_2y+\bar a_2 \bar y)|y|^2)+\tO(|y|^3).
\end{equation}
Now by Lemma \ref{l:w_1-expansion},
\begin{equation}d_D^c (a_1y)=d_D^c w_1+\tO(|y|)=-\sqrt{-1}dw_1+\tO(|y|),\end{equation}
so
\begin{equation}
d_D^cy=-\sq dy+\tO(|y|).
\end{equation}
Similarly, $d_D^c\bar{y}=\sq d\bar{y}+\tO(|y|)$.
Plugging these into \eqref{eqn1000}, and compare with \eqref{eqn1001} we obtain 
\begin{align}d_D^c |y|^2=\sq(yd\bar y-\bar ydy)-2|y|^2d_D^c \log a_1+\tO(|y|^3).
\end{align}
Thanks to Lemma \ref{l:w_1-expansion}, $a_1=|\sigma|^{-1}$ which is a smooth function on $H$, so
\begin{align}
d_D^c |y|^2=\sq(yd\bar y-\bar ydy)+2|y|^2d_H^c \log |\sigma|+\tO(|y|^3).
\end{align}
By Lemma \ref{l:lemmagamma}, $\Gamma=\frac{1}{2}d_H^c\log |\sigma|$, so we conclude 
\begin{equation}d_D^c |y|^2=\sq(yd\bar y-\bar ydy)+4|y|^2\Gamma +\tO(|y|^3).\end{equation}
\end{proof}

Now we prove an expansion result for the trace of $\psi$.
\begin{proposition} \label{p:trace-expansion} Let $\psi$ be the $2$-form on $Q$ given as in \eqref{e:singular-2-form-expansion}, then we have  the following expansion in a neighborhood of $p$ in $Q$
\begin{equation}
\label{e:trace expansion equation}\Tr_{\omega_D}\psi=\frac{1}{2r}+O'(r).\end{equation}
\end{proposition}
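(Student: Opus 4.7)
The plan is to identify the leading singular behavior of $\Tr_{\omega_D}\psi$ by rewriting the singular part of $\psi$ in terms of $dd^c|y|^2$, and then using the interpretation of $|y|$ as the Riemannian distance to $H$ together with the minimality of $H$ as a complex submanifold.

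First, I would observe that the three terms $r\cdot d\Gamma$, $r^{-3}\Pi_2^{(4)}$ and $O'(r^2)$ in the expansion of $\psi$ given by Proposition~\ref{p:complex-Green-expansion} each contribute $O'(r)$ to the trace: this is clear for the first and third, and for $r^{-3}\Pi_2^{(4)}$ it follows from the fact that $\Pi_2^{(4)}$ has homogeneous degree-4 polynomial coefficients in $y$ (so $r^{-3}\Pi_2^{(4)}$ is pointwise bounded by $|y|^4/r^3\leq r$) together with a direct check of derivatives. Since $\omega_D$ is smooth and non-degenerate, tracing preserves the class $O'(r)$, so these terms may be ignored.

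For the two singular pieces, I would differentiate Proposition~\ref{p:d^c|y|^2} to obtain
\[
dd^c|y|^2 = 2\sq\, dy\wedge d\bar y + 4\, d|y|^2\wedge \Gamma + 4|y|^2 d\Gamma + \tO(|y|^2),
\]
which can be rearranged as
\[
\frac{\sq}{4r}dy\wedge d\bar y + \frac{1}{2r}(yd\bar y + \bar y\,dy)\wedge \Gamma = \frac{1}{8r}dd^c|y|^2 - \frac{|y|^2}{2r}d\Gamma - \frac{\tO(|y|^2)}{8r}.
\]
Because $|y|^2/r \leq r$ with derivatives controlled by the homogeneity of $r$, the last two terms are $O'(r)$. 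Consequently $\psi = \tfrac{1}{8r}dd^c|y|^2 + O'(r)$, and therefore
\[
\Tr_{\omega_D}\psi = \frac{1}{8r}\Tr_{\omega_D}(dd^c|y|^2) + O'(r).
\]

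Finally, I would apply the K\"ahler identity $\Tr_{\omega_D}(dd^c f) = -\Delta_D f$ to the smooth function $|y|^2$ on $D$. The crucial geometric observation is that, by the construction $y=|\sigma|\zeta$ using the unitary frame $e=\sigma/|\sigma|$, the function $|y|$ coincides with the Riemannian distance from a point of $D$ to $H$ in a tubular neighborhood. Since $H$ is a complex, hence K\"ahler, hence minimal, submanifold of $D$ of real codimension $2$, the standard expansion of the Laplacian of the squared distance to a minimal submanifold gives
\[
\Delta_D|y|^2 = -4 + \tO(|y|^2),
\]
so $\Tr_{\omega_D}(dd^c|y|^2) = 4 + \tO(|y|^2)$ and therefore
\[
\Tr_{\omega_D}\psi = \frac{4+\tO(|y|^2)}{8r} + O'(r) = \frac{1}{2r} + O'(r).
\]

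The heart of the argument is the minimality of $H$: if $H$ had nonzero mean curvature in $D$, the expansion of $\Delta_D|y|^2$ at $H$ would contain a linear-in-$|y|$ term, producing a contribution of order $|y|/r$ to $\Tr_{\omega_D}\psi$ which is bounded but fails to satisfy the higher-derivative estimates in Definition~\ref{d:normal-regularity}. The most delicate technical point is verifying, via direct computation using $|y|^2/r = r\cdot(|y|^2/r^2)$ and analogous identities, that the various $|y|^2/r$-type remainders genuinely lie in $O'(r)$ in the precise sense of that definition.
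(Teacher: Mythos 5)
Your proof is correct, but it takes a genuinely different route from the paper's. The paper argues pointwise: it first notes that \eqref{e:singular-2-form-expansion} forces an expansion of the form \eqref{e:tr-exp}, $\Tr_{\omega_D}\psi = A_0/r + (A_1 y+\bar{A}_1\bar{y})/r + O'(r)$, then fixes $p\in H$, passes to the adapted holomorphic coordinates of Lemma \ref{l:holo-coor-at-a-point} (in which $d|\sigma|(p)=0$, hence $\Gamma(p)=0$), and computes $A_0(p)=1$, $A_1(p)=0$ by expanding $\omega_D$, $w_1$ and $w_j$ along the normal fiber $N_0(p)$. You instead absorb the two singular terms of $\psi$ into $\frac{1}{8r}d_Dd_D^c|y|^2$ by differentiating Proposition \ref{p:d^c|y|^2}, check that the leftover pieces ($r\, d\Gamma$, $\frac{|y|^2}{2r}d\Gamma$, $\tO(|y|^2)/r$, $r^{-3}\Pi_2^{(4)}$) lie in $O'(r)$, and then combine the K\"ahler identity $\Tr_{\omega_D}(d_Dd_D^c f)=-\Delta_D f$ with the expansion of the Laplacian of the squared distance to the minimal submanifold $H\subset D$; that expansion is exactly the codimension-two analogue in $D$ of the unlabeled lemma at the end of Section \ref{ss:real-case} ($\Delta(r^2)=-6+\tO(r^2)$ on $Q$), and in fact follows from it via the product splitting $\Delta_Q=\Delta_D-\p_z^2$ applied to $r^2=|y|^2+z^2$, so you need not appeal to an external "standard fact." What your route buys is brevity and transparency: the vanishing of the dangerous $(A_1y+\bar{A}_1\bar{y})/r$ term is seen at once as the vanishing of the mean-curvature (linear) term in $\Delta_D|y|^2$, i.e.\ minimality of the complex hypersurface $H$; the cost is that you lean on Proposition \ref{p:d^c|y|^2} and must verify the $O'(r)$ membership of the remainders in the precise sense of Definition \ref{d:normal-regularity}, which you correctly identify as the delicate point and which does check out. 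One cosmetic slip: the coefficients of $\Pi_2^{(4)}$ are degree-four homogeneous polynomials in all three normal variables $(\mathrm{Re}\, y,\mathrm{Im}\, y,z)$, so the pointwise bound is $r^4/r^3=r$ rather than $|y|^4/r^3$; the conclusion that $r^{-3}\Pi_2^{(4)}=O'(r)$ is unchanged.
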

Using \eqref{e:singular-2-form-expansion} it is easy to see $\Tr_{\omega_D}\psi$ admits an expansion of the form 
\begin{equation}\Tr_{\omega_D}\psi=\frac{A_0}{r}+\frac{A_1y+\bar A_1\bar y}{r}+O'(r).\label{e:tr-exp}\end{equation}
for local functions $A_0, A_1$ defined on $H$. It suffices to show $A_0\equiv 1$ and $A_1\equiv 0 $. Since the left hand side is independent of the choice of local holomorphic coordinates,  it suffices to  work on the slice $z=0$ with  special local holomorphic coordinates in a neighborhood of $p\in H$, and  it suffices to understand the Taylor expansion along the fiber $N_0(p)$ of $N_0$ over the fixed point $p$. 

\begin{lemma}\label{l:holo-coor-at-a-point}
We may choose the above holomorphic coordinates $\{w_i\}_{i=1}^{n-1}$ centered at $p$,  so that $H$ is locally given by $w_1=0$ and 
\begin{equation}
\omega_D=\frac{\sq}{2}g_{i\bar j}dw_i\wedge d\bar w_j, 
\end{equation}
where
\begin{align}
\begin{cases}
g_{i\bar j}(0)=\delta_{ij}, &   1\leq i, j\leq n-1,\\ 
\p_{w_1}g_{i\bar j}(0)=0, & 1\leq  i, j\leq n-1,
\\
\p_{w_k}g_{1\bar 1}(0)=\p_{w_k}g_{i\bar j}(0)=0, &  2\leq i,j, k\leq n-1.
\end{cases}\label{e:at-a-point}
\end{align}

\end{lemma}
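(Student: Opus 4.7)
The proof will be a careful adaptation of the standard Bochner construction of K\"ahler normal coordinates, subject to the constraint that $H$ must continue to be cut out by $w_1=0$. I would proceed in two stages: a linear normalization, followed by a quadratic correction that respects this constraint.

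For the linear step, I would start with any holomorphic coordinates $\tilde w_i$ centered at $p$ with $H=\{\tilde w_1=0\}$, and then apply a linear change $w_1=\lambda\tilde w_1$ and $w_j=c_j\tilde w_1+\sum_{k\ge 2}\mu_{jk}\tilde w_k$ for $j\ge 2$. The $\mu_{jk}$ are chosen to make $\{\partial_{w_j}|_p\}_{j\ge 2}$ a unitary basis of $T_pH^{1,0}$, the coefficients $c_j$ are chosen so that the resulting $\partial_{w_1}|_p$ is orthogonal to $T_pH^{1,0}$, and $\lambda$ normalizes the length of $\partial_{w_1}|_p$. This produces $g_{i\bar j}(0)=\delta_{ij}$ and manifestly preserves $\{w_1=0\}=H$. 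For the quadratic step I would set
\[
w_i'=w_i+\tfrac12\sum_{j,k}a_{i,jk}w_jw_k+O(|w|^3),\qquad a_{i,jk}=a_{i,kj}.
\]
The constraint $w_1'|_H=0$ forces $a_{1,jk}=0$ whenever both $j,k\ge 2$, which is the only obstruction carried by the geometry. A straightforward chain-rule calculation then gives
\[
\partial_{w_m'}g_{i\bar j}'(0)=\partial_{w_m}g_{i\bar j}(0)-a_{j,im},
\]
so killing a prescribed first derivative amounts to setting $a_{j,im}=\partial_{w_m}g_{i\bar j}(0)$, and the K\"ahler identity $\partial_{w_m}g_{i\bar j}=\partial_{w_i}g_{m\bar j}$ makes this value symmetric in $i,m$, consistent with the required symmetry of $a_{j,im}$.

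The main obstacle — and really the only thing to verify — is that the three families of vanishings demanded by \eqref{e:at-a-point} only require coefficients $a_{j,im}$ that are allowed by $a_{1,jk}=0$ for $j,k\ge 2$. Checking each family: $\partial_{w_1'}g_{i\bar j}'=0$ needs $a_{j,i,1}$, whose last index is $1$, hence allowed for every $i,j$ (in particular $j=1$); $\partial_{w_k'}g_{1\bar 1}'=0$ with $k\ge 2$ needs $a_{1,1,k}$, again containing a $1$ among its last two slots and allowed; and $\partial_{w_k'}g_{i\bar j}'=0$ with $i,j,k\ge 2$ needs $a_{j,i,k}$ with first index $j\ge 2$, which is entirely unconstrained. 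Tellingly, the lemma does \emph{not} impose $\partial_{w_k'}g_{i\bar 1}'=0$ for $i,k\ge 2$, precisely the condition that would have required the forbidden $a_{1,i,k}$ with $i,k\ge 2$. A final consistency check — using the K\"ahler identities once more — shows that whenever the same coefficient is prescribed by two different required vanishings (for example $a_{1,1,k}=a_{1,k,1}$ from both the first and second families), the two prescribed values agree, so a coherent choice exists and the lemma follows.
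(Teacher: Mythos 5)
Your proof is correct and follows essentially the same route as the paper: first a linear normalization making $g_{i\bar j}(0)=\delta_{ij}$ while keeping $H=\{w_1=0\}$, then a quadratic holomorphic substitution whose $w_1$-component contains only terms divisible by $w_1$, with the coefficients read off from the first derivatives of $g_{i\bar j}$ and the K\"ahler identities guaranteeing the required symmetry/consistency. Your explicit chain-rule formula and the observation that the omitted condition $\p_{w_k}g_{i\bar 1}(0)=0$ ($i,k\ge 2$) is exactly the one blocked by the constraint match the paper's remark about the second fundamental form.
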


\begin{remark}
In fact, the only non-trivial Christoffel symbols at $p$ are
\begin{equation}\Gamma_{ij}^1 (0)=\p_{i}g_{j\bar 1}(0), \ \ \Gamma_{\bar i\bar j}^{\bar 1}=\p_{\bar i}g_{1\bar j}(0)
\end{equation}
for  $i, j\geq 2$. This is due to the constraint that  the equation $w_1=0$ defines $H$, which prevents us from  using substitutions like \begin{equation}w_1=z_1+\sum\limits_{i, j=2}^{n-1} C_{1ij}z_i z_j.\end{equation} Intrinsically, $\{\Gamma^1_{ij}\}_{i, j\geq 2}$ captures the second fundamental form of the complex hypersurface $H$ at $p$. 
\end{remark}

\begin{proof}[Proof of Lemma \ref{l:holo-coor-at-a-point}]
This follows from elementary manipulation. First, the holomorphic coordinates $\{w_i\}_{i=1}^{n-1}$  can be chosen such that $g_{i\bar j}(0)=\delta_{ij}$ for all $1\leq i, j\leq n-1$. By the substitution of the form 
\begin{equation}\begin{cases}
w_i=z_i+\frac{1}{2}\sum\limits_{j, k= 2}^{n-1} C_{ijk}z_j z_k+\sum\limits_{j= 2}^{n-1} D_{ij} z_1z_j+E_{i}z_1^2, \ \  2\leq i\leq n-1,\\
w_1=z_1+\sum\limits_{j= 1 }^{n-1} F_j z_1z_j,
\end{cases}\label{e:co-change}
\end{equation}
with suitable choices of coefficients, where $C_{ijk}=C_{ikj}$ for $2\leq i,j,k\leq n-1$.
One can plug both the Taylor expansion of $g_{i\bar{j}}$ along $z_k$'s and \eqref{e:co-change} into $\omega_D$. Comparing the coefficients, then it follows that,
\begin{align}
\begin{cases}
C_{ijk} = - \p_{w_k}g_{j\bar{i}}(0),
\\
D_{ij} = -\p_{w_j} g_{1\bar{i}}(0),
\\
E_i = -\frac{1}{2}\p_{w_1}g_{1\bar{j}}(0),
\\
F_i = - \p_{w_j}g_{1\bar{1}}(0),
\\
F_1 = -\frac{1}{2}\p_{w_1}g_{1\bar{1}}(0),
\end{cases}
\end{align}
where $2\leq i,j,k\leq n-1$. Then we can achieve \eqref{e:at-a-point} with $\{w_i\}_{i=1}^{n-1}$ replaced by $\{z_i\}_{i=1}^{n-1}$.
\end{proof}

Now we prove Proposition \ref{p:trace-expansion}.

\begin{proof}[Proof of Proposition \ref{p:trace-expansion}] The goal is to show $A_0=1$ and $A_1=0$ in the expansion \eqref{e:tr-exp}.
  We work in the above special holomorphic coordinates given by Lemma \ref{l:holo-coor-at-a-point}.  
 The first step is to show that the $O'(1)$-term in the expansion of $\psi$
given by Proposition \ref{p:complex-Green-expansion}
  in fact vanishes along $N_0(p)$. 
  To this end, notice that $\p_y=\sigma=\p_{w_1}$ at $p\in H$ and hence by Lemma \ref{l:w_1-expansion},
 \begin{equation}w_1 = y + a_2 y^2 + \tO(|y|^3).\label{e:unit-leading-coe}
 \end{equation}
  Since the only non-trivial Christofell symsbols at $p$ are $\Gamma_{ij}^1$ and $\Gamma_{\bar i\bar j}^{\bar 1}$ for $i, j\geq 2$, it easily follows that 
\begin{equation}d|\sigma|(p)=0.\label{e:d-sigma=0}\end{equation}
Combining \eqref{e:d-sigma=0} and Lemma \ref{l:lemmagamma}, \begin{equation}\Gamma(p)=\frac{1}{2}(d_H^c\log|\sigma|)(p)=0,\end{equation} for each $p\in H$. Therefore,  along the fiber $N_0(p)$ of the normal bundle $N_0(p)$,
 the expansion of $\psi$ in Proposition \ref{p:complex-Green-expansion}  becomes\begin{equation}\psi=\frac{\sq}{4|y|} dy\wedge d\bar y+O(|y|).\end{equation}

The next is to compute $A_0(p)$ and $A_1(p)$  in \eqref{e:tr-exp}.
As in the proof of Lemma \ref{l:w_1-expansion}, we obtain that 
\begin{align}\frac{\p w_j}{\p y}(p)&=\frac{\p w_j}{\p \bar y}=0, \ \  j\geq 2,
\\
\frac{\p^2 w_j}{\p y^2}(p)&=\frac{\p^2 w_j}{\p y\p\bar y}(p)=\frac{\p^2 w_j}{\p \bar y^2}(p)=0, \ \ j\geq 1. \end{align}
This particularly implies that $a_2(p)=0$ and 
along the fiber $N_0(p)$,
\begin{equation}
\label{eqn3.291}w_j=O(|y|^3), \ \  j\geq 2.\end{equation}
By Lemma \ref{l:holo-coor-at-a-point}, $\p_{w_1}g_{i\bar{j}}(p)=0$ for all $1\leq i,j\leq n-1$,  then the expansion of $\omega_D$ along the fiber $N_0(p)$ is at least quadratic in the $w_1$-direction, i.e., 
\begin{eqnarray}
\omega_D&=&\frac{\sq}{2} \Big(dw_1\wedge d\bar w_1+\sum_{j=2}^{n-1} dw_j\wedge d\bar w_j\Big)+O\Big(\sqrt{\sum_{j=2}^{n-1}|w_j|^2}\Big)+O(|w_1|^2)\nonumber\\
&=&\frac{\sq}{2} \Big(dw_1\wedge  d\bar w_1+\sum_{j=2}^{n-1} dw_j\wedge d\bar w_j\Big)+O(|y|^2). 
\end{eqnarray}
By \eqref{e:unit-leading-coe} and \eqref{eqn3.291},  along the fiber $N_0(p)$, we have 
\begin{equation}dw_1=dy+O(|y|^2),\quad dw_j=dw_j'+O(|y|^2), \ \ j\geq 2.
\end{equation}
So we get 
\begin{equation}
\omega_D=\frac{\sq}{2}\Big(dy\wedge d\bar y+\sum_{j=2}^{n-1} dw_j'\wedge d\bar w_j'\Big)+O(|y|^2)
\end{equation}
Since by definition,
\begin{equation}\Big(\Tr_{\omega_D}\psi\Big)\cdot \frac{\omega_D^{n-1}}{(n-1)!}=\psi\wedge \frac{ \omega_D^{n-2}}{(n-2)!}.\end{equation}
by elementary manipulations we get that $A_0(p)=1$ and $A_1(p)=0$. 
\end{proof}

We close this subsection by proving an expansion of a local holomorphic volume form on $D$. Given the choice of local holomorphic coordinates on $D$ as before, let $\Omega_D$ be a local holomorphic volume form in a neighborhood of $p$, then we can always write 
\begin{equation}\Omega_D=f\cdot dw_1\wedge dw_2\cdots \wedge dw_{n-1},\label{e:Omega-D-in-w-coordinates}\end{equation}
for a local nowhere vanishing holomorphic function $f$. 
Denote the local holomorphic volume form on $H$
\begin{equation}
\Omega_H\equiv dw_2'\wedge\cdots dw_{n-1}'=(dw_2\wedge\cdots \wedge dw_{n-1})|_H.
\end{equation}
Then $\Omega_H$ can be naturally viewed as a complex $(n-2)$-form in some neighborhood of $p$ in $D$, in the coordinate system given by $\{y, \bar y, w_2', \bar w_2', \cdots, w_{n-1}', \bar w_{n-1}'\}$.

\begin{proposition}\label{lem3-6} We have the following expansion
\begin{equation}\Omega_D=F(dy+2\sq y \Gamma)\wedge \Omega_H+\tO(|y|)dy+\tO(|y|^2) \label{e:Omega-D-expansion}\end{equation}
for some smooth function $F$ locally defined on $H$. 
\end{proposition}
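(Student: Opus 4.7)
The plan is to derive the expansion by starting from Lemma \ref{l:w_1-expansion} and directly computing $dw_1\wedge dw_2\wedge\cdots\wedge dw_{n-1}$ in the normal coordinate basis $\{dy, d\bar y, dw_j', d\bar w_j'\}_{j\geq 2}$. Differentiating the expansions in Lemma \ref{l:w_1-expansion} gives
\begin{align}
dw_1&=(a_1+\tO(|y|))\,dy+y\cdot d_Ha_1+\tO(|y|^2),\\
dw_j&=dw_j'+(c_j+\tO(|y|))\,dy+\tO(|y|)\quad (j\geq 2),
\end{align}
where $d_H$ denotes the exterior derivative on $H$ (with $a_1,c_j$ viewed as functions on $H$ extended trivially in the normal directions), and the trailing $\tO$-terms refer to 1-form pieces free of $dy$. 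Wedging the factors for $j\geq 2$ yields
\begin{equation}
dw_2\wedge\cdots\wedge dw_{n-1}=\Omega_H+dy\wedge\beta_1+\tO(|y|)
\end{equation}
for some $(n-3)$-form $\beta_1=\tO(1)$, the final $\tO(|y|)$ tail being free of $dy$.

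The crux is the wedge $dw_1\wedge(dw_2\wedge\cdots\wedge dw_{n-1})$. Its leading term is $a_1\,dy\wedge\Omega_H$, and the only first-order-in-$y$ contribution that does not carry a $dy$ factor is $y\cdot d_Ha_1\wedge\Omega_H$. To bring this into the claimed form, decompose $d_Ha_1=\theta+\bar\theta$ with $\theta=\p a_1$ a $(1,0)$-form on $H$. Since $\theta$ is a linear combination of the $dw_j'$ with $j\geq 2$, each of which already appears in $\Omega_H$, one has $\theta\wedge\Omega_H=0$ identically in the ambient space. Invoking Lemma \ref{l:lemmagamma}, $\Gamma=-\tfrac{1}{2}d_H^c\log a_1=\tfrac{\sq}{2a_1}(\theta-\bar\theta)$, so $2\sq a_1\Gamma=\bar\theta-\theta$, and hence
\begin{equation}
y\cdot d_Ha_1\wedge\Omega_H=y\bar\theta\wedge\Omega_H=2\sq a_1y\,\Gamma\wedge\Omega_H.
\end{equation}
This produces the distinguished combination $a_1(dy+2\sq y\Gamma)\wedge\Omega_H$ in the leading expansion of $dw_1\wedge\cdots\wedge dw_{n-1}$.

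All remaining cross-terms in the wedge expansion either carry an explicit $dy$ factor with coefficient $\tO(|y|)$, or are of order $\tO(|y|^2)$ with no $dy$ factor; the bookkeeping proceeds term by term from the expansions above, using $dy\wedge dy=0$ to kill products of two $dy$-coefficients. Since $f$ is holomorphic in $w_1,\ldots,w_{n-1}$ and $w_1=\tO(|y|)$ in the normal coordinates, one has $f=f|_{y=0}+\tO(|y|)$, and multiplying through gives
\begin{equation}
\Omega_D=F\,(dy+2\sq y\,\Gamma)\wedge\Omega_H+\tO(|y|)\,dy+\tO(|y|^2)
\end{equation}
with $F=a_1\cdot f|_{y=0}$, a smooth function on $H$. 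The main subtlety is verifying that $\theta\wedge\Omega_H=0$ holds ambiently and not merely after restriction to $H$; this is precisely what makes the antiholomorphic part $\bar\theta$ alone responsible for the $\Gamma$-correction and forces the expansion into the stated form.
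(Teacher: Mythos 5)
Your proof is correct and follows essentially the same route as the paper's: expand $dw_1$ and $dw_j$ via Lemma \ref{l:w_1-expansion}, use Lemma \ref{l:lemmagamma} to rewrite the $y\,d_H a_1$ term as the $\Gamma$-correction, and kill the $(1,0)$-part via $\p_H a_1\wedge\Omega_H\equiv 0$ (the paper phrases this as $d_H\log a_1=2(\p_H\log a_1+\sq\,\Gamma)$ and $\p_H\log a_1\wedge\Omega_H\equiv 0$, which is the same identity as your $\theta$, $\bar\theta$ decomposition). The remaining bookkeeping and the identification $F=a_1\cdot f|_H$ match the paper as well.
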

\begin{proof}
We need to calculate the expansion for $dw_1\wedge \ldots\wedge dw_{n-1}$ in \eqref{e:Omega-D-in-w-coordinates}.
First, by Lemma \ref{l:w_1-expansion},\begin{equation}
dw_1=a_1(dy+yd_H\log a_1)+\tO(|y|) dy+\tO(|y|^2),
\end{equation}
where $a_1=|\sigma|^{-1}$.
Notice that
\begin{equation}
d_H\log a_1 =2\p_H\log a_1 - \sqrt{-1}d_H^c \log a_1.
\end{equation}
Applying Lemma \ref{l:lemmagamma},
\begin{equation}
d_H\log a_1=2(\p_H\log a_1+\sqrt{-1}\Gamma)
.\end{equation}
Next, applying Lemma \ref{l:w_1-expansion} to $w_j$'s for $j\geq 2$,
\begin{equation}
dw_j=dw_j'+c_jdy+ydc_j+\tO(|y|)dy+\tO(|y|^2).
\end{equation}
Since it holds that
\begin{equation}
\p_H \log a_1\wedge\Omega_H\equiv 0,
\end{equation}
then taking the wedge product,\begin{equation}
dw_1\wedge\cdots \wedge dw_{n-1}=a_1(dy+2\sq y\Gamma)\wedge \Omega_H+\tO(|y|)dy+\tO(|y|^2).
\end{equation}
On the other hand, we have the expansion of $f$ in \eqref{e:Omega-D-in-w-coordinates},  
\begin{equation}
f=f|_H+\frac{\p f}{\p y}\Big|_H\cdot y+\frac{\p f}{\p\bar y}\Big|_H\cdot \bar y+\tO(|y|^2).
\end{equation}
Therefore, 
\begin{equation}
\Omega_D=f|_H \cdot a_1\cdot (dy+2\sq y\Gamma)\wedge \Omega_H+\tO(|y|)dy+\tO(|y|^2).
\end{equation}
So we obtain the conclusion by taking  $F\equiv f|_H\cdot a_1$. 
\end{proof}
\subsection{A global existence result}\label{ss:global-existence}
We assume the same set-up as in Section \ref{ss:complex-greens-currents}.
 We further assume that $D$ is compact K\"ahler, and $H$ is a smooth divisor in $D$ which is  Poincar\'e dual to $\frac{k}{2\pi}[\omega_D]$ for some positive integer $k$. 
The following proposition establishes a global existence for Green's current in this setting. We thank Lorenzo Foscolo for discussions concerning the constructive proof.

\begin{proposition} \label{p:existence-Greens-current}
Given any constants $k_-, k_+\in \R$ with $
 k_--k_+=k,$ 
 there exists a unique global Green's current $G_P$ for $P$ in $Q$ such that the following properties hold:
\begin{enumerate}
\item  $G_P$ is of the form \begin{equation}G_P=\psi(z)\wedge dz, \end{equation}
where $\psi(z)$ is a family of  real-valued closed $(1,1)$-forms on $D$ parametrized by $z$ and satisfies the expansion \eqref{e:singular-2-form-expansion} near $P$.

\item 
For any  $k\in\dN$ and  $\delta\in(0,10^{-2})$, we have
\begin{align}
\label{e:greens-current-exp-asymp}
\begin{cases}
|\nabla^k(\psi(z)-(k_-z)\cdot \omega_D)|=O(e^{(1-\delta)\sqrt{\lambda_1}z}),& z\rightarrow -\infty,\\
|\nabla^k(\psi(z)-(k_+z)\cdot \omega_D)|=O(e^{-(1-\delta)\sqrt{\lambda_1} z}), & z\rightarrow \infty,
\end{cases}
\end{align}
where $\lambda_1>0$ is the first eigenvalue of the Hodge Laplacian  acting on real-valued closed $(1,1)$-forms on $D$, and 
\eqref{e:greens-current-exp-asymp} are with respect to the fixed product metric on $Q$. 
\end{enumerate}

\end{proposition}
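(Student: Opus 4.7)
The idea is to exploit the product structure $Q = D \times \mathbb{R}$ to reduce the Green's current equation to a distributional elliptic equation on $D$ with a delta source at $z = 0$, and then solve it by combining Hodge decomposition with a spectral expansion on $D$. Under the ansatz $G_P = \psi(z) \wedge dz$ with $\psi(z)$ a family of closed real $(1,1)$-forms on $D$, the product formula for the Hodge Laplacian gives $\Delta_Q(\psi \wedge dz) = (\Delta_D \psi - \partial_z^2 \psi) \wedge dz$. Pairing with compactly supported test forms on $Q$, the equation $\Delta_Q G_P = 2\pi\delta_P$ becomes the distributional identity
\begin{equation*}
\Delta_D \psi(z) - \partial_z^2 \psi(z) \;=\; 2\pi\,\delta_H\,\delta_0(z) \quad \text{on } D \times \mathbb{R},
\end{equation*}
where $\delta_H$ is the codimension-$2$ Dirac current of $H$ in $D$ and $\delta_0(z)$ is the one-dimensional Dirac delta.

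The next step is a parallel Hodge decomposition of both sides. The hypothesis $[H] = \tfrac{k}{2\pi}[\omega_D]$ together with Poincar\'e--Lelong produces a function $u_H \in L^2(D) \cap C^\infty(D \setminus H)$, unique up to a constant (normalized by $\int_D u_H\,\omega_D^{n-1} = 0$) and with logarithmic singularity along $H$, satisfying $\delta_H = \tfrac{k}{2\pi}\omega_D + d_Dd_D^c u_H$ after absorbing a harmless positive multiplicative constant. On compact K\"ahler $D$, every closed real $(1,1)$-form decomposes $L^2$-orthogonally as a multiple of $\omega_D$, an element of the orthogonal complement of $\omega_D$ inside the harmonic $(1,1)$-forms, and a $d_Dd_D^c$-exact form. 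Writing $\psi(z) = \alpha(z)\omega_D + \beta(z) + d_Dd_D^c v(z)$ accordingly with $v(z)$ $L^2$-orthogonal to constants, and using that $\Delta_D$ annihilates harmonic forms and commutes with $d_Dd_D^c$ on functions, the reduced equation decouples into
\begin{equation*}
-\alpha''(z) = k\,\delta_0(z), \qquad -\beta''(z) = 0, \qquad (\Delta_D - \partial_z^2)\, v(z) = u_H\,\delta_0(z).
\end{equation*}
The first ODE together with the prescription $\alpha \sim k_\pm z$ at $\pm\infty$ has the unique solution $\alpha(z) = k_- z$ for $z \leq 0$ and $\alpha(z) = k_+ z$ for $z \geq 0$, with derivative jump matching the source since $k_+ - k_- = -k$. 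The second ODE combined with exponential decay forces $\beta \equiv 0$. For $v$, I expand $u_H = \sum_{i\geq 1} \hat u_i \phi_i$ in an $L^2$-orthonormal basis of eigenfunctions $\Delta_D \phi_i = \lambda_i \phi_i$ and apply the $1$-D Green's function of $-\partial_z^2 + \lambda_i$ termwise to obtain
\begin{equation*}
v(z) \;=\; \sum_{i\geq 1} \frac{\hat u_i}{2\sqrt{\lambda_i}}\, e^{-\sqrt{\lambda_i}|z|}\,\phi_i,
\end{equation*}
convergent in $L^2(D)$ for each fixed $z$ and smooth on $D\setminus H$ for $z\neq 0$.

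It remains to check the asymptotics, the local singularity structure near $P$, and uniqueness. By K\"ahler identities, $d_Dd_D^c\phi_i$ is an eigen-$(1,1)$-form of $\Delta_D$ with the same eigenvalue $\lambda_i$, and together these span the orthogonal complement of the harmonic $(1,1)$-forms; hence the first eigenvalue $\lambda_1$ in the statement is exactly the first nonzero scalar eigenvalue appearing in the series. Weyl's law combined with elliptic estimates giving $\|\phi_i\|_{C^k(D)}$ bounded by a polynomial in $\lambda_i$ shows that for every $k \in \mathbb{N}$ and every $\delta \in (0, 10^{-2})$ the series for $d_Dd_D^c v(z)$ converges in $C^k(D)$ with exponential bound $O(e^{-(1-\delta)\sqrt{\lambda_1}|z|})$, yielding \eqref{e:greens-current-exp-asymp}. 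The local expansion \eqref{e:singular-2-form-expansion} near $P$ is automatic from Proposition \ref{p:complex-Green-expansion} once $\psi(z)\wedge dz$ is known to be a Green's current, which holds by construction. For uniqueness, any two solutions differ by a smooth form of the same ansatz satisfying the homogeneous equation $(\Delta_D - \partial_z^2)\eta(z) = 0$ with exponential decay at $\pm\infty$, and the same Hodge and spectral decomposition forces the difference to vanish. The main technical obstacle I anticipate is verifying that the spectral series for $v(z)$ faithfully reproduces the logarithmic Poincar\'e--Lelong singularity of $u_H$ in the distributional sense, so that the right-hand side of the $v$-equation really equals $u_H\,\delta_0(z)$ and not merely its regular part; this follows from completeness of the spectral decomposition in $L^2(D)$ together with elliptic regularity of $v$ away from $P$.
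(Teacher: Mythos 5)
Your proposal is correct, and at its core it runs on the same engine as the paper's proof: separation of variables along $D$, the explicit one-dimensional kernels $\frac{1}{2\sqrt{\lambda}}e^{-\sqrt{\lambda}|z|}$ together with a piecewise-linear zero mode with slope jump $k_+-k_-=-k$, convergence of the series via elliptic bounds on eigenmodes plus Weyl's law, and uniqueness by the same expansion of a decaying harmonic difference. Where you genuinely diverge is in how the source is decomposed. The paper expands $2\pi\delta_P$ directly in an $L^2$-orthonormal basis of eigenforms of the Hodge Laplacian on real closed $(1,1)$-forms on $D$, with coefficients $2\pi\bigl(\int_H *_D\phi_j\bigr)\delta_0(z)$, and observes that among the harmonic modes only the $\omega_D$-direction is sourced because $\int_H *_D\phi_j=\frac{k}{2\pi}\langle\omega_D,\phi_j\rangle_{L^2}$ when $\Delta_D\phi_j=0$. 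You instead peel off the cohomological part once and for all by Poincar\'e--Lelong, writing $2\pi\delta_H$ as $k\,\omega_D$ plus $d_Dd_D^c$ of a potential $u_H$ with logarithmic singularity, and reduce the rest to a scalar problem driven by $u_H\,\delta_0(z)$, solved in the scalar eigenbasis. Your route buys two things: the identification of the decay rate $\lambda_1$ with the first nonzero eigenvalue of the scalar Laplacian is made explicit (via $\Delta_D d_Dd_D^c=d_Dd_D^c\Delta_D$ and the $\partial\bar\partial$-lemma, exactly as you argue — the paper leaves this implicit by phrasing everything in terms of the $(1,1)$-form spectrum), and you never need the pairings $\int_H *_D\phi_j$ for non-harmonic eigenforms. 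The costs are the ones you correctly flag: one must know $u_H\in L^2(D)$ (true, logarithmic singularities are square-integrable) and must justify passing $d_Dd_D^c$ and the termwise one-dimensional Green's operators through the distributional identity; this does follow, as you say, from $L^2(D)$-convergence of the partial sums (uniform in $z$, thanks to the factor $e^{-\sqrt{\lambda_i}|z|}/\sqrt{\lambda_i}$) together with continuity of differential operators on currents, which is the same kind of interchange the paper performs when summing its series against test forms. Two cosmetic remarks: your $v(z)$ is in fact smooth on all of $D$ (not merely on $D\setminus H$) for every $z\neq 0$, by the exponentially damped $C^k$-convergence you already invoke; and your appeal to Proposition \ref{p:complex-Green-expansion} for item (1) is legitimate because smooth $(1,1)$-corrections are absorbed into the $O'(r^2)$ remainder of \eqref{e:singular-2-form-expansion}, which is also how the paper concludes that step.
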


\begin{proof}[Proof of Proposition \ref{p:existence-Greens-current}]
We first prove the existence part.  Let $\{\phi_j\}_{j=0}^{\infty}$ be a complete  $L^2$-orthonormal basis of eigenvectors for the Hodge Laplacian $\Delta_D$ acting on real-valued closed  $(1,1)$-forms on $D$. We suppose $\Delta_D \phi_j=\lambda_j\phi_j$, and $\lambda_j\geq 0$ is increasing in $j$.  Our basic strategy is to first construct a formal series and then prove the convergence.

To begin with, the Dirac $3$-current $\delta_P$ of $P\subset Q$ has a formal expansion along the $D$ direction, 
\begin{equation}2\pi \delta_P=\sum_{j=0}^{\infty} f_j(z)\phi_j\wedge dz. \end{equation}
Here $f_j(z)$ is a $0$-current on $\R$ given by 
\begin{equation}f_j(z) \equiv 2\pi\Big( \int_H *_D\phi_j \Big)\delta_{0}(z), 
\end{equation}
where $\delta_0(z)$ is the standard Dirac $0$-current acting on functions on $\R$, supported at  $\{z=0\}$. 
Furthermore, if $\Delta_D\phi_j=0$, then $*_D\phi_j$ is closed and 
\begin{equation}\int_H*_D\phi_j=\int_D \frac{k}{2\pi} \omega_D\wedge *_D\phi_j=\frac{k}{2\pi}\langle \omega_D, \phi_j\rangle_{L^2(D)}. \end{equation}
It follows that there is exactly one $j$, which we may assume to be $0$,  such that $\lambda_j=0$ and $f_j$ is non-zero. The corresponding eigenform $\phi_0$ is a multiple of $\omega_D$. Now suppose $G_P$ is given as a formal series
\begin{equation}G_P=\sum\limits_{j=0}^{\infty} h_j(z) \phi_j \wedge dz,\end{equation}
then we need $h_j$ to satisfy \begin{equation}\label{eqn3333}-h_j''(z)+\lambda_j\cdot h_j(z)=f_j(z).\end{equation}
If $\lambda_j>0$, we can write down a solution 
 \begin{align}h_j(z)&=\frac{1}{-2\sqrt{\lambda_j}}\Big(e^{-\sqrt{\lambda_j}\cdot z}\int_{-\infty}^z e^{\sqrt{\lambda_j}\cdot u}f_j(u)du+e^{\sqrt{\lambda_j}\cdot z} \int_z^\infty e^{-\sqrt{\lambda_j}\cdot u}f_j(u)du\Big)
 \nonumber\\
 &=\begin{cases}\frac{\pi}{-\sqrt{\lambda_j}}\cdot e^{-\sqrt{\lambda_j}\cdot z}\cdot \int_H *_D\phi_j, & z>0,
\\
\frac{\pi}{-\sqrt{\lambda_j}}\cdot e^{\sqrt{\lambda_j}\cdot z}\cdot \int_H *_D\phi_j, & z\leq 0.
\end{cases}
\label{e:formal-solution-h-j}\end{align}
If $j>0$ and $\lambda_j=0$ we simply set $h_j(z)=0$. 
If $j=0$, we can write down a solution
 \begin{align}h_0(z)=\begin{cases}
 k_+z \langle \omega_D, \phi_0\rangle_{L^2(D)}, & z\geq 0,\\
 k_-z \langle \omega_D, \phi_0\rangle_{L^2(D)}, &  z\leq0.
 \end{cases}
 \end{align}
With this choice of $h_j$, we can define $G_P$ as the formal series given above. 
Next we claim that  $G_P$ is well-defined as a $3$-current on $Q$. For any test form $\chi\in\Omega_0^{m-3}(Q)$, we need to show the sum $\sum\limits_{j=0}^N\Big(h_j(z) \phi_j \wedge dz, \chi\Big)$ converges as $N\rightarrow\infty$. 
 So it suffices to show that 
\begin{equation}
\sum\limits_{j=0}^\infty\Big|\Big(h_j(z)\phi_j \wedge dz, \chi\Big)\Big|<\infty.\label{e:bounded-sum-current}\end{equation}
 To see this, for each $j$, we write
\begin{eqnarray}\Big(h_j(z)\phi_j \wedge dz, \chi\Big)&=&\int_Q h_j(z) \phi_j\wedge dz \wedge \chi
\nonumber\\
&=&\int_{\dR} \Big( h_j(z) \cdot \int_D\langle \chi, *_D\phi_j\rangle\dvol_{\omega_D}\Big) dz.\label{e:fubini}\end{eqnarray}
We first derive a uniform bound on  the integral $\int_D\langle \chi, *_D\phi_j\rangle\dvol_{\omega_D}$. If $\lambda_j>0$, then for all $\ell\geq 1$, 
\begin{eqnarray}\int_D\langle \chi(z), *_D\phi_j\rangle \dvol_{\omega_D} &=&
\frac{1}{(\lambda_j)^{\ell}}\int_D\langle \chi(z), *_D(\DelH_D)^{\ell}(\phi_j)\rangle \dvol_{\omega_D}
\nonumber\\
&=&\frac{1}{(\lambda_j)^{\ell}}\int_D\langle (\DelH_D)^{\ell}\chi(z), *_D\phi_j\rangle\dvol_{\omega_D}.\end{eqnarray}
By standard elliptic regularity we have \begin{equation}
\|\phi_j\|_{C^0(D)} \leq C \cdot (\lambda_j)^{\frac{n-1}{2}}, 
\end{equation}
where $C$ depends only on $n$ and the metric $\omega_D$. 
So it follows that
\begin{equation}
\Big|\int_D\langle \chi(z), *_D\phi_j\rangle \dvol_{\omega_D}\Big| \leq C\cdot \|\chi\|_{C^{2\ell}(Q)}\cdot \frac{1}{(\lambda_j)^{\ell-\frac{n-1}{2}}}.\end{equation}
Notice this estimate is independent of $z$. 
Next, we have
\begin{eqnarray}
\int_{\dR}|h_{j}(z)|dz &\leq&  \int_{-\infty}^{-1}|h_j(z)|dz +  \int_{-1}^{1}|h_j(z)|dz + \int_{1}^{+\infty} |h_j(z)|dz 
\nonumber\\
&\leq & C(1+\lambda_j^{\frac{n}{2}}).
\end{eqnarray}
Combining the above estimates, we have
\begin{equation}
|(h_j(z) \phi_j \wedge dz, \chi)|=\Big|\int_{\dR} \Big( h_j(z) \cdot \int_D\langle \chi, *_D\phi_j\rangle\dvol_{\omega_D}\Big) dz\Big| \leq C\cdot \|\chi\|_{C^{2\ell}(Q)}\cdot \frac{C(1+\lambda_j^{\frac{n}{2}})}{(\lambda_j)^{\ell-\frac{n}{2}}}.
\end{equation}
Applying Weyl's law, we see that if we fix $\ell$ sufficiently large,  then \eqref{e:bounded-sum-current} holds, and this completes the proof of the claim.

Now we show $G_P$ as defined above satisfies the current equation
\begin{equation}\label{eqn3347}
\Delta G_P =2\pi\delta_P .
\end{equation}
Given a test form $\chi\in\Omega_0^{m-3}(Q)$,
applying the definition of $f_j$, $h_j$ and integration by parts, it is straightforward to see that for each $j$,
\begin{equation}(h_j(z) \phi_j \wedge dz, \DelH\chi)=(f_j(z) \phi_j \wedge dz, \chi).\end{equation}
So 
\begin{equation}
(G_P, \Delta\chi)=\sum_j (f_j(z) \phi_j\wedge dz, \chi)=\sum_j 2\pi (\int_H *_D\phi_j) \int_D \phi_j\wedge \chi|_{z=0}. 
\end{equation}
By Hodge decomposition we can write  the $(n-2,n-2)$ component of $\chi|_{z=0}$ as $ d_D\alpha+\beta$ with $d_D^*\beta=0$. Since $\phi_j$ is a closed $(1,1)$ form, we have
\begin{equation}
\int_{D} \phi_j\wedge \chi|_{z=0}=\int_{D}\phi_j\wedge \beta=\langle \beta, *_D\phi_j\rangle_{L^2(D)}. 
\end{equation}
Notice $\beta=\sum_{j} \langle \beta, *_D\phi_j\rangle_{L^2(D)}*_D\phi_j$, we see $(G_P, \Delta\chi)=2\pi  \int_{P}\chi, $ which proves \eqref{eqn3347}. In particular, we know $G_P$ is smooth away from $P$. Clearly $G_P$ satisfies (1) with $\psi=\sum_j h_j(z) \phi_j$. 

Finally we study the asymptotics of  $G_P$ as $z\rightarrow\pm\infty$. If $\lambda_j>0$,  by standard elliptic regularity we have for all $\ell\geq 0$
\begin{equation}
\| \nabla^{\ell}\phi_j \|_{C^0(D)}\leq C\cdot (\lambda_j)^{\frac{n-1}{2}+\frac{\ell+1}{2}}.
\end{equation}
This implies that for any  $z>10^{n^2+\ell^2}$, we have \begin{equation}
 \|\nabla^{\ell}_Q(h_j(z)\cdot \phi_j)\|_{C^0(D\times\{z\})} \leq C(\lambda_j)^{\frac{n+\ell}{2}}e^{-\sqrt{\lambda_j} z}.
 \end{equation}
 By elementary computations, for each $\delta\in(0,10^{-2})$ and for $z$ sufficiently large, we have 
 \begin{equation}
\sum\limits_{\lambda_j>0} \|\nabla^{\ell}_Q(h_j(z)\cdot \phi_j)\|_{C^0(D\times\{z\})}  \leq C e^{-(1-\delta)\sqrt{\lambda_1}z}\cdot \sum\limits_{j=0}^{\infty}(\lambda_j)^{-6n}.
\end{equation}
By Weyl's law, the above series converges, and hence for each $\ell\in\dN$, 
 $\nabla_Q^{\ell}(G_P-h_0(z)\phi_0\wedge dz)$ is exponentially decaying as $z\rightarrow +\infty$. The argument is identical  for $z<0$. Then $G_P$ satisfies the \eqref{e:greens-current-exp-asymp}. 
 
To see the uniqueness,  suppose there is another Green's current $\widetilde G_P=\tilde \psi(z)\wedge dz$ also satisfying (1) and (2), then $\widetilde G_P-G_P=(\tilde \psi(z)-\psi(z))\wedge dz$ is a global harmonic 3-form with exponential decay at infinity. Applying Fourier expansion to $\tilde\psi(z)-\psi(z)$ along the $D$ direction, similar to what is done in the above, it is easy to see that $\tilde\psi(z)=\psi(z)$. 
 \end{proof}

The constants $k_-$ and $k_+$ determine some information of the above $\psi$, which will be used later. 

\begin{proposition}\label{p:cohomology-constant}
 Let $\psi$ be the $(1,1)$-current in Proposition \ref{p:existence-Greens-current}, then the following holds:
\begin{enumerate}\item The cohomology class $[\psi(z)]\in H^2(D; \R)$ is given by $k_-z[\omega_D]$ and $k_+z[\omega_D]$  for $z<0$ and $z>0$ respectively.
\item We have
\begin{equation}\label{eqn2-43}
\p_z\psi|_{z=0}=\frac{1}{2}(k_-+k_+)\omega_D.
\end{equation}
 In particular, $\p_z\psi|_{z=0}$ extends smoothly across $P$.
\end{enumerate}
\end{proposition}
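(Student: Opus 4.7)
The plan is to exploit the Fourier decomposition $\psi(z)=\sum_{j\geq 0}h_j(z)\phi_j$ constructed in the proof of Proposition~\ref{p:existence-Greens-current}. Three structural facts are used repeatedly: (i) the $L^2$-unit eigenform $\phi_0$ is proportional to $\omega_D$ (it is the only harmonic closed $(1,1)$-form with non-zero $L^2$-pairing with $\omega_D$); (ii) every $\phi_j$ with $\lambda_j>0$ is exact, since $d\phi_j=0$ combined with $\Delta_D\phi_j=\lambda_j\phi_j$ gives $\phi_j=\lambda_j^{-1}dd^*\phi_j$; and (iii) for the remaining indices $j\geq 1$ with $\lambda_j=0$, the coefficient $h_j$ vanishes identically by construction.

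For part (1), fix $z\neq 0$ and pair $\psi(z)$ with any smooth closed $(2n-4)$-form $\eta$ on $D$. Observations (ii) and (iii) eliminate every summand except $j=0$, and with $\phi_0=\omega_D/\|\omega_D\|_{L^2}$ and $h_0(z)=k_\pm z\|\omega_D\|_{L^2}$ one obtains
\[
\int_D\psi(z)\wedge\eta \;=\; k_\pm z\int_D\omega_D\wedge\eta.
\]
As $\eta$ is arbitrary, Poincar\'e duality on the compact K\"ahler manifold $D$ yields $[\psi(z)]=k_\pm z[\omega_D]$ in $H^2(D;\mathbb{R})$.

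For part (2), split $\psi=\bar\psi+\hat\psi$ into its $z$-even and $z$-odd parts. Inspecting the explicit formula \eqref{e:formal-solution-h-j}, the coefficient $h_j$ is an even function of $z$ for every $j\geq 1$ with $\lambda_j>0$, so those modes contribute only to $\bar\psi$. Combined with (iii) and the piecewise-linear formula for $h_0$, a direct computation collapses $\hat\psi$ onto the $j=0$ mode and gives the closed-form expression
\[
\hat\psi(z)\;=\;\tfrac{1}{2}(k_++k_-)\,z\,\omega_D,
\]
a globally smooth $(1,1)$-form on $D\times\mathbb{R}$ whose $z$-derivative at $z=0$ equals $\tfrac{1}{2}(k_++k_-)\omega_D$. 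At every point $p\in D\setminus H$ the point $(p,0)$ lies outside the singular locus $P$, so $\psi(p,\cdot)$ is smooth in $z$ near the origin; hence $\bar\psi(p,\cdot)$ is smooth and even in $z$, forcing $\partial_z\bar\psi(p,0)=0$. Adding the two contributions gives $\partial_z\psi|_{z=0}=\tfrac{1}{2}(k_++k_-)\omega_D$ on $D\setminus H$, and since the right-hand side is smooth on all of $D$ it provides the asserted smooth extension across $P$.

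The main point of care is to verify that $\hat\psi$ really equals the closed-form expression above, rather than having additional contributions from harmonic modes perpendicular to $\omega_D$. For the positive-eigenvalue modes this is immediate from the evenness of $h_j$; for the remaining harmonic $\phi_j$ with $j\geq 1$ it follows from the specific normalization $h_j\equiv 0$ chosen during the construction in Proposition~\ref{p:existence-Greens-current}. Thus $\hat\psi$ is pinned down as a finite sum in the Fourier basis, and no delicate convergence or regularity argument beyond this identification is needed.
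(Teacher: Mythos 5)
Your proof is correct, but it follows a genuinely different route from the paper's. You argue directly from the explicit spectral construction $\psi=\sum_j h_j(z)\phi_j$ in the proof of Proposition \ref{p:existence-Greens-current}: for (1) every mode with $\lambda_j>0$ is exact ($\phi_j=\lambda_j^{-1}d_D(d_D^*\phi_j)$) and the remaining harmonic modes with $j\geq 1$ have $h_j\equiv 0$, so pairing with closed forms and Poincar\'e duality pin down $[\psi(z)]$; for (2) the evenness in $z$ of the $h_j$ with $\lambda_j>0$ (visible in \eqref{e:formal-solution-h-j}) collapses the odd part to the $j=0$ mode, giving $\psi(z)-\psi(-z)=(k_-+k_+)z\,\omega_D$, and the parity argument at points off $P$ finishes. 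The paper instead argues structurally: for (1) it uses that $\p_z^2\psi=\Delta_D\psi=d_Dd_D^*\psi$ is exact for $z\neq 0$, so $[\p_z\psi(z)]$ is locally constant, and the asymptotics \eqref{e:greens-current-exp-asymp} then force $[\psi(z)]=k_\pm z[\omega_D]$; for (2) it notes that $\tilde\psi(z)\equiv\psi(-z)+(k_-+k_+)z\omega_D$ is again a Green's current with the same asymptotics and invokes the uniqueness clause of Proposition \ref{p:existence-Greens-current}, which is exactly the reflection identity you re-derive by hand. Your version buys explicitness but is tied to the particular construction, and the term-wise steps (interchanging the sum with $\int_D\cdot\wedge\eta$ at fixed $z\neq 0$, and taking the even/odd split of the series) should be flagged as justified by the convergence estimates already established in the existence proof (exponential decay in $\sqrt{\lambda_j}\,|z|$ together with Weyl's law), with the split performed at the level of currents near $z=0$. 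The paper's route is shorter and more robust in that it only uses properties (1)--(2) and uniqueness of the Green's current, not the Fourier formulae. One small wording point: $\phi_0\propto\omega_D$ is a normalization of the harmonic part of the orthonormal basis (the other harmonic basis elements are chosen orthogonal to $\omega_D$, hence have $f_j=0$ and so $h_j\equiv 0$); your parenthetical ``only harmonic closed $(1,1)$-form with non-zero pairing with $\omega_D$'' should be read as exactly that basis choice rather than as a statement about all harmonic $(1,1)$-forms.
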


\begin{proof}
Since $Q$ is a Riemannian product, we have for $z\neq 0$,  \begin{equation}\frac{d^2}{dz^2}\psi(z)=\Delta_D \psi(z)=d_Dd_D^*\psi(z)\end{equation}
is exact, which implies that the cohomology class $[\p_z\psi(z)]\in H^2(D; \R)$ is locally constant for $z\in \dR\setminus\{0\}$. On the other hand, by the exponential decay property in \eqref{e:greens-current-exp-asymp} we see that  
\begin{equation}\lim_{z\rightarrow\pm \infty}(\psi(z)-k_\pm z[\omega_D])     =0.\end{equation}
So (1) follows.
For item (2), denote
 \begin{equation}\tilde\psi(z)\equiv\psi(-z)+(k_-+k_+)z\omega_D. \end{equation} 
 Then $\tilde\psi\wedge dz$ is also a Green current for $P$ and it is also asymptotic to $(k_{\pm}z)\cdot \omega_D$ as $z\rightarrow \pm\infty$. Therefore by uniqueness, $\tilde\psi(z)=\psi(z)$. 
Taking the $z$-derivative at $z=0$ we get the conclusion.
\end{proof}

\section{The approximately Calabi-Yau neck region}

\label{s:neck}

In this section, we  build the {\it neck region}. It is one of the key geometric ingredients in this paper. 
For all dimensions, we will construct a family of incomplete  K\"ahler metrics with $S^1$-symmetry, on certain singular $S^1$-fibrations over a cylindrical base. These will serve to interpolate between the geometries at the ends of two Tian-Yau metrics.

  Our construction is motivated by the non-linear Gibbons-Hawking ansatz in Section \ref{s:torus-symmetries}. However, as  explained in Section \ref{s:torus-symmetries}, it does not seem easy to solve the non-linear reduced equation directly. Instead, we will use a singular solution to the \emph{linearized} ansatz, namely, the Green's current constructed in Section \ref{s:Greens-currents}, to obtain a family of K\"ahler metrics with $S^1$-symmetry, parametrized by a large parameter $T\gg 1$.  Here are two technical points to note:

\begin{itemize}
\item These metrics will not be shown to be smooth along the fixed loci of the $S^1$-action. Indeed, we  only prove that they are $C^{2, \alpha}$ for all $\alpha\in (0, 1)$ (Proposition \ref{p:C2alpha}). For our gluing construction, we need a further perturbation which lowers the regularity to be $C^{1,\alpha}$. This turns out to be sufficient for our analysis.
\item These metrics  are only \emph{approximately} Calabi-Yau, in an appropriate weighted sense (Proposition \ref{p:CY-error-small}). One can also perturb them to genuine incomplete Calabi-Yau metrics (see Section \ref{s:neck-perturbation}). 
\end{itemize}

Let us first set up some notations. Throughout this section we  fix integers $n\geq 2$ and $k>0$.
Let $(D, \omega_D, \Omega_D)$ be a closed Calabi-Yau manifold of complex dimension $n-1$. Here $\omega_D$ is a K\"ahler form in the class $2\pi c_1(L)$ for some ample holomorphic line bundle $L$,  $\Omega_D$ is a  holomorphic volume form on $D$, and we assume the following normalized Calabi-Yau equation holds \begin{equation}
\label{e:CY equation on D}
\frac{1}{(n-1)!}\omega_D^{n-1}=\frac{(\sq)^{(n-1)^2}}{2^{n-1}}\Omega_D\wedge\bar\Omega_D.
\end{equation}
We fix a hermitian metric on $L$ whose curvature form is $-\sq \omega_D$. This naturally induces a hermitian metric on all tensor powers of $L$. 
We  also fix a smooth divisor $H$ in the linear system $L^{\otimes k}$ and a defining section $S_H$ of $H$. Fix $r_D>0$ such that for any $p\in H$, the local coordinate system $\{y, \bar y, w_2', \bar w_2', \cdots, w_{n-1}', \bar w_{n-1}'\}$   introduced in Section \ref{ss:complex-greens-currents} exists in the ball $B_{10 r_D}(p)$.

Let $Q\equiv D\times \R$ be the Riemannian product of $D$ and the real line $\R$ parametrized by the 
coordinate $z\in (-\infty, \infty)$.  We denote
$P\equiv H\times \{0\}$ the codimension-$3$ submanifold of $Q$.
Using the normal exponential map on $D$ (resp. $Q$), we may always implicitly identify a tubular neighborhood of $P$ in $D$ (resp. $Q$) with a neighborhood of the zero section in the normal bundle $N_0$ (resp. $N=N_0\oplus \R$). Here we adopt the notation in Section \ref{ss:complex-greens-currents}, so $N_0$ is a hermitian line bundle and $N$ is a Riemannian vector bundle.  
  
Let us fix $k_-, k_+\in \Z$ with $k_->0$, $k_+<0$ and $k_--k_+=k$. Applying Proposition \ref{p:existence-Greens-current}, there is a unique  Green's current $G_P=\psi\wedge dz$ for $P$ in $Q$ such that the asymptotics \eqref{e:greens-current-exp-asymp} holds.

We also make the following notational conventions for this section:
\begin{itemize}
\item $\epsilon_T$ denotes a family of smooth functions on $D$, parametrized by $T\geq 1$, such that for each $k\geq 0$, the norm of its $k$-th derivative with respect to $\omega_D$ is of the form $O(e^{-\delta_k T})$ as $T\rightarrow\infty$, for some  $\delta_k>0$ (independent of $T$).
\item $\underline \epsilon_T$ denotes a function of $T$ which is $O(e^{-\delta T})$ as $T\rightarrow\infty$, for some $\delta>0$
\item $\epsilon(z)$ denotes a smooth function  defined on a subdomain in $Q$  for $|z|\gg1$  such that its all derivatives  decay exponentially fast as $|z|\rightarrow\infty$. 
\item $B_T$ denotes a family of smooth functions  on $D$, parametrized by $T\gg1$, such that for each $k\geq 0$, its $k$-th derivatives with respect to $\omega_D$ is bounded independent of $T$. 
\item $\underline B_T$ denotes a function of $T$ which is uniformly bounded as $T\rightarrow\infty$. 
\item $B(z)$ denotes a smooth  function of $z$, such that all its derivatives are uniformly bounded. 
\end{itemize}

The organization of this section is as follows. In Section \ref{ss:kaehler-structures} we construct a  family of $S^1$-invariant K\"ahler structures. Special attention is paid to understand the singularity structure near the fixed loci of the $S^1$-action. We will first construct a smooth compactification and write an explicit local model, and then study the regularity of the K\"ahler structures. In Section \ref{ss:complex-geometry} we show the underlying complex manifold is an open subset in an explicit $\C^*$ fibration over $D$, and derive a formula for the K\"ahler potential of our family of K\"ahler metrics. In Section \ref{ss:regularity-scales}  we study  and classify the limiting geometry of our family of metrics at regularity scales, which forms a foundation for our weighted analysis. In Section \ref{ss:neck-weighted-analysis} we define the relevant weighted H\"older spaces and prove a local weighted Schauder estimate. We also show our family of K\"ahler metrics are approximately Calabi-Yau by providing an estimate of the error in a weighted H\"older space. In Section \ref{ss:perturbation of complex structures} we deal with a perturbation of the complex structures of the underlying complex manifold, and estimate the error in a weighted H\"older space.

\subsection{Construction of a family of $C^{2,\alpha}$-K\"ahler structures}
\label{ss:kaehler-structures}
In this subsection we  use \eqref{omegahequation} to construct a family of $C^{2, \alpha}$-K\"ahler structures on certain singular $S^1$-fibrations over an increasing family of domains in $Q$. In Section \ref{sss:incomplete} we construct smooth K\"ahler metrics on principal $S^1$-bundles over $Q\setminus P$. In Section \ref{sss:smooth compactification} we construct a smooth compactification by adding fixed points of the $S^1$-action which lie over points in $P$. In Section \ref{sss:metric compactification} we study the regularity of the K\"ahler metric near the fixed loci.  Most of the quantities defined in this subsection will depend on the parameter $T$, but for simplicity of notation, we will not always keep track of this if the meaning is clear from the context. 

\subsubsection{K\"ahler metrics on $S^1$-bundles away from $P$}
\label{sss:incomplete}

For $T\gg1$, we define
\begin{equation}
\tilde\omega=T\omega_D+\psi.
\end{equation}
It can be viewed as a family of closed $(1,1)$-forms $\tilde\omega(z)$ on $D$ parametrized by $z\in \R$.  
Using the K\"ahler identities, we obtain
\begin{equation} \label{compatibility}\p_z^2\tilde\omega=\Delta_{D} \tilde\omega=-d_Dd_D^c \Tr_{\omega_D}\tilde\omega, \end{equation}
If we define
\begin{equation}
h\equiv \Tr_{\omega_D}\tilde\omega+q(z)
\end{equation}
for any smooth function $q(z)$, then the pair $(\tilde\omega, h)$ satisfies the first equation in \eqref{omegahequation}:
\begin{equation} \label{eqn4445}
\p_z^2\tilde\omega+d_Dd_D^ch=0.
\end{equation}
For our purpose we need to make a special choice of the function $q(z)$. First we choose a function $q_0(z)$ that satisfies 
 the following cohomological condition
\begin{equation} \label{eqn5-3}
q_0(z)\int_D \omega_D^{n-1}+(n-1)\int_D\tilde\omega(z)\wedge \omega_D^{n-2}=T^{2-n}\int_D \tilde\omega(z)^{n-1},  \ \   \forall z\in\dR.
\end{equation}
By Proposition \ref{p:cohomology-constant}, the cohomology class $[\psi(z)]\in H^2(D; \R)$ is piecewise linear in $z\in \R$, so
\begin{align}\label{q0definition}
q_0(z)=
\begin{cases}
T^{2-n}(T+k_+ z)^{n-1}-(n-1)(T+k_+ z), & z>0,
\\
T^{2-n}(T+k_- z)^{n-1}-(n-1)(T+k_- z), & z<0.
\end{cases}
\end{align}
It follows that $q_0(z)$ is identically zero if $n=2$.  But if $n>2$ then $q_0(z)$ is only $C^{1,1}$ at $z=0$ and we need to smooth it. We  fix throughout this section  a smooth function $L_0: \R\rightarrow \R$ satisfying
\begin{align} \label{e:l(z)}
L_0(z)\equiv
\begin{cases}
k_+z, & z>1,
\\
0 ,  & z=0,
\\
k_-z, & z<-1. 
\end{cases}
\end{align}
and let
\begin{equation} \label{e:LT z}
L_T(z)\equiv T+L_0(z).
\end{equation}
Let us we define
$q(z)\equiv T^{2-n}L_T(z)^{n-1}-(n-1)L_T(z)$.
Then
\begin{align}
q(z)
=
\begin{cases}
q_0(z), & |z|\geq 1,
\\
q_0(z) + T^{-1}B(z), & |z|<1.
\end{cases}
\end{align}
 It is also easy to see from \eqref{eqn5-3} that \begin{align} \label{haverage}
\int_D h\omega_D^{n-1}=
\begin{cases}T^{2-n}\int_D \tilde\omega(z)^{n-1}, &  |z|\geq 1, \\
T^{2-n}\int_D \tilde\omega(z)^{n-1}+T^{-1}B(z), & z\in [-1, 1].
\end{cases}
\end{align}
We refer to Remark \ref{r:error-function} for an explanation of this particular choice of $q(z)$.

To apply the construction in Section \ref{s:torus-symmetries}, we  need to restrict to the region in $Q$ where $\tilde\omega(z)$ is a positive $(1,1)$-form and $h$ is a positive function. 
For $T$ large we define $T_+>0$ and $T_-<0$ by 
\begin{equation}\label{e:define-T-plus-minus}
\begin{cases}
L_T(T_+)=T+k_+T_+=T^{\frac{n-2}{n}}\\
L_T(T_-)=T+k_-T_-=T^{\frac{n-2}{n}}.
\end{cases} 
\end{equation}
and denote by $Q_T\subset Q$ the closed subset defined by $z\in [T_-, T_+]$.  The reason for this choice of $T_\pm$ is also explained in Remark \ref{r:error-function}.

\begin{lemma}
For $T$ large, over $Q_T\setminus P$, both $\tilde\omega$ and $h$ are positive. Moreover, we have 
\begin{align}
h &=T^{2-n}(T+k_\pm z)^{n-1}+\epsilon(z), \quad   |z|\geq 1,\label{e:h-asymptotics}
\\h & =T+\frac{1}{2r}+O'(r) + T^{-1}B(z), \quad |z|\leq 1,\label{e:h-bounded-z}
\end{align}
where $O'(r)$ is independent of $T$, and its singular behavior near $P$ is given by Definition \ref{d:normal-regularity}.
\end{lemma}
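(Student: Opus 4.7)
The plan is to assemble the two identities for $h$ and the positivity claims directly from the expansions established earlier, bookkeeping the error terms carefully. First I will rewrite $h$ using $\Tr_{\omega_D}(T\omega_D)=(n-1)T$, so that
\begin{equation*}
h=(n-1)T+\Tr_{\omega_D}\psi(z)+q(z).
\end{equation*}
For $|z|\geq 1$ we have $q(z)=q_0(z)=T^{2-n}(T+k_{\pm}z)^{n-1}-(n-1)(T+k_{\pm}z)$, while Proposition~\ref{p:existence-Greens-current} gives $\psi(z)=k_{\pm}z\,\omega_D+\epsilon(z)$ and therefore $\Tr_{\omega_D}\psi(z)=(n-1)k_{\pm}z+\epsilon(z)$. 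Substituting cancels the linear-in-$z$ terms and yields the desired formula $h=T^{2-n}(T+k_{\pm}z)^{n-1}+\epsilon(z)$. For $|z|\le 1$ we have $q(z)=q_0(z)+T^{-1}B(z)$, with $q_0(0)=(2-n)T$, and Proposition~\ref{p:trace-expansion} gives the local expansion $\Tr_{\omega_D}\psi=\tfrac{1}{2r}+O'(r)$ near $P$ while away from $P$ the trace is smooth and bounded; combining these with the Taylor expansion in $z$ of $q_0(z)-q_0(0)$ (which contributes to $T^{-1}B(z)$ after absorbing $O(T^{-1})$ terms) produces $h=T+\tfrac{1}{2r}+O'(r)+T^{-1}B(z)$.

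Positivity of $h$ is immediate from these two formulas once $T$ is large: in the first range, $T+k_{\pm}z\ge T^{(n-2)/n}$ by the definition \eqref{e:define-T-plus-minus} of $T_\pm$, so $T^{2-n}(T+k_\pm z)^{n-1}\ge T^{(n-2)/n}$, which dominates the exponentially small $\epsilon(z)$; in the second range, $T$ dominates the bounded terms and $\tfrac{1}{2r}\ge 0$. Positivity of $\tilde\omega$ on $Q_T\setminus P$ is checked in three regions. For $|z|\ge 1$, the exponential decay of $\psi(z)-k_\pm z\,\omega_D$ from Proposition~\ref{p:existence-Greens-current} gives $\tilde\omega=(T+k_\pm z)\omega_D+\epsilon(z)$, positive because $T+k_\pm z\ge T^{(n-2)/n}\gg \underline\epsilon_T$. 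For $|z|\le 1$ outside any fixed tubular neighborhood of $P$, $\psi(z)$ is uniformly bounded in $C^0$ (being smooth on a compact set disjoint from $P$) so $T\omega_D+\psi(z)>0$ for $T$ large.

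The last and main point is positivity near $P$. Here I will use the local expansion of Proposition~\ref{p:complex-Green-expansion}: writing $\psi=\tfrac{\sqrt{-1}}{4r}dy\wedge d\bar y+\tfrac{1}{2r}(yd\bar y+\bar y dy)\wedge\Gamma+O'(1)$, the leading singular piece is a non-negative $(1,1)$-form in the normal $y$-direction, and by completing the square the cross term $\tfrac{1}{2r}(yd\bar y+\bar y dy)\wedge\Gamma$ together with $\tfrac{\sqrt{-1}}{4r}dy\wedge d\bar y$ can be absorbed into a non-negative semidefinite form plus a bounded tangential error of size $O(|y|^2/r)\cdot|\Gamma|^2=O'(r)$. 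Thus $\psi\ge -C\cdot\mathrm{Id}$ in the $\omega_D$-norm for a constant $C$ independent of $T$, while $T\omega_D\ge T\cdot\mathrm{Id}$ uniformly, so $\tilde\omega>0$ for all $T$ sufficiently large.

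The main obstacle is the positivity of $\tilde\omega$ at points $r\to 0$, since $\psi$ itself is unbounded there. The key observation making this work is that the unbounded part of $\psi$ is a non-negative $(1,1)$-form along the normal directions (after the square-completion trick with $\Gamma$), so it actually \emph{helps} rather than destroys positivity; only the bounded tangential error competes with $T\omega_D$, and this is dominated for $T\gg 1$. Once this is done the two asymptotic formulas for $h$ are essentially pure bookkeeping from \eqref{q0definition} and Propositions~\ref{p:existence-Greens-current}, \ref{p:trace-expansion}.
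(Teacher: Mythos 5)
Your proof follows essentially the same route as the paper: extract \eqref{e:h-asymptotics} and \eqref{e:h-bounded-z} from the formula $h=(n-1)T+\Tr_{\omega_D}\psi+q(z)$ using \eqref{q0definition}, Proposition \ref{p:existence-Greens-current} and Proposition \ref{p:trace-expansion}, get positivity of $h$ from these formulas, and get positivity of $\tilde\omega$ from the exponential decay away from a bounded $z$-range and from the local expansion of Proposition \ref{p:complex-Green-expansion} near $P$ (where your square-completion remark just makes explicit what the paper leaves implicit, and it is correct since $|y|^2/r\le r$).

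Two small quantitative slips should be fixed, though neither changes the structure. First, $T^{2-n}(T+k_\pm z)^{n-1}\ge T^{2-n}\cdot T^{\frac{(n-2)(n-1)}{n}}=T^{-\frac{n-2}{n}}$, not $T^{\frac{n-2}{n}}$; the corrected (smaller) bound still suffices, because near $z=T_\pm$ the error $\epsilon(z)$ is exponentially small in $|z|\sim T$ while $T^{-\frac{n-2}{n}}$ is only polynomially small. Second, writing ``$\ge T^{(n-2)/n}\gg\underline\epsilon_T$'' for all $|z|\ge 1$ misuses the notation: $\epsilon(z)$ is exponentially small in $|z|$, not in $T$, so at $|z|$ of order $1$ it is merely $O(1)$. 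The positivity argument there should be split as in the paper: for $1\le |z|\le C$ (with $C$ fixed so that the exponential tail is, say, less than $1/2$ beyond $|z|=C$) one uses $T+k_\pm z\approx T$, and only for $|z|\ge C$ does one play the decaying error against the lower bound $T+k_\pm z\ge T^{\frac{n-2}{n}}$ (respectively $h\gtrsim T^{-\frac{n-2}{n}}$). With these one-line corrections your argument is complete and matches the paper's.
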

\begin{proof}
We first consider $\tilde\omega$.
As $z\rightarrow\pm\infty$ the behavior of $\tilde\omega$ is governed by \eqref{e:greens-current-exp-asymp}, so for $T\gg 1$ we know $\tilde\omega$ is positive over the region where $z\in [-k_-^{-1}(T-1), -k_+^{-1}(T-1)]\setminus [-C, C]$ for some number $C>0$ independent of $T$. By the expansion of $\psi$ in a neighborhood of $P$ given in Proposition \ref{p:complex-Green-expansion}, for $T$ sufficiently large,  $\tilde\omega$ is also positive when $z\in [-C, C]$. Hence $\tilde\omega$ is positive over the region where $z\in [-k_-^{-1}(T-1), -k_+^{-1}(T-1)].$  Since this contains $Q_T$ we see in particular $\tilde\omega$ is positive over $Q_T\setminus P$. 

To deal with $h$ we need to analyze $q(z)$.  When $|z|\geq 1$, we have 
\begin{equation}\label{qexpansion}
q(z)=q_0(z)=T^{2-n}(T+k_\pm z)^{n-1}-(n-1)(T+k_\pm z),
\end{equation}
where the choice of $+$ or $-$ depends on whether $z>0$ or $z<0$. 
By \eqref{e:greens-current-exp-asymp} we then get 
\begin{equation} 
h=T^{2-n}(T+k_\pm z)^{n-1}+\epsilon(z).
\end{equation}
So we can find $C>0$ such that $h$ is positive when $z\in [T_-, T_+]\setminus [-C, C]$. On the other hand, on $[-C, C]$ we know by definition
\begin{equation}\label{e:q-bounded-distance}
q(z)=(2-n)T+T^{-1}B(z).
\end{equation}
Hence by the expansion in Proposition \ref{p:trace-expansion} we obtain \eqref{e:h-bounded-z}. This implies that for $T\gg1$,   $h$ is also positive  when $z\in [-C, C]$.
\end{proof}

Now on $Q\setminus P$ we define the 2-form 
\begin{equation}
\Upsilon\equiv \p_z\tilde\omega-dz\wedge d_D^ch.
\end{equation}
Then \eqref{compatibility} implies that $\Upsilon$ is closed on $Q\setminus P$ and hence $[\Upsilon]\in H^2(Q\setminus P, \mathbb R)$. 
Moreover, we have
\begin{lemma}\label{l:integrality}
The cohomology class $\frac{1}{2\pi}[\Upsilon]\in H^2(Q\setminus P; \mathbb R)$  is integral. 
\end{lemma}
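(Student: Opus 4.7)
My plan is to reduce the integrality of $\frac{1}{2\pi}[\Upsilon]$ to checking it against a convenient set of generators of $H_2(Q \setminus P; \dZ)$. The closedness of $\Upsilon$ on $Q \setminus P$ is implicit in \eqref{compatibility} together with the $z$-wise closedness of $\tilde\omega$ in the $D$-direction. Because $P = H \times \{0\}$ has real codimension three in $Q = D \times \dR$, the long exact sequence of the pair $(Q, Q\setminus P)$ combined with the Thom isomorphism $H^k(Q, Q \setminus P) \cong H^{k-3}(P)$ yields
\begin{equation*}
0 \to H^2(D) \to H^2(Q \setminus P) \to H^0(P) \to H^3(D) \to \cdots,
\end{equation*}
from which $H_2(Q \setminus P; \dZ)$ is generated, modulo torsion, by (i) 2-cycles lying inside a fixed slice $D \times \{z_0\}$ with $z_0 \neq 0$, and (ii) small normal 2-spheres $S^2_\epsilon$ linking the components of $P$.

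For the slice generators (i), I will observe that the pullback of $\Upsilon$ to $D \times \{z_0\}$ equals $\partial_z \tilde\omega(z_0) = \partial_z \psi(z_0)$, since the $dz\wedge d_D^c h$ piece drops out. By Proposition \ref{p:cohomology-constant} one has $[\psi(z)] = k_\pm z [\omega_D]$ for $\pm z > 0$, hence $[\partial_z \psi(z_0)] = k_\pm[\omega_D]$; because $[\omega_D] = 2\pi c_1(L)$ and $k_\pm \in \dZ$, the integrality on slice cycles is immediate.

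For the linking generators (ii), I will work in a neighborhood of a point $p \in P$ in the normal coordinates $\{y, \bar y, z, x\}$ of Section \ref{ss:complex-greens-currents} and compare $\Upsilon$ with the flat Hopf curvature $\Upsilon_0$ from \eqref{modelcurvature}, which satisfies $\int_{S^2_\epsilon}\Upsilon_0 = 2\pi$ identically. Using the expansions in Proposition \ref{p:complex-Green-expansion} and Proposition \ref{p:trace-expansion},
\begin{equation*}
\psi - \tilde\omega_0 = \tfrac{1}{2r}(y\, d\bar y + \bar y\, dy)\wedge\Gamma + r\, d\Gamma + r^{-3}\Pi_2^{(4)} + O'(r^2), \qquad h - h_0 = C(z) + O'(r),
\end{equation*}
where $C(z)$ is constant along $D$. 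Every correction term in $\psi - \tilde\omega_0$ either contains $\Gamma$ or $d\Gamma$, which by Lemma \ref{l:lemmagamma} are forms on $H$ built from the tangential differentials $dx_i$ and $dw_j'$ and therefore pull back trivially to the normal 2-sphere $S^2_\epsilon$ at $p$, or is of sufficiently high order in $r$ to give an integral of size $O(\epsilon) \to 0$; an analogous estimate handles $dz\wedge d_D^c(h-h_0)$. Since $\Upsilon$ is closed and the spheres $S^2_\epsilon$ are mutually homologous in the punctured normal $\dR^3$-fibre, $\int_{S^2_\epsilon}\Upsilon$ is independent of $\epsilon$, and passing to $\epsilon \to 0$ gives $\int_{S^2_\epsilon}\Upsilon = 2\pi$.

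The main technical obstacle lies in step (ii): term-by-term bookkeeping of the normal-coordinate expansions of $\psi$ and $\mathrm{Tr}_{\omega_D}\psi$ to confirm that every correction beyond the Hopf model either restricts trivially to the linking sphere, which is ultimately forced by the structural role of $\Gamma$ as the Chern connection on the normal bundle $N_0$ per Lemma \ref{l:lemmagamma}, or decays fast enough to contribute negligibly in the limit. Once this accounting is carried out, both types of generators pair integrally with $\frac{1}{2\pi}[\Upsilon]$, and the conclusion follows.
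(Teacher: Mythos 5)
Your proof is correct and follows essentially the same route as the paper: reduce via the topology of $Q\setminus P$ to checking the class on a slice $D\times\{z_0\}$ (where Proposition \ref{p:cohomology-constant} gives $[\p_z\psi(z_0)]=k_\pm[\omega_D]=2\pi k_\pm c_1(L)$) and on a linking $2$-sphere (where the local expansions of $\psi$ and $h$ near $P$ show the integral tends to $\pm 2\pi$ as $\epsilon\to 0$). The only cosmetic difference is that you identify the generators of $H_2(Q\setminus P;\Z)$ modulo torsion via the long exact sequence of the pair together with the Thom isomorphism, whereas the paper uses Mayer--Vietoris combined with the Gysin sequence for the sphere bundle $\mathcal S_\epsilon\rightarrow P$; both yield the same reduction.
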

\begin{proof}
As mentioned in the beginning of this section, we identify a tubular neighborhood of $P$ in $Q$ with a neighborhood of the zero section in its normal bundle $N=N_0\oplus \R$. For simplicity we may assume this neighborhood is given by $\mathcal B_\epsilon$, the 2-ball bundle over $P$ consisting of the set of all elements in $N_0\oplus \R$ with norm at  most $\epsilon$, and we denote by $\mathcal S_\epsilon$ the boundary of $\mathcal B_\epsilon$.

Fix $z_0>0$, then the composition of the natural maps 
\begin{equation}
D\simeq D\times \{z_0\}\hookrightarrow Q\setminus P \hookrightarrow Q\rightarrow D
\end{equation}
is the identity map, which implies that for all $k$, the map $H_k(Q\setminus P; \Z)\rightarrow H_k(Q;\Z)$ is surjective and we have a natural splitting 
\begin{equation}
H_2(Q\setminus P; \Z)=H_2(D; \Z)\oplus K
\end{equation}
for some $K$.
By assumption for $z>0$,  
\begin{equation}[\p_z\tilde\omega(z)]=[\p_z\psi(z)]=k_+[\omega_D]=2\pi k_+c_1(L),
\end{equation}
 so $\frac{1}{2\pi}[\Upsilon]|_{D\times \{z_0\}}=k_+c_1(L)$ is integral. Hence it suffices to show the integral of $\frac{1}{2\pi}\Upsilon$ over any element in $K$ is also an integer. 

By the Mayer-Vietoris sequence applied to $Q=(Q\setminus P)\cup \mathcal B_\epsilon$, we get 
\begin{equation}
0\rightarrow H_2(\mathcal S_\epsilon; \Z)\rightarrow H_2(Q\setminus P; \Z)\oplus H_2(\mathcal B_\epsilon; \Z)\rightarrow H_2(Q;\Z)\simeq H_2(D;\Z)\rightarrow 0.
\end{equation}
So we obtain the  exact sequence 
\begin{equation} \label{e: 4-23}
0\rightarrow K\rightarrow H_2(\mathcal S_\epsilon; \Z)\rightarrow H_2(\mathcal B_\epsilon; \Z)\simeq H_2(P; \Z).
\end{equation}
 On the other hand, 
by the Gysin sequence applied to the 2-sphere bundle $p:\mathcal S_\epsilon\rightarrow P$ we get
\begin{equation}
0\rightarrow H^2(P; \mathbb Z) \xrightarrow{p^*} H^2(\mathcal S_\epsilon; \mathbb Z)\xrightarrow{\int} H^0(P; \mathbb Z)\xrightarrow{\wedge e} H^3(P; \mathbb Z)\rightarrow\cdots
\end{equation}
where  $\int$ denotes integration over the 2-sphere fibers, and $\wedge e$ denotes the wedge product with Euler class of $\mathcal S_\epsilon$.
Since the Euler class $e$ of $N_0\oplus \R$ vanishes, the above becomes 
\begin{equation}
\label{e: 4-25}
0\rightarrow H^2(P; \mathbb Z)\xrightarrow{p^*}  H^2(\mathcal S_\epsilon; \mathbb Z)\xrightarrow{\int} H^0(P; \mathbb Z)\simeq \Z\rightarrow 0.
\end{equation}
\eqref{e: 4-23} and \eqref{e: 4-25} together imply that modulo torsion, $K$ is generated by the homology class of a 2-sphere fiber of $p$. So we just need to show $\int \frac{1}{2\pi}[\Upsilon]|_{\mathcal S_\epsilon}$ is an integer.

By the expansion of $\psi$ and $h$ in Proposition \ref{p:complex-Green-expansion} and Proposition \ref{p:trace-expansion}, it is easy to check that by restricting to the fiber of $N$ over $p$, we have 
\begin{equation}
\Upsilon|_{N(p)}=-\frac{\sq}{4r^3}(zdyd\bar y+(yd\bar y-\bar ydy)dz)+O(1).
\end{equation}
Further restricting to the $2$-sphere with radius $\epsilon$, we get 
\begin{equation}
\Upsilon|_{\mathcal S_\epsilon(p)}=-\frac{1}{2\epsilon^2}\dvol_{S^2_\epsilon}+O(1),
\end{equation}
where $\dvol_{S^2_\epsilon}$ is the area form of the standard $\epsilon$-sphere in $\R^3$. Taking the integral and letting $\epsilon\rightarrow 0$,\begin{equation}
\int_{\mathcal S_\epsilon(p)} \Upsilon=-2\pi. 
\end{equation}
\end{proof}

By Lemma \ref{l:integrality}, standard theory yields a $U(1)$ connection $1$-form $-\sq\Theta$ on a principal $S^1$-bundle
\begin{equation}\pi: \mathcal M^* \rightarrow Q_T\setminus P\end{equation} 
with curvature form $-\sq\Upsilon$. Moreover,  $\mathcal M^*$ restricts  to the standard  Hopf bundle on each normal $S^2$ to $P$ (it has degree $-1$ if we use the natural orientation). Then we have the second equation in \eqref{omegahequation}: 
\begin{equation} \label{d Theta equation}
d\Theta=\p_z\tilde\omega-dz\wedge d_D^c h.
\end{equation}
On ${\mathcal M^*}$ we define a real-valued 2-form and a  complex-valued $n$-form 
as follows  
\begin{align}
\omega &\equiv T^{\frac{2-n}{n}}(\pi^*\tilde\omega+dz\wedge \Theta),\label{e:def-omega}
\\ 
\Omega &\equiv\sq(hdz+\sq\Theta)\wedge \pi^*\Omega_D.\label{e:def-Omega}
\end{align}
One can directly check that both $\omega$ and $\Omega$ are closed. By the discussion in Section \ref{s:torus-symmetries}, we know $(\omega, \Omega)$ defines a smooth K\"ahler metric on ${\mathcal M^*}$, so that $\Omega$ is the holomorphic volume form and $\omega$ is the K\"ahler form. Also $h^{-1}$ has an intrinsic geometric meaning as the norm squared of the Killing field generating the $S^1$-action. 
By \eqref{e:CY equation on D} and straightforward calculations, we have
\begin{equation} 
\frac{(\sq)^{n^2}2^{-n}\Omega\wedge\bar\Omega}{\omega^{n}/n!}=\frac{T^{-1}h\omega_D^{n-1}} {(\omega_D+T^{-1}\psi)^{n-1}}.\label{e:error}
\end{equation}

\begin{definition}
\label{d:error-function} Given the above constructed K\"ahler metric $\omega$,  the error function is defined by 
\begin{equation}
\Err\equiv \frac{T^{-1}h\omega_D^{n-1}} {(\omega_D+T^{-1}\psi)^{n-1}}-1.
\end{equation}
\end{definition}
In particular, $\omega$ is a Calabi-Yau metric if $\Err=0$.

\begin{remark} \label{r:error-function}
 We now explain the reason for the various of choices of constants in the definition above.
 
  First, the constants $T_{\pm}$ are chosen so that on the boundary $\{z=T_\pm\}$, the size of the base $D\times \{z\}$ and the size of the $S^1$-circles are comparable, both of order $O(1)$. This can be seen from \eqref{e:h-asymptotics} and \eqref{e:greens-current-exp-asymp}.

 Secondly, the function $q(z)$ and the rescaling factor $T^{\frac{n-2}{n}}$ in the above definition of $\omega$ are chosen to make the K\"ahler metric $(\omega, \Omega)$ approximately Calabi-Yau in the following sense:
 \begin{enumerate}
 \item Applying \eqref{e:greens-current-exp-asymp} and \eqref{e:h-asymptotics}, we have $\Err=T^{-2}\epsilon(z(\bx))$ for $\bx\in \M^*$ satisfying $|z(\bx)|\geq C$.
 \item Applying  \eqref{e:trace expansion equation} and \eqref{e:h-bounded-z}, we  have  $\Err=O(T^{-2})$ for
$\bx\in\M^*$ satisfying $|z(\bx)|\leq C$ and $d_Q(\bx, P)\geq d_0>0$, where $d_0>0$ is some definite constant.
\end{enumerate}
Later we need a more precise weighted estimate on $\Err$, which will be shown in Proposition \ref{p:CY-error-small}.
 \end{remark}

 \begin{remark} \label{remark4-2-2}
As explained in Section \ref{s:torus-symmetries},  a priori these structures depend on the choice of $\Theta$.  Given two such $\Theta$ and $\Theta'$, the difference $\Theta'-\Theta$ is a closed 1-form on $Q_T\setminus P$. Since $P$ has codimension $3$ in $Q$, it follows that $H^1(Q_T\setminus P; \R)\simeq H^1(Q; \R)\simeq H^1(D; \R)$.  Then  
$
\Theta'-\Theta=df+\beta
$
for a function $f$ on $Q_T\setminus P$ and a harmonic 1-form $\beta$ on $D$.  If $b_1(D)=0$ then $\beta=0$, and the isomorphism class of the K\"ahler structure $(\omega, \Omega)$ is independent of the choice of $\Theta$.  If $b_1(D)>0$, up to gauge equivalence, $\Theta-\Theta'$ is the pull-back of a flat connection on $D$. In Section \ref{sss:complex manifold}, we will fix this choice of flat connection, by complex-geometric considerations, so then we have a unique choice of $\Theta$ up to gauge equivalence.  
\end{remark}

\begin{remark}\label{r:bundle fixed}
Later we will study the behavior of these metrics as $T\rightarrow\infty$. It is convenient to notice that the above $U(1)$ bundle and the connection 1-form $-\sqrt{-1}\Theta$ can be defined over the entire $Q\setminus P$. Hence as $T$ varies we may view $(\omega, \Omega)$ as a family of pairs of forms on the fixed $U(1)$ bundle, but they only define a K\"ahler structure over $Q_T\setminus P$.

\end{remark}

\subsubsection{Smooth compactification}
\label{sss:smooth compactification}
Next we move on to the study the compactified geometry of ${\mathcal M^*}$ near $P$. We first construct a smooth model for the compactification and then study the regularity of the K\"ahler structure $(\omega, \Omega)$ on this model. 

As before we  always identify a neighborhood $\mathcal U$ of $P$ in $Q$ with a tubular neighborhood of the zero section in $N_0\oplus \R$ over $H$. 
Denote by $\mathbb L_1$ and $\mathbb L_2$ the complex line bundles over $H$ given by the restriction
\begin{equation}
\mathbb L_1\equiv L^{\otimes -k_+}|_H; \ \  \mathbb L_2\equiv L^{\otimes k_-}|_H.
\end{equation}
Then as complex line bundles  $N_0$ is isomorphic to $L^{\otimes k}|_H\simeq \mathbb L_1\otimes \mathbb L_2$, and we fix such an isomorphism now.   Notice $N_0$ is equipped with a natural hermitian metric induced from the K\"ahler metric $\omega_D$ on $D$ (c.f. Section \ref{ss:complex-greens-currents}). This then determines a hermitian metric on $L$ hence on $\mathbb L_1$ and $\mathbb L_2$. Define
\begin{equation}\mathbb L\equiv \mathbb L_1\oplus \mathbb L_2,\end{equation}
and consider the map
\begin{equation}
\tau: \mathbb L \rightarrow N_0\oplus \R;  (s_1, s_2)\mapsto (s_1\otimes s_2, \frac{|s_1|^2-|s_2|^2}{2}).
\end{equation}
Away from the zero section in $\mathbb L$, $\tau$ is a principal $S^1$-bundle, with the $S^1$-action given by 
\begin{equation}\label{S1action equation}
e^{\sq\ft}\cdot (s_1, s_2)=(e^{-\sq t}s_1, e^{\sq t}s_2).
\end{equation}
As in Section \ref{ss:complex-greens-currents}, we choose local holomorphic coordinates $\{w_1, \cdots, w_{n-1}\}$ on $D$ centered at $p\in H$. These give rise to local coordinates $\{y, \bar y, w_2', \bar w_2', \cdots, w_{n-1}', \bar w_{n-1}'\}$ on $N_0$, and also a local unitary section of  $N_0$  in the form $$\e\equiv|\sigma|^{-1}\cdot \sigma.$$
Then we choose a local section $\e_L$ of $L|_H$ with $\e_L^{\otimes k}=\e$.  Correspondingly we get local unitary sections $$\e_1\equiv \e_L^{\otimes -k_+}, \ \ \ \  \e_2\equiv \e_L        ^{\otimes k_-}$$ of $\mathbb L_1, \mathbb L_2$ respectively. Then we obtain local fiber coordinates $u_1, u_2, y$ on $\mathbb L_2, \mathbb L_1, N_0$ respectively by writing 
\begin{equation}s_1=u_1\e_1, \ \ \ s_2=u_2\e_2, \ \ \  s= y\e. \end{equation} 
Then the map $\tau$ can be represented in coordinates as 
 \begin{equation} \label{e:y definition equation}
 \begin{cases}
 y=u_1u_2\\
 z=\frac{1}{2}(|u_1|^2-|u_2|^2)
 \end{cases}\end{equation}
Hence $\tau$ is the standard Hopf fibration $\C^2\rightarrow\R^3$ when restricting to each fiber. 

\begin{lemma}\label{lem4-3}
Over \  $\U\setminus P$, the principal $S^1$-bundles ${\mathcal M^*}$ and $\mathbb L$ are isomorphic.
\end{lemma}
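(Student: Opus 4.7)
The plan is to reduce the statement to a cohomological classification of principal $S^1$-bundles, since both $\mathcal{M}^*|_{\mathcal{U}\setminus P}$ and $\mathbb{L}^*=\mathbb{L}\setminus\{\text{zero section}\}$ are principal $S^1$-bundles over the same base $\mathcal{U}\setminus P$. Shrinking $\mathcal{U}$ if necessary, $\mathcal{U}\setminus P$ deformation retracts onto the unit $2$-sphere bundle $\mathcal{S}_1 \subset N=N_0\oplus\mathbb{R}$ over $H$, so $S^1$-bundles on $\mathcal{U}\setminus P$ are classified by $H^2(\mathcal{S}_1;\mathbb{Z})$.

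First, I would exploit the Gysin sequence for the $S^2$-bundle $\mathcal{S}_1\to H$. Because $\partial_z$ is a nowhere-vanishing section of $N$, the Euler class of $N$ vanishes, and the Gysin sequence degenerates to a short exact sequence
\begin{equation*}
0\longrightarrow H^2(H;\mathbb{Z})\xrightarrow{\pi^*}H^2(\mathcal{S}_1;\mathbb{Z})\xrightarrow{\pi_*}\mathbb{Z}\longrightarrow 0,
\end{equation*}
which is split by the horizontal section $\sigma:H\to\mathcal{S}_1$, $p\mapsto (p,\partial_z)$. Hence a class $\alpha\in H^2(\mathcal{S}_1;\mathbb{Z})$ is uniquely determined by the pair $(\sigma^*\alpha,\pi_*\alpha)$, and it suffices to match these two invariants for $c_1(\mathcal{M}^*)$ and $c_1(\mathbb{L}^*)$.

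Next, I would compute the fiber integral. For $\mathcal{M}^*$ the end of the proof of Lemma \ref{l:integrality} already shows $\int_{S^2_\epsilon(p)}\Upsilon=-2\pi$, so $\pi_*c_1(\mathcal{M}^*)=-1$. For $\mathbb{L}^*$ the restriction of $\tau$ to a single fiber is the standard Hopf fibration $\mathbb{C}^2\setminus\{0\}\to\mathbb{R}^3\setminus\{0\}$ equipped with the $S^1$-action \eqref{S1action equation}, whose explicit curvature form $\Upsilon_0$ was written down in Section \ref{ss:2d standard model}; direct integration (or comparison with the model calculation used to prove Lemma \ref{l:integrality}) again gives fiber degree $-1$.

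Finally, I would match the horizontal invariant $\sigma^*c_1$. Take the copy $H_c\equiv\{(p,0,c):p\in H\}\subset\mathcal{U}\setminus P$ for a small $c>0$; this represents the section $\sigma$ up to homotopy. On $\mathcal{M}^*$, pulling back $\Upsilon=\partial_z\tilde\omega-dz\wedge d_D^c h$ to $H_c$ kills the second term (since $dz=0$ there), so by Proposition \ref{p:cohomology-constant} we get $c_1(\mathcal{M}^*|_{H_c})=k_+\,c_1(L|_H)$. On $\mathbb{L}^*$, the preimage $\tau^{-1}(H_c)$ is cut out by $u_2=0$, $|u_1|^2=2c$, i.e.\ the unit circle bundle of $\mathbb{L}_1=L^{\otimes -k_+}|_H$ with the $S^1$-action $u_1\mapsto e^{-\sqrt{-1}t}u_1$ (the \emph{opposite} of the tautological action); as a principal $S^1$-bundle this is $P(\mathbb{L}_1^{-1})$, so its Chern class is $-c_1(\mathbb{L}_1)=k_+\,c_1(L|_H)$, matching. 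The analogous computation for $c<0$ over $\mathbb{L}_2=L^{\otimes k_-}|_H$ gives $k_-\,c_1(L|_H)$ on both sides, consistent with the $c<0$ part of Proposition \ref{p:cohomology-constant}. Both invariants agree, so the bundles are isomorphic. The main obstacle, and the only real bookkeeping, is keeping the orientation and sign conventions straight: the direction of the Hopf action in \eqref{S1action equation}, the sign flip of $c_1$ under replacing the $S^1$-action by its conjugate, and the sign of the fiber integral of $\Upsilon$ must all be handled consistently with the conventions already fixed in Section \ref{ss:2d standard model} and Lemma \ref{l:integrality}, but no new analytic input is required.
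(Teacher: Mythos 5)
Your proposal is correct and takes essentially the same route as the paper's proof: compare first Chern classes via the Gysin sequence of the normal sphere bundle (both fiber integrals give $-1$, using the computation from Lemma \ref{l:integrality} and the model curvature $\Upsilon_0$), then eliminate the residual $H^2(H;\mathbb{Z})$ ambiguity by restricting to a copy of $H$ at fixed $z\neq 0$ and matching with the appropriate line bundle. The only (immaterial) difference is that the paper restricts at $z_0<0$, identifying the restriction of $\mathbb{L}$ with the unit circle bundle of $\mathbb{L}_2$, whereas you restrict at $c>0$ and track the conjugate $S^1$-action on $\mathbb{L}_1$, together with a slightly more explicit phrasing of the splitting by the horizontal section.
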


\begin{proof}
Notice a principal $S^1$-bundle is topologically determined by its first Chern class. It suffices to compare the first Chern classes of ${\mathcal M^*}$ and $\mathbb L$ over the sphere bundle $\mathcal S_\epsilon$ for a small $\epsilon$.  As in the proof of Lemma \ref{l:integrality} the Gysin sequence gives 
\begin{equation}0\rightarrow H^2(P; \mathbb Z)\xrightarrow{p^*}  H^2(\mathcal S_\epsilon; \mathbb Z)\xrightarrow{\int} H^0(P; \mathbb Z)\simeq \Z\rightarrow 0.\end{equation}
From the proof of Lemma \ref{l:integrality} we know 
\begin{equation}
\int c_1({\mathcal M^*})=\int_{\mathcal S_\epsilon(p)} \frac{1}{2\pi}\Upsilon=-1. 
\end{equation}
Also by \eqref{modelcurvature} we have 
\begin{equation}
\int c_1(\mathbb L)=\int_{S^2\subset\R^3}\frac{1}{2\pi}\Upsilon_0=-1.
\end{equation}
So we have that 
${\mathcal M^*}=\mathbb L\otimes p^*  L'$ 
for some $U(1)$ bundle $L'$ over $P$. Now we restrict both ${\mathcal M^*}$ and $\mathbb L$ to the subset $H_0\subset \mathcal U$ where $y=0$ and $z=z_0$ for a fixed $z_0<0$. We can identify $H_0$ with $H$ by the projection map.  Now we claim both restrictions have first Chern class equal to $k_- c_1(\mathbb L_2)$.  For ${\mathcal M^*}$ this follows from construction and for $\mathbb L$ we notice that $z=z_0<0$ implies that $s_2\neq 0$ and $s_1=0$, so the projection map $(s_1, s_2)\mapsto |2z_0|^{1/2}\cdot s_2$ gives an isomorphism between the restriction of $\mathbb L$ and the unit circle bundle in $\mathbb L_2$. This also explains the choice of the weight of the $S^1$-action in \eqref{S1action equation}. 

It follows from the claim that $L'$ is indeed a trivial principal $S^1$-bundle, and this finishes the proof. 
\end{proof}

By Lemma \ref{lem4-3} we may glue ${\mathcal M^*}$ and $\mathbb L$ together to obtain  a differentiable compactfication $\M$ of ${\mathcal M^*}$. The projection map $\pi$ naturally extends to a map
\begin{equation}\pi: \M\rightarrow Q_T\end{equation}
which is a singular $S^1$-fibration,  with discriminant locus given by $P$. We  identify
\begin{equation}
 \cP\equiv\pi^{-1}(P)
 \end{equation}
 with the zero section in $\mathbb L$, and  identify a neighborhood of $\cP$ with a neighborhood of the zero section in $\mathbb L$ and the projection map $\pi$ with the above $\tau$.  

\subsubsection{Regularity of the K\"ahler structures}
\label{sss:metric compactification}
To study the regularity of the K\"ahler  structure $(\omega, \Omega)$ on the compactification $\mathcal M$, we will make a special choice of the connection 1-form $-\sq\Theta$ on a neighborhood $\mathcal V$ of $\cP$ in $\mathbb L$, with curvature form $-\sqrt{-1}\Upsilon$, which has explicit regularity behavior across $\mathcal{P}$. To do this, we need to proceed in a few steps.  First, we notice that $\{u_1, \bar u_1, u_2, \bar u_2, w_2', \bar w_2', \cdots, w_{n-1}', \bar w_{n-1}'\}$ provides local coordinates on $\mathbb L$, and we can define a local model connection 1-form on $\mathbb L$ by simply taking the model formula \eqref{e:model-connection}:
\begin{equation}
\Theta_0=-\sq\cdot\frac{\bar u_1 du_1-u_1d\bar u_1-\bar u_2du_2+u_2d\bar u_2}{2(|u_1|^2+|u_2|^2)}.
\end{equation}
Just as in the discussion in Section 2, we see $\Theta_0(\p_t)=-1$, where $\p_t$ is the vector field generating the $S^1$-action. It is clear that the definition of $\Theta_0$ depends only on the choice of $\sigma$ and does not depend on the choice of $\e_1$ and $\e_2$ (which has the freedom of multiplying by a constant root of unity). 
 
To make a globally defined connection 1-form, we need to add a correction term, and we define 
\begin{equation}
\Theta_1=\Theta_0+\frac{z}{r}\Gamma-\frac{k_-+k_+}{k_--k_+}\Gamma,
\end{equation}
where $\Gamma$ is the local 1-form given in Section \ref{ss:complex-greens-currents}, and we have implicitly viewed forms on $H$ as forms on $\mathbb L$ using the pull-back $\pi^*$. 

\begin{proposition}\label{p: prop4-4}
$-\sq\Theta_1$ is a globally-defined connection 1-form on the $S^1$-bundle  $\tau: \mathbb L\setminus \cP\rightarrow N\setminus H$, and we have 
\begin{equation}
d\Theta_1-\tau^*\Upsilon=O'(s)
\end{equation}
where 
\begin{equation}
s^2\equiv |u_1|^2+|u_2|^2=2r,
\end{equation}
 and we have adopted the $O'$ notation in Section \ref{ss:geodesic-coordinates} for the submanifold $\cP\subset \mathbb L$.  In particular, $d\Theta_1-\tau^*\Upsilon$ extends continuously across $\cP$. Moreover, it is cohomologically trivial in a neighborhood of $\cP$. \end{proposition}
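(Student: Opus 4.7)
The plan is to establish in turn the four assertions — global well-definedness of $\Theta_1$, the identity $d\Theta_1-\tau^*\Upsilon=O'(s)$, continuous extension across $\mathcal{P}$, and cohomological triviality near $\mathcal{P}$ — by an explicit gauge-change computation, a careful leading-order expansion of $\partial_z\psi$ and $d_D^c h$ via the Green's current expansions of Section~\ref{s:Greens-currents}, and a Gysin-sequence argument.

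For global well-definedness I will work directly with the frame transformation induced by $w_1\mapsto e^{\sqrt{-1}\phi}w_1$ on $H$. This sends $y\mapsto e^{\sqrt{-1}\phi}y$, hence $u_1\mapsto e^{\sqrt{-1}\phi k_+/k}u_1$ and $u_2\mapsto e^{-\sqrt{-1}\phi k_-/k}u_2$. A short computation with the formula for $\Theta_0$, using $|u_1|^2=r+z$ and $|u_2|^2=r-z$, gives $\Theta_0\mapsto\Theta_0+(\tfrac{k_-+k_+}{2k}-\tfrac{z}{2r})d\phi$, and combined with the law $\Gamma\mapsto\Gamma+\tfrac{1}{2}d\phi$ from \eqref{e:Gamma-transform}, the three $d\phi$ contributions in $\Theta_1$ cancel — which is precisely why the coefficient $-\tfrac{k_-+k_+}{k}$ in front of the global correction $\Gamma$ is what it is. That $\Theta_1(\partial_t)=-1$ is immediate from the analogous property of $\Theta_0$ and the fact that the corrections are basic.

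For the curvature match the starting point is $d\Theta_0=\tau^*\Upsilon_0$ from Section~\ref{ss:2d standard model}. I will expand $\tau^*\Upsilon=\tau^*(\partial_z\psi-dz\wedge d_D^c h)$, noting that $h=(n-1)T+\Tr_{\omega_D}\psi+q(z)$ so only $\Tr_{\omega_D}\psi$ contributes to $d_D^c h$. Propositions~\ref{p:complex-Green-expansion},~\ref{p:d^c|y|^2} and~\ref{p:trace-expansion} then supply the expansions of $\psi$, of $d_D^c|y|^2$, and of $\Tr_{\omega_D}\psi=\tfrac{1}{2r}+O'(r)$. The $r^{-3}$ singular pieces of $\partial_z\psi$ and of $dz\wedge d_D^c(\tfrac{1}{2r})$ recombine to produce exactly $\Upsilon_0$, cancelling $d\Theta_0$. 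The next-to-leading $\Gamma$-type singular terms on both sides are matched by $d(z/r)\wedge\Gamma$ in $d\Theta_1$ via the identity $d(z/r)=\tfrac{|y|^2}{r^3}dz-\tfrac{z}{2r^3}(yd\bar y+\bar y dy)$, an immediate consequence of $r\,dr=\tfrac{1}{2}(yd\bar y+\bar y dy)+z\,dz$. The two $\tfrac{z}{r}d\Gamma$ contributions (coming from differentiating $(z/r)\Gamma$ and from $\partial_z(r\cdot d\Gamma)$) cancel. What remains is $-\tfrac{k_-+k_+}{k}\tau^*d\Gamma$ — smooth on all of $\mathbb{L}$ — together with a bounded tail whose constituent pieces each carry a $dy$, $d\bar y$ or $dz$ factor; since $\tau^*dy=u_2du_1+u_1du_2$ and $\tau^*dz$ are both of size $O(s)$ on $\mathbb{L}$, the tail is in fact $O(s)$. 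Because Definition~\ref{d:normal-regularity} of $O'(s)$ only requires the form and its derivatives to satisfy the standard size bounds (rather than to vanish at $\mathcal{P}$), this gives the desired estimate.

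The remaining two claims follow quickly. Continuous extension across $\mathcal{P}$ is immediate from the $O'(s)$ bound and the smoothness of $\tau^*d\Gamma$. For cohomological triviality, note that $d\Theta_1$ is manifestly exact on $\mathbb{L}\setminus\mathcal{P}$; by Lemma~\ref{lem4-3}, $\tau$ is identified with the restriction of the $S^1$-bundle $\mathcal{M}^*\to Q_T\setminus P$ whose first Chern class is $\tfrac{1}{2\pi}[\Upsilon]$, and the Gysin sequence then forces $\tau^*[\Upsilon]=0\in H^2(\mathbb{L}\setminus\mathcal{P};\mathbb{R})$, so $\tau^*\Upsilon$ is also exact there. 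Since $\mathcal{P}$ has real codimension $4$ in $\mathbb{L}$, every $2$-cycle in a tubular neighborhood can be perturbed to miss $\mathcal{P}$, and the integral of the (continuously extended) exact form $d\Theta_1-\tau^*\Upsilon$ over such a cycle vanishes. The main technical obstacle in the whole argument is the singular-term cancellation in the curvature expansion, which relies essentially on the precise form of the $\Gamma$-dependent subleading terms in Propositions~\ref{p:complex-Green-expansion} and~\ref{p:d^c|y|^2}.
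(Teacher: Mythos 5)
Your treatment of the first two assertions is essentially the paper's proof. The gauge-change cancellation you verify (the shift $(\tfrac{k_-+k_+}{2k}-\tfrac{z}{2r})d\phi$ of $\Theta_0$ against the shifts of the two $\Gamma$-terms) is exactly the computation behind the paper's ``$\widetilde\Theta_1=\Theta_1$'', with your $\phi$ equal to $k$ times theirs; and the curvature estimate is obtained the same way: differentiate the expansion of $\psi$ from Proposition \ref{p:complex-Green-expansion}, use Propositions \ref{p:trace-expansion} and \ref{p:d^c|y|^2} for $d_D^ch$, cancel the leading singular terms against $d\Theta_0=\tau^*\Upsilon_0$ and the $\Gamma$-singular terms via $d(z/r)\wedge\Gamma$, then pull back using $\tau^*dy,\tau^*dz=O(s)$ and $2r=s^2$ (you in fact spell out the intermediate cancellations more explicitly than the paper does). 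Two harmless slips: the stated law $y\mapsto e^{\sq\phi}y$ is inconsistent in sign with the $u_1,u_2$ laws you actually use, which correspond to $y\mapsto e^{-\sq\phi}y$ and hence correctly pair with $\Gamma\mapsto\Gamma+\tfrac12 d\phi$ from \eqref{e:Gamma-transform}, so the cancellation stands; and the remainder also contains pure $O'(r)$ pieces carrying no $dy$, $d\bar y$ or $dz$ factor, but these pull back to $O'(s^2)$ and are absorbed anyway. Where you genuinely diverge is the cohomological triviality: the paper restricts to the slice $\{y=0,\ z=z_0>0\}$ and checks, via Proposition \ref{p:cohomology-constant} and Lemma \ref{l:lemmagamma}, that $\tau^*\Upsilon$ and $d\Theta_1$ represent the same class there, whereas you observe that both forms are already exact on $\mathbb L\setminus\cP$ ($d\Theta_1$ tautologically, $\tau^*\Upsilon$ because it is the curvature pulled back to the total space of the bundle identified with $\mathcal M^*$ by Lemma \ref{lem4-3}), and then use that $\cP$ has real codimension $4$, so $2$-cycles and the $3$-chains bounding their perturbations can be pushed off $\cP$. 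Both routes are valid: yours is more purely topological and makes transparent that the claim needs only exactness away from $\cP$ plus high codimension, while the paper's slice computation additionally identifies the common cohomology class explicitly, tying it back to Proposition \ref{p:cohomology-constant}.
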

\begin{proof}
To see $\Theta_1$ is a well-defined, we  consider the change of unitary frame $\e$ on $N_0$ to $\tilde \e=e^{\sq k\phi}\e$,  then we have 
\begin{equation}
\tilde y=e^{-(k_--k_+)\sq\phi}y; \ \ \tilde u_1=e^{k_+\sq\phi}u_1;\ \ \tilde u_2=e^{-k_-\sq\phi}u_2.
\end{equation}
for some local real-valued function $\phi$ on $H$. Then we get 
\begin{align}
\bar u_1 du_1-u_1d\bar u_1 &=\bar {\tilde u}_1d\tilde u_1-\tilde u_1d\bar {\tilde u}_1-2k_+\sq |u_1|^2d\phi,
\\
\bar u_2 du_2-u_2d\bar u_2 &=\bar {\tilde u}_2d\tilde u_2-\tilde u_2d\bar {\tilde u}_2+2k_-\sq |u_2|^2d\phi,
\\
\Gamma&=\widetilde\Gamma-\frac{k_--k_+}{2}d\phi.
\end{align}
Then it is a straightforward to compute that $\widetilde{\Theta}_1=\Theta_1$, which shows that $\Theta_1$ is globally defined. 

Now we consider the local expansion of $\Upsilon$. First, differentiating the expansion of $\psi$ in Proposition \ref{p:complex-Green-expansion},\begin{equation}
\p_z\tilde\omega=-\sq\frac{z}{2r^3}dy\wedge 
d\bar y-\frac{z}{2r^3} (yd\bar y+\bar ydy)\wedge\Gamma+\frac{z}{r} d\Gamma+O'(1)dy+O'(1)d\bar y+O'(r).
\end{equation}
Next, applying Proposition \ref{p:trace-expansion} and Proposition \ref{p:d^c|y|^2}, we obtain
\begin{equation}
d_D^ch=d_D^c(\frac{1}{2r}+O'(r))=-\frac{1}{4r^3} d_D^c |y|^2+O'(1)=-\frac{\sq(yd\bar y-\bar y dy)+4|y|^2\Gamma}{4r^3}+O'(1).
\end{equation}
Putting together these, and noting that $d\Theta_0$ is given as in \eqref{e:d Theta0}, we obtain
\begin{equation}
d\Theta_1-\tau^*\Upsilon=O'(1)dy+O'(1)d\bar y+O'(r)+O'(1)dz.
\end{equation}
Now translating into the coordinates $u_1, u_2$ on $\mathbb L$, and noticing that $\tau^*dx, \tau^*dy, \tau^*dz$ are all in $\tilde O(s)$ and $2r=s^2$, we obtain that $d\Theta_1-\tau^*\Upsilon=O'(s)$.  

To see the last statement, 
 we can restrict to the slice with $z>0$ and $y=0$,  then by Proposition \ref{p:cohomology-constant} we know $\Upsilon$ is cohomologous to $k_+\omega_D$. On the other hand, by definition $d\Theta_1$ on this slice is given by $(1-\frac{k_++k_-}{k_--k_+})d\Gamma$, which is also cohomologous to $k_+\omega_D$, using  Lemma \ref{l:lemmagamma} and the fact that $2\pi\cdot  c_1(N_0)=2\pi c_1(L)=(k_--k_+)[\omega_D]$. 
\end{proof}

The next Lemma allows us to correct the $O'(s)$ term on the right hand side. We fix an arbitrary $S^1$-invariant Riemannian metric on $\mathbb L$. 

\begin{lemma}\label{l:theta-regularity}
There exists a local 1-form $\theta$ on a neighborhood of $\cP$ in $\mathbb L$ with the following properties:\begin{enumerate}
\item $\theta=O'(s^2)$,
\item $\theta$ is smooth away from $\pi^{-1}(P)$,
\item $\mathcal L_{\p_t}\theta=0$,
\item $\p_t\lrcorner \theta=0$,
\item $d(\Theta_1+\theta)=\tau^*\Upsilon$.
\end{enumerate}
\end{lemma}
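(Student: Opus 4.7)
I would prove the lemma by writing down $\theta$ explicitly via a radial homotopy formula, using the fact established in Proposition~\ref{p: prop4-4} that $\alpha \equiv \tau^*\Upsilon - d\Theta_1$ is closed, cohomologically trivial near $\cP$, and of regularity $O'(s)$. The plan is to verify three preparatory facts about $\alpha$, then define $\theta$ by integrating along radial lines in the fibers of $\mathbb L$, and finally check the four required properties.

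First, I would establish that $\alpha$ is $S^1$-basic. Since both $\tau^*\Upsilon$ and $\Theta_1$ are manifestly $S^1$-invariant by construction, $\mathcal L_{\p_t}\alpha = 0$. Moreover, $\p_t \lrcorner \tau^*\Upsilon = 0$ because $\tau_* \p_t = 0$, and
\begin{equation}
\p_t \lrcorner d\Theta_1 = \mathcal L_{\p_t}\Theta_1 - d(\p_t \lrcorner \Theta_1) = 0 - d(-1) = 0,
\end{equation}
using that $\Theta_1(\p_t) = -1$ (a direct computation from the formula for $\Theta_0$ and the fact that the correction terms are pulled back from $H$). Hence $\p_t \lrcorner \alpha = 0$.

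Next, I define $\theta$ using the fiberwise scaling $\phi_t: \mathbb L \to \mathbb L$, $\phi_t(s_1, s_2) = (t s_1, t s_2)$, whose generating vector field is the $S^1$-invariant radial field $X = \tfrac{1}{2}(u_1 \p_{u_1} + \bar u_1 \p_{\bar u_1} + u_2 \p_{u_2} + \bar u_2 \p_{\bar u_2})$. Since $\alpha$ is closed, Cartan's formula gives $\frac{d}{dt}\phi_t^*\alpha = d\bigl(\phi_t^*(X \lrcorner \alpha)/t\bigr)$, and since $\alpha|_{\cP} = 0$ (because $\alpha = O'(s)$), integrating from $0$ to $1$ yields $\alpha = d\theta$ where
\begin{equation}
\theta \equiv \int_0^1 \phi_t^*(X \lrcorner \alpha)\, \frac{dt}{t}.
\end{equation}
This automatically satisfies (5) $d(\Theta_1 + \theta) = \tau^*\Upsilon$ and (2) smoothness away from $\cP$. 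For (3), $S^1$-invariance follows because $\phi_t$ is $S^1$-equivariant and both $X$ and $\alpha$ are $S^1$-invariant. For (4), using $(\phi_t)_*\p_t = \p_t$ and the antisymmetry of interior product,
\begin{equation}
\p_t \lrcorner \theta = \int_0^1 \phi_t^*\bigl(\p_t \lrcorner X \lrcorner \alpha\bigr)\frac{dt}{t} = -\int_0^1 \phi_t^*\bigl(X \lrcorner \p_t \lrcorner \alpha\bigr)\frac{dt}{t} = 0,
\end{equation}
since $\p_t \lrcorner \alpha = 0$.

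The main step, and the one requiring the most care, is property (1), the regularity improvement from $O'(s)$ to $O'(s^2)$. The key point is that $X$ itself is $O(s)$ in the coordinates $(u_1, \bar u_1, u_2, \bar u_2)$, so contracting with $\alpha = O'(s)$ yields $X \lrcorner \alpha = O'(s^2)$. When writing $X\lrcorner\alpha = \sum g_j(u, \bar u, w', \bar w') du_j + \cdots$ with coefficients satisfying $g_j(\bx) = O'(s(\bx)^2)$, the pullback by $\phi_t$ produces a factor of $t$ from the differential $d\phi_t$ and evaluates the coefficients at $\phi_t(\bx)$, where $s \circ \phi_t = t s$. Thus $\phi_t^*(X\lrcorner\alpha)/t = O'(t^2 s^2)$, and integrating in $t$ gives $\theta = O'(s^2)$. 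The normal/tangential derivative control required by Definition~\ref{d:normal-regularity} will be checked by differentiating under the integral, noting that tangential derivatives (along $\cP$) and the operations $\phi_t^*$, $X\lrcorner$ preserve the $O'$-class, while each normal derivative costs at most one power of $s$ in the integrand, which still leaves a positive power of $t$ to integrate against. This is the step where I expect most of the bookkeeping to go, but no new ideas beyond the homotopy construction are needed.
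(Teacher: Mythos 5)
Your radial-homotopy construction handles (2)--(4) correctly, and the $O'(s^2)$ bookkeeping for (1) can indeed be pushed through as you indicate (each normal derivative of the integrand brings a compensating factor of $t$, and $E\lrcorner\alpha$ genuinely vanishes along $\cP$ because the Euler field does, so the $dt/t$ integral converges). The gap is in property (5), and it stems from a misreading of the paper's notation: by Definition \ref{d:normal-regularity}, $\alpha=\tau^*\Upsilon-d\Theta_1=O'(s)$ means only that the coefficients are \emph{bounded} with controlled normal derivatives (the decaying class would be written $sO'(1)$ or $\tO(s)$); it does \emph{not} give $\alpha|_{\cP}=0$. Consequently your homotopy formula produces $d\theta=\alpha-\pi_{\cP}^*(\iota_{\cP}^*\alpha)$, where $\iota_{\cP}^*\alpha$ is the restriction to the zero section of the continuous extension of $\alpha$, and this restriction is in general nonzero. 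Indeed, evaluating the purely tangential components at $y=0$ and letting $z\to 0^{\pm}$, the term $-\tfrac{z}{r}d\Gamma+\tfrac{k_-+k_+}{k_--k_+}d\Gamma$ from $-d\Theta_1$ combines with the one-sided limits of $(\p_z\psi)^{\mathrm{tang}}$; using the exact symmetry $\psi(z)-\psi(-z)=(k_-+k_+)z\,\omega_D$ from the proof of Proposition \ref{p:cohomology-constant} together with the continuity of $\alpha$, one finds
\begin{equation}
\iota_{\cP}^*\alpha=\frac{k_-+k_+}{2}\,\omega_D\big|_{TH}+\frac{k_-+k_+}{k_--k_+}\,d\Gamma,
\end{equation}
which is exact (its class vanishes, consistently with Proposition \ref{p: prop4-4}) but is generically not pointwise zero (it vanishes identically only in special situations such as $k_-+k_+=0$). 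So, as written, your $\theta$ does not satisfy $d(\Theta_1+\theta)=\tau^*\Upsilon$.

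Note that your argument never invokes the cohomological triviality statement of Proposition \ref{p: prop4-4} -- a warning sign, since without that input the lemma is false; the unjustified claim $\alpha|_{\cP}=0$ was silently standing in for it. The construction can be repaired: add $\pi_{\cP}^*\beta$, where $\beta$ is a $1$-form on $H$ with $d\beta=\iota_{\cP}^*\alpha$ (it exists because the class vanishes, and by the displayed formula $\iota_{\cP}^*\alpha$ is smooth, so $\beta$ may be taken smooth); this piece is basic, $S^1$-invariant and smooth, hence lies in $O'(s^2)$, so (1)--(4) are unaffected and (5) is restored. (Also, the generator of the scaling $\phi_t$ is the full Euler field, without your factor $\tfrac12$, though this is cosmetic.) For comparison, the paper proceeds differently: it solves the first-order system $d\theta=\tau^*\Upsilon-d\Theta_1$, $d^*\theta=0$ with Neumann boundary condition on a tubular neighborhood of $\cP$ (this is where cohomological triviality enters), gets (3) by averaging, (4) from the constancy of $\p_t\lrcorner\theta$ near $\cP$, and (1) via the bootstrap of Lemma \ref{l:higher-regularity} applied to $\Delta\theta=d^*(\tau^*\Upsilon-d\Theta_1)=O'(1)$.
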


\begin{proof}
From Proposition \ref{p: prop4-4}, the above proposition we know that  in a neighborhood of $\mathcal P$,  $d\Theta_1-\tau^*\Upsilon$ is in $C^{\alpha}$ for all $\alpha\in (0, 1)$ and is cohomologous to zero. The existence of a solution $\theta$ to $d(\Theta_1+\theta)=\tau^*\Upsilon$ is obtained by adding the gauge fixing condition $d^*\theta=0$, and solving  the elliptic system with Neumann boundary condition
\begin{equation}
\begin{cases}
d\theta=\tau^*\Upsilon-d\Theta_1, \\
d^*\theta=0, \\
\theta(\nu)=0 \ \ \text{on} \ \ \p\mathcal V,
\end{cases}
\end{equation}
on a tubular neighborhood $\mathcal V$ of $\mathcal P$ in $\mathbb L$. 
 Standard elliptic regularity guarantees a solution $\theta\in C^{1, \alpha}$ and is smooth away from $\cP$. Since  both $\tau^*\Upsilon$ and $\Theta_1$ are $S^1$-invariant, by averaging we may assume $\theta$ is $S^1$-invariant too, hence $\mathcal L_{\p_t}\theta=0$ on the smooth part. Also since $\tau^*\Upsilon$ and $d\Theta_1$ are pulled-back from the base $Q_T\setminus P$, we have 
$
\p_t \lrcorner \tau^*\Upsilon=\p_t\lrcorner d\Theta_1=0. 
$
So we get 
\begin{equation}
d(\p_t\lrcorner \theta)=\mathcal L_{\p_t}\theta-\p_t\lrcorner(d\theta)=0.
\end{equation}
This implies $\p_t\lrcorner \theta$ is a constant. Now as we approach $\cP$, the norm of $\p_t$, with respect to  the fixed metric on $\mathbb L$, must go to zero, hence we see 
$
\p_t\lrcorner \theta=0.
$
The higher regularity of $\theta$ follows just as in the proof of Lemma \ref{l:higher-regularity}, since $\Delta\theta=d^*d\theta=d^*(\tau^*\Upsilon-d\Theta_1)=O'(1)$. 
\end{proof}

Now we define a fixed 1-form on $\mathbb L$ 
\begin{equation}
\Theta_{m} \equiv \Theta_1 + \theta, 
\end{equation}
Therefore, in a neighborhood of $\cP\subset\mathbb L$ minus $\cP$, the original choice of $\Theta$ can be written as 
\begin{equation}
\Theta=\Theta_m+\theta_f,\label{e:fixed-connection-form}
\end{equation}
where $\theta_f$ is a flat connection, and hence it  is gauge equivalent to the pull-back of a flat connection on $D$. Without loss of generality, we can then assume $\theta_f$ is smooth.

\begin{proposition}\label{p:C2alpha}With respect to the choice of the connection form $\Theta$ given in \eqref{e:fixed-connection-form}, 
$(\omega, \Omega)$ defined by \eqref{e:def-omega} and \eqref{e:def-Omega} gives a $C^{2, \alpha}$ (for all $\alpha\in (0, 1)$) K\"ahler structure on $\mathbb L$ which is invariant under the natural $S^1$-action and is smooth outside $\cP$. 
\end{proposition}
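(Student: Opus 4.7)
The proof reduces to a local computation near $\cP$, since on $\mathbb{L}\setminus\cP = \M^*$ the pair $(\omega,\Omega)$ is smooth and K\"ahler by the non-linear Gibbons-Hawking construction of Section~\ref{sss:incomplete}. Fix a point of $\cP$ over $p\in H$ and work in the local coordinates $(u_1,u_2,w_2',\ldots,w_{n-1}')$ on $\mathbb{L}$, so that $y=u_1u_2$, $z=\tfrac12(|u_1|^2-|u_2|^2)$ and $r=\tfrac12 s^2$ with $s^2=|u_1|^2+|u_2|^2$. The plan is to substitute the singular expansion of $\psi$ from Proposition~\ref{p:complex-Green-expansion} and the explicit formula $\Theta=\Theta_0+\tfrac{z}{r}\Gamma-\tfrac{k_-+k_+}{k_--k_+}\Gamma+\theta+\theta_f$ into \eqref{e:def-omega} and \eqref{e:def-Omega}, then use a sequence of algebraic identities to isolate a manifestly smooth Taub-NUT-type contribution and estimate the residue.

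Three identities drive the cancellations. First, the two-dimensional Gibbons-Hawking identity $\pi^*\bigl(\tfrac{\sq}{4r}dy\wedge d\bar y\bigr)+dz\wedge\Theta_0=\omega_{\C^2}$ converts the leading singular piece of $\psi$, together with the model part of $\Theta$, into the flat K\"ahler form on the $\C^2$ fiber, which is smooth on $\mathbb{L}$. Second, $\tfrac{1}{2r}(y\,d\bar y+\bar y\,dy)=dr-\tfrac{z}{r}dz$ forces the Dirac-type piece $\tfrac{1}{2r}(y\,d\bar y+\bar y\,dy)\wedge\Gamma$ in $\psi$ to combine with $dz\wedge\tfrac{z}{r}\Gamma$ from $\Theta$ to give $dr\wedge\Gamma$; added to the $r\,d\Gamma$ term of $\psi$ this assembles into $d(r\Gamma)=\tfrac12 d(s^2\Gamma)$, which is smooth after pullback. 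Third, $-\tfrac{k_-+k_+}{k_--k_+}dz\wedge\Gamma$ and $dz\wedge\theta_f$ are smooth by construction. A parallel set of cancellations for $\Omega$, using the expansion $h = T + \tfrac{1}{2r}+O'(r)+T^{-1}B(z)$ for $|z|\le 1$ and Proposition~\ref{lem3-6} for $\pi^*\Omega_D$, rewrites $\sq\bigl[(\tfrac{1}{2r}+T)dz+\sq\Theta_0\bigr]\wedge dy$ as the smooth expression $\sq(du_1\wedge du_2+T\,dz\wedge dy)$. After all cancellations, the regularity question at $\cP$ reduces to showing that the residuals $\pi^*(r^{-3}\Pi_2^{(4)})$, $\pi^*(O'(r^2))$, and $dz\wedge\theta$ lie in $C^{2,\alpha}$; the last is immediate from $\theta = O'(s^2)$ (Lemma~\ref{l:theta-regularity}) together with the first-order vanishing of $dz$ at $\cP$, and $\pi^*(O'(r^2))$ is $C^{2,\alpha}$ because the branched Hopf pullback $\tau$ with $|d\tau|=O(s)$ gains a factor $s$ at each differentiation.

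The main technical obstacle is the residue $\pi^*(r^{-3}\Pi_2^{(4)})$: restricted to a normal $\C^2$ fiber its coefficients are homogeneous of degree two, but their angular profile is a general combination of spherical harmonics and therefore is not a priori $C^2$ at the origin. The crucial point is that the Neumann problem defining $\theta$ in Lemma~\ref{l:theta-regularity} is arranged so that $d(\Theta_1+\theta)=\tau^*\Upsilon$ holds \emph{exactly} near $\cP$; consequently $\pi^*\psi+dz\wedge\Theta$ is closed near $\cP$, and a Bianchi-type computation produces a compensating contribution inside $\theta$ that cancels precisely the non-polynomial angular modes of $r^{-3}\Pi_2^{(4)}$. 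Once this cancellation is made rigorous, $\omega$ and $\Omega$ are $C^{2,\alpha}$ on $\mathbb{L}$; positivity of $\omega$ on $\mathbb{L}$ follows by continuity from $\M^*$, $S^1$-invariance is inherited from the invariance of the building blocks $\psi$, $h$, $\Theta$, and the K\"ahler compatibility of $(\omega,\Omega)$ extends from the dense open $\M^*$.
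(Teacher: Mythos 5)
Your overall route is the paper's: you work in the coordinates $(u_1,u_2,w_2',\ldots)$, use $\pi^*\tilde\omega_0+dz\wedge\Theta_0=\omega_{\C^2}$ to absorb the leading singularity, assemble $\tfrac{1}{2r}(y\,d\bar y+\bar y\,dy)\wedge\Gamma+dz\wedge\tfrac{z}{r}\Gamma+r\,d\Gamma$ into $d(r\Gamma)=\tfrac12 d(s^2\Gamma)$, and for $\Omega$ reduce to the model identity of Section \ref{ss:2d standard model} (cf. \eqref{e: modelholomorphic1form}) together with Proposition \ref{lem3-6}. The genuine gap is your treatment of the remainder $\pi^*(r^{-3}\Pi_2^{(4)})$. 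You declare it the main obstacle and propose to remove its bad angular modes by an exact cancellation hidden inside the correction $\theta$ of Lemma \ref{l:theta-regularity}, invoking closedness of $\pi^*\psi+dz\wedge\Theta$ and a ``Bianchi-type computation'', ending with ``once this cancellation is made rigorous''. That step is not a proof: $\theta$ is produced abstractly by a Neumann boundary-value problem and is only known to satisfy $\theta=O'(s^2)$; nothing in its construction ties its Taylor expansion at $\cP$ to the particular polynomial-coefficient term of the Green's current, and closedness of a form does not by itself delete angular modes of fixed homogeneity. As written, the decisive step of your argument is asserted rather than established, and the asserted mechanism has no basis in Lemma \ref{l:theta-regularity}.

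Fortunately the obstacle is illusory, and the correct bookkeeping is exactly what closes the argument (and is what the paper does). By Proposition \ref{p:complex-Green-expansion} every term of $\Pi_2^{(4)}$ contains at least one factor $dy$ or $d\bar y$; the scalar coefficient of $r^{-3}\Pi_2^{(4)}$ is a homogeneous quartic divided by $r^3$, i.e.\ $O'(r)$, which pulls back under the Hopf map to $O'(s^2)$ --- this is the degree-two homogeneity you observed, and for the scalar alone your worry about failure of $C^2$ would be justified. But you dropped the contribution of the differentials: $\pi^*dy=u_1\,du_2+u_2\,du_1=\tilde O(s)$ (and likewise $\pi^*d\bar y$), so each term of $\pi^*(r^{-3}\Pi_2^{(4)})$ is $O'(s^2)\cdot\tilde O(s)=O'(s^3)$, i.e.\ its coefficients in the $du,d\bar u,dw'$ basis have homogeneity three, and $O'(s^3)$ tensors are $C^{2,\alpha}$ for every $\alpha\in(0,1)$ by the example following Definition \ref{d:normal-regularity}. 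In the paper this term is simply absorbed into $\pi^*(O'(r)dy+O'(r)d\bar y)=sO'(s^2)$, so no cancellation against $\theta$ is needed at all; your own observation that the pullback gains a factor of $s$, which you applied to $\pi^*O'(r^2)$, applies verbatim here. A smaller point: positivity of $\omega$ along $\cP$ does not follow ``by continuity'' (limits of positive forms are only semi-positive); it follows from the expansion $T^{\frac{n-2}{n}}\omega=T\pi^*\omega_D+\omega_{\C^2}+O'(s^3)$, whose first two terms are semi-positive with complementary kernels at $\cP$.
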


\begin{proof}
We first analyze the regularity of $\omega$, defined in \eqref{e:def-omega}.
To start with, let us compute the lifting $\pi^*\psi$. By \eqref{e:singular-2-form-expansion}, 
\begin{equation}\pi^*\psi=\pi^*\tilde\omega_0+\frac{1}{2r}(yd\bar y+\bar ydy)\wedge \Gamma+rd\Gamma+\pi^*(O'(r)dy+O'(r)d\bar y)+\pi^*O'(r^2),\end{equation}
where
$\tilde\omega_0=\frac{\sq}{4r} dy\wedge d\bar y$
is the standard form in the model setting \eqref{modelquantities}. 
We also notice that
\begin{align}
\pi^*(O'(r)dy+O'(r)d\bar y)&=sO'(s^2),
\\
\pi^*O'(r^2)&=O'(s^4).
\end{align}
Now by definition
\begin{equation}
\Theta =\Theta_0+\frac{z}{r}\Gamma+\frac{k_-+k_+}{k_--k_+}\Gamma +  \theta+\theta_f =\Theta_0+\frac{z}{r}\Gamma +  O'(s^2).\label{e:Theta-near-p}
\end{equation}
Moreover, according to the discussions in Section \ref{s:torus-symmetries}, we have 
$\pi^*\tilde\omega_0+dz\wedge \Theta_0=\omega_{\C^2},
$
where $\omega_{\C^2}=\frac{\sq}{2} (du_1\wedge d\bar u_1+du_2\wedge d\bar u_2)$ is the standard K\"ahler form of $\C^2$. 
Therefore, 
\begin{align}
\pi^*\psi + dz\wedge \Theta =& \omega_{\C^2} + rd\Gamma+
\frac{1}{2r}(yd\bar y+\bar ydy)\wedge \Gamma+dz\wedge (\frac{z}{r}\Gamma) + O'(s^3).
\end{align}
Using the relation $r^2=|y|^2+z^2$ and the simple computation
\begin{equation}
d(r\Gamma)= rd\Gamma+ dr\wedge \Gamma = rd\Gamma+\frac{1}{2r}(yd\bar y+\bar ydy)\wedge \Gamma+dz\wedge (\frac{z}{r}\Gamma),
\end{equation}
we have
\begin{align}
\pi^*\psi + dz\wedge \Theta =\omega_{\C^2}+ d(r \Gamma) + O'(s^3)
\nonumber=\omega_{\C^2}+ O'(s^3),
\end{align}
where we use the fact that $r=\frac{1}{2}s^2$ and hence $r\Gamma=s^2\Gamma$ is smooth on $\mathbb L$. Then it follows that 
\begin{equation}T^{\frac{n-2}{n}}\omega =T\pi^*\omega_D+\omega_{\C^2}+O'(s^3).\end{equation}
Hence we see the $(1,1)$-form  $\omega$ locally extends to a $C^{2, \alpha}$-form across the subset $\{u_1=u_2=0\}$; see item (4) of Example \ref{ex:regularity}.

Now we analyze the regularity of the holomorphic volume form $\Omega$, as defined by \eqref{e:def-Omega}.
By Lemma \ref{lem3-6}, locally we have
\begin{equation}\pi^*\Omega_D=F(u_1du_2+u_2du_1+2\sq u_1u_2\Gamma)\wedge \pi^*\Omega_H+\tO(s^2)(u_1du_2+u_2du_1)+\tO (s^3).
\end{equation}
Also 
\begin{equation}
hdz+\sq\Theta=q(z)dz+\frac{1}{|u_1|^2+|u_2|^2}(-\bar u_2 du_2+\bar u_1 du_1+\sq(|u_1|^2-|u_2|^2)\Gamma)+O'(s^2).
\end{equation}
Therefore,  
\begin{equation}\Omega=Fdu_1\wedge du_2\wedge \Omega_H +\sq F(u_2du_1-u_1du_2)\wedge \Gamma\wedge \Omega_H+\tO(s^2)+sO'(s^2).\end{equation}
This implies that $\Omega$ also extends to a $C^{2, \alpha}$ form across $\{u_1=u_2=0\}$. This is equivalent to saying that the almost complex structure $J$ determined by $\Omega$ extends to a $C^{2, \alpha}$ almost complex structure on $\M$. 
\end{proof}

\begin{remark}
\label{r:C2alpharegularity}
The reason we need the $C^{2, \alpha}$ regularity will be explained in Remark \ref{r:C1alphavsC2alpha}. If we only need weaker regularity then we only need to work out fewer terms in the expansion in Theorem \ref{t:Green-expansion}; similarly, with more work one can probably improve the regularity here, which is not needed for our purpose. But we point out that in general there is no reason to expect the above defined K\"ahler structures to be smooth. Only after we perturb these metrics to exactly Calabi-Yau metrics (see Section \ref{s:neck-perturbation} and \ref{s:gluing}) we can gain smoothness via elliptic regularity. This is in sharp contrast to the 2 dimensional case, where the Gibbons-Hawking ansatz yields an exact solution of the Calabi-Yau equation and simultaneously the metric completion near the (isolated) $S^1$-fixed points is smooth.
\end{remark}

Using the Newlander-Nirenberg theorem , we may locally find $C^{3, \alpha}$ holomorphic coordinates, making the complex structure locally standard while still keeping the K\"ahler form with $C^{2,\alpha}$  regularity . 

By construction the K\"ahler structure $(\omega, \Omega)$ is preserved by the natural $S^1$-action. The corresponding Killing field is given by \begin{equation}\p_t\equiv-\sq(u_1\p_{u_1}-u_2\p_{u_2})+\sq(\bar u_1\p_{\bar u_1}-\bar u_2\p_{\bar u_2}).
\end{equation}
The zero set $\mathcal P$ 
 is a complex submanifold of $\M$ which bi-holomorphic to $H\subset D$. 
We denote the corresponding holomorphic vector field by \begin{equation} \label{xi 10}
 \xi^{1,0}\equiv\frac{1}{2}(\p_t-\sq J\p_t).
 \end{equation}
We also have a smooth holomorphic projection $\pi: \mathcal M\rightarrow D\setminus H$ whose fibers are holomorphic cylinders (isomorphic to annuli in $\C$).  
In the next subsection we will understand the underlying complex manifold and the K\"ahler potentials on $\M$. 

\subsection{K\"ahler geometry}

\label{ss:complex-geometry}
A key feature in the analysis on K\"ahler manifolds is that we can describe the geometry locally in terms of a single \emph{potential function}. This has led to a vast simplification of formulae in K\"ahler geometry as compared to more general Riemannian geometric setting, and it also has allowed various techniques from PDE and several complex variables etc. to be exploited. 
The goal of this subsection is to derive a formula for the relative K\"ahler potential for our K\"ahler manifold $(\M, \omega, \Omega)$. This is one of the most crucial observations in this paper.

In Section \ref{sss:complex manifold} we  identify the underlying complex manifold of the family of K\"ahler metrics constructed in Section \ref{ss:kaehler-structures} as a family of open subsets of a fixed complex manifold. In Section \ref{sss:kahler potential} we  derive a formula for the relative K\"ahler potential.

\subsubsection{The underlying complex manifold}
\label{sss:complex manifold}
We define the following holomorphic line bundles on $D$
\begin{equation}
\begin{cases}
 L_+\equiv L^{-\otimes k_+}, \\
 L_-\equiv L^{\otimes k_-},
\end{cases}
\end{equation}
and we denote by $J_\pm$ the complex structure on the total space of $L_\pm$. 
Denote by $\mathcal N^0$ the hypersurface in the total space of $L_+\oplus L_-$ defined by the equation 
\begin{equation}\label{e: SH definition}\zeta_+\otimes \zeta_-=S_H(x),\end{equation}
where $\zeta_\pm$ denotes points on the fibers of $L_\pm$ over $x\in D$, and $S_H$ is the section we fixed at the beginning of this section.  Since $H$ is smooth, $\mathcal N^0$ is also smooth and the submanifold 
\begin{equation}
\mathcal H\equiv\{\zeta_+=\zeta_-=0\}
\end{equation}
of $\mathcal N^0$
is naturally isomorphic to $H$.
The fixed hermitian metric on $L$ then induces hermitian metrics on $L_\pm$, and these yield  norm functions on $L_\pm$:
\begin{equation}
r_\pm(\zeta_\pm)\equiv\|\zeta_\pm\|.
\end{equation}
Then by pulling back to $\mathcal N^0$ through the projection maps to $L_\pm$ we may also view $r_\pm$ as functions on $\mathcal N^0$.

There is a natural holomorphic volume form on $\mathcal N^0$ given by 
\begin{equation}\label{e:Omega N0 definition}
\Omega_{\mathcal N^0}\equiv\frac{\sq}{2} (\frac{d\zeta_+}{\zeta_+}-\frac{d\zeta_-}{\zeta_-})\wedge \Omega_D,
\end{equation}
where $\Omega_D$ means the pull-back of $\Omega_D$ to $\mathcal N^0$ and for simplicity of notation we  omit the pull-back notation when the meaning is clear from the context. The expression on the right hand side of \eqref{e:Omega N0 definition} should be understood after choosing a local holomorphic frame $\sigma$ of $L$, so that $\zeta_\pm$ becomes local holomorphic functions on $L_\pm$. One can check the definition does not depend on the choice of $\sigma$, and $\Omega_{\mathcal N^0}$ is a well-defined holomorphic volume form on $\mathcal N^0$.

There is a natural $\C^*$ action on $\mathcal N^0$ given by
\begin{equation}
\lambda\cdot (\zeta_+, \zeta_-)\equiv(\lambda^{-1}\zeta_+, \lambda \zeta_-), \ \ \lambda\in \C^*. 
\end{equation}
and we denote by 
\begin{equation}
\xi_{\mathcal N^0}\equiv\sq(-\zeta_+\p_{\zeta_+}+\zeta_-\p_{\zeta_-})
\end{equation}
the corresponding holomorphic vector field (the choice of coefficients is made so that the real part of $\xi_{\mathcal N^0}$ is twice the real vector field generated by the induced $S^1$-action, as given in \eqref{xi 10}).
One checks that $
\xi_{\mathcal N^0}\lrcorner\   \Omega_{\mathcal N^0}=\Omega_D
$ and $\mathcal L_{\xi_{\mathcal N^0}}\Omega_{\mathcal N^0}=0$.

Our main goal is  to holomorphically embed $(\mathcal M, \Omega, \xi^{1,0})$ into $(\mathcal N^0, \Omega_{\mathcal N^0}, \xi_{\mathcal N^0})$, for appropriate choice of $\Theta$ (which is necessary in the case $b_1(D)>0$, see Remark \ref{remark4-2-2}).  This will be summarized in Proposition \ref{p:complex geometry}. Notice for our later purpose the quantitative choice of various constants in the arguments below will  be important.

First let us fix an arbitrary choice of $\Theta$.  Denote
  \begin{equation}
  \begin{cases}
  \M_-\equiv {\mathcal M^*}\setminus \pi^{-1}(H\times [0, \infty))\\
  \M_+\equiv {\mathcal M^*}\setminus \pi^{-1}(H\times (\infty, 0]).
  \end{cases}
  \end{equation}
On $\M_-$ we can trivialize the $U(1)$ connection $-\sq\Theta$ along the $z$ direction so that the $z$ component $\Theta_z$ vanishes identically. Denote by $\Theta|_z$ the restriction of $\Theta$ to the slice $D\times \{z\}$ for $z<0$ and to $(D\setminus H)\times \{z\}$ for $z\geq 0$. From \eqref{d Theta equation} we see that that curvature form of $-\sq\Theta|_z$ is given by $-\sq\p_z\tilde\omega$.

By definition  $\M|_{z=T_-}$ is a principal $S^1$-bundle over $D$ endowed with a unitary connection with curvature $-\sqrt{-1}\p_{z}\tilde\omega|_{z=T_-}$. So we may assume $\M|_{z=T_-}$   embeds into a holomorphic line bundle $\tilde L_-$ over $D$, as the unit circle bundle defined by a hermitian metric $\|\cdot\|_{\sim}^2$, and the connection 1-form $-\sq \Theta|_{T_-}$ agrees with the restriction of the Chern connection form. Denote by $\tilde r_-$ the norm function on $\tilde L_-$ corresponding to the hermitian metric. By Proposition \ref{p:cohomology-constant} we have $[\p_z\tilde\omega]|_{z=T_-}=k_-[\omega_D]\in H^2(D; \R).
$
 If $b_1(D)=0$, then $\tilde L_-$ is isomorphic to $L_-$ as holomorphic line bundles. In general $\tilde L_-$ is isomorphic to $L_-\otimes \mathcal F_-$ for a flat holomorphic line bundle $\mathcal F_-$.

Furthermore, we may extend $-\sq \Theta|_{T_-}$ naturally to the complement of the zero section ${\bf 0}_{\tilde L_-}$ in $\tilde L_-$, via the fiberwise projection, and the resulting 1-form coincides with the Chern connection form   
$\sq\tilde {r}_-^{-1}\tilde J_-d\tilde r_-$, where $\tilde J_-$ denotes the complex structure on $\tilde L_-$.

Now we define a map $\Phi_-: {\mathcal M}_-\rightarrow \tilde L_-\setminus {\bf 0}_{\tilde L_-}$, where ${\bf 0}_{\tilde L_-}$ denotes  the zero section in $\tilde L_-$. First at $z=T_-$ we define $\Phi_-$ to be the  natural inclusion map as above, multiplied by $e^{A_-}$ for some constant $A_-$ to be determined later. Then using the trivialization of the $S^1$-bundle ${\mathcal M}_-$ along  the $z$ direction and the natural scaling map on $\tilde L_-$, we extend the map to the whole ${\mathcal M}_-$ by setting 
\begin{equation}
\label{e:tilde r}\tilde r_-=e^{A_--\int_{T_-}^z h(u) du}.
\end{equation}
Notice both $\mathcal M_-$ and $\tilde L_-$ have natural projection maps to $D$, and $\Phi_-$ clearly commutes with the projection maps to $D$, so  $\Phi_-^*\alpha=\alpha$ for any $1$-form $\alpha$ which is a pull-back from $D$ (again we omit the pull-back notation here). Since 
\begin{equation}
\p_z\Theta|_z=d_D^ch=-J_D d_Dh
\end{equation}
we have
\begin{equation}\tilde r_-^{-1}\Phi_-^*d\rmi=-hdz-\int_{T_-}^z du\wedge d_Dh=-hdz-J_D(\Theta|_{z}-\Theta|_{T_-}),
\end{equation}
noticing that $\Theta|_z-\Theta|_{T_-}$ is a 1-form which is a pull-back from $D$.
So 
\begin{equation}\rmi^{-1}\Phi_-^*(d\rmi+\sq \tilde J_-d\rmi)=-hdz-\sq \Theta|_z-\sq(\Theta|_{T_-}-\Theta|_z)-J_D(\Theta|_{z}-\Theta_{T_-})
\end{equation}
is a $(1, 0)$ form on ${\mathcal M}_-$. It then follows that $\Phi_-$ is holomorphic. It is also clear that $\Phi_-$ is $S^1$-equivariant with respect to the natural $S^1$-action on $\M_-$ and $\tilde L_-$. 

 Since $h$ is positive we see that the image of $\Phi_-$ is bounded in $L_-$. Since $\M\setminus \M_-$ is of complex codimension one,  by the removable singularity theorem for bounded holomorphic functions, $\Phi_-$ extends to a holomorphic map on the entire $\M$. 

Similarly we get a holomorphic  embedding
 $
 \Phi_+: \M_+\rightarrow \tilde L_+
 $
 with 
 \begin{equation}
 \label{tilde r+}
 \tilde r_+=e^{A_+-\int_{z}^{T_+}h(u)du}
 \end{equation}
for a constant $A_+$ to be determined. Here $\tilde L_+$ is the hermitian holomorphic line bundle determined by $\sqrt{-1}\Theta|_{T_+}$, and we have $\tilde L_+=L_+\otimes \mathcal F_+$ for a flat holomorphic line bundle $\mathcal F_+$. Notice there is a sign difference here  due to the fact that $L_+=L^{-\otimes k_+}$.  Again $\Phi_+$ is equivariant  with respect to the natural $S^1$-action on $\M_+$ and the inverse of the natural $S^1$-action on $\tilde L_+$ (due to the sign difference above), and it extends to a holomorphic map on $\M$. 
Together we obtain
\begin{equation}\Phi\equiv(\Phi_+, \Phi_-): \M\rightarrow \tilde L_+\oplus \tilde L_-,
\end{equation}
which is an embedding on $\M\setminus \mathcal P$. It commutes with projections maps to $D$. 

Now we claim $\tilde L_+\otimes \tilde L_-$ is isomorphic to $L^{\otimes k}$ as holomorphic line bundles. First notice that by $S^1$-equivariancy, we know the map 
\begin{equation}
\det\Phi\equiv \Phi_+\otimes \Phi_-: \M\setminus (H\times (-\infty, \infty)) \rightarrow \tilde L_+\otimes \tilde L_-
 \end{equation}
 has image lying on a  holomorphic section, say $\tilde S$, of $\tilde L_+\otimes \tilde L_-$ over $D\setminus H$. Since $h$ is smooth away from $P=H\times \{0\}$, $\tilde S$ is nowhere zero on $D\setminus H$. 
Again since $h$ is positive we know  the image of $\Phi$ is bounded in $\tilde L_+\oplus \tilde L_-$, with respect to the norm  defined by $\tilde r_\pm$, so $\tilde S$ is a bounded section  with respect to the norm $\tilde r\equiv \tilde r_+\otimes \tilde r_-$. Thus again by removable singularity theorem for bounded holomorphic functions it extends to a holomorphic section on the entire $D$. 
By our assumption that $[H]$ is Poincar\'e dual to $c_1(L)=c_1(\tilde L_+\otimes \tilde L_-)$,  we see  $H$ is exactly the zero locus of $\tilde S$ with multiplicity 1. So $\tilde L_+\otimes \tilde L_-$ is isomorphic to the holomorphic line bundle defined by the divisor $H$, which is exactly $L^{\otimes k}$. This proves the claim. 

Now modify the choice of $\Theta$ so that $\tilde L_{\pm}$ is isomorphic to $L_{\pm}$ as holomorphic line bundles. By the correspondence between gauge equivalence classes of flat $S^1$-connections on $D$ and isomorphism classes of flat holomorphic line bundles on $D$ mentioned in Section \ref{ss:Calabi model space}, we can add a flat $S^1$-connection to $\Theta$, and make $\tilde L_-$ isomorphic to $L_-$ as holomorphic line bundle.  The above claim then implies $\tilde L_+$ is also isomorphic to $L_+$ as holomorphic line bundles. So from now on we will simply identify $\tilde L_\pm$ with $L_\pm$.  

 By \eqref{e:greens-current-exp-asymp}, we have 
$\p_z\tilde\omega|_{z=T_\pm}=k_\pm\omega_D+\epsilon_T,
$
so we may assume that  under this identification  $
\log \tilde r_\pm=\log r_\pm+\epsilon_T$. From the above  we also know there is a nonzero constant $C$ such that 
\begin{equation}
\tilde S=C\cdot S_H,
\end{equation}
where $S_H$ is the section of $L^{\otimes k}$ we fixed at the beginning of this section. 
Multiplying $\Phi_-$ by an element in $S^1$-we may assume $C$ is a positive real number.

Notice by definition locally once we choose a holomorphic trivialization $\sigma$ of $L_-$, we can write 
$
\tilde r_-^2=|\zeta_-|^2 \cdot \|\sigma\|_{\sim}^2.
$
So
\begin{equation}
\frac{d\zeta_-}{\zeta_-}=\frac{d\tilde r_-}{\tilde r_-}+\sq J_-\frac{d\tilde r_-}{\tilde r_-}+\p_D \log |\sigma|^2.
\end{equation}
Therefore we obtain
$\Phi_-^*\Omega_{L_-}=\Omega,
$
where 
\begin{equation}\Omega_{L_-}\equiv -\sqrt{-1}\frac{d\zeta_-}{\zeta_-}\wedge \Omega_D
\end{equation}
is a natural holomorphic volume form on $L_-\setminus  {\bf 0}_{L_-}$. In particular $\Phi_-$ is a holomorphic embedding. Also, we have 
\begin{equation}
d\Phi_-(\xi^{1,0})=\sq \zeta_-\p_{\zeta_-}
\end{equation}
is the natural holomorphic vector field on $L_-$.
So similarly we obtain that 
\begin{equation} \label{e:d Phi xi}
d\Phi(\xi^{1,0})=\sq(\zeta_-\p_{\zeta_-}-\zeta_+\p_{\zeta_+}). 
\end{equation}

Now we show that with appropriate choice of $A_\pm$,  $\Phi$ maps $\M$ into $\mathcal N^0$.  
  Notice
\begin{equation}\log C=\frac{1}{\int_D \omega_D^{n-1}} \int_D \log \|{\tilde S}\| \omega_D^{n-1}-\frac{1}{\int_D \omega_D^{n-1}} \int_D \log \|S_H\|\omega_D^{n-1}, \end{equation}
where $\|\cdot\|$ denotes the norm on $L^k$ determined by  the fixed metric on $L$. 
The second term is a constant independent of $T$. For the first term, by definition we have 
\begin{equation}-\log \|\tilde S\|=\int_{T_-}^{T_+}hdz-(A_-+A_+)+\epsilon_T.
\end{equation}
By \eqref{haverage} and Proposition \ref{p:cohomology-constant}, 
\begin{eqnarray}\int_{T_-}^{T_+} \int_D h \omega_D^{n-1}&=&\int_{T_-}^{T_+} T^{2-n} \int_D (T\omega_D+\psi)^{n-1}dz+T^{-1}\underline B_T\\ &=&\frac{1}{n}\int_D \omega_D^{n-1}(\frac{1}{k_-}-\frac{1}{k_+}) (T^2-1)+T^{-1}\underline B_T 
\end{eqnarray}
So we get that
\begin{equation}
-\log C=\frac{1}{n}(\frac{1}{k_-}-\frac{1}{k_+})(T^2-1)-(A_-+A_+)+\frac{1}{\int_D \omega_D^{n-1}} \int_D \log \|S_H\|+T^{-1}\underline B_T
\end{equation}
Setting $C=1$ gives one condition on $A_-$ and $A_+$, but this does not determine $A_-$ and $A_+$. This corresponds to the fact that there is a $\C^*$ action on $\mathcal N^0$.  For our later purposes (c.f. Remark \ref{r:Aplusminus}), we need an additional balancing condition
\begin{equation}\label{balancing condition}
k_-A_-=k_+A_+.
\end{equation}
Together these determine $A_-$ and $A_+$ as 
\begin{align}
A_- &\equiv \frac{1}{nk_-}(T^2-1)-\frac{-k_+}{2(k_--k_+)}\frac{1}{\int_D \omega_D^{n-1}} \int_D \log \|S_H\|+T^{-1}\underline B_T,
\\
A_+ & \equiv \frac{1}{-nk_+}(T^2-1)-\frac{k_-}{2(k_--k_+)}\frac{1}{\int_D \omega_D^{n-1}} \int_D \log \|S_H\|+T^{-1}\underline B_T.
\end{align}
So we  obtain the following

\begin{proposition}\label{p:complex geometry}
With the above choice of $\Theta$, we have  a holomorphic embedding $\Phi: (\M, \Omega)\rightarrow \mathcal N^0$ as a relatively compact open subset containing $\mathcal H$, such that the following holds
\begin{enumerate}
\item $\Phi$ commutes with the projection maps to $D$; 
\item $\Phi^*\Omega_{\mathcal N^0}=\Omega$;
\item $d\Phi(\xi^{1,0})=\xi_{\mathcal N^0}$. In particular, $\Phi$ maps $\mathcal P$ bi-holomorphically onto $\mathcal H$. 
\end{enumerate}
\end{proposition}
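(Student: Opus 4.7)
The plan is simply to assemble the embedding $\Phi=(\Phi_+,\Phi_-)$ from the holomorphic maps $\Phi_\pm:\M\to L_\pm$ already constructed in the discussion preceding the statement, and verify each of the three listed properties. Property (1) is immediate from construction: $\Phi_\pm$ were defined by the radial normalizations \eqref{e:tilde r} and \eqref{tilde r+} along fibers of $\pi$, so $\Phi$ commutes with projection to $D$. Property (3) is exactly the content of \eqref{e:d Phi xi}, which in turn is a consequence of the $S^1$-equivariance of $\Phi_\pm$ together with the opposite weights on $L_+=L^{\otimes -k_+}$ and $L_-=L^{\otimes k_-}$.

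The substantive steps are then that $\Phi(\M)\subset \mathcal N^0$ and that $\Phi^*\Omega_{\mathcal N^0}=\Omega$. For the former, I would invoke the already-established identification $\det\Phi=\Phi_+\otimes\Phi_-=C\cdot S_H$ (as bounded holomorphic sections of $L^{\otimes k}$ extending across $H\times(-\infty,\infty)$ by the removable singularity theorem), and observe that the specific choice of $A_\pm$ enforcing $C=1$ together with the balancing condition \eqref{balancing condition} produces $\det\Phi=S_H$, which is precisely the defining equation \eqref{e: SH definition} of $\mathcal N^0$. For the volume form, I would combine the computation
\begin{equation*}
\tilde r_-^{-1}\Phi_-^*(d\tilde r_-+\sqrt{-1}\tilde J_-d\tilde r_-)=-h\,dz-\sqrt{-1}\Theta|_z-\sqrt{-1}(\Theta|_{T_-}-\Theta|_z)-J_D(\Theta|_z-\Theta|_{T_-})
\end{equation*}
carried out above (yielding $\Phi_-^*\Omega_{L_-}=\Omega$) with its analogue for $\Phi_+$, noting the sign reversal in the $S^1$-action; then on $\mathcal N^0$ the sum $\frac{d\zeta_+}{\zeta_+}+\frac{d\zeta_-}{\zeta_-}=\frac{dS_H}{S_H}$ is a pullback from $D$ and wedges to zero against $\Omega_D$, so the two pullbacks $\Phi_\pm^*(\pm\sqrt{-1}\frac{d\zeta_\pm}{\zeta_\pm}\wedge\Omega_D)$ both agree with $\Omega$, and $\Phi^*\Omega_{\mathcal N^0}$ (which by definition is their average) equals $\Omega$.

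Embeddedness on $\M\setminus\mathcal P$ follows from $S^1$-equivariance plus the strict monotonicity of $z\mapsto-\int_{T_-}^z h(u)\,du$ (since $h>0$ on $Q_T\setminus P$), which gives injectivity on each fiber of $\pi$, combined with $\Phi^*\Omega_{\mathcal N^0}=\Omega$ being nowhere zero, which gives the immersion property. Across $\mathcal P$, both $\mathcal P$ and $\mathcal H$ are cut out as the zero loci of $\zeta_+=\zeta_-=0$ and bi-holomorphic to $H\subset D$ via the projection, and the $C^{2,\alpha}$-regularity of the K\"ahler structure from Proposition \ref{p:C2alpha} together with the Newlander--Nirenberg theorem ensures that $\Phi$ is a holomorphic bi-holomorphism onto its image on a neighborhood of $\mathcal P$. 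Relative compactness follows at once from the uniform bounds $\tilde r_\pm\le e^{A_\pm}$ (positivity of $h$ on a compact $z$-interval). The only point where care is genuinely required is the simultaneous pinning down of $A_\pm$: the condition $C=1$ alone fixes the image only up to the $\C^*$-action on $\mathcal N^0$, and the additional constraint \eqref{balancing condition} is what removes this residual freedom and will be crucial for the quantitative K\"ahler potential formula derived in the next subsection.
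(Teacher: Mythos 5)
Your proposal is correct and follows essentially the same route as the paper: the proposition is stated as a summary of the construction immediately preceding it, and you assemble exactly those ingredients — the maps $\Phi_\pm$ with the normalizations \eqref{e:tilde r}, \eqref{tilde r+}, the identification $\det\Phi=C\cdot S_H$ with $C=1$ forced by the choice of $A_\pm$ (and, as you correctly note, the balancing condition \eqref{balancing condition} only fixing the residual $\C^*$-freedom rather than being needed for $\Phi(\M)\subset\mathcal N^0$), the pullback computation giving $\Phi_-^*\Omega_{L_-}=\Omega$ together with the relation $\tfrac{d\zeta_+}{\zeta_+}\wedge\Omega_D=-\tfrac{d\zeta_-}{\zeta_-}\wedge\Omega_D$ on $\mathcal N^0$, and \eqref{e:d Phi xi} for item (3). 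The only place you are slightly vaguer than necessary is the embedding property across $\mathcal P$, but your nonvanishing-Jacobian argument via $\Phi^*\Omega_{\mathcal N^0}=\Omega$ plus compatibility with the projections is at the same level of detail as the paper itself.
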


For our purpose later, we list a few more results here.  First we   compare the function $z$ with the norm $r_-$ and $r_+$ near each end.
Given $C>0$ fixed, then by \eqref{e:h-asymptotics} we have
\begin{equation}\label{e: compare r and z}
\begin{cases}
-\log r_-=\frac{1}{nk_-}T^{2-n}(T+k_-z)^{n}-A_-+\epsilon_T+\epsilon(z),  \  \ z\leq -C; \\
-\log r_+=-\frac{1}{nk_+}T^{2-n}(T+k_+ z)^{n}-A_++\epsilon_T+\epsilon(z), \ \ z\geq C,
\end{cases}
\end{equation}
by noticing that for example
\begin{equation}
\int_{T_-}^z\epsilon(z)dz=\epsilon_T+\epsilon(z), z\leq -C.
\end{equation}
So we have
\begin{equation} \label{e: compare z and r}
\begin{cases}
(T+k_-z)^n=T^{n-2}nk_-(A_--\log r_-+\epsilon_T+\epsilon(z)), \ \ z\leq -C;\\
(T+k_+z)^n=-T^{n-2}nk_+(A_+-\log r_-+\epsilon_T+\epsilon(z)), \ \  z\geq C.
\end{cases}
\end{equation}

Next we give a description of the behavior of the metric $\omega$ when we restrict to the region $|z|\geq 1$. From the asymptotics of $\tilde\omega$ and $h$ we know the metric is asymptotic to the ends of the Calabi model space in Section \ref{ss:Calabi model space}. Locally on $D$ we fix holomorphic coordinates $\{w_1, \cdots, w_{n-1}\}$ and choose a holomorphic trivialization of $L$ as before, then we obtain fiber holomorphic coordinates $\zeta_\pm$ on $L_\pm$. Denote 
\begin{equation}
\omega_{\pm, cyl}\equiv \sum_{i\geq 1} \sq dw_i\wedge d\bar w_i+\frac{\sq d\zeta_\pm\wedge d\bar\zeta_\pm}{|\zeta_\pm|^2}
\end{equation}
the local cylindrical type metrics on $L_\pm$ respectively. Then we have 

\begin{lemma}\label{l:neck cylindrical compare}
On $|z|\geq 1$, we have 
\begin{equation}
C^{-1}T^{\frac{(n-2)(1-n)}{n}}(T+k_\pm z)^{1-n} \omega_{\pm, cyl}\leq \omega\leq C T^{\frac{2-n}{n}}(T+k_\pm z)\cdot  \omega_{\pm, cyl}.
\end{equation}
Furthermore, for all $k\geq 1$, there exists $m_k, C_k$ such that 
\begin{equation}
|\nabla^k_{\omega_{\pm, cyl}}\omega|_{\omega_{\pm, cyl}}\leq C_k (T^{\frac{2-n}{n}}(T+k_\pm z))^{m_k}.
\end{equation}
\end{lemma}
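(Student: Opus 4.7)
My plan is to work near one end at a time, say $z\ge 1$, and pull the metric $\omega=T^{\frac{2-n}{n}}(\pi^*\tilde\omega+dz\wedge\Theta)$ back to its image in $L_+$ under the holomorphic embedding $\Phi_+$ from Proposition \ref{p:complex geometry}. Choosing a local holomorphic trivialization $\sigma_+$ of $L_+$ over a ball in $D$ gives a fiber coordinate $\zeta_+$, so that $|\zeta_+|=\tilde r_+\cdot \|\sigma_+\|^{-1}$. Differentiating \eqref{tilde r+} and using $\log\tilde r_+=\log r_+ + \epsilon_T$, I obtain the key formula
\begin{equation*}
d\log|\zeta_+|^2=2h\,dz+\alpha_D+\epsilon_T,
\end{equation*}
where $\alpha_D$ is a smooth 1-form on $D$ (coming from $d\log\|\sigma_+\|^{-2}$). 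Likewise, the $S^1$-equivariance identity \eqref{e:d Phi xi} together with $\Theta(\partial_t)=-1$ forces $\Theta=d\theta_+ +\beta_D$ for a 1-form $\beta_D$ on $D$, where $\theta_+$ is the angular coordinate in the fiber. A one-line calculation then gives $d\log|\zeta_+|^2\wedge d\theta_+=\sqrt{-1}\,d\zeta_+\wedge d\bar\zeta_+/|\zeta_+|^2$, so inverting the relation between $dz$ and $d\log|\zeta_+|^2$ yields
\begin{equation*}
dz\wedge\Theta=\frac{1}{2h}\cdot \sqrt{-1}\,\frac{d\zeta_+\wedge d\bar\zeta_+}{|\zeta_+|^2}+(\text{terms purely horizontal in }D)+\epsilon_T+\epsilon(z).
\end{equation*}

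Next, I would insert the asymptotics \eqref{e:greens-current-exp-asymp} and \eqref{e:h-asymptotics}, namely $\tilde\omega=(T+k_+z)\omega_D+\epsilon(z)$ and $h=T^{2-n}(T+k_+z)^{n-1}+\epsilon(z)$, to decompose
\begin{equation*}
\omega=\omega_{\mathrm{base}}+\omega_{\mathrm{fiber}},\qquad \omega_{\mathrm{base}}\sim T^{\frac{2-n}{n}}(T+k_+z)\,\omega_D,\qquad \omega_{\mathrm{fiber}}\sim T^{\frac{(n-1)(n-2)}{n}}(T+k_+z)^{1-n}\cdot\frac{\sqrt{-1}d\zeta_+\wedge d\bar\zeta_+}{|\zeta_+|^2}.
\end{equation*}
Comparing with $\omega_{+,cyl}$, the upper bound is controlled by the larger of the two coefficients. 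The constraint $T+k_+z\ge T^{(n-2)/n}$ (built into the definition \eqref{e:define-T-plus-minus} of $T_+$) implies $T^{\frac{2-n}{n}}(T+k_+z)\ge T^{\frac{(n-1)(n-2)}{n}}(T+k_+z)^{1-n}$ on the domain, which is exactly the claimed upper bound. For the lower bound, the smaller of the two coefficients is the fiber one; since $T^{\frac{(n-1)(n-2)}{n}}\ge T^{-\frac{(n-1)(n-2)}{n}}=T^{\frac{(n-2)(1-n)}{n}}$, this is $\ge C^{-1}T^{\frac{(n-2)(1-n)}{n}}(T+k_+z)^{1-n}$, giving the stated lower bound (which is not sharp in the fiber direction, but simple to state uniformly). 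The analysis at the $-$ end via $\Phi_-$ is identical.

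For the higher-regularity statement, I would observe that in the frame $(w_1,\ldots,w_{n-1},\zeta_+)$ on $L_+$, the $\omega_{+,cyl}$-covariant derivatives correspond to the vector fields $\partial_{w_i}$ and $\zeta_+\partial_{\zeta_+}, \bar\zeta_+\partial_{\bar\zeta_+}$. Using the substitution $\partial_z=(2h)^{-1}\zeta_+\partial_{\zeta_+}+\bar\zeta_+\partial_{\bar\zeta_+}+\text{base}$ modulo $\epsilon_T$, each $z$-derivative of coefficients of $\omega$ contributes a factor $1/h\le T^{(n-2)/n}$, i.e., a fixed polynomial factor in $T$. Since $\tilde\omega=T\omega_D+\psi$ and $h$ depend polynomially on $T$ and smoothly (with uniformly bounded derivatives) on $(w_i,z)$, iterating produces bounds of the desired form $|\nabla^k_{\omega_{\pm,cyl}}\omega|_{\omega_{\pm,cyl}}\le C_k(T^{\frac{2-n}{n}}(T+k_\pm z))^{m_k}$.

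The main technical obstacle will be the careful bookkeeping of the exponentially small error terms $\epsilon_T$ and $\epsilon(z)$ arising in three places: the identification $\tilde L_\pm\simeq L_\pm$ (introducing a non-trivial but small hermitian gauge), the difference $\tilde\omega-(T+k_\pm z)\omega_D$ (decaying via \eqref{e:greens-current-exp-asymp}), and the formula linking $\log|\zeta_\pm|$ with $z$. All of these contribute terms that are smaller than the leading coefficients, so they can be absorbed into the constant $C$ once $T$ is large; verifying this absorption uniformly in $z\in[1,T_+]$ (and analogously on the left end) is the only place where one must pay real attention, but it follows mechanically from the exponential decay in $T$ and $|z|$.
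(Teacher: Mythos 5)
Your overall route is the paper's route (the paper's own proof is only a few lines: write $\frac{d\zeta_\pm}{\zeta_\pm}=\pm h\,dz+\sqrt{-1}\Theta$ up to corrections and read off the coefficients of $\omega$ in the frame $\{dw_i,\frac{d\zeta_\pm}{\zeta_\pm}\}$), and your leading-order bookkeeping is right: the base coefficient $T^{\frac{2-n}{n}}(T+k_\pm z)$, the fiber coefficient $T^{\frac{2-n}{n}}(2h)^{-1}\sim T^{\frac{(n-1)(n-2)}{n}}(T+k_\pm z)^{1-n}$, and the verification that the base dominates on the domain via $(T+k_\pm z)^n\geq T^{n-2}$ are all correct, as is the derivative sketch. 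However, there is a concrete gap in your key display for $dz\wedge\Theta$. In a local \emph{holomorphic} trivialization the corrections are not "purely horizontal in $D$ plus $\epsilon_T+\epsilon(z)$": your own formula $d\log|\zeta_+|^2=2h\,dz+\alpha_D+\epsilon_T$ has $\alpha_D=-d\log\|\sigma_+\|^2+\cdots$ of size $O(1)$, and likewise $\beta_D$ (the horizontal part of $\Theta$ in this gauge) is $O(1)$, since the curvature $\approx k_+\omega_D$ is not small. Substituting, $dz\wedge\Theta$ acquires mixed fiber--horizontal terms such as $(2h)^{-1}\,d\log|\zeta_+|^2\wedge\beta_D$ and $-(2h)^{-1}\alpha_D\wedge d\theta_+$, whose coefficients are of the \emph{same order as the fiber diagonal entry}, not exponentially small. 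The upper bound is unaffected (these cross coefficients are $\leq C$ times the base coefficient), but your lower bound, argued by "take the smaller of the two diagonal coefficients," implicitly assumes near block-diagonality; near $z=T_\pm$ (and for $n=2$ throughout) the cross terms are comparable to both diagonal entries, so min-of-diagonals is not justified as written.

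The fix is short and worth writing. Since $\Theta=hJdz$, the 2-form $dz\wedge\Theta=h\,dz\wedge Jdz$ is a nonnegative $(1,1)$-form, and with $\frac{d\zeta_\pm}{\zeta_\pm}=\pm h\,dz+\sqrt{-1}\Theta+\eta$ for a horizontal $1$-form $\eta$ with $|\eta|=O(1)$, Cauchy--Schwarz gives $|\zeta_\pm|^{-2}|d\zeta_\pm(v)|^2\leq 2h^2\bigl(dz(v)^2+dz(Jv)^2\bigr)+C|\pi_*v|^2$ for every tangent vector $v$. Combining with $\tilde\omega\geq c(T+k_\pm z)\omega_D$ on horizontal vectors, this yields $\omega\geq c\,T^{\frac{2-n}{n}}\min\bigl((T+k_\pm z),\,h^{-1}\bigr)\,\omega_{\pm,cyl}$, and since $h^{-1}\sim T^{n-2}(T+k_\pm z)^{1-n}\leq (T+k_\pm z)$ on the domain, this is $\geq C^{-1}T^{\frac{(n-2)(1-n)}{n}}(T+k_\pm z)^{1-n}\omega_{\pm,cyl}$, which is the claimed (deliberately non-sharp) lower bound. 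Alternatively, one can bound the ratio of volume forms from below, using $\omega^n\approx c_n\,\Omega\wedge\bar\Omega$ together with $\Omega\wedge\bar\Omega=\frac{d\zeta_\pm\wedge d\bar\zeta_\pm}{|\zeta_\pm|^2}\wedge\Omega_D\wedge\bar\Omega_D$ (the horizontal $\eta$ drops out when wedged against $\Omega_D\wedge\bar\Omega_D$), and combine this with your upper bound on the largest eigenvalue. With either absorption argument inserted, your proof is complete and matches the paper's.
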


\begin{proof}
We only consider the case $z\leq -1$. Since
\begin{equation}
\frac{d\zeta_-}{\zeta_-}=\frac{dr_-}{r_-}+\sq J\frac{dr_-}{r_-}=-hdz+\epsilon_T-\sq J hdz
\end{equation}
Then we can estimate the coefficient of the metric $\omega$ in the frame given by $dw_1, \cdots dw_{n-1}$ and $\frac{d\zeta_-}{\zeta_-}$, using the asymptotics of $h$ \eqref{e:h-asymptotics} and $\tilde\omega$
\eqref{e:greens-current-exp-asymp}. From this the conclusion follows.
\end{proof} 

We also need to understand the level set $r_\pm=C$ under the projection to $D\times \R$, for a fixed $C>0$ and for $T$ large. First we have    

\begin{proposition} \label{l:zeta formula}
We have 
\begin{equation}
A_--\int_{T_-}^0 h(u)du=\frac{1}{2}\log {\|S_H\|}+B_T,
\label{e:asympotics-A-}
\end{equation}
\begin{equation}
A_+-\int_{0}^{T_+} h(u)du=\frac{1}{2}\log {\|S_H\|}+B_T.\label{e:asympotics-A+}
\end{equation}
\end{proposition}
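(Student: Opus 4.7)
The plan is to deduce both identities from a single exact identity, and then split using a symmetry plus a potential-theoretic argument on $D$. The exact identity comes from the holomorphic embedding $\Phi = (\Phi_+, \Phi_-): \M \hookrightarrow \mathcal{N}^0$ established in Proposition \ref{p:complex geometry}: the defining relation $\zeta_+\otimes\zeta_- = S_H(x)$ of $\mathcal{N}^0$, together with the normalization $C=1$, yields $\tilde r_+(x,z)\cdot\tilde r_-(x,z) = \|S_H(x)\|$ pointwise on $\M$. Substituting \eqref{e:tilde r} and \eqref{tilde r+} at $z=0$ gives the exact pointwise identity
\begin{equation}
\label{e:planA1}
\Big(A_- - \int_{T_-}^0 h(x,u)\,du\Big) + \Big(A_+ - \int_{0}^{T_+} h(x,u)\,du\Big) = \log \|S_H(x)\|.
\end{equation}
Hence it suffices to establish the first identity in the proposition; the second then follows from \eqref{e:planA1}.

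Writing $h(x,u) = (n-1)T + \Tr_{\omega_D}\psi(x,u) + q(u)$, only the middle term is $x$-dependent. Define $G(x,u)\equiv \Tr_{\omega_D}\psi(x,u) - (n-1)\mathcal{L}(u)$, where $\mathcal{L}(u) = k_- u$ for $u\leq 0$ and $\mathcal{L}(u) = k_+ u$ for $u\geq 0$. The exponential asymptotics \eqref{e:greens-current-exp-asymp} ensure that $G(x,\cdot)$ is integrable over $\R$ with all derivatives in $x$. The reflection identity $\psi(x,u) = \psi(x,-u)+(k_++k_-)u\cdot\omega_D$ from Proposition \ref{p:cohomology-constant}(2) yields the crucial symmetry $G(x,u) = G(x,-u)$, from which
\begin{equation*}
\int_{T_-}^0 G(x,u)\,du = \tfrac{1}{2}\int_{\R} G(x,u)\,du + \epsilon_T(x) \equiv \tfrac{1}{2} V(x) + \epsilon_T(x).
\end{equation*}

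The key geometric step is to identify $V(x)$ explicitly. Applying $d_D d_D^c$ to the first equation of \eqref{omegahequation} gives $d_Dd_D^c\Tr_{\omega_D}\psi = -\partial_z^2\psi$, so integrating over $u\in\R$ and using $\partial_z\psi\to k_\pm\omega_D$ at infinity yields $d_Dd_D^c V = k\,\omega_D$ on $D\setminus H$. Combined with the Poincar\'e--Lelong formula applied to $S_H$ as a section of $L^{\otimes k}$, which gives $d_Dd_D^c\log\|S_H\| = -k\,\omega_D$ on $D\setminus H$, we conclude that $V + \log\|S_H\|$ is pluriharmonic on $D\setminus H$. The expansion in Proposition \ref{p:trace-expansion} together with a direct integration show that $V(x)\sim -\log|y|$ as $x$ approaches a point of $H$, where $|y|$ denotes the normal distance to $H$; this is precisely cancelled by the first-order vanishing of $S_H$ along $H$, so $V + \log\|S_H\|$ remains locally bounded near $H$. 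The removable singularity theorem then extends it to a pluriharmonic function on the compact K\"ahler manifold $D$, which must be a constant $C_1$. Hence $V(x) = C_1 - \log\|S_H(x)\|$.

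The remainder is bookkeeping. Substituting the formula for $V$ and the expansion of $\int_{T_-}^0[(n-1)T+(n-1)\mathcal{L}(u)+q(u)]\,du$ yields
\begin{equation*}
A_- - \int_{T_-}^0 h(x,u)\,du - \tfrac{1}{2}\log\|S_H(x)\| = A_- - \tfrac{C_1}{2} + (n-1)TT_- + \tfrac{(n-1)k_-}{2}T_-^2 - \int_{T_-}^0 q(u)\,du + \epsilon_T(x).
\end{equation*}
The main obstacle is verifying the algebraic cancellation of all positive powers of $T$ on the right-hand side. This relies on the cohomological normalization \eqref{eqn5-3} of $q_0$, the endpoint identity $T+k_-T_- = T^{(n-2)/n}$, and the balancing condition \eqref{balancing condition} $k_-A_- = k_+A_+$ together with the explicit formula for $A_-$. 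Once this algebra is unwound, the $T^2$, $T^{2(n-2)/n}$, and all intermediate powers of $T$ precisely cancel, leaving a $T$-independent constant modulo an error of order $T^{-1}\underline B_T + \epsilon_T(x)$, which together with the constant constitutes a $B_T$ in the sense of the paper. The second identity then follows immediately from \eqref{e:planA1}.
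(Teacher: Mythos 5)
Your proof is correct, but it takes a genuinely different route from the paper's. The paper proves \eqref{e:asympotics-A-} and \eqref{e:asympotics-A+} separately and more directly: it sets $\hat h_-=A_--\int_{T_-}^0 h\,du-\tfrac12\log\|S_H\|$, shows $d_Dd_D^c\hat h_-=\epsilon_T$ globally as currents on $D$ (Poincar\'e--Lelong together with $d_Dd_D^c\int_{T_-}^0h\,dz=\p_z\tilde\omega|_{T_-}-\p_z\tilde\omega|_{0}=\tfrac12(k_--k_+)\omega_D+\epsilon_T$, the factor $\tfrac12$ coming from \eqref{eqn2-43}, and boundedness of $\hat h_-$ near $H$ from \eqref{e:h-bounded-z}), then computes $\int_D\hat h_-\,\omega_D^{n-1}=B_T$ from \eqref{haverage} and the formula for $A_-$, and concludes by elliptic regularity. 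You instead (i) reduce the second identity to the first via the product relation between $\tilde r_+\tilde r_-$ and $\|S_H\|$ coming from Proposition \ref{p:complex geometry} with $C=1$, and (ii) produce the factor $\tfrac12$ not from \eqref{eqn2-43} itself but from the reflection symmetry $\psi(z)=\psi(-z)+(k_-+k_+)z\,\omega_D$ (which is established in the \emph{proof} of Proposition \ref{p:cohomology-constant}, not in its statement), writing the half-line integral as half of the full-line integral $V$, identifying $V=\mathrm{const}-\log\|S_H\|$ by the same $dd^c$/Lelong machinery, and pinning the constant down by explicit $T$-algebra rather than by an $L^2$-average plus elliptic regularity. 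Both routes rest on the same ingredients ($d_Dd_D^c\Tr_{\omega_D}\psi=-\p_z^2\psi$, the asymptotics \eqref{e:greens-current-exp-asymp}, Poincar\'e--Lelong, the explicit $A_-$); the paper's version is shorter, treats the two ends symmetrically, and avoids any exchange-of-integral justifications, while yours makes the constant and the role of the reflection symmetry explicit and collapses the two identities into one. Three minor points, none of them gaps: your "exact" sum identity actually carries an $\epsilon_T$, since $\tilde r_\pm$ and the fixed norms differ by $\log\tilde r_\pm=\log r_\pm+\epsilon_T$ (compare the paper's own $-\log\|\tilde S\|=\int_{T_-}^{T_+}h\,dz-(A_-+A_+)+\epsilon_T$), which is harmless as it is absorbed into $B_T$; your Lelong constant $d_Dd_D^c\log\|S_H\|=-k\,\omega_D$ off $H$ is indeed the one consistent with $dd^c=2\sq\,\p\bp$ and is exactly what makes the $\mp\log|y|$ singularities of $V$ and $\log\|S_H\|$ cancel (note it differs by a factor $2$ from the constant as printed in the paper's proof); and the cancellation of positive powers of $T$ that you assert does check out, since with $S\equiv T+k_-T_-=T^{\frac{n-2}{n}}$ one finds $(n-1)TT_-+\tfrac{(n-1)k_-}{2}T_-^2-\int_{T_-}^0q_0(u)\,du=-\tfrac{T^2-1}{nk_-}$ exactly, which is cancelled by the leading term of $A_-$, leaving only bounded quantities.
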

\begin{proof}
We denote
\begin{equation}
\hat h_-=A_--\int_{T_-}^0 h(u)du-\frac{1}{2}\log \|S_H\|.
\end{equation}
By the Poincar\'e-Lelong equation we have
\begin{equation}
d_Dd_D^c \log {\|S_H\|}^2=4\pi \delta_H-(k_--k_+)\omega_D,
\end{equation}
where $\delta_H$ denotes the current of integration along $H$. 
By directly taking derivatives and use \eqref{eqn4445} we obtain that outside $H$, 
\begin{equation}
d_Dd_D^c (\int_{T_-}^{0}h(z)dz)=\int_{T_-}^{0}d_Dd_D^c h(z)dz=-\int_{T_-}^{0} \p_z^2\tilde\omega(z)dz=\p_z\tilde\omega|_{z=T_-}-\p_z\tilde\omega|_{z=0}
\end{equation} 
By \eqref{e:greens-current-exp-asymp} and \eqref{eqn2-43}, the right hand side is given by $\frac{1}{2}(k_--k_+)\omega_D+\epsilon_T$. Now using the asymptotics of $h$ near $P$ in \eqref{e:h-bounded-z}, one sees that $\hat h_-$ is bounded near $H$. So globally as currents on $D$, we have $
d_Dd_D^c\hat h_-=\epsilon_T.
$ Now 
\begin{equation}
\int_{D} \hat h_- \omega_D^{n-1}=A_-\int_D \omega_D^{n-1}-\int_{D}\int_{T_-}^{0}h\omega_D^{n-1} dz=B_T
\end{equation}
By standard elliptic regularity we get the conclusion for $\hat h_-$. The proof of \eqref{e:asympotics-A+} is the same. 
\end{proof}

\begin{remark}\label{r:Aplusminus}
This proposition explains the reason for choosing the constants $A_\pm$ to satisfy the balancing condition \eqref{balancing condition}: this makes the image under $\Phi$ of the slice $\{z=0\}$ lie in a bounded region in $\mathcal N^0$, which is not distorted as $T\rightarrow\infty$. 
\end{remark}

For $|z|\leq 1$, since $h(z)=T+\frac{1}{2r}+O'(r)+O(T^{-1})$, we easily see that in a fixed distance (with respect to $\omega_D$) away from $H$, $r_\pm \leq C$ is equivalent to $ B_T\cdot  T^{-1}\mp z\geq 0$.  We need a refinement of this. Fix a normal coordinate chart $(y, \bar y, w_2', \cdots, \bar w_{n-1}')$ on $D$ as given in Section \ref{ss:complex-greens-currents}. Recall we have locally $r^2=|y|^2+z^2$.   
\begin{proposition}
\label{p:z r- relation}
If $|z|\leq 1$, then  we have 
\begin{align}
\log r_-&=-Tz+\frac{1}{2}\log (r-z)+ B_T,
\\
\log r_+&=Tz+\frac{1}{2}\log (r+z)+ B_T.
\end{align}
\end{proposition}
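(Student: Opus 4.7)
The plan is to start from the defining formula \eqref{e:tilde r} for $\tilde r_-$ and the identification $\tilde L_-\simeq L_-$ from Proposition \ref{p:complex geometry} (with the gauge correction $\epsilon_T$), which gives
\[
\log r_- = A_- - \int_{T_-}^{z} h(u)\,du + \epsilon_T.
\]
Splitting the integral at $u=0$ and applying the first identity of Proposition \ref{l:zeta formula} to the piece on $[T_-,0]$ reduces the claim to
\[
\log r_- = \tfrac12\log\|S_H\| + B_T - \int_0^{z} h(u)\,du,
\]
so everything comes down to analyzing the integral for $|z|\leq 1$ together with the local behavior of $\|S_H\|$ near $H$.

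Working in the normal coordinate chart from Section \ref{ss:complex-greens-currents}, where $r(u)=\sqrt{|y|^2+u^2}$, I would substitute the expansion \eqref{e:h-bounded-z}
\[
h(u) = T + \frac{1}{2r(u)} + O'(r(u)) + T^{-1}B(u)
\]
and integrate termwise. The linear-in-$T$ term yields $Tz$; the $O'(r)$ term and the $T^{-1}B(u)$ term contribute respectively a smooth bounded function on $D$ and a quantity uniformly bounded in $T$, both absorbed into $B_T$; and the essential singular piece is handled by the elementary identity
\[
\int_0^{z}\frac{du}{2\sqrt{|y|^2+u^2}} \;=\; \tfrac12\log\frac{z+r}{|y|}.
\]

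For the defining-section piece, $S_H$ is a holomorphic section of $L^{\otimes k}$ vanishing simply along $H=\{w_1=0\}$, and Lemma \ref{l:w_1-expansion} gives $w_1 = a_1 y + O(y^2)$ with $a_1=|\sigma|^{-1}>0$, so in a local holomorphic trivialization $\|S_H\| = |y|\cdot f$ for a nonvanishing smooth positive function $f$ on a neighborhood of $p$; hence $\tfrac12\log\|S_H\| = \tfrac12\log|y| + B_T$. Combining the pieces and using the algebraic identity $|y|^2 = r^2-z^2 = (r-z)(r+z)$, i.e.\ $\log|y| = \tfrac12\log(r-z)+\tfrac12\log(r+z)$, the $\tfrac12\log(r+z)$ contributions cancel and one obtains
\[
\log r_- = -Tz + \tfrac12\log|y| - \tfrac12\log(z+r) + \tfrac12\log|y| + B_T = -Tz + \tfrac12\log(r-z) + B_T.
\]
The formula for $\log r_+$ is entirely parallel, starting from \eqref{tilde r+} and invoking the second identity in Proposition \ref{l:zeta formula}. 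There is no conceptual obstacle; the only care required is in bookkeeping which error terms legitimately get absorbed into a single $B_T$ (the gauge term $\epsilon_T$, the primitive of $O'(r)$ over $u\in[0,z]$, the $T^{-1}B(u)$ contribution, and the nonvanishing smooth factor in $\|S_H\|$), and in checking that the primitive of the $O'(r)$-term remains bounded as one approaches $H$ so that the decomposition is valid on the full region $|z|\leq 1$ in the normal chart.
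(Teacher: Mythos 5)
Your proposal is correct and follows essentially the same route as the paper: split $\log r_- = A_- - \int_{T_-}^{z} h\,du + \epsilon_T$ at $u=0$, apply Proposition \ref{l:zeta formula} to the $[T_-,0]$ piece, integrate the expansion \eqref{e:h-bounded-z} using $\int_0^z \frac{du}{2\sqrt{|y|^2+u^2}}=\frac12\log\frac{z+r}{|y|}$, identify $\log\|S_H\|=\log|y|+B_T$, and finish with $|y|^2=(r-z)(r+z)$. The paper's proof is just a terser version of the same bookkeeping, and your error-absorption remarks (the $\epsilon_T$, the bounded primitive of $O'(r)$, the $T^{-1}B(z)$ term, and the nonvanishing factor in $\|S_H\|$) match what the paper leaves implicit.
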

\begin{proof}
By the previous proposition,
\begin{equation}
A_--\int_{T_-}^zh(u)=\frac{1}{2}\log {\|S_H\|}+B_T+\int_{z}^0 h(u)du.
\end{equation}
When $|z|\leq 1$ we have by \eqref{e:h-bounded-z} that 
\begin{equation}
\int_{0}^z h(u)du=B_T+Tz+\frac{1}{2}(\log(r+z)-\log |y|).
\end{equation}
Comparing \eqref{e:y definition equation} and \eqref{e: SH definition} we see  $\log {\|S_H\|}=\log |y|+B_T$ , it follows that 
\begin{equation}
\log r_-=B_T-Tz+\frac{1}{2}\log (r-z).
\end{equation}
Similarly we get the estimate for $
\log r_+$.  
\end{proof}

The following corollary will be used frequently later. 
\begin{corollary}\label{c:r zeta relation}
The following  hold:
\begin{enumerate}
\item Let $C>0$ be fixed. Then for $T$ large,  $r_\pm\leq C$ implies $\frac{3}{4}T^{-1}\log T\mp z\geq0$.
\item Let $c\in (0, 1/2)$ be fixed. Then for $T$ large $r \leq cT^{-1}\log T$ implies 
$\log r_\pm\leq -\frac{1}{2}(\frac{1}{2}-c) \log T.$
\item Let $C\geq 1$ be fixed. Then for $T$ large,  $C\pm z\geq 0$ implies $\log r_\mp\leq (C+1)T$.
\end{enumerate}

\end{corollary}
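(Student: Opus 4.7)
All three statements reduce to direct computations using Proposition~\ref{p:z r- relation} when $|z|\leq 1$ and the asymptotic formula \eqref{e: compare r and z} together with the explicit form of $A_\pm$ when $|z|\geq 1$. By the obvious symmetry $z\leftrightarrow -z$ exchanging $r_+$ and $r_-$, in each item it suffices to treat the case of $r_-$ (or, in (3), the pair $C+z\geq 0$ and $\log r_-$). I plan to carry out the three items in order.

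For item (1), the first step is to show that $r_-\leq C$ forces $z\geq -1$ for $T$ large, so that Proposition~\ref{p:z r- relation} applies. Indeed, if $z\leq -1$, combining \eqref{e: compare r and z} with $A_-=\frac{1}{nk_-}(T^2-1)+O(1)$ and the Taylor expansion $(T+k_-z)^n-T^n=nT^{n-1}k_-z+O(T^{n-2}z^2)$ gives $\log r_-=-Tz+O(1)$ in the range $-1\geq z\geq -\mathrm{const}$, while for more negative $z$ the explicit $(T+k_-z)^n$ term pushes $\log r_-$ up to order at least $T/2$. In both cases $\log r_-\geq T/2-O(1)$, contradicting $r_-\leq C$ for $T$ large. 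Once $|z|\leq 1$ is known, Proposition~\ref{p:z r- relation} gives
\[
-Tz+\tfrac{1}{2}\log(r-z)+B_T=\log r_-\leq \log C.
\]
If $z\geq 0$ the conclusion is trivial, so assume $u\equiv -z>0$; then $r-z\geq -z=u$, so $\tfrac{1}{2}\log(r-z)\geq \tfrac{1}{2}\log u$, and the inequality becomes $Tu+\tfrac{1}{2}\log u\leq O(1)$. Since the function $u\mapsto Tu+\tfrac{1}{2}\log u$ is increasing and at $u=\tfrac{3}{4}T^{-1}\log T$ it equals $\tfrac{3}{4}\log T-\tfrac{1}{2}\log T+O(\log\log T)=\tfrac{1}{4}\log T+o(\log T)$, we conclude $u<\tfrac{3}{4}T^{-1}\log T$ for $T$ large.

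Item (2) follows directly from Proposition~\ref{p:z r- relation}. Since $r\leq cT^{-1}\log T$ implies $|z|\leq r\leq cT^{-1}\log T\leq 1$ for $T$ large, the proposition applies. Then $|Tz|\leq c\log T$ and $r\mp z\leq 2r\leq 2cT^{-1}\log T$, so $\tfrac{1}{2}\log(r\mp z)\leq -\tfrac{1}{2}\log T+O(\log\log T)$. Combining gives
\[
\log r_\pm\leq c\log T-\tfrac{1}{2}\log T+O(\log\log T)=-(\tfrac{1}{2}-c)\log T+o(\log T),
\]
which is bounded above by $-\tfrac{1}{2}(\tfrac{1}{2}-c)\log T$ for $T$ large.

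For item (3), again treat only $C+z\geq 0\Rightarrow \log r_-\leq (C+1)T$. When $|z|\leq 1$ we apply Proposition~\ref{p:z r- relation}: $-Tz\leq T$ and $\log(r-z)$ is bounded since $r\leq r_D$, so $\log r_-\leq T+O(1)\leq (C+1)T$. When $z>1$, the sign in \eqref{e: compare r and z} makes $\log r_-$ negative, hence trivially bounded above. The only remaining range is $-C\leq z\leq -1$; there \eqref{e: compare r and z} combined with $A_-=\tfrac{1}{nk_-}T^2+O(1)$ and the Taylor expansion of $(T+k_-z)^n$ used in item (1) give $\log r_-=-Tz+O(1)\leq CT+O(1)\leq (C+1)T$ for $T$ large. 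The main technical point throughout is thus the interplay between the two regimes for $z$, and the most delicate estimate is the logarithmic one in item (1), where one must keep the $\tfrac{1}{2}\log(r-z)$ correction and use its monotonicity in $-z$ to extract the sharp $\tfrac{3}{4}T^{-1}\log T$ threshold.
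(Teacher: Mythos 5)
Your proof is correct and follows essentially the same route as the paper: items (1)–(2) are deduced from Proposition~\ref{p:z r- relation}, and item (3) from the asymptotics \eqref{e: compare r and z} — your Taylor expansion of $(T+k_-z)^n$ on $[-C,-1]$ is the same computation as the paper's bound $\int_{-C}^{-1}h(u)\,du\le CT$, since that integral is exactly the difference of the closed-form expressions. The extra details you supply (ruling out $z\le -1$ in item (1), and the monotonicity argument at the threshold $u=\tfrac{3}{4}T^{-1}\log T$) correctly spell out what the paper leaves implicit.
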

\begin{proof}
The first two items are easy consequences of the previous proposition. For the last item to see the bound on $r_-$ we simply notice that for $C\geq 1$, 
\begin{equation}
\int_{-C}^{-1} h(u)du=\int_{-C}^{-1} (T^{2-n}(T+k_-u)^{n-1}+\epsilon(u))du\leq CT. 
\end{equation}
The bound for $r_+$ can be proved similarly. 
\end{proof}

\subsubsection{K\"ahler potentials}\label{sss:kahler potential}We look for an $S^1$-invariant function $\phi$ on $\M$ satisfying the equation
\begin{equation} \label{eqn7-1}
T\pi^*\omega_{D}+dd^c\phi=T^{\frac{n-2}{n}}\omega
\end{equation}
We write
$d\phi=d_D\phi+\phi_zdz$
where as before $d_D\phi$ is the differential along $D$ direction   and $\phi_z=\p_z\phi$ is the derivative along $z$ direction. Then
$d^c\phi=d^c_D\phi+\phi_zh^{-1} \Theta, $
and 
\begin{equation}dd^c\phi=d_Dd_D^c\phi+dz\wedge (d^c_D\phi_z)+d(\phi_z h^{-1})\wedge\Theta+\phi_z h^{-1} (\p_z\tilde\omega-dz\wedge d_D^ch)\end{equation}  
Since 
$T^{\frac{n-2}{n}}\omega=\pi^* \tilde\omega+dz\wedge \Theta,
$
we see (\ref{eqn7-1}) is equivalent to the system of equations
\begin{equation}\label{eqn7-9}
\begin{cases}
\tilde\omega=T\omega_D+d_Dd^c_D\phi+\phi_zh^{-1} \p_z\tilde\omega\\
d_D^c\phi_z-\phi_zh^{-1} d_D^ch=0\\
d(\phi_z h^{-1})=dz.
\end{cases}
\end{equation}
To solve these equations, we first notice that the last equation in (\ref{eqn7-9}) is equivalent to 
\begin{equation}\phi_zh^{-1}=z+C\end{equation}
for a constant $C$. 
So we obtain \footnote{In the case when $n=2$ for the classical Gibbons-Hawking ansatz this formula was derived by the authors together with Hans-Joachim Hein in the office of the first author at Stony Brook in the Fall of 2017. }
\begin{equation}\label{eqn7-2}
\phi(z)=\int_{z_0}^z (u+C)h du+\phi(z_0) 
\end{equation}
for a function $\phi(z_0)$ on $D$.

The second equation of (\ref{eqn7-9}) then holds automatically, and if we take $\p_z$ on the first equation  then it also holds.  So in order for $\phi$ defined in (\ref{eqn7-2}) to satisfy (\ref{eqn7-9}),   it suffices that at a fixed $z=T_+$ the following holds
\begin{equation}T\omega_D+d_Dd^c_D \phi= \tilde\omega-(T_++C)\p_z\tilde\omega.\end{equation}
Comparing the cohomology classes of both sides yields that $C$ must be zero. 
Then we can solve $\phi(T_+)$ uniquely up to addition of a constant. After fixing a choice of $\phi(T_+)$ we may define $\phi$ by
\begin{equation} \label{eqn7-10}
\phi(z)=\int_{T_+}^z uh du+\phi(T_+),
\end{equation}
 and we can  view it as either a function on $Q_T$ or an $S^1$-invariant function on $\M$. 

\begin{proposition}
The function $\phi$ is smooth on ${\mathcal M^*}$, and $C^{3, \alpha}$ on $\M$ with respect to the smooth topology as defined in Section \ref{ss:kaehler-structures}, and satisfies (\ref{eqn7-1}).
\end{proposition}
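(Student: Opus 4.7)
The plan is to establish the three claims (validity of \eqref{eqn7-1}, smoothness on $\mathcal M^*$, and $C^{3,\alpha}$ regularity on $\mathcal M$) in sequence. First, the system \eqref{eqn7-9} decouples: the ansatz $\phi_z = zh$ (which comes from integrating the third equation and setting $C=0$) makes the second equation an identity, and taking $\partial_z$ of the first equation together with the compatibility identity $\partial_z^2\tilde\omega + d_Dd_D^c h = 0$ noted in \eqref{eqn4445} shows that the first equation propagates in $z$. It therefore suffices to verify the first equation at the single slice $z=T_+$, where it becomes a linear PDE for $\phi(T_+)$ on $D$. Proposition \ref{p:cohomology-constant} together with the asymptotics \eqref{e:greens-current-exp-asymp} show that both sides of this equation have the same cohomology class, which is exactly the condition that forces $C=0$; hence standard elliptic theory on the compact K\"ahler manifold $D$ produces a smooth solution $\phi(T_+)$, unique modulo an additive constant.

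For smoothness of $\phi$ on $\mathcal M^*$, note that $h$ is smooth on $Q_T\setminus P$, so the only concern is whether the integration path in \eqref{eqn7-10} passes through a singular stratum. Splitting the expansion \eqref{e:h-bounded-z} into its singular and smooth parts and integrating the singular contribution explicitly gives
\[
\int\frac{u}{2\sqrt{|y|^2+u^2}}\,du=\tfrac{1}{2}\sqrt{|y|^2+u^2},
\]
which is smooth in $(y,u)$ away from $(0,0)$, i.e.\ away from $P$. Combined with smoothness of the other integrand pieces, this yields smoothness of $\int_{T_+}^z uh\,du$ on $Q_T\setminus P$, hence of $\phi$ on $\mathcal M^*$.

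The $C^{3,\alpha}$ regularity on $\mathcal M$ is the most subtle point and the step I expect to be the main obstacle. Near $\mathcal P$ I substitute the expansion \eqref{e:h-bounded-z} into \eqref{eqn7-10} and integrate term by term. The dominant singular contribution yields $\frac{1}{2}r$, which by \eqref{e:y definition equation} pulls back under the Hopf fibration $\pi$ to $\frac{1}{4}(|u_1|^2+|u_2|^2)$ and is smooth on $\mathcal M$. The contributions of $T$ and $T^{-1}B(z)$ are polynomial or smooth in $z$, while the $O'(r)$ correction, integrated against $u\,du$, yields an $O'(r^2)$ quantity; after pulling back via $r=s^2/2$ this becomes of class $O'(s^4)$, which sits inside $C^{3,\alpha}$. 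Equivalently, since $dd^c\phi = T^{\frac{n-2}{n}}\omega - T\pi^*\omega_D$ and the right hand side is $C^{2,\alpha}$ on $\mathcal M$ by Proposition \ref{p:C2alpha}, Schauder theory (with respect to the smooth differential structure on $\mathcal M$ and any smooth auxiliary Riemannian background) bootstraps $\phi$ to $C^{3,\alpha}$.

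The hard part will be keeping careful track of the $O'$-type regularity under integration along the $z$-direction near $\mathcal P$, in particular ensuring that no further singular terms are produced beyond the smoothed contribution $\frac{1}{2}r$ that is absorbed by the Hopf fibration. Once this bookkeeping is under control, the verification of \eqref{eqn7-1} is immediate from the construction, and the uniqueness of $\phi(T_+)$ up to a constant corresponds exactly to the one-parameter gauge freedom in the definition of $\phi$.
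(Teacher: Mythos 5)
The main gap is in how you integrate across the singular set. Both your smoothness claim on $\mathcal M^*$ and your term-by-term $C^{3,\alpha}$ bookkeeping rest on the assertion that, after subtracting the explicit $\tfrac{1}{2r}$ piece, ``the other integrand pieces'' are smooth, respectively that $u\cdot O'(r)$ integrates to an $O'(r^2)$ quantity. Neither is justified: by \eqref{e:h-bounded-z} the remainder $h-T-\tfrac{1}{2r}-T^{-1}B(z)$ is only of class $O'(r)$, i.e.\ its normal derivatives of order $m\ge 1$ are merely $O(r^{1-m-\epsilon})$, and it is \emph{not} smooth across $P$. For a point $(x,z)$ with $x\in H$ and $z<0$ --- which lies in $Q_T\setminus P$, hence corresponds to points of $\mathcal M^*$ --- the integration path in \eqref{eqn7-10} from $T_+$ crosses $P$, and the naive bound for, say, three transverse derivatives of $\int_z^{T_+}u\,O'(r)\,du$ gives a divergent integral of the form $\int |u|^{-1-\epsilon}\,du$ through $u=0$; so the $O'$-estimates alone yield neither smoothness there nor the claimed $O'(r^2)$ on the $z\le 0$ side (they do suffice on the $z>0$ side, where the path stays away from $P$). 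This is exactly the point you flagged as ``the hard part,'' and it is where the argument as written fails. The paper's proof takes a different route: continuity of $\phi$ across $P$ gives the slice-wise current equation \eqref{eqn4149} on $D$ for every fixed $z$, elliptic regularity on the compact slice makes $\phi(T_-)$ smooth, and then rewriting $\phi(z)=\int_{T_-}^{z}uh\,du+\phi(T_-)$ for $z\le 0$ re-bases the integral so that the path never meets $P$; smoothness on $Q_T\setminus P$, hence on $\mathcal M^*$, follows without ever estimating an integral across the singularity.

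Your alternative Schauder argument is essentially the paper's, but it omits the step that makes it legitimate: at that stage the identity $dd^c\phi=T^{\frac{n-2}{n}}\omega-T\pi^*\omega_D$ is only known on $\mathcal M^*$, and to bootstrap on all of $\mathcal M$ one must first show that the current equation extends across the complex submanifold $\mathcal P$; the paper does this using continuity of $\phi$ across $\mathcal P$ together with the standard extension theory for plurisubharmonic functions. One should also be careful where the elliptic regularity is run: the complex structure on $\mathcal M$ is only $C^{2,\alpha}$ in the smooth structure, so the paper passes to Newlander--Nirenberg holomorphic coordinates (which are $C^{3,\alpha}$ with respect to the smooth structure), obtains $\phi\in C^{4,\alpha}$ there, and then concludes $C^{3,\alpha}$ in the smooth topology; if instead you work with a smooth auxiliary background metric, the operator $u\mapsto \operatorname{tr}\bigl(d(J\,du)\bigr)$ has lower-order coefficients involving $\partial J\in C^{1,\alpha}$, and this has to be tracked to still land at $C^{3,\alpha}$. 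Your first paragraph (decoupling of \eqref{eqn7-9}, forcing $C=0$ by comparing cohomology classes, and solving at a single slice) agrees with the paper's discussion preceding the proposition and is fine.
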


\begin{proof}
Since $h$ is smooth on $Q_T\setminus H\times (-\infty, 0]$, so is $\phi$.  Using \eqref{e:h-bounded-z} it is easy to see that $\phi$ extends continuously on the whole $Q_T$. Hence for all fixed $z$, the following equation holds as currents on $D$
\begin{equation}
\label{eqn4149}T\omega_D+d_Dd_D^c\phi(z)=\tilde\omega(z)-z\p_z\tilde\omega(z). 
\end{equation}
Elliptic regularity implies that $\phi$ is smooth on each slice $\{z\}\times D$ for $z\neq 0$. Now for $z\leq 0$  we can write 
\begin{equation}\phi(z)=\int_{T_-}^{z} uh du+\phi(T_-). 
\end{equation}
We then see that $\phi$ is indeed smooth on $ Q_T\setminus P$. Over the $S^1$-fibration $\M$, we know $\phi$ is globally continuous, and it is smooth and satisfies  the equation (\ref{eqn7-1}) on ${\mathcal M^*}$. Since $\mathcal P=\mathcal M\setminus \mathcal M^*$ is a complex submanifold, and $\phi$ is continuous across $\mathcal P$, by standard theory on extension of pluri-subharmonic functions we conclude the current equation \eqref{eqn4149} holds globally on $\M$. Since $\omega$ is $C^{2, \alpha}$ in local holomorphic coordinates on $\M$, elliptic regularity gives that $\phi$ is  $C^{4, \alpha}$ in local holomorphic coordinates. This implies that $\phi$ is $C^{3, \alpha}$ in the smooth topology we defined, since the holomorphic coordinate functions are $C^{3,\alpha}$. 
\end{proof}

\begin{remark}
\label{r:Calabi model potential}As a by-product we can also recover the formula of the Calabi model metric in terms of K\"ahler potentials as mentioned in Section \ref{ss:Calabi model space}. In this case as in \eqref{e:Calabi model solution} we take $\tilde\omega= z\omega_D$  and $h=z^{n-1}$. Then  we can write 
$\tilde\omega=dd^c\phi$
with 
\begin{equation}\phi=\int_0^z u^{n} du=\frac{1}{n+1}z^{n+1}\end{equation}
To match with the formula for Calabi ansatz  in \eqref{calabiansatz}, we notice that $z^{n+1}=(-\log |\xi|)^2$, and there is a factor of $\frac{n}{2}$ due to the normalization of the Calabi-Yau equation and that $dd^c=2\sq \p\bp$. 
\end{remark}

\begin{remark}
\label{r:TaubNUT potential}
Notice the argument above does not essentially require the compactness of $D$, except to solve the equation \eqref{eqn4149} on one slice. Using similar idea one can get the expression of the Taub-NUT metric on $\C^2$ in terms of K\"ahler potentials, as mentioned in Section \ref{ss:2d standard model}. Here we take $D$ to be $\C$ with the standard flat structure, and 
$
\tilde\omega(z)=\frac{\sq}{2} Vdy\wedge d\bar y$, $h=V$,
with $
V=\frac{1}{2r}+T.$
Suppose we want to find $\phi$ with 
$
\omega=dd^c\phi,
$
then by \eqref{eqn7-2} we have 
$
\phi(z)-\phi(0)=\frac{1}{2}r-\frac{1}{2}|y|+\frac{T}{2}z^2.
$
The equation \eqref{eqn4149} for $z=0$ becomes  
$
4\p_y\p_{\bar y}\phi(0)=\tilde\omega(0)=\frac{1}{2|y|}+T,
$
and a solution is given by 
$
\phi(0)=\frac{1}{2}|y|+\frac{T}{4}|y|^2.
$
So we get 
\begin{equation}
\phi=\frac{1}{2}r+\frac{T}{2}z^2+\frac{T}{4}|y|^2=\frac{1}{4}(|u_1|^2+|u_2|^2)+\frac{T}{8}(|u_1|^4+|u_2|^4).
\end{equation}
This agrees with formula \eqref{e:TaubNUT potential} up to a constant $2$, again caused by the fact that $dd^c=2\sq \p\bp$.

\end{remark}

For our purpose later we need to express $\omega$ as $dd^c$ of an explicit function on the two ends $z\rightarrow\pm\infty$. 

\begin{proposition}
When $z\leq -C$, we have 
\begin{equation}
T^{\frac{n-2}{n}}\omega=T^{\frac{n-2}{n}}dd^c\phi_-,
\end{equation}
with 
\begin{equation} \label{neck potential negative side}
\phi_-\equiv \frac{1}{n+1}n^\frac{n+1}{n}k_-^{-\frac{n-1}{n}} (A_-+\epsilon_T+\epsilon(z)-\log r_-)^{\frac{n+1}{n}}-T^{\frac{2}{n}}k_-^{-1} A_-+\epsilon_T+T^{\frac{2}{n}}\epsilon(z).
\end{equation}
Similarly for $z\geq C$, we have 
\begin{equation}
T^{\frac{n-2}{n}}\omega=T^{\frac{n-2}{n}}dd^c\phi_+,\end{equation}
with
\begin{equation} \label{neck potential positive side}
\phi_+\equiv \frac{1}{n+1}n^\frac{n+1}{n}(-k_+)^{-\frac{n-1}{n}} (A_++\epsilon_T+\epsilon(z)-\log r_+)^{\frac{n+1}{n}}-T^{\frac{2}{n}}k_+^{-1} A_++\epsilon_T+T^{\frac{2}{n}}\epsilon(z).
\end{equation}
\end{proposition}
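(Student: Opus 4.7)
The plan is to deduce the formulas by relating $\phi_\pm$ to the already-constructed relative K\"ahler potential $\phi$ from \eqref{eqn7-10}. Since the stated identity $T^{\frac{n-2}{n}}\omega = T^{\frac{n-2}{n}}dd^c\phi_\pm$ is just $dd^c\phi_\pm = \omega$, and \eqref{eqn7-1} reads $dd^c\phi = T^{\frac{n-2}{n}}\omega - T\pi^*\omega_D$, the task on each end reduces to finding a K\"ahler potential for the missing $T\pi^*\omega_D$ piece and then verifying that the total matches the explicit expression. For $z\leq -C$, the holomorphic embedding $\Phi_-\colon \M\hookrightarrow L_-$ from Proposition~\ref{p:complex geometry}, together with the Chern-curvature identity $\sq\p\bp \log r_-^2 = -k_-\omega_D$ (inherited from the fixed hermitian metric on $L$ via $L_- = L^{\otimes k_-}$), yields $dd^c(-k_-^{-1}\log r_-) = \omega_D$. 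Hence
$$
\phi_-^{(0)} := T^{\frac{2-n}{n}}\phi - T^{\frac{2}{n}}k_-^{-1}\log r_-
$$
is an honest K\"ahler potential for $\omega$ on $z\leq -C$, and analogously
$$
\phi_+^{(0)} := T^{\frac{2-n}{n}}\phi + T^{\frac{2}{n}}k_+^{-1}\log r_+
$$
works on $z\geq C$ using $\sq\p\bp\log r_+^2 = k_+\omega_D$ for $L_+ = L^{-\otimes k_+}$. Since adding a constant does not affect $dd^c$, the proposition reduces to showing that each expression in \eqref{neck potential negative side} and \eqref{neck potential positive side} differs from the corresponding $\phi_\pm^{(0)}$ by a (possibly $T$-dependent) additive constant plus an error of the form $\epsilon_T + T^{\frac{2}{n}}\epsilon(z)$.

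This reduction is carried out by an explicit asymptotic evaluation of $T^{\frac{2-n}{n}}\phi$ on the two ends. Substituting \eqref{e:h-asymptotics} into \eqref{eqn7-10} and integrating, on $z\leq -C$ one obtains
$$
T^{2-n}\int_{T_-}^{z} u(T+k_-u)^{n-1}\,du \;=\; \frac{T^{2-n}(T+k_-z)^{n+1}}{(n+1)k_-^2} - \frac{T^{3-n}(T+k_-z)^n}{nk_-^2} + \underline B_T,
$$
while the remainder $\int u\,\epsilon(u)\,du$ contributes $\epsilon_T + \epsilon(z)$. Invoking \eqref{e: compare z and r} to rewrite $(T+k_-z)^n = T^{n-2}nk_-(A_- - \log r_- + \epsilon_T + \epsilon(z))$, a brief bookkeeping of $T$-exponents (the sum $(2-n) + (n-2)(n+1)/n$ collapses to $(n-2)/n$, and $(nk_-)^{(n+1)/n}/k_-^2 = n^{(n+1)/n}k_-^{-(n-1)/n}$) shows that the first polynomial term produces exactly $T^{\frac{n-2}{n}}\cdot \frac{1}{n+1}n^{\frac{n+1}{n}}k_-^{-\frac{n-1}{n}}(A_- - \log r_- + \epsilon)^{\frac{n+1}{n}}$, while the second becomes $-Tk_-^{-1}(A_- - \log r_- + \epsilon)$. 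Multiplying by $T^{\frac{2-n}{n}}$ and adding the correction $-T^{\frac{2}{n}}k_-^{-1}\log r_-$ of $\phi_-^{(0)}$, the two $\log r_-$ contributions cancel exactly, leaving precisely $-T^{\frac{2}{n}}k_-^{-1}A_-$ together with an $\epsilon_T + T^{\frac{2}{n}}\epsilon(z)$ error. This matches \eqref{neck potential negative side} up to the additive constant $\underline B_T + T^{\frac{2-n}{n}}\phi(T_+)$, and because $dd^c\phi_-^{(0)} = \omega$ is insensitive to constants, the stated formula also satisfies $dd^c\phi_- = \omega$.

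The case $z\geq C$ is entirely parallel: using $(T+k_+z)^n = -T^{n-2}nk_+(A_+ - \log r_+ + \epsilon)$ from \eqref{e: compare z and r} together with the correction $+T^{\frac{2}{n}}k_+^{-1}\log r_+$ of $\phi_+^{(0)}$, the algebraic identity $(-k_+)^{(n+1)/n}/k_+^2 = (-k_+)^{-(n-1)/n}$ reproduces the prefactor $\frac{1}{n+1}n^{\frac{n+1}{n}}(-k_+)^{-\frac{n-1}{n}}$ of \eqref{neck potential positive side}, and the $\log r_+$ contributions cancel exactly. The residual constant comes out as $+T^{\frac{2}{n}}k_+^{-1}A_+$, which differs from the stated $-T^{\frac{2}{n}}k_+^{-1}A_+$ by the additive constant $2T^{\frac{2}{n}}k_+^{-1}A_+$; since $\phi_-$ and $\phi_+$ are defined on disjoint domains and each is determined only up to an additive constant by $dd^c\phi_\pm = \omega$, this constant shift is irrelevant. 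The main obstacle in the proof is purely combinatorial: tracking how the $T$-exponents cancel to yield the clean prefactors $\frac{1}{n+1}n^{\frac{n+1}{n}}k_-^{-\frac{n-1}{n}}$ and $\frac{1}{n+1}n^{\frac{n+1}{n}}(-k_+)^{-\frac{n-1}{n}}$, and classifying the various exponentially small and polynomial-in-$T$ error terms into the categories $\epsilon_T$, $\epsilon(z)$, $B_T$, $\underline B_T$ and constant remainders as defined at the beginning of Section~\ref{s:neck}. No additional geometric input is required beyond \eqref{e:h-asymptotics}, \eqref{e: compare z and r}, and the Chern-curvature identifications for $L_\pm$ from Section~\ref{sss:complex manifold}.
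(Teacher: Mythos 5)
Your overall route coincides with the paper's: compute the end asymptotics of $\phi$ from \eqref{eqn7-10} and \eqref{e:h-asymptotics}, convert $(T+k_\pm z)^n$ into $A_\pm-\log r_\pm$ via \eqref{e: compare z and r}, absorb the $T\pi^*\omega_D$ term through the fiberwise potentials of $\log r_\pm$ coming from the Chern curvature of $L_\pm$, and then track the exponents; the prefactor bookkeeping you describe is exactly the paper's "simple computation". Your remark about the positive end is also fair: the direct computation does produce the constant $+T^{\frac{2}{n}}k_+^{-1}A_+$ (because $dd^c\log r_+=k_+\omega_D$, with the opposite sign to the negative end), and since constants are invisible to $dd^c$ this does not affect the asserted identity.

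There is, however, one genuine gap on the negative end. You integrate $uh$ from $T_-$ to $z$ and treat the base value as "the additive constant $\underline B_T+T^{\frac{2-n}{n}}\phi(T_+)$". But $\phi$ in \eqref{eqn7-10} is based at $T_+$, so what you are really adding is $\phi(T_-)=\phi(T_+)-\int_{T_-}^{T_+}zh\,dz$, and this is a priori a \emph{function on $D$}, not a constant: the integral crosses the region $|z|\le 1$, where $h=T+\frac{1}{2r}+O'(r)+T^{-1}B(z)$ depends on the point of $D$ and is singular along $H$. Since $dd^c$ does not annihilate functions on $D$, the claim that the only unwritten errors in \eqref{neck potential negative side} are of type $\epsilon_T+T^{\frac{2}{n}}\epsilon(z)$ requires proving that $\phi(T_-)$ is a constant up to $\epsilon_T$ (and controlling its size). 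This is precisely the content of the paper's auxiliary lemma \eqref{eqn7-6}: one computes $d_Dd_D^c\Psi$ for $\Psi=\int_{T_-}^{T_+}zh\,dz$ as a current on $D$, integrates by parts in $z$ (the factor $z$ kills the would-be residue at $z=0$, giving $d_Dd_D^c\Psi=\epsilon_T$), and then evaluates the $D$-average of $\Psi$ using \eqref{haverage}; alternatively one can argue via \eqref{eqn7-11} at $z=T_-$, the exponential decay of $\psi$, and elliptic theory on the compact manifold $D$. Either way, an additional input is needed beyond the three ingredients listed in your final sentence, so as written your argument does not justify the negative-end formula.
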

The goal of the rest of this subsubsection is to prove this proposition.
First notice from the above discussion we know for each fixed $z$, $\phi(z)$ is uniquely determined up to a constant on $D$ by the equation 
\begin{equation}\label{eqn7-11}
T\omega_D+d_Dd^c_D \phi(z)=\tilde\omega(z)-z\p_z \tilde\omega(z),  
\end{equation}
and the integration formula (\ref{eqn7-10}) exactly gives a coherent way of fixing all the constants for each $z$, so the overall freedom in only up to a global constant.  

 Notice by \eqref{e:greens-current-exp-asymp} we have for $z\gg 1$, 
\begin{equation}\tilde\omega(z)-z\p_z\tilde\omega(z)-T\omega_D=\psi(z)-z\p_z\psi(z)=\epsilon(z).\end{equation}
Standard elliptic estimate allows us to fix a solution $\phi(T_+)$ which is  $\epsilon_T$.  By \eqref{e:h-asymptotics}  we obtain that for $z\geq C$
\begin{equation} \label{eqn8-7}
\phi(z)=C_++T^{2-n}k_{+}^{-2} (\frac{(k_+z+T)^{n+1}}{n+1}-\frac{T(k_+z+T)^{n}}{n}),
\end{equation}
where 
\begin{equation}C_{+}=\epsilon_T+\epsilon(z)-T^{2-n}k_+^{-2}(\frac{1}{n+1}T^{\frac{(n+1)(n-2)}{n}}-\frac{1}{n}T^{n-2}).\end{equation}
For the other end $z\leq -C$, similarly we have 
\begin{equation} \label{eqn8-8}
\phi(z)-\phi(T_-)=C_-+T^{2-n}k_{-}^{-2} (\frac{(k_-z+T)^{n+1}}{n+1}-\frac{T(k_-z+T)^{n}}{n}),
\end{equation}
where 
\begin{equation}C_-=\epsilon_T+\epsilon(z)+T^{2-n}k_-^{-2}(\frac{1}{n+1}T^{\frac{(n+1)(n-2)}{n}}-\frac{1}{n}T^{n-2}).\end{equation}
To understand $\phi(T_-)$ we need the following 
\begin{lemma}
We have
\begin{equation}\label{eqn7-6}
\phi(T_-)=\epsilon_T+T^{-1}\underline B_T.
\end{equation}
\end{lemma}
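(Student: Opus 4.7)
The strategy is to first characterise $\phi(T_-)$ via a PDE on $D$, show that this forces $\phi(T_-)$ to be an exponentially small perturbation of some $T$-dependent constant $c_0$, and then pin down $c_0$ by averaging the integral formula \eqref{eqn7-10} against $\omega_D^{n-1}$.

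First I would evaluate equation \eqref{eqn4149} at $z = T_-$, which gives
\[
T\omega_D + dd^c\phi(T_-) = \tilde\omega(T_-) - T_-\,\partial_z\tilde\omega(T_-).
\]
The exponential decay \eqref{e:greens-current-exp-asymp} yields $\tilde\omega(T_-) - (T+k_-T_-)\omega_D = \epsilon_T$ and $\partial_z\tilde\omega(T_-) - k_-\omega_D = \epsilon_T$; combining these with the defining identity $T + k_-T_- = T^{(n-2)/n}$ produces a complete cancellation at the level of the leading cohomological terms, leaving $dd^c\phi(T_-) = \epsilon_T$. Standard elliptic regularity for the Hodge Laplacian on the compact K\"ahler manifold $(D,\omega_D)$ then forces $\phi(T_-) = c_0 + \epsilon_T$ for some constant $c_0 = c_0(T)$.

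To determine $c_0$, I would apply \eqref{eqn7-10} at $z = T_-$, integrate over $D$ against $\omega_D^{n-1}$, and use $\phi(T_+) = \epsilon_T$ to get
\[
c_0 \cdot \mathrm{Vol}(D) \;=\; -\int_{T_-}^{T_+} u \left(\int_D h\,\omega_D^{n-1}\right) du \;+\; \epsilon_T.
\]
The inner average is computed by \eqref{haverage}: on $\{|z|\geq 1\}$ it equals $T^{2-n}(T+k_\pm z)^{n-1}\mathrm{Vol}(D)$ with exponentially small remainder, while on $\{|z|\leq 1\}$ there is an extra term $T^{-1}B(z)$. Splitting the $u$-integral at $u=\pm 1$, the middle piece $\int_{-1}^{1} u T^{-1}B(z)\,du$ contributes a $T^{-1}\underline{B}_T$ constant, while each outer piece is evaluated exactly by the substitution $v = T + k_\pm u$ and standard integration by parts, with endpoints $v|_{u = T_\pm} = T^{(n-2)/n}$ and $v|_{u = \mp 1} = T\mp k_\pm$. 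The resulting polynomial expressions in $T$ will be organised so that they match precisely the combinations defining $A_-$ and $A_+$ derived before Proposition \ref{p:complex geometry}; the balancing condition \eqref{balancing condition} $k_-A_- = k_+A_+$ is exactly what forces the surviving $T^2$-scale constants to cancel, leaving $c_0 = T^{-1}\underline{B}_T + \epsilon_T$ as desired.

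The main obstacle will be the careful bookkeeping of the polynomial-in-$T$ contributions coming from the outer-range integrals: verifying that the $T^{n+1}$ and lower-order terms produced by the $v$-substitution recombine, under the balancing condition and the defining identity for $T_\pm$, into the precise combinations absorbed in $A_\pm$, so that only a residue of size $T^{-1}$ survives. This is a direct algebraic check once one writes the outputs of the integration by parts in terms of $A_\pm$, but it requires keeping track of which endpoint contributions arise from $u = T_\pm$ versus $u = \pm 1$, and using the identity \eqref{eqn5-3} to substitute cohomological integrals by explicit polynomials.
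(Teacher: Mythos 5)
Your first step is fine, and in fact slightly more streamlined than the paper's: the paper does not evaluate \eqref{eqn4149} at $z=T_-$ directly, but instead writes $\phi(T_-)=\phi(T_+)-\Psi$ with $\Psi=\int_{T_-}^{T_+}z\,h\,dz$, shows $d_Dd_D^c\Psi=(-z\,\p_z\tilde\omega+\tilde\omega)\big|_{T_-}^{T_+}=\epsilon_T$ by integrating by parts in $z$ (observing that the factor $z$ kills any residue at $z=0$), and then uses elliptic regularity on $D$; your route via \eqref{eqn4149} plus the exponential decay \eqref{e:greens-current-exp-asymp} reaches the same conclusion $\phi(T_-)=c_0+\epsilon_T$, and the small point that you do not actually need the identity $T+k_-T_-=T^{\frac{n-2}{n}}$ for this cancellation is harmless. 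Your averaging formula for $c_0$ is also the same as the paper's second step.

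The gap is in how you propose to evaluate that average. The quantity $\int_{T_-}^{T_+}u\bigl(\int_D h\,\omega_D^{n-1}\bigr)du$ is, by \eqref{haverage}, a completely explicit function of $T$, $n$, $k_\pm$ alone: the paper evaluates it (this is \eqref{e:quotient-int}) purely from the antiderivative $k_\pm^{-2}\bigl(\tfrac{(T+k_\pm u)^{n+1}}{n+1}-\tfrac{T(T+k_\pm u)^{n}}{n}\bigr)$ together with the defining relations \eqref{e:define-T-plus-minus} for $T_\pm$, plus the $T^{-1}B(z)$ correction on $|z|\le 1$. The constants $A_\pm$, the section $S_H$, and the balancing condition \eqref{balancing condition} never enter, and they cannot: $A_\pm$ and the balancing condition are fixed by the complex-geometric normalization of the embedding into $\mathcal N^0$ (i.e.\ by the \emph{unweighted} integral $\int_{T_-}^{T_+}h\,dz$, which governs $\log\|\tilde S\|$ and the constant $C$ in $\tilde S=C\cdot S_H$), whereas here you need the $z$-weighted integral, a different quantity. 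So your plan to "organise the polynomial expressions so that they match the combinations defining $A_-$ and $A_+$" and to let $k_-A_-=k_+A_+$ cancel the "surviving $T^2$-scale constants" rests on a mechanism that does not exist; moreover the large contributions you must control are not at scale $T^2$ but come from the antiderivative evaluated at the $u$-endpoints near $0$ (values $\approx\tfrac{T^{n+1}}{n+1}-\tfrac{T^{n+1}}{n}$, i.e.\ scale $T^3$ after the $T^{2-n}$ factor), weighted separately by $k_+^{-2}$ and $k_-^{-2}$. Since the entire content of the lemma is precisely this bookkeeping, deferring it to "a direct algebraic check" organised around $A_\pm$ leaves the decisive step unproved, and as written the check would not close; you need to carry out the explicit endpoint evaluation as in \eqref{e:quotient-int} (and combine it with \eqref{e:small-ddc}) rather than appeal to the balancing condition.
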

\begin{proof}
We have $\phi(T_-)=\phi(T_+)-\Psi,$ where 
$\Psi=\int_{T_-}^{T_+}zhdz.$
Away from $H$ we have 
\begin{equation}d_Dd_D^c\Psi=\int_{T_-}^{T_+} zd_Dd_D^c h dz=-\int_{T_-}^{T_+} z\p_z^2\tilde\omega dz.\end{equation}
Integration by parts we get
\begin{equation}\label{e:small-ddc}
d_Dd_D^c\Psi=(-z \p_z\tilde\omega+\tilde\omega)|^{T_+}_{T_-}=\epsilon_T.
\end{equation}
Notice since there is a factor $z$ in the integrand we do not get residue term at $z=0$. Notice $\Psi$ is continuous on $D$, and the right hand side is smooth on $D$, so elliptic regularity implies that $\Psi$ is indeed smooth on $D$, and the equation holds globally on $D$.

On the other hand, we have
\begin{equation}\int_D \Psi\omega_D^{n-1}=\int_{T_-}^{T_+}z \int _D h\omega_D^{n-1} dz.\end{equation}
Using \eqref{haverage}, we see
\begin{align}\label{e:quotient-int}
\frac{\int_D \Psi\omega_D^{n-1}}{\int_D \omega_D^{n-1}}= & T^{2-n}k_{+}^{-2} (\frac{(k_+z+T)^{n+1}}{n+1}-\frac{T(k_+z+T)^{n}}{n}) 
\nonumber\\&-T^{2-n}k_{-}^{-2} (\frac{(k_-z+T)^{n+1}}{n+1}-\frac{T(k_-z+T)^{n}}{n})+T^{-1}\underline B_T,
\end{align}
where we used the definition of $T_-$ and $T_+$. 
(\ref{e:small-ddc}) and \eqref{e:quotient-int} together yield the conclusion. 
\end{proof}

Now notice that by \eqref{e: compare r and z} 
\begin{equation}-T^{3-n}k_{-}^{-2} \frac{(k_-z+T)^{n}}{n}= \frac{T}{k_-} (\log r_-- A_-+\epsilon_T+\epsilon(z))+\epsilon_T.\end{equation}
We may also write by definition
$\omega_D=-\frac{1}{k_-}dd^c\log  r_-$. Then \eqref{neck potential negative side} and \eqref{neck potential positive side} follow from 
\eqref{eqn7-1} and simple computation. 

\

\subsection{Geometries at regularity scales}
\label{ss:regularity-scales}

In this subsection, we take a closer look at the Riemannian geometric behavior of the family of incomplete K\"ahler metrics $(\M_T, \omega_T)$ constructed in Section \ref{ss:kaehler-structures} as $T\rightarrow \infty$. For clarity we now re-install the parameter $T$ throughout the rest of this section. The K\"ahler metric $\omega_T$ is given by 
\begin{equation}
\omega_T\equiv T^{\frac{2-n}{n}}\cdot\Big(\pi^*(T\omega_D+\psi)+dz\wedge \Theta\Big).\end{equation}
Denote the corresponding Riemannian metric by $g_T$, which has  the form 
\begin{equation}g_T=T^{\frac{2-n}{n}}\cdot\Big(\pi^*(Tg_0+g_1+h_Tdz^2)+h_T^{-1}\Theta^2\Big),\label{e:g_T-submersion-1}
\end{equation}
where $g_0$ is the Riemannian metric corresponding to $\omega_D$, and $g_1$ is the symmetric 2-tensor corresponding to $\psi$. 
Notice that by Remark \ref{r:bundle fixed} we may fix the $U(1)$-connection $\Theta$ over the entire $Q\setminus P$. Then $\omega_T$ and $g_T$ can be viewed as families of tensors  on a fixed space. Moreover, they are positive definite when restricted to $T_-\leq z\leq T_+$ and $T_{\pm}$ are defined in \eqref{e:define-T-plus-minus}.

It is easy to see that as the parameter $T\to+\infty$, the curvatures are unbounded around the singular set 
 $\mathcal{P}\subset\M_T$ such that the standard uniform elliptic estimates fail.
Instead, we will define some appropriate weighted H\"older spaces
and establish uniformly weighted a priori estimates, which will be done in Section \ref{ss:neck-weighted-analysis}. 
Geometrically, the weighted elliptic estimate that we pursue is intimately connected with 
the {\it effective regularity at definite scales} of the points in $(\M_T,\omega_T)$. More rigorously, we  need the following notion. 
\begin{definition}[Local regularity]
\label{d:local-regularity} Let $(M^n,g,p)$ be a Riemannian manifold and $p\in M^n$. Given $r>0$, $\epsilon>0$, $k\in\dN$, $\alpha\in(0,1)$, we say $(M^n,g,p)$ is $(r,k+\alpha,\epsilon)$-regular at $p$ if the metric $g$ is at least $C^{k+\alpha}$ in $B_{2r}(p)$ and satisfies the following property: Let $(\widetilde{B_{2r}(p)},\tilde{p})$ be the Riemannian universal cover of $B_{2r}(p)$. Then $B_r(\tilde{p})$ is diffeomorphic to a disc $\mathbb{D}^n$ or a half disc $\mathbb{D}_+^n$ in the Euclidean space $\dR^n$ such that $g$ in coordinates  satisfies
\begin{equation}
|g_{ij}-\delta_{ij}|_{C^0(B_r(\tilde{p}))}+\sum\limits_{m=1}^k r^m\cdot|\nabla^m g_{ij}|_{C^0(B_r(\tilde{p}))} +  r^{k+\alpha}[g_{ij}]_{C^{k,\alpha}(B_r(\tilde{p}))} < \epsilon.
\end{equation}

\end{definition}

 \begin{remark}
The case that $B_r(\tilde{p})$ is diffeomorphic to a half Euclidean disc $\mathbb D_+^n$ will be used to discuss  the regularity of a manifold with boundary.
\end{remark}

\begin{definition}
[$C^{k,\alpha}$-regularity scale] Let $(M^n,g)$ be a Riemannian manifold with a $C^{k,\alpha}$-Riemannian metric $g$. The $C^{k,\alpha}$-regularity scale at $p$, denoted by $r_{k,\alpha}(p)$, is defined as
the supremum of all $r>0$ such that $M^n$  is $(r,k+\alpha,10^{-6})$-regular at $p$.
\end{definition}

Intuitively, the $C^{k,\alpha}$-regularity scale 
is the maximal zooming-in scale at which 
the {\it nontrivial} $C^{k,\alpha}$-geometry is uniformly bounded on the local universal cover.   Clearly, if we work in a scale smaller than the regularity scale, then the corresponding $C^{k,\alpha}$ geometry is also uniformly bounded. So in the following we are mostly interested in a lower bound of the regularity scale.
    
\begin{example} 
 If $g$ is a $C^{k,\alpha}$-metric on $M^n$, then for any $p\in M^n$, we have $r_{k,\alpha}(p)>0$. Here the size of $r_{k,\alpha}(p)$ depends on $p$. 
\end{example}

\begin{example} 
Let $(M^n,g)$ satisfy $|\Rm_g|\leq 1$ in $B_2(p)$. Then the following holds:
\begin{enumerate} \item  there exists a dimensional constant $r_0(n)>0$ such that $r_{1,\alpha}(x)\geq r_0(n)>0$ for all $x\in B_1(p)$ and $\alpha\in(0,1)$. Moreover, 
$r_{1,\alpha}(p) \geq r_0(n)\cdot r_{|\Rm|}(p) >0$, where
\begin{equation}r_{|\Rm|}(p)\equiv \sup\Big\{r>0\Big| |\Rm|_{C^0(B_r(p))}\leq r^{-2}\Big\}\end{equation} denotes the curvature scale at $p$.

 \item In particular, if $\Rm_g\equiv 0$ on a complete manifold $M^n$, then $r_{k,\alpha}(x)=+\infty$ for all $x\in M^n$, $k\in\dZ_+$ and $\alpha\in(0,1)$.

\end{enumerate} 
\end{example}

Notice that the construction of the K\"ahler manifolds $(\mathcal M_T, \omega_T)$  in Section \ref{ss:kaehler-structures} are fairly explicit. In this subsection, we estimate a lower bound of the $C^{k, \alpha}$-regularity scale on $\mathcal M_T$ for $T$ large.

Before the technical  discussion, it is helpful to present the scenario of geometric transformations on $\M_T$  from the singular set $\mathcal{P}$ to the boundary $\p \M_T$.
First, as $T\to+\infty$,  curvatures blow up if the reference point $\bx$ is located around $\mathcal{P}$, and suitably rescaling the metric $\omega_T$
gives rise to a product bubble limit $\dC_{TN, \lambda}^2\times\dC^{n-2}$, where $\dC_{TN, \lambda}^2$ is the Taub-NUT space (c.f. Section \ref{ss:2d standard model}) for some $\lambda>0$.  This is a {\it deepest bubble (rescaling limit)} in our context.    
When the distance from $\bx$ to $\mathcal{P}$ is increasing, 
the length of $S^1$-fiber at the infinity 
of the Taub-NUT space $\dC_{TN, \lambda}^2\times\dC^{n-2}$ is decreasing which corresponds to $\lambda$ is increasing. The next level of bubble corresponds to $\lambda\rightarrow\infty$, and this amounts to getting the tangent cone at infinity of the product $\dC_{TN}^2\times\dC^{n-2}$, which is $\dR^{2n-1}\equiv \dR^3\times \dC^{n-2}$. This is of codimension-$1$ collapse, with locally uniformly bounded curvature away from $\{0^{3}\}\times \dC^{n-2}$.  When $\bm{x}$ is getting further away from $\mathcal{P}$, the size of $D$ will be shrinking and the next level of bubble is   $D \times \mathbb{R}$. This is again a codimension-$1$ collapse, with locally uniformly bounded curvature away $P=H\times \{0\}\subset D\times \dR$.  As we move further away from $\mathcal P$, we may still see the bubble $D\times \R$, but this time the codimension-1 collapse is with locally bounded curvature and $P$ gets pushed to infinity. 
Finally, as $\bx$ moves close to the boundary $\p\M_T$, the metrics will converge to the incomplete Calabi model metrics $\mathcal C^n_-$ and $\mathcal C^n_+$, which corresponds to applying the construction in Section \ref{ss:Calabi model space} to the line bundle $L^{k_-}$ and $L^{k_+}$ over $D$. 

\vspace{0.5cm}

Now we  make a subdivision for $\M_T$  (see Figure \ref{f:neck-subdivision}).
Given $\bx\in \mathcal M_T$.  Denote by $r(\bx)$ the distance from $\pi(\bx)\in Q$
to $P$ with respect to the product metric $g_Q = g_D + dz^2$ on the base $Q$. 

{\bf Region $\I_1$:}
This region consists of the points $\bm{x}$ which satisfy  
$r(\bx)\leq T^{-1}$. 

{\bf Region $\I_2$:}
This region consists of the points $\bx$  which satisfy 
$\frac{ T^{-1} }{2}\leq r(\bx) \leq 1$. 
  
{\bf Region $\I_3$:}
This region consists of the points 
 $\bx$  which satisfy 
$r(\bx) \geq \frac{1}{2}$ and   
 $T_- \leq z(\bx) \leq T_+$.

\vspace{0.5cm}

Notice that the above regions completely cover the neck $\M_T$, and
 overlapping regions have the same geometric behavior.
So we will just ignore these overlaps in the following discussions.

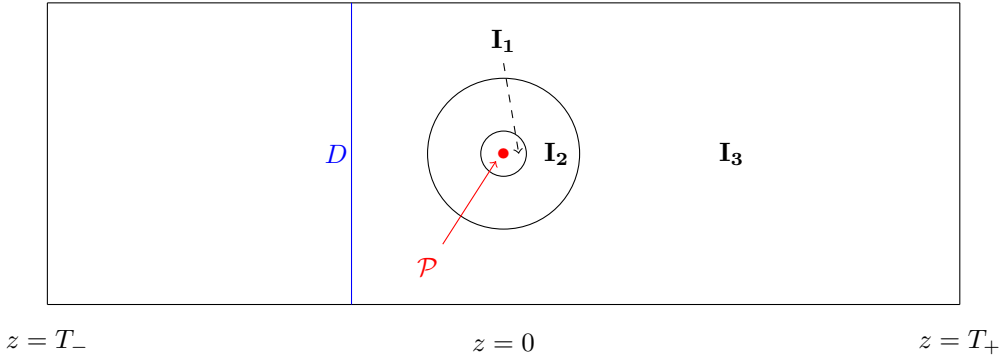
\begin{figure}
\begin{tikzpicture}
\draw (-6, 2) to (6, 2); 
\draw (-6, -2) to (6, -2); 
\draw (0,0) circle (3 mm); 
\draw (0, 0) circle (10mm);
\draw (-6, 2) to (-6, -2); 
\draw (6, 2) to (6, -2); 
\draw[blue] (-2, 2) to (-2, -2);
\node[blue] at (-2.2, 0) {$D$};  

\node[red] at (0, 0) {$\bullet$}; 
\node at (-6, -2.5) {$z=T_-$}; 
\node at (6, -2.5) {$z=T_+$}; 
\node at (0, -2.5) {$z=0$}; 
\node at (0, 1.5) {$\I_1$};
\draw[dashed, ->] (0, 1.2) to (0.2,0); 
\node[red] at (-1, -1.5) {$\mathcal{P}$};
\draw[ ->, red] (-0.8, -1.2) to (-0.1, -0.1);  
\node at (0.7, 0) {$\I_2$};
\node at (3, 0) {$\I_3$}; 
\end{tikzpicture}
\caption{Subdivision of $\M_T$ into various regions}
 \label{f:neck-subdivision}
\end{figure}

 For convenience, we define a continuous function  $\fr$  which is uniformly equivalent to $r(\bx)$: \begin{align}\label{e:definition-fr}
\fr(\bx) \equiv  \begin{cases}
T^{-1},  &  r(\bx) \leq  T^{-1}, \\
r(\bx),  & 2T^{-1}\leq r(\bx) \leq  \frac{1}{4},\\
1,       & r(\bx)\geq \frac{1}{2}.
\end{cases}
\end{align}
The following proposition gives an explicit lower bound estimate of the $C^{k,\alpha}$-regularity scale on $\mathcal{M}_T$.

\begin{proposition}
\label{p:regularity-scale}
Let us define \begin{align}\label{e:explicit-scales}
\fs(\bx) \equiv \Big(\frac{L_T(\bx)}{T}\Big)^{\frac{1}{2}}\cdot \fr(\bx) \cdot T^{\frac{1}{n}}, \quad \bx\in \M_T,
\end{align}
where $L_T(\bx)$ is defined in \eqref{e:LT z}. 
Then the following properties hold:
\begin{enumerate}
\item Given $k\in\dZ_+$ and $\alpha\in(0,1)$, 
there is a uniform constant $\underline{v}_0$ (depending on $k$ and $\alpha$) such that for all sufficiently large $T$ and $\bx\in\mathcal{M}_T$, it holds that
\begin{equation} \label{e: regularity scale bound}
 r_{k,\alpha}(\bx) \geq \underline v_0\cdot \mathfrak{s}(\bx),\end{equation}
where $k\leq 2$ if $\bx$ is in Region $\I_1$, and for all $k\in\dZ_+$ if $\bx$ is in Region $\I_2$ and $\I_3$.

\item 
 There are uniform constants $\underline{v}_0>0$ and $\overline{v}_0>0$ independent of $T\gg1$ such that for each $\bx\in \M_T$, we have
 \begin{equation}
\underline{v}_0 \leq \frac{\fs(\by)}{\fs(\bx)} \leq \overline{v}_0 \quad \text{for all} \quad \by\in B_{\fs(\bx)/4}(\bx).
 \end{equation}

 \item Let $T_j$ be a sequence tending to infinity. Then for a sequence of points $\bx_j\in\M_{T_j}$, the rescaled spaces $(\M_{T_j}, \fs(\bx_j)^{-2} \cdot g_{T_j},\bx_j)$ converge in the pointed Gromov-Hausdorff sense to one of the following as $T_j\to+\infty$:
 \begin{itemize}
 \item the Riemannian product $\dC_{TN}^2\times \dC^{n-2}$ where $\dC_{TN}^2$ is the Taub-NUT space;
 \item the product Euclidean space  $\dR^3\times \dC^{n-2}$;
 \item the cylinder $D\times\dR$;
 \item the Calabi model spaces $(\mathcal C_{\pm}^n, g_{\mathcal C_{\pm}^n})$.
 \end{itemize}

\end{enumerate}

\end{proposition}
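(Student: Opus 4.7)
The strategy is to identify, in each of the three regions $\I_1, \I_2, \I_3$, an explicit model geometry such that after rescaling the metric $g_T$ by $\mathfrak{s}(\bx)^{-2}$ on a ball of radius comparable to $\mathfrak{s}(\bx)$ (passing to the universal cover where the $S^1$-orbits collapse), we obtain a metric that is $C^{k,\alpha}$-close to the model. The regularity scale bound \eqref{e: regularity scale bound} and the classification of bubble limits in item (3) will then both follow from this. Item (2) is a separate elementary comparison, which I address last.

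\textbf{Region $\I_3$ (far from $\mathcal{P}$).} Here $\mathfrak{r}\equiv 1$ and $\mathfrak{s}(\bx) = (L_T(z)/T)^{1/2}\cdot T^{1/n}$, which matches exactly the transverse diameter of a fiber of $\pi$ given by Lemma~\ref{l:neck cylindrical compare}. I distinguish three subcases according to where $z=z(\bx)$ sits. If $z$ lies in a compact subset of $(T_-, T_+)$ bounded away from the ends, then \eqref{e:greens-current-exp-asymp} gives $\psi \approx k_\pm z \cdot \omega_D$ while \eqref{e:h-asymptotics} gives $h \approx T^{2-n}(T+k_\pm z)^{n-1}$; after rescaling by $\mathfrak{s}(\bx)^{-2}$ the base $D$ keeps bounded size while the $S^1$-fiber shrinks, so on the universal cover (unwrapping the $S^1$ direction) the limit is the Riemannian cylinder $D\times \mathbb{R}$. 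Near either end $z=T_\pm$, the K\"ahler potential formulas \eqref{neck potential negative side}, \eqref{neck potential positive side} show directly that $\omega_T$ is exponentially close to the Calabi ansatz potential for $L^{k_\pm}|_D$, giving convergence to $(\mathcal{C}^n_\pm, g_{\mathcal{C}^n_\pm})$. In both cases $g_T$ is smooth on this region and the $C^{k,\alpha}$-closeness of the rescaled metric to the model for all $k$ follows from the smooth asymptotic estimates in \eqref{e:greens-current-exp-asymp} and \eqref{e:h-asymptotics} (again, applied on the universal cover when relevant), so $r_{k,\alpha}\gtrsim \mathfrak{s}$ for all $k$.

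\textbf{Region $\I_2$ (intermediate annulus around $\mathcal{P}$).} Here $T^{-1}\leq r\leq 1$, so $|z|\leq 1$ and the expansions \eqref{e:h-bounded-z}, Proposition~\ref{p:complex-Green-expansion} apply: $h = T + \tfrac{1}{2r}+O'(r) + T^{-1}B(z)$ and $\tilde\omega = T\omega_D + \psi$ with the leading model part of $\psi$ given by $\frac{\sqrt{-1}}{4r}dy\wedge d\bar y$ plus controlled lower order terms. Rescale $g_T$ by $\mathfrak{s}(\bx)^{-2}$ and pass to the universal $S^1$-cover. The choice $\mathfrak{s} = T^{1/n}\cdot r$ makes the rescaled base metric, in the normal coordinates from Section \ref{ss:geodesic-coordinates}, converge to the flat metric on $\mathbb{R}^3 \times \mathbb{C}^{n-2}$; simultaneously the rescaled $S^1$-orbit length goes to zero (since $h \sim T$ dominates so $h^{-1/2}$ is of order $T^{-1/2}$, which after dividing by $\mathfrak{s}$ is of order $(Tr^2 T^{2/n})^{-1/2}\to 0$), producing codimension-$1$ collapse to $\mathbb{R}^3\times \mathbb{C}^{n-2}$ with locally bounded curvature away from $\{0\}\times \mathbb{C}^{n-2}$. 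The Schauder-type estimate in the universal cover yields $r_{k,\alpha}\gtrsim \mathfrak{s}$ for every $k$, since $g_T$ is smooth on this region.

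\textbf{Region $\I_1$ (deepest scale).} Here $r\leq T^{-1}$, so $\mathfrak{r}=T^{-1}$ and $\mathfrak{s}\sim T^{-1+1/n}$. Rescale $g_T$ by $T^{2(n-1)/n}$; the reduced K\"ahler structure on the universal cover is governed by \eqref{omegahequation} with reduced data $\tilde\omega = T\omega_D+\psi$ and $h=T+\tfrac{1}{2r}+\cdots$. After the rescaling, the base becomes flat $\mathbb{C}\oplus \mathbb{R}$ (with a $\mathbb{C}^{n-2}$ factor coming from the tangential directions to $H$ in $D$), and $h$ converges to $\frac{1}{2r}+\lambda$ for some $\lambda>0$ determined by the limit of the quotient $\mathfrak{s}(\bx)^{-2}\cdot h^{-1}$ at the reference point. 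Matching with the explicit formula \eqref{Taub-NUT} shows the pointed limit is exactly the Riemannian product $\mathbb{C}^2_{TN}\times \mathbb{C}^{n-2}$. The $C^{2,\alpha}$-regularity bound in Proposition \ref{p:C2alpha} then gives $r_{2,\alpha}(\bx)\gtrsim \mathfrak{s}(\bx)$; the restriction to $k\leq 2$ in item (1) reflects exactly this limitation (see Remark \ref{r:C2alpharegularity}). The main technical obstacle of the whole proposition is here: one must verify that the rescaled K\"ahler forms converge in $C^{2,\alpha}$ (not merely Gromov-Hausdorff), which requires re-running the local expansion arguments of Section \ref{ss:complex-greens-currents} in the rescaled coordinates and checking that the correction 1-form $\theta$ of Lemma \ref{l:theta-regularity} is controlled under rescaling.

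\textbf{Items (2) and (3).} The comparability in item (2) is a direct check from the definition \eqref{e:explicit-scales}: on a $\mathfrak{s}(\bx)/4$-ball the quantities $\mathfrak{r}$, $L_T(z)/T$, and $T^{1/n}$ are all individually comparable to their values at $\bx$, using the explicit formulas \eqref{e:definition-fr} and \eqref{e:LT z} together with the fact that $\mathfrak{s}(\bx) \leq \mathfrak{r}(\bx)$ when $\bx\in \I_1\cup\I_2$ (so the ball stays inside a single regime) and $\mathfrak{s}(\bx)$ is much smaller than $T_+ - T_-$ in $\I_3$. Item (3) is then extracted by noting that any pointed sequence $(\M_{T_j}, \mathfrak{s}(\bx_j)^{-2}g_{T_j}, \bx_j)$ falls, after passing to a subsequence, into one of the four regimes above, and the convergence is precisely the one exhibited in the corresponding step.
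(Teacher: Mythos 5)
Your overall strategy is the same as the paper's: the paper also proves the proposition by exhibiting, after rescaling by $\fs(\bx)^{-2}$, convergence to exactly these four models using the expansions \eqref{e:h-asymptotics}, \eqref{e:h-bounded-z}, Proposition \ref{p:complex-Green-expansion} and the $O'(s^3)$ expansion near $\mathcal{P}$ (the paper phrases item (1) as a contradiction/compactness argument, but the content of each case is the direct convergence you describe). The genuine problem is in your treatment of Region $\I_2$. The correct dichotomy is not by the fixed regions $\I_1$ versus $\I_2$ but by whether $r(\bx)\cdot T$ stays bounded or tends to infinity: the paper's Case (1) covers all points with $r(\bx_j)T_j\to\sigma_0<\infty$, which includes the inner part of $\I_2$, and there the rescaled limit is the Taub-NUT product with parameter $\sigma_0^2$, not $\dR^3\times\dC^{n-2}$. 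Your claim that throughout $\I_2$ the $S^1$-orbit collapses relative to $\fs$ is false in that regime, and your ratio computation drops the conformal factor $T^{\frac{2-n}{n}}$ in \eqref{e:g_T-submersion}: the orbit length in $g_T$ is $\sim T^{\frac{2-n}{2n}}h^{-1/2}$, so for $|z|\leq 1$ (where $h\sim T+\frac{1}{2r}$) the ratio of orbit length to $\fs(\bx)\sim T^{1/n}r$ is comparable to $(Tr)^{-1}$, which does not tend to $0$ when $rT$ is bounded (indeed your own expression $(Tr^2T^{2/n})^{-1/2}$ blows up there for $n\geq 3$, contradicting the asserted collapse). Consequently, for such points your argument ("codimension-one collapse with curvature bounded away from the singular slice, then Schauder on the universal cover") breaks down, because the point sits at bounded rescaled distance from the slice where the unwrapped curvature is not bounded; one must instead see the Taub-NUT model there, exactly as in the paper's Case (1) with $\sigma_0>1$, and this is also where the restriction to $k\leq 2$ (coming from the $C^{2,\alpha}$ regularity of Proposition \ref{p:C2alpha}) actually bites.

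Two smaller points. First, your justification of item (2) compares $\fs(\bx)$ (a $g_T$-length) with $\fr(\bx)$ (a $g_Q$-distance); the inequality ``$\fs\leq\fr$ on $\I_1\cup\I_2$'' is false as stated (there $\fs\sim T^{1/n}\fr$), and the correct statement is that the base part of $g_T$ is $\approx T^{2/n}g_Q$ for $|z|\lesssim 1$, so a $g_T$-ball of radius $\fs(\bx)/4$ projects to a $g_Q$-region in which $r$, and hence $\fr$ and $L_T/T$, vary by bounded factors. Second, in Region $\I_3$ you announce three subcases but only treat two; the case $|z(\bx_j)|\to\infty$ with $L_{T_j}(\bx_j)^{-1}T_j^{\frac{n-2}{n}}\to 0$ needs the translation/rescaling of the $z$-coordinate (as in the paper) and again yields $D\times\dR$ with $P$ pushed to infinity — harmless, but it should be stated.
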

\begin{remark}
The Calabi model space $(\mathcal C_{-}^n, \omega_{\mathcal C_{-}^n}, \Omega_{\mathcal{C}_-^n})$ (reps. $(\mathcal C_{+}^n, \omega_{\mathcal C_{+}^n}, \Omega_{\mathcal C_{+}^n})$ ) is defined as the $n$ dimensional Calabi-Yau manifold with boundary, obtained by applying the construction in Section \ref{ss:Calabi model space} with $\omega_D$ replaced by $k_-\omega_D$ (resp. $-k_+\omega_D$), and with the interval $z\in[1, \infty)$. When $b_1(D)>0$, we also make the choice of the corresponding connection 1-form similar to the discussion in Section \ref{sss:complex manifold}, so that the underlying complex manifold is naturally embedded into the holomorphic line bundle given by $L_-$ (resp. $L_+$).

\end{remark}

\begin{remark}
In \eqref{e:explicit-scales},  $\frac{L_T(\bx)}{T} = 1 +  O(T^{-1})$ if $|z(\bx)|$ is bounded. 
\end{remark}

\begin{proof}
[Proof of Proposition \ref{p:regularity-scale}. 
]

We will prove \eqref{e: regularity scale bound}
by contradiction.
Suppose that no such uniform constant $\underline{v}_0$ exists with respect to fixed $k\in\dZ_+$ and $\alpha\in(0,1)$. That is, there are a sequence $T_j\to+\infty$ and  a sequence of points $\bx_j\in \M_{T_j}$ such that 
\begin{align}
\frac{r_{k,\alpha}(\bx_j)}{\fs(\bx_j)}\to 0.\label{e:rescaled-r-limiting-to-zero}
\end{align}
Let us consider the rescaled sequence $(\M_{T_j},\tilde{g}_{T_j},\bx_j)$ with $\tilde{g}_{T_j}\equiv \fs(\bx_j)^{-2} \cdot g_{T_j}$ as $T_j\to+\infty$. In the proof, we will show that $C^{k,\alpha}$-regularity scale at $\bx_j$ with respect to $\tilde{g}_{T_j}$ is uniformly bounded from below as $T_j\to +\infty$ which contradicts \eqref{e:rescaled-r-limiting-to-zero}.
We will produce a contradiction in each of the following cases depending upon the location of $\bx_j$ in $\M_{T_j}$. We will also identify the rescaled limit in each case. 

\vspace{0.5cm}

\begin{flushleft}
{\bf Case (1):}  There is a constant $\sigma_0\geq 0$ such that $r(\bx_j)\cdot T_j\to   \sigma_0$ as $j\rightarrow\infty$.
\end{flushleft}

In this case, by definition $\fs(\bx_j) = (1+O(T_j^{-\frac{1}{2}}))\cdot \fr(\bx_j) \cdot T_j^{\frac{1}{n}}$. It suffices to show that $r_{k,\alpha}(\bx_j)$ with respect to the rescaled metric $\fr(\bx_j)^{-2}\cdot T_j^{-\frac{2}{n}}\cdot g_{T_j}$ (still denoted by $\tilde{g}_{T_j}$) is uniformly bounded from below as $T_j\to+\infty$.

First, we consider the case $\sigma_0\leq 1$. 
Then by definition \eqref{e:definition-fr}, we have  $\fr(\bx_j)=  T_j^{-1}$.
Denote the rescaled K\"ahler form and the rescaled holomorphic volume form by 
\begin{equation}
\tilde{\omega}_{T_j} \equiv  T_j^{\frac{2n-2}{n}}\cdot\omega_{T_j},\quad \widetilde{\Omega}_{T_j} \equiv T_j^{n-1}\cdot\Omega_{T_j}.
\end{equation}
As in \eqref{Taub-NUT}, for a parameter $\lambda>0$, 
let us denote by
$(\omega_{TN,\lambda} ,\Omega_{TN,\lambda})$ the K\"ahler form and the holomorphic volume form of the Taub-NUT space
$\dC_{TN, \lambda}^2$.

Clearly $r(\bx_j)\rightarrow0$.  Passing to a subsequence we may assume $\pi(\bx_j)$ converges to $p\in P$. 
Denote $\bm{p}\equiv \pi^{-1}(p)\in\mathcal P$. 
In the following we will prove that as $T_j\rightarrow\infty$,  \begin{equation} \label{I1convergence}(\M_{T_j}, \tilde{\omega}_{T_j}, \widetilde{\Omega}_{T_j}, \bx_j)\xrightarrow{C^{2,\alpha}}(\dC_{TN,1}^2\times \dC^{n-2}, \omega_{TN,1}\oplus \omega_{\dC^{n-2}}, \Omega_{TN, 1}\wedge \Omega_{\C^{n-2}}, \bm{0}^*)\end{equation} in the pointed $C^{2,\alpha}$-topology 
and the fixed point $\bm{p}$ converges to $\bm{0}^*\equiv (\bm{0}^2, \bm{0}^{n-2})$, where $\bm{0}^2$ is the origin of $\dC_{TN,1}^2$ and $(\omega_{\dC^{n-2}}, \Omega_{\C^{n-2}})$ is the flat K\"ahler structure on $\C^{n-2}$ with the origin $\bm{0}^{n-2}\in\dC^{n-2}$. 

As in Section \ref{sss:metric compactification}, we work with the local coordinate system $\{y, \bar y, z, w_2', \bar w_2', \cdots, w_{n-1}', \bar w_{n-1}'\}$ in a  neighborhood of $p$ in $Q$, which gives a local coordinate system  in a neighborhood of $\bm{p}$ in $\mathcal M_T$, denoted by $\{u_1, \bar u_1, u_2, \bar u_2, w_2', \bar w_2', \ldots, w_{n-1}', \bar w_{n-1}'\}$.  From the computation in Section \ref{sss:metric compactification}, 
\begin{equation}
T_j^{\frac{2n-2}{n}}\cdot\omega_{T_j} =\Big(T_j\cdot\omega_{TN, T_j}\oplus T_j^2\cdot\omega_{\C^{n-2}}\Big)+T_j^2\cdot\pi^*(\omega_D-\omega_{\C^{n-1}})+T_j\cdot O'(s^3)+T_j\cdot d(s^2\Gamma),
\end{equation}
where $\omega_{TN, T_j}$ is the Taub-NUT metric on $\C^2_{u_1, u_2}$ with a parameter $T_j$, and
\begin{equation}\omega_{\C^{n-2}}\equiv   \frac{\sq}{2}\sum_{j= 2}^{n-1} dw_j'\wedge d\bar w_j', \ \ \omega_{\C^{n-1}}\equiv   \frac{\sqrt{-1}}{2}\cdot dy\wedge d\bar y+\frac{\sq}{2}\sum_{j= 2}^{n-1} dw_j'\wedge d\bar w_j'.
\end{equation}
Notice that we have already used the relations \begin{equation}\pi^*(O'(r^p))=O'(s^{2p}) \ \text{for}\  p\geq 1,\quad  \pi^*(dy)=\tO (s),  \quad \pi^*(dz)=\tO (s).\end{equation} 
We perform a change of coordinates
\begin{equation}
z=T_j^{-1}\cdot\underline{z}, \quad y=T_j^{-1}\cdot\underline{y}, \quad  
w_j'=T_j^{-1}\cdot\underline{w_j}',\quad   
u_k=T_j^{-1/2}\cdot\underline{u_k}, 
\end{equation}
and denote 
${\bm w}\equiv (\underline w_2', \cdots,  \underline w_{n-1}')$,   ${\bm u}\equiv (\underline u_1, \underline u_2)$,  $\underline{s}=|{\bm u}|$.
In these rescaled coordinates, the rescaled K\"ahler structure $(T_j\cdot \omega_{TN, T_j},T_j\cdot\Omega_{TN,T_j}) $ can be identified with $(\omega_{TN, 1},\Omega_{TN, 1})$. Moreover, we have that 
\begin{align}
T_j^2\cdot \pi^*(\omega_D-\omega_{\C^{n-1}}) &= O((|{\bm w} |+|{\bm u}|^2)T_j^{-1}),\\
T_j\cdot O'(s^3) &= O(T_j^{-3/2}\underline s^3),\\
T_j \cdot d(s^2\Gamma) &= O(T_j^{-3/2}\underline s).
\end{align}
The above computations imply that on the region $|\bm w|+|\bm u|\leq C$ for a fixed $C>0$. Therefore, \begin{equation}|T_j^{\frac{2n-2}{n}}\omega_T-(\omega_{TN, 1}\oplus\omega_{\C^{n-2}})|_{C^{2, \alpha}}=O(T_j^{-1}),
\end{equation}
where the norm is measured with respect to the product metric 
$\omega_{TN, 1}\oplus\omega_{\C^{n-2}}$.  
Similarly, one can also obtain the expansion for the rescaled holomorphic form $\widetilde{\Omega}_{T_j}$,  
\begin{equation}
\widetilde{\Omega}_{T_j} = T_j^{n-1}\cdot\Omega_{T_j}=\Omega_{TN, 1}\wedge d\underline w_2'\wedge \cdots\wedge d\underline w_{n-1}'+O((|{\bm w}|+|{\bm u}|^2)T_j^{-1}).
\end{equation}
This implies that the convergence \eqref{I1convergence} holds, and hence
 there is  $\underline{v}_0>0$ is independent of $T_j\gg1$ such that under the rescaled metric $\tilde{g}_{T_j}$,
\begin{equation}
r_{2,\alpha}(\bx_j)\geq  \underline{v}_0,
\end{equation}
which contradicts \eqref{e:rescaled-r-limiting-to-zero}.  This completes the proof in the case $\sigma_0\leq 1$.

In the case $\sigma_0>1$, the proof is the same. We only need to notice that the K\"ahler structure on the limiting Taub-NUT space is given by  
\begin{align}\label{e:parameter-TN}
\begin{cases}
\omega_{TN,\sigma_0^2}  \equiv  (\frac{1}{2r}+\sigma_0^2)\cdot \frac{\sq}{2}\cdot dy\wedge d\bar{y}
+dz\wedge \Theta_0  
\\
\Omega_{TN,\sigma_0^2} \equiv \sq ((\frac{1}{2r}+\sigma_0^2)\cdot dz + \Theta_0)\wedge dy.
\end{cases}
\end{align}
The rest of the computations are the same.
So the proof in Case (1) is done.

\vspace{0.5cm}

\begin{flushleft}
{\bf Case (2):}  $r(\bx_j)\cdot T_j\rightarrow\infty$ and $r(\bx_j)\rightarrow 0$ as $j\rightarrow\infty$.\end{flushleft}

In this case, by definition $\fs(\bx_j) =(1+O(T_j^{-\frac{1}{2}}))\cdot r(\bx_j)\cdot T_j^{\frac{1}{n}}$. It suffices to show that  with respect to the rescaled metric 
$r_j^{-2}\cdot T_j^{-\frac{2}{n}}\cdot g_j$ (still denoted by $\tilde{g}_{T_j}$)
with $r_j\equiv r(\bx_j)$, the regularity scale
$r_{k,\alpha}(\bx_j)$ is uniformly bounded from below, which contradicts \eqref{e:rescaled-r-limiting-to-zero}.

In the following computations, it is more convenient to consider the Riemannian metric $g_T$ corresponding to $\omega_{T_j}$, which is given by \begin{equation}g_{T_j}=T_j^{\frac{2-n}{n}}\cdot\Big(\pi^*(T_j\cdot g_0+g_1+h_{T_j}dz^2)+h_{T_j}^{-1}\cdot \Theta^2\Big).\label{e:g_T-submersion-2}
\end{equation}
Again we emphasize that $\Theta$ is  independent of $T_j$.

Since $r_j\rightarrow 0$, we may assume that $\pi(\bx_j)$ converges to a fixed point $p\in P$. In a neighborhood of $P$ in $Q$, we have local coordinates $\{y, \bar y, z, w_2', \bar w_2', \cdots, w_{n-1}', \bar w_{n-1}'\}$ as in Section \ref{ss:complex-greens-currents} and we rescale them by
\begin{equation}
z= r_j\cdot \underline z, \ y= r_j\cdot\underline y, \ 
w_p'= r_j\cdot\underline w_p',\  \  p=2, \ldots, n-1. 
\label{e:rescaled-coordinates}
\end{equation}
Notice that $\pi(\bx_j)$ has a definite distance away from the subspace $\{y=z=0\}\subset P$ under  $\tilde{g}_{T_j}$.

We consider the pointed Gromov-Hausdorff limit of the metrics $(\mathcal M_T, \tilde g_j, \bx_j)$. Notice that
 \begin{equation}\tilde{g}_{T_j}=r_j^{-2}\cdot T_j^{-1}\cdot\Big(\pi^*(T_jg_0+g_1+h_{T_j}dz^2)+h_{T_j}^{-1}\Theta^2\Big).
\end{equation}
As before, we can see that in the rescaled coordinates, the first term converges smoothly to $g_{\C^{n-2}}\oplus dz^2$ away from the subspace $\{y=z=0\}$, where $g_{\C^{n-2}}$ denotes the flat metric on $\C^{n-2}_{w_2', \cdots, w_{n-1}'}$. On the other hand, since $h_{T_j}$ has uniformly positive lower bound in this region (by \eqref{e:h-bounded-z}), and $T_j\cdot r_j\rightarrow +\infty$, it is easy to see that the length of the $S^1$-fibers tends to zero uniformly. 

Analyzing more closely using the behaviors of $\psi, h_{T_j}$ and $\Theta$ near $p$ (see \eqref{e:singular-2-form-expansion}, \eqref{e:h-bounded-z} and \eqref{e:Theta-near-p}), one can see that the metrics $\tilde{g}_j$ are collapsing with uniformly bounded curvature away from the subspace $\{y=z=0\}\subset P$. Moreover, around $\bx_j$, if we pass to the local universal cover, we have $C^{k,\alpha}$-bounded geometry in a ball of definite size. This already shows that there is a constant $\underline{v}_0>0$ independent of $T_j$ such that $r_{k,\alpha}(\bx_j)\geq \underline{v}_0$ with respect to $\tilde{g}_{T_j}$, which contradicts \eqref{e:rescaled-r-limiting-to-zero}. 

With more analysis one can actually show that the rescaled metrics $(\mathcal M_{T_j}, \tilde g_{T_j}, \bx_j)$ converges in the pointed Gromov-Hausdorff sense to the product Euclidean space  $\R^3\times \C^{n-2}$. The details follow from explicit but lengthy tensor computations, so we omit them. Notice that $\R^3\times \C^{n-2}$ is 
the tangent cone at infinity of the product space $\C^2_{TN,1}\times \C^{n-2}$ which appears as the rescaled limit in Case (1).

\vspace{0.5cm}

\begin{flushleft}

{\bf Case (3):} There is a constant $\underline{T}_0>0 $ such that $r(\bx_j)\geq   \underline{T}_0$  and $L_{T_j}(\bx_j)^{-1}\cdot T_j^{\frac{n-2}{n}}\rightarrow 0$ as $j\rightarrow\infty$. 
\end{flushleft}

In this case,  $\fs(\bx_j) = \Big(\frac{L_{T_j}(\bx_j)}{T_j}\Big)^{\frac{1}{2}}\cdot r(\bx_j) \cdot T_j^{\frac{1}{n}}$. Passing to a subsequence there are two sub-cases.

First, we consider the case  $z(\bx_j)\to  z_0$ and $r(\bx_j)\to r_0>0$. Then $\fs(\bx_j) = (1+O(T_j^{-\frac{1}{2}}))\cdot r_0 \cdot  T_j^{\frac{1}{n}}$. So it suffices to work with the rescaled metric  
$r_0^{-2}\cdot T_j^{-\frac{2}{n}}\cdot g_{T_j}$ (again denoted by $\tilde{g}_{T_j}$) and show that the regularity scale
$r_{k,\alpha}(\bx_j)$ is uniformly bounded from below. The rescaled metric $\tilde{g}_{T_j}$ has the explicit form
\begin{equation}
\tilde{g}_{T_j}=r_0^{-2}\cdot T_j^{-1}\cdot\Big(\pi^*(T_j\cdot g_0+g_1+h_{T_j}\cdot dz^2)+h_{T_j}^{-1}\cdot\Theta^2\Big).\end{equation}
Now using the asymptotics of $\psi$ and $h_{T_j}$ in \eqref{e:singular-2-form-expansion} and \eqref{e:h-bounded-z}, we see that $\tilde{g}_{T_j}$ converges smoothly as tensors to $\pi^*(g_0+dz^2)$  away from $P$.  Since the $S^1$-fibers are collapsing, and we have uniformly bounded $C^{k,\alpha}$-geometry on a definite size ball on the local universal cover around $\bx_j$. This implies that there is a constant $\underline{v}_0>0$ such that $r_{k,\alpha}(\bx_j)\geq \underline{v}_0$ under the metric $\tilde{g}_{T_j}$, which contradicts to \eqref{e:rescaled-r-limiting-to-zero}. 
Again with a little more work one can see that the rescaled metrics $(\mathcal M_{T_j}, \tilde g_j, \bx_j)$ converges in the pointed Gromov-Hausdorff topology to the product space $D\times \dR$, and $S^1$-fibers collapse. The collapsing is with uniformly bounded curvature away from $P$.  We also omit the details. Notice that this limit is also related to the rescaled limit in Case (2): Any point $p\in D\times\dR$ has a tangent cone $\R^3\times \C^{n-2}$.

Next, let us consider the case $|z_j|\to+\infty$ and $L_{T_j}(\bx_j)^{-1}\cdot T_j^{\frac{n-2}{n}}\rightarrow 0$, where $z_j\equiv z(\bx_j)$. 
Without loss of generality, we may assume $z_j<0$. First, we have $\fs(\bx_j)= \Big(\frac{L_j}{T_j}\Big)^{\frac{1}{2}} \cdot T_j^{\frac{1}{n}}$, where $L_j\equiv L_{T_j}(\bx_j)$.
Then the rescaled metric $\tilde{g}_{T_j}\equiv \fs(\bx_j)^{-2}\cdot g_{T_j}$ is given by \begin{align}
\tilde{g}_j=L_j^{-1}\cdot\Big(\pi^*((T_j\cdot g_0+g_1)+h_{T_j}\cdot dz^2)+h_{T_j}^{-1}\cdot\Theta^2\Big).\label{e:cylinder-rescaled-metric}\end{align}
We perform a change of coordinate
$z = z_j + (\frac{T_j}{L_j})^{\frac{n-2}{2}}\cdot w$.
Then using the asymptotics of $\psi$ and $h_{T_j}$ in \eqref{e:singular-2-form-expansion} and \eqref{e:h-asymptotics}, we can see that the $S^1$-fibers are collapsing. The pointed Gromov-Hausdorff limit is again the cylinder $D\times\dR$ and the collapsing sequence has uniformly bounded curvature. It is then easy to obtain a contradiction. 

Geometrically, the above two situations are related as follows. The rescaled limit $D\times\dR_z$ in the second situation can be viewed as translating $D\times\dR_z$ along $z$ towards $\pm\infty$, so that the singular set $P$ disappears.

\vspace{0.5cm}

\begin{flushleft}
{\bf Case (4):} There is a constant $c_0>0$ such that $L_{T_j}(\bx_j)^{-1}\cdot T_j^{\frac{n-2}{n}}\to  c_0$.
\end{flushleft}

In this case, by definition $\fs(\bx_j) = (c_0)^{-1}\cdot (1+o(1))$. 
One can see that $d_{g_j}(\bx_j,\p\M_{T_j})$ is uniformly bounded. Without loss of generality we may assume $z(\bx_j)<0$.  We rescale the $z$-coordinate by 
\begin{equation}
w=T_j^{\frac{2-n}{n}}(T_j+k_-\cdot z). 
\end{equation}
Then using the asymptotics of $h_{T_j}, \psi$,  one can obtain convergence of the metric tensor $g_{T_j}$ in the $w$ coordinate. From this one easily see that $(\M_{T_j}, \tilde{g}_{T_j}, \bx_j)$ converges smoothly to the Calabi model space $(\mathcal{C}_-^n, g_{\mathcal{C}_-^n}, \bx_\infty)$ in the pointed Gromov-Hausdorff sense. Clearly we obtain a contradiction to \eqref{e:rescaled-r-limiting-to-zero}. 

Notice that the Calabi model space has a boundary. To relate to the limit in Case (3), one can think of the space $D\times \R$ as a pointed Gromov-Hausdorff limit of certain scale down of the Calabi model space at infinity. We choose a sequence of points towards infinity, and scale down so that the base $D$ has fixed size. Then  $\p\mathcal{C}_-^n$ gets pushed to infinity and also the $S^1$-fibers are collapsing. 
\end{proof}

\subsection{Fundamental estimates in the weighted H\"older spaces}

\label{ss:neck-weighted-analysis}

Based on the above detailed studies of the regularity scales, 
we are now ready to define the weighted H\"older space on the neck. 
To start with, 
let us recall the notation,
\begin{align}
\M_T &\equiv \Big\{\bx\in \mathcal{M}\Big|T_- \leq z(\bm{x})\leq T_+\Big\},
\\
\mathring{\mathcal{M}}_T &\equiv \Big\{\bx\in \mathcal{M}\Big|T_- \leq z(\bm{x}) \leq T_+,\  d_{\omega_T}(\bm{x}, \p\M_T)\geq 1\Big\}.
\end{align}

\begin{definition}
[Weight function] \label{d:weight-function}
Given fixed real parameters $n\geq 2$,  $T\gg1$, $\delta>0$, $\nu,\mu\in\dR$. For each $k\in\dN$, $\alpha\in(0,1)$, 
the weight function $\rho_{\delta,\nu,\mu}^{(k+\alpha)}$
is defined as follows, 
\begin{align}
\label{e:definition of weights}
\rho_{\delta,\nu,\mu}^{(k+\alpha)}(\bx)=e^{\delta\cdot U_T(\bx)}\cdot \fs(\bx)^{\nu+k+\alpha}\cdot T^{\mu},
\end{align}
where $\fs(\bx)$ is the regularity scale at $\bx$ given by Proposition \ref{p:regularity-scale} and 
\begin{align}
U_T(\bx)  \equiv T\Big(1-(\frac{L_T(\bx)}{T})^{\frac{n}{2}}\Big),
\quad 
L_T(\bx)  \equiv   L_T(z(\bx)) = T + L_0(z(\bx)),
\label{d:def-U_T}\end{align}
where the functions $L_T$ and $L_0$ are defined in \eqref{e:LT z}. 
\end{definition}

To better understand the weight function \eqref{e:definition of weights}, we give several remarks. 

\begin{remark}
In the case that $D$ is not flat, by the proof of Proposition \ref{p:regularity-scale}, one can see that $\mathfrak{s}(\bx)$ is uniformly equivalent to the $C^{k,\alpha}$-regularity scale $r_{k,\alpha}(\bx)$ at $\bx$.
\end{remark}

\begin{remark}
The function $e^{\delta\cdot U_T(\bx)}$
 is the dominating 
 factor at large scales on $\M_T$ which 
 behaves like an exponential function.  The term $U_T(\bx)$ is used for unifying the weighted analysis for different ``large scales'' on $\M_T$, which will be seen in the proof of Proposition \ref{p:neck-uniform-injectivity} in Section \ref{s:neck-perturbation}. For intuition, there are two cases in which $U_T$ has simple expressions:
 \begin{align}
 \begin{cases}
 U_T(\bx) = - L_0(z) , & n=2,
 \\
U_T(\bx) \approx - \frac{n}{2}\cdot L_0(z) , & n> 2,\ |z(\bx)|\ll T.
 \end{cases}
 \end{align}

\end{remark}

\begin{remark}
The constant factor $T^\mu$ in the definition of the weight function is needed to deal with the non-linear term in the application of the implicit function theorem (see Proposition \ref{p:nonlinear-neck}). When $n=2$ the non-linear term is quadratic and this constant term is unnecessary, but when $n>2$ we need to choose appropriate $\mu$ (see \eqref{e:fix-mu-neck}) so that the weight function has a uniform lower bound independent of $T$. 
\end{remark}

\begin{lemma}[Lower bound estimate for the weight function]\label{l:weight-function-lower-bound-estimate}
For fixed constants $\delta>0$, $\mu,\nu\in\dR$, $\alpha\in(0,1)$ and $k\in\dN$, then for all $T\gg1$ and $\bx\in\M_T$,
\begin{align}
\rho_{\delta,\nu,\mu}^{(k+\alpha)}(\bx)\geq \begin{cases}
T^{(\frac{1}{n}-1)(\nu+k+\alpha)+\mu}, & \nu+k+\alpha \geq 0, 
\\
T^{\frac{\nu+k+\alpha}{n}+\mu}, & \nu+k+\alpha < 0.
 \end{cases}
\end{align}

\end{lemma}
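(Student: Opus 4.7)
The plan is to reduce the estimate to a direct minimization of the three factors in \eqref{e:definition of weights} over the admissible range of $\bx\in\M_T$, using the explicit definitions of $\fs(\bx)$, $U_T(\bx)$, and $\fr(\bx)$. First I would collect the allowed ranges of the two scalar parameters $L_T(\bx)/T$ and $\fr(\bx)$: since by \eqref{e:define-T-plus-minus} one has $L_T(T_\pm) = T^{(n-2)/n}$ and $L_T$ attains its maximum $T$ at $z=0$, we have $L_T(\bx)/T\in [T^{-2/n},1]$ throughout $\M_T$; and by \eqref{e:definition-fr} we have $\fr(\bx)\in[T^{-1},1]$.

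Next, I would substitute these into the defining identity $\fs(\bx) = (L_T(\bx)/T)^{1/2}\cdot \fr(\bx)\cdot T^{1/n}$ and into $U_T(\bx) = T(1-(L_T(\bx)/T)^{n/2})\geq 0$, and analyze the product
\begin{equation*}
F(\bx) \equiv e^{\delta U_T(\bx)}\cdot \fs(\bx)^{\nu+k+\alpha}
\end{equation*}
in two cases, depending on the sign of $\nu+k+\alpha$. When $\nu+k+\alpha\geq 0$, both factors are minimized when $\fs(\bx)$ is as small as possible and $U_T(\bx)$ is as small as possible. The minimum of $\fs$ is attained by taking $\fr=T^{-1}$ and $L_T/T=1$ (simultaneously realized near the singular set $\mathcal{P}$ over $z=0$), giving $\fs_{\min} = T^{1/n-1}$, while at such points $U_T=0$. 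This yields $F\geq T^{(1/n-1)(\nu+k+\alpha)}$, and multiplying by $T^\mu$ gives the claimed bound.

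When $\nu+k+\alpha<0$, the factor $\fs^{\nu+k+\alpha}$ is minimized when $\fs$ is as large as possible, so one should take $\fr=1$ and maximize $L_T/T$. The key monotonicity observation is that along the one-parameter family $\fr=1$ and $x\equiv L_T/T$ varying in $[T^{-2/n},1]$, one computes
\begin{equation*}
F = e^{\delta T(1-x^{n/2})}\cdot x^{(\nu+k+\alpha)/2}\cdot T^{(\nu+k+\alpha)/n},
\end{equation*}
and the logarithmic derivative in $x$ is $-\tfrac{n\delta T}{2} x^{n/2-1}+\tfrac{\nu+k+\alpha}{2x}$, which is strictly negative for all $T$ large since $\delta>0$ and $\nu+k+\alpha<0$. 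Hence $F$ is monotone decreasing in $x$ on this slice, so its minimum is at $x=1$, where $U_T=0$ and $\fs=T^{1/n}$. Checking also that decreasing $\fr$ away from $1$ only increases $F$ (since $\fs$ decreases and the exponent is negative), we obtain $F\geq T^{(\nu+k+\alpha)/n}$, which combined with $T^\mu$ gives the stated lower bound.

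The argument is essentially a bookkeeping computation and presents no real obstacle; the only point requiring care is ensuring that the monotonicity analysis in the second case covers both variations in $L_T/T$ and in $\fr$ simultaneously, which is handled by the two-step argument above since the two variables decouple multiplicatively in $\fs$ while $U_T$ depends only on $L_T/T$.
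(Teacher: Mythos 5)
Your overall strategy---writing $\rho_{\delta,\nu,\mu}^{(k+\alpha)}$ in terms of $w=L_T(\bx)/T\in[T^{-2/n},1]$ and $\fr(\bx)\in[T^{-1},1]$ and minimizing over these ranges---is the same as the paper's, and your treatment of the case $\nu+k+\alpha<0$ is correct and essentially matches the paper (there the paper simply observes that $e^{\delta U_T}\geq 1$ and $w^{(\nu+k+\alpha)/2}\geq 1$ since $w\leq 1$, so your log-derivative computation is more machinery than needed, but it reaches the same conclusion, and your decoupling in $\fr$ is valid because replacing $\fr$ by its maximum $1$ can only decrease $\fs^{\nu+k+\alpha}$).

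The one place where your write-up has a real gap is the case $\nu+k+\alpha\geq 0$. There you discard the exponential factor (using only $e^{\delta U_T}\geq 1$) and rest the whole estimate on the claim $\fs_{\min}\sim T^{\frac1n-1}$. That claim does not follow from the ranges you recorded: from $w\geq T^{-2/n}$ and $\fr\geq T^{-1}$ alone one only gets $\fs\geq T^{-1/n}\cdot T^{-1}\cdot T^{1/n}=T^{-1}$, which is strictly smaller than $T^{\frac1n-1}$ and would not give the stated bound. What you need, and only assert in a parenthesis, is the geometric coupling between the two factors: by \eqref{e:definition-fr}, $\fr(\bx)<1$ forces $r(\bx)<\tfrac12$, hence $|z(\bx)|<\tfrac12$ and $L_T(\bx)/T=1+O(T^{-1})$, so $\fs(\bx)\geq c\,T^{\frac1n-1}$ on that set; while if $\fr(\bx)=1$ then $\fs(\bx)\geq T^{-1/n}\cdot T^{1/n}=1$. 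With that one line added your argument closes (up to a harmless $1+o(1)$ multiplicative constant coming from $L_T/T$ not being exactly $1$ near $\mathcal{P}$). It is worth noting that the paper's proof avoids needing this coupling altogether: it bounds $\fr^{\nu+k+\alpha}\geq T^{-(\nu+k+\alpha)}$ separately and keeps $w^{(\nu+k+\alpha)/2}$ together with $e^{\delta T(1-w^{n/2})}$ in a single one-variable function $F(w)$, whose minimum on $[T^{-2/n},1]$ is at $w=1$ because $F(T^{-2/n})=e^{\delta(T-1)}\,T^{-(\nu+k+\alpha)/n}\gg 1$ for $T$ large---that is, the exponential weight dominates the small power of $w$ at the end of the neck, so no joint minimization of $\fs$ is required and the bound comes out exact rather than up to a constant.
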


\begin{proof}

This lower bound estimate can be obtained by analyzing the regularity scale $\fs(\bx)$. 
Denote by $w=w(\bx) \equiv \frac{L_T(\bx)}{T}$ and recall that the two end points $T_-,T_+$ satisfy
\begin{align}
\begin{cases}
L_T(T_-) = T^{\frac{n-2}{n}}
\\
L_T(T_+) = T^{\frac{n-2}{n}}.
\end{cases}
\end{align}
Then we have
$w\in[T^{-\frac{2}{n}},1]$.  So it follows that
\begin{equation}
\rho_{\delta,\nu,\mu}^{(k+\alpha)}(\bx) = F(w)\cdot \fr(\bx)^{\nu+k+\alpha}\cdot  T^{\frac{\nu+k+\alpha}{n}+\mu}, 
\end{equation}
where $F(w) \equiv e^{\delta \cdot T(1-w^{\frac{n}{2}})}\cdot w^{\frac{\nu+k+\alpha}{2}}$.
By the definition of $\mathfrak{r}(\bx)$, immediately we have \begin{align}T^{-1}\leq\fr(\bx)\leq 1\end{align} for all $\bx\in\M_T$, so it follows that 
\begin{align}
\rho_{\delta,\nu,\mu}^{(k+\alpha)} (\bx)
\geq 
\begin{cases}
F(w)\cdot T^{(\frac{1}{n}-1)(\nu+k+\alpha)+\mu}, & \nu+k+\alpha\geq 0, 
\\
F(w)\cdot T^{\frac{\nu+k+\alpha}{n}+\mu}, & \nu+k+\alpha < 0, 
\end{cases}
\end{align}

Now it suffices to 
compute the lower bound of $F(w)$. To this end, 
there are two cases to analyze depending on the sign of $\nu+k+\alpha$. First, let $\nu+k+\alpha\leq 0$, then obviously $F(w)\geq F(1) = 1$
and hence
\begin{equation}
\rho_{\delta,\nu,\mu}^{(k+\alpha)}(\bx)\geq T^{(\frac{1}{n}-1)(\nu+k+\alpha)+\mu}.
\end{equation}
Next, we consider the case $\nu+k+\alpha>0$. Simple calculus shows that 
$F(w)$ achieves its minimum in $[T^{-\frac{2}{n}},1]$ either at $w= 1$ or at $w=T^{-\frac{2}{n}}$. Notice that 
$F(T^{-\frac{2}{n}})\gg F(1)$ as $T\gg1$.
This tells us that
\begin{equation}
\rho_{\delta,\nu,\mu}^{(k+\alpha)}(\bx)\geq T^{\frac{\nu+k+\alpha}{n}+\mu}.\end{equation}
The proof is done. 
\end{proof}

Using the above weight function, we  define weighted H\"older spaces as follows.

\begin{definition}
[Weighted H\"older norm] \label{d:weighted-space} Let $\mathcal{K}\subset\M_T$ be a compact subset. Then the  weighted H\"older norm of a tensor field $\chi\in T^{r,s}(\mathcal{K})$ of type $(r,s)$ is defined as follows:
\begin{enumerate}

\item The weighted $C^{k,\alpha}$-seminorm of $\chi$ is defined by
\begin{align}
[\chi]_{C_{\delta,\nu,\mu}^{k,\alpha}}(\bx) & \equiv  \sup\Big\{\rho_{\delta,\nu,\mu}^{(k+\alpha)}(\bx) \cdot\frac{|\nabla^k\tilde{\chi}(\tilde{\bx})- \nabla^k\tilde{\chi}(\tilde{\by})|}{(d_{\tilde{g}_T}(\tilde{\bx},\tilde{\by}))^{\alpha}}  \ \Big| \  \tilde{\by}\in B_{r_{k,\alpha}(\bx)}(\tilde{\bx})\Big\}\ \text{for}\ \bx\in\mathcal{K},
\\
[\chi]_{C_{\delta,\nu,\mu}^{k,\alpha}(\mathcal{K})} & \equiv \sup\Big\{[\chi]_{C_{\delta,\nu,\mu}^{k,\alpha}}(\bx)\Big|\bx \in \mathcal{K}\Big\},
\end{align}
where $r_{k,\alpha}(\bx)$ is the $C^{k,\alpha}$-regularity scale at $\bx$, $\tilde{\bx}$ denotes a lift of $\bx$ to the universal cover of $B_{2r_{k,\alpha}(\bx)}(\bx)$, 
the difference of the two covariant derivatives is defined in terms of parallel translation in $B_{r_{k,\alpha}(\bx)}(\tilde{\bx})$,  
and $\tilde{\chi}$, $\tilde{g}_T$ are the lifts of $\chi$, $g_T$ respectively.

\item The weighted $C^{k,\alpha}$-norm of $\chi$ is defined by
\begin{align}
\|\chi\|_{C_{\delta,\nu, \mu}^{k,\alpha}(\mathcal{K})}
  \equiv \sum\limits_{m=0}^k\Big\|\rho_{\delta,\nu,\mu}^{(m)} \cdot\nabla^m \chi\Big\|_{C^0(\mathcal{K})} + [\chi]_{C_{\delta,\nu,\mu}^{k,\alpha}(\mathcal{K})},
\end{align}
\end{enumerate}

\end{definition}

\begin{remark}
By definition, it is direct to check that
\begin{equation}
\|\chi\|_{C_{\delta,\nu,\mu}^k(\mathcal{K})} = \sum\limits_{m=0}^k \|\nabla^m \chi\|_{C_{\delta,\nu+m,\mu}^0(\mathcal{K})}.
\end{equation}

\end{remark}

With the above definition of the weighted H\"older space, we are ready to give a local uniform weighted Schauder estimate with respect to the Laplacian on the neck $(\M_T, \omega_T)$.

\begin{proposition}
[Weighted Schauder estimate, the local version] \label{p:local-weighted-schauder}
For every sufficiently large parameter $T\gg1$, let
 $\M_T$ be the neck region with 
an $S^1$-invariant K\"ahler metric $\omega_T$
constructed in Section \ref{ss:kaehler-structures}.
Then the following estimates hold:
\begin{enumerate}

\item (Interior estimate)
Given $k\in\{0,1\}$ and $\alpha\in(0,1)$, there is some uniform constant $C_{k,\alpha}>0$ 
such that for any 
$\bm{x}\in\mathring{\mathcal{M}}(T_-,T_+)$, $r\in(0,1/8]$, $u\in C^{k+2,\alpha}(B_{2 r\cdot \fs(\bx)}(\bx))$,
\begin{align}
&r^{k+2+\alpha}\cdot \|u\|_{C_{\delta,\nu,\mu}^{k+2,\alpha}(B_{r\cdot \fs(\bx)}(\bx))}
\nonumber\\
\leq & C_{k,\alpha} \Big(\|\Delta u\|_{C_{\delta,\nu+2,\mu}^{k,\alpha}(B_{2r\cdot \fs(\bx)}(\bx))} + \| u\|_{C_{\delta,\nu,\mu}^0(B_{2r\cdot \fs(\bx)}(\bx))}\Big),
\label{e:local-schauder}
\end{align}
where $\mathfrak{s}(\bx)$ is the function defined in \eqref{e:explicit-scales}. 
  
\item (Higher order estimate away from $\mathcal{P}$) If $\bx\in\mathring{\M}_T$ satisfies $r(\bx)\geq 2T^{-1}$,  then the uniform Schauder estimate \eqref{e:local-schauder} holds for all $k\in\dZ_+$, $\alpha\in(0,1)$, $r\in(0,r_0)$ with $r_0$ independent of $T$. 
\item (Boundary estimate)
For any $k\in\dZ_+$ and $\alpha\in(0,1)$, there exists some uniform constant $C_{k,\alpha}>0$ such that for all $\bx\in\p\mathcal{M}_T$, $r\in(0,1/8]$ and $u\in C^{k+2,\alpha}(B_{2r\cdot \fs(\bx)}^+(\bx))$,
\begin{align}
& r^{k+2+\alpha}\cdot \|u\|_{C_{\delta,\nu,\mu}^{k+2,\alpha}(B_{r\cdot \fs(\bx)}^+(\bx))}\nonumber
\\  \leq &  C_{k,\alpha} \cdot \Big(\|\Delta u\|_{C_{\delta,\nu+2,\mu}^{k,\alpha}(B_{2r\cdot \fs(\bx)}^+(\bx))}  +  \Big\| \frac{\p u}{\p n}\Big\|_{C_{\delta,\nu+1,\mu}^{k+1,\alpha}(B_{2r\cdot \fs(\bx)}^+(\bx)\cap \p M_T)} \nonumber\\
& + \| u\|_{C_{\delta,\nu,\mu}^0(B_{2r\cdot \fs(\bx)}^+(\bx))}\Big),
\label{e:boundary-local-estimate}
\end{align}
where  $B_s^+(\bx)\equiv B_s(\bx)\cap \mathcal{M}_T$ and $\frac{\p}{\p n}$ is the exterior normal vector field on $\p M_T$.

\end{enumerate}

\end{proposition}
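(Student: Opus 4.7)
The strategy is the standard rescaling (blow-up) argument combined with Proposition \ref{p:regularity-scale}. Fix a point $\bx\in\mathring{\M}_T$. By Proposition \ref{p:regularity-scale}(1), on the universal cover of $B_{2r_{k,\alpha}(\bx)}(\bx)$, the rescaled metric $\tilde{g}_T \equiv \fs(\bx)^{-2}\cdot g_T$ has $C^{k+2,\alpha}$-bounded geometry (with $k\leq 1$ in Region $\I_1$ and arbitrary $k$ in Regions $\I_2,\I_3$). The scale-invariance of the weighted norms is built into the definition: if $u$ on $B_{r\fs(\bx)}(\bx)$ corresponds under rescaling to $\tilde u$ on a ball of radius $r$ in the rescaled universal cover, then $|\nabla^m u|_{g_T} = \fs(\bx)^{-m}|\tilde\nabla^m\tilde u|_{\tilde g_T}$ and the rescaled H\"older quotient picks up the factor $\fs(\bx)^{k+\alpha}$, which is exactly the $\fs$-dependence in the weight $\rho_{\delta,\nu,\mu}^{(k+\alpha)}$.

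The second step is to show that the weight $\rho_{\delta,\nu,\mu}^{(k+\alpha)}$ is comparable up to a uniform constant on the ball $B_{2r\fs(\bx)}(\bx)$, so that it can be factored out of the rescaled estimate. The factor $T^\mu$ is constant, while $\fs(\by)/\fs(\bx)$ is uniformly pinched by Proposition \ref{p:regularity-scale}(2). For the exponential factor $e^{\delta U_T}$, I would verify directly from \eqref{d:def-U_T}, \eqref{e:g_T-submersion}, and the asymptotics \eqref{e:h-asymptotics}--\eqref{e:h-bounded-z} of $h_T$ that $|\nabla U_T|_{g_T}\cdot \fs(\bx)$ is uniformly bounded on $\M_T$: indeed $\nabla U_T$ only involves $\p_z U_T$ times $\nabla z$, and both $\p_z U_T$ and $|\nabla z|_{g_T}$ admit explicit bounds in terms of $L_T/T$, $h_T$, and the rescaling factor $T^{(2-n)/n}$, which combine to give a bound matched with the definition \eqref{e:explicit-scales} of $\fs$. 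Once this is done, applying the standard interior Schauder estimate on a unit ball of the rescaled universal cover (where we have $C^{k+2,\alpha}$ bounded geometry) and translating back via the rescaling yields \eqref{e:local-schauder}.

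For part (2), on Regions $\I_2$ and $\I_3$ the metric is smooth with uniform bounded geometry on every finite derivative order after rescaling by $\fs(\bx)$, so the same rescaling argument yields the higher-order Schauder estimate for all $k$. For part (3), note that $\p\M_T\subset\{z=T_\pm\}$ lies inside Region $\I_3$, where the metric is smooth and the rescaled limit from Proposition \ref{p:regularity-scale}(3) Case (4) is the Calabi model space $(\mathcal{C}^n_{\pm},g_{\mathcal{C}^n_{\pm}})$ (a smooth manifold with boundary, with bounded geometry). On the rescaled half-ball $B_{2r}^+(\tilde\bx)$ one applies the standard boundary Schauder estimate for the Laplacian with Neumann data on the smooth boundary; the weight $\rho_{\delta,\nu+1,\mu}^{(k+1+\alpha)}$ for $\p u/\p n$ is exactly what one gets by differentiating once more in the tangential-normal setting and descaling.

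The principal technical obstacle, beyond the bookkeeping above, is the verification that $U_T$ is indeed Lipschitz at scale $\fs(\bx)$ uniformly across all four geometric regimes identified in Proposition \ref{p:regularity-scale}(3) — the Taub-NUT bubble, the flat $\R^3\times\C^{n-2}$ bubble, the cylinder $D\times\R$, and the Calabi model ends. Each regime has a different rate at which $z(\bx)$, $h_T$, and $L_T/T$ behave, and one must check that the precise power of $(L_T/T)^{1/2}$ inserted in \eqref{e:explicit-scales} is exactly what makes $|\nabla U_T|_{g_T}\fs\leq C$ uniformly; fortunately this is the same balance condition that was used to identify the rescaled limits in Proposition \ref{p:regularity-scale}. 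The regularity ceiling $k\leq 1$ in part (1) is forced by Proposition \ref{p:C2alpha}, which only gives a $C^{2,\alpha}$ K\"ahler structure near $\mathcal{P}$; any higher-order estimate would require improving the regularity of the Green's current expansion in Theorem \ref{t:Green-expansion}, and this is precisely why the rest of the paper proceeds via implicit function theorem in $C^{2,\alpha}$ rather than in a smooth category (cf.\ Remark \ref{r:C2alpharegularity}).
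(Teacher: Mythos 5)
Your proposal is correct and follows essentially the same route as the paper: rescale by $\fs(\bx)^{-2}$, invoke the uniform bounded geometry of Proposition \ref{p:regularity-scale} on the local universal cover (and the Calabi model limit with Neumann Schauder theory at the boundary), and descale after checking that the weight $\rho_{\delta,\nu,\mu}^{(k+\alpha)}$ is uniformly comparable on balls of radius $\sim\fs(\bx)$. Your explicit verification that $|\nabla U_T|_{g_T}\cdot\fs(\bx)$ is uniformly bounded is exactly the substance behind the paper's terser appeal to Proposition \ref{p:regularity-scale}(2) for the comparability estimate \eqref{e:weight-function-control}, so no gap remains.
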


\begin{proof}

We first describe the proof of \eqref{e:local-schauder}. 
Without loss of generality, we only prove the estimate by assuming the scale parameter $r=1/8$ and $k=0$. The estimate in the general case $r\in(0,1)$ can be achieved by simple rescaling.  

Since we have shown in item (1) of Proposition \ref{p:regularity-scale} that, for any $\bx\in\mathring{\mathcal{M}}_T(T_-,T_+)$, under the rescaled metric 
$\tilde{g}_T = \fs(\bx)^{-2}\cdot  g_T$, the $C^{2,\alpha}$-regularity scale of
$B_{1/2}^{\tilde{g}_T}(\bm{x})$ is uniformly bounded from below (independent of $T$) for each $\alpha\in(0,1)$. So there is a uniform constant $C>0$ (independent of $T$) such that the standard Schauder estimate holds for $k\in\{0,1\}$ and for every $u\in C^{2,\alpha}(B_{2 r\cdot \fs(\bx)}(\bx))$ and $\bm{x}\in \mathring{\mathcal{M}}(T_-,T_+)$, \begin{equation}
\|u\|_{C^{2,\alpha}(B_{1/8}^{\tilde{g}_T}(\bm{x}))} \leq C\Big(\| \Delta_{\tilde{g}}u \|_{C^{\alpha}(B_{1/4}^{\tilde{g}_T}(\bm{x}))} +\|u\|_{C^{0}(B_{1/4}^{\tilde{g}_T}(\bm{x}))}\Big).\label{e:ball-standard-schauder}
\end{equation}
Then rescaling back to $g_T$ and using the definition of the weighted functions, we have that
\begin{align}
\begin{split}
&\sum\limits_{m=0}^2\|\rho_{\delta,\nu,\mu}^{(m)}(\bx)\cdot\nabla^m u\|_{C^{0}(B_{\fs(\bx)/8}(\bx))} 
+
[\rho_{\delta,\nu,\mu}^{(2+\alpha)}(\bx)\cdot\nabla^2 u]_{C^{0,\alpha}(B_{\fs(\bx)/8}(\bx))} \\
\leq &  C\cdot\Big(\|\rho_{\delta,\nu+2,\mu}^{(0)}(\bx)\cdot \Delta u\|_{C^{0}(B_{\fs(\bx)/4}(\bx))} 
+[\rho_{\delta,\nu+2,\mu}^{(\alpha)}(\bx)
\cdot \Delta u]_{C^{0,\alpha}(B_{\fs(\bx)/4}(\bx))}
\\
&+\|\rho_{\delta,\nu,\mu}^{(0)}(\bx) \cdot u\|_{C^0(B_{\fs(\bx)/4}(\bx))}\Big).
\end{split}
\end{align}
By the definition of the weighted norms, the next is to verify that for every $\bm{x}\in \mathring{\mathcal{M}}(T_-,T_+)$,
 the weight function $\rho_{\delta,\nu,\mu}^{(k+\alpha)}$ is roughly a constant in the ball $B_{\fs(\bx)/4}(\bm{x})$ in the sense that there is a uniform constant $C=C_{k,\alpha}>0$ (independent of $T$) such that for any $\by\in B_{\fs(\bx)/4}(\bm{x})$,
 \begin{equation}
C^{-1}\cdot  \rho_{\delta,\nu,\mu}^{(k+\alpha)}(\bx)\leq  \rho_{\delta,\nu,\mu}^{(k+\alpha)}(\by) \leq  C\cdot \rho_{\delta,\nu,\mu}^{(k+\alpha)}(\bx).\label{e:weight-function-control}
 \end{equation}
The verification of \eqref{e:weight-function-control} follows from the comparison on $\fs(\bx)$ given in item (2) of Proposition \ref{p:regularity-scale}. So we have that
\begin{align}
\begin{split}
&\sum\limits_{m=0}^2\|\rho_{\delta,\nu,\mu}^{(m)}\cdot\nabla^m u\|_{C^{0}(B_{\fs(\bx)/8}(\bx))} 
+
[\rho_{\delta,\nu,\mu}^{(2+\alpha)}\cdot\nabla^2 u]_{C^{\alpha}(B_{\fs(\bx)/8}(\bx))}\\
\leq &
C\Big(\|\rho_{\delta,\nu+2,\mu}^{(0)}\cdot \Delta u\|_{C^{0}(B_{\fs(\bx)/4}(\bx))}+[\Delta u]_{C_{\delta,\nu+2,\mu}^{0,\alpha}(B_{\fs(\bx)/4}(\bx))}+\|\rho_{\delta,\nu,\mu}^{(0)} \cdot u\|_{C^0(B_{\fs(\bx)/4}(\bx))}\Big).
\end{split}
\end{align}
 Then we obtain the weighted Schauder estimate \eqref{e:local-schauder}.

The proof of item (2) is the same as item (1). We just notice that higher order estimate holds in a geodesic ball without touching the singular locus $\mathcal{P}$.

We now prove
item (3), which 
follows from the Schauder estimate for Neumann boundary problem. 
As before, we only consider $r=1$ and prove the estimate for $\bx\in\{T=T_-\}\subset \p M_T$.  
Let us choose the rescaled metric $\tilde{g} = \fs(\bx)^{-2} \cdot g$ with 
\begin{equation}
\fs(\bx) = (L_T(T_-))^{\frac{1}{2}}\cdot  T^{\frac{2-n}{2n}}=1.\end{equation}
By the proof of Proposition \ref{p:regularity-scale}, for $T\gg1$ sufficiently large, 
$(\M_T, \tilde{g}, \bx)$ is $C^k$-close to a fixed incomplete Calabi space $(\Ca_-^n, g_{\Ca_-^n},\bx_{\infty})$ for any $k\in\dZ_+$.
Moreover, the geodesic ball $B_{1/4}^+(\bx)$ with respect to the rescaled metric $\tilde{g}_T$ has uniformly bounded $C^{k,\alpha}$-geometry (independent of $T$) for any $k\in\dZ_+$. So the standard Schauder estimate for the Neumann boundary problem reads as follows (see Section 6.7 of \cite{GT} for instance):
\begin{align}
\|u\|_{C^{k+2,\alpha}(B_{1/8}^+(\bx))}
\leq   C_{k,\alpha} \Big(\|\Delta u\|_{C^{k,\alpha}(B_{1/4}^+(\bx))} + \Big\| \frac{\p u}{\p n}\Big\|_{C^{k+1,\alpha}(B_{1/4}^+(\bx)\cap \p\M_T)}+ \| u\|_{C^0(B_{1/4}^+(\bx))}\Big),
\end{align}
where $\frac{\p}{\p n}$ is the exterior normal vector field on $\p\M_T$. So we obtain the desired weighted estimate. 
\end{proof}

We finish this subsection with a weighted error estimate for the Calabi-Yau equation.

\begin{proposition}[Weighted error estimate] \label{p:CY-error-small} 
Let $\Err$ be the error function given by Definition \ref{d:error-function}. 
 For fixed parameters $\delta>0$, $\mu,\nu\in\dR$ and $\alpha\in(0,1)$ which satisfy 
 \begin{align}
 0<\delta < \delta_{e} & \equiv \frac{\sqrt{\lambda_1}}{n(|k_-|+|k_+|)},
 \\
 \nu  + \alpha & >0,
 \end{align}
where the constants $\lambda_1>0$, $k_->0$ and $k_+<0$ are given in Proposition \ref{p:existence-Greens-current}. 
Then the weighted $C^{0,\alpha}$-estimate holds, \begin{equation}\|\Err\|_{C^{0,\alpha}_{\delta,  \nu, \mu}(\M_T)}=O(T^{-2+\frac{\nu+\alpha}{n}+\mu}).
\end{equation}

\end{proposition}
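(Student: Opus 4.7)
The plan is to decompose $\Err$ algebraically and estimate it separately on the regions identified in Remark \ref{r:error-function}. Setting $\Phi\equiv T^{-1}\Tr_{\omega_D}\psi$ and $N\equiv\sum_{k=2}^{n-1}T^{-k}\binom{n-1}{k}\psi^{k}\wedge\omega_D^{n-1-k}/\omega_D^{n-1}$, so that $(\omega_D+T^{-1}\psi)^{n-1}/\omega_D^{n-1}=1+\Phi+N$, and using $h=T(n-1)+\Tr_{\omega_D}\psi+q(z)$, one obtains the identity
\[
\Err=\frac{(n-2)+T^{-1}q(z)-N}{1+\Phi+N}.
\]
This form is convenient: the numerator measures exactly what the linearized construction (which kills the terms $\tilde\omega-T\omega_D$ and $\Tr_{\omega_D}\psi$) fails to match in the nonlinear equation, while the denominator is bounded below by a positive constant on $\fr(\bx)\gtrsim T^{-1}$ by positivity of $\tilde\omega$, and grows like $T^{-1}/r$ near $\mathcal{P}$.

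First I would treat the bounded region $|z(\bx)|\le1$. Taylor expansion of $q_0$ around $L_0=0$ (using $|L_0(z)|$ bounded) gives $T^{-1}q(z)=(2-n)+O(T^{-2})$, so the numerator is $O(T^{-2})-N$. By Corollary \ref{c:psipower}, $\psi^k=O'(r^{k-1})$ for $k\ge 2$, hence $N=O(T^{-2}r)$ on a bounded region. Combined with the denominator bound (away from $\mathcal{P}$ the denominator is $\gtrsim 1$; near $\mathcal{P}$ it is $\gtrsim T^{-1}/r$ due to $\Phi=T^{-1}/(2r)+O'(r)T^{-1}$ by Proposition \ref{p:trace-expansion}), this yields $|\Err|\le CT^{-2}$ throughout Regions $\I_1$ and $\I_2$. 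Next, in the far region $|z|\ge 1$, by \eqref{e:h-asymptotics} and \eqref{e:greens-current-exp-asymp} one has $h=T^{2-n}L_T^{n-1}+\epsilon(z)$ and $\psi=L_0(z)\omega_D+\epsilon(z)$, so
\[
1+\Phi+N=(L_T/T)^{n-1}+\epsilon(z),\qquad T^{-1}h=T^{1-n}L_T^{n-1}+T^{-1}\epsilon(z),
\]
giving $\Err=\epsilon(z)\cdot O\!\bigl((T/L_T)^{n-1}\bigr)$; since $L_T\ge T^{(n-2)/n}$ we obtain $|\Err|\le CT^{(2n-2)/n}\epsilon(z)$.

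Combining these pointwise bounds with the lower bound for $\fs$ from Proposition \ref{p:regularity-scale} yields the weighted $C^0$-estimate: near $\mathcal{P}$, $\fs(\bx)^{\nu+\alpha}\le CT^{(\nu+\alpha)/n}$ (since $\nu+\alpha>0$) and $U_T=O(1)$, so $|\Err|\cdot\rho^{(\alpha)}_{\delta,\nu,0}\le CT^{-2+(\nu+\alpha)/n}$. In the far region the key point is that the factor $e^{\delta U_T}$ is absorbed by $\epsilon(z)$: when $|z|\ll T$ we have $U_T\sim-\tfrac{n}{2}k_\pm z$, and at the endpoints $z=T_\pm$ we have $U_T\approx T$ while $|z|\approx T/|k_\pm|$. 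The hypothesis $\delta<\delta_e=\sqrt{\lambda_1}/(n(|k_-|+|k_+|))$ is precisely what is needed so that $\delta U_T-(1-\delta')\sqrt{\lambda_1}|z|$ remains bounded above, uniformly in $T$, which combined with $L_T/T\ge T^{-2/n}$ yields the claimed bound.

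For the H\"older seminorm I would pass to the rescaled metric $\fs(\bx)^{-2}g_T$ on the ball $B_{\fs(\bx)/4}(\bx)$, on which Proposition \ref{p:regularity-scale} provides uniform $C^{2,\alpha}$ bounded geometry (after lifting to the local universal cover). The building blocks of $\Err$, namely $h$, $\Tr_{\omega_D}\psi$, $\psi^{k}\wedge\omega_D^{n-1-k}/\omega_D^{n-1}$, all inherit uniform scale-invariant $C^{k,\alpha}$ bounds from the explicit local expansions of Propositions \ref{p:complex-Green-expansion}, \ref{p:trace-expansion}, \eqref{e:h-bounded-z} and the exponential asymptotics \eqref{e:greens-current-exp-asymp}. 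Since the weight function varies by a bounded factor on $B_{\fs(\bx)/4}(\bx)$ (item (2) of Proposition \ref{p:regularity-scale}), standard H\"older interpolation upgrades the weighted $C^0$ bound to the weighted $C^{0,\alpha}$ bound at the same rate. The main obstacle is the delicate matching in the far region: verifying that the polynomial factor $(T/L_T)^{n-1}$ together with the growth of $e^{\delta U_T}$ all the way to $z=T_\pm$ is dominated by $\epsilon(z)$ requires exactly the threshold $\delta_e$; every other step is a routine algebraic manipulation of the expansions established earlier.
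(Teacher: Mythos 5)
Your overall route is the same as the paper's (split at $|z|=1$, expand $(\omega_D+T^{-1}\psi)^{n-1}$ using Corollary \ref{c:psipower} and Proposition \ref{p:trace-expansion} on the bounded region, compare $e^{\delta U_T}$ with the exponential decay of $\epsilon(z)$ via $\delta<\delta_e$, and upgrade to the H\"older seminorm on regularity-scale balls), and your identity $\Err=\frac{(n-2)+T^{-1}q(z)-N}{1+\Phi+N}$ together with the bounded-region bound $|\Err|=O(T^{-2})$ is correct. The gap is in the far region $|z|\ge1$. Writing $w=L_T(z)/T$ and $\psi=k_\pm z\,\omega_D+\xi$ with $\xi=\epsilon(z)$, one has $T^{-1}q_0(z)=w^{n-1}-(n-1)w$ while the $\xi$-independent part of $N$ is exactly $w^{n-1}-(n-1)w+(n-2)$ and its part linear in $\xi$ is $(w^{n-2}-1)\,T^{-1}\Tr_{\omega_D}\xi$; hence your numerator equals
\begin{equation*}
-(w^{n-2}-1)\,T^{-1}\Tr_{\omega_D}\xi+\text{(terms quadratic in $\xi$ or carrying $T^{-2}$)},
\end{equation*}
so that $|\Err|\le C\,\bigl|w^{n-2}-1\bigr|\,w^{1-n}\,T^{-1}\,|\Tr_{\omega_D}\xi|+\dots$, which is the paper's refined formula. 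Your bound $|\Err|\le CT^{\frac{2n-2}{n}}\epsilon(z)$ throws away this cancellation of the terms linear in $\xi$ between $T^{-1}h$ and the volume ratio, and with it the essential factor of $T^{-1}$ (indeed $T^{-1}(w^{n-2}-1)=O(T^{-2}|z|)$ where $w$ is close to $1$). Since at a bounded value of $|z|$, say $|z|=1$, the quantity $\epsilon(z)$ is a constant independent of $T$, while there $\fs\sim T^{1/n}$ and $e^{\delta U_T}=O(1)$, your estimate only gives a far-region weighted contribution of order $T^{\frac{2n-2}{n}+\frac{\nu+\alpha}{n}+\mu}$, which misses the claimed $O(T^{-2+\frac{\nu+\alpha}{n}+\mu})$ by the positive power $T^{2+\frac{2n-2}{n}}$. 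The hypothesis $\delta<\delta_e$ cannot repair this: it controls only the $z$-dependence, not the missing powers of $T$.

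The repair stays entirely within your own decomposition: expand the numerator one order further as above. Where $w$ is close to $1$ the coefficient $(w^{n-2}-1)w^{1-n}$ is $O(T^{-1}|z|)$, and where $z$ is comparable to $T_\pm$ it is at worst $O(T^{\frac{2(n-1)}{n}})$ while $\Tr_{\omega_D}\xi=O(e^{-(1-\tau)\sqrt{\lambda_1}|z|})$ with $|z|\gtrsim T$ is super-polynomially small; together with the explicit factor $T^{-1}$ this yields $|\Err|=O(T^{-2})$ pointwise on $\{|z|\ge1\}$, and since every term of $\Err$ there carries at least one factor of $\xi$, the residual exponential decay in $|z|$ dominates $e^{\delta U_T}$ exactly under $\delta<\delta_e$, giving the claimed weighted rate. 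With this correction the remainder of your argument, including the H\"older upgrade on the balls $B_{\fs(\bx)/4}(\bx)$, goes through as in the paper.
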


\begin{proof}
We again divide into different regions and estimate separately. 

For $|z(\bx)|\leq 1$, applying Corollary \ref{c:psipower}, we have 
\begin{equation}
(\omega_D+T^{-1}\psi)^{n-1}=\omega_D^{n-1}(1+T^{-1}\Tr_{\omega_D}\psi+\sum_{k\geq 2}T^{-k}\Phi_k),
\end{equation}
where $\Phi_k=O'(r^{k-1})$ is independent of $T$. 
By \eqref{e:q-bounded-distance} we have
\begin{equation}
T^{-1}h=1+T^{-1}\Tr_{\omega_D}\psi+T^{-2}\underline B(z).
\end{equation}
Using \eqref{e:trace expansion equation}, it is easy to see that
$\|\Err\|_{C^0(\{|z(\bx)|\leq 1\})}=O(T^{-2})$.
Immediately, by the definition of the weighted $C^0$-norm, we have
\begin{equation}
\|\Err\|_{C_{\delta,\nu,\mu}^0(\{|z(\bx)|\leq 1\})}=O(T^{-2+\frac{\nu}{n}+\mu}),
\end{equation}

Next we consider the region $|z(\bx)|\geq 1$. Then by \eqref{e:greens-current-exp-asymp} we may write 
\begin{align}
\psi=
\begin{cases}
(k_- z)\cdot \omega_D+\xi, & z\leq -1,
\\
(k_+ z)\cdot \omega_D+\xi, & z\geq 1,
\end{cases}
\end{align}
where $\xi=\epsilon(z)$. 
So it follows that
\begin{align}
&(\omega_D+T^{-1}\psi)^{n-1}\nonumber\\
=&\Big((1+T^{-1}k_{\pm} z)\omega_D+T^{-1}\xi\Big)^{n-1}
\nonumber\\
=&\omega_D^{n-1}\Big((1+T^{-1}k_\pm z)^{n-1}+(1+T^{-1}k_\pm z)^{n-2}T^{-1}\Tr_{\omega_D}\xi+O(T^{-2})\Big).
\end{align}
By \eqref{e:h-asymptotics},  we have
$T^{-1}\cdot h=(1+T^{-1}k_\pm z)^{n-1}+T^{-1}\cdot \Tr_{\omega_D}\xi$.
So we obtain 
\begin{equation}
\Err=\Big((1+T^{-1}k_\pm z)^{-1}-(1+T^{-1}k_\pm z)^{-n+1}\Big)T^{-1}\Tr_{\omega_D}\xi+O(T^{-2}).
\end{equation}
Since for $z\in [T_-, T_+]$, 
\begin{equation}
U_T(z)=T-T^{-\frac{n-2}{2}}(T+k_\pm z)^{\frac{n}{2}}=T(1-(1+T^{-1}k_{\pm}z)^{\frac{n}{2}})\leq -\frac{n}{2}\cdot k_{\pm}z.\label{e:U-upper-bound}
\end{equation}
Here we have used the following inequality:  $(1-x)^p\geq 1-px$ for any $p\geq 1$ and $x\in(0,1)$. 
By Proposition \ref{p:existence-Greens-current},  the asymptotics $\xi=\epsilon(z)$ has an explicit exponential decaying rate 
$\epsilon(z)=O(e^{-(1-\tau)\sqrt{\lambda_1}z})$ for any $\tau\in(0,1)$. Applying \eqref{e:U-upper-bound} and the  
 the assumption  
\begin{equation}0<\delta<\delta_e \equiv \frac{\sqrt{\lambda_1}}{n(|k_-|+|k_+|)},\end{equation} we conclude that, as $|z(\bx)|\to+\infty$, 
the growth rate of $e^{\delta\cdot  U_T(z(\bx))}$
is slower than the decaying rate of $\epsilon(z)$. 
Therefore, we have that \begin{align}
\|\Err\|_{C^0(\{\|z(\bx)\|\geq 1\})}  &= O(T^{-2}),
\\
\|\Err\|_{C_{\delta,\nu,\mu}^0(\{\|z(\bx)\|\geq 1\})} &= O(T^{-2+\frac{\nu}{n}+\mu}).
\end{align}

The weighted $C^{0,\alpha}$-estimate can be obtained in a similar way. 
It suffices to analyze the H\"older regularity around the singular set $\mathcal{P}$.
Notice that a fixed function in $O'(r)$ has bounded $C^{0,\alpha}$ norm, so 
the weighted $C^{0,\alpha}$-estimate yields\begin{equation}
\|\Err\|_{C_{\delta,\nu,\mu}^{\alpha}(\M_T)}=O(T^{-2+\frac{\nu+\alpha}{n}+\mu}).
\end{equation}
 The proof is done. 
\end{proof}

\subsection{Perturbation of complex structures}
\label{ss:perturbation of complex structures}

In Section \ref{ss:complex-geometry} we have identified the underlying complex manifold of our family of $C^{2, \alpha}$ K\"ahler metrics $(\M_T, \omega_T)$. In our gluing argument in Section \ref{ss:glued-metrics} we  need to perturb the complex structure and accordingly perturb the K\"ahler forms. This section is devoted to the estimate of error caused by such a perturbation. 

Under the holomorphic embedding of $\M_T$ into $\mathcal N^0$ defined in Section 4.2, $\Omega_T$ is identified with the standard holomorphic volume form $\Omega_{\mathcal N^0}$. 
Fix $C_0>0$, and let $\cV$ be the open neighborhood of $\mathcal P$ in $\mathcal N^0$ defined by 
$\{r_+<C_0, r_-<C_0\}$.
Fix a smooth K\"ahler metric $\omega_{\mathcal N^0}$ on $\mathcal N^0$. 
Assume that there are a family of complex structures $J_T'$ on $\cV$ with holomorphic volume forms $\Omega_T'$ satisfying for all $k\geq 0$, 
\begin{equation}
\sup_{\bx\in \mathcal V}|\nabla^k_{\omega_{\mathcal N^0}}(\Omega_T'-\Omega_{\mathcal N^0})(\bx)|_{\omega_{\mathcal N^0}}\leq \underline\epsilon_{T^2}.
\end{equation}
 We also assume there is a deformation of the form $\pi^*\omega_D$ over $\cV$ to $\omega_{D, T}$, which is a closed $(1,1)$ form with respect $J_T'$, and  satisfies that  for all $k\geq 0$
 \begin{equation}
\sup_{\bx\in \mathcal V}|\nabla^k_{\omega_{\mathcal N^0}}(\omega_{D, T}-\pi^*\omega_D)(\bx)|_{\omega_{\mathcal N^0}}\leq \underline\epsilon_{T^2}.
\end{equation}
These assumptions will be met in our applications.
Let $\phi$ be the K\"ahler potential defined in \eqref{eqn7-10}. Then we define the new family of closed forms on $\mathcal V$ by
 \begin{equation}
 T^{\frac{n-2}{n}}\omega_T'\equiv T\omega_{D, T}+dJ_T'd\phi. 
 \end{equation}
  
 \begin{proposition} \label{p: perturbation of complex structures}
 For all sufficiently large $T$, the above $(\omega_T', \Omega_T')$ defines a family of $C^{1, \alpha}$-K\"ahler structures on $\cV$ which satisfies that for all fixed $\alpha\in(0,1)$, $\delta,\mu, \nu\in \dR$, we have
  \begin{align}\label{e:Omega perturbation} \|\Omega_T'-\Omega_T\|_{C^{2, \alpha}_{\delta, \mu, \nu}(\cV)}&=\underline \epsilon_{T^2},
\\
 \label{e:omega perturbation}
 \|\omega_T'-\omega_T\|_{C^{1, \alpha}_{\delta, \mu, \nu}(\cV)}&=\underline \epsilon_{T^2}.\end{align}
\end{proposition}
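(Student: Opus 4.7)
The plan is to reduce everything to the exponentially small hypothesis on $J_T'-J$ and on $\omega_{D,T}-\pi^*\omega_D$, and then to absorb all polynomial-in-$T$ losses that arise when converting from the ambient $\omega_{\mathcal N^0}$-norm to the weighted $\omega_T$-norm. The key observation is that on $\mathcal V$ we have $|z|\leq \tfrac{3}{4}T^{-1}\log T$ by Corollary \ref{c:r zeta relation}, so $\mathcal V$ is a compact set in $\mathcal N^0$ of bounded $\omega_{\mathcal N^0}$-diameter as $T\to\infty$, and the weight function $\rho_{\delta,\nu+k,\mu}^{(k+\alpha)}$ is at most polynomial in $T$ there (indeed $U_T\approx 0$ on $\mathcal V$, so $e^{\delta U_T}$ is uniformly bounded). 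Since $\underline\epsilon_{T^2}$ decays faster than any power of $T$, it will swallow all polynomial factors that appear.

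First I would handle the holomorphic volume form. Under the embedding $\Phi$ we have $\Omega_T=\Omega_{\mathcal N^0}$, so $\Omega_T'-\Omega_T=\Omega_T'-\Omega_{\mathcal N^0}$. The hypothesis bounds all $\omega_{\mathcal N^0}$-derivatives of this difference by $\underline\epsilon_{T^2}$. To convert to the weighted $\omega_T$-norm one needs to compare $\omega_T$ to $\omega_{\mathcal N^0}$ on $\mathcal V$: at a point $\bx\in\mathcal M_T\cap\mathcal V$ the metric $\omega_T$ can be degenerate or concentrated, and the comparison (together with covariant derivative translation using the difference of Christoffel symbols) produces factors bounded polynomially in $T$. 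Multiplying by $\rho_{\delta,\nu+k,\mu}^{(k+\alpha)}\leq P(T)$ for some polynomial $P$, we get \eqref{e:Omega perturbation}.

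For the K\"ahler form, I would use the identity established in Section \ref{sss:kahler potential}, namely $T^{(n-2)/n}\omega_T=T\pi^*\omega_D+dJd\phi$ on $\mathcal V$ (where $J$ is the complex structure pulled back from $\mathcal N^0$), so that
\[
T^{\frac{n-2}{n}}(\omega_T'-\omega_T)=T(\omega_{D,T}-\pi^*\omega_D)+d(J_T'-J)d\phi.
\]
The first term is handled exactly as above, with a harmless extra factor of $T$. For the second term, the content of the argument is to bound $\phi$ and its derivatives on $\mathcal V$. Since $|z|\lesssim T^{-1}\log T$ on $\mathcal V$, the explicit expansions \eqref{eqn7-10}, \eqref{eqn8-7} and \eqref{eqn8-8}, together with Proposition \ref{l:zeta formula}, show that $\phi=O(T^{3})$ on $\mathcal V$ and that all its $\omega_{\mathcal N^0}$-covariant derivatives are polynomial in $T$ (one may differentiate term-by-term using that $h$ is smooth on $\mathcal M^*$). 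Then $(J_T'-J)d\phi$ is estimated in $C^{k+1,\alpha}$ with respect to $\omega_{\mathcal N^0}$ by $\underline\epsilon_{T^2}\cdot\mathrm{poly}(T)=\underline\epsilon_{T^2}$, and a further derivative gives the same bound for $d(J_T'-J)d\phi$. Converting to the weighted $\omega_T$-norm as before yields \eqref{e:omega perturbation}.

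Finally, positivity and the $(1,1)$-property are automatic: $\omega_T'$ is manifestly real, closed and, by construction via $dJ_T'd\phi$, of type $(1,1)$ with respect to $J_T'$; combining \eqref{e:omega perturbation} with the fact that $\omega_T$ is a uniformly positive $C^{2,\alpha}$ K\"ahler form on $\mathcal V\cap\mathcal M_T$ (Proposition \ref{p:C2alpha}), the exponentially small perturbation preserves positivity for $T\gg 1$, while the regularity drops to $C^{1,\alpha}$ because $(J_T'-J)d\phi$ only has one reliable derivative in the smooth topology we are using. The main obstacle in carrying out this plan is the bookkeeping of polynomial factors in the weighted norms near $\mathcal P$, where $\omega_T$ is only $C^{2,\alpha}$ and its comparison with the smooth ambient metric $\omega_{\mathcal N^0}$ is delicate; however, since every such factor is at most polynomial in $T$ and all the hypothesis provides is an exponential decay $\underline\epsilon_{T^2}$, the estimates close without loss.
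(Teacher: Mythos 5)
Your overall strategy coincides with the paper's: the same identity $T^{\frac{n-2}{n}}(\omega_T'-\omega_T)=T(\omega_{D,T}-\pi^*\omega_D)+d\bigl((J_T'-J_T)d\phi\bigr)$, the same pointwise algebraic determination of $J_T'$ from $\Omega_T'$, and the same absorption principle that $\underline\epsilon_{T^2}$ beats whatever loss is incurred in passing from ambient $\omega_{\mathcal N^0}$-estimates to the weighted norms built from $\omega_T$. The trouble is that this passage is the entire technical content of the proposition, and you assert it rather than prove it. You claim that the comparison between $\omega_T$ and $\omega_{\mathcal N^0}$ on $\cV$ (including the Christoffel-symbol corrections) and the $\omega_{\mathcal N^0}$-derivatives of $\phi$ are ``polynomial in $T$'', with no computation; but near $\mathcal P$ this is exactly where the difficulty sits: $\omega_T$ is only a $C^{2,\alpha}$ metric there, $\phi$ is only $C^{3,\alpha}$ across $\mathcal P$ (so ``all its $\omega_{\mathcal N^0}$-covariant derivatives'' is not even available, and term-by-term differentiation of \eqref{eqn7-10} meets the $\tfrac{1}{2r}$ singularity of $h$), and the weighted H\"older seminorms of Definition \ref{d:weighted-space} are taken over balls of the regularity scale $\fs(\bx)\sim T^{1/n}$, which for points of $\cV$ at definite distance from $\mathcal P$ reach out to $|z|\sim 1$, where by item (3) of Corollary \ref{c:r zeta relation} the coordinates $\zeta_\pm$ are of size $e^{cT}$. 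So the natural conversion losses are exponential in $T$, not polynomial, and your stated quantitative claim is in general false for the seminorms as defined.

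The paper closes precisely this gap by a different mechanism: it localizes to finitely many charts, observes that $\zeta_\pm$ and $w_j$ are $\omega_T$-harmonic and that $\Delta_{\omega}\phi=n-T\Tr_{\omega}(\pi^*\omega_D)$, and then applies the weighted Schauder estimate of Proposition \ref{p:local-weighted-schauder} to obtain $\|\zeta_\pm\|_{\wIII}\leq Ce^{3T}$ and $\|\phi\|_{\wIII}\leq e^{CT}$, after which the chain rule and the crude multiplicativity of the weighted norms (up to a factor $T^m$) give the claimed bounds, the point being that $e^{CT}$ losses are still swallowed by $\underline\epsilon_{T^2}$. Your conclusion would survive verbatim with ``$e^{CT}$'' in place of ``polynomial'', but as written the decisive estimates -- the metric comparison near $\mathcal P$, and the control of $\phi$ and of the coordinate functions in the weighted spaces -- are left unproved, so the argument has a genuine gap at its key step.
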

 \begin{remark} \label{r:C1alphavsC2alpha}
Notice that we lose one derivative control on $\omega_T'$. This is due to the fact that in the above definition of $\omega_{T}'$ we also have a derivative on the complex structure $J_T'$. Later we need at least $C^{1,\alpha}$ regularity on the metrics to obtain $C^{2, \alpha}$ weighted Schauder estimates for the Laplacian operator. 
This technical issue is the reason why we need to show $(\omega_T, \Omega_T)$ defined in Section \ref{ss:kaehler-structures} is $C^{2,\alpha}$, and it determines the order we need in the expansion of the Green's current in Theorem \ref{t:Green-expansion}.
\end{remark} 

\begin{proof}[Proof of Propsoition \ref{p: perturbation of complex structures}] 
We first reduce the estimate to a local one. To do this, let us  
choose finite holomorphic charts $\{(U_\beta, w_1, \ldots, w_{n-1})\}$ of $D$ and each chart $U_{\beta}$ is given by $\{|w_i|<1\}_{i=1}^{n-1}$,  such that the following holds:
\begin{enumerate}
\item The smaller charts $V_{\beta}\subset U_{\beta}$ given by $\{|w_i|<1/2\}_{i=1}^{n-1}$ also cover $D$. 
\item If $U_{\beta}\cap H\neq \emptyset$, then $U_{\beta}$ satisfies that for some $p\in H$, $w_i(p)=0$ for every $1\leq i\leq n-1$, and $H\cap U_{\beta}$ is given by $\{w_1=0\}$.
\item The line bundle $L$ restricted to every $U_\beta$ has a holomorphic trivialization $\sigma_{\beta}$ under which $\zeta_{\pm}$ can be viewed as local holomorphic functions on $\mathcal{N}^0$,  and $\cV\cap \pi^{-1}(U_\beta)$ is locally defined by $|\zeta_\pm|<C\cdot|\sigma_\beta|^{-|k_\pm|}$.
\end{enumerate}
The above gives an open cover of $\cV$ by $\cV\cap \pi^{-1}(V_\beta)$, and it suffices to prove the estimates in each of those open sets.

We only work with the charts that satisfy $U_\beta\cap H\neq \emptyset$, and the other case can be proved in a similar manner. For such charts, in $\pi^{-1}(U_\beta)$, by the definition of $\mathcal N^0$, we have that
\begin{equation}
\zeta_+\cdot \zeta_-=w_1 \cdot F(w_1, \ldots, w_{n-1})
\end{equation}
for a non-zero holomorphic function $F$. Without loss of generality, we may assume that $\{\zeta_+,\zeta_-, w_2, \ldots, w_{n-1}\}$ are holomorphic coordinates on $\pi^{-1}(U_\beta)$. 

We first prove \eqref{e:Omega perturbation}. 
The hypothesis implies that 
\begin{equation} \label{e: holomorphic volume form difference}
\Omega_T'-\Omega_{T}=G_{i_1\ldots i_n} \cdot e_{i_1}\wedge\ldots\wedge  e_{i_n},
\end{equation}
where each $e_j$ is one of $d\zeta_{\pm}, d\bar\zeta_{\pm}, dw_j, d\bar w_j$ for $2\leq j\leq n-1$, and $G_{i_1\ldots i_n}$ is a smooth function in $\zeta_{\pm}, \bar{\zeta}_{\pm},  w_j, \bar{w}_j $ for $2\leq j\leq n-1$ and its $k$-th derivative over $\mathcal V$ with respect to the fixed metric $\omega_{\mathcal N^0}$ is bounded by $\underline \epsilon_{T^2}$ for all $k$. Since a holomorphic function is automatically harmonic with respect to any K\"ahler metric, we have 
\begin{equation}
\Delta_{\omega_T}\zeta_{\pm}=\Delta_{\omega_T}w_j=0,\quad 2\leq j\leq n-1.\label{e:zeta-harmonic}
\end{equation}
By item (1) of Corollary \ref{c:r zeta relation}, we have that $\cV$ is contained in the region $|z|\leq 1$. Also notice by the discussion in Section \ref{ss:regularity-scales}, there is a constant $C>1$ such that for each $\bx \in \cV\cap \pi^{-1}(V_\beta)$, the ball $B_{C^{-1}\mathfrak{s}(\bx)}(\bx)$ is contained in $\pi^{-1}(U_\beta)\cap \{|z|\leq 2\}$.  In this proof we denote by $C>0$ a constant which is independent of $T$ but may vary from line to line. Again by Corollary \ref{c:r zeta relation}, item (3) on $\pi^{-1}(U_\beta)\cap \{|z|\leq 2\}$, we have 
\begin{equation}
|\zeta_\pm|\leq Cr_\pm \leq Ce^{3T}. 
\end{equation}
By \eqref{e:zeta-harmonic}, one can apply the weighted Schauder estimate in  Proposition \ref{p:local-weighted-schauder} to every $\bx\in \cV\cap \pi^{-1}(V_\beta)$, and we obtain 
\begin{equation}\label{e:eta pm estimate}
 |\zeta_\pm|_{C^{3, \alpha}_{\delta, \mu, \nu}(\cV\cap \pi^{-1}(V_\beta))}\leq  C|\zeta_\pm|_{C^{0}_{\delta, \mu, \nu}(\pi^{-1}(U_\beta)\cap \{|z|\leq 1\})}\leq Ce^{3T}.
 \end{equation}
Similarly, since on $U_\beta$ we have $|w_j|<1$, we get for $2\leq j\leq  n-1$,  
 \begin{equation}
 |w_j|_{C^{3, \alpha}_{\delta, \mu, \nu}(\cV\cap \pi^{-1}(V_\beta))}\leq  |w_j|_{C^{0}_{\delta, \mu, \nu}(\pi^{-1}(U_\beta)\cap \{|z|\leq 1\})}\leq C. 
 \end{equation}
 Then using the chain rule and induction we get that 
\begin{equation}
|G_{i_1\cdots i_n}|_{\wIII(\cV\cap \pi^{-1}(V_\beta))}=\underline\epsilon_{T^2}\cdot Ce^{3T}=\underline \epsilon_{T^2} 
\end{equation}
and hence
$|\Omega_T'-\Omega_T|_{C^{2, \alpha}_{\delta, \mu, \nu}(\cV\cap \pi^{-1}(V_\beta))}=\underline \epsilon_{T^2}$. 
Notice that the complex structure $J_T'$ is pointwise determined by the holomorphic volume form $\Omega_T'$ algebraically, so we have \begin{equation}
|J_T'-J_T|_{\wII(\cV\cap \pi^{-1}(V_\beta))}=\underline\epsilon_{T^2}.
\end{equation}

To prove \eqref{e:omega perturbation}, we write
\begin{equation}
\omega_T'-\omega_T = T(\omega_{D, T}-\pi^*\omega_D)+d((J_T'-J_T)d\phi). 
\end{equation}
By assumption, and the above discussion, using  \eqref{e:eta pm estimate} we get 
\begin{equation}
|\omega_{D, T}-\pi^*\omega_D|_{\wII(\pi^{-1}(V_\beta)\cap \{|z|\leq 1\})}=\underline\epsilon_{T^2}
\end{equation}
It is also easy to see that for two functions $V_1, V_2$, we have
\begin{equation}
|V_1\cdot V_2|_{\wII(\cV\cap \pi^{-1}(V_\beta))}\leq C\cdot T^m \cdot |V_1|_{\wII(\cV\cap \pi^{-1}(V_\beta))}\cdot|V_2|_{\wII(\cV\cap \pi^{-1}(V_\beta))},
\end{equation}
for some $m>0$ independent of $V_1$, $V_2$. We refer to Lemma \ref{l:weight-function-lower-bound-estimate} for the more precise expression of $m$. 
So the proof of \eqref{e:omega perturbation}  is reduced to the following claim. 

\vspace{0.5cm}

{\bf Claim:} There is a constant $C>0$ such that $|\phi|_{\wIII(\cV\cap \pi^{-1}(V_\beta))}\leq e^{C\cdot T}$
holds  for all $T\gg1$.

\vspace{0.5cm}

Let us prove the claim. Since by construction 
$T^{\frac{n-2}{n}}\omega=T\pi^*\omega_D+dd^c\phi$,
we have that 
\begin{equation}
\Delta_{T^{\frac{2n-2}{n}}\omega}\phi=n-T\cdot \Tr_{T^{\frac{2n-2}{n}}\omega}(\pi^*\omega_D).
\end{equation}
Since $\pi^*\omega_D$ is smooth on $\cV$ and $\omega$ is parallel,  again the above discussion gives  that 
\begin{equation}
|\Tr_{T^{\frac{2n-2}{n}}\omega}(\pi^*\omega_D)|_{C^{1, \alpha}_{\delta, \nu, \mu}(\cV\cap\pi^{-1}(V_{\beta}))}\leq e^{CT}.
\end{equation}
Then Proposition \ref{p:local-weighted-schauder} implies that 
\begin{equation}
|\phi|_{C^{3, \alpha}_{\delta, \nu, \mu}(\cV\cap \pi^{-1}(V_\beta))}\leq e^{CT}+C|\phi|_{C^0_{\delta, \nu, \mu}(\pi^{-1}(U_\beta)\cap \{|z|\leq 1\})}.
\end{equation}
To bound the right hand side we use the formula 
\begin{equation}
\phi=\int_{T_+}^z uh(u)du+\phi(T_+)=\int_{T_+}^0 uh(u)du+\phi(T_+)+\int_0^z uh(u)du.
\end{equation}
Hence by \eqref{e:h-asymptotics} and \eqref{e:h-bounded-z},   \begin{equation}
\phi=\frac{T}{2}z^2+\frac{1}{2}r+B_T+O(1),
\end{equation}
which gives 
$|\phi|_{C^0_{\delta, \nu,\mu}(\pi^{-1}(V_\beta)\cap \{|z|\leq 1\})}\leq O(T^m)$
for some $m>0$. The conclusion then follows. 
\end{proof}

\begin{remark}
In principle, it is possible to obtain more refined estimates on the higher order weighted norms of $\zeta_\pm$ and $\phi$ by more direct calculation. The above argument using weighted Schauder estimates avoids the lengthy computations, and it suffices for our purpose since in our setting the error caused by complex structure perturbation is at the scale $e^{-CT^2}$ while the weighted analysis in the region $\{|z|\leq 1\}$ only introduces at most $e^{CT}$ error.
\end{remark}

\section{Perturbation to Calabi-Yau metrics on the neck}

\label{s:neck-perturbation}

In Section \ref{ss:kaehler-structures}, we constructed a family of $C^{2,\alpha}$-K\"ahler structures $(\omega_T,\Omega_T)$ on $\M_T$ which are very close to Calabi-Yau metrics with weighted error estimates in Proposition \ref{p:CY-error-small}. The main result of this section is Theorem \ref{t:neck-CY-metric}.  
Our goal in this section is to perturb $\omega_T$ to a genuine Calabi-Yau metric for any sufficiently large $T$. 
Technically, this amounts to proving the uniform estimates demanded by the implicit function theorem (Lemma \ref{l:implicit-function}). Those estimates will be proved in Sections \ref{ss:perturbation-framework}-\ref{ss:incomplete weighted analysis}. In Section \ref{ss:renormalized-measure}, we will  discuss some geometric information naturally arising in the {\it measured Gromov-Hausdorff convergence} of those Calabi-Yau metrics obtained in Theorem \ref{t:neck-CY-metric}  on appropriate scales. The results in this section may have independent interest. 
As mentioned in the introduction of this paper, it is the proof, but not the statement of Theorem \ref{t:neck-CY-metric} itself, that will be used in the proof of Theorem \ref{t:main-theorem}.

\subsection{Framework of perturbation}
\label{ss:perturbation-framework}
The studies and applications of the implicit function theorem have been well developed 
in various contexts. We refer the readers 
to the book \cite{Krantz} for seeing the comprehensive discussions and the history of the whole methodology. 
For our practical and specific applications, we need the following quantitative version of implicit function theorem (Lemma \ref{l:implicit-function}), which is based on  Banach contraction mapping principle. 

To avoid confusions, we clarify several notations as follows:
\begin{itemize}
\item Let $\mathscr{L}: \fS_1 \to \fS_2$ be a bounded linear operator between normed linear spaces $\fS_1$ and $\fS_2$. Then the operator norm of $\mathscr{L}$ is  defined by  
\begin{equation}
\|\mathscr{L}\|_{op} \equiv \inf\Big\{ M_0\in \dR_+ \Big| \ \|\mathscr{L}(\bv)\|_{\fS_2}\leq M_0\cdot \|\bv\|_{\fS_1} , \ \forall \bv\in \fS_1 \Big\}. 
\end{equation}

\item We use the common notation $\bo$ for the zero vector in every normed linear space. 
\end{itemize}

\begin{lemma}
[Implicit function theorem]
 \label{l:implicit-function}
Let $\mathscr{F}:\fS_1 \to \fS_2$ be a map between two Banach spaces such that for all $\bv\in \fS_1$,
\begin{equation}\mathscr{F}(\bv)-\mathscr{F}(\bo)=\mathscr{L}(\bv)+\mathscr{N}(\bv),\label{e:functional-expansion}\end{equation} where the operator $\mathscr{L}:\fS_1\to\fS_2$ is linear and the operator  $\mathscr{N}:\fS_1\to \fS_2$  satisfies $\mathscr{N}(\bo)=\bo$. Additionally we assume the following properties:
\begin{enumerate} 
\item (Bounded inverse) $\mathscr{L}:\fS_1\to\fS_2$ is an isomorphism and there is some constant $C_L>0$ such that \begin{equation}\| \mathscr{L}^{-1} \|_{op} \leq C_L,\label{e:bounded-inverse}\end{equation} where $\mathscr{L}^{-1}$ is the inverse of $\mathscr{L}$.

\item There are constants $C_N>0$ and   $r_0\in(0,\frac{1}{2C_L C_N})$ which satisfy the following: 
\begin{enumerate}\item  (Controlled nonlinear error) for all $\bv_1,\bv_2\in \overline{B_{r_0}(\bo)}\subset \fS_1$,
\begin{equation}\| \mathscr{N}(\bv_1) - \mathscr{N}(\bv_2) \|_{\fS_2}  \leq C_N\cdot r_0 \cdot  \| \bv_1 - \bv_2 \|_{\fS_1}.\label{e:nonlinear-term-control}\end{equation}
\item (Controlled initial error) $\mathscr{F}(\bo)$
is effectively controlled as follows, 
\begin{equation}\| \mathscr{F}(\bo) \|_{\fS_2}\leq \frac{r_0}{4C_L}.\label{e:very-small-initial-error}\end{equation}
\end{enumerate}
\end{enumerate}
Then the equation $\mathscr{F}(\bx)=\bo$ has a unique solution $\bx\in B_{r_0}(\bo)$ with the estimate 
\begin{equation}
\|\bx\|_{\fS_1} \leq 2C_L \cdot \|\mathscr{F}(\bo)\|_{\fS_2}.\label{e:apriori-estimate-of-the-fixed-point}
\end{equation}

\end{lemma}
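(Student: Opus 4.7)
The plan is to recast the equation $\mathscr{F}(\bx)=\bo$ as a fixed-point problem on the closed ball $\overline{B_{r_0}(\bo)}\subset \fS_1$ and invoke the Banach contraction mapping principle. Using the decomposition \eqref{e:functional-expansion} and the invertibility of $\mathscr{L}$ from hypothesis (1), the equation $\mathscr{F}(\bx)=\bo$ is equivalent to the fixed-point equation
\begin{equation*}
\bx=\mathscr{T}(\bx)\equiv -\mathscr{L}^{-1}\bigl(\mathscr{F}(\bo)+\mathscr{N}(\bx)\bigr).
\end{equation*}
Thus it suffices to check that $\mathscr{T}$ is a self-map of $\overline{B_{r_0}(\bo)}$ and a strict contraction there.

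First I would verify the self-mapping property. For any $\bv\in\overline{B_{r_0}(\bo)}$, the fact that $\mathscr{N}(\bo)=\bo$ together with \eqref{e:nonlinear-term-control} yields $\|\mathscr{N}(\bv)\|_{\fS_2}\leq C_N r_0\|\bv\|_{\fS_1}\leq C_N r_0^2$. Combining this with \eqref{e:bounded-inverse} and the initial-error bound \eqref{e:very-small-initial-error} gives
\begin{equation*}
\|\mathscr{T}(\bv)\|_{\fS_1}\leq C_L\|\mathscr{F}(\bo)\|_{\fS_2}+C_L\|\mathscr{N}(\bv)\|_{\fS_2}\leq \frac{r_0}{4}+C_L C_N r_0^2\leq \frac{r_0}{4}+\frac{r_0}{2}<r_0,
\end{equation*}
where the last inequality uses $r_0<(2C_LC_N)^{-1}$. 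Second, the same smallness of $r_0$ produces a contraction estimate: for $\bv_1,\bv_2\in\overline{B_{r_0}(\bo)}$,
\begin{equation*}
\|\mathscr{T}(\bv_1)-\mathscr{T}(\bv_2)\|_{\fS_1}\leq C_L\,\|\mathscr{N}(\bv_1)-\mathscr{N}(\bv_2)\|_{\fS_2}\leq C_L C_N r_0\|\bv_1-\bv_2\|_{\fS_1}\leq \tfrac{1}{2}\|\bv_1-\bv_2\|_{\fS_1}.
\end{equation*}
Banach's fixed-point theorem then produces a unique fixed point $\bx\in\overline{B_{r_0}(\bo)}$, and since $\|\mathscr{T}(\bv)\|_{\fS_1}<r_0$ for $\bv\in\overline{B_{r_0}(\bo)}$, in fact $\bx\in B_{r_0}(\bo)$; uniqueness in the open ball is immediate.

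Finally, the quantitative estimate \eqref{e:apriori-estimate-of-the-fixed-point} follows by taking the $\fS_1$-norm in the identity $\bx=\mathscr{T}(\bx)$, applying the same bound on $\|\mathscr{N}(\bx)\|_{\fS_2}\leq C_N r_0\|\bx\|_{\fS_1}$, and absorbing: $\|\bx\|_{\fS_1}(1-C_LC_N r_0)\leq C_L\|\mathscr{F}(\bo)\|_{\fS_2}$, so $\|\bx\|_{\fS_1}\leq 2C_L\|\mathscr{F}(\bo)\|_{\fS_2}$ since $C_LC_N r_0<\tfrac12$.

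The argument is entirely standard; there is no genuine analytic obstacle in the proof itself. The content of this lemma lies rather in the careful bookkeeping of the three constants $C_L$, $C_N$, $r_0$, since in the main body of the paper these must be controlled \emph{uniformly in the collapsing parameter} $T$. The only step that requires even mild care is the balancing of $r_0$ against the two competing requirements $C_LC_N r_0\leq \tfrac12$ (to beat the nonlinear term) and $\|\mathscr{F}(\bo)\|_{\fS_2}\leq r_0/(4C_L)$ (to absorb the initial error within the contraction ball); the real work, deferred to subsequent sections, is to verify hypotheses (1), (2a), (2b) in the weighted H\"older setting built in Section \ref{ss:neck-weighted-analysis}.
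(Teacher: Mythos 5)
Your proof is correct and follows exactly the route the paper intends: the paper states Lemma \ref{l:implicit-function} as a quantitative consequence of the Banach contraction mapping principle without writing out the details, and your fixed-point reformulation $\bx=-\mathscr{L}^{-1}(\mathscr{F}(\bo)+\mathscr{N}(\bx))$, the self-map and contraction checks, and the absorption argument for \eqref{e:apriori-estimate-of-the-fixed-point} are precisely that standard argument. Nothing is missing.
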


\begin{remark}
In our applications, the constants $C_L>0$, $C_N>0$ and $r_0>0$ will be fixed as uniform constants (independent of $T\gg1$),  which will be stated and proved in the next subsections. With the specified weight parameters $\delta,\mu,\nu$, 
the error estimate in Proposition \ref{p:CY-error-small} in fact guarantees 
$\|\Err\|_{\fS_2} \to 0  $ as $T\to\infty$, which particularly implies $\| \mathscr{F}(\bo) \|_{\fS_2}\to 0$  so that  $\mathscr{F}$ satisfies item (2b) in the above lemma.

\end{remark}

To set up the perturbation problem in our setting,  
we define the Banach spaces
\begin{align}
\begin{split}
\fS_1 & \equiv \Big\{\sq\p\bp\phi\in\Omega^{1,1}(\M_T)\Big| \phi\in C^{2, \alpha}(\M_T)\ \text{is}\ S^1\text{-invariant and satisfies} \ \frac{\p \phi}{\p n}\Big|_{\p \M_T}=0\Big\},
\\
\fS_2 & \equiv \Big\{f\in C^{0,\alpha}(\M_T)\Big|f\ \text{is}\ S^1\text{-invariant and} \ \int_{\M_T} f \cdot \omega_T^n= 0\Big\},
\end{split}
\end{align}
endowed with the weighted H\"older norms 
\begin{align} 
\|\sq\p\bp\phi \|_{\fS_1} &\equiv \|\sq\p\bp\phi\|_{C_{\delta,\nu+2,\mu}^{0,\alpha}(X_t)}, \label{e:norm-of-S1-space}
\\
\|f \|_{\fS_2} &\equiv \|f\|_{C_{\delta,\nu+2,\mu}^{0,\alpha}(X_t)}.
\label{e:norm-of-S2-space}
\end{align}
It is worth mentioning that the weighted H\"older norms are defined in a way such that $\fS_1$ and $\fS_2$ are Banach spaces since those weighted norms are equivalent to the standard Banach norms on a closed manifold.
Also notice that an $S^1$-invariant function $\phi$ on $\M_T$ can be identified with a function on the quotient $Q_T$, and the Neumann condition $\frac{\p\phi}{\p n}|_{\p \M_T}=0$ amounts to the condition $\p_z\phi=0$ on $\p Q_T$. 

In this section, the parameters in the weighted norms are specified as follows:  

\begin{enumerate}
\item[(NP1)] (Fix  $\nu$) The parameters $\nu\in\dR$ is chosen such that
\begin{align}
\nu\in(-1,0). \label{e:fix-nu-neck}
\end{align}
In our context, Lemma \ref{l:harmonic-removable-singularity} requires $\nu\in(-1,1)$. Furthermore, we need $\nu\in(-1,0)$ for effectively applying Proposition \ref{p:CY-error-small} to Lemma \ref{l:implicit-function}.

\item[(NP2)] (Fix $\alpha$) The H\"older exponent $\alpha\in(0,1)$ is chosen sufficiently small such that
\begin{equation}
\nu+\alpha<0.\label{e:fix-alpha-neck}
\end{equation}

\item[(NP3)] (Fix $\delta$) $\delta>0$ is chosen such that \begin{equation}
0<\delta < \delta_N \equiv \frac{1}{n\cdot (|k_-| + |k_+|)^n}\cdot \min\{\delta_b, \delta_e,\sqrt{\lambda_D}\},\label{e:fix-delta-neck}
\end{equation}
where 
$\sqrt{\lambda_D}$ is in Lemma \ref{l:liouville-cylinder} (Liouville theorem for harmonic functions on the cylinder $Q$),  
$\delta_e>0$ is in the error estimate  Proposition \ref{p:CY-error-small}, $\delta_b\equiv \min  \{\delta_{\Ca_-^n}, \delta_{\Ca_+^n}\}$ with the constants $\delta_{\Ca_{\pm}^n}$ given in Lemma \ref{l:Liouville-Calabi-space-SZ}  which in turn depend on the two Calabi model spaces $(\mathcal C^n_{\pm}, g_{\mathcal C^n_{\pm}})$ respectively.

\item[(NP4)] (Fix $\mu$)  The parameter $\mu$ is fixed by 
\begin{equation}
\mu= \Big(1-\frac{1}{n}\Big)(\nu+2+\alpha).\label{e:fix-mu-neck}
\end{equation}
This condition guarantees that the weight function $\rho_{\delta,\nu,\mu}^{(\alpha)}$ with parameters specified as the above is uniformly bounded from below, which will be used in  controlling the non-linear error in Proposition \ref{p:nonlinear-neck}.
\end{enumerate}

Now we explicitly write down the nonlinear functional $\mathscr{F}$ in our context. For $T\gg1$, starting with the $C^{2,\alpha}$-K\"ahler structure $(\omega_T,\Omega_T)$, we will solve the Calabi-Yau equation
\begin{equation}
\frac{1}{n!} (\omega_T+\sqrt{-1}\p\bp\phi)^n = \frac{(\sq)^{n^2}}{2^n}\cdot \Omega_T\wedge \overline{\Omega}_T.\label{e:CY-eq-neck-T-large}
\end{equation} 
To begin with, we appropriately normalize the holomorphic volume form $\Omega_T$. 
Notice that the right hand side of \eqref{e:CY-eq-neck-T-large} satisfies 
\begin{align}\int_{\M_T} \frac{(\sq)^{n^2}}{2^n}\Omega_T\wedge\overline{\Omega}_T &= \int_{\M_T} h\cdot \frac{\omega_D^{n-1}}{(n-1)!} dz\wedge \Theta=\int_{T_-}^{T_+}\Big(\int_D h\cdot\frac{\omega_D^{n-1}}{(n-1)!}\Big)dz.\label{e:RHS-CY-eq}
\end{align}
By Proposition \ref{p:cohomology-constant}, we have
\begin{align}
[\tilde{\omega}(z)] 
=
\begin{cases}
(T + k_-\cdot z)[\omega_D], & z<0,
\\
(T + k_+\cdot z)[\omega_D],& z>0,
\end{cases}
\end{align}
as a cohomology class in $H^2(D;\dR)$, which implies \begin{align}
\int_{\M_T} \frac{\omega_T^n}{n!} = T^{2-n}\int_{\M_T} \frac{\tilde\omega(z)^{n-1}}{(n-1)!}dz\wedge \Theta =\Big(\frac{k_+-k_-}{n\cdot k_-\cdot k_+}\Big)(T^2-1).\label{e:LHS-CY-eq}\end{align}
Comparing \eqref{e:RHS-CY-eq} and \eqref{e:LHS-CY-eq} and using \eqref{haverage}, we have that 
\begin{equation}
\int_{\M_T} \frac{(\sq)^{n^2}}{2^n}\Omega_T\wedge\overline{\Omega}_T=(1+O(T^{-2}))\int_{\M_T}\frac{\omega_T^n}{n!}.
\end{equation}
Recall that the error function $\Err$ given in Definition \ref{d:error-function} satisfies \begin{equation}
\frac{(\sq)^{n^2}}{2^n}\Omega_T\wedge\overline{\Omega}_T=(1+\Err) \frac{\omega_T^n}{n!}.
\end{equation}
Therefore,  multiplying $\Omega_T$ by a $T$-dependent constant $c_T$ of the form $1+O(T^{-2})$,  we may assume 
\begin{align}
\int_{\M_T}\Err\cdot\omega_T^n=0.\label{e:integral-error-zero}\end{align}

Applying Proposition \ref{p:CY-error-small} and \eqref{e:fix-mu-neck}, 
\begin{equation}
\label{e:new error estimate}\|\Err\|_{C^{0,\alpha}_{\delta,  \nu+2, \mu}(\M_T)}=O(T^{\nu+\alpha}).
\end{equation}
Now we define 
\begin{equation}
\mathscr{F}: \fS_1\rightarrow C^{0, \alpha}(\mathcal M_T);\quad  \sqrt{-1}\p\bp\phi  \mapsto \frac{(\omega_T+\sq\p\bp\phi)^n}{\omega_T^n}-(1-\Err).
\end{equation}

\begin{lemma}
 $\mathscr{F}$ maps $\fS_1$ into $\fS_2$, and $ \|\mathscr{F}(\bo)\|_{\fS_2}=O(T^{\nu+\alpha}).
$
\end{lemma}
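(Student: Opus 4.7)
I plan to verify the three defining conditions of $\fS_2$ for $\mathscr{F}(\sq\p\bp\phi)$ and then compute $\|\mathscr{F}(\bo)\|_{\fS_2}$ directly from the identity $\mathscr{F}(\bo)=\Err$.

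The regularity and symmetry conditions are straightforward. Since $\omega_T$ is $C^{2,\alpha}$ (Proposition \ref{p:C2alpha}) and $\phi\in C^{2,\alpha}$, both $\omega_T^n$ and $(\omega_T+\sq\p\bp\phi)^n$ are $C^{0,\alpha}$ volume forms, so their ratio lies in $C^{0,\alpha}(\M_T)$; together with $\Err\in C^{0,\alpha}$ this gives the required H\"older regularity. The $S^1$-invariance of $\omega_T$, $\phi$ and $\Err$ is immediate from the construction, so $\mathscr{F}(\sq\p\bp\phi)$ is $S^1$-invariant as well.

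The substantive point is the vanishing of the integral $\int_{\M_T}\mathscr{F}(\sq\p\bp\phi)\cdot\omega_T^n$. Using the normalization \eqref{e:integral-error-zero}, this reduces to showing
\begin{equation*}
\int_{\M_T}\Big[(\omega_T+\sq\p\bp\phi)^n-\omega_T^n\Big]=0.
\end{equation*}
Writing $\alpha=\sq\p\bp\phi=\tfrac12 dd^c\phi$, commutativity of even-degree forms yields
\begin{equation*}
(\omega_T+\alpha)^n-\omega_T^n=\alpha\wedge\beta=\tfrac12 d\bigl(d^c\phi\wedge\beta\bigr),\qquad \beta\equiv\sum_{k=0}^{n-1}(\omega_T+\alpha)^k\wedge\omega_T^{n-1-k},
\end{equation*}
since $\beta$ is closed. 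By Stokes' theorem, the integral reduces to a boundary integral over $\p\M_T=\{z=T_-\}\cup\{z=T_+\}$. On each boundary slice one has $dz|_{\p\M_T}=0$, so using $\omega_T=T^{\frac{2-n}{n}}(\pi^*\tilde\omega+dz\wedge\Theta)$ we see that $\omega_T|_{\p\M_T}$ is the pullback of a form on $D$ from the projection $\p\M_T\to D$; the same is true for $\alpha|_{\p\M_T}$ and hence for $\beta|_{\p\M_T}$. On the other hand, the formula $d^c\phi=d_D^c\phi+\phi_z h^{-1}\Theta$ (c.f.\ Section \ref{sss:kahler potential}) together with the Neumann condition $\phi_z|_{\p\M_T}=0$ shows that $d^c\phi|_{\p\M_T}$ is also pulled back from $D$. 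Therefore $d^c\phi\wedge\beta$ restricted to $\p\M_T$ is the pullback of a $(2n-1)$-form on the $(2n-2)$-dimensional base $D$, which vanishes identically. This gives the desired integral identity, so $\mathscr{F}$ indeed takes values in $\fS_2$.

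For the norm estimate, a direct computation gives $\mathscr{F}(\bo)=1-(1-\Err)=\Err$, so by the weighted error estimate \eqref{e:new error estimate} in Proposition \ref{p:CY-error-small} and the choice of weight parameters \eqref{e:fix-mu-neck},
\begin{equation*}
\|\mathscr{F}(\bo)\|_{\fS_2}=\|\Err\|_{C^{0,\alpha}_{\delta,\nu+2,\mu}(\M_T)}=O\Bigl(T^{-2+\frac{\nu+2+\alpha}{n}+\mu}\Bigr)=O(T^{\nu+\alpha}),
\end{equation*}
where the last equality uses $\mu=(1-\tfrac1n)(\nu+2+\alpha)$ so that $\tfrac{\nu+2+\alpha}{n}+\mu=\nu+2+\alpha$. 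The main delicate point in the whole argument is the boundary vanishing in Stokes, where the Neumann condition and the horizontal nature of $\omega_T|_{\p\M_T}$ conspire to kill the $\Theta$-component that would otherwise survive integration against the $(2n-1)$-dimensional $S^1$-fibered boundary.
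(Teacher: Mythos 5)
Your proposal is correct and follows essentially the same route as the paper: reduce membership in $\fS_2$ to the vanishing of $\int_{\M_T}\big[(\omega_T+\sq\p\bp\phi)^n-\omega_T^n\big]$, apply Stokes, and kill the boundary term using the Neumann condition together with the fact that $\omega_T$ and $d^c\phi$ restrict to horizontal (pulled-back) forms on $\p\M_T$, then get the norm bound from $\mathscr{F}(\bo)=\Err$ and \eqref{e:new error estimate}. The only cosmetic difference is that you conclude the boundary integrand vanishes by a degree count on forms pulled back from the $(2n-2)$-dimensional base $D$, whereas the paper contracts the top-degree boundary form with the Killing field $\p_t$; the underlying mechanism is identical.
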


\begin{proof}The lemma amounts to proving that 
$\int_{\M_T} \mathscr{F}(\sqrt{-1}\p\bp \phi)\omega_T^n=0$.
Since \eqref{e:integral-error-zero} holds for the normalized $\Omega_T$, 
by Stokes' theorem, we have that 
\begin{equation}\int_{\M_T} (\omega_T +\sqrt{-1}\p\bp\phi)^n-\int_{\M_T}\omega_T^n=\int_{\p\M_T} \gamma,\end{equation}
where $\gamma$ is the sum of terms which contain a factor $d^c\phi$ and other factors either $dd^c\phi$ or $\omega_T$. We claim that $\gamma$ identically  vanishes on $\p\M_T$. It suffices to show $\p_t\lrcorner\gamma=0$.  Since by assumption $\phi$ is $S^1$-invariant,  we have $\p_t \phi=0$. By the Neumann boundary condition, we also have $d^c\phi(\p_t)=0$ on $\p\M_T$. This follows from the observation that $J\p_t=\nabla z$. 
Now 
\begin{align}\p_t\lrcorner \omega_T|_{\p\M_T}&=dz|_{\p\M_T}=0,
\\
\p_t\lrcorner dd^c\phi&=\mathcal L_{\p_t} (d^c\phi)-d(\p_t\lrcorner d^c\phi)=-d(\p_t\lrcorner d^c\phi).\end{align}
The last term vanishes on $\p \M_T$ since  $d^c\phi(\p_t)=0$ pointwise on $\p\M_T$.
\end{proof}

Now we are ready to state the main result in this section.

\begin{theorem}[Existence of $S^1$-invariant Calabi-Yau metrics] \label{t:neck-CY-metric}
For each sufficiently large $T$, there exists an $S^1$-invariant Calabi-Yau metric of the form $\omega_{T, CY}=\omega_T + \sq\p\bp\phi(T)
$, where  $\phi(T)\in \fS_1$ and
\begin{equation} \label{e:eqn524}
\|\sq\p\bp\phi(T)\|_{\fS_1}\leq C_0\cdot T^{\nu+\alpha},
\end{equation}
where $C_0>0$ is a uniform constant independent of $T\gg1$ and 
the weighted H\"older norm of $\fS_1$ is defined in \eqref{e:norm-of-S1-space} for parameters $\nu$, $\alpha$, $\delta$ and $\mu$ satisfying \eqref{e:fix-nu-neck},
\eqref{e:fix-alpha-neck}, \eqref{e:fix-delta-neck} and \eqref{e:fix-mu-neck}.  
\end{theorem}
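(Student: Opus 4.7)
The plan is to apply the quantitative implicit function theorem (Lemma \ref{l:implicit-function}) to the nonlinear map $\mathscr{F}:\fS_1\to\fS_2$ defined just above, expanded as $\mathscr{F}(\sq\p\bp\phi)-\mathscr{F}(\bo)=\mathscr{L}(\sq\p\bp\phi)+\mathscr{N}(\sq\p\bp\phi)$, where the linearization $\mathscr{L}$ is, up to a multiplicative factor, the $\omega_T$-Laplacian acting on $S^1$-invariant functions with Neumann boundary condition on $\p\M_T$, and $\mathscr{N}$ collects the genuinely nonlinear (at least quadratic) terms in $\sq\p\bp\phi$. The initial error $\|\mathscr{F}(\bo)\|_{\fS_2}=O(T^{\nu+\alpha})$ is already in hand from Proposition \ref{p:CY-error-small} combined with the choice of $\mu$ in (NP4) and the parameters (NP1)--(NP3), so the task reduces to establishing (i) a uniform inverse bound $\|\mathscr{L}^{-1}\|_{op}\leq C_L$ and (ii) a contraction estimate $\|\mathscr{N}(\bv_1)-\mathscr{N}(\bv_2)\|_{\fS_2}\leq C_N r_0\|\bv_1-\bv_2\|_{\fS_1}$ on a ball $B_{r_0}(\bo)$, both with constants independent of $T\gg 1$.

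For (i), the strategy is a blow-up/contradiction argument built on the geometric classification of regularity-scale limits in Proposition \ref{p:regularity-scale}. Suppose the uniform injectivity estimate failed: then there would exist a sequence $T_j\to\infty$ and $\phi_j\in\fS_1(\M_{T_j})$ with $\|\sq\p\bp\phi_j\|_{\fS_1}=1$ yet $\|\Delta_{\omega_{T_j}}\phi_j\|_{\fS_2}\to 0$. Choose points $\bx_j\in\M_{T_j}$ where the weighted norm is attained (or nearly so), rescale by $\fs(\bx_j)^{-2}$, and pass to a pointed limit. The possible limits are $\C^2_{TN}\times\C^{n-2}$, $\R^3\times \C^{n-2}$, the cylinder $D\times\R$, or a Calabi model space $\mathcal C^n_{\pm}$; the exponential weight $e^{\delta U_T}$ is tuned via (NP3) so that the rescaled limit function is bounded on each model, harmonic (or Neumann-harmonic on the Calabi model), and of controlled polynomial growth. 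A Liouville-type theorem on each model space (the cylinder case uses the eigenvalue gap $\sqrt{\lambda_D}$ of $\Delta_D$; the Calabi model cases use the constants $\delta_{\mathcal C^n_{\pm}}$ from \cite{SZ-Liouville}; the Taub-NUT and Euclidean cases are classical because $\nu+\alpha<0$) then forces the limit to be a constant, which together with the normalization $\int_{\M_T}f\,\omega_T^n=0$ and the removable-singularity type statement along $\mathcal P$ (requiring $\nu\in(-1,1)$) contradicts $\|\sq\p\bp\phi_j\|_{\fS_1}=1$. Once injectivity is uniform, surjectivity of $\mathscr{L}$ between the two Banach spaces follows from standard Fredholm theory applied on the $C^{2,\alpha}$-manifold $\M_T$ together with the Neumann boundary condition, and the weighted Schauder estimate (Proposition \ref{p:local-weighted-schauder}) upgrades this to the stated bound in the weighted norms.

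For (ii), the nonlinear term expands as $\mathscr{N}(\sq\p\bp\phi)=\sum_{k=2}^{n}\binom{n}{k}\frac{(\sq\p\bp\phi)^k\wedge\omega_T^{n-k}}{\omega_T^n}$. Each term is a pointwise polynomial of degree $\geq 2$ in $\sq\p\bp\phi$, and the weighted H\"older norms are multiplicative up to a factor controlled by the lower bound of $\rho^{(\alpha)}_{\delta,\nu+2,\mu}$, which by Lemma \ref{l:weight-function-lower-bound-estimate} and the choice \eqref{e:fix-mu-neck} of $\mu=(1-\tfrac{1}{n})(\nu+2+\alpha)$ is uniformly bounded below, independent of $T$. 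Consequently one obtains $\|\mathscr{N}(\bv_1)-\mathscr{N}(\bv_2)\|_{\fS_2}\leq C_N(\|\bv_1\|_{\fS_1}+\|\bv_2\|_{\fS_1})\|\bv_1-\bv_2\|_{\fS_1}$, with $C_N$ independent of $T$, which verifies \eqref{e:nonlinear-term-control} on any fixed small ball of radius $r_0<\tfrac{1}{2C_LC_N}$. Combined with the bound $\|\mathscr{F}(\bo)\|_{\fS_2}=O(T^{\nu+\alpha})\to 0$ (since $\nu+\alpha<0$), all three hypotheses of Lemma \ref{l:implicit-function} hold for $T$ sufficiently large, and we obtain a unique solution $\phi(T)\in \fS_1$ with $\|\sq\p\bp\phi(T)\|_{\fS_1}\leq 2C_L\|\mathscr{F}(\bo)\|_{\fS_2}\leq C_0 T^{\nu+\alpha}$, giving the desired Calabi-Yau metric $\omega_{T,CY}=\omega_T+\sq\p\bp\phi(T)$.

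The main obstacle in this plan is step (i), the uniform injectivity of $\mathscr{L}$: one must simultaneously handle four qualitatively different rescaled geometries, keep track of the exponential weight $e^{\delta U_T}$ across the two non-compact ends where the geometry transitions into Calabi model spaces, and ensure the correct matching between the blow-up scale $\fs(\bx_j)$ and the weight exponents so that the limiting function lies in the function space to which the Liouville theorem applies. The delicate point is the choice \eqref{e:fix-delta-neck} of $\delta$, which must beat the smallest decay rate among $\delta_b$, $\delta_e$, and $\sqrt{\lambda_D}$ so that constants on the model limits are forced by the $L^\infty$ normalization, rather than merely permitted, and for this the removable-singularity statement along $\mathcal P$ (valid only for $\nu\in(-1,1)$) is essential because the points $\bx_j$ may accumulate on the singular locus of the fibration.
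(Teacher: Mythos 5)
Your proposal is correct and follows essentially the same route as the paper: the paper proves Theorem \ref{t:neck-CY-metric} by verifying the hypotheses of Lemma \ref{l:implicit-function}, with the uniform inverse supplied by Proposition \ref{p:estimate-L-neck} (itself proved by exactly the blow-up/contradiction argument you sketch, combining Proposition \ref{p:regularity-scale}, the weighted Schauder estimate, the removable singularity lemma for $\nu\in(-1,0)$, and the Liouville theorems on the four model geometries), the contraction estimate by Proposition \ref{p:nonlinear-neck} via the lower bound on the weight from the choice of $\mu$, and the initial error by Proposition \ref{p:CY-error-small} after the normalization of $\Omega_T$ making $\Err$ mean-zero. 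The only cosmetic difference is that in the Liouville step the decisive condition is $\nu\in(-1,0)$ (giving sublinear decay exponents on the Taub-NUT and Euclidean models) rather than $\nu+\alpha<0$, which is needed only to make $\|\mathscr{F}(\bo)\|_{\fS_2}\to 0$.
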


To prove Theorem \ref{t:neck-CY-metric}, we write
\begin{equation}
\mathscr{F}(\sq\p\bp\phi) -  \mathscr{F}(\bo) = \mathscr{L}(\sq\p\bp\phi) + \mathscr{N}(\sq\p\bp\phi),
\end{equation}
for any $\sq\p\bp\phi \in \fS_1$, where 
\begin{align}
\mathscr{L}(\sq\p\bp\phi) & \equiv  \Delta \phi,\label{e:linear-MT}
\\
\mathscr{N}(\sq\p\bp\phi)\cdot \omega_T^n & \equiv (\omega_T+\sqrt{-1}\p\bp\phi)^n-\omega_T^n - n \omega_T^{n-1}\wedge \sq\p\bp\phi.\label{e:nonlinear-MT}
\end{align}
By definition of the weight function  and Lemma \ref{l:weight-function-lower-bound-estimate}, we have the following nonlinear error estimate. 
\begin{lemma}
[Nonlinear error estimate]\label{p:nonlinear-neck} There exists a constant $C_N>0$   independent of 
$T\gg1$ such that for all 
$\vr\in (0,\frac{1}{2})
$
and 
\begin{equation}\sqrt{-1}\p\bp\phi_1\in \overline{B_{\vr}(\bo)} \subset \fS_1, \quad \sqrt{-1}\p\bp\phi_2\in \overline{B_{\vr}(\bo)}\subset\fS_1,\end{equation} we have 
\begin{align}  \|\mathscr{N}(\sqrt{-1}\p\bp\phi_1)-\mathscr{N}(\sqrt{-1}\p\bp\phi_2)\|_{\fS_2} 
\leq   C_N \cdot \vr \cdot  \|\sqrt{-1}\p\bp(\phi_1-\phi_2)\|_{\fS_1}.
\end{align}

\end{lemma}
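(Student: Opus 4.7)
The plan is to exploit the polynomial structure of $\mathscr{N}$ combined with a telescoping identity and the uniform lower bound on the weight function guaranteed by the choice (NP4) of $\mu$.

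First I would expand $\mathscr{N}$ explicitly. Writing $\eta_i \equiv \sqrt{-1}\partial\bar\partial\phi_i$, the binomial formula gives
\begin{equation}
\mathscr{N}(\eta_i)\cdot \omega_T^n = \sum_{k=2}^{n}\binom{n}{k} \omega_T^{n-k}\wedge \eta_i^{\,k},
\end{equation}
so every term in $\mathscr{N}$ is at least quadratic in $\eta_i$. Combining this with the telescoping identity $\eta_1^{\,k}-\eta_2^{\,k}=(\eta_1-\eta_2)\wedge\sum_{j=0}^{k-1}\eta_1^{\,j}\wedge\eta_2^{\,k-1-j}$, I would obtain
\begin{equation}
\bigl(\mathscr{N}(\eta_1)-\mathscr{N}(\eta_2)\bigr)\cdot \omega_T^n=(\eta_1-\eta_2)\wedge \sum_{k=2}^n\binom{n}{k}\omega_T^{n-k}\wedge \sum_{j=0}^{k-1}\eta_1^{\,j}\wedge \eta_2^{\,k-1-j},
\end{equation}
which factors out one copy of $\eta_1-\eta_2$ and leaves $k-1$ extra factors of $\eta_1$ or $\eta_2$.

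Next, I would establish a product rule in the weighted H\"older norms: on each regularity-scale ball $B_{\mathfrak{s}(\bx)/4}(\bx)$, the weight function is comparable to a constant (by \eqref{e:weight-function-control}), so the standard local product inequality $[fg]_{C^{0,\alpha}}\leq \|f\|_{C^{0}}[g]_{C^{0,\alpha}}+[f]_{C^{0,\alpha}}\|g\|_{C^{0}}$ upgrades to a weighted inequality in which the weight exponents add: a product of two factors of class $C^{0,\alpha}_{\delta,\nu+2,\mu}$ sits naturally in $C^{0,\alpha}_{2\delta,\,2(\nu+2),\,2\mu}$. Iterating, the term in the above sum indexed by $(k,j)$ carries the combined weight $C^{0,\alpha}_{k\delta,k(\nu+2),k\mu}$.

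The crucial step is now to convert that combined weight back to the target weight $C^{0,\alpha}_{\delta,\nu+2,\mu}$ at a cost proportional to $\vr^{k-1}$. This is where the choice (NP4) $\mu=(1-\tfrac{1}{n})(\nu+2+\alpha)$ plays its role. By Lemma \ref{l:weight-function-lower-bound-estimate} applied to the weight $\rho^{(\alpha)}_{\delta,\nu+2,\mu}$, the exponent $(\tfrac{1}{n}-1)(\nu+2+\alpha)+\mu$ equals zero, so the weight function is uniformly bounded below by a positive constant $c_0$ independent of $T$; in particular $e^{-\delta U_T}\mathfrak{s}^{-(\nu+2)}T^{-\mu}\leq c_0^{-1}$ everywhere on $\mathcal{M}_T$. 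Hence the $(k-1)$ additional factors of $\eta_i$ — each carrying $\fS_1$-norm at most $\vr$ — produce an extra multiplicative factor of at most $c_0^{-(k-1)}\vr^{k-1}$ in the target norm.

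Putting these ingredients together, for each pair $(k,j)$ the contribution to $\|\mathscr{N}(\eta_1)-\mathscr{N}(\eta_2)\|_{\fS_2}$ is bounded by a dimensional constant times $c_0^{-(k-1)}\vr^{k-1}\|\eta_1-\eta_2\|_{\fS_1}$. Summing finitely many such terms over $k=2,\dots,n$ and $j=0,\dots,k-1$, and using $\vr<\tfrac{1}{2}$ to dominate the resulting geometric-type sum, yields the claimed estimate with a uniform constant $C_N$. The main obstacle is bookkeeping rather than genuine analysis: one has to track carefully how the $(k,j)$-th term's weight parameters interact with the ambient $\omega_T$ (which is only $C^{1,\alpha}$ but whose operator-theoretic contraction $\omega_T^{n-k}\wedge(\cdot)/\omega_T^n$ is bounded pointwise by a dimensional constant times the product of operator norms $|\eta_i|_{\omega_T}$), and to verify that the weighted product rule extends uniformly across the full family of regions described in Proposition \ref{p:regularity-scale}, in particular through the Taub-NUT bubble region where $\mathfrak{s}(\bx)$ degenerates.
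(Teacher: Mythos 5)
Your proposal is correct and follows essentially the same route as the paper: binomial expansion of $\mathscr{N}$, factoring out one copy of $\sqrt{-1}\p\bp(\phi_1-\phi_2)$, and using the uniform lower bound $\rho^{(\alpha)}_{\delta,\nu+2,\mu}\geq 1$ coming from the choice \eqref{e:fix-mu-neck} together with Lemma \ref{l:weight-function-lower-bound-estimate} so that the remaining factors contribute only an $O(\vr)$ pointwise factor. Your repackaging via an additive-weight product rule and then dropping back to the target weight is just a slight reformulation of the paper's step of converting the weighted bounds on $\sqrt{-1}\p\bp\phi_i$ into uniform sup bounds before multiplying.
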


\begin{proof}

By definition,
\begin{align}
& \Big(\mathscr{N}(\sqrt{-1}\p\bp\phi_1)-\mathscr{N}(\sqrt{-1}\p\bp\phi_2)\Big)\cdot \omega_T^n
\nonumber\\
 = & \sum\limits_{k=2}^n\begin{pmatrix} 
n \\
k 
\end{pmatrix} \cdot \omega_T^{n-k}\wedge \Big((\sqrt{-1}\p\bp\phi_1)^k - (\sqrt{-1}\p\bp\phi_2)^k \Big).
\end{align}
By the definition of the norm on $\fS_1$, 
we have
 \begin{equation}
 \|\sqrt{-1}\p\bp\phi_1\|_{C_{\delta,\nu+2,\mu}^{0,\alpha}(\M_T)} \leq \vr , \quad 
 \|\sqrt{-1}\p\bp\phi_2\|_{C_{\delta,\nu+2,\mu}^{0,\alpha}(\M_T)} \leq \vr . 
\end{equation}
With $\mu$ specified by \eqref{e:fix-mu-neck}, Lemma \ref{l:weight-function-lower-bound-estimate} shows that the weight function $\rho_{\delta,\nu+2,\mu}^{(\alpha)}:\M_T\to \dR_+$ has a uniform lower bound $\rho_{\delta,\nu+2,\mu}^{(\alpha)}(\bx) \geq  1$ for any $\bx\in \M_T$,
So we have the pointwise estimates on $\M_T$,
   \begin{equation}
  |\sqrt{-1}\p\bp\phi_1 | \leq C_0\cdot   \vr   \quad \text{and} \quad   
   |\sqrt{-1}\p\bp\phi_2 | \leq C_0\cdot \vr, 
   \end{equation}
where $C_0>0$ is a uniform constant independent of $T\gg1$. This implies the pointwise estimate 
\begin{equation}
|\mathscr{N}(\sqrt{-1}\p\bp\phi_1)-\mathscr{N}(\sqrt{-1}\p\bp\phi_2)
| \leq C_N \cdot \vr  \cdot  |\sqrt{-1}\p\bp(\phi_1-\phi_2)|,
\end{equation}
 where 
$C_N>0$ is a uniform constant independent of $T$.
Write the above in terms of the weighted norms, we have 
\begin{equation}
\|\mathscr{N}(\sqrt{-1}\p\bp\phi_1)-\mathscr{N}(\sqrt{-1}\p\bp\phi_2)
\|_{C_{\delta,\nu+2,\mu}^{0,\alpha}(\M_T)} \leq C_N \cdot \vr  \cdot  \|\sqrt{-1}\p\bp(\phi_1-\phi_2)\|_{C_{\delta,\nu+2,\mu}^{0,\alpha}(\M_T)}. 
\end{equation}
The proof is done. 
\end{proof}

To apply the implicit function theorem, 
we still need to prove the weighted linear estimate, which will be completed in the following subsections.

\subsection{Some Liouville type theorems and  removable singularity theorems}
\label{ss:some liouville theorems}
 In this subsection, we introduce some removable singularity and Liouville type theorems, which will be needed in the proof of Proposition \ref{p:global-injectivity-estimate}.  
For the convenience of discussions, we give precise statement here.

\begin{lemma}[Removable singularity]
\label{l:harmonic-removable-singularity}Let $(M^m,g)$ be a Riemannian manifold with $m\geq 3$ such that $B_R(p)$ 
has a compact closure in $B_{2R}(p)$. Let $K\subset M^m$ be a  smooth submanifold with $\dim(K)=k_0\leq m-3$. If $u$ is harmonic in $B_R(p)\setminus K$ and there is some
$\epsilon\in(0,1)$ such that 
\begin{equation}
|u(x)|\leq \frac{C}{d_g(x,K)^{(m-2-k_0)-\epsilon}},
\end{equation}
 then  $u$ is harmonic in $B_R(p)$.
\end{lemma}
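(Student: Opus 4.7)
The plan is to show that $u$ is weakly harmonic on all of $B_R(p)$; then standard elliptic regularity upgrades $u$ to a smooth (in particular, harmonic) function on $B_R(p)$. Let $k_1 \equiv m - k_0 \geq 3$ denote the codimension of $K$. The growth hypothesis becomes $|u(x)| \leq C d_g(x,K)^{-(k_1 - 2) + \epsilon}$, which is $\epsilon$ better than the critical Green's function rate in codimension $k_1$. In particular this bound implies $u \in L^1_{\mathrm{loc}}(B_R(p))$ since the volume of the $\delta$-tubular neighborhood of $K$ scales like $\delta^{k_1}$, which beats the singularity.

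First, shrinking $R$ slightly if needed, we may assume the normal exponential map of $K$ gives well-defined normal coordinates in a tubular neighborhood of $K \cap \overline{B_R(p)}$, so that the distance function $\rho(x) \equiv d_g(x, K)$ is smooth in $\{0 < \rho < \rho_0\}$ with $|\nabla \rho| \equiv 1$ and $|\Delta \rho| \leq C/\rho$ (comparable to the Euclidean model). Fix a smooth function $\chi: \R \to [0,1]$ with $\chi(t)=0$ for $t\leq 1$ and $\chi(t)=1$ for $t\geq 2$, and for $0 < \delta \ll \rho_0$ define the cutoff
\begin{equation}
\eta_\delta(x) \equiv \chi\bigl(\rho(x)/\delta\bigr).
\end{equation}
Then $\eta_\delta$ vanishes on $\{\rho \leq \delta\}$, equals $1$ on $\{\rho \geq 2\delta\}$, and its derivatives are supported on the annular tube $U_\delta \equiv \{\delta \leq \rho \leq 2\delta\}$ with pointwise bounds $|\nabla \eta_\delta| \leq C\delta^{-1}$ and $|\Delta \eta_\delta| \leq C\delta^{-2}$. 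A standard tubular neighborhood computation gives $\Vol_g(U_\delta) \leq C \delta^{k_1}$ (with the constant depending on $K \cap \overline{B_R(p)}$).

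For any test function $\phi \in C_c^\infty(B_R(p))$, the function $\phi \eta_\delta$ is smooth and compactly supported in $B_R(p) \setminus K$ where $u$ is harmonic, so integration by parts gives
\begin{equation}
0 = \int_{B_R(p)} u \cdot \Delta(\phi \eta_\delta)\, d\mathrm{vol}_g = \int u \eta_\delta \Delta \phi + 2\int u \langle \nabla \phi, \nabla \eta_\delta\rangle + \int u \phi \Delta \eta_\delta.
\end{equation}
As $\delta \to 0$, the first term tends to $\int u \Delta \phi$ by dominated convergence (since $u \in L^1_{\mathrm{loc}}$ and $\eta_\delta \to 1$ pointwise on $B_R(p) \setminus K$). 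The two remaining terms are controlled by the growth hypothesis and the volume of $U_\delta$:
\begin{equation}
\left|\int u \phi \Delta \eta_\delta\right| \leq C \|\phi\|_{C^0} \cdot \delta^{-(k_1-2)+\epsilon} \cdot \delta^{-2} \cdot \delta^{k_1} = C \|\phi\|_{C^0}\, \delta^{\epsilon},
\end{equation}
\begin{equation}
\left|\int u \langle\nabla \phi, \nabla \eta_\delta\rangle\right| \leq C \|\nabla\phi\|_{C^0} \cdot \delta^{-(k_1-2)+\epsilon} \cdot \delta^{-1} \cdot \delta^{k_1} = C \|\nabla\phi\|_{C^0}\, \delta^{1+\epsilon},
\end{equation}
both of which vanish as $\delta \to 0$. (Here we use crucially that $k_1 \geq 3$, i.e. $k_0 \leq m-3$, so the exponent $(k_1-2) - \epsilon$ is strictly less than $k_1$ and the volume factor dominates.) Passing to the limit yields $\int u \Delta \phi = 0$ for every $\phi \in C_c^\infty(B_R(p))$, so $u$ is a distributional solution of $\Delta u = 0$ on $B_R(p)$. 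By elliptic regularity $u$ is smooth and harmonic on $B_R(p)$.

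The only real obstacle is the geometric step of ensuring that $\rho(x) = d_g(x,K)$ is smooth and behaves like the Euclidean distance from a linear subspace in a uniform tubular neighborhood—this is where the hypothesis $B_R(p) \Subset B_{2R}(p)$ together with smoothness of $K$ enters, allowing us to use a uniform lower bound on the injectivity radius of the normal exponential map of $K$ in the ambient ball. Everything else is the standard codimension-$\geq 3$ capacity argument.
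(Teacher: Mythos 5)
Your proof is correct, and it is exactly the argument the paper has in mind: the paper's own proof is only a one-line sketch ("apply integration by parts to show that $u$ is a weak solution... the computations are routine"), and your cutoff/capacity computation with the tubular-neighborhood volume bound $\Vol(U_\delta)\leq C\delta^{k_1}$ fills in precisely those routine details, with the exponent bookkeeping ($\delta^{\epsilon}$ and $\delta^{1+\epsilon}$) done correctly.
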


\begin{proof}
The point is to apply integration by parts to show that $u$ is a weak solution to $\Delta u =0$ on $B_R(p)$. The computations are routine and standard in the literature, so we just skip it. 
\end{proof}

\begin{lemma}
[Liouville theorem on $\dR^{m+n}$]\label{l:liouville-product} Given $m,l\in\dZ_+$ with $m+l\geq 3$,  Let $\mu_p\in(-1,1)\setminus\{0\}$ and let $u\in C^{\infty}(\dR^{m+l})$ be a harmonic function on the Euclidean space $(\dR^{m+l}, g_{\dR^m}\oplus g_{\dR^l})$. If $u$ satsifies
\begin{align}
|u(x,y)| \leq \frac{C}{|x|^{\mu_p}}  \quad   \text{for every}\ (x,y)\in (\dR^m\setminus \{0^m\})\times\dR^l,\label{e:partial-control}
\end{align} 
then $u\equiv 0$ on $\dR^{m+l}$.

\end{lemma}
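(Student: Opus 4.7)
\smallskip
\noindent\textbf{Proof plan for Lemma \ref{l:liouville-product}.} The strategy is to treat the two sign cases of $\mu_p$ separately, handle the positive case by a direct mean value estimate, and reduce the negative case to the positive case via a gradient estimate.

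First I would treat the case $\mu_p \in (0,1)$. Fix any $(x_0,y_0) \in \dR^{m+l}$, and for $R$ much larger than $|x_0|$ apply the mean value property for harmonic functions on the ball $B_R(x_0,y_0) \subset \dR^{m+l}$:
\begin{equation*}
|u(x_0,y_0)| \leq \frac{1}{|B_R|} \int_{B_R(x_0,y_0)} |u(x,y)|\,dxdy \leq \frac{C}{|B_R|} \int_{B_R(x_0,y_0)} |x|^{-\mu_p}\,dxdy.
\end{equation*}
Since $\mu_p < 1 \leq m$, the weight $|x|^{-\mu_p}$ is locally integrable on $\dR^m$, and a change of variables $(z,w)=(x-x_0,y-y_0)$ together with the bound $|z+x_0|^{-\mu_p} \leq C\max(|z|^{-\mu_p},|x_0|^{-\mu_p})$ yields the dimensional estimate $\int_{B_R(x_0,y_0)} |x|^{-\mu_p}\,dxdy \leq C\, R^{m+l-\mu_p}$ for $R \geq 2|x_0|+1$. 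Since $|B_R| \sim R^{m+l}$, the mean value bound becomes $|u(x_0,y_0)| \leq C\, R^{-\mu_p}$, and letting $R \to \infty$ forces $u(x_0,y_0)=0$.

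Next I would handle the case $\mu_p \in (-1,0)$, writing $\mu_p = -\alpha$ with $\alpha \in (0,1)$. The hypothesis reads $|u(x,y)| \leq C|x|^{\alpha}$ for $x \neq 0$. For any $(x_0,y_0)$ with $r_0 := |x_0| > 0$, the ball $B_{r_0/2}(x_0,y_0)$ lies in the region $|x| \geq r_0/2$, where $|u| \leq C(3r_0/2)^\alpha$. The standard gradient estimate for harmonic functions then gives
\begin{equation*}
|\nabla u(x_0,y_0)| \leq \frac{C_{m+l}}{r_0/2}\sup_{B_{r_0/2}(x_0,y_0)}|u| \leq C' r_0^{-(1-\alpha)}.
\end{equation*}
Hence each component of $\nabla u$ is a smooth harmonic function on $\dR^{m+l}$ satisfying the bound of the lemma with exponent $\mu'_p = 1-\alpha \in (0,1)$. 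Applying the previous case componentwise, $\nabla u \equiv 0$, so $u$ is constant. The original bound $|u| \leq C|x|^\alpha$ with $\alpha > 0$ then forces this constant to be $0$ by letting $|x| \to 0$.

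I do not expect any single step to be a serious obstacle: the only subtleties are verifying the integrability of $|x|^{-\mu_p}$ on the translated ball and tracking the correct power of $R$ after the shift, and confirming that the gradient estimate in Case 2 can indeed be used to reduce to Case 1 (i.e.\ that the hypothesis of the lemma is inherited by $\nabla u$, with the new exponent still in $(0,1)$). The dimension hypothesis $m+l \geq 3$ is not used in the argument itself but is already assumed in the statement; the proof works verbatim.
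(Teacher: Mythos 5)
Your proof is correct, and it takes a genuinely different route from the paper. The paper argues by separation of variables: it expands $u$ in spherical harmonics on $\mathbb{S}^{m+l-1}$, observes that each radial coefficient solves an Euler--Cauchy equation whose homogeneous solutions grow like $r^{p_j}$ with $p_j\in\{0\}\cup[1,\infty)$ or decay like $r^{q_j}$ with $q_j\le -1$, and then shows (by integrating the hypothesis over spheres, which is where the restriction $\mu_p\in(-1,1)$ and the condition $m+l\ge 3$ enter) that each coefficient is $O(r^{-\mu_p})$, forcing all of them to vanish. Your argument replaces this with two elementary tools: the mean value property for $\mu_p\in(0,1)$, and the interior gradient estimate to reduce $\mu_p\in(-1,0)$ to the first case applied to $\nabla u$. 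This is more elementary (no eigenfunction expansion, no spherical integrals of $|x|^{-\mu_p}$), it does not use $m+l\ge 3$ at all, and in fact it proves the statement for the larger range $\mu_p\in(-1,m)\setminus\{0\}$, since the mean value step only needs $\mu_p<m$ for local integrability; the paper's expansion, by contrast, yields finer information about the radial behavior of $u$ but as written is tied to $\mu_p\in(-1,1)$ and $m+l\ge3$.

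One small repair: the pointwise inequality you invoke in Case 1, $|z+x_0|^{-\mu_p}\le C\max(|z|^{-\mu_p},|x_0|^{-\mu_p})$, is false near $z=-x_0$ (the left side blows up there while the right side stays bounded). It is also unnecessary: for $R\ge |x_0|+|y_0|$ one simply has $B_R(x_0,y_0)\subset B_{2R}(0)\subset B^m_{2R}(0)\times B^l_{2R}(0)$, so
\begin{equation*}
\int_{B_R(x_0,y_0)}|x|^{-\mu_p}\,dx\,dy \le \Big(\int_{|x|\le 2R}|x|^{-\mu_p}\,dx\Big)\cdot CR^{l} \le C\,R^{m+l-\mu_p},
\end{equation*}
which is exactly the bound your argument needs. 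With that substitution the proof goes through as you wrote it.
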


\begin{remark} 
Although the result stated as in Lemma \ref{l:liouville-product} is sufficient for our purpose, it is worth mentioning that
this Liouville type result holds in a much more general setting under weaker assumptions. In fact, the uniform decay condition  \eqref{e:partial-control} can be replaced with some mild boundedness condition. We refer to
\cite[lemma A.1]{Walpuski} for the precise statement. 	
\end{remark}

\begin{proof}

The proof is done by using separation of variables. 
For the simplicity of notations, we denote 
\begin{equation}d\equiv m+l \geq 3.\end{equation}
Let $(r,\Theta)\in\dR^{d}$ be the polar coordinate system  in $\dR^{d}$, so the Laplacian of $u$ can be written as
\begin{equation}
\Delta_{\dR^{d}} u = \frac{\partial^2u}{\partial r^2} + \frac{d-1}{r}\cdot \frac{\partial u}{\partial r} + \frac{1}{r^2}\cdot\Delta_{\mathbb{S}^{d-1}}u.
\end{equation}

We make separation of variables on the punctured Euclidean space $\dR^{d}\setminus\{0^{d}\}$. Let 
\begin{equation}\lambda_j\equiv j(j+d-2),\ j\in\dN,\end{equation} be the spectrum of the unit round sphere $\mathbb{S}^{d-1}$. Correspondingly, let $\vf_j\in C^{\infty}(\mathbb{S}^{d-1})$ satisfy
\begin{equation}
-\Delta_{\mathbb{S}^{d-1}}\vf_j(\Theta) = \lambda_j \vf_j(\Theta).
\end{equation}
Then the function $u(r,\Theta)$ has the expansion along the fiber $\mathbb{S}^{d-1}$,
\begin{equation}
u(r,\Theta) = \sum\limits_{j=0}^{\infty} u_j(r)\cdot\vf_j(\Theta).\label{e:L2-exp-polar}
\end{equation}
Immediately, for each $j\in\dN$, the coefficient function  $u_j(r)$ solves the Euler-Cauchy equation, 
\begin{equation}
u_j''(r) + \frac{d-1}{r}\cdot u_j'(r) - \frac{1}{r^2} \cdot\lambda_j \cdot u_j(r) =0,
\end{equation}
which has a general solution
\begin{equation}
u_j(r)=C_j\cdot r^{p_j} + C_j^*\cdot r^{q_j},\label{e:polar-general-solution}
\end{equation}
where $p_j=\frac{2-d+\sqrt{(d-2)^2+4\lambda_j}}{2}\geq 0$ 
and $q_j=\frac{2-d-\sqrt{(d-2)^2+4\lambda_j}}{2}<0$ solve the quadratic equation
\begin{equation}
w^2 + (d-2)w - \lambda_j = 0.
\end{equation}
So it is obvious
\begin{align}
p_0 & = 0 , \quad q_0=2-d\leq -1,\nonumber\\
 p_j &\geq p_1 =1,\nonumber \\
 q_j & \leq q_1 = 1-d \leq  -2, \quad  j\in\dZ_+.\label{e:radial-gap}
\end{align}

In the following, we will show that, given the growth condition \eqref{e:partial-control} for $\mu_p\in(-1,1)\setminus\{0\}$, then for each $j\in\dN$ and for each $r>0$, the coefficient $u_j(r)$ satisfies 
\begin{equation}
|u_j(r)| \leq \frac{Q_j}{r^{\mu_p}},
\end{equation}
where $Q_j\in\dR$.
In fact, so it follows from the  expansion \eqref{e:L2-exp-polar} that for each $j\in\dN$,
\begin{equation}
u_j(r) = \int_{\mathbb{S}^{d-1}} u(r,\Theta) \cdot \vf_j \dvol_{\mathbb{S}^{d-1}},
\end{equation}
which implies 
\begin{equation}
|u_j(r)| \leq |\vf_j|_{L^{\infty}(\mathbb{S}^{d-1})}\cdot \int_{\mathbb{S}^{d-1}}\frac{1}{|x|^{\mu_p}} \dvol_{\mathbb{S}^{d-1}}.
\end{equation}
Next, we will write the above integral in the polar coordinates \begin{align}\Theta\equiv(\theta_1,\ldots,\theta_{d-1}),\quad \theta_1,\ldots,\theta_{d-2}\in[0,\pi],\quad \theta_{d-1}\in[0,2\pi].\end{align} Denote by  $d\Theta\equiv d\theta_1\wedge d\theta_2\wedge \ldots \wedge d\theta_{d-1}$,  then it is by elementary calculations that, 
$|x| = r^m\cdot  \prod\limits_{k=1}^{d-m}|\sin\theta_{k}|$ and
$\dvol_{\mathbb{S}^{d-1}}  = \prod\limits_{k=1}^{d-2}(\sin^{d-k-1}\theta_k)\cdot d\Theta$.
Therefore, 
\begin{equation}
\int_{\mathbb{S}^{d-1}}\frac{1}{|x|^{\mu_p}} \dvol_{\mathbb{S}^{d-1}}=\frac{1}{r^{\mu_p}}\int_{\mathcal{D}_{\Theta}}	\frac{\prod\limits_{k=1}^{d-2}(\sin^{d-k-1}\theta_k)}{\prod\limits_{k=1}^{d-m}|\sin\theta_{k}|^{\mu_p}}\cdot d\Theta,
\end{equation}
where $\mathcal{D}_{\Theta}\equiv\{0\leq \theta_1,\ldots, \theta_{d-2}\leq \pi, \ 0\leq \theta_{d-1}\leq 2\pi\}$. 
By assumption, $\mu_p\in(-1,1)\setminus\{0\}$, then the following is integrable
\begin{equation}
\mathcal{I}_0\equiv\int_{\mathcal{D}_{\Theta}}\prod\limits_{k=1}^{d-2}(\sin^{d-k-1}\theta_k)(\prod\limits_{k=1}^{d-m}|\sin\theta_{k}|^{\mu_p})^{-1}\cdot d\Theta.
\end{equation}
Therefore,  for each
$j\in\dN$, it holds that for all $r>0$,
\begin{equation}
|u_j(r)| \leq \frac{\mathcal{I}_0\cdot|\vf_j|_{L^{\infty}(\mathbb{S}^{d-1})}}{r^{\mu_p}} \equiv \frac{Q_j}{r^{\mu_p}}
\end{equation}

Now we go back to the representation of $u_j(r)$ in \eqref{e:polar-general-solution} and we analyze the growth behavior of function as  $r\ll1$ and $r\gg1$. Applying the assumption $\mu_p\in(-1,1)\setminus\{0\}$ and the gap obtained in \eqref{e:radial-gap}, we have that, for each $j\in\dN$, $C_j=C_j^*=0$.  Therefore, $u\equiv 0$.
\end{proof}

We will also need the following lemma which holds on a cylindrical space $D\times\dR$.
The proof follows from the rather standard separation of variables. We omit the details.

\begin{lemma}
[Liouville theorem on a cylinder]
\label{l:liouville-cylinder}
Let $(Q,g_Q)\equiv(D\times\dR,g_Q)$ be a cylinder with a product Riemannian metric $g_Q = g_D\oplus dz^2$, where $(D,g_D)$ is a closed Riemannian manifold.  Denote by $\lambda_D>0$ the lowest eigenvalue of the Laplace-Beltrami operator of $(D,g_D)$ acting on functions. If $u$ is a harmonic function on $Q$ satisfying the growth control
\begin{equation}|u| = O(e^{\lambda_c\cdot z})\end{equation} for some $\lambda_c\in(0,\sqrt{\lambda_D})$, then
$u\equiv 0$.

\end{lemma}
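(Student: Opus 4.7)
The plan is to run separation of variables along the compact factor $D$ and reduce the problem to a family of scalar ODEs on $\dR$ whose solutions are controlled by elementary arguments. Let $\{\phi_j\}_{j=0}^{\infty}$ be an $L^2(D)$-orthonormal basis of eigenfunctions of the (positive) Laplace-Beltrami operator on $D$, with eigenvalues $0 = \lambda_0 < \lambda_1 \leq \lambda_2 \leq \cdots$; here $\lambda_1 = \lambda_D$ by hypothesis. Since $u$ is smooth and the metric on $Q$ is a product, for each $z \in \dR$ we can expand $u(\cdot, z) \in L^2(D)$ as
\begin{equation}
u(x,z) = \sum_{j=0}^{\infty} u_j(z)\, \phi_j(x), \qquad u_j(z) \equiv \int_{D} u(x,z)\, \phi_j(x)\, \dvol_{g_D}.
\end{equation}
The harmonic equation $\Delta_Q u = 0$, combined with the product structure $\Delta_Q = -\partial_z^2 + \Delta_D$, then forces each coefficient function $u_j(z)$ to satisfy the ODE $u_j''(z) = \lambda_j \, u_j(z)$.

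The next step is to extract, from the pointwise hypothesis $|u(x,z)| \leq C\, e^{\lambda_c z}$, a corresponding one-sided exponential bound on every coefficient. Using the definition of $u_j$ and H\"older's inequality we obtain
\begin{equation}
|u_j(z)| \leq \|u(\cdot,z)\|_{L^{\infty}(D)} \cdot \|\phi_j\|_{L^1(D)} \leq C_j\, e^{\lambda_c z}
\end{equation}
for a constant $C_j$ depending on $j$ but not on $z$. So each $u_j$ is a solution of an Euler-type ODE with controlled growth at $+\infty$ and forced decay at $-\infty$ (since $\lambda_c>0$).

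I would then analyse these ODEs case by case. For $j=0$ we have $u_0(z) = A_0 + B_0 z$; letting $z\to-\infty$ the bound $|u_0(z)|\leq C_0 e^{\lambda_c z}\to 0$ forces $A_0=B_0=0$. For $j\geq 1$ the general solution is $u_j(z) = A_j\, e^{\sqrt{\lambda_j}\, z} + B_j\, e^{-\sqrt{\lambda_j}\, z}$ with $\sqrt{\lambda_j}\geq \sqrt{\lambda_D}>\lambda_c$. As $z\to+\infty$ the dominant term $A_j e^{\sqrt{\lambda_j} z}$ must be absorbed by $C_j e^{\lambda_c z}$, forcing $A_j=0$ by the strict gap $\sqrt{\lambda_j}>\lambda_c$; as $z\to -\infty$ the remaining term $B_j e^{-\sqrt{\lambda_j}z}$ blows up whereas $C_j e^{\lambda_c z}\to 0$, hence $B_j=0$. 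Therefore $u_j \equiv 0$ for every $j$, and summing the expansion gives $u\equiv 0$.

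The only mildly delicate point in this plan is the justification that the series expansion converges pointwise and can be differentiated term-by-term in $z$ so that the ODE derivation is rigorous; this is routine from elliptic regularity on $D$ combined with the exponential bound on $u$ (which transfers via the Poisson-type integral kernel to uniform bounds on derivatives in $z$ on any finite slab), and then dominated convergence. Apart from this, the argument is essentially the same separation-of-variables calculation that underlies Lemma \ref{l:liouville-product}, adapted to the cylinder instead of Euclidean space; there is no substantive obstacle.
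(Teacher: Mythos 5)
Your proposal is correct and follows the same route the paper indicates (and omits): separation of variables along the compact factor $D$, reducing to the ODEs $u_j''=\lambda_j u_j$, with the coefficient bounds $|u_j(z)|\leq C_j e^{\lambda_c z}$ killing the linear mode at $z\to-\infty$ and the exponential modes via the gap $\lambda_c<\sqrt{\lambda_j}$. The reading of the hypothesis as a one-sided bound (decay as $z\to-\infty$) is the intended one, as it is exactly what is needed for the conclusion $u\equiv 0$ and what is used in the paper's application.
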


 Finally we quote a Liouville theorem on the Calabi model space, which is proved in \cite{SZ-Liouville} (see Corollary 5.3.1 in this paper).
\begin{lemma}[Liouville theorem on Calabi model space,     
 \cite{SZ-Liouville}] \label{l:Liouville-Calabi-space-SZ} Let $(\Ca^n, g_{\Ca^n})$ be a Calabi model space. There exists a constant $\delta_{\Ca^n}>0$ such that the following holds:
 Let $u$ be a harmonic function on  $\Ca^n$ which satisfies the Neumann condition $\frac{\p u}{\p n}|_{\p\Ca^n}=0$. If $u=O(e^{\delta_{\Ca^n}\cdot  z^{\frac{n}{2}}})$ as $z\rightarrow\infty$, then $u$ is a constant on $\Ca^n$. 
\end{lemma}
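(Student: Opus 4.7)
The plan is to prove this by separation of variables, exploiting the $S^1$-symmetry of the Calabi model (inherited from the natural $S^1$-action on $L_D$) together with spectral decomposition along the base $D$. First I would Fourier decompose $u=\sum_{k\in\mathbb Z} u_k$ along the $S^1$-fibers, where $u_k$ transforms with weight $k$. Since both the Neumann condition on $\p\Ca^n$ and the growth hypothesis are $S^1$-invariant, it suffices to show that each $u_k$ vanishes for $k\neq 0$ and that $u_0$ is constant. For each weight-$k$ mode I would further expand along the base in a complete orthonormal basis of eigen-sections (of the appropriate line bundle $L_D^{\otimes k}$) for the relevant Bochner Laplacian on $D$, writing $u_k=\sum_j f_{k,j}(z)\phi_{k,j}(x)e^{\sqrt{-1}kt}$, and reduce the equation $\Delta u=0$ to a family of second-order ODEs in $z$ alone, one for each spectral mode.

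Next, using the explicit form of the Calabi ansatz with $\tilde\omega(z)=z\omega_D$ and $h=\tfrac{2}{n}z^n$, the reduced Laplacian takes the schematic form
\begin{equation}
\Delta u \;=\; -z^{-(n-1)}\p_z^2 u - (n-1)z^{-n}\p_z u + z^{-1}\Delta_D u + k^2 z^{n-1} u,
\end{equation}
so each coefficient function $f_{k,j}(z)$ satisfies an ODE whose leading-order asymptotics as $z\to\infty$ is $f''\sim\bigl(k^2 z^{2(n-1)}+\lambda_{k,j}z^{n-2}\bigr)f$, with $\lambda_{k,j}$ the corresponding spectral eigenvalue. A WKB-type analysis shows the two independent solutions behave like $\exp\bigl(\pm\alpha(k,j)\,z^{n/2}\bigr)$ with
\begin{equation}
\alpha(k,j)\;=\;\tfrac{2}{n}\sqrt{\lambda_{k,j}} \quad\text{(when }k=0\text{)},\qquad \alpha(k,0)\;\sim\;\tfrac{|k|}{n}\quad\text{(multiplied by }z^{n/2}\text{ effectively)},
\end{equation}
and the infimum $\delta_{\Ca^n}\equiv\inf_{(k,j)\neq(0,0)}\alpha(k,j)>0$ (bounded below by the first nonzero eigenvalue on $D$, essentially $\frac{2}{n}\sqrt{\lambda_1(D)}$) is the critical exponent in the statement. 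Any nonzero mode must therefore grow at least like $e^{\delta_{\Ca^n} z^{n/2}}$, so the hypothesis $u=O(e^{\delta_{\Ca^n}z^{n/2}})$ forces all modes with $(k,j)\neq (0,0)$ to vanish.

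The remaining mode $u_0=f_{0,0}(z)$ depends only on $z$, and the harmonic equation reduces to $(z^{n-1}f')'=0$ (up to normalization), whose solutions are $f=a+b\cdot z^{2-n}$ (or $f=a+b\log z$ when $n=2$). The Neumann condition $\p u/\p n=0$ at the boundary $z=1$ of $\Ca^n$ forces $b=0$, yielding $u\equiv a$.

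The main technical obstacle is making the WKB asymptotics rigorous and uniform across all the modes $(k,j)$ simultaneously: one must rule out the possibility that a carefully tuned infinite series of modes, each individually with the right growth rate, cancels in such a way that the sum grows more slowly than each summand. This requires quantitative control on the asymptotic expansions with errors that sum well after applying Weyl's law for the eigenvalues $\lambda_{k,j}$, together with estimates comparing the $L^2$-decomposition of $u$ on slices $\{z=\text{const}\}$ with its pointwise growth. Alternatively, one can avoid WKB entirely by running an energy/integration-by-parts argument on weighted $L^2$-spaces tailored to the volume form $z^{n-1}dz\wedge\dvol_D$, combined with a Caccioppoli-type inequality at infinity; the latter is the approach followed in \cite{SZ-Liouville}, where the details are carried out.
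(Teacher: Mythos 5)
First, a point of comparison: the paper does not prove this lemma at all --- it imports it verbatim from \cite{SZ-Liouville} (Corollary 5.3.1 there), so there is no in-paper argument to measure you against; your proposal has to stand on its own. Your overall strategy (Fourier decomposition along the $S^1$-fibers, spectral decomposition along $D$, reduction to ODEs in $z$, WKB growth rates) is a reasonable route, but as written it has a genuine gap at the decisive step. You claim that every mode with $(k,j)\neq(0,0)$ ``must grow at least like $e^{\delta_{\Ca^n}z^{n/2}}$'', so the growth hypothesis kills it. That is false: each mode ODE has the form $f''=W_{k,j}(z)f$ with $W_{k,j}>0$, whose solution space contains a decaying branch, and $u=f(z)\phi_{1}(x)$ with $f$ the decaying solution is a bounded, nonconstant harmonic function on $\mathcal{C}^n$ --- perfectly compatible with the growth bound. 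Indeed the lemma is simply false without the Neumann hypothesis, so the boundary condition must enter the mode-killing step, not just the zero mode. The missing argument is: since $f_{k,j}(z)$ is the slicewise $L^2$-projection of $u$, it inherits $f_{k,j}'(1)=0$; then, because $f''=W f$ with $W>0$, a nontrivial solution with $f'(1)=0$ and $f(1)\neq0$ is convex where it is nonzero, hence $|f|$ is nondecreasing and in fact grows at the WKB rate $\exp(c\,z^{n/2})$ (for $k=0$, $\lambda>0$) or faster, and only at that point does the hypothesis force $f\equiv0$.

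Two secondary corrections. (i) The obstacle you flag as the main difficulty --- possible cancellation among infinitely many modes --- is not an obstacle: $u_k$ and $f_{k,j}$ are obtained by projecting $u$ over fibers and slices, so each coefficient inherits the pointwise growth bound up to a factor polynomial in $z$ (the slice volume), and the modes are eliminated one at a time; no Weyl-law summation or uniform WKB error control is needed. (ii) Your radial reduction is off: with $\tilde\omega(z)=z\omega_D$ and $h\propto z^{n-1}$, the volume density is $\propto z^{n-1}$ but the radial operator is $z^{1-n}\partial_z\bigl(z^{n-1}h^{-1}\partial_z\,\cdot\bigr)$, so the invariant harmonic functions are affine in the moment map (e.g. $a+bz$ in this normalization), not $a+b\,z^{2-n}$; the Neumann condition still forces constancy, but note the growing solution $z$ is itself compatible with the growth hypothesis, which again shows it is the boundary condition, not the growth rate, that does the work there. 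Likewise, the $k\neq0$ modes behave like $e^{\pm c|k|\,z^{n}}$ rather than $e^{\pm c\,z^{n/2}}$ --- harmless for the conclusion, but the stated exponent is wrong.
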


\subsection{Weighted analysis and existence of incomplete Calabi-Yau metrics}
\label{ss:incomplete weighted analysis}

The main focus of this subsection is to prove the following proposition.

\begin{proposition}[Uniform injectivity estimate on the neck]\label{p:estimate-L-neck}
For any sufficiently large parameter $T\gg1$, the linearized operator defined in \eqref{e:linear-MT}   
\begin{equation}\mathscr{L}: \fS_1\rightarrow \fS_2, \quad \sqrt{-1}\p\bp\phi\mapsto \Delta\phi\end{equation}
is an isomorphism and satisfies the uniform injectivity estimate,
\begin{equation}
\|\sqrt{-1}\p\bp\phi\|_{\fS_1}\leq   C_L \cdot \|\Delta\phi\|_{\fS_2}.
\end{equation}
Here the constant $C_L>0$ is independent of the parameter $T\gg1$.
\end{proposition}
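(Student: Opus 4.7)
The plan is to prove the uniform injectivity estimate by a blow-up argument that reduces to a Liouville-type rigidity on each of the four bubble models classified in Proposition \ref{p:regularity-scale}; surjectivity for each fixed $T$ then follows from standard Fredholm theory for the Neumann problem on the compact manifold-with-boundary $\M_T$. First, I would combine the interior, higher-order, and boundary local weighted Schauder estimates of Proposition \ref{p:local-weighted-schauder} with a covering of $\M_T$ by balls of radius proportional to $\fs(\bx)$, using the $\fs$-controlled overlap from item (2) of Proposition \ref{p:regularity-scale}, to obtain a global semi-estimate
\begin{equation} \label{e:semi}
\|\sqrt{-1}\p\bp\phi\|_{\fS_1} \leq C\bigl(\|\Delta\phi\|_{\fS_2} + \|\phi\|_{C^0_{\delta,\nu,\mu}(\M_T)}\bigr),
\end{equation}
with $C$ independent of $T$. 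Since $\sqrt{-1}\p\bp\phi$ only depends on $\phi$ modulo constants, I may normalize $\int_{\M_T}\phi\,\omega_T^n = 0$, after which it suffices to absorb the $C^0$ term by proving
\begin{equation} \label{e:reduced}
\|\phi\|_{C^0_{\delta,\nu,\mu}(\M_T)} \leq C\|\Delta\phi\|_{\fS_2}, \qquad T \gg 1.
\end{equation}

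The heart of the argument is to prove \eqref{e:reduced} by contradiction. Suppose there exist $T_j \to \infty$ and normalized $S^1$-invariant $\phi_j$ satisfying the Neumann condition with $\|\phi_j\|_{C^0_{\delta,\nu,\mu}} = 1$ but $\|\Delta\phi_j\|_{\fS_2} \to 0$. Choose $\bx_j \in \M_{T_j}$ with $\rho^{(0)}_{\delta,\nu,\mu}(\bx_j)|\phi_j(\bx_j)| \geq \tfrac{1}{2}$, rescale the metrics by $\fs(\bx_j)^{-2}$, and rescale $\phi_j$ by its weighted value at $\bx_j$ so that $|\tilde\phi_j(\bx_j)| \approx 1$. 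By Proposition \ref{p:regularity-scale} the pointed rescaled spaces subconverge to one of (a) $\C^2_{TN}\times \C^{n-2}$, (b) $\R^3 \times \C^{n-2}$, (c) the cylinder $D \times \R$, or (d) a Calabi model $\mathcal{C}^n_\pm$. The weighted Schauder estimate combined with $\|\Delta\phi_j\|_{\fS_2}\to 0$ gives $C^{2,\alpha}_{loc}$-convergence (away from $\mathcal P$ in case (a)) to a harmonic function $u_\infty$ on the limit with $|u_\infty(\bx_\infty)|\approx 1$; in case (d) the Neumann condition on $\p\M_T$ passes to the limit boundary condition on $\p\mathcal{C}^n_\pm$.

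Next I would derive a contradiction case by case, where the precise choices $\nu \in (-1,0)$, $\delta < \delta_N$, and $\mu$ as in \eqref{e:fix-mu-neck} are calibrated exactly to match the Liouville hypotheses. Translating the weighted $C^0$ bound through the rescaling shows that: in case (a), $|u_\infty| \leq C\,d(\cdot,\mathcal P_\infty)^{-\nu}$ on the codimension-$3$ fixed locus $\mathcal P_\infty \cong \C^{n-2}$, which is removable across $\mathcal P_\infty$ by Lemma \ref{l:harmonic-removable-singularity} since $\nu \in (-1,0)$, after which Lemma \ref{l:liouville-product} applied to the tangent cone at infinity forces $u_\infty\equiv 0$; in case (b) Lemma \ref{l:liouville-product} applies directly; in case (c) the linear behavior of $U_T$ for $|z|\ll T$ together with $\delta < \sqrt{\lambda_D}/(n(|k_-|+|k_+|)^n)$ translates the weighted bound into a growth rate $|u_\infty(\cdot, z)| = O(e^{\lambda|z|})$ with $\lambda < \sqrt{\lambda_D}$, so Lemma \ref{l:liouville-cylinder} yields $u_\infty\equiv 0$; in case (d) the bound becomes $|u_\infty|=O(e^{\delta' z^{n/2}})$ with $\delta' < \delta_{\mathcal{C}^n_\pm}$, so the Neumann Liouville Lemma \ref{l:Liouville-Calabi-space-SZ} forces $u_\infty$ constant, and the zero-average normalization then kills it. In every case this contradicts $|u_\infty(\bx_\infty)|\approx 1$, establishing \eqref{e:reduced}.

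Finally, for surjectivity at fixed $T \gg 1$, the operator $\Delta$ on $\M_T$ with homogeneous Neumann condition is a self-adjoint Fredholm operator of index zero on the unweighted H\"older spaces, and the compatibility condition $\int_{\M_T} f\,\omega_T^n = 0$ built into $\fS_2$ is exactly the solvability condition; uniqueness modulo constants gives a well-defined $\sqrt{-1}\p\bp\phi \in \fS_1$ for each $f \in \fS_2$. Combined with \eqref{e:reduced} and \eqref{e:semi} this produces the isomorphism $\mathscr L$ with a $T$-uniform bound $C_L$ on $\|\mathscr{L}^{-1}\|_{op}$. The main obstacle will be case (c), the cylindrical bubble $D\times \R$: the translation invariance of the limit along $\R$ combined with the fact that $U_T$ is only piecewise linear and vanishes at $z = 0$ requires carefully tracking how the rescaled center behaves along the sequence and extracting the correct exponential growth rate, and it is precisely this case that dictates the upper bound $\delta_N$ in condition (NP3) and the constraint $\nu \in (-1, 0)$.
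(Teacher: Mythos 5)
Your reduction of the problem to a weighted $C^0$ bound on $\phi$ itself (after imposing $\int_{\M_T}\phi\,\omega_T^n=0$) is where the argument breaks. In the contradiction/blow-up step you normalize the weighted sup of $\phi_j$ to $1$ at the basepoint and hope to conclude $u_\infty\equiv 0$ on each bubble, but on the models $\C^2_{TN}\times\C^{n-2}$, $\R^3\times\C^{n-2}$ and $\mathcal C^n_\pm$ the available Liouville theorems only force the limit to be \emph{constant}: your weighted bound translates, near the singular slice, into $|u_\infty|\leq C$ rather than $|u_\infty|\leq C\,d(\cdot,\mathcal P_\infty)^{-\nu}$ with decay, because the weight caps at the scale $T^{-1}$ (in the paper's Case (1) the limiting weight equals $1$ on the unit tube around $\Sigma_0$), so Lemma \ref{l:harmonic-removable-singularity} plus Cheng--Yau or Lemma \ref{l:Liouville-Calabi-space-SZ} give constancy and nothing more. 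Note also that Lemma \ref{l:liouville-product} is designed to be applied to quantities with a \emph{positive} weight exponent (in the paper it is applied to the partial derivatives $\nabla_k\tilde v_\infty$, which carry the exponent $\nu+1>0$), not to the function itself with exponent $\nu\in(-1,0)$. Since constants lie in the kernel of both $\sqrt{-1}\p\bp$ and $\Delta$, the only thing that could exclude a nonzero constant limit is your normalization $\int_{\M_T}\phi\,\omega_T^n=0$; but that is a global condition on the whole neck and does not pass to the pointed blow-up limits (the bubble is a vanishingly small, or merely end, portion of $\M_T$), so it gives no contradiction with $|u_\infty(\bx_\infty)|\approx 1$. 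Whether your intermediate $C^0$ estimate is even true is beside the point: your proposed proof of it does not close.

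The paper circumvents exactly this issue by never estimating $\phi$ in $C^0$. In Proposition \ref{p:neck-uniform-injectivity} the quantity normalized to $1$ in the contradiction argument is the weighted norm of $\nabla u_j$ and $\nabla^2 u_j$ (and later the $C^{2,\alpha}$ seminorm), which is insensitive to constants, and the blown-up functions are renormalized by subtracting the value at the basepoint, $v_j=u_j-u_j(\bx_j)$. Then constancy of the limit (from the same Liouville theorems you invoke, applied after first killing the $\C^{n-2}$- or Taub--NUT directions via derivative bounds with the exponents $\nu+1,\nu+2>0$) forces $\tilde v_\infty\equiv 0$, and the contradiction comes from the nonvanishing weighted \emph{gradient or Hessian} at $\bx_j$, which survives the $C^{2,\alpha'}_{loc}$ convergence. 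Combining that derivative estimate with the global weighted Schauder estimate (Proposition \ref{p:neck-weighted-schauder}) directly bounds $\|\sqrt{-1}\p\bp\phi\|_{\fS_1}$ by $\|\Delta\phi\|_{\fS_2}$, with surjectivity handled as you indicate by the standard Neumann--Fredholm theory and the compatibility condition built into $\fS_2$. If you want to salvage your outline, replace the $C^0$ reduction by a contradiction argument set up on the weighted $C^1$/$C^2$ norms with the basepoint-value subtraction; the rest of your case analysis and your choice of parameters $(\nu,\alpha,\delta,\mu)$ then goes through essentially as in the paper.
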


A preliminary ingredient in proving Proposition \ref{p:estimate-L-neck} is the following weighted Schauder estimate.  

\begin{proposition}[Weighted Schauder estimate on the neck]\label{p:neck-weighted-schauder}
There exists a constant $C>0$ such that for $T\gg1$, and all $u\in C^{2, \alpha}(\M_T)$,  the following holds
\begin{equation}
\|u \|_{C_{\delta,\nu,\mu}^{2,\alpha}(\M_T)} \leq C \Big(\|\Delta u\|_{C_{\delta,\nu+2,\mu}^{0,\alpha}(\M_T)} + \Big\|\frac{\p u}{\p n}\Big\|_{C_{\delta,\nu+1,\mu}^{1,\alpha}(\p\M_T)}+\|u\|_{C_{\delta,\nu,\mu}^0(\M_T)}\Big).
\end{equation}
\end{proposition}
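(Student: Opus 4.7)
The plan is to deduce the global weighted Schauder estimate from the local version Proposition~\ref{p:local-weighted-schauder} by a patching argument, using the varying regularity scale $\mathfrak{s}$ as the natural local length scale and the slow variation of $\mathfrak{s}$ (item (2) of Proposition~\ref{p:regularity-scale}) to glue the local estimates into a global one.

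Concretely, for each $\bx \in \M_T$ I would apply Proposition~\ref{p:local-weighted-schauder} with $k = 0$ and $r = 1/8$: at an interior point (say $d_{g_T}(\bx, \partial \M_T) \ge \mathfrak{s}(\bx)/4$) I would use the interior estimate (1), and at a point within $\mathfrak{s}(\bx)/4$ of $\partial \M_T$ I would use the Neumann boundary estimate (3). In either case this gives a bound of the form
\begin{equation*}
\|u\|_{C^{2,\alpha}_{\delta,\nu,\mu}(B_{\mathfrak{s}(\bx)/8}(\bx))} \le C\Big(\|\Delta u\|_{C^{0,\alpha}_{\delta,\nu+2,\mu}(B_{\mathfrak{s}(\bx)/4}(\bx))} + \big\|\tfrac{\partial u}{\partial n}\big\|_{C^{1,\alpha}_{\delta,\nu+1,\mu}(\partial \M_T)} + \|u\|_{C^0_{\delta,\nu,\mu}(B_{\mathfrak{s}(\bx)/4}(\bx))}\Big),
\end{equation*}
with the boundary contribution absent in the interior case, and with the constant $C$ independent of $\bx$ and $T$ thanks to item (2) of Proposition~\ref{p:regularity-scale} (which implies $\rho_{\delta,\nu,\mu}^{(k+\alpha)}(\by) \asymp \rho_{\delta,\nu,\mu}^{(k+\alpha)}(\bx)$ for $\by \in B_{\mathfrak{s}(\bx)/4}(\bx)$, as already recorded in \eqref{e:weight-function-control}).

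I would then take the supremum over $\bx \in \M_T$. The $C^0$-components $\|\rho^{(m)} \nabla^m u\|_{C^0(\M_T)}$ for $m = 0, 1, 2$ are direct suprema over the covering $\{B_{\mathfrak{s}(\bx)/8}(\bx)\}$ and assemble trivially. For the H\"older seminorm $[\nabla^2 u]_{C^{0,\alpha}_{\delta,\nu+2,\mu}}(\bx)$, whose defining range is $\by \in B_{r_{2,\alpha}(\bx)}(\bx)$, I would split: for $\by \in B_{\mathfrak{s}(\bx)/8}(\bx)$ the local estimate above controls the increment quotient directly; for $d_{g_T}(\bx, \by) \gtrsim \mathfrak{s}(\bx)$ the denominator $d_{g_T}(\bx,\by)^\alpha$ is bounded below by a definite multiple of $\mathfrak{s}(\bx)^\alpha$, so that
\begin{equation*}
\rho^{(2+\alpha)}_{\delta,\nu,\mu}(\bx) \cdot \frac{|\nabla^2 u(\bx) - \nabla^2 u(\by)|}{d_{g_T}(\bx,\by)^\alpha} \lesssim \rho^{(2)}_{\delta,\nu,\mu}(\bx)\bigl(|\nabla^2 u(\bx)| + |\nabla^2 u(\by)|\bigr),
\end{equation*}
which is absorbed by the global $\|\rho^{(2)} \nabla^2 u\|_{C^0(\M_T)}$ after using the slow variation of $\rho$ one more time.

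The main obstacle is ensuring that all constants are uniform in $T \to \infty$ across the four genuinely different bubble regimes classified in Proposition~\ref{p:regularity-scale}: the deep Taub-NUT bubble $\C^2_{TN} \times \C^{n-2}$ near $\mathcal{P}$, the intermediate flat $\R^3 \times \C^{n-2}$, the collapsing cylinder $D \times \R$, and the Calabi model ends $\mathcal{C}^n_{\pm}$. This uniformity, however, has already been packaged into Proposition~\ref{p:local-weighted-schauder}, whose proof rescales by $\mathfrak{s}(\bx)$ and applies standard Schauder on the (possibly $S^1$-unwrapped) local universal cover, where $C^{k,\alpha}$-bounded geometry of definite size is guaranteed by Definition~\ref{d:local-regularity} in each regime. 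Once that local $T$-independence is granted, the patching step of the present proposition is essentially formal.
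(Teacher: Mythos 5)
Your argument is correct and is essentially the paper's proof: the paper simply deduces the global estimate from the local weighted Schauder estimate (Proposition \ref{p:local-weighted-schauder}) by a standard covering argument, which is exactly the patching you carry out, with the uniform comparability of the weight function on regularity balls (item (2) of Proposition \ref{p:regularity-scale}) and the near/far splitting of the H\"older quotient filling in the standard details.
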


\begin{proof}
The proof follows directly from 
Proposition \ref{p:local-weighted-schauder} and standard covering argument. \end{proof}

Next, the key part of the injectivity estimate  in Proposition \ref{p:estimate-L-neck} is the following weighted 
estimate for higher derivatives with respect to the Neumann boundary value problem. 

\begin{proposition}[Uniform derivative estimates on the neck] \label{p:neck-uniform-injectivity}
Let the parameters $\mu$, $\nu$, $\alpha$, $\delta$ satisfy the conditions in \eqref{e:fix-nu-neck}, \eqref{e:fix-alpha-neck}, \eqref{e:fix-delta-neck} and \eqref{e:fix-mu-neck}. Then 
there exists a uniform constant $C>0$  such that for $T\gg1$, and for all 
$u\in C^{2,\alpha}(\M_T)$ satisfying the boundary condition $\frac{\p u}{\p n}|_{\p\M_T}=0$, we have 
\begin{align}
\|\nabla u\|_{C_{\delta,\nu+1,\mu}^{0}(\M_T)} + \|\nabla^2 u\|_{C_{\delta,\nu+2,\mu}^{0}(\M_T)} +  [u]_{C_{\delta,\nu,\mu}^{2,\alpha}(\M_T)}	\leq C \cdot \|\Delta u\|_{C_{\delta,\nu+2,\mu}^{0,\alpha}(\M_T)}. \label{e:hoelder-neck}
\end{align}
 
\end{proposition}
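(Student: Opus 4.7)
The plan is to argue by contradiction using a blow-up analysis together with the Liouville theorems of Section~\ref{ss:some liouville theorems}. Suppose the estimate fails; then after rescaling, there exist sequences $T_j\to\infty$ and $u_j\in C^{2,\alpha}(\M_{T_j})$ with $\frac{\partial u_j}{\partial n}|_{\p\M_{T_j}}=0$ satisfying
\begin{equation*}
A_j\equiv\|\nabla u_j\|_{C_{\delta,\nu+1,\mu}^{0}} + \|\nabla^2 u_j\|_{C_{\delta,\nu+2,\mu}^{0}} + [u_j]_{C_{\delta,\nu,\mu}^{2,\alpha}}=1,\qquad \|\Delta u_j\|_{C_{\delta,\nu+2,\mu}^{0,\alpha}}\to 0.
\end{equation*}
Because $A_j=1$, we can pick $\bx_j\in\M_{T_j}$ and a weight order $k\in\{1,2,2{+}\alpha\}$ at which at least a definite fraction of $A_j$ is attained. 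The regularity scale $\fs(\bx_j)$ and the asymptotic parameter $\tfrac{L_{T_j}(\bx_j)}{T_j}$ then determine, by item~(3) of Proposition~\ref{p:regularity-scale}, the candidate pointed Gromov--Hausdorff limit after rescaling by $\fs(\bx_j)^{-2}g_{T_j}$, namely one of $\C^2_{TN}\times\C^{n-2}$, $\R^3\times\C^{n-2}$, the cylinder $D\times\R$, or a Calabi model space $\Ca^n_\pm$.

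Next I will normalize $u_j$ by dividing by the weight value $\rho_{\delta,\nu,\mu}^{(k)}(\bx_j)\cdot\fs(\bx_j)^{-k}$ so that, in the rescaled metric, the corresponding derivative of $\tilde u_j$ has unit norm at the base point $\bx_j$. Using Proposition~\ref{p:local-weighted-schauder} and the uniform control of $\rho_{\delta,\nu,\mu}^{(k+\alpha)}$ on balls of definite rescaled radius provided by item~(2) of Proposition~\ref{p:regularity-scale}, the rescaled functions $\tilde u_j$ satisfy uniform $C^{2,\alpha}$ bounds on compact sets away from the fixed locus $\cP$ and, when relevant, up to the boundary $\p\Ca^n_\pm$. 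Hence a subsequence converges in $C^{2,\alpha'}_{\mathrm{loc}}$ to a harmonic function $u_\infty$ on the model space, with (i) controlled growth dictated by the weight $e^{\delta U_T}\fs^{\nu+k+\alpha}$ transported to the limit, (ii) Neumann boundary values in the Calabi model case, and (iii) a nontrivial $k$-th derivative at the base point.

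The contradiction then comes from the various Liouville theorems, for which the choice of $\delta<\delta_N$ in \eqref{e:fix-delta-neck} is precisely calibrated: on $\C^2_{TN}\times\C^{n-2}$ the weight gives polynomial growth, the codimension-$3$ fixed locus is removable by Lemma~\ref{l:harmonic-removable-singularity} (combined with the $\nu\in(-1,0)$ condition \eqref{e:fix-nu-neck}), and the usual Euclidean Liouville theorem forces $u_\infty$ to be affine with bounds that kill the normalized derivative; on $\R^3\times\C^{n-2}$ one applies Lemma~\ref{l:liouville-product} after removing the line $\{0^3\}\times\C^{n-2}$; on the cylinder $D\times\R$ the allowed exponential growth $e^{\delta\cdot\frac{n}{2}|k_\pm||z|}$ is strictly below $e^{\sqrt{\lambda_D}|z|}$, so Lemma~\ref{l:liouville-cylinder} applies; finally, on $\Ca^n_\pm$ the growth $e^{\delta\cdot L_0(z)^{n/2}}$ is below $e^{\delta_{\Ca^n}z^{n/2}}$ thanks to the factor $(|k_-|+|k_+|)^n$ in \eqref{e:fix-delta-neck}, so Lemma~\ref{l:Liouville-Calabi-space-SZ} applies and the Neumann condition forces $u_\infty$ to be constant. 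In every case $\nabla^k u_\infty\equiv 0$, contradicting the construction at $\bx_j$.

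The main obstacle is the case where $\bx_j$ drifts to the boundary $\p\M_{T_j}$ or to $|z|\to\infty$: here one must verify that the correct model is the Calabi model space rather than an intermediate collapse, and that the exponential weight and the Neumann condition together are compatible with Lemma~\ref{l:Liouville-Calabi-space-SZ}. This is precisely where the explicit choice of $L_0(z)$ in \eqref{e:l(z)} and of the constant $\delta_N$ involving $\delta_{\Ca^n_\pm}$ enters, and checking these compatibility conditions carefully is the technical heart of the argument; the other three model cases are comparatively standard given the weighted Schauder estimate of Proposition~\ref{p:neck-weighted-schauder} and the classical Liouville results.
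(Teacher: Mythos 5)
Your overall strategy (contradiction, blow-up at a point where the weighted norm concentrates, identification of the four rescaled limits from Proposition \ref{p:regularity-scale}, and the Liouville theorems of Section \ref{ss:some liouville theorems} calibrated by the choice of $\delta$, $\nu$) is indeed the paper's Step~1. But there is a genuine gap in how you treat the H\"older seminorm term $[u]_{C^{2,\alpha}_{\delta,\nu,\mu}}$. You lump it into $A_j$ and claim that when the weight order $k=2+\alpha$ dominates, the blow-up limit $u_\infty$ has a ``nontrivial $k$-th derivative at the base point.'' This does not follow: the convergence you can extract is only $C^{2,\alpha'}_{\mathrm{loc}}$ with $\alpha'<\alpha$, and a lower bound $[\tilde u_j]_{C^{2,\alpha}}(\bx_j)\geq c>0$ does not pass to the limit, because the H\"older quotient of $\nabla^2\tilde u_j$ may be realized at pairs of points whose distance tends to zero, in which case the limit harmonic function can have vanishing seminorm and no contradiction arises. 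The paper avoids this by a two-step structure: it first proves the weighted $C^1$ and $C^2$ bounds \eqref{e:neck-C2-estimate} by the blow-up/Liouville argument (only the gradient and Hessian terms are normalized to $1$), and then handles the seminorm in a separate, purely local second step: applying \eqref{e:neck-C2-estimate} to a new contradiction sequence shows its weighted $C^1$ and $C^2$ norms tend to zero, hence (after subtracting $u_j(\bx_j)$) the $C^0$ norm on the regularity ball tends to zero, and the local weighted Schauder estimate of Proposition \ref{p:local-weighted-schauder} then forces the local $C^{2,\alpha}$ norm to zero, contradicting $[u_j]_{C^{2,\alpha}}(\bx_j)=1$. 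You need either this second step or a further rescaling argument handling the ``points coming together'' regime; as written your single blow-up does not close.

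Two smaller points. First, the hypotheses give no zeroth-order control of $u$, so before invoking Proposition \ref{p:local-weighted-schauder} (whose right-hand side contains $\|u\|_{C^0_{\delta,\nu,\mu}}$) and before extracting a $C^{2,\alpha'}_{\mathrm{loc}}$ limit of the functions themselves, you must normalize additively, $v_j=u_j-u_j(\bx_j)$, as the paper does; your purely multiplicative rescaling by the weight value does not supply the needed $C^0$ bound. Second, on the deepest bubble $\C^2_{TN}\times\C^{n-2}$ you cannot quote ``the usual Euclidean Liouville theorem'' (the space is Ricci-flat but not flat), and no removable-singularity lemma is needed there since the convergence is $C^{2,\alpha}$ across $\mathcal P$; the paper instead kills the $\C^{n-2}$-directional derivatives via Cheng--Yau's gradient estimate and then applies a Bochner argument on Taub-NUT, reserving Lemma \ref{l:harmonic-removable-singularity} and Lemma \ref{l:liouville-product} for the collapsed limits $\R^3\times\C^{n-2}$ and $D\times\R$. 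The Calabi-model boundary case you flag as the ``technical heart'' is in fact the quickest case once Lemma \ref{l:Liouville-Calabi-space-SZ} is available.
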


\begin{proof}

The proof of \eqref{e:hoelder-neck} consists of two primary steps:
In the first step, we will prove the weighted $C^1$ and $C^2$ estimates, 
\begin{equation}
		\|\nabla u\|_{C_{\delta,\nu+1,\mu}^{0}(\M_T)} + \|\nabla^2 u\|_{C_{\delta,\nu+2,\mu}^{0}(\M_T)} \leq C \cdot \|\Delta u\|_{C_{\delta,\nu+2,\mu}^{0,\alpha}(\M_T)},\label{e:neck-C2-estimate}\end{equation}
where $C>0$ is independent of $T$. Once \eqref{e:neck-C2-estimate} is accomplished, we will prove the weighted $C^{2,\alpha}$-estimate for some uniform constant $C>0$ independent of $T$, 
\begin{equation}
		[u]_{C_{\delta,\nu,\mu}^{2,\alpha}(\M_T)}	 \leq C \cdot \|\Delta u\|_{C_{\delta,\nu+2,\mu}^{0,\alpha}(\M_T)}.\label{e:neck-top-semi-norm}\end{equation}

\begin{flushleft}
{\bf Step 1} (weighted $C^1$ and $C^2$ estimates):
\end{flushleft}

 We will prove \eqref{e:neck-C2-estimate} by contradiction. 
Suppose no such uniform constant $C>0$ exists. That is, for fixed parameters  $(\nu, \alpha, \delta, \mu)$, 
there are the following sequences: 
\begin{enumerate}
\item 
A sequence $T_j\to+\infty$.

\item A sequence of $C^{2,\alpha}$-functions $u_j$ on $\M_j\equiv \M_{T_j}$ which satisfy 
\begin{align}
\begin{split}
\frac{\p u_j}{\p n}\Big|_{\p\M_j} & = 0, \\
				\|\nabla u_j\|_{C_{\delta,\nu+1,\mu}^{0}(\M_j)} + \|\nabla^2 u_j\|_{C_{\delta,\nu+2,\mu}^{0}(\M_j)} & = 1,
				\\
				\|\Delta u_j\|_{C_{\delta,\nu+2,\mu}^{0,\alpha}(\M_j)} &\to 0, \quad j\to+\infty.
\end{split}
\end{align}
\end{enumerate}
So it follows that either		
$\|\nabla u_j\|_{C_{\delta,\nu+1,\mu}^{0}(\M_j)}\geq \frac{1}{2}$ or 
$\|\nabla^2 u_j\|_{C_{\delta,\nu+2,\mu}^{0}(\M_j)}\geq \frac{1}{2}$. 
Without loss of generality, we only consider the first case and let  $\bx_j\in\M_j$ satisfy 
\begin{equation}
|\rho_{\delta,\nu+1,\mu}^{(0)}(\bx_j)\cdot\nabla u_j(\bx_j)|=\|\nabla u_j\|_{C_{\delta,\nu+1,\mu}^{0}(\M_j)}\geq\frac{1}{2}.\end{equation}	
Let us 	
renormalize $u_j$ by taking
$	v_j(\bx) \equiv u_j(\bx) - u_j(\bx_j)$. 
Immediately, $v_j(\bx_j) = 0$, $\frac{\p v_j}{\p n}|_{\p\M_j}=0$ and
\begin{align}\label{e:v_j-conditions}
\begin{split}
 |\rho_{\delta,\nu+1,\mu}^{(0)}(\bx_j)\cdot\nabla v_j(\bx_j)|=\|\nabla v_j\|_{C_{\delta,\nu+1,\mu}^{0}(\M_j)}&\geq\frac{1}{2},
 \\
\|\nabla v_j\|_{C_{\delta,\nu+1,\mu}^{0}(\M_j)} + \|\nabla^2 v_j\|_{C_{\delta,\nu+2,\mu}^{0}(\M_j)} & = 1, 
\\
\|\Delta v_j\|_{C_{\delta,\nu+2,\mu}^{0,\alpha}(\M_j)} &\to 0,
 \\
 \|v_j\|_{C_{\delta,\nu,\mu}^{0}(\M_j)} &\leq C_0,
 \end{split}
 \end{align}
where $C_0>0$ is independent of $T_j$.
Applying the weighted Schauder estimate in Proposition \ref{p:neck-weighted-schauder}, we have that  
$  \|v_j\|_{C_{\delta,\nu,\mu}^{2,\alpha}(\M_j)} \leq C_0$ for some uniform constant $C_0>0$ independent of $T_j$. 

We will rescale $(\M_j, g_{T_j})$ around the above reference points $\bx_j$  
such that the desired contradiction will arise on the limiting space. 
Denote the rescaling factors as follows:
\begin{enumerate}
\item {\bf Rescaling of the metrics:} 
Let $\tilde{g}_{T_j}\equiv \fs(\bx_j)^{-2}\cdot g_{T_j}$, then with respect to the fixed reference point $\bx_j\in\M_j$ picked as the above and passing to a subsequence, we have the  convergence
$(\M_j, \tilde{g}_{T_j}, \bx_j) \xrightarrow{GH} (\cX_{\infty}, \tilde{d}_{\infty}, \bx_{\infty})$ with the rescaled limit $X_{\infty}$ identified in Proposition \ref{p:regularity-scale}.

\item {\bf Rescaling of the solutions:} 
Let $\kappa_j>0$ be a sequence of rescaling factors which will be determined later, such that $\tilde{v}_j \equiv \kappa_j \cdot v_j$.

\item {\bf Rescaling of the weight functions:}
Denote by $\tilde{\rho}_{j,\delta,\nu,\mu}^{(k+\alpha)}$ be the weight functions on the rescaled spaces $(\M_j, \tilde{g}_{T_j},\bx_j)$ which are given by by 
$\tilde{\rho}_{j,\delta,\nu,\mu}^{(k+\alpha)} = \tau_j \cdot \rho_{j,\delta,\nu,\mu}^{(k+\alpha)}$, so that $\tilde{\rho}_{j,\delta,\nu,\mu}^{(k+\alpha)}$ converges to $\tilde{\rho}_{\infty,\delta,\nu,\mu}^{(k+\alpha)}$ on  $(\cX_{\infty}, \tilde{g}_{\infty},\bx_{\infty})$. Notice that the rescaling factor $\tau_j$ depends on $k$ and $\alpha$.
\end{enumerate}

In the following, we study the convergence of the renormalized functions $\tilde{v}_j$ with respect to the pointed convergence of the rescaled spaces $(\M_j,\tilde{g}_{T_j},\bx_j)$  for $\bx_j$ in every case in the proof of Proposition \ref{p:regularity-scale}. 
The main goal is to show that $\tilde{v}_{\infty}\equiv 0$ on each rescaled limit $\cX_{\infty}$ which gives the desired contradiction.

 To begin with,  we fix the rescaling factors. Notice that as computing the weighted norms of $\tilde{v}_j$ in terms of $\tilde{g}_{T_j}$ and $\tilde{\rho}_{j,\delta,\nu,\mu}^{(k+\alpha)}$, the property   
$\|\tilde{v}_j\|_{C_{\delta,\nu,\mu}^{k,\alpha}(\M_j)}=\|v_j\|_{C_{\delta,\nu,\mu}^{k,\alpha}(\M_j)}$ holds if we require the rescaling factors to satisfy
\begin{align}\frac{\tau_j \cdot \kappa_j}{(\fs(\bx_j)^{-1})^{k+\alpha}}= 1.\end{align}  So the rescaling factors $\tau_j$ and $\kappa_j$ are fixed as follows such that \eqref{e:v_j-conditions} retains for $\tilde{v}_j$ and $\bx_j$:
\begin{itemize}
\item 
If $|z(\bx_j)|$ is uniformly bounded as $j\to+\infty$, \begin{align}
\begin{cases}
	\tau_j = (\fs_j^{-1})^{\nu+k+\alpha}\cdot T_j^{-\mu}
	\\
	\kappa_j = (\fs_j^{-1})^{-\nu} \cdot T_j^{\mu}.
\end{cases}
\end{align}

\item If $|z(\bx_j)|\to +\infty$,  
\begin{align}
\begin{cases}
\tau_j = (\fs_j^{-1})^{\nu+k+\alpha}  \cdot e^{-T_j}\cdot T_j^{-\mu}
\\
\kappa_j =  (\fs_j^{-1})^{-\nu}  \cdot e^{T_j} \cdot T_j^{\mu}.
\end{cases}
\end{align}
\end{itemize}
As in the proof of Proposition \ref{p:regularity-scale}, we will produce a contradiction in each of the following four cases depending upon the location of $\bx_j\in\M_j$.

\vspace{0.5cm}

\begin{flushleft}
{\it Case (1):  There is a constant $\sigma_0\geq 0$ such that $r(\bx_j)\cdot T_j\to   \sigma_0$}.
\end{flushleft}
\vspace{0.5cm}

Recall the proof of Proposition \ref{p:regularity-scale} that the pointed $C^{2,\gamma}$-convergence
\begin{equation}
(\M_j , \tilde{g}_{T_j}, \bx_j) \xrightarrow{C^{2,\gamma}}	(\dC_{TN}^2 \times \dC^{n-2}, \tilde{g}_{\infty}, \bx_{\infty}) ,
\end{equation}
holds for any $\gamma\in(0,1)$,
where $\tilde{g}_{\infty} \equiv g_{TN} \oplus g_{\dC^{n-2}}$ is the product metric of the Taub-NUT metric $g_{TN}\equiv g_{TN,\sigma_0^2}$ given in \eqref{e:parameter-TN} and the Euclidean metric $g_{\dC^{n-2}}$.
Moreover, the rescaled weight function $\tilde{\rho}_{j,\delta,\nu,\mu}^{(k+\alpha)}$ converges to
\begin{align}
\tilde{\rho}_{\infty,\delta,\nu,\mu}^{(k+\alpha)}(\bx)	
=
\begin{cases}
1, & \bx\in T_1(\Sigma_{0}),\\
(d_{\tilde{g}_{\infty}}(\bx, \Sigma_0))^{\nu+k+\alpha}, & \bx\in(\dC_{TN}^2 \times \dC^{n-2})\setminus T_1(\Sigma_{0}),
\end{cases}
\end{align}
where $\Sigma_0 \equiv \{p_{\infty}\} \times \dC^{n-2}\subset\dC_{TN}^2\times \dC^{n-2}$ for some $p_{\infty}\in \dC_{TN}^2$, is the Gromov-Hausdorff limit of the lifted divisor $\mathcal{P}\equiv \pi^{-1}(P)\subset \M_j$ with respect to the rescaled metrics $\tilde{g}_j$ such that 
and 
\begin{equation}T_1(\Sigma_0) \equiv \{\bx\in \dC_{TN}^2\times \dC^{n-2} | d_{\tilde{g}_{\infty}}(\bx, \Sigma_0) \leq 1\}.\end{equation}
It is straightforward that, the rescaled functions $\tilde{v}_j$ converge to $\tilde{v}_{\infty}$ in the $C_{loc}^{2,\alpha'}$-norm for each $\alpha'\in(0,\alpha)$ such that the following properties hold,
\begin{enumerate}
\item $\|\nabla_{\tilde{g}_{\infty}}\tilde{v}_{\infty}\|_{C_{\delta,\nu+1,\mu}^0(\dC_{TN}^2\times \dC^{n-2},\tilde{g}_{\infty})} +\|\nabla_{\tilde{g}_{\infty}}^2\tilde{v}_{\infty}\|_{C_{\delta,\nu+2,\mu}^0(\dC_{TN}^2\times \dC^{n-2},\tilde{g}_{\infty})} = 1$,
\item $\tilde{v}_{\infty}(\bx_{\infty})=0$,

\item $\Delta_{\tilde{g}_{\infty}}\tilde{v}_{\infty} \equiv 0$ on $\dC_{TN}^2\times \dC^{n-2}$.
\end{enumerate}
We will prove that $\tilde{v}_{\infty} \equiv 0$ on $\dC_{TN}^2\times \dC^{n-2}$.

To start with, we will show that $\tilde{v}_{\infty}$ is constant on the Euclidean factor $\dC^{n-2}$. Indeed,
we write  $\bx\equiv (\bx', \bx'') \in \dC_{TN}^2\times\dC^{n-2}$, so it suffices to prove that for every $1\leq k\leq 2n-4$, we have
\begin{equation}
|\nabla_{k} \tilde{v}_{\infty}| \equiv 0\ \text{on}\ \dC^{n-2},\label{e:gradient-vanishing}
\end{equation}
 where the partial derivative $\nabla_{k} \tilde{v}_{\infty}(\bx) \equiv \frac{\partial \tilde{v}_{\infty}}{\partial x_k''}(\bx',\bx'')$ is taken in the directions of $\dC^{n-2}$.
 Now for every $1\leq k \leq 2n-4$,
\begin{equation}
\Delta_{\tilde{g}_{\infty}} (\nabla_{k} \tilde{v}_{\infty}) = \Delta_{\dC_{TN}^2}(\nabla_{k} \tilde{v}_{\infty}) + \Delta_{\dC^{n-2}} (\nabla_{k} \tilde{v}_{\infty}). 
\end{equation}
Notice that $\tilde{g}_{\infty} = g_{TN} \oplus g_{\dC^{n-2}}$ is a product metric and $\nabla_k$ in effect acts on the Euclidean factor $\dC^{n-2}$, so $\nabla_k$ commutes with both $\Delta_{\dC_{TN}^2}$ and $\Delta_{\dC^{n-2}}$. Therefore,
\begin{eqnarray}
\Delta_{\tilde{g}_{\infty}} (\nabla_{k} \tilde{v}_{\infty}) = \nabla_k (\Delta_{\dC_{TN}^2} \tilde{v}_{\infty} + \Delta_{\dC^{n-2}}\tilde{v}_{\infty}) = 0.
\end{eqnarray}
The weighted bound implies the estimates  
\begin{align}
\begin{cases}
	|\nabla_k\tilde{v}_{\infty}(\bx)|\leq 1, &  d_{\tilde{g}_{\infty}}(\bx,\Sigma_0)\leq 1,
\\
	|\nabla_k\tilde{v}_{\infty}(\bx)|\leq d_{\tilde{g}_{\infty}}(\bx,\Sigma_0)^{-(\nu+1)}, & d_{\tilde{g}_{\infty}}(\bx,\Sigma_0)\geq 1.\label{e:D_k-decay}
	\end{cases}
\end{align}
Since we have assumed $\nu\in(-1,0)$, so
it is straightforward
$-(\nu+1)\in (-1,0)$.
The above implies that $|\nabla_k\tilde{v}_{\infty}| \leq 1$ on $\dC_{TN}^2\times\dC^{n-2}$.
Applying Cheng-Yau's gradient estimate to the harmonic function $\nabla_k\tilde{v}_{\infty}$ on the Ricci-flat manifold $\dC_{TN}^2\times\dC^{n-2}$, we conclude that
 $\nabla_k \tilde{v}_{\infty}$ is constant on $\dC_{TN}^2\times \dC^{n-2}$.
By \eqref{e:D_k-decay}, $\nabla_k\tilde{v}_{\infty}\equiv0$ for every $1\leq k\leq 2n-4$.
Therefore, $\tilde{v}_{\infty}$ is constant
on the Euclidean factor $\dC^{n-2}$.

By the above argument, the limiting function   $\tilde{v}_{\infty}$ can be viewed as a harmonic function on the Taub-NUT space $(\dC_{TN}^2,g_{TN})$. 
Now applying Bochner's formula, 
\begin{equation}
\frac{1}{2}\Delta_{g_{TN}}|\nabla_{g_{TN}}\tilde{v}_{\infty}|^2 = |\nabla_{g_{TN}}^2 \tilde{v}_{\infty}|^2 \geq 0.
\end{equation}
Since $\tilde{v}_{\infty}$
satisfies the weighted bound
\begin{equation}
\|\nabla_{g_{TN}}\tilde{v}_{\infty}\|_{C_{\delta,\nu+1,\mu}^{0}(\dC_{TN}^2)} + \|\nabla_{g_{TN}}^2\tilde{v}_{\infty}\|_{C_{\delta,\nu+2,\mu}^{0}(\dC_{TN}^2)}  =1	,
\end{equation}
so we have for any $\bx\in \dC_{TN}^2\setminus B_1(\bx_{\infty})$,
\begin{equation}
|\nabla_{g_{TN}}\tilde{v}_{\infty}(\bx)| \leq d_{g_{TN}}(\bx,\bx_{\infty})^{-(\nu+1)}.
\end{equation}
By assumption $\nu\in(-1,0)$, then
$|\nabla_{g_{TN}}\tilde{v}_{\infty}| \equiv 0 $ on $\dC_{TN}^2$ and hence $\tilde{v}_{\infty}$ is constant on $\dC_{TN}^2$. Notice that
$\tilde{v}_{\infty}(\bx_{\infty})=0$, so we conclude that 
$\tilde{v}_{\infty}(\bx_{\infty})\equiv 0$.

\vspace{0.5cm}

\begin{flushleft}
{\it  Case (2): $r(\bx_j)\cdot T_j\to \infty$ and $r(\bx_j)\to 0$ as $j\rightarrow\infty$.} \end{flushleft}
\vspace{0.5cm}

As in the proof of Proposition \ref{p:regularity-scale}, we have the pointed Gromov-Hausdorff convergence,
\begin{equation}
	(\M_j , \tilde{g}_{T_j} , \bx_j) \xrightarrow{GH} (\dR^3\times \dC^{n-2}, g_0, \bx_{\infty}),
\end{equation}
where the metric $g_0$ is the product Euclidean metric on $\dR^3\times\dC^{n-2}$. In this rescaled limit, the limiting reference point  $\bx_{\infty}$ satisfies $d_{g_0}(\bx_{\infty},\Sigma_{0^3})=1$ and $\Sigma_{0^3}\equiv \{0^3\}\times\dC^{n-2}\subset \dR^3\times \dC^{n-2}$ is the singular slice. Moreover, the convergence keeps curvature uniformly bounded away from the singular slice $\Sigma_{0^3}$. By passing to the local universal covers, in fact one can see that away from $\Sigma_{0^3}\subset \dR^3\times \dC^{n-2}$, 
  the functions $\tilde{v}_j$ converge to $\tilde{v}_{\infty}$ in the $C_{loc}^{2,\alpha'}$-norm for each $\alpha'\in(0,\alpha)$, such that the following properties hold,
\begin{enumerate}
\item $\|\nabla_{g_0}\tilde{v}_{\infty}\|_{C_{\delta,\nu+1,\mu}^0(\dR^3\times\dC^{n-2})} +\|\nabla_{g_0}^2\tilde{v}_{\infty}\|_{C_{\delta,\nu+2,\mu}^0(\dR^3\times\dC^{n-2})} = 1$,
\item $\tilde{v}_{\infty}(\bx_{\infty})=0$,

\item $\Delta_{g_0}\tilde{v}_{\infty} \equiv 0$ in $(\dR^3\times\dC^{n-2})\setminus\Sigma_{0^3}$,\end{enumerate}
where the limiting weight function is 
\begin{equation}
	\rho_{\infty, \delta, \nu, \mu}^{(k+\alpha)} (\bx)  = (d_{g_0} (\bx, \Sigma_{0^3}))^{\nu+k+\alpha},\ \bx\in\dR^3\times\dC^{n-2}.
	\end{equation}
Our goal is to show that $\tilde{v}_{\infty}\equiv 0$ on $\dR^3\times\dC^{n-2}$, which 
consists of the following ingredients:

First, we will prove that $\tilde{v}_{\infty}$ in fact globally harmonic in $\dR^3\times\dC^{n-2}$. To show the singular slice $\Sigma_{0^3}$ is removable, for each $q\in \Sigma_{0^3}$, we take a unit ball $B_1(q)\subset\Sigma_{0^3}$,  and for any $r\in(0,1)$, we choose the tubular neighborhood $T_r(B_1(q))\subset\dR^3\times \dC^{n-2}$. 
Notice that $\nabla_{g_0}\tilde{v}_{\infty}$ satisfies the uniform estimate 
\begin{equation}\|\nabla_{g_0}\tilde{v}_{\infty}\|_{C_{\delta,\nu+1,\mu}^{0}(\dR^3\times\dC^{n-2})}\leq 1,
\end{equation}
integrating the above weighted bound,  then for any $\bx\in T_r(B_1(q))\setminus B_1(q)$,
\begin{equation}
	|\tilde{v}_{\infty}(\bx)| \leq C\cdot d(\bx,B_1(q))^{-(\nu)}.
\end{equation}
By Lemma \ref{l:harmonic-removable-singularity}, $B_1(q)$ is a removable singular set in $T_r(B_1(q))$ and hence $\tilde{v}_{\infty}$ is harmonic in $T_r(B_1(q))$.

Next, we will show that $\tilde{v}_{\infty}$ is constant in $\dC^{n-2}$. It is straightforward that for each $1\leq k\leq 2n-4$, the partial derivative $\nabla_k\tilde{v}_{\infty}\equiv \frac{\partial}{\partial x_k''}\tilde{v}_{\infty}(\bx',\bx'')$ satisfies
\begin{equation}
\Delta_{g_0}(\nabla_k\tilde{v}_{\infty})=0 \ \text{in}\ \dR^3\times\dC^{n-2}.	
\end{equation}
The weighted condition implies that $\nabla_k\tilde{v}_{\infty}$ satisfies the uniform estimate,
\begin{equation}
|\nabla_k\tilde{v}_{\infty}| \leq d(\bx,\Sigma_{0^3})^{-(\nu+1)} \quad \text{for every}\  \bx\in \dR^3\times\dC^{n-2}.
\end{equation}
Since 
we have assumed $\nu\in(-1,0)$, 
Lemma \ref{l:liouville-product} implies that
$|\nabla_k\tilde{v}_{\infty}| \equiv 0$ on $\dR^3\times\dC^{n-2}$  and hence $\tilde{v}_{\infty}$ is constant in $\dC^{n-2}$.
Therefore, $\tilde{v}_{\infty}$ can be viewed as a harmonic function in the Euclidean space $(\dR^3,g_{\dR^3})$.
By assumption, $\tilde{v}_{\infty}$ satisfies
\begin{equation}
	|\tilde{v}_{\infty}(\bx)| \leq d_{g_{\dR^3}}(\bx, 0^3)^{-\nu}.
\end{equation}
Since $\nu\in(-1,0)$, 
applying the standard Liouville theorem for sublinear growth harmonic functions on a Euclidean space, 
we conclude that $\tilde{v}_{\infty}$
is a constant. The last step is to use the renormalization $\tilde{v}_{\infty}(\bx_{\infty})=0$, which gives $\tilde{v}_{\infty}\equiv 0$. So the proof of Case (2) is done.

\vspace{0.5cm}

\begin{flushleft}
{\it  Case (3): There is a constant $\underline{T}_0>0$ such that $r(\bx_j)\geq \underline{T}_0$ and $L_{T_j}(\bx_j)^{-1}\cdot T_j^{\frac{n-2}{n}}\to 0$.} \end{flushleft}

\vspace{0.5cm}

There are two situations to consider.

First, we consider the case $z(\bx_j)\to z_0$. By Proposition \ref{p:regularity-scale},
 the limit of $(\M_j,\tilde{g}_{T_j},\bx_j)$ is the cylinder
$(Q,g_c,\bx_{\infty})\equiv (D\times\dR, g_{D}\oplus dz^2, \bx_{\infty})$,
where $(D, g_{D})$
is a closed Calabi-Yau manifold.
The limiting function $\tilde{v}_{\infty}$ of $\tilde{v}_j$ satisfies 
\begin{enumerate}
\item $\|\nabla_{g_c}\tilde{v}_{\infty}\|_{C_{\delta,\nu+1,\mu}^0(Q)} +\|\nabla_{g_c}^2\tilde{v}_{\infty}\|_{C_{\delta,\nu+2,\mu}^0(Q)} = 1$,
\item $\tilde{v}_{\infty}(\bx_{\infty})=0$,

\item $\Delta_{g_c}\tilde{v}_{\infty} \equiv 0$  in $Q\setminus P$,
\end{enumerate}
where the limiting weight function is given by
$	\tilde{\rho}_{\infty,\delta,\nu,\mu}^{(k+\alpha)}(\bx)=	e^{-\frac{n}{2}\cdot \delta \cdot L_0(z)}\cdot \mathfrak{r}(\bx)^{\nu+k+\alpha}$ 	and  
	\begin{align}
	L_0(z)=
	\begin{cases}
	k_+z , & z>1,
	\\
	k_-z, & z<-1,
	\end{cases}
	\end{align}
 as defined in \eqref{e:l(z)}.
Similar to Case (2), first we need to extend the limiting function $\tilde{v}_{\infty}$ across 
the singular set $P$. 
Integrating $\nabla\tilde{v}_{\infty}$ around $P$, we have that $\tilde{v}_{\infty}$
satisfies the growth estimate
\begin{equation}
|\tilde{v}_{\infty}(\bx)| \leq C\cdot d_{g_c}(\bx,P)^{-\nu}.
\end{equation}
Since we have assumed $\nu\in(-1,0)$,
 so Lemma \ref{l:harmonic-removable-singularity} implies that the singular set $P$ is removable. Now we have obtained that
$\tilde{v}_{\infty}$ is harmonic on $Q$ and satisfies
\begin{equation}
|\tilde{v}_{\infty}(\bx)| \leq C e^{-\delta\cdot \frac{n}{2}\cdot(|k_+|+|k_-|)\cdot |z(\bx)|},
\end{equation}
for $|z(\bx)|\gg1$.
 By the choice of $\delta$, we conclude that
$\tilde{v}_{\infty}\equiv 0$ on $Q$.

The next situation is that the sequence $\bx_j$ satisfies $|z(\bx_j)|\to \infty$ and $L_{T_j}(\bx_j)^{-1}\cdot T_j^{\frac{n-2}{n}}\to 0$.   
We need to perform the coordinate change centered at the reference point $\bx_j$,
\begin{equation}
z(\bx) = z_j + \Big(\frac{T_j}{L_{T_j}(z_j)}\Big)^{\frac{n-2}{2}}w(\bx), \quad z_j\equiv z(\bx_j).
\end{equation}
Without loss of generality, we only consider the case $z_j \ll 0$. 
Recall the proof of Proposition \ref{p:regularity-scale}, the rescaled limit is $Q=D\times\dR$ with a product metric 
$g_Q = g_D + dw^2$. 
Moreover,  the limiting  weight function is
$\tilde{\rho}_{\infty,\delta,\nu,\mu}^{(k+\alpha)}(\bx) 
= e^{-\frac{  \delta \cdot n\cdot  k_-}{2} \cdot w(\bx)}$.
  Now the growth condition implies that the limiting function $\tilde{v}_{\infty}$ satisfies
\begin{align}
\begin{cases}
\Delta_Q \tilde{v}_{\infty}(\bx) = 0 , & \forall \bx \in Q, 
\\
|\tilde{v}_{\infty}(\bx)| \leq  e^{\frac{\delta\cdot n\cdot k_-}{2} \cdot w(\bx)}, & w\in\dR.
\end{cases}
\end{align}
By \eqref{e:fix-delta-neck}, we have
$\frac{\delta\cdot n\cdot k_-}{2} < \frac{\sqrt{\lambda_D}}{2}$.
Applying Lemma \ref{l:liouville-cylinder}, $ \tilde{v}_{\infty}(\bx) \equiv  0$ on $Q$.

Combining the above situations, the proof of Case (3) is complete.

\vspace{0.5cm}

\begin{flushleft}
{\it  Case (4): There is a constant $c_0>0$ such that $r(\bx_j)\geq \underline{T}_0$ and $L_{T_j}(\bx_j)^{-1}\cdot T_j^{\frac{n-2}{n}}\to c_0$.} \end{flushleft}

\vspace{0.5cm}

Without loss of generality we may assume $z(\bx_j)<0$. Then the reference points $\bx_j$ are close to the boundary $\{z=T_-\}\subset\p\M_j$ such that we can exploit the Neumann boundary condition.    Without loss of generality, we just assume $c_0=1$. We have shown in Section \ref{ss:regularity-scales} that the limit of the rescaled spaces $(\M_j, \tilde{g}_{T_j}, \bx_j)$
is the Calabi model space $(\mathcal{C}^n_- , g_{\mathcal{C}^n_-}, \bx_{\infty})$.
Let us perform the coordinate change  as follows \begin{equation}
k_-\cdot (z(\bx) - T_-) = T_j^{\frac{n-2}{n}}\cdot(w(\bx)-1).
\end{equation}
Moreover, the limiting weight function is 
$\tilde{\rho}_{\infty,\delta,\nu,\mu}^{(k+\alpha)}(\bx) = e^{-\delta\cdot  w^{\frac{n}{2}}}\cdot w^{\frac{\nu+k+\alpha}{2}}$. 
So $\tilde{v}_{\infty}$ satisfies
\begin{align}
\begin{cases}
\Delta_{g_{\mathcal{C}_-^n}} \tilde{v}_{\infty}(\bx) = 0, & \bx\in\mathcal{C}_-^n,
\\
|\tilde{v}_{\infty}(\bx)| \leq e^{\delta\cdot w^{\frac{n}{2}}}\cdot w^{-\frac{\nu}{2}} , & w(\bx)\gg1,
\\
\frac{\p \tilde{v}_{\infty}(\bx)}{\p w} = 0, & w(\bx)=1.
\end{cases}
\end{align}
Applying the renormalization condition $\tilde{v}_{\infty}(\bx_{\infty})=0$ and Lemma \ref{l:Liouville-Calabi-space-SZ}, we have $\tilde{v}_{\infty}\equiv 0 $ on $\Ca_-^n$.

Combining all the above,  we have completed the proof of the weighted estimates in Step 1. 

\begin{flushleft}
{\bf Step 2} (weighted $C^{2,\alpha}$-estimate):
\end{flushleft}

It remains to prove the uniform weighted $C^{2,\alpha}$-estimate
\begin{equation}
[u]_{C_{\delta,\nu,\mu}^{2,\alpha}(\M_T)}	\leq C \cdot \|\Delta u\|_{C_{\delta,\nu+2,\mu}^{0,\alpha}(\M_T)}\label{e:weighted-higher-order}\end{equation}
for some uniform constant $C>0$ independent of $T$. This will again be proved by contradiction. As before, suppose that there are have a sequence of parameters $T_j\to+\infty$ and a sequence of functions  $u_j$
 on $(\M_j, g_{T_j})$
which satisfy 
\begin{align}\label{e:u_j-contradicting-higher-order}
\begin{split}
\frac{\p u}{\p n}\Big|_{\p M_j} & = 0,
\\
[u_j]_{C_{\delta,\nu,\mu}^{2,\alpha}}(\bx_j)=[u_j]_{C_{\delta,\nu,\mu}^{2,\alpha}(\M_j)} &= 1,
\\
\|\Delta_{g_{T_j}} u_j\|_{C_{\delta,\nu+2,\mu}^{0,\alpha}(\M_j)} &\to 0,\quad j\to+\infty.
\end{split}
\end{align}
We normalize the functions $u_j$ by 
$v_j(\bm{x}) \equiv u_j(\bm{x}) - u_j(\bm{x}_j)$. Then \eqref{e:u_j-contradicting-higher-order} retains for $v_j$ and $\bx_j$. Applying \eqref{e:neck-C2-estimate} to $v_j$, we have
\begin{align}
\|\nabla_{g_{T_j}} v_j\|_{C_{\delta,\nu+1,\mu}^0(\M_j)} + \|\nabla_{g_{T_j}}^2 v_j\|_{C_{\delta,\nu+2,\mu}^0(\M_j)} \leq C \cdot \|\Delta_{g_{T_j}} v_j\|_{C_{\delta,\nu+2,\mu}^{0,\alpha}(\M_j)}, 
\end{align}
where $C>0$ is independent of $T_j$. So it follows that 
\begin{align}
\|\nabla_{g_{T_j}} v_j\|_{C_{\delta,\nu+1,\mu}^0(\M_j)} + \|\nabla_{g_{T_j}}^2 v_j\|_{C_{\delta,\nu+2,\mu}^0(\M_j)} \to 0 \label{e:weighted-C2-limit-to-0}
\end{align}
as $j\to +\infty$. Denote $\fs_j\equiv \fs(\bx_j)$, by straightforward computations, there is some uniform constant $C_0>0$ such that
$\|v_j\|_{C_{\delta,\nu,\mu}^0(B_{\fs_j}(\bx_j))} \leq C_0 \cdot \|\nabla v_j\|_{C_{\delta,\nu+1,\mu}^0(B_{\fs_j}(\bx_j))}$ and hence 
\begin{align}
\|v_j\|_{C_{\delta,\nu,\mu}^0(B_{\fs_j}(\bx_j))} \to 0 \ \text{as} \ j\to+\infty.\label{e:weighted-C0-limit-to-0}
\end{align}
By Proposition \ref{p:local-weighted-schauder}, 
$\|v_j\|_{C_{\delta,\nu,\mu}^{2,\alpha}(B_{\fs_j/10}(\bx_j))} \to 0$, which contradicts \eqref{e:u_j-contradicting-higher-order}.  
The proof of \eqref{e:weighted-higher-order}
is done.

Since we have proved \eqref{e:neck-C2-estimate} and \eqref{e:weighted-higher-order}, the proof of the Proposition is complete.
\end{proof}

Combining all the above estimates, we are ready to complete the proof of Theorem \ref{t:neck-CY-metric}.

\begin{proof}
[Proof of  Theorem \ref{t:neck-CY-metric}]

It suffices to verify that $\mathscr{F}$ satisfies all the conditions in Lemma  \ref{l:implicit-function}. Proposition \ref{p:estimate-L-neck} and Proposition \ref{p:nonlinear-neck} show that $\mathscr{F}$ satisfies 
item (1) and item (2a). In our context, $C_L>0$
and $C_N>0$ are uniform constants. $r_0>0$
can be chosen as any fixed constant in $(0,\frac{1}{2C_LC_N})$. To verify item (2b)
in Lemma \ref{l:implicit-function}, we just need to use \eqref{e:new error estimate}. In fact, we have assumed $\nu+\alpha<0$, then 
\begin{equation}
\|\mathscr{F}(\bo)\|_{\fS_2} \leq C\cdot T^{\nu+\alpha} \ll \frac{r_0}{4C_L},
\end{equation}
as $T$ is sufficiently large. This completes the proof. 
\end{proof}

\subsection{Geometric singularity and normalized limit measure}

\label{ss:renormalized-measure}

The goal of this subsection is to understand the measured Gromov-Hausdorff limits of the sequence of incomplete Calabi-Yau metrics $(\M_T, \omega_{T,CY})$ (scaled to fixed diameter) constructed in Theorem \ref{t:neck-CY-metric}. As can be easily seen, the results are parallel to the statements in Theorem \ref{t:main-theorem}. So we will not repeat the arguments in Section \ref{s:gluing}. 

To begin with, we recall the notion of \emph{measured Gromov-Hausdorff convergence}. We refer the readers to \cite{ChC1} for the general theory about this.
 \begin{definition}[Measured Gromov-Hausdorff convergence] Let  $(M_j^m,  g_j, p_j)$ be a sequence of Riemannian manifolds with $\Ric_{g_j}\geq -(m-1)$ such that
\begin{equation}
(M_j^m,  g_j, p_j) \xrightarrow{GH} (X_{\infty},d_{\infty},p_{\infty})
\end{equation}
 for some metric space $(X_{\infty},d_{\infty},p_{\infty})$. Then by passing to  a subsequence, the renormalized measures 
 \begin{equation}
 d\nmv_j \equiv \frac{\dvol_{g_j}}{\Vol_{g_j}(B_1(p_j))}\label{e:renormalized measure}
\end{equation}
converge to a Radon measure $d\nmv_{\infty}$ on $X_\infty$ which is called the {\it renormalized limit measure}. The Gromov-Hausdorff convergence together with the convergence of the renormalzied measures is called the measured Gromov-Hausdorff convergence.
\end{definition}

In the general context of collapsed sequences with Ricci curvature bounded from below,  $d\nmv_{\infty}$ behaves quite differently from the Hausdorff measures on $X_{\infty}$ induced by the limiting metric $d_{\infty}$. 
In our specific context, $d\nmv_{\infty}$ has an explicit form and it effectively reveals the geometric singularity information in the collapsing spaces.

In our context, we are interested in the measured Gromov-Hausdorff limits of $(\M_T, \omega_{T, CY},d\nmv_{T, CY})$, where $d\nmv_{T, CY}$ is the renormalized volume measure of the metric $\omega_{T, CY}$.  By the weighted estimate on the solution $\phi(T)$ given in \eqref{e:eqn524} we see that on the regularity scale the metrics $\omega_{T, CY}$ and $\omega_{T}$ are very close  in $C^{ \alpha}$-topology for $T$ large. Lemma \ref{l:weight-function-lower-bound-estimate} and our choice of parameters ensure that the weight function has a positive lower bound independent of $T$, so  we also know that for $T\gg1$, 
\begin{equation}
(1+O(T^{\nu+\alpha}))\omega_{T}\leq \omega_{T, CY}\leq (1+O(T^{\nu+\alpha}))\omega_T. 
\end{equation}
So on large scales the metrics are also close in $L^\infty$ topology. This allows us to effectively replace  $\omega_{T, CY}$ by $\omega_T$ in the computation of limit geometries. Then we are led to do the following explicit calculations.

\begin{flushleft}{\bf Gromov-Hausdorff limit:} 
\end{flushleft}
 By construction and direct calculation, one can see that $g_T$ in large scale is approximated by the $1$ dimensional metric tensor $T^{\frac{(2-n)(n+1)}{n}} L_T(z)^{n-1} dz^2$. In particular, the diameter is of order $T^{\frac{n+1}{n}}$. This suggests rescaling the metric $g_T$ by $T^{-\frac{2(n+1)}{n}}$ in order to obtain bounded diameter. Indeed, upon the change of variable $z=T\cdot \xi$, we see $T^{-\frac{2(n+1)}{n}}g_T$ converges to the one dimensional metric $(1+k_{\mp}\xi)^{n-1}d\xi^2$, $\xi\in [-k_-^{-1}, -k_+^{-1}]$ in the Gromov-Hausdorff sense. 

The above limit can be transformed into the standard metric on the unit interval $(\bI, dv^2)$ via a constant rescaling and the following coordinate change 
\begin{equation}
\begin{cases}
1+k_+(\frac{1}{k_-}-\frac{1}{k_+})v=(1+k_+\xi)^{\frac{n+1}{2}}, & \xi>0, \\
1+k_-(\frac{1}{k_-}-\frac{1}{k_+})v=(1+k_-\xi)^{\frac{n+1}{2}}, & \xi<0. 
\end{cases}
\end{equation}

\begin{flushleft}
{\bf Renormalized limit measure:} 
\end{flushleft}

Again we first calculate by definition 
\begin{equation}
\Vol_{\omega_T}(\M_{a\leq z\leq b})=\int_a^b \Big(  \frac{T^{2-n}}{(n-1)!} \int_D \tilde\omega(z)^{n-1}\Big)dz.
\end{equation}
Using the change of variable $z=T\cdot \xi$, we have that 
\begin{equation}
\Vol_{\omega_T}(\M_{a\leq \xi\leq b})=T^2 \int_a^b\Big(\int_D \frac{1}{(n-1)!}(1+k_\pm \xi)^{n-1}\omega_D^{n-1}\Big)d\xi.
\end{equation}
So up to constant, the renormalized limit measure has density function given by $(1+k_{\pm}\cdot \xi)^{n-1}d\xi$. Changing to the $v$-variable this becomes (again up to constant multiplication) 
\begin{equation}
d\underline\nu_\infty=
\begin{cases}
(\frac{v}{-k_+}+\frac{1}{k_--k_+})^{\frac{n-1}{n+1}}dv, & v\in [\frac{k_+}{k_--k_+}, 0],\\
(\frac{v}{-k_-}+\frac{1}{k_--k_+})^{\frac{n-1}{n+1}}dv, &  v\in [0, \frac{k_-}{k_--k_+}]. 
\end{cases}
\end{equation}

\begin{flushleft}
{\bf Diameter and volume:} 
\end{flushleft}

By straightforward computations, if the diameter of $\M_T$ is rescaled to be $1$, then the volume of $\M_T$ is collapsing at rate $T^{-2n}$. On the other hand, if the volume of $\M_T$ is rescaled to be $1$, then the diameter of $\M_T$ is of order $T$.

\begin{flushleft}
{\bf Fibration structure:}
\end{flushleft}

There is an obvious fibration of $\M_T$ over $[T_-, T_+]$ using the coordinate function $z$. Composing with above coordinate changes, we obtain a fibration 
\begin{equation}
\mathcal F_T: \M_T\rightarrow \bI,\quad  \bx\mapsto v(\bx). 
\end{equation}
Then for any $v\neq 0$, $\mathcal F_T^{-1}(v)$ is an $S^1$-bundle over $D$, whose first Chern class is given by $c_1(L_\pm)$ depending on the sign of $v$, and $\mathcal F_T^{-1}(0)$ is an singular $S^1$-fibration over $D$ with vanishing circles along $H$.

\begin{flushleft}
\textbf{Bubble classification:}
\end{flushleft}

By Proposition \ref{p:regularity-scale}, it is clear that suitable rescalings around the vanishing circles in $\mathcal{F}_T^{-1}(0)$ converge to $\dC_{TN}^2\times \C^{n-2}$. Also suitable rescalings around the ends $z=T_\pm$ gives the Calabi model spaces. 

\

We close this section by giving the following remarks regarding the renormalized limit measure.

\begin{remark}
It can be seen from the above formulae that the limiting density function $\mathscr{V}_{\infty}=\frac{d\underline\nu_\infty}{dv}$ is a   Lipschitz function on $\bI$ and it is smooth everywhere in the interior of $\bI$ except at $v=0$. On the other hand, the singular fiber of $\mathcal{F}$ precisely appears at $t=0$.
So in our context, the singularity of the renormalized limit measure $d\nmv_{\infty}$ effectively characterizes the singularity behavior of the collapsing geometry.

\end{remark}

\begin{remark}
By Cheeger-Colding (see \cite{ChC3}, theorem 4.6),  in the regular set $\mathcal{R}$ of a general Ricci-limit space, the density function $\mathscr{V}_{\infty}$ of the renormalized limit measure always exists and is H\"older continuous. Our example tells us that, in general, one cannot expect the regularity of $\mathscr{V}_{\infty}$ to be  differentiable in $\mathcal{R}$ (even though $\mathcal{R}$ is  a smooth Riemannian manifold). We thank Shouhei Honda for pointing this out. 
\end{remark}

\begin{remark}
If we use rescale the metrics further around the point $v=0$
such that the sequence of spaces collapse to the complete real line $\dR$, 
then $d\nmv_{\infty}$ coincides with the standard Lebesgue measure on $\dR$. This fact can be quickly seen by scaling-up the coordinate $v$.
This is compatible with the general theory of Ricci-limit spaces. That is, due to Cheeger-Colding, the renormalized limit measure always splits off the Lebesgue measure of $\dR$ if the limit space isometrically splits off $\dR$ (see proposition 1.35 in \cite{ChC1}).  

\end{remark}

\section{Proof of the main theorem}

\label{s:gluing}
The goal of this section is to prove Theorem \ref{t:main-theorem}. We will work with the special family of Calabi-Yau varieties $\mathcal X$ defined in the Introduction.  In Section \ref{ss:algebraic-geometry} we show how to modify the family $\mathcal X$ to a new family $\hX$ such that the new central fiber consists of a chain of three components, with the middle component given by the compactification of the space $\mathcal N^0$ defined in Section \ref{ss:complex-geometry}. Notice in Section \ref{s:neck} a family of neck metrics are constructed on an exhausting family of domains in $\mathcal N^0$. In Section \ref{ss:tian-yau} we review general facts about the Tian-Yau metrics on the complement of a smooth anti-canonical divisor in a Fano manifold. These give Ricci-flat K\"ahler metrics on the other two components of the central fiber in $\hX$.  In Section \ref{ss:glued-metrics} we explain how to graft the above neck metrics and Tian-Yau  metrics on the central fiber of $\hX$ to the nearby smooth fibers, and obtain approximately Calabi-Yau  metrics in a suitable sense. In Section \ref{ss:global analysis}  we finish the proof of Theorem \ref{t:main-theorem}. The arguments are very similar to those in Section \ref{ss:incomplete weighted analysis} and \ref{ss:renormalized-measure}, so we will not provide full details for the final perturbation arguments.

\subsection{Algebro-geometric aspect}

\label{ss:algebraic-geometry}
\subsubsection{Poincar\'e residue}
 We first recall some general facts about \emph{Poincar\'e residues}. Given a smooth divisor $Z$ in a complex manifold $M$ of dimension $m$,  the Poincar\'e residue map
\begin{equation}\Res: H^0(M, K_M\otimes [Z])\rightarrow H^0(Z, K_Z)
\end{equation}
can be defined as follows. Given a holomorphic $m$ form $\Omega$ on $M$ with a simple pole along $Z$, locally if we choose a defining function $h$ of $Z$, then $h\Omega$ is a holomorphic $m$ form, and we can write 
$
h\Omega=dh\wedge \tilde\Omega
$
for some locally defined holomorphic $m-1$ form $\tilde\Omega$. 
The Poincar\'e residue of $\Omega$ along $Z$ is given by
 $\Res(\Omega)\equiv \tilde\Omega|_Z.$
 It is straightforward to check that this definition does not depend on the choice of $h$ and $\tilde\Omega$, and gives rise to a well-defined holomorphic $m-1$ form $\Omega_Z$ globally on $Z$. 
If we choose local holomorphic coordinates $z_1, \cdots, z_m$ on $M$, then we may write 
\begin{equation}\Omega=\frac{g}{h} dz_1\wedge \cdots dz_m, \end{equation}
where $g$ is holomorphic.
At a point  on $Z$ where $\frac{\p h}{\p z_1}\neq 0$, we have then by definition
\begin{equation}\Res(\Omega)=\frac{g}{\frac{\p h}{\p z_1}}dz_2\wedge \cdots \wedge dz_m. 
\end{equation}

From the local expression one can see that if $Z$ is an anti-canonical divisor in $M$,  and we pick a holomorphic volume form $\Omega_M$ on $M\setminus Z$ with a simple pole along $Z$, and then $\Res(\Omega_M)$ gives a holomorphic volume form $\Omega_Z$ on $Z$.

A special case arises when we have a globally defined holomorphic function $h: M\rightarrow \C$, and we are given a holomorphic volume form $\Omega$ on $M$, then for each $w\in \C$, we can apply the above construction to the meromorphic form $(h-w)^{-1}\Omega$. In this way we obtain a nowhere vanishing section  $\Omega'$ of the relative canonical bundle $K_M\otimes (h^*K_{\C})^{-1}$ on the set where $h$ is a submersion, and it satisfies the equation 
$dh\wedge \Omega'=\Omega. $
We may also view $\Omega'$ as a holomorphic varying family of holomorphic volume forms on the fibers of $h$.

\subsubsection{A model partial resolution of singularities}

Let $\mathcal S$ be a two dimensional $A_{k-1}(k\geq 2)$ singularity, which is a hypersurface in $\C^3$ with defining equation
$
z_1z_2+z_3^k=0.
$
Given two positive integers $a_1\geq a_2$ with $a_1+a_2=k$, we can define a \emph{partial} resolution of $\mathcal S$ as follows. Let  $\overline{\mathcal S}$ be the subvariety in the product space $\C^3\times \C\P^2$ cut out by the following system of equations
\begin{equation}
\begin{cases}
z_3^{a_1}u_1=z_1u_3; \\
z_3^{a_2}u_2=z_2u_3; \\
u_1u_2+u_3^2=0;\\
z_3^{a_1-a_2}u_1z_2=u_2z_1; \\
z_3^{a_2}u_3+u_1z_2=0. 
\end{cases}
\end{equation}
where $[u_1:u_2: u_3]$ denotes homogeneous coordinates on $\C\P^2$. Alternatively, $\overline{\mathcal S}$ can also be described as the closure in $\C^3\times\C\P^2$ of the graph of the rational map \begin{align}\mathcal S\rightarrow \C\P^2; (z_1, z_2, z_3)\mapsto \left[\frac{z_1}{z_3^{a_1}}: \frac{z_2}{z_3^{a_2}}:1\right].\end{align} On the affine chart $\{u_i\neq 0\}$ we denote by $v_j=u_j/u_i (j\neq i)$ the affine coordinates. 

\begin{lemma}
$\overline{\mathcal S}$ has at most two  singularities, which are of type $A_{a_1-1}$ and $A_{a_2-1}$ respectively, and the projection map $\overline{\mathcal S}\rightarrow \mathcal S$ is a partial resolution, with exceptional divisor isomorphic to $\C\P^1$. 
\end{lemma}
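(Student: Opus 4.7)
The plan is to verify the lemma by a direct computation in the three standard affine charts of $\mathbb{CP}^2$, namely $\{u_i\neq 0\}$ for $i=1,2,3$, using the affine coordinates $v_j\equiv u_j/u_i$. The first step is to confirm that $\overline{\mathcal S}$ really is the closure in $\C^3\times\C\P^2$ of the graph of the rational map
\begin{equation}
\mathcal S\dashrightarrow\C\P^2,\qquad (z_1,z_2,z_3)\mapsto\bigl[z_1/z_3^{a_1}:z_2/z_3^{a_2}:1\bigr],
\end{equation}
which is regular wherever $z_3\neq 0$ on $\mathcal S$; this part of $\overline{\mathcal S}$ is manifestly isomorphic to $\mathcal S\setminus\{z_3=0\}$. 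The remaining two equations $z_3^{a_1-a_2}u_1z_2=u_2z_1$ and $z_3^{a_2}u_3+u_1z_2=0$ are derived from the relation $z_1z_2+z_3^k=0$ together with the first three equations, and are included precisely to cut out the closure irreducibly along $\{z_3=0\}$.

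Next I would identify the exceptional divisor by restricting to $\{z_1=z_2=z_3=0\}$: the only surviving equation is $u_1u_2+u_3^2=0$ on $\C\P^2$, a smooth conic, hence isomorphic to $\C\P^1$. The core of the proof is then the local analysis in each chart. In the chart $\{u_3\neq 0\}$, the equations $z_1=z_3^{a_1}v_1$, $z_2=z_3^{a_2}v_2$ and $v_1v_2+1=0$ (together with the derived equations, which become automatic) give a smooth affine surface parametrized by $(z_3,v_1)\in\C\times\C^*$. In the chart $\{u_1\neq 0\}$ with coordinates $v_2,v_3$, the defining relations reduce, after using $v_2=-v_3^2$ and $z_2=-z_3^{a_2}v_3$ to eliminate $v_2$ and $z_2$, to the single hypersurface equation $z_3^{a_1}=z_1v_3$ in the affine space with coordinates $(z_1,z_3,v_3)$; this is an $A_{a_1-1}$ surface singularity at the origin. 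Symmetrically, the chart $\{u_2\neq 0\}$ reduces to $z_3^{a_2}=z_2v_3$, an $A_{a_2-1}$ singularity. Since the three charts cover $\overline{\mathcal S}$, the list of singular points is exactly the two exhibited ones (with the convention that $A_0$ means smooth, taking care of the case $a_2=1$), and the projection $\overline{\mathcal S}\to\mathcal S$ is an isomorphism off the exceptional $\C\P^1$.

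The routine computations consist of checking that the five defining equations are consistent in each chart and that the ideal they generate indeed coincides with the prescribed local model. The main obstacle I anticipate is bookkeeping: one must verify carefully that when eliminating variables in the charts $\{u_1\neq 0\}$ and $\{u_2\neq 0\}$, the two ``extra'' equations $z_3^{a_1-a_2}u_1z_2=u_2z_1$ and $z_3^{a_2}u_3+u_1z_2=0$ are automatically satisfied modulo the others, so that no spurious components are introduced and the local defining ideal really is the $A_{a_i-1}$ ideal. This is a straightforward but slightly tedious algebraic manipulation, which I would handle by substituting the explicit parametrizations obtained in each chart.
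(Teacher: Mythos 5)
Your proposal is correct and follows essentially the same route as the paper: a chart-by-chart computation in $\{u_i\neq 0\}$ reducing the defining ideal to $z_3^{a_1}=z_1v_3$, $z_3^{a_2}=z_2v_3$, and the smooth locus $v_1v_2+1=0$, plus identifying the exceptional conic $u_1u_2+u_3^2=0$ over the origin. The only cosmetic difference is that you frame the preliminary step via the graph-closure description, whereas the paper instead checks directly that the five equations force $z_1z_2+z_3^k=0$ (splitting into the cases $u_3\neq 0$ and $u_3=0$); both serve the same purpose and the rest of the argument is identical.
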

\begin{proof}
We first show that the system of equations implies $z_1z_2+z_3^k=0$, so that $\overline{\mathcal S}$ does project to $\mathcal S$. To see this, we notice the first three equations imply 
\begin{equation}
u_3^2(z_1z_2+z_3^k)=0. 
\end{equation}
If $u_3\neq 0$, then we get $z_1z_2+z_3^k=0$. If $u_3=0$, then by the third equation we get that either $u_1\neq 0, u_2=0$ or $u_1=0, u_2\neq 0$. In the first case using the remaining equations we get $z_3=z_2=0$. In the second case we get $z_3=z_1=0$. In both cases the equation $z_1z_2+z_3^k=0$ is indeed satisfied. 

Now we study singularities of $\overline{\mathcal S}$. In the affine chart $\{u_1\neq 0\}$, we get
\begin{equation}
\begin{cases}
v_2+v_3^2=0; \\
z_2+z_3^{a_2}v_3=0,
\end{cases}
\end{equation}
so we reduce the defining equations to a single equation in the $z_1, z_3, v_3$ variable given by 
\begin{equation}
z_3^{a_1}=z_1v_3.
\end{equation}
This has exactly one $A_{a_1-1}$ singularity at $\{z_1=z_3=v_3=0\}$. Notice by convention an $A_0$ singularity is  a smooth point. Similarly, on the affine chart $\{u_2\neq 0\}$ we reduce  the equations to  
\begin{equation}
z_3^{a_2}=z_2v_3. 
\end{equation}
This has exactly one $A_{a_2-1}$ singularity at $\{z_2=z_3=v_3=0\}$. 
On the affine chart $\{u_3\neq 0\}$, we reduce the equations to
$v_1v_2+1=0$ which is smooth. 

It is then easy to verify that the projection map $\overline{\mathcal S}\rightarrow \mathcal S$ is an isomorphism outside the point $\{z_1=z_2=z_3=0\}$, and if $z_1=z_2=z_3=0$, we  get the equation 
$u_1u_2+u_3^2=0$,
which gives a conic in $\C\P^2$. 
\end{proof}

From another point of view, we can think of both $\mathcal S$ and $\overline{\mathcal S}$ as  families of algebraic curves by projecting to the $z_3$ variable. For $\mathcal S$ this is simply the standard nodal degeneration of conics in $\C^2$, modified by a base change. The family corresponding to $\overline{\mathcal S}$ is isomorphic to $\mathcal S$ over any general fiber $\{z_3\neq 0\}$, and the special fiber of $\overline{\mathcal S}$ is now given by a chain consisting of three components, two of which are given by the proper transforms of the two lines $\{z_1=0\}$ and $\{z_2=0\}$ in $\C^2$, and the middle component is the conic $\{u_1u_2+u_3^2=0\}$ in $\C\P^2$. In the special case when $a_1=a_2=1$, $\overline{\mathcal S}$ is smooth and the projection map is precisely the minimal resolution of singularity. 

It is well-known that $\mathcal S$ has a canonical singularity, meaning that the canonical line bundle $K_{\mathcal S}$ is trivial. An explicit holomorphic volume form $\Omega_{\mathcal S}$ can be written by applying the Poincar\'e residue to the standard meromorphic form $\frac{1}{z_1z_2+z_3^k}dz_1\wedge dz_2\wedge dz_3$ on $\C^3$. In the chart $\{z_1\neq 0\}$, it is given by 
\begin{equation}
\Omega_{\mathcal S}=\frac{dz_2\wedge dz_3}{z_2}.  
\end{equation}
Notice $\mathcal S$ is isomorphic to the quotient $\C^2/\mathbb Z_k$, and $\Omega_{\mathcal S}$ pulls-back to a multiple of the standard holomorphic volume form on $\C^2$. 
 
Viewing $\mathcal S$ as fibered over $z_3\in\mathbb C$, we further get a relative holomorphic volume form 
\begin{equation}
\Omega'=-\frac{dz_2}{z_2}=\frac{dz_1}{z_1}.        
\end{equation}
One can see $\Omega'$ is smooth away from the singularity $\{z_1=z_2=z_3=0\}$, and on each component of the singular fiber it is a meromorphic 1-form with a simple pole along the singularity.

The partial resolution $\overline{\mathcal S}$ is a \emph{crepant} resolution, i.e., the canonical line bundle $K_{\overline{\mathcal S}}$ is also trivial. Indeed the pull-back $\Omega_{\overline{\mathcal S}}$ of $\Omega_{\mathcal S}$ is nowhere vanishing on $\overline{\mathcal S}$, and by applying the Poincar\'e residue to the function $z_3$,  we then get a meromorphic 1-form on each component of the special fiber. On the conic $\{u_1u_2+u_3^2=0\}$ the meromorphic 1-form  is given by $v_1^{-1}dv_1=-v_2^{-1}dv_2$. The upshot is that we still get a meromorphic section of the relative canonical bundle, which is smooth away from the two singularities  $\{u_1=u_3=z_1=z_2=z_3=0\}$ and $\{u_2=u_3=z_1=z_2=z_3=0\}$ of $\overline S$.

\subsubsection{A modification of the degenerating family}
 We now recall the setup in the introduction. We adopt the notations there, and let $p:\mathcal \mathcal X \rightarrow\Delta$ be the family defined by \eqref{eqn1.1}. We also assume the genericity assumptions (i)-(iv) in the Section \ref{ss:1.1} hold.  The total space $\mathcal X$ is singular along $H\times\{0\}$ and  transverse to $H\times \{0\}$ the singularities are locally modeled on a two dimensional ordinary double point. For our purpose we need to perform certain birational transformations to $\mathcal X$.  We first do a base change $t\mapsto t^{n+2}$, and work on the new family, which we still denote by $\mathcal X$. Then 
 $\mathcal X$ now has singularities along $D\times \{0\}$, transversal to which generically it is a two dimensional $A_{d-1}$ singularity, which becomes worse along $H\times \{0\}$. This is usually referred to as a compounded Du Val (cDV) singularity . 

Now we apply the family version of  the above model partial resolution to $\mathcal X$. Let $\hX$ be  the subvariety  in the projective bundle $\P(\cO(d_2)\oplus \cO(d_1)\oplus \C)$ over $\C\P^{n+1}\times \Delta$ cut out by  the equations
\begin{equation}
\begin{cases}
t^{d_1}s_1=s_3f_2(x);\\
t^{d_2}s_2=s_3f_1(x); \\
s_1\otimes s_2+s_3^2 f(x)=0;\\
t^{d_1-d_2}s_1\otimes f_1(x)=f_2(x)\otimes s_2; \\
t^{d_2}s_3f(x)+s_1\otimes f_1(x)=0.
\end{cases}
\end{equation}
where naturally we view $f_i\in H^0(\C\P^{n+1}, \cO(d_i))$,  $f\in H^0(\C\P^{n+1}, \cO(n+2))$, and  $[s_1: s_2: s_3]$ denotes a point in the fiber of the projective bundle over the point $(x, t)\in \C\P^{n+1}\times \Delta$.  

For our discussion in the rest of this section, we will always take $[x_0:x_1:\cdots: x_{n+1}]$ to be the homogeneous coordinates of a point $x$ on $\C\P^{n+1}$. On the affine chart $\{x_i\neq 0\}$ of $\C\P^{n+1}$ we denote by $u=\{u_j=x_j/x_i, j\neq i\}$ the affine coordinates, and we view $x_i$ as a local trivialization of $\cO(1)$. Then on this chart we can view  holomorphic sections of powers of $\cO(1)$ as local holomorphic functions. In particular, for a homogeneous function $R(x)$, we denote by $R(u)$ the corresponding inhomogeneous function. On the 
affine trivialization of the projective bundle $\{s_i\neq 0\}$, we denote by $\{\zeta_j=s_j/s_i, j\neq i\}$ the affine coordinates on the fibers. 
We also define
\begin{align}
D_1 & \equiv \{f_1(x)=f_2(x)=t=0, s_2=s_3=0\},
\\
D_2 & \equiv \{f_1(x)=f_2(x)=t=0, s_1=s_3=0\}.
\end{align}

\begin{lemma}
$\hX$ is smooth away from the union $D_1\cup D_2$, and transverse to each $D_i$ the singularity is a two dimensional $A_{d_i-1}$ singularity.

\end{lemma}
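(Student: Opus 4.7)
The plan is to work chart by chart: $\hX$ is a subvariety of the projective bundle $\P(\cO(d_2)\oplus\cO(d_1)\oplus\C)$ over $\C\P^{n+1}\times\Delta$, so I would cover it by the three affine trivializations $\{s_i\neq 0\}$ for $i=1,2,3$ and on each one restrict further to an affine chart $\{x_j\neq 0\}$ of $\C\P^{n+1}$. Under this restriction the five defining relations of $\hX$ become polynomial equations in local variables $u_1,\ldots,u_{n+1},t$ together with the two affine fiber coordinates, and the whole computation becomes a family version, parametrized by points of $D$ (and by the value of $f$), of the model partial resolution $\overline{\mathcal S}$ treated at the beginning of this section. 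The key input from the genericity assumptions (i)--(iv) is the simultaneous transversality of $f_1,f_2,f$ at points of $H$, which I will use at precisely one step.

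On the chart $\{s_3\neq 0\}$, setting $\zeta_\alpha=s_\alpha/s_3$, the defining relations reduce to the three independent equations
\begin{equation*}
t^{d_1}\zeta_1=f_2(x), \qquad t^{d_2}\zeta_2=f_1(x), \qquad \zeta_1\zeta_2+f(x)=0,
\end{equation*}
and I would verify smoothness by computing the Jacobian. For $t\neq 0$ smoothness is immediate. For $t=0$ the first two equations force $f_1=f_2=0$, so the point lies on $D$; off $H$ one has $f\neq 0$ and hence $\zeta_1\zeta_2\neq 0$, and the ranks in the $\zeta$- and $u$-directions combine to full rank via assumption (iii); on $H$ one has $\zeta_1=\zeta_2=0$ and the Jacobian in the $u$-directions collapses to $(df_1,df_2,df)$, which is full rank by assumption (iv). Note that this chart is disjoint from $D_1\cup D_2$, as desired. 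On the chart $\{s_1\neq 0\}$, setting $\zeta_2=s_2/s_1$ and $\zeta_3=s_3/s_1$, the same sort of reduction as in the model (eliminate $\zeta_2$ using $s_1\otimes s_2+s_3^2f=0$ and eliminate $f_1$ using $t^{d_2}s_3 f+s_1\otimes f_1=0$) shows that the defining ideal is generated by
\begin{equation*}
\zeta_2+\zeta_3^2 f=0,\qquad f_1+t^{d_2}\zeta_3 f=0,\qquad t^{d_1}=\zeta_3\, f_2(x),
\end{equation*}
the other two original equations being automatic. Picking local coordinates on $\C\P^{n+1}$ in which $f_2$ is one of the $u_j$'s (possible by the smoothness of $Y_2$), the last equation becomes $t^{d_1}=\zeta_3\, u_2$, and a short Jacobian computation shows the singular locus of this chart is cut out by $t=\zeta_3=f_2=0$, which via the first two equations forces also $\zeta_2=f_1=0$ and recovers exactly $D_1$; transversally this is the standard $A_{d_1-1}$ equation $xy=t^{d_1}$. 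The chart $\{s_2\neq 0\}$ is entirely symmetric and yields the $A_{d_2-1}$ singularity along $D_2$.

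The main (bookkeeping) obstacle will be checking on each affine chart that, among the five defining relations, exactly three are independent and the remaining ones are consequences, in particular on the special loci where $\zeta_3 f=0$ (in the $\{s_1\neq 0\}$ chart) or $\zeta_1\zeta_2=0$ (in the $\{s_3\neq 0\}$ chart). Once this is verified, the model computation from the beginning of the section applies transversally to $D$, promoting the relative $A_{d_1+d_2-1}$ singularity of $\mathcal X$ along $D\times\{0\}$ (after the $(n+2)$-fold base change $t\mapsto t^{n+2}$) into the two transverse singularities of types $A_{d_1-1}$ and $A_{d_2-1}$ along $D_1$ and $D_2$ respectively, with no extra singularity appearing over $H$ thanks to assumption (iv).
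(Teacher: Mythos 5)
Your proposal is correct in substance and follows essentially the same route as the paper: the paper likewise reduces the five defining relations on each affine trivialization $\{s_i\neq 0\}$, identifies the $\{s_1\neq0\}$ (resp.\ $\{s_2\neq0\}$) chart with the graph of a function over the hypersurface $v_2\zeta_3=t^{d_1}$ (resp.\ $v_1\zeta_3'=t^{d_2}$), giving the transverse $A_{d_i-1}$ singularity exactly along $D_i$, and on $\{s_3\neq0\}$ invokes the independence of $(df_1,df_2,df)$ along $H$, i.e.\ assumption (iv). Two small inaccuracies in your write-up: first, in the $\{s_3\neq0\}$ chart, over $H$ one only gets $\zeta_1\zeta_2=0$, not $\zeta_1=\zeta_2=0$; the points with exactly one $\zeta_i\neq 0$ are still smooth, and they are in any case covered by your $\{s_1\neq0\}$ or $\{s_2\neq0\}$ analysis, so the conclusion is unaffected, but the case split as stated has a hole. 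Second, in the $\{s_1\neq0\}$ chart your ``short Jacobian computation'' needs more than smoothness of $Y_2$: at points with $t=0$, $\zeta_3\neq0$ (lying on $\mathcal N$ over $D$) the two relevant differentials reduce to $df_1$ and $\zeta_3\,df_2$, so you need their independence along $D$, i.e.\ assumption (iii) — which is precisely how the paper uses (iii) in that chart (to take $v_1=f_1$, $v_2=f_2$ as coordinates), and which also means transversality assumptions enter in more than the ``precisely one step'' you announce.
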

\begin{proof}
We know $\hX$ is isomorphic to $\mathcal X$ away from $D\times \{0\}$, so it suffices to consider around a point $(x, 0)$ where $f_1(x)=f_2(x)=0$. Locally in an affine chart $\{s_1\neq 0\}$, $\hX$ is then cut out by the equations
\begin{equation}\label{e: s1 nonzero region all equations}\begin{cases}
f_2(u)\zeta_3=t^{d_1};\\
f_1(u)\zeta_3=t^{d_2}\zeta_2;\\
 \zeta_2+\zeta_3^2f(u)=0;\\
 f_2(u)\zeta_2=t^{d_1-d_2}f_1(u);\\
 t^{d_2}\zeta_3 f(u)+f_1(u)=0.
 \end{cases}\end{equation}
These can be reduced to  two equations on the coordinates $u$, $t$ and $\zeta_3$, given by 
\begin{equation} \label{e: s1 nonzero region}
\begin{cases}
f_2(u)\zeta_3-t^{d_1}=0\\
t^{d_2}\zeta_3 f(u)+f_1(u)=0. 
\end{cases}
\end{equation}
By our assumption (iii) locally we may use $v_1=f_1(u)$ and $v_2=f_2(u)$ to replace $u_1, u_2$ (for example) as local holomorphic coordinates on a neighborhood of $x$ in $\C\P^{n+1}$. Then it is easy to see the corresponding subvariety is smooth if $\zeta_3\neq 0$, and has transversal $A_{d_1-1}$ singularities along $D_1$. 
So this gives the local description of $\hX$ in a neighborhood of $D_1$. Similarly on $\{s_2\neq 0\}$ we also know the space is smooth except with transversal $A_{d_2-1}$ singularities along $D_2$. 

On $\{s_3\neq 0\}$, we use $u, t, \zeta_1, \zeta_2$ as coordinates, and we get the constraint equations 
\begin{equation}\label{e: s3 nonzero region}
\begin{cases}
\zeta_1\zeta_2+f(u)=0,\\ 
f_2(u)-t^{d_1}\zeta_1=0,\\ 
f_1(u)-t^{d_2}\zeta_2=0.\end{cases}
\end{equation}
We only need to consider the points where $\zeta_1=\zeta_2=t=0$, so in particular we also have $f(u)=0$. At such a point, the differentials of these three equations are $(\nabla f(u), \nabla f_2(u), \nabla f_1(u))$. This is non-zero by our assumption (iv).
\end{proof}

One can  see that the new central fiber $\hat{X}_0$ consists of a chain of three smooth  components intersecting transversally, given by the proper transforms $\hat Y_1, \hat Y_2$ of $Y_1, Y_2$ respectively and the submanifold $\mathcal N$ in the projective bundle $\P(L_1\oplus L_2\oplus \C)$ over $D$ cut out by the equation $s_1s_2=s_3^2 f(x)$ (so that $\mathcal N$ is a quadric bundle over $D$, and singular fibers are over $H$). Notice $\mathcal N$ itself is a smooth manifold. 

\begin{figure}
\begin{tikzpicture}
\draw (-5, 3) to [out=-75, in=75] (-5, -3); 
\draw (-5, 3) to (-3.5, 4); 
\draw (-5, -3) to  (-3.5, -2); 
\draw (-3.5, 4) to [out=-75, in=75]  (-3.5, -2); 
\draw (-3, 3) to [out=-75, in=75]  (-3, -3); 
\draw (-3, 3) to (-1.5, 4); 
\draw (-3, -3) to (-1.5, -2); 
\draw (-1.5, 4) to [out=-75, in=75]  (-1.5, -2); 
\draw (3.0, 3) to [out=-75, in=75] (3.0, -3); 
\draw (3.0, 3) to (4.5, 4); 
\draw (3, -3) to  (4.5, -2); 
\draw (4.5, 4) to [out=-75, in=75]  (4.5, -2); 
\draw (1.0, 3) to [out=-75, in=75]  (1.0, -3); 
\draw (1.0, 3) to (2.5, 4); 
\draw (1.0, -3) to (2.5, -2); 
\draw (2.5, 4) to [out=-75, in=75]  (2.5, -2); 
\draw (-1, 3) to (0.5, 4); 
\draw (-1, 3) to [out=-65, in=60] (-0.7, 0.5);
\draw[red, very thick] (-0.7, 0.5) to (0.8, 1.5); 
\draw (-0.7, -1.5) to [out=-60, in=65] (-1, -3);
\draw[red, very thick] (-0.7, -1.5) to (0.8, -0.5);
\draw (-1, -3) to (0.5, -2); 
\draw (0.5, 4) to  [out=-65, in=60]  (0.8, 1.5);
\draw  (0.8, -0.5) to [out=-60, in=65] (0.5, -2);
\draw[thick] (-0.7, 0.5) to [out=-65, in=65] (-0.7, -1.5);
\draw[thick] (0.8, 1.5) to [out=-65, in=65] (0.8, -0.5);
\draw[thick] (-0.5, 0.66) to [out=-65, in=65] (-0.5, -1.34);
\draw[thick] (0.4, 1.26) to [out=-65, in=65] (0.4, -0.74);
\draw[very thick] (-0.2, 0.8) to  (0.05, 0); 
\draw[very thick] (0.05, 0) to  (-0.2, -1.19); 

\node[blue] at (0.05, 0) {$\bullet$};
\node[blue] at (-1.75, 0.1) {$\bullet$};
\node[blue] at (-3.75, 0.4) {$\bullet$};
\node[blue] at (2.25, 0.2) {$\bullet$};
\node[blue] at (4.25, 0.6) {$\bullet$};

\node at (-4, 2.5) {$\widehat{X}_t$};
\node at (0, 2.5) {$\hat Y_1$}; 
\node at (0, -1.85) {$\hat Y_2$}; 
\node[red] at (0.3, 1.6) {$D_1$};
\node[red] at (0.3, -1.2) {$D_2$};
\node at (0.3, 0.5) {$\mathcal N$};  
\node[blue] at (-6, 1.4) {$H\times\{t\}$};
\draw[->] (-5.2, 1.3) to (-3.9, 0.5);
\node at (0, -4.5) {$\widehat{X}_0=\hat Y_1\cup_{D_1}\mathcal N\cup_{D_2} \hat Y_2$};
\draw[->] (0, -4) to (0, -2.5);
\draw[thick, blue, dashed] plot[smooth] coordinates {(-3.75, 0.4) (-1.75, 0.1) (0.05, 0) (2.25, 0.2) (4.25, 0.6)};
\end{tikzpicture}
\caption{The modified family $\hX$}
 \label{f: the modified family}
\end{figure}

Then we have
\begin{equation}D_1=\hat Y_1\cap \mathcal N, \ \  D_2=\hat Y_2\cap \mathcal N.\end{equation}
It is straightforward to see that the normal bundle of $D_i$ in $\mathcal N$ is  $L_i^{-1}$.

Next we consider holomorphic volume forms. 
Viewing $\mathcal X$ as an anti-canonical divisor in $\C\P^{n+1}\times \Delta$, then away from $D\times \{0\}$, $\mathcal X$ is smooth and we then obtain a holomorphic volume form $\Gamma$. In the affine chart $\{x_0\neq 0\}\times \Delta\subset \C\P^{n+1}\times \Delta$,  the meromorphic volume form  is given by 
\begin{equation}\frac{1}{F_t(u)} dt\wedge du_1\wedge \cdots \wedge du_{n+1}.\end{equation}
So the Poincar\'e residue on $\mathcal X$ is 
\begin{equation}\Gamma=-\frac{1}{(n+2)t^{n+1}f(u)} du_1\wedge \cdots du_{n+1}. \end{equation}
Using the equation and the genericity assumptions, $\Gamma$ is indeed holomorphic on $\mathcal X\setminus D\times \{0\}$. 

Now applying the above discussion to the global function $t$ on $\mathcal X$, then we get a holomorphic family of holomorphic volume forms $\Gamma_t$ on each $\widehat{X}_t$. 
Differentiating the equation $F_t(u)=f_1(u)f_2(u)+t^{n+2}f(u)=0$, we get
$(n+2)t^{n+1}f(u)dt+d_uF_t=0.$
In the above affine chart,  on the set where $\frac{\p F_t}{\p u_1}\neq 0$, we have
\begin{equation}
\Gamma_t=\frac{1}{\frac{\p F_t(u)}{\p u_1}} du_2\wedge \cdots du_{n+1}.
\end{equation}
This is indeed well-defined on $\widehat{X}_t$ for $t\neq 0$ and also on $X_0\setminus D$. On each component $Y_i$ of $X_0$, it has a simple pole along $D$.  Notice $\Gamma_t$ is also the natural holomorphic volume form on $\widehat{X}_t$ when we apply the Poincar\'e residue to the divisor $\widehat{X}_t$ in $\C\P^{n+1}$.

Now we pass to the resolution $\hX$. Abusing notation we still denote by $\Gamma$ its pull-back. 

\begin{lemma}
$\Gamma$ extends to a global holomorphic volume form on $\hX\setminus (D_1\cup D_2)$.
\end{lemma}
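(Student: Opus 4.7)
The plan is to verify the extension locally in each affine chart of the projective bundle $\P(\cO(d_2)\oplus\cO(d_1)\oplus\C)$, and to leverage the explicit local model already analyzed in this subsection. Since the projection $\pi:\hX\to\mathcal X$ is an isomorphism over $\mathcal X\setminus (D\times\{0\})$, the pull-back $\pi^*\Gamma$ is automatically a holomorphic volume form on $\hX\setminus \pi^{-1}(D\times\{0\})=\hX\setminus\mathcal N$. Hence it suffices to verify that $\pi^*\Gamma$ extends holomorphically across the locus $\mathcal N\setminus(D_1\cup D_2)$, which is precisely the smooth part of $\hX$ lying over $D\times\{0\}$.

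First I would work in the chart $\{s_3\neq 0\}$, where by the lemma above $\hX$ is smooth everywhere and cut out by the three equations \eqref{e: s3 nonzero region} in the coordinates $(u,t,\zeta_1,\zeta_2)$. Near a point of $D$, choose local holomorphic coordinates $(w_2,\ldots,w_{n-1},v_1=f_1(u),v_2=f_2(u))$ on $\C\P^{n+1}$; this is possible by the genericity condition (iii). Then the first two equations on $\hX$ become $v_1=t^{d_2}\zeta_2$ and $v_2=t^{d_1}\zeta_1$, while the third is $\zeta_1\zeta_2+f(u)=0$. A direct computation yields
\begin{equation}
dv_1\wedge dv_2=t^{n+1}\bigl[d_2\zeta_2\,dt\wedge d\zeta_1-d_1\zeta_1\,dt\wedge d\zeta_2\bigr]+t^{n+2}d\zeta_2\wedge d\zeta_1,
\end{equation}
and substituting into
\begin{equation}
\Gamma=-\frac{1}{(n+2)\,t^{n+1}f(u)}\,du_1\wedge\cdots\wedge du_{n+1}=-\frac{J(w,v)}{(n+2)\,t^{n+1}f(u)}\,dw\wedge dv_1\wedge dv_2,
\end{equation}
the factor $t^{n+1}$ cancels; using the constraint $f(u)=-\zeta_1\zeta_2$ and restricting to $\hX$ then gives an explicit expression that is regular wherever $\zeta_1,\zeta_2$ are not simultaneously uncontrolled. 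Since $D_1\cup D_2$ is entirely contained in the charts $\{s_1\neq 0\}$ and $\{s_2\neq 0\}$ (both require $s_3=0$), the chart $\{s_3\neq 0\}$ is disjoint from $D_1\cup D_2$ and the formula shows $\pi^*\Gamma$ extends holomorphically there. A parallel computation in $\{s_1\neq 0\}$ and $\{s_2\neq 0\}$, using the reduced equations \eqref{e: s1 nonzero region}, handles the remaining pieces of $\mathcal N\setminus(D_1\cup D_2)$.

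Conceptually, this is the global incarnation of the fact, already recorded for the local model in the previous sub-subsection, that the partial resolution $\overline{\mathcal S}\to\mathcal S$ is \emph{crepant}: transverse to $D\times\{0\}$, the family $\pi:\hX\to\mathcal X$ looks exactly like the $A_{d-1}$ model $\overline{\mathcal S}\to\mathcal S$, and the canonical section $\Omega_{\mathcal S}$ there pulls back to a nowhere-vanishing form on the smooth locus of $\overline{\mathcal S}$. The chart computation above is simply making this crepancy explicit, with the bookkeeping keeping track of the variation in $D$ (the coordinates $w$) and the normal parameter $f(u)$. The main technical nuisance I anticipate is the explicit rearrangement of differentials once one substitutes $f(u)=-\zeta_1\zeta_2$, together with verifying that the residual apparent singularities (e.g.\ where $\zeta_1$ or $\zeta_2$ vanishes inside $\mathcal N\setminus(D_1\cup D_2)$, corresponding to the nodes of the singular conic fibers over $H$) are genuinely canceled by the numerator—this is exactly what the crepancy of the local model guarantees, so there is no conceptual obstacle, only a careful local calculation.
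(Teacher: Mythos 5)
Your overall strategy is the same as the paper's: the extension is automatic away from $\mathcal N$ (where $\hX\to\mathcal X$ is an isomorphism), and one checks holomorphy chart by chart in the projective bundle; your treatment of the charts $\{s_1\neq 0\}$, $\{s_2\neq 0\}$ via the reduced equations \eqref{e: s1 nonzero region} is exactly the paper's computation leading to \eqref{eqn8-3}. The gap is in the chart $\{s_3\neq 0\}$, at the nodes $\zeta_1=\zeta_2=0$ of the conic fibers over $H$ — the only points of $\mathcal N\setminus(D_1\cup D_2)$ not already covered by the other two charts. After you substitute $f(u)=-\zeta_1\zeta_2$, your displayed expression still carries $f=-\zeta_1\zeta_2$ in the denominator (the $t^{n+1}$ cancels, but the $f$ does not), and moreover $(t,\zeta_1,\zeta_2,u_3,\ldots,u_{n+1})$ are \emph{not} independent functions on $\hX$: there remains one relation, $\zeta_1\zeta_2+f(u)=0$, which you have not yet used to reduce the form. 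So your formula as written does not exhibit regularity at the nodes. The cancellation only becomes visible after eliminating one more ambient coordinate by this relation; the paper does this by invoking the genericity assumption (iv) to take $v_3=f$ as an ambient coordinate (replacing $u_3$), so that on $\hX$ one has $dv_3=-(\zeta_1\,d\zeta_2+\zeta_2\,d\zeta_1)$, the leading $t^{n+2}$ term dies for degree reasons, and the wedge $dv_1\wedge dv_2\wedge dv_3$ produces precisely the factor $\zeta_1\zeta_2$ that kills the pole, yielding the manifestly smooth formula \eqref{eqn8-4}.

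Your fallback — that the cancellation is "guaranteed by crepancy of the local model" — is not a valid substitute at exactly these points. The surface model $\overline{\mathcal S}\to\mathcal S$ describes $\hX\to\mathcal X$ only transverse to $D\times\{0\}$ \emph{away} from $H$; along $H$ the singularity of $\mathcal X$ is worse (it is a compound Du Val point, not a product of the $A_{d-1}$ surface singularity with $D$), so crepancy of the two-dimensional model says nothing about the nodes over $H$, which is the one locus where something actually needs to be verified in the $\{s_3\neq0\}$ chart. Supplying the coordinate change $v_3=f$ — this is where assumption (iv) enters, and it is absent from your proposal — together with the explicit wedge computation closes the gap and turns your outline into the paper's proof.
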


\begin{proof}
We only need to consider around a point $(x, t, s)$ on the exceptional set $\mathcal N$, so $(x, t)\in D\times \{0\}$.  Without loss of generality may assume $x_0\neq 0$. Since $D$ is a complete intersection by assumption (iii),  we may use $v_1=f_1(u)$ and $v_2=f_2(u)$ to replace $u_1, u_2$ (say) as local holomorphic coordinates on a neighborhood of $x$ in $\C\P^{n+1}$.  So we can write 
\begin{equation}\Gamma=-\frac{J^{-1}}{(n+2)t^{n+1}f(u)}dv_1\wedge dv_2\wedge du_3\cdots\wedge du_{n+1}, \end{equation}
 where $J=\frac{\p f_1}{\p u_1} \frac{\p f_2}{\p u_2} -\frac{\p f_1}{\p u_2}\frac{\p f_2}{\p u_1}.$ is the Jacobian.
Suppose first we work on the affine chart $\{s_1\neq 0\}$. Then we get the local equations for $\hX$ given by \eqref{e: s1 nonzero region}. Since we are away from $D_1$, we must have $\zeta_3\neq 0$. Then we can use  $\zeta_3, t, u_3, \cdots, u_{n+1}$ as local holomorphic coordinates on $\hX$. We have
  \begin{align}dv_1 &=-t^{d_2}fd\zeta_3-d_2t^{d_2-1}\zeta_3 f dt-t^{n+2}\zeta_3 df,
  \\
dv_2&=d_1t^{d_1-1} \zeta_3^{-1}dt-\zeta_3^{-2} t^{d_1}d\zeta_3,
\\
df&=\frac{\p f}{\p v_1} dv_1+\frac{\p f}{\p v_2} dv_2+\sum_{j\geq 3} \frac{\p f}{\p u_j} du_j. \end{align}
 So we get
 \begin{align}&(1+t^{d_2} \zeta_3\frac{\p f}{\p v_1}) dv_1
 \nonumber\\
 =&(-t^{d_2}f+t^{n+2} \zeta_3^{-1} \frac{\p f}{\p v_2})d\zeta_3-(d_2t^{d_2-1}\zeta_3 f+d_1t^{n+1}\frac{\p f}{\p v_2})dt & \mod (du_3, \cdots ,du_{n+1}).\end{align}
 Hence we get
\begin{equation} \label{eqn8-3}
\Gamma=\frac{\zeta_3^{-1}}{(1+t^{d_2}\zeta_3\frac{\p f}{\p v_1})} J^{-1}d\zeta_3\wedge dt\wedge du_3\wedge\cdots\wedge du_{n+1}.\end{equation}
Near $t=0$  we see $\Gamma$ is smooth around such a point.  Similarly we can deal with the chart $\{s_2\neq 0\}$.
 
  Now on $\{s_3\neq 0\}$, we only need to  consider a point on $D$ where $f=0$, then by our assumption (iv) we may use $v_3=f$ as a local holomorphic coordinate to replace $u_3$ for instance.  Then  we can write
\begin{equation}\Gamma=-\frac{1}{(n+2)t^{n+1}f} K^{-1} dv_1\wedge dv_2\wedge dv_3\wedge du_4\cdots \wedge du_{n+1},\end{equation}
where $K$ is the Jacobian for the change of coordinates. We have
\begin{equation}dv_3=-(\zeta_1d\zeta_2+\zeta_2d\zeta_1),\end{equation}
\begin{equation}dv_1=t^{d_2}d\zeta_2+d_2\zeta_2 t^{d_2-1} dt,\end{equation}
\begin{equation}dv_2=t^{d_1}d\zeta_1+d_1\zeta_1 t^{d_1-1}dt.\end{equation}
Then we get 
\begin{equation} \label{eqn8-4}
\Gamma=K^{-1}dt\wedge d\zeta_1\wedge d\zeta_2\wedge du_4\cdots \wedge du_{n+1}
,\end{equation}
which is smooth. 
\end{proof}

We can apply the previous Poincar\'e residue to the function $t$ on $\hX$. Since the exceptional set of the resolution lies over $D\times \{0\}$, we still get $\Gamma_t$ for $t\neq 0$. On the central fiber $\hat X_0$, we still get $\Gamma_0$ on $\hat Y_1\setminus D_1$ and $\hat Y_2\setminus D_2$.  Over $\mathcal N\setminus (D_1\cup D_2)$, using  (\ref{eqn8-3}) and (\ref{eqn8-4}) we get the corresponding Poincar\'e residue
\begin{equation}\Gamma_{\mathcal N}=J^{-1}\frac{d\zeta_1}{\zeta_1}\wedge du_3\wedge \cdots\wedge du_{n+1}=-J^{-1}\frac{d\zeta_2}{\zeta_2}\wedge du_3\wedge \cdots\wedge du_{n+1}. 
\end{equation}

Notice by applying Poincar\'e residue twice to the complete intersection $D=\{f_1=f_2=0\}$, we obtain a holomorphic volume form $\Omega_D$ on $D$, which in the above local coordinates can be written as
\begin{equation}\Omega_D=J^{-1}du_3\wedge \cdots\wedge du_{n+1}.\end{equation}
So we have that  
$\Gamma_{\mathcal N}=\frac{d\zeta_1}{\zeta_1} \wedge \Omega_D$. This means that up to multiplying by $-\sq$, $\Gamma_{\mathcal N}$ agrees with the natural holomorphic volume  form $\Omega_0$ on $\mathcal N_0$ defined in Section \ref{ss:complex-geometry}, under the identification $k_-=d_2, k_+=-d_1$.

\begin{remark}
A priori there could be various different choices of birational transformations extracting an extra component as above. For example, one can directly perform a blow-up on the original family $\mathcal X$ (without doing the base change $t\mapsto t^{n+2}$).  The reason for our particular choice of birational transform is related to a matching condition required when we glue the Calabi-Yau metrics on the three components on the central fiber $\hat{X}_0$ to the nearby smooth fiber $X_t$. See \eqref{eqn6-70} and \eqref{eqn6-71}. 
\end{remark}

\subsection{Tian-Yau metrics}

\label{ss:tian-yau}
In this subsection we briefly review the complete Ricci-flat K\"ahler metrics, constructed in \cite{TY} on the complement of a smooth anti-canonical divisor in a Fano manifold. We will state some facts on the asymptotics of these metrics. Interested readers are referred to \cite{HSVZ} (Section 3) for the proof and more details.

Let $Y$ be an $n$ dimensional Fano manifold, $D$ a smooth anti-canonical divisor in $Y$, and denote $Z=Y\setminus D$.  By adjunction formula $D$ itself is Calabi-Yau, and we can find a Ricci-flat K\"ahler metric $\omega_D\in 2\pi c_1(L_D)$, where $L_D$ is the restriction of $K_Y^{-1}$ to $D$. 
Fixing a defining section $S$ of $D$, we can view $S^{-1}$ as a holomorphic $n$-form $\Omega_{Z}$ on $Z$ with a simple pole along $D$. Rescaling suitably we may assume that the Poincar\'e residue of $\Omega_{Z}$ gives a holomorphic volume form $\Omega_D$ on $D$ which satisfies the normalization condition \eqref{e:CY equation on D}.

As before we can fix the hermitian metric $|\cdot|$ on $L_D$ whose curvature form is $-\sq\omega_D$  and we also fix a smooth extension to $Y$ with strictly positive curvature. 
Then
 \begin{equation}\omega_{Z} \equiv\frac{n}{n+1}\sq \p\bp (-{\log |S|^2})^{\frac{n+1}{n}}  \end{equation}
defines a K\"ahler form on a neighborhood of infinity in $Z$. The Tian-Yau metric $\omega_{TY}$ on $Z$ is then obtained by solving a Monge-Amp\`ere equation with reference metric $\omega_{Z}$. Let $\mathcal{C}^n$ be the Calabi model space constructed using $(D, L_D, \omega_D)$, as in Section \ref{ss:Calabi model space}.

\begin{proposition}[\cite{TY}, see also \cite{HSVZ}] \label{t:hein}
There is a smooth function $\phi$ on $Z$ such that  $\omega_{TY}\equiv \omega_{Z}+\sqrt{-1}\p\bp\phi$ is a complete Ricci-flat K\"ahler metric on $Z$ solving the Monge-Amp\`ere equation
 \begin{equation}\omega_{TY}^n=\frac{1}{n\cdot 2^{n-1}}(\sq)^{n^2}\Omega_{Z}\wedge\overline\Omega_{Z}.  \end{equation}
Moreover, there is a diffeomorphism $\Phi: \mathcal{C}^n\setminus K'\rightarrow Y\setminus K$, where $K \subset Z$ is compact and $K' = \{|\xi| \geq \frac{1}{2}\}$ and constant $\delta_{Z}>0$, such that the following holds uniformly for all $z$ large:

\begin{enumerate}
\item The K\"ahler potential $\phi$ satisfies the asymptotics
 \begin{equation}\label{lalilu}|\nabla_{g_Z}^k \phi|_{g_Z} = O(e^{-\delta_Z \cdot (-\log |S|^2)^{\frac{1}{2}}}) \ \text{for all} \ k\in\dN.
\end{equation}

\item  We have the complex structure asymptotics
\begin{equation}
|\nabla_{g_{\mathcal{C}^n}}^k(\Phi^*J_{Z}-J_{\Ca^n})|_{g_\mathcal{C}^n}=O(e^{-(\frac{1}{2}-\epsilon)z^n})\ \text{for all} \ k \in\dN, \epsilon > 0.
\end{equation}

\item We have the holomorphic $n$-form asymptotics
\begin{equation}\label{e:n-form-asympt}
|\nabla_{g_{\mathcal{C}^n}}^k(\Phi^*\Omega_Z-\Omega_{\Ca^n})|_{g_{\mathcal C}^n}=O(e^{-(\frac{1}{2}-\epsilon)z^n})\ \text{for all} \ k \in\dN, \epsilon > 0.
\end{equation} 

\item We have the K\"ahler form asymptotics \begin{equation}|\nabla_{g_{\Ca^n}}^k(\Phi^*\omega_{TY}-\omega_{\Ca^n})|_{g_{\mathcal{C}^n}}=O(e^{-{\delta_Z} z^{n/2}}).\end{equation} 
\item There is a constant $C>0$ such that 
\begin{equation}
C^{-1}\cdot z\leq \Phi^*((-\log |S|^2)^{\frac{1}{n}})\leq C\cdot z.
\end{equation}

\end{enumerate}

\end{proposition}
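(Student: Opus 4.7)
The plan is to follow the Tian-Yau scheme combined with the refined asymptotic analysis of Hein. The first step is to construct the reference K\"ahler form. Using the fixed hermitian metric on $K_Y^{-1}$ whose curvature along $D$ is $-\sqrt{-1}\omega_D$, the Calabi ansatz $\omega_Z = \frac{n}{n+1}\sqrt{-1}\partial\bar\partial(-\log|S|^2)^{(n+1)/n}$ is well-defined on a neighborhood of infinity $\{|S|^2<\epsilon_0\}$ in $Z$, and by a cut-off we extend it to a smooth K\"ahler form on all of $Z$. A direct computation on $L_D$, exactly as in Section \ref{ss:Calabi model space}, shows that on this neighborhood of infinity the pair $(\omega_Z, \Omega_Z)$ is literally the Calabi model $(\omega_{\mathcal C^n}, \Omega_{\mathcal C^n})$ pulled back under a natural diffeomorphism $\Phi_0: \mathcal C^n \setminus K' \to Z \setminus K$, where $\Phi_0$ is induced by the exponential map normal to $D$ using the K\"ahler metric $\omega_D$ on $D$ and the chosen hermitian norm on $L_D$. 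Comparing $\Phi_0^*\Omega_Z$ with $\Omega_{\mathcal C^n}$ to order one in a neighborhood of $D$ and using that $\Omega_D$ is covariantly constant on $D$ gives item (3) for $\Phi_0$, provided the discrepancy between $\Phi_0^*J_Z$ and $J_{\mathcal C^n}$ is controlled — this latter discrepancy will be analyzed below.

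The second step is to set up the Monge-Amp\`ere equation
\begin{equation}
(\omega_Z+\sqrt{-1}\partial\bar\partial\phi)^n = e^F \cdot \omega_Z^n, \qquad F \equiv \log\!\left(\frac{(\sqrt{-1})^{n^2}\,\Omega_Z\wedge\bar\Omega_Z}{n\cdot 2^{n-1}\omega_Z^n/n!}\right),
\end{equation}
and to observe that $F$ decays exponentially at infinity. Indeed, since $(\omega_{\mathcal C^n})^n$ is proportional to $(\sqrt{-1})^{n^2}\Omega_{\mathcal C^n}\wedge\bar\Omega_{\mathcal C^n}$ by construction of the Calabi model, and since the Calabi-Yau equation \eqref{e:CY equation on D} on $D$ has already been imposed, pulling back by $\Phi_0$ reduces $F$ to the error incurred by replacing the flat fiber geometry of $L_D$ by its actual curvature; a Taylor expansion transverse to $D$ combined with the rate at which the Calabi model variable $z$ grows like $(-\log|S|^2)^{1/n}$ yields $|F| \leq C e^{-\delta_0 z^{n/2}}$ for some $\delta_0>0$, and similarly for all covariant derivatives measured in $g_Z$.

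The third step, which I expect to be the main technical obstacle, is the actual solution of the Monge-Amp\`ere equation with exponential decay of $\phi$. This is done by weighted analysis on the Tian-Yau/Calabi end. One introduces weighted H\"older spaces $C^{k,\alpha}_{\delta}$ adapted to the Calabi model, with weight $e^{\delta z^{n/2}}$, and proves: (a) the linearized operator $\Delta_{\omega_Z}: C^{k+2,\alpha}_{\delta} \to C^{k,\alpha}_{\delta+2/n}$ is an isomorphism for suitable $\delta\in(0,\delta_{\mathcal C^n})$ (the key input being the Liouville-type Lemma \ref{l:Liouville-Calabi-space-SZ} for harmonic functions on $\mathcal C^n$, applied in a blow-down/bubble argument analogous to Proposition \ref{p:neck-uniform-injectivity}); (b) a local weighted Schauder estimate built from the regularity scale of the Calabi model; (c) a quadratic-error estimate for the nonlinearity $\mathcal N(\phi) = \log((\omega_Z+\sqrt{-1}\partial\bar\partial\phi)^n/\omega_Z^n) - \Delta_{\omega_Z}\phi$. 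With these ingredients, the implicit function theorem in the form of Lemma \ref{l:implicit-function} produces a solution $\phi$ with the exponential decay \eqref{lalilu}; completeness of $\omega_{TY}$ then follows because $\phi$ is a small perturbation of the complete Calabi model.

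The final step is to upgrade $\Phi_0$ to the desired diffeomorphism $\Phi$ and to establish the remaining asymptotics (2)–(5). Item (4) is immediate from \eqref{lalilu}, since $\Phi_0^*\omega_Z - \omega_{\mathcal C^n}$ is already much smaller than $e^{-\delta_Z z^{n/2}}$ by construction. Item (5) is an elementary comparison from the definition of $z$. For items (2) and (3), one composes $\Phi_0$ with a further diffeomorphism constructed by identifying parallel holomorphic frames along normal geodesics to $D$; here the sharp exponent $(\frac{1}{2}-\epsilon)z^n$ comes from Taylor expanding the transition functions of the normal bundle $L_D$ to all orders in $|S|$, which translates (via $z\sim(-\log|S|^2)^{1/n}$) into $e^{-(\frac12-\epsilon)z^n}$ decay. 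The main obstacle, as already emphasized, is the weighted linear theory underlying step three; everything else is either algebraic book-keeping or standard elliptic regularity applied to \eqref{lalilu}.
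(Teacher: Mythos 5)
First, a remark on the comparison itself: the paper does not actually prove this proposition — it is quoted from \cite{TY}, with the refined asymptotics attributed to \cite{HSVZ} (Section 3) — so your sketch has to be measured against those arguments. Your overall architecture is the right one and matches them: the Calabi-ansatz reference form $\omega_Z$ built from the hermitian metric with curvature $-\sqrt{-1}\omega_D$ along $D$, the Monge--Amp\`ere equation with Ricci potential $F$ decaying exponentially (in fact at rate $e^{-(\frac{1}{2}-\epsilon)z^{n}}$, so your bound $e^{-\delta_0 z^{n/2}}$ is a safe under-claim), and the identification of the end of $Z$ with the Calabi model with errors of order $|S|\sim e^{-\frac{1}{2}z^{n}}$ in $J$ and $\Omega$, which is exactly where items (2) and (3) come from.

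The genuine gap is in your third step. The quantitative implicit function theorem (Lemma \ref{l:implicit-function}) needs the initial error $\|\mathscr{F}(\bo)\|$ to be small relative to $1/(C_LC_N)$, and here there is no small parameter: $F$ is a fixed function, exponentially small at infinity but of size $O(1)$ on the compact part of $Z$, so its weighted norm is just some fixed number and the contraction scheme never gets started. (This is precisely what distinguishes this proposition from the neck and gluing perturbations in Sections \ref{s:neck-perturbation} and \ref{s:gluing}, where $T\to\infty$, equivalently $|t|\to 0$, supplies the smallness.) Existence must instead be obtained as in Tian--Yau/Hein: an exhaustion by compact domains or a continuity method in $e^{sF}$, whose crux is the a priori weighted $C^0$ estimate (and then the $C^2$ and $C^{2,\alpha}$ estimates) for the complex Monge--Amp\`ere equation on a complete noncompact manifold; your weighted linear isomorphism would give openness along such a path but says nothing about closedness, and your sketch never addresses these a priori estimates. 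Relatedly, the exponential decay in item (1) is not an automatic output of the linear theory: the existence theorems deliver only a solution with much weaker (polynomial-type) decay, and the sharp rate $e^{-\delta_Z z^{n/2}}$ in items (1) and (4) is established afterwards by a separate improvement-of-decay argument on the Calabi end — this is the actual content of \cite{HSVZ}, Section 3. Arguing that the IFT ``produces a solution with the exponential decay \eqref{lalilu}'' presupposes that $\phi$ already lies in the weighted space, which is circular. Items (2), (3), (5), and the deduction of (4) from (1), are fine as you describe, modulo the minor point that no cut-off is needed for $\omega_Z$: after rescaling $S$ so that $\|S\|<1$ on $Y$, the Calabi ansatz is a K\"ahler form on all of $Z$ because the extended hermitian metric has globally positive curvature.
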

In particular, the space $(Z, \omega_{TY})$ is $\delta_Z$-asymptotically Calabi in the sense of \cite{SZ-Liouville}. For later purposes we also need a simple observation regarding the asymptotics of $\omega_{TY}$. Fix a local holomorphic chart $\{U, w_1, \cdots, w_n\}$ centered at a point $p\in D$, i.e., $w_i(p)=0$ for all $i$, and such that $S$ is locally defined by $w_1=0$. Define a cylindrical type K\"ahler metric as follows
\begin{equation}
\omega_{cyl}\equiv \sum_{j=2}^n \sq dw_j\wedge d\bar w_j+\sq  |w_1|^{-2}dw_1\wedge d\bar w_1.
\end{equation}
By a straightforward computation, we have the following. 
\begin{lemma} \label{l: TY cylindrical compare}
On $U\setminus D$,  there is a constant $C>0$ such that 
\begin{equation}
C^{-1}(-\log|S|^2)^{\frac{1}{n}-1}\omega_{cyl}\leq \omega_{TY}\leq C(-\log |S|^2)^{\frac{1}{n}}\omega_{cyl},
\end{equation}
and for all $k\in\dZ_+$, there are constants $C_k, m_k>0$ such that 
\begin{equation}
|\nabla^k_{\omega_{cyl}}\omega_{TY}|_{\omega_{cyl}}\leq C_k(-\log |S|^2)^{m_k}.
\end{equation}
\end{lemma}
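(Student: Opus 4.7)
The plan is to work directly with the explicit reference metric $\omega_Z = \tfrac{n}{n+1}\sq\p\bp(-\log|S|^2)^{(n+1)/n}$ that appears in Tian-Yau's construction, rather than with $\omega_{TY}$ itself. Proposition~\ref{t:hein}(1) gives $\omega_{TY}=\omega_Z+\sq\p\bp\phi$ with $\phi$ and all its $g_Z$-covariant derivatives of size $O(e^{-\delta_Z\rho^{1/2}})$, where $\rho\equiv -\log|S|^2$. Since any polynomial factor in $\rho$ picked up in converting between $g_Z$ and $\omega_{cyl}$ will be absorbed by this exponential decay, any polynomial $\omega_{cyl}$-bound proved for $\omega_Z$ will immediately transfer to $\omega_{TY}$. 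The problem is thus reduced to an explicit local computation for $\omega_Z$ on $U\setminus D$.

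For that computation, I will use the identity
\begin{equation*}
\omega_Z = \tfrac{1}{n}\rho^{(1-n)/n}\sq\p\rho\wedge\bp\rho + \rho^{1/n}\sq\p\bp\rho,
\end{equation*}
together with $|S|^2=|w_1|^2 h$ for some smooth nonvanishing $h$ on $U$, giving $\rho=-\log|w_1|^2-\log h$. Since $\log|w_1|^2$ is pluriharmonic, $\sq\p\bp\rho=-\sq\p\bp\log h$ is a smooth bounded $(1,1)$-form on $U$; in the logarithmic fiber coordinate $v\equiv\log w_1$ (in which $\omega_{cyl}$ becomes flat) its pure fiber $(dv,d\bar v)$-component vanishes to order $|w_1|^2$, while its horizontal part stays bounded away from zero. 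For $\sq\p\rho\wedge\bp\rho$, expanding $\p\rho=-\tfrac{dw_1}{w_1}-\p\log h$ isolates the leading fiber term $\sq|w_1|^{-2}dw_1\wedge d\bar w_1$, which is exactly the fiber piece of $\omega_{cyl}$, while the remaining cross and horizontal pieces have bounded $\omega_{cyl}$-norm. Assembling these, the $\omega_{cyl}$-eigenvalues of $\omega_Z$ are of size $\rho^{1/n}$ in horizontal directions (from $\rho^{1/n}\sq\p\bp\rho$) and of size $\rho^{1/n-1}$ in the fiber direction (from the leading fiber piece of $\rho^{(1-n)/n}\sq\p\rho\wedge\bp\rho$), with cross entries of size at most $\rho^{1/n-1}$. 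Since $\omega_Z$ is positive definite this produces $C^{-1}\rho^{1/n-1}\omega_{cyl}\leq \omega_Z\leq C\rho^{1/n}\omega_{cyl}$ on $U\setminus D$, and hence the same estimate for $\omega_{TY}$ up to the exponentially small correction.

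For the higher-derivative bound, differentiating the explicit formula for $\omega_Z$ and using that the Christoffel symbols of $\omega_{cyl}$ are polynomially bounded in $\rho$ directly yields $|\nabla^k_{\omega_{cyl}}\omega_Z|_{\omega_{cyl}}\leq C_k\rho^{m_k}$ for some exponents $m_k$. For the Tian-Yau correction $\sq\p\bp\phi$, I will convert the $g_Z$-bounds on $\phi$ into $\omega_{cyl}$-bounds: the two-sided pointwise comparison above shows that $g_Z$ and $\omega_{cyl}$ are polynomially comparable in $\rho$, so the difference of their Levi-Civita connections is polynomially bounded in $\rho$ as well, and a standard induction on $k$ yields
\begin{equation*}
|\nabla^k_{\omega_{cyl}}\sq\p\bp\phi|_{\omega_{cyl}}=O(\rho^{M_k}e^{-\delta_Z\rho^{1/2}}),
\end{equation*}
which is negligible next to the polynomial bound on $\omega_Z$. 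The hard part will be precisely this last bookkeeping step — tracking how many powers of $\rho$ accumulate when expressing $\nabla^k_{\omega_{cyl}}$ in terms of $\nabla^j_{g_Z}$ for $j\leq k$ — since one must verify uniformly in $k$ that the exponential factor continues to swallow the accumulated polynomial factor; the induction goes through, but it is the only non-routine piece of the argument.
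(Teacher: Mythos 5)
Your proposal is correct and is essentially the computation the paper intends: the paper gives no written proof (it asserts the lemma follows ``by a straightforward computation'' from the asymptotics in Proposition \ref{t:hein}), and your reduction of $\omega_{TY}$ to the explicit Calabi ansatz $\omega_Z$ via the exponentially decaying potential, followed by an estimate of the coefficients of $\omega_Z$ in the cylindrical frame, is exactly that computation. It also parallels the paper's own short proof of the analogous Lemma \ref{l:neck cylindrical compare} on the neck, so nothing further is needed.
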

Using this lemma, later when we do estimates for quantities using the Tian-Yau metric, we can do computations using the cylindrical metric which becomes much simpler, and in the end we only get an error which is of polynomial order in $-\log |S|^2$. 

Finally we need a crucial Liouville theorem on the Tian-Yau spaces, which is proved in \cite{SZ-Liouville}.

\begin{theorem}[Theorem 1.2, \cite{SZ-Liouville}]
\label{t:Liouville on Tian-Yau}
There exists a constant $\epsilon_{Z}>0$ such that if $u$ is a harmonic function on the above Tian-Yau space $(Z, \omega_{TY})$ and $|u|\leq e^{\epsilon_Z\cdot z^{\frac{n}{2}}}$ as $z\rightarrow\infty$,
then $u$ is a constant. 
\end{theorem}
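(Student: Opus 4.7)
The plan is to reduce the theorem to the corresponding Liouville statement on the Calabi model space, Lemma \ref{l:Liouville-Calabi-space-SZ}, via the sharp asymptotic comparison of Proposition \ref{t:hein}. First, I would pull back $u$ through the diffeomorphism $\Phi:\Ca^n\setminus K'\to Z\setminus K$ to obtain $\tilde u=\Phi^*u$ on the complete end of $\Ca^n$. Item (5) of Proposition \ref{t:hein} transfers the growth hypothesis to $|\tilde u|\leq Ce^{\epsilon_Z z^{n/2}}$ in the Calabi coordinate $z=(-\log|\xi|^2)^{1/n}$, and the harmonic equation becomes $(\Delta_{g_{\Ca^n}}+E_T)\tilde u=0$, where item (4) gives that $E_T$ is a second-order operator with coefficients of order $O(e^{-\delta_Z z^{n/2}})$ in every $C^k$-norm with respect to $g_{\Ca^n}$.

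The core of the argument is a mode-by-mode analysis on $\Ca^n$ using its natural $S^1$-action on the fibers of $L_D$ together with the spectral decomposition of $(D,\omega_D)$. Writing $\tilde u=\sum_{k\in\Z,\,j\geq 0}u_{k,j}(z)\,e^{\sq k\theta}\,\phi_j(x)$, each pair $(k,j)$ satisfies, up to the exponentially small coupling produced by $E_T$, a second-order linear ODE in $z$ whose two fundamental solutions have growth rates of the form $e^{\pm c_{k,j}z^{n/2}}$. The indicial exponents $\pm c_{k,j}$ are determined by $\lambda_j$ and the $S^1$-weight $k$, and for $(k,j)\neq(0,0)$ they satisfy $c_{k,j}\geq c_0>0$ uniformly, with $c_0$ controlled by the first nonzero eigenvalue of $\Delta_D$ and by the fiber geometry. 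Taking $\epsilon_Z<\min\{c_0,\delta_Z\}/2$, the growth hypothesis then rules out the exponentially increasing solution in every non-zero mode, forcing those modes to decay exponentially as $z\to\infty$. The remaining $(0,0)$ mode is a function of $z$ alone whose ODE admits a two-dimensional family of solutions, and the sub-exponential growth bound pins it down to a constant --- this is precisely where Lemma \ref{l:Liouville-Calabi-space-SZ} is invoked.

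Combining the two steps yields $\tilde u(\bx)\to c$ for some constant $c$ as $z(\bx)\to\infty$, so $u-c$ is a harmonic function on $Z$ vanishing at infinity; the maximum principle then forces $u\equiv c$. The main obstacle I anticipate is the rigorous justification of the mode analysis in the presence of the non-diagonal perturbation $E_T$, since the modes are only approximately decoupled. The natural resolution is a weighted energy or three-annulus argument in the variable $z$ exploiting that the exponential gap between incoming and outgoing modes (at rate $c_0 z^{n/2}$) dominates the exponentially decaying coupling (at rate $\delta_Z z^{n/2}$), with an iteration to upgrade polynomial control to exponential decay. A more flexible alternative, likely the route taken in \cite{SZ-Liouville}, is a blow-down/contradiction argument: a sequence of normalized non-constant harmonic functions on $Z$ with $\epsilon_i\downarrow 0$ would, after rescaling, converge locally smoothly to a non-trivial harmonic function on the model space violating Lemma \ref{l:Liouville-Calabi-space-SZ}.
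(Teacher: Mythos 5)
This theorem is not proved in the paper at all: it is quoted verbatim from \cite{SZ-Liouville} (Theorem 1.2 there), so there is no in-paper argument to compare against. Judged on its own terms, your outline has a genuine gap, and it sits exactly where the statement is delicate: the $(k,j)=(0,0)$ mode. On the Calabi model space the $S^1$- and $D$-invariant reduction of the Laplacian gives (with $\tilde\omega=z\omega_D$, $h\sim z^{n-1}$) the equation $z^{1-n}f''(z)=0$, so the two indicial solutions of the invariant mode are $1$ and $z$; in particular the moment coordinate $z$ itself is a nonconstant harmonic function on the model end, and it satisfies your growth hypothesis $|u|\leq e^{\epsilon_Z z^{n/2}}$ for \emph{every} $\epsilon_Z>0$. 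Hence the growth bound cannot ``pin the zero mode down to a constant,'' and Lemma \ref{l:Liouville-Calabi-space-SZ} cannot be invoked for $\Phi^*u$: that lemma's hypothesis is the Neumann condition $\frac{\p u}{\p n}=0$ on $\p\Ca^n$, which is precisely what excludes the linear solution on the model, and the pulled-back function $\Phi^*u$ satisfies no boundary condition whatsoever on the inner boundary of the end. As written, your argument would equally ``prove'' that every harmonic function on the Calabi end with the stated growth converges to a constant, which is false. The blow-down alternative does not repair this either: the limit could simply be a multiple of $z$ on the model, which contradicts nothing in the absence of the Neumann/flux input (and since $Z$ is a fixed space, there is no geometric parameter to rescale in the first place).

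What is missing is a genuinely global input from the completeness of $Z$ (no boundary) that kills the linear term. The standard mechanism is a flux identity: for $u$ harmonic on all of $Z$, the level set $\{z=R\}$ bounds a compact region, so $\int_{\{z=R\}}\partial_\nu u\,dA=\int_{\{z\leq R\}}\Delta u\,\dvol=0$; on the other hand the model solution $z$ has nonvanishing flux through cross-sections (the area of $\{z=R\}$ is $\sim R^{(n-1)/2}$ while $\partial_\nu z\sim R^{-(n-1)/2}$), so once the nonconstant modes are shown to decay, the coefficient of the linear term must vanish and only then does $u$ tend to a constant, after which your maximum-principle conclusion is fine. Your treatment of the other modes is plausible modulo two smaller imprecisions: the $k$-th Fourier mode is a section of $L_D^{\pm k}$, so the fiberwise decomposition must use the twisted (connection) Laplacian rather than eigenfunctions of $\Delta_D$; and the $k\neq 0$ modes actually have indicial behavior $e^{\pm c|k|z^{n}}$, with the rate $e^{\pm \frac{2\sqrt{\lambda_j}}{n}z^{n/2}}$ arising only from the $k=0$, $j\geq 1$ modes — this is harmless for the argument but should be stated correctly, since it is what fixes the admissible size of $\epsilon_Z$ relative to $\lambda_1$ and $\delta_Z$.
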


\subsection{Construction of approximately  Calabi-Yau metrics}

\label{ss:glued-metrics}

We will work on the setup of Section \ref{ss:algebraic-geometry}. Let us recall some notation from previous discussion. 
The algebro-geometric setup is

\begin{itemize}
\item We have the family of Calabi-Yau varieties $p:\hX \rightarrow \Delta$ in $\C\P^{n+1}\times \Delta$. Let us denote by $\widehat{X}_t$ the fiber $p^{-1}(t)$. By construction, for $t\neq 0$ we know $\widehat{X}_t$ can be identified with $X_{t^{n+2}}$ in the original family.

\item The central fiber $\widehat{X}_0$ is given by the union of three smooth components: $\hat Y_1$, $\hat Y_2$ and $\mathcal N$, with $\hat Y_j\cap \mathcal N=D_j$ both canonically isomorphic to $D$.
\item Under the identification $k_-=d_2$, $k_+=-d_1$, and $L=\mathcal O(1)|_D$,  $\mathcal N\setminus (D_1\cup D_2)$ is naturally identified with the space $\mathcal N^0$ defined in Section \ref{ss:complex-geometry}.
\item The normal bundle of $D_j$ in $\hat Y_j$ is $L_j=\cO(d_{3-j})|_{D}$ and in $\mathcal N$ is $L_j^{-1}$. 
\item There is a relative holomorphic volume form $\Gamma_t(t\in\Delta)$ defined on $\hX\setminus \{D_1\cup D_2\}$. We denote 
\begin{equation}
\begin{cases}
\Gamma_{0, 1}\equiv \Gamma_0|_{Z_1}\\
\Gamma_{0,2}\equiv \Gamma_0|_{Z_2},
\end{cases}
\end{equation}
where $Z_j\equiv \hat Y_j\setminus D_j$, and we know 
\begin{equation}
\Gamma_0|_{\mathcal N^0}=-\sq \Omega_0,
\end{equation}
where $\Omega_0$ is the holomorphic volume form on $\mathcal N^0$ defined in \eqref{e:Omega N0 definition}.

\end{itemize}
The corresponding metric ingredients are 

\begin{itemize}
\item We have the Calabi-Yau metric $\omega_D\in 2\pi c_1(L)$ on $D$, where $L=\cO(1)|_D$. We fix a hermitian metric on $L$ with curvature $-\sq\omega_D$. We also extend this hermitian metric to the whole $\C\P^{n+1}$ such that its curvature form  defines a smooth K\"ahler metric $\omega_{\C\P^{n+1}}$. This then induces hermitian metrics on $\cO(l)$ for all $l$, and also on the pull-back of $\cO(l)$ to the projective bundle $\P(\cO(d_2)\oplus \cO(d_1)\oplus \C)$. Later when $s$ is a holomorphic section of some $\cO(l)$,  $|s|$ will always mean the norm of $s$ with respect to this fixed hermitian metric. 
\item Applying the construction in Section \ref{ss:tian-yau} to the line bundle $L_j\rightarrow D_j$, we have the Tian-Yau metrics $\omega_{TY, j}$ on $Z_j$ for $j\in\{1, 2\}$, and the Calabi-Yau metrics $\omega_{D_j}=d_{3-j}\cdot \omega_D$. So $\omega_{TY, j}$ is asymptotic to 
\begin{equation}
\omega_{Z_j}=\frac{n}{n+1}\sq \p\bp (-\log |f_{3-j}|^2)^{\frac{n+1}{n}},
\end{equation}
and 
\begin{equation}
\omega_{TY, j}^n=\frac{(\sq)^{n^2}}{n\cdot 2^{n-1}}\cdot d_{3-j}^{n-1} \cdot \Gamma_{0, j}\wedge \bar\Gamma_{0, j},
\end{equation}
where the coefficient $d_{3-j}^{n-1}$ arises from the fact we are using $\omega_{D_j}$ instead of $\omega_D$ in the construction.

\item The family of incomplete $C^{2, \alpha}$ approximately Calabi-Yau metrics $(\omega_T, \Omega_T)$ on $\M_T$, and $(\M_T, \Omega_T)$ is embedded in $(\mathcal N^0, \Omega_0)$ as in Section \ref{ss:complex-geometry}, with $k_-=d_2$ and $k_+=-d_1$. 
\end{itemize}

Our goal in this subsection is to construct for each $t$ small a $C^{1,\alpha}$ K\"ahler metric $\omega(t)$ on $\widehat{X}_t$ which is approximately Calabi-Yau in a suitable weighted sense.

\subsubsection{Matching between the parameters $t$ and $T$}

The relationship between the parameters $t$ and $T$ can be determined by studying the matching between the Tian-Yau ends and the neck region.

In our setting, we need to normalize the Tian-Yau metrics  $\omega_{TY, i}$ on $Z_i$ (as in Section \ref{ss:tian-yau}) by defining \begin{equation}
\tilde\omega_{TY, j}\equiv 2^{\frac{-1}{n}}n^{\frac{1}{n}}d_{3-j}^{-\frac{n-1}{n}}\omega_{TY, j}.
\end{equation}
Then we have 
\begin{equation}
\tilde\omega_{TY, j}=\frac{(\sq)^{n^2}}{2^n} \Gamma_{0, j}\wedge \bar\Gamma_{0,j}.
\end{equation}
By definition we can write 
\begin{equation}
\tilde\omega_{TY, j}=  dd^c\phi_j=2\sq \p\bp\phi_j,\end{equation}
where 
$\phi_j=\eta_j+\psi_j$
such that 
\begin{align}
\eta_j=\frac{1}{n+1}\cdot k_{3-j}^{\frac{1-n}{n}}\cdot n^{\frac{n+1}{n}}\cdot (-\log |f_{3-j}|)^{\frac{n+1}{n}}\quad  \text{and} \quad  |\nabla^k \psi_1|=O(e^{-\delta_0 (-\log |f_{3-j}|^2)^{1/2}}),\end{align}
for all $k\in \dN$, and where the derivatives and norms are taken with respect to the Tian-Yau metric itself (which is equivalent to taking with respect to the metric $\omega_{Z_j}$).

\

Now on the neck $\M_T$ we have the asymptotics of the K\"ahler potential given in Section \ref{ss:complex-geometry}. By the discussion there we identify $\M_T$ with an open set in $\mathcal N^0$, and the latter is naturally an open set in $\mathcal N$. Moreover, we can write 

\begin{equation}
\label{e:asymptotics of phi-}
T^{\frac{n-2}{n}}\omega_T=dd^c\phi_T, 
\end{equation}
with
\begin{equation}
\phi_T=
\begin{cases}
\phi_-\equiv\vf_-+\psi_-, \ \ \ \ z< 0;\\
\phi_+\equiv\vf_++\psi_+, \ \ \ \ z> 0,
\end{cases}\label{e:phi_T}
\end{equation}
where 
\begin{equation}
\label{e:definition of phi-}
\begin{cases}
\vf_-=\frac{1}{n+1}n^{\frac{n+1}{n}} k_-^{-\frac{n-1}{n}}(A_--\log |s_1/s_3|)^{\frac{n+1}{n}};\\
\vf_+=\frac{1}{n+1}n^{\frac{n+1}{n}} (-k_+)^{-\frac{n-1}{n}}(A_+-\log |s_2/s_3|)^{\frac{n+1}{n}}.\end{cases}
\end{equation}
Notice $\omega_T$ is defined by $z\in [T_-, T_+]$, so  by \eqref{e: compare r and z} $\varphi_-$ is well-defined for $A_--\log |s_1/s_3|\gg1$ and 
$\varphi_2$ is well-defined for $A_+-\log |s_2/s_3|\gg1$. Moreover
 for $|z|\geq 1$, we have 
\begin{equation}
|\psi_\pm|=\epsilon(z)+\epsilon_T.
\end{equation}
Now on $\M_T$ for $|t|>0$ small we have \begin{equation}
t^{d_1}s_1=s_3f_2(x)
\end{equation}
which gives
\begin{equation}-d_1\log |t|-\log \frac{|s_1|}{|s_3|}=-\log |f_2|.\end{equation}
So if we want to graft the metrics on the three components of $\widehat{X}_0$ to nearby $\widehat{X}_t$ so that they match with small errors, then we need 
\begin{equation} \label{eqn6-70}
d_1\log |t|=-A_-.
\end{equation}
Similarly at the positive end we need 
\begin{equation}\label{eqn6-71}
d_2\log |t|=-A_+.
\end{equation}
This suggests that we should choose 
\begin{equation}
\label{e:t T relation}
|t|=e^{-\frac{1}{d_1}A_-}=e^{-\frac{1}{d_2}A_+}.
\end{equation}
Given $|t|$ small we can find $T$ big so that \eqref{e:t T  relation} holds. It is not necessary that $T$ is uniquely determined by $t$, but we always fix a particular choice for each $t$ throughout this section so that \eqref{e:t T relation} holds.  With this choice it is easy to see that 
\begin{equation} \label{eqn6667}
C^{-1}e^{-\frac{1}{d_1d_2n} T^2}\leq |t|\leq Ce^{-\frac{1}{d_1d_2n} T^2}.
\end{equation}

\subsubsection{Fixing the constants in the definition of weighted spaces}
\label{sss:parameters-fixed}

From now on, we fix weight parameters in the definition of weight spaces,  which allows us to prove the uniform injectivity estimate in Proposition \ref{p:global-injectivity-estimate} and apply the implicit function theorem to complete the proof of the main theorem in Section \ref{ss:global analysis}.  The parameters $\delta$, $\mu$, $\nu$ are fixed as follows (similar to the specification of the parameters in Section \ref{ss:perturbation-framework}):

\begin{enumerate}
\item[(GP1)] (Fix $\nu$) The parameter $\nu\in\dR$ is chosen such that
$\nu\in(-1,0)$.

\item[(GP2)] (Fix $\alpha$) The H\"older constant $\alpha\in(0,1)$ is chosen such that
$\nu+\alpha<0$.

\item[(GP3)] (Fix $\delta$) The constant $\delta>0$ is chosen such that \begin{equation}
0<\delta < \delta_G \equiv \frac{1}{n\cdot (|k_-| + |k_+|)}\cdot \min\{\delta_e, \delta_{Z_1}, \delta_{Z_2}, \epsilon_{Z_1}, \epsilon_{Z_2}, \sqrt{\lambda_D}\},\label{e:global-delta}
\end{equation}
where $\sqrt{\lambda_D}$ is in Lemma \ref{l:liouville-cylinder},  
$\delta_e>0$ is in Proposition \ref{p:CY-error-small},  $\delta_{Z_1}, \delta_{Z_2}$ are the constants in Proposition \ref{t:hein} applied to $Z_1, Z_2$, and $\epsilon_{Z_1},\epsilon_{Z_2}$ are the constants in Theorem \ref{t:Liouville on Tian-Yau} applied to $Z_1, Z_2$.

\item[(GP4)] (Fix $\mu$) The parameter $\mu>0$ is chosen as 
$\mu=(1-\frac{1}{n})(\nu+2+\alpha)$.

\end{enumerate}

\subsubsection{Construction of $\omega(t)$}

We will divide a neighborhood of $\widehat{X}_0$ into various regions (c.f. Figure \ref{f: the division of region}):

\begin{itemize}
\item Region $\I$ is given by $2|s_3|\geq \max(|s_1|, |s_2|)$;
\item Region $\II_-$ is given by $s_1\neq 0$, and $|s_3|\leq 2|s_1|, -\log |f_2|\geq -\frac{d_1}{2}\log |t|$;
\item Region $\III_-$ is given by $s_1\neq 0$, and $|f_2|\leq 1/2$,  $-\log |f_2|\leq -\frac{d_1}{2}\log |t|+1$;
\item Region $\IV_-$ is 
given by $s_1\neq 0$ and $|f_2|\geq 1/4$;
\item Region $\II_+$ is given by $s_2\neq 0$, and $|s_3|\leq 2|s_2|, -\log |f_1|\geq -\frac{d_2}{2} \log |t|$;
\item Region $\III_+$ is 
given by $s_2\neq 0$ and $|f_1|\leq 1/2$, $-\log |f_1|\geq -\frac{d_2}{2} \log |t|+1$;
\item Region $\IV_+$ is 
given by $s_2\neq 0$ and $|f_1|\geq 1/4$.
\end{itemize}

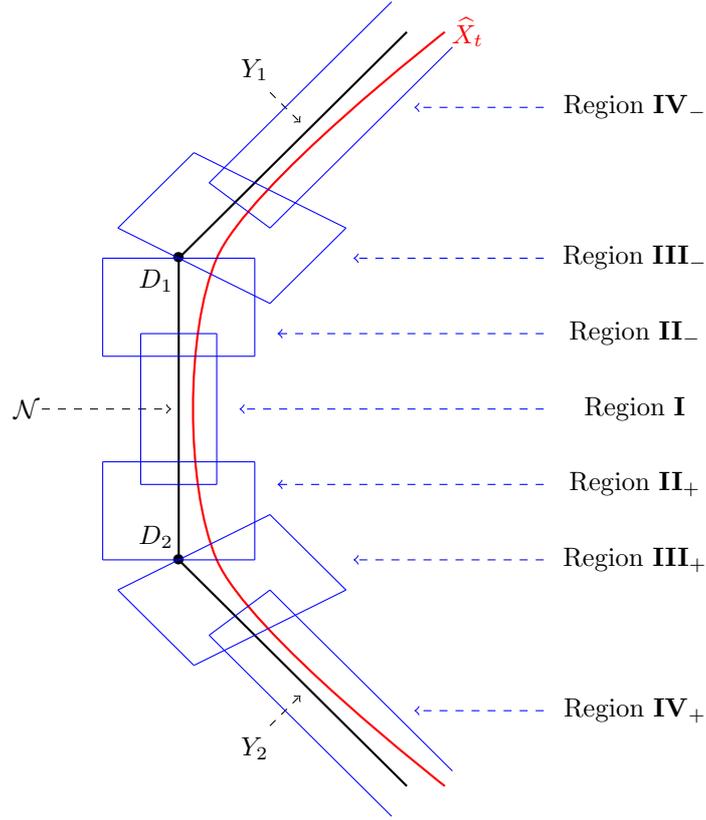
\begin{figure}
\begin{tikzpicture}
\draw[thick] (0, 2) to (3, 5); 
\draw[thick] (0, 2) to (0, -2); 
\draw[thick] (0, -2) to (3, -5); 
\node at (0, 2) {$\bullet$};
\node at (0, -2) {$\bullet$};
\node at (1, 4.5) {$Y_1$}; 
\draw[dashed, ->] (1.2, 4.2) to (1.6, 3.8); 
\node at (1, -4.5) {$Y_2$}; 
\draw[dashed, ->] (1.2, -4.2) to (1.6, -3.8); 
\node[red] at (3.8, 5) {$\widehat{X}_t$}; 
\node at (-0.3, 1.7) {$D_1$}; 
\node at (-0.3, -1.7) {$D_2$}; 
\node at (-2, -0) {$\mathcal N$};
\draw[dashed, ->] (-1.8, 0) to (-0.1, 0); 
\node at (6, 4) {Region $\IV_-$};
\node at (6, 2) {Region $\III_-$};
\node at (6, 1) {Region $\II_-$};
\node at (6, 0) {Region $\I$};
\node at (6, -4) {Region $\IV_+$};
\node at (6, -2) {Region $\III_+$};
\node at (6, -1) {Region $\II_+$};
\draw[blue,->, dashed] (4.8, 4) to (3.1, 4);
\draw[blue,->, dashed] (4.8, 2) to (2.3, 2);  
\draw[blue,->, dashed] (4.8, 1) to (1.3, 1);  
\draw[blue,->, dashed] (4.8, 0) to (0.8, 0);  
\draw[blue,->, dashed] (4.8, -4) to (3.1, -4);
\draw[blue,->, dashed] (4.8, -2) to (2.3, -2);  
\draw[blue,->, dashed] (4.8, -1) to (1.3, -1);

\draw[red, thick] plot[smooth] coordinates {(3.5, 5) (0.5, 2) (0.5, -2) (3.5, -5)};
\draw[blue] (-0.5, 1) to (0.5, 1); 
\draw[blue] (-0.5, 1) to (-0.5, -1); 
\draw[blue] (-0.5, -1) to (0.5, -1);
\draw[blue] (0.5, 1) to (0.5, -1);  

\draw[blue] (-1, 0.7) to (1, 0.7); 
\draw[blue] (-1, 2) to (1, 2);
\draw[blue] (-1, 0.7) to (-1,2); 
\draw[blue] (1, 0.7) to (1, 2); 

\draw[blue] (-0.8, 2.4) to (1.2, 1.4);

\draw[blue] (0.2, 3.4) to (2.2, 2.4); 
\draw[blue] (-0.8, 2.4) to (0.2, 3.4); 
\draw[blue] (2.2, 2.4) to (1.2, 1.4); 

\draw[blue] (0.4, 3) to (1.2, 2.4); 
\draw[blue] (2.8, 5.4) to (0.4, 3);
\draw[blue] (1.2, 2.4) to (3.6, 4.8);

\draw[blue] (-1, -0.7) to (1, -0.7); 
\draw[blue] (-1, -2) to (1, -2);
\draw[blue] (-1, -0.7) to (-1,-2); 
\draw[blue] (1, -0.7) to (1, -2); 

\draw[blue] (-0.8, -2.4) to (1.2, -1.4);

\draw[blue] (0.2, -3.4) to (2.2, -2.4); 
\draw[blue] (-0.8, -2.4) to (0.2, -3.4); 
\draw[blue] (2.2, -2.4) to (1.2, -1.4); 

\draw[blue] (0.4, -3) to (1.2, -2.4); 
\draw[blue] (2.8, -5.4) to (0.4, -3);
\draw[blue] (1.2, -2.4) to (3.6, -4.8);
\end{tikzpicture}
\caption{Division of a neighborhood of $\widehat{X}_0$}
 \label{f: the division of region}
\end{figure}

For all $|t|$ sufficiently small, then we also get a division of $\widehat{X}_t$ into 7 regions. 
Notice that we have non-empty intersections  between these regions so that we need a cut-off on the overlap.

For the convenience of later analysis, we now fix a finite cover $\U=\{U_\beta^1, U_\gamma^2, U_{\cN}, U_{-}, U_{+}\}$ of a neighborhood of $\widehat{X}_0$ in $\hX$ obtained as follows. 

We first cover a neighborhood of $D_1$. 
Given any point in $(x, t, [s_1:s_2:s_3])\in D_1$, we have $t=f_1(x)=f_2(x)=s_2=s_3=0, s_1\neq0$.  On the open subset $\{x_j\neq 0\}$ in $\C\P^{n+1}$, we can view $\sigma=x_j$ as a trivialization of $\cO(1)$. Without loss of generality we may assume $j=0$. Then we get affine coordinates $\{u_i=x_i/x_0(i=1,\cdots, n+1)\}$, and we can $v_1=f_1(u)$ and $v_2=f_2(u)$ as local holomorphic functions on $\C\P^{n+1}$. Further without loss of generality we can assume $\{v_1, v_2, u_i=x_i/x_0 (i=3, \cdots)\}$ yield local holomorphic coordinates in a neighborhood of $x$ in $\C\P^{n+1}$. Correspondingly we can pull back these to be local holomorphic functions on the projective bundle $\P(\cO(d_2)\oplus \cO(d_1)\oplus \C)$. As before we also introduce local holomorphic functions $\zeta_3=s_3/s_1, \zeta_2=s_2/s_1$ on the projective bundle, and the space $\hX$ is then defined by the equations as in \eqref{e: s1 nonzero region all equations}, which essentially reduces to one relation $v_2\zeta_3=t^{d_1}$ in the three variables $v_2, \zeta_3, t$. We denote by $U_\beta^1$ an open  subset in $\hX$ defined by the inequalities $|\zeta_3|< 3 |\sigma|^{d_2}$, $|v_2|<3|\sigma|^{d_2}$, and $|u_i|< C(i=3, \cdots)$ for some fixed $C>0$.  Denote the trivializing section $\sigma$ by $\sigma_\beta^1$.  For $|t|$ small, $U^1_{\beta, t}\equiv U^1_\beta\cap \widehat{X}_t$ is then defined by the equation $v_2\zeta_3=t^{d_1}$. 

We have the natural projection maps
\begin{equation}\pi_\beta^1: U^1_{\beta, t}\rightarrow U^{1}_{\beta, 0}\cap Y_1; (x, t, v_2, \zeta_3)\mapsto (v_2, 0),\end{equation}
\begin{equation}\pi_\beta^{ \mathcal N}: U^1_{\beta, t}\rightarrow U^{1}_{\beta, 0}\cap \mathcal N; (x, t, v_2, \zeta_3)\mapsto (0, \zeta_3),\end{equation}
\begin{equation}\pi^{1, D}_{\beta}: U^1_{\beta, t}\rightarrow D; (x, t, v_2, \zeta_3)\rightarrow x.\end{equation}
Then the union of images $\pi^{1, D}_\beta(U^1_{\beta, t})$ form an open cover of $D$. By compactness we can choose and then fix finitely many of them which also cover $D$, and we put these $U^1_\beta$'s in $\U$. Then we obtain also a cover of a neighborhood of $D_1$ in $Y_1$ by $\{U^{1}_{\beta, 0}\cap Y_1\}$ and a cover of a neighborhood of $D_1$ in $\mathcal N$ by $\{U^{1}_{\beta, 0}\cap \mathcal N\}$ so that on each element in the cover we have holomorphic coordinates.  Without loss of generality we may assume these cover the neighborhood defined by $|f_2|\leq 3$ and $|s_3/s_1|\leq 3$.  So in particular they contain Regions $\II_-$ and $\III_-$.

 We can do the same with $D_2$, and add the corresponding elements $U_\gamma^2$ to $\U$. Now away from $D_1\cup D_2$ we may find a trivialization of the fibration $\hX\rightarrow\Delta$. So we can obtain three open subsets of $\hX$, each of which has a differentiable trivialization over $\Delta$. Call these $U_{\mathcal N}$, $U_-$, $U_+$.  Adding these to $\U$ we then obtain an open cover of a neighborhood of $\widehat{X}_0$. Over each of the three subsets we also have the projection map $\pi_{-},  \pi_{+}$, and $\pi_{{\mathcal N}}$ from them into $\widehat{X}_0\setminus (D_1\cup D_2)$. We may assume that Region $\I$ is contained in $U_{\mathcal N}$, Region $\IV_\pm$ is contained in $U_\pm$.

Let us fix a partition of unity $\chi_\beta^1$ of $D$ subordinate to the cover $\pi_\beta^{1, D}(U^1_{\beta, t})$, and ${\chi_\gamma^2}$ of $D$ subordinate to the cover $\pi_\gamma^{1, D}(U^2_{\gamma, t})$. We view these naturally as functions on the corresponding $U^1_{\beta, t}$ and $U^2_{\gamma, t}$, though not compactly supported (along the fiber direction).

\

Below we define the approximately Calabi-Yau  metric $\omega(t)$ on $(\widehat{X}_t, \Gamma_t)$ for each region above, and we also define the weight function $\rho_t(\bx)$ simultaneously and measure the following error of the Calabi-Yau equation in the weighted sense:
\begin{equation}
\Errt\equiv \frac{(\sq)^{n^2}2^{-n}\Gamma_t\wedge\bar\Gamma_t}{\omega(t)^n/n!}-1.
\end{equation}
Obviously $\Errt=0$ if and only if $\omega(t)$ is Calabi-Yau. 
In the course we also discuss the gluing in the intersection of neighboring regions. 

\

{\bf Region $\I$}. In this region we define
\begin{equation}\omega(t)=T^{\frac{2-n}{n}}(T\omega_{\C\P^{n+1}}|_{\widehat{X}_t}+dd^c\phi_{t, \mathcal N}),
\end{equation}
where 
$\phi_{t,\mathcal N}=\pi_{\mathcal N}^*\phi_T$. 
Using the fixed diffeomorphism $\pi_{\mathcal N}$ we may view the $C^{1,\alpha}$ K\"ahler structures $(\omega(t), \Omega(t)=\Gamma_t)$ on $\widehat{X}_t$ as a perturbation of the K\"ahler structure $(\omega_T, \Omega_0)$ on $\mathcal N^0$. 

Notice  by Corollary \ref{c:r zeta relation} it is not difficult to see that $\I\cap \mathcal N$ is contained in the union $\I_1\cup \I_2$ (as defined in Section \ref{ss:neck-weighted-analysis}). So we can define 
\begin{equation}
z(\bx)\equiv z(\pi_{\cN}(\bx)), \ \ L_t(\bx)\equiv L_t(\pi_{\mathcal N}(\bx)), \ \  \fr(\bx)\equiv \fr(\pi_{\mathcal N}(\bx))
\end{equation}
and then use \eqref{e:definition of weights} to define the weight function $\rho_t(\bx)$. Applying Proposition \ref{p: perturbation of complex structures}, we conclude that 
 \begin{equation}
\begin{cases}|\Omega(t)-\Omega_0|_{\wI}=\underline\epsilon_{T^2}, \\
|\omega(t)-\omega_t|_{\wI}=\underline\epsilon_{T^2}. 
\end{cases}
\end{equation}
 Then by Proposition \ref{p:CY-error-small} we get an error estimate 
\begin{equation}
\label{e:error region I}
\|\Errt\|_{C^{0, \alpha}_{\delta,\nu+2, \mu}({\I}\cap \widehat{X}_t)}=O(T^{\nu+\alpha}).
\end{equation}
At the two ends of ${\I}\cap {\widehat{X}_t}$, we can write down the metric $\omega(t)$ in potential form. In the negative end we have $f_2\neq 0$, so we can write 
\begin{equation}
\omega_{\C\P^{n+1}}|_{\widehat{X}_t}=-\frac{1}{d_2}dd^c \log |f_2|,
\end{equation}
and
\begin{equation}
\omega(t)=T^{-\frac{n-2}{n}}dd^c \phi_{t, \I_-}, 
\end{equation}
where
\begin{equation}
\label{e:Kahler potential I-}
\phi_{t, \I_-}=-\frac{T}{d_2}\log |f_2|+\phi_{t,\cN}. 
\end{equation}
Similarly at the positive end  we have
\begin{equation}
\omega(t)=T^{-\frac{n-2}{n}}dd^c\phi_{t, \I_+},
\end{equation}
where
\begin{equation}
\phi_{t, \I_+}=-\frac{T}{d_1}\log |f_1|+\phi_{t, \cN}. 
\end{equation}

\

\

{\bf Region $\IV_\pm$.}  We only consider the region $\IV_-$, and $\IV_+$ is similar. We define 
\begin{equation}
\omega(t)=dd^c(\phi_1\circ \pi_-).
\end{equation}
Then for $|t|$ small we can view $(\widehat{X}_t\cap {\IV_-}, \omega(t))$ as a perturbation of the Tian-Yau metric $\tilde\omega_{TY, 1}$. It is easy to see that in the intersection $\widehat{X}_t\cap \IV_-$, for all $k\geq 0$ we have
\begin{equation}
\begin{cases}
|\nabla_{\tilde\omega_{TY, 1}}^k(\omega(t)-\tilde\omega_{TY, 1})|_{\tilde\omega_{TY, 1}}=\underline\epsilon_{T^2};\\
|\nabla_{\tilde\omega_{TY, 1}}^k((\Gamma(t)-\Gamma_{0, 1}))|_{\tilde\omega_{TY, 1}}=\underline\epsilon_{T^2}. 
\end{cases}
\end{equation}
To define the weight we let
\begin{equation}
L_t(\bx)\equiv T^{\frac{n-2}{n}}(nk_-)^{\frac{1}{n}} (-\log 4)^{\frac{1}{n}}, 
\end{equation} 
\begin{equation}
U_t(\bx)=T-T^{1-\frac{n}{2}}L_t(\bx)^{\frac{n}{2}},
\end{equation}
 and then define $\rho_t(\bx)$ as  in \eqref{e:definition of weights}.Then we obtain that
\begin{equation}
\label{e:error region IV-}
\|\Errt\|_{C^{0, \alpha}_{\delta, \nu+2, \mu}({\IV_-}\cap \widehat{X}_t)}=\underline\epsilon_{T^2}.
\end{equation}
We also have by assumption the asymptotics at the end 
\begin{equation}
\label{e:Kahler potential region IV-}
\phi_1\circ\pi_-=\eta_1\circ\pi_-+\psi_1\circ\pi_-. 
\end{equation}

\

\

{\bf Region $\II_\pm$.} Again we only consider the region $\II_-$. 
 We define
\begin{equation}
\omega(t)=T^{\frac{2-n}{n}}dd^c \phi_{t, \II_-},
\end{equation}
where 
\begin{equation}
\phi_{t, \II_-}=\sum {\chi_\beta^1} \cdot \phi_-\circ \pi_{\beta}^{\mathcal N}.
\end{equation}
By definition of $\phi_-$ in \eqref{e:phi_T}, we know this is well-defined in region $\II_-$ for $|t|>0$ small. 
We need the following Lemma. 
\begin{lemma}
\label{l:transition function expansion}
We have the following
\begin{enumerate}
\item On $\pi^1_{\beta}(U^1_{\beta, t}\cap U^1_{\beta',t})$, we write $\pi^{1}_{\beta'}\circ( \pi^1_\beta)^{-1}(\bx)=\bx'$. Suppose $\bx$ and $\bx'$ have coordinates given by $(v_2, 0, u_i)$ and $(v_2', 0, u_i')$ in the chart $U^1_{\beta, 0}\cap Y_1$.  Then we have 
\begin{equation}
\begin{cases}
v_2'=v_2 \cdot (1+v_1 F_2)
\\
u_i'=u_i+v_1 G_i,
\end{cases}
\end{equation}
where $F_2$ and $G_i$ are smooth functions in $v_1, v_2, u_i$, and $v_1$ is implicitly determined by $v_2, u_i$ and $t$ by the equation \eqref{e: s1 nonzero region}
\item On $\pi^{\mathcal N}_{\beta}(U^1_{\beta, t}\cap U^1_{\beta',t})$, we write $\pi^{\mathcal N}_{\beta'}\circ( \pi^{\mathcal N}_\beta)^{-1}(\bx)=\bx'$. Suppose $\bx$ and $\bx'$ have coordinates given by $(0, \zeta_3, u_i)$ and $(0, \zeta_3', u_i')$ in the chart $U^1_{\beta, 0}\cap\mathcal N$.  Then we have 
\begin{equation}
\begin{cases}
\zeta_3'=\zeta_3 \cdot (1+v_2 \tilde F_3)
\\
u_i'=u_i+v_2 \tilde G_i,
\end{cases}
\end{equation}
where $\tilde F_2$ and $\tilde G_i$ are smooth functions in $v_1, v_2, u_i$, and $v_1$ is implicitly determined by $\zeta_3, u_i$ and $t$ by the equation \eqref{e: s1 nonzero region}.

\end{enumerate}
\end{lemma}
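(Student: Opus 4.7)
The proof proceeds by a direct Taylor-expansion computation in local coordinates, anchored on two structural observations. First, in each chart $U^1_\beta$, the equations \eqref{e: s1 nonzero region} reduce to two constraints from which a point of $\widehat X_t\cap U^1_{\beta,t}$ is parametrized by $(v_2,u_i)$ together with $t$, with the remaining coordinates determined by $v_1=-t^{n+2}f(v_2,u)/v_2$ and $\zeta_3=t^{d_1}/v_2$; in particular $v_1$ is small of order $O(t^{n+2})$ and serves as the small parameter. Second, $v_2$ and $v_2'$ are both local defining functions for the divisor $\{f_2=0\}$ on $\C\P^{n+1}$, so the ambient biholomorphic transition $T_{\beta\beta'}:(v_1,v_2,u_i)\mapsto (v_1',v_2',u_i')$ has the form $v_2'=\lambda(v_1,v_2,u)\cdot v_2$ with $\lambda$ nowhere vanishing, while $u_i'$ are general holomorphic functions of $(v_1,v_2,u_j)$.

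For part (1), the point $\bx'=\pi^1_{\beta'}\circ(\pi^1_\beta)^{-1}(\bx)$ is obtained by applying $T_{\beta\beta'}$ to the lifted point and then forgetting $t$ and $\zeta_3$ in the $\beta'$ chart; to express $\bx'$ back in $\beta$ coordinates one inverts the restriction $\tilde T:=T_{\beta\beta'}|_{Y_1\cap U^1_{\beta,0}}$. Since $\tilde T^{-1}$ preserves $D_1=\{v_2'=0\}$, its $v_2$-component factors as $v_2'$ times a nowhere-vanishing smooth function, and composing with $v_2'=\lambda v_2$ yields
\begin{equation}
v_2^{\bx'}=v_2\cdot \Lambda(v_1,v_2,u,t),\quad \Lambda \text{ nowhere vanishing}.
\end{equation}
The key point is that $\Lambda|_{v_1=0}\equiv 1$, because at $v_1=0$ the composition $\tilde T^{-1}\circ\tilde T$ is the identity; writing $\Lambda=1+v_1 F_2$ with $F_2$ smooth gives the desired multiplicative form. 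For the $u_i$-coordinates, a direct Taylor expansion of $\tilde T^{-1,u_i}$ about its value at $v_1=0$ (which equals $u_i$ by definition of $\tilde T$) produces $u_i^{\bx'}=u_i+v_1\,G_i$ with $G_i$ smooth, where any implicit dependence on $t$ enters only through the formula $v_1=-t^{n+2}f/v_2$.

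Part (2) is formally identical with $(Y_1,\pi^1_\beta,v_1,v_2)$ replaced by $(\mathcal N,\pi^{\mathcal N}_\beta,v_2,\zeta_3)$: the small parameter is now $v_2=t^{d_1}/\zeta_3$, and $\zeta_3$ plays the role of a defining function for $D_1\subset\mathcal N$, so the analogous factorization $\zeta_3'=\tilde\lambda(v_1,v_2,u,\zeta_3)\,\zeta_3$ holds on the intersection of the two trivializations of the projective bundle $\P(L_1\oplus L_2\oplus\C)$. The main technical nuisance — and the only real bookkeeping step — is to confirm that when one changes both the trivialization $\sigma$ of $\cO(1)$ and the trivialization of the projective bundle simultaneously, the fiber coordinate $\zeta_3'=s_3/s_1'$ still has $\zeta_3$ as a factor; this is immediate from the intrinsic characterization of $\zeta_3$ as a defining section for $D_1\subset\mathcal N$ in either trivialization. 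Once this is in hand, the same argument as in part (1) produces the multiplicative expansion $\zeta_3'=\zeta_3(1+v_2\tilde F_3)$ and the additive expansion $u_i'=u_i+v_2\tilde G_i$.
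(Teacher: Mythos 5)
Your part (1) is correct and is essentially the paper's own argument: the paper writes the ambient coordinate change as $v_1'=v_1\,Q_1$, $v_2'=v_2\,Q_2$, $u_i'=R_i'$ with $Q_i=(\sigma^1_{\beta'}/\sigma^1_\beta)^{d_i}$, restricts it to $Y_1$, and the stated form then follows from exactly your observation that the composite, written back in the $\beta$-chart, is the identity on $\{v_1=0\}$, so its deviation is divisible by $v_1$ (the explicit formula for $Q_2$ is extra information the lemma does not need).

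For part (2), however, the phrase ``the same argument as in part (1)'' hides a genuine gap, and it is not the trivialization bookkeeping you single out (that $\zeta_3'$ is $\zeta_3$ times a nowhere-vanishing factor is immediate, and is what the paper records as $\zeta_3'=\zeta_3 Q_2^{-1}$). The asymmetry is this: inside the chart, $Y_1$ is cut out by $v_1=0$ with $v_2$ remaining an honest coordinate, so ``identity at $v_1=0$'' gives divisibility by $v_1$ alone; but on $\mathcal N$ \emph{both} $v_1$ and $v_2$ vanish (at $t=0$ the second equation of \eqref{e: s1 nonzero region} forces $f_1=0$), while the transition data $Q_2^{-1},R_i'$ depend on $v_1$ to first order. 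The lifted point $(\pi^{\mathcal N}_\beta)^{-1}(\bx)$ has $v_1=-t^{d_2}\zeta_3 f\neq 0$, so your composite is the identity only on $\{v_1=v_2=0\}$, and the Taylor expansion yields $\zeta_3'=\zeta_3(1+v_1\tilde A+v_2\tilde B)$, $u_i'=u_i+v_1\tilde C_i+v_2\tilde D_i$ with smooth coefficients, not the claimed form with $v_2$ alone. To absorb the $v_1$-terms into $v_2\tilde F_3$, $v_2\tilde G_i$ you would need $v_1/v_2$ to be a bounded smooth quantity, but on the fiber $v_1/v_2=-t^{d_2-d_1}\zeta_3^2 f$, which is not a smooth function of $(v_1,v_2,u_i)$ and is not even bounded on the chart when $d_1>d_2$ and $|\zeta_3|$ is of order one. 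So either you must make this substitution and its range of validity explicit, or you should state the honest two-term conclusion; note the latter is all that is used downstream, since in Region $\II_-$ one has $|v_1|\leq C|t|^{d_2}$ and $|v_2|\leq C|t|^{d_1/2}$, both of size $\underline\epsilon_{T^2}$, so the estimate in Proposition \ref{p:region II- error on overlapping region} goes through unchanged. As written, your part (2) skips precisely this step.
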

\begin{proof}
This involves only local discussion.   By construction we get overlapping local holomorphic charts on $\C\P^{n+1}$ given by $\{v_1, v_2, u_i(i\geq 3)\}$ and $\{v_1', v_2',  u_i' (i\geq 3)\}$. Given a point in this overlap with coordinates $(v_1, v_2, u_i)$ and $(v_1', v_2', u_i')$ in these two coordinate charts respectively, then we have 
\begin{equation}
\begin{cases}
v_1'=v_1\cdot Q_1(v_1, v_2, u_i)\\
v_2'=v_2\cdot Q_2(v_1, v_2, u_i)\\
u_i'=R_i'(v_1, v_2, u_i).
\end{cases}
\end{equation}
where $Q_1, Q_2$ are smooth and non-vanishing along $D$. More precisely, we have 
\begin{equation}
Q_i=(\sigma_{\beta'}^1/\sigma_\beta^1)^{d_i}.
\end{equation}
Correspondingly we obtain the transition maps on  $U^1_\beta\cap U^1_{\beta'}$ given by 
\begin{equation}
\begin{cases}
v_2'=v_2\cdot Q_2(v_1, v_2, u_i); \\
\zeta_3'=\zeta_3\cdot Q_2^{-1}(v_1, v_2, u_i); \\
u_i'=R_i'(v_1, v_2, u_i);
\end{cases}
\end{equation}
where using \eqref{e: s1 nonzero region} we can write $v_1$ implicitly as a function of $v_2, \zeta_3'$ and $u_i$. In particular, we obtain the transition function on $Y_1\cap U_\beta^1\cap U_{\beta'}^1$ given by 
\begin{equation}
\begin{cases}
v_2'=v_2\cdot Q_2(0, v_2, u_i); \\
u_i'=R_i'(0, v_2, u_i),
\end{cases}
\end{equation}
and  on $\mathcal N\cap U_\beta^1\cap U_{\beta'}^1$ given by 
\begin{equation}
\begin{cases}
\zeta_3'=\zeta_3\cdot Q_2^{-1}(v_1, 0, u_i); \\
u_i'=R_i'(v_1, 0, u_i). 
\end{cases}
\end{equation}
Then the conclusion follows by a direct calculation. 
\end{proof}

\begin{proposition}
\label{p:region II- error on overlapping region}
In the Region ${\II_-}\cap U_{\beta, t}^1$, we have for all $k\geq 0$
\begin{equation}
|\nabla^k(\phi_{t, \II_-}\circ (\pi_{\beta}^{\mathcal N})^{-1}-\phi_-)|=\underline\epsilon_{T^2},
\end{equation}
where the derivative and norm are taken with respect to the metric $\omega_T$. 
\end{proposition}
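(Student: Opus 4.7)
Using that $\sum_\beta \chi_\beta^1 \equiv 1$, I would first rewrite
\begin{equation*}
\phi_{t, \II_-} \circ (\pi_\beta^{\mathcal N})^{-1} - \phi_- \;=\; \sum_{\beta'} \tilde\chi_{\beta'} \cdot (\phi_- \circ F_{\beta'} - \phi_-),
\end{equation*}
where $F_{\beta'} \equiv \pi_{\beta'}^{\mathcal N} \circ (\pi_\beta^{\mathcal N})^{-1}$ is the transition map between two $\mathcal N$-charts seen through the smooth fiber $\widehat{X}_t$, and $\tilde\chi_{\beta'} \equiv \chi_{\beta'}^1 \circ (\pi_\beta^{\mathcal N})^{-1}$. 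Since each $\chi_{\beta'}^1$ is pulled back from a fixed partition of unity on $D$, its $\omega_D$-derivatives are uniformly bounded, so the $\omega_T$-derivatives grow at most polynomially in $T$. The problem therefore reduces to bounding $\phi_- \circ F_{\beta'} - \phi_-$ on a single overlap.

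The next step is to expand $F_{\beta'}$ explicitly using Lemma \ref{l:transition function expansion}(2): in coordinates $(\zeta_3, u_i)$ on $\mathcal N$, the map is $(\zeta_3, u_i)\mapsto (\zeta_3(1 + v_2 \tilde F_3),\; u_i + v_2 \tilde G_i)$, where $v_2 = t^{d_1}/\zeta_3$ because of the fiber equation $v_2 \zeta_3 = t^{d_1}$. In Region $\II_-$ we have $|v_2| = |f_2| \leq |t|^{d_1/2}$ by definition and $|\zeta_3| = |s_3/s_1| \leq 2$, so the combination with \eqref{eqn6667} gives
\begin{equation*}
|v_2| \leq |t|^{d_1/2} \leq C\, e^{-T^2/(2 d_2 n)} = \underline\epsilon_{T^2}.
\end{equation*}
Since $\tilde F_3, \tilde G_i$ are smooth in fixed holomorphic coordinates on a compact piece of $\C\P^{n+1}$ near $D$, all higher coordinate derivatives of $F_{\beta'} - \mathrm{Id}$ enjoy the same bound $\underline\epsilon_{T^2}$.

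The final step is a Taylor expansion
\begin{equation*}
\phi_- \circ F_{\beta'}(\bx) - \phi_-(\bx) \;=\; \int_0^1 \nabla \phi_-\bigl((1-s)\bx + s F_{\beta'}(\bx)\bigr) \cdot (F_{\beta'}(\bx) - \bx)\, ds,
\end{equation*}
iterated for higher derivatives. The task then reduces to showing that every $|\nabla^k_{\omega_T} \phi_-|$ grows at most polynomially in $T$ on the relevant region. For this I would use the explicit decomposition $\phi_- = \vf_- + \psi_-$ from \eqref{neck potential negative side}, in which $\vf_-$ is an explicit $\tfrac{n+1}{n}$-power of $A_- - \log r_-$ and $\psi_- = \epsilon_T + \epsilon(z)$ is exponentially small near $z=T_-$; combined with Lemma \ref{l:neck cylindrical compare} comparing $\omega_T$ to the cylindrical model near the negative end, this gives the desired polynomial bound. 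Multiplying by the factor $\underline\epsilon_{T^2}$ produced by $F_{\beta'} - \mathrm{Id}$ yields the claimed estimate.

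The main technical obstacle will be the last step: carefully converting between the flat coordinates $(\zeta_3, u_i)$, in which the transition formulas of Lemma \ref{l:transition function expansion} are stated, and the $\omega_T$-geometry, in which the norm is measured. Once the polynomial-in-$T$ bound on $|\nabla^k_{\omega_T}\phi_-|$ is established, the doubly-exponentially small factor $|t|^{d_1/2}$ trivially absorbs any such polynomial, giving the $\underline\epsilon_{T^2}$ estimate.
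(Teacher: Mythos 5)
Your first half matches the paper's proof: the same partition-of-unity decomposition, the same fundamental-theorem-of-calculus step along the segment from $\bx$ to $\bx'$, and the same smallness input $|v_2|\leq C|f_2|\leq |t|^{d_1/2}=\underline\epsilon_{T^2}$ coming from the defining inequality of Region $\II_-$ together with \eqref{eqn6667}. The gap is in your last step. Region $\II_-$ is not contained in $\{z\leq -1\}$: it extends all the way to where $|\zeta_3|=|s_3/s_1|\sim 1$, i.e.\ (by Proposition \ref{p:z r- relation}) to within distance $O(T^{-1}\log T)$ of the slice $z=0$ --- indeed this is exactly where Region $\II_-$ is glued to Region $\I$. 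On that portion neither of your two tools applies: Lemma \ref{l:neck cylindrical compare} is only stated for $|z|\geq 1$, and the estimate $\psi_-=\epsilon_T+\epsilon(z)$ is likewise only asserted for $|z|\geq 1$ (near $z=0$ the potential contains the $\tfrac12 r$-type terms and the metric degenerates toward the singular fibration). So your plan provides neither the bound on $|\nabla^k_{\omega_T}\phi_-|$ nor, more importantly, the comparison between the flat coordinate metric (in which Lemma \ref{l:transition function expansion} expresses $F_{\beta'}-\mathrm{Id}$) and $\omega_T$ on that part of $\II_-$, which is precisely the ``technical obstacle'' you flag but do not resolve.

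The paper closes this by splitting into two cases. For the derivative bound it does not use the explicit decomposition at all: it uses $\Delta_{\omega_T}\phi_-=n$, the fact that on $\II_-$ one has $r(\bx)\geq cT^{-1}\log T$ (Corollary \ref{c:r zeta relation}(2)), so the higher-order interior Schauder estimate of Proposition \ref{p:local-weighted-schauder}(2) applies, and a $C^0$ bound coming from $-\log|f_2|\leq CT^2$, yielding $|\nabla^k_{\omega_T}\phi_-|\leq e^{C_kT}$ (note: exponential in $T$, not polynomial as you claim --- harmless, since $e^{C_kT}\cdot e^{-cT^2}$ is still $\underline\epsilon_{T^2}$, but your polynomial assertion is not what the available estimates give). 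For the metric comparison on $z\geq -1$ it compares $\omega_T$ with the standard coordinate metric $\omega_{std}$ in the variables $(\zeta_3,w_j)$: since $w_j,\zeta_3,\zeta_3^{-1}$ are $\omega_T$-harmonic, Corollary \ref{c:r zeta relation}(3) gives $|\zeta_3^{-1}|\leq Ce^{CT}$, weighted Schauder controls their covariant derivatives, and a volume-form comparison then gives $\omega_{std}\geq Ce^{-CT}\omega_T$; this is the mechanism that converts coordinate smallness of $\bx'-\bx$ into $\omega_T$-smallness where the cylindrical comparison is unavailable. You would need to add an argument of this type (or an equivalent one) for the $|z|\lesssim 1$ portion of $\II_-$ before your proof is complete; on the $z\leq -1$ portion your use of Lemma \ref{l:neck cylindrical compare} coincides with the paper's Case 1.
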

\begin{proof}
We may write 
\begin{equation}
\phi_{t, \II_-}\circ (\pi_{\beta}^{\mathcal N})^{-1}(\bx)-\phi_-(\bx)=\sum_{\beta': q\in U_{1, \beta'}} \chi_{\beta'}^1(\bx) (\phi_-\circ \pi_{\beta'}^{\mathcal N}\circ(\pi_{\beta}^{\mathcal N})^{-1}(\bx)-\phi_-(\bx)).
\end{equation}
Write 
\begin{equation}
\pi_{\beta'}^{\cN}\circ (\pi_{\beta}^{\mathcal N})^{-1}(\bx)=\bx'=(\zeta_3', u_i').
\end{equation}
Then we write 
\begin{equation} \label{eqn-6109}
\phi_-(\bx')-\phi_-(\bx)=\int_0^{1}\langle\nabla_{\omega_T}\phi_-(t\bx'+(1-t)\bx), \bx'-\bx\rangle_{\omega_T} dt.
\end{equation}
\textbf{Claim:} For any $k\geq 0$, there is a $C_k>0$ such at for all $\bx\in \II_-$, 
\begin{equation}
|\nabla^k_{\omega_T} \phi_-(\bx)|\leq e^{C_kT}.
\end{equation}
To see this we notice by definition $\phi_-$ satisfies the equation 
\begin{equation}
\Delta_{\omega_T}\phi_-=\Tr_{\omega_T}\omega_T=n.
\end{equation}
 Notice by Corollary \ref{c:r zeta relation} item (2), given $c\in (0, 1/2)$ we have for all $\bx\in \II_-$, 
\begin{equation}
r(\bx)\geq cT^{-1}\log T.
\end{equation}
Hence by Proposition \ref{p:regularity-scale}, for every point $\by$ in the regularity ball $B_{\mathfrak s(x)}(\bx)$, we have
\begin{equation}
r(\by)\geq \frac{c}{2}T^{-1}\log T.
\end{equation}
So we can apply the item (2) in Proposition \ref{p:local-weighted-schauder}, and it suffices to show a bound on the $C^0$ norm of $\phi_-$. By \eqref{e:definition of phi-} it suffices to bound $-\log |f_2|$. By our definition for $\bx\in \II_-$ we have 
\begin{equation}
-\log |f_2|=-d_1\log |t|-\log \frac{|s_1|}{|s_3|}\leq -\frac{d_1}{2}\log |t|-\log 2\leq CT^2.\end{equation}
This then proves the Claim. 

Now it suffices to bound the norm of the vector field $\bx'-\bx$ and its convariant derivatives. To this end we divide into two cases. 

\textbf{Case 1}: $z\leq -1$. 
Notice by Lemma \ref{l:neck cylindrical compare} comparing with the cylindrical metric, we obtain the norm of the tangent vectors  $|\bx'-\bx|\leq |v_2|T^{m}$ for some $m>0$. On the other hand we have $|v_2|\leq C|f_2|\leq \underline\epsilon_{T^2}$. So we obtain 
\begin{equation}
|\phi_-(\bx')-\phi_-(\bx)|=\underline\epsilon_{T^2}.
\end{equation}
The higher order derivatives follows similarly by differentiating \eqref{eqn-6109} and Lemma \ref{l:neck cylindrical compare}, using the fact that all derivatives of the vector field $\bx'-\bx$ in the cylindrical metric is bounded by $C|v_2|$.

\

\textbf{Case 2}. $z\geq -1$. Then we instead compare the metric $\omega_T$ with the standard metric 
\begin{equation}\omega_{std}\equiv \sum_{j=1}^{n-1}\sq dw_j\wedge d\bar w_j+\sq d\zeta_3\wedge d\bar \zeta_3.\end{equation}
As in the proof of Proposition \ref{p: perturbation of complex structures} we first notice 
\begin{equation}\Delta_{\omega_T}w_j=\Delta_{\omega_T}\zeta_3=\Delta_{\omega_T}\zeta_3^{-1}=0.\end{equation}
By assumption we have $|\zeta_3|\leq C$ in this case, and also by Corollary \ref{c:r zeta relation}, item (3) we get $|\zeta_3^{-1}|\leq Ce^{CT}$. 
Then we again apply Schauder estimates Proposition \ref{p:local-weighted-schauder}, item (2), to get \begin{equation}
|\nabla^k w_j|\leq Ce^{C_kT}, |\nabla^k \zeta_3|\leq Ce^{C_kT}. 
\end{equation}
Hence we get for all $k\geq 0$. 
\begin{equation}|\nabla^k_{\omega_T} \omega_{std}|_{\omega_T}\leq Ce^{C_kT}.\end{equation}
Now we use
\begin{equation}
\omega_T^{n}\leq C\Omega_T\wedge\bar\Omega_T\leq C|\zeta_3|^{-2} \omega_{std}^n.
\end{equation}
to get that 
\begin{equation}
\omega_{std}\geq Ce^{-CT}\omega_T. 
\end{equation}
Now we again can first estimate the norm of $\bx'-\bx$ and its derivatives using the standard metric, and use the above information to conclude. 
\end{proof}

Now we define the weight function $\rho_t$.  We first define

\begin{equation}
L_t(\bx)=\sum_\beta \chi_\beta(\bx)\cdot  L(\pi_{\beta}^{\mathcal N}(\bx)), \ \ \  \  \fr(\bx)\equiv e^{\sum_\beta \chi_\beta(\bx)\cdot \log \fr(\pi_\beta^{\mathcal N}(\bx))}. 
\end{equation}
Then we define the weight function $\rho_t(\bx)$ as \eqref{e:definition of weights}.
Notice we have that on ${\II_-}\cap \widehat{X}_t\cap U_\beta^1$,
\begin{equation}
\label{e:l function on II-}
L_t(\bx)=T^{\frac{n}{2}-1}(nk_-)^{\frac{1}{n}}(A_--\log |r_-(\pi_\beta^{\mathcal N}(\bx))|+\epsilon(z))^{\frac{1}{n}}\end{equation}
From this we get that 
\begin{equation}|\omega(t)-(\pi_{\beta}^{\mathcal N})^*\omega_T|_{C^{1, \alpha}_{\delta, \nu, \mu}({\II_-}\cap \widehat{X}_t)}=\underline\epsilon_{T^2}. \end{equation}
Now we understand the holomorphic volume form. Using \eqref{eqn8-3} we get that 
\begin{equation}
\Gamma_t= (1+H)(\pi_{\beta}^{\mathcal N})^*\Omega_0,
\end{equation}
where $H$ is a holomorphic function in $\zeta_3, v_2, w_2, \cdots, w_{n-1}$, and its derivatives is of order $\underline\epsilon_{T^2}$ in these coordinates. Then we again apply weighted Schauder estimates to get 
that 
\begin{equation}
|H|_{C^{0,\alpha}_{\delta, \nu, \mu}({\II_-}\cap \widehat{X}_t) }=\underline\epsilon_{T^2}. 
\end{equation}
So by Proposition \ref{p:CY-error-small}  we obtain
\begin{equation}
\label{e:error in region II-}
\|\Errt\|_{C^{0, \alpha}_{\delta, \nu+2, \mu}({\II_-}\cap \widehat{X}_t)}=O(T^{\nu+\alpha}).
\end{equation}

\

Notice ${\II_-}\cap \widehat{X}_t$ has two ends. Along one end it is close to the negative end of Region $\I$. 

\begin{proposition}
\label{p:potential difference II- I}
On the intersection ${\II_-}\cap {\I}\cap \widehat{X}_t$ we have for all $k\geq 0$
\begin{equation}|\nabla^k(\phi_{t, \II_-}-\phi_{t, \I_-}-d_1\log t)|=\underline\epsilon_{T^2},
\end{equation}
where the derivative and norm are taken with respect to $\omega(t)$. 
\end{proposition}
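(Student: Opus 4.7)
\emph{Plan.} The proposition will follow from combining Proposition~\ref{p:region II- error on overlapping region} with two algebraic identities that together pin down the shift $d_1\log t$. The first identity is the defining relation $t^{d_1}s_1=s_3f_2$ on $\widehat{X}_t$, which rewrites as
\[\log|s_1/s_3|=\log|f_2|-d_1\log|t|\]
as functions on $\widehat{X}_t$ (the hermitian factors from the trivialization cancel). The second is the matching condition $A_-=-d_1\log|t|$ from the choice \eqref{e:t T relation}, together with Proposition~\ref{l:zeta formula}. Combining these gives the crucial pointwise identity on $\widehat{X}_t$:
\[A_--\log|s_1/s_3|=-\log|f_2|.\]

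The core computation proceeds as follows. First I will apply Proposition~\ref{p:region II- error on overlapping region} on each chart $U^1_\beta$ to reduce $\phi_{t,\II_-}$ to $\phi_-\circ\pi_\beta^{\mathcal{N}}+\underline{\epsilon}_{T^2}$, with all derivatives controlled in the $\omega_T$-metric. Since $\pi_\beta^{\mathcal{N}}$ preserves both $\zeta_3$ and the base coordinates $u_i$, the function $|s_1/s_3|\circ\pi_\beta^{\mathcal{N}}$ equals $|s_1/s_3|$ as a function on $\widehat{X}_t$, so the algebraic identity above lets me rewrite the leading term $\varphi_-\circ\pi_\beta^{\mathcal{N}}$ purely in terms of $-\log|f_2|$. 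Next I will carry out the analogous reduction for the $\pi_{\mathcal{N}}^*\phi_T$ summand in $\phi_{t,\I_-}=-\tfrac{T}{d_2}\log|f_2|+\pi_{\mathcal{N}}^*\phi_T$: since on $\widehat{X}_t$ the displacement $\pi_{\mathcal{N}}(\bx)-\pi_\beta^{\mathcal{N}}(\bx)$ is driven by $v_2$, and $|v_2|\lesssim|f_2|\lesssim|t|^{d_1/2}=\underline{\epsilon}_{T^2}$ on the overlap $\I\cap\II_-$, an argument parallel to the one in the proof of Proposition~\ref{p:region II- error on overlapping region}, combined with the at-most-polynomial growth of the $\omega_T$-derivatives of $\phi_T$ from Proposition~\ref{p:local-weighted-schauder}(2), yields $\pi_{\mathcal{N}}^*\phi_T=\phi_-\circ\pi_\beta^{\mathcal{N}}+\underline{\epsilon}_{T^2}$ with all derivatives. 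Subtracting and using $A_-+d_1\log|t|=0$ exactly (by our definition of $T$ in terms of $t$), the $\varphi_-$-type terms cancel and the $-\tfrac{T}{d_2}\log|f_2|$ contribution combines with the remaining shift to leave precisely the constant $d_1\log t$ modulo $\underline{\epsilon}_{T^2}$. The higher-derivative estimates in the weighted $\omega(t)$-norm then follow from the weighted Schauder estimate (Proposition~\ref{p:neck-weighted-schauder}), exactly as in Proposition~\ref{p:region II- error on overlapping region}, since the difference is approximately pluriharmonic and the polynomial-in-$T$ blow-up of the Schauder constants is dominated by the exponential $\underline{\epsilon}_{T^2}$ decay.

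\emph{Main obstacle.} The principal technical difficulty is handling the two different asymptotic regimes of $\phi_T$ on the overlap $\I\cap\II_-$: depending on the point, $z$ can be as small as $O(T^{-1})$ (where $\phi_T$ is given by the intermediate expansion from \eqref{e:h-bounded-z}) or bounded of order $1$ (where the $\phi_-$-expansion starts to apply). Rather than splitting into cases, I plan to work uniformly by invoking the global K\"ahler potential formula \eqref{eqn7-10} directly and using the cylindrical-metric comparison from Lemma~\ref{l:neck cylindrical compare} to bound the discrepancy between the two projections. The emergence of the specific constant $d_1\log t$ (rather than some other constant) is a consistency check on the balancing condition \eqref{balancing condition} for $A_\pm$: it is precisely that condition, together with \eqref{e:t T relation}, which engineers the cancellation at both ends of the neck simultaneously.
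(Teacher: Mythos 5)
Your overall scaffolding matches the paper's (reduce to a fixed chart via Proposition~\ref{p:region II- error on overlapping region}, exploit the smallness of $|v_1|,|v_2|$ on the overlap, use the relation coming from $t^{d_1}s_1=s_3f_2$ together with $A_-=-d_1\log|t|$, and finish with weighted Schauder estimates), but the central bookkeeping step is wrong. Your claimed intermediate identity $\pi_{\mathcal N}^*\phi_T=\phi_-\circ\pi_\beta^{\mathcal N}+\underline\epsilon_{T^2}$ conflates two different potentials: the $\phi_T$ appearing in $\phi_{t,\I_-}=-\tfrac{T}{d_2}\log|f_2|+\pi_{\mathcal N}^*\phi_T$ is the \emph{relative} potential of \eqref{eqn7-1}, \eqref{eqn7-10} (the $T\omega_D$-part of the neck metric is carried in Region $\I$ by $T\omega_{\C\P^{n+1}}=-\tfrac{T}{d_2}dd^c\log|f_2|$, which is the only way $\omega(t)$ there can be a small perturbation of $\omega_T$), whereas the $\phi_-$ used in $\phi_{t,\II_-}$ is the \emph{absolute} potential \eqref{neck potential negative side}. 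These differ by $-\tfrac{T}{d_2}\log r_-$ (up to constants and $\epsilon_T$ terms), and on the overlap $\I\cap\II_-$ one has $r_-$ comparable to $1$ (say between $1/2$ and $2$), so this discrepancy is of size $O(T)$ and varies by $O(T)$ across the gluing region — it is neither $\underline\epsilon_{T^2}$ nor constant. If you run your subtraction with the identity as you stated it, the $\varphi_-$ and $\psi_-$ terms cancel identically on both sides and you are left with $\tfrac{T}{d_2}\log|f_2|$, which is not constant on the overlap; so the advertised cancellation producing the constant $d_1\log t$ does not follow from your steps. (Your rewrite of $\varphi_-$ in terms of $-\log|f_2|$ via $A_--\log|s_1/s_3|=-\log|f_2|$ is the mechanism relevant to the $\II_-$/$\III_-$ matching of Proposition~\ref{p:potential difference II- III-}, not to the $\II_-$/$\I$ matching, since in your scheme $\varphi_-$ never survives the subtraction.)

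The missing step — and the heart of the paper's argument — is precisely the relation $(\pi_\beta^{\mathcal N})^*\phi_-=\phi_T\circ\pi_\beta^{\mathcal N}-\tfrac{T}{d_2}\log r_-\circ\pi_\beta^{\mathcal N}$ (up to constants), i.e.\ you must keep the $-\tfrac{T}{d_2}\log r_-$ term explicit. Then your (correct) algebraic identity $-\log r_-=d_1\log|t|-\log|f_2|+\underline\epsilon_{T^2}$ converts it into $-\tfrac{T}{d_2}\log|f_2|$ plus a constant, which cancels the explicit $-\tfrac{T}{d_2}\log|f_2|$ in \eqref{e:Kahler potential I-}; what remains is the constant plus $\phi_T(\pi_\beta^{\mathcal N}(\bx))-\phi_T(\pi_{\mathcal N}(\bx))$, and only this last difference is handled by the small-displacement argument (Lemma~\ref{l:transition function expansion}, the bound $|v_1|+|v_2|=\underline\epsilon_{T^2}$ on the overlap, and the weighted Schauder estimates of Proposition~\ref{p:local-weighted-schauder}), exactly as in the proof of Proposition~\ref{p:region II- error on overlapping region}. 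Two smaller remarks: the statement that $|s_1/s_3|\circ\pi_\beta^{\mathcal N}$ \emph{equals} $|s_1/s_3|$ is not exact (the hermitian weight is evaluated at a base point displaced by $O(|v_1|+|v_2|)$), though the error is $\underline\epsilon_{T^2}$ and harmless; and the two-regime worry about $z$ is moot, since by Proposition~\ref{p:z r- relation} the overlap $\I\cap\II_-$ lies entirely in $|z|=O(T^{-1}\log T)$, so the comparison metric to use is the standard one as in Case 2 of that proof, not the cylindrical comparison of Lemma~\ref{l:neck cylindrical compare}.
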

\begin{proof}
We work in $U_\beta^1$ for a fixed $\beta$.  We have 
\begin{equation}
\phi_{t, \I_-}(\bx)=\frac{T}{d_2}\log |f_2(\bx)|+\pi_{\mathcal N}^* \phi_t(\bx)
\end{equation}
and 
\begin{equation}
(\pi_{\beta}^{\mathcal N})^* \phi_-(\bx)=\phi_t(\by)-\frac{T}{d_2}\log r_-(\by)
\end{equation}
where $\by=\pi_\beta^{\mathcal N}(\bx)$. By definition it is easy to see that $\by-\bx$ is of order $\underline\epsilon_{T^2}$ in the coordinates in $v_2, \zeta_3, w_2, \cdots, w_{n-1}$. 
By our choice of $T$ in terms of $t$ we have 
\begin{equation}-\log |r_-(\by)|=d_1\log |t|-\log |f_2(\by)|.\end{equation}
Then by Lemma \ref{l:transition function expansion}, and using weighed Schauder estimates as above we get the conclusion. 
\end{proof}

By Proposition \ref{p:potential difference II- I}, we  glue  the potentials in Region $\I_-$ and $\II_-$ using the potential\begin{equation}
\phi(t)\equiv\chi(r_-(\bx))\cdot (\phi_{t, \I_-}+d_1\log t)+(1-\chi(r_-(\bx)))\cdot \phi_{t, \II_-}
\end{equation}
where $\chi$ is a cut-off function in $s$ satisfying
\begin{equation}
\chi(s)=
\begin{cases}
1,  s\leq 3/4\\
0, s\geq 5/4.
\end{cases}
\end{equation}
Then we define $\omega(t)=dd^c\phi(t)$.

Along the other end, Region $\II_-$ is close to the region $\III_-$.

\begin{proposition}
\label{p:potential difference II- III-}
On the intersection ${\II_-}\cap {\III_-}\cap \widehat{X}_t$, we have for all $k\geq 0$
\begin{equation}
|\nabla^k(\phi_{t, \II_-}-\eta_1)|=O(e^{-\delta_e T}),
\end{equation}
where the derivative and norm are taken with respect to $\omega(t)$, and $\delta_e$ is defined as in Proposition \ref{p:CY-error-small}.
\end{proposition}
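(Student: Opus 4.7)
The strategy is to combine Proposition \ref{p:region II- error on overlapping region} with a direct comparison between the neck potential $\phi_-$ and the Tian--Yau leading potential $\eta_1$. First I would invoke Proposition \ref{p:region II- error on overlapping region} to reduce, on each chart $U^1_\beta\cap\II_-\cap\III_-$, to the pointwise estimate $|\nabla^k(\phi_-\circ\pi_\beta^{\mathcal N}-\eta_1)|=O(e^{-\delta_e T})$, since the partition-of-unity sum contributes only a further $\underline\epsilon_{T^2}$ error which is negligible against the target rate.

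Writing $\phi_-=\vf_-+\psi_-$ as in \eqref{e:asymptotics of phi-}--\eqref{e:definition of phi-}, the matching identity $A_-=-d_1\log|t|$ fixed in \eqref{eqn6-70}, together with the defining equation $t^{d_1}s_1=s_3 f_2$ of $\widehat X_t\cap U^1_\beta$, immediately gives $A_--\log|s_1/s_3|=-\log|f_2|$ pointwise on $\widehat X_t$. Substituting into \eqref{e:definition of phi-} yields $\vf_-\circ\pi_\beta^{\mathcal N}=\eta_1$ identically, so the problem reduces to bounding $\psi_-\circ\pi_\beta^{\mathcal N}$ (up to the absolute constant that is absorbed consistently by the gluing cut-offs, paralleling the $-d_1\log t$ correction appearing in Proposition \ref{p:potential difference II- I}).

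For the $C^0$ bound I would use the pointwise decay $\psi_-=\epsilon(z)+\epsilon_T$ with $\epsilon(z)=O(e^{-(1-\tau)\sqrt{\lambda_1}|z|})$, inherited from the exponential asymptotics \eqref{e:greens-current-exp-asymp} of the Green's current. The defining inequalities of $\II_-\cap\III_-$ force $-\log|f_2|\asymp A_-/2\asymp T^2/(2nd_2)$, which through \eqref{e: compare z and r} yields a linear lower bound $|z|\gtrsim T$ with an explicit constant depending only on $k_-$ and $n$. The value $\delta_e=\sqrt{\lambda_1}/(n(|k_-|+|k_+|))$ from Proposition \ref{p:CY-error-small} was chosen precisely so that this translation gives $|\psi_-|=O(e^{-\delta_eT})$ uniformly on the overlap.

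For the derivative bounds $k\geq 1$, I would follow the template of the proof of Proposition \ref{p:region II- error on overlapping region}: first bound the covariant derivatives of $\psi_-$ in a cylindrical comparison metric via Lemma \ref{l:neck cylindrical compare} together with a local weighted Schauder estimate (Proposition \ref{p:local-weighted-schauder}) applied to $\Delta_{\omega_T}\psi_-=\Delta_{\omega_T}(\phi_--\vf_-)$, and then convert back to $\omega(t)$-covariant derivatives at the cost of an $\underline\epsilon_{T^2}$ error from Proposition \ref{p: perturbation of complex structures}. The main obstacle is constant-tracking: one must verify that the at most polynomial-in-$T$ losses accumulated through the Schauder estimate and the cylindrical-to-intrinsic metric comparison are absorbed by the strict gap $(1-\tau)\sqrt{\lambda_1}/k_--\delta_e>0$ built into the choice (GP3) of $\delta$ in Section \ref{sss:parameters-fixed}. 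Once this bookkeeping is settled, the derivative estimates follow and the proof concludes.
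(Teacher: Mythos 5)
Your route is essentially the paper's: the paper's own proof of this proposition is a two-line sketch ("work in a fixed $U^1_\beta$, use the asymptotics of $\phi_-$ and the relation between $t$ and $T$"), and your chart-by-chart reduction via Proposition \ref{p:region II- error on overlapping region}, the exact cancellation of the leading Calabi-type terms coming from $t^{d_1}s_1=s_3f_2$ together with the matching condition $d_1\log|t|=-A_-$, and the cylindrical-comparison/weighted-Schauder treatment of the derivatives are precisely how those omitted details are meant to be filled in. One small tightening: with the Section 6 normalization \eqref{e:definition of phi-} there is in fact no additive constant left to "absorb into the cut-offs" — $\vf_-\circ\pi^{\mathcal N}_\beta$ and $\eta_1$ agree on the nose up to $\underline\epsilon_{T^2}$ (the constant $-T^{2/n}k_-^{-1}A_-$ belongs to the other normalization \eqref{neck potential negative side}); this matters because for $k=0$ a genuine constant discrepancy could not be hidden by cut-offs.

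The one step that does not close as written is exactly the constant-tracking you flag, and it is the only place the claimed rate $e^{-\delta_e T}$ enters. On $\II_-\cap\III_-$ one has $-\log|f_2|\approx A_-/2\approx T^2/(2nk_-)$, and feeding this into \eqref{e: compare z and r} gives $(T+k_-z)^n\approx T^n/2$, hence $|z|\approx (1-2^{-1/n})\,T/k_-$ rather than $T/k_-$. The decay you then extract from \eqref{e:greens-current-exp-asymp} is $O\big(e^{-(1-\tau)\sqrt{\lambda_1}(1-2^{-1/n})T/k_-}\big)$, while the "strict gap" you invoke, $(1-\tau)\sqrt{\lambda_1}/k_--\delta_e>0$, has dropped the factor $1-2^{-1/n}$; with that factor restored the gap can be negative. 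For instance $n=2$, $d_1=1$, $d_2=3$ (so $k_-=3$, $|k_+|=1$) gives a rate constant $\approx 0.098\sqrt{\lambda_1}$ versus $\delta_e=\sqrt{\lambda_1}/8=0.125\sqrt{\lambda_1}$. So $\delta_e$ was not "chosen precisely" to make this translation work, and your argument proves the estimate only with a smaller explicit exponent $c(n,k_\pm)\sqrt{\lambda_1}$. This is harmless for the applications — everything downstream (and the choice of $\delta$ in (GP3)) only uses that the discrepancy is $O(e^{-cT})$ for some fixed $c>0$, and the paper is equally loose about the exponent in the companion Proposition \ref{p:potential difference III- II-} — but as a proof of the literal statement you must either verify the constant (you cannot, by the example above) or note explicitly that the exponent should be replaced by the smaller constant your computation actually yields.
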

\begin{proof}
The proof is similar to the previous Proposition. One works in a fixed $U_\beta^1$, and then we use the asymptotics of $\phi_-$ (c.f. \eqref{e:asymptotics of phi-}) and the relation between $t$ and $T$ (c.f. \eqref{e:t T relation}). We omit the details.  
\end{proof}

\

{\bf Region $\III_\pm$.} Again we only consider the Region $\III_-$. The discussion here is very similar to the case of Region $\II_-$ so the computations are sketchy. We define
\begin{equation}
\omega(t)=dd^c \phi_{t, \III_-}, 
\end{equation}
where 
\begin{equation}
\phi_{t, \III_-}(\bx)=\sum\chi_\beta^1(\bx)\cdot \phi_{1}\circ \pi_\beta^1(\bx).
\end{equation}

\begin{proposition}
In the intersection ${\III_-}\cap  {U_{\beta, t}^1}$, we have for all $k\geq 0$
\begin{equation}
|\nabla^k(\phi_{t, \III_-}\circ (\pi_{\beta}^{1})^{-1}-\phi_1)|=\underline\epsilon_{T^2},
\end{equation}
where derivative is taken with respect to the metric $\omega_{TY, 1}$. 
\end{proposition}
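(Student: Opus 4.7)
The plan is to mirror the argument of Proposition \ref{p:region II- error on overlapping region}, with the neck estimates replaced by their Tian--Yau analogues. Working on the target $Y_1 = \hat Y_1$ via the identification given by $\pi_\beta^1$, I would decompose
\begin{equation}
\phi_{t,\III_-}\circ(\pi_\beta^1)^{-1}(\by) - \phi_1(\by)
  = \sum_{\beta'}\chi_{\beta'}^1(\by)\cdot\bigl(\phi_1(\by') - \phi_1(\by)\bigr),
  \qquad \by' := \pi_{\beta'}^1\circ(\pi_\beta^1)^{-1}(\by),
\end{equation}
and estimate each summand via
\begin{equation}
\phi_1(\by') - \phi_1(\by) = \int_0^1 \bigl\langle \nabla_{\omega_{TY,1}}\phi_1\bigl(s\by'+(1-s)\by\bigr),\ \by'-\by\bigr\rangle_{\omega_{TY,1}}\,ds.
\end{equation}
Higher covariant derivatives are handled by differentiating this representation and using the chain rule together with the transition formulas in Lemma \ref{l:transition function expansion}, item (1).

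The first key step is to control the displacement $\by'-\by$. By item (1) of Lemma \ref{l:transition function expansion}, in coordinates $(v_1,v_2,u_i)$ with $v_1 = f_1$ and $v_2 = f_2$, one has $\by'-\by = v_1\cdot(v_2 F_2\partial_{v_2} + G_i\partial_{u_i})$ with smooth coefficients. On $\hat X_t \cap U_\beta^1$ the defining equations \eqref{e: s1 nonzero region} give $\zeta_3 = t^{d_1}/v_2$ and hence $v_1 = -t^{n+2}f(u)/v_2$. In Region $\III_-$ the inequality $-\log|f_2|\le -\frac{d_1}{2}\log|t|+1$ forces $|v_2|\ge e^{-1}|t|^{d_1/2}$, whence
\begin{equation}
|v_1| \le C|t|^{n+2-d_1/2} = C|t|^{d_1/2+d_2} \le C|t|^{3/2} \le \underline\epsilon_{T^2},
\end{equation}
using $d_1,d_2\ge 1$ and the relation $|t|\sim e^{-T^2/(d_1 d_2 n)}$ from \eqref{eqn6667}. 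Combined with Lemma \ref{l: TY cylindrical compare}, which compares $\omega_{TY,1}$ with the cylindrical model $\omega_{cyl}$ near $D_1$ up to polynomial-in-$(-\log|f_2|)$ factors, this gives $|\nabla^j_{\omega_{TY,1}}(\by'-\by)|_{\omega_{TY,1}} \le C_j(-\log|f_2|)^{m_j}\cdot |v_1|$ for every $j\ge 0$.

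The second step is to bound $|\nabla^k_{\omega_{TY,1}}\phi_1|_{\omega_{TY,1}}$. Splitting $\phi_1=\eta_1+\psi_1$ as in Section \ref{ss:tian-yau}, the term $\psi_1$ decays exponentially in $(-\log|f_2|^2)^{1/2}$ together with all its derivatives, and for $\eta_1 = c_n(-\log|f_2|^2)^{(n+1)/n}$ direct differentiation in the cylindrical frame combined with Lemma \ref{l: TY cylindrical compare} yields $|\nabla^k_{\omega_{TY,1}}\eta_1|_{\omega_{TY,1}} \le C_k(-\log|f_2|)^{M_k}$ for some $M_k$. Since on $\III_-$ we have $-\log|f_2|\le -\tfrac{d_1}{2}\log|t|+1\le CT^2$, this gives the polynomial-in-$T$ bound $|\nabla^k_{\omega_{TY,1}}\phi_1|_{\omega_{TY,1}} \le C_k T^{2M_k}$.

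Plugging both estimates into the integral representation (and its differentiated analogues) and recalling $|v_1| \le \underline\epsilon_{T^2}$, we obtain
\begin{equation}
|\nabla^k_{\omega_{TY,1}}(\phi_{t,\III_-}\circ(\pi_\beta^1)^{-1}-\phi_1)| \le C_k\, T^{2N_k}\cdot |v_1| \le C_k\, T^{2N_k}\, e^{-cT^2} = \underline\epsilon_{T^2},
\end{equation}
which is the claim. The main technical point is the interaction between the logarithmically unbounded derivatives of the Tian--Yau potential and the small factor $v_1$; this is controlled because $-\log|f_2|$ grows only polynomially in $T$ while $|v_1|$ decays as $|t|^{3/2}$, which by \eqref{eqn6667} is exponentially small in $T^2$, so that any polynomial loss in $T$ is absorbed.
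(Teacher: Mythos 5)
Your proposal is correct and follows essentially the same route the paper indicates: the paper's (omitted) proof is precisely to repeat the argument of the Region $\II_-$ overlap estimate, replacing the neck weighted estimates by the comparison with the cylindrical metric from Lemma \ref{l: TY cylindrical compare} and the Tian--Yau asymptotics, which is exactly what you have carried out, including the key smallness $|v_1|\le C|t|^{d_1/2+d_2}=\underline\epsilon_{T^2}$ on Region $\III_-$ absorbing the polynomial-in-$T$ losses.
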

The proof is very similar to that of Proposition \ref{p:region II- error on overlapping region}, except that one compares with the cylindrical metric and uses Lemma \ref{l: TY cylindrical compare}. We omit the details.

To define the weight, we also define the function $L_t$ by setting 
\begin{equation}
L_t(\bx)=T^{\frac{n}{2}-1}(nk_-)^{\frac{1}{n}}(-\log |f_2(\bx)|)^{\frac{1}{n}}
\end{equation}
and correspondingly the weight $\rho_t$ using \eqref{e:definition of weights}.

Similar to the case of Region $\II_-$, we have the holomorphic volume form 
\begin{equation}
\Gamma_t=(1+H)(\pi_\beta^1)^*\Omega_0
\end{equation}
where $H$ is a holomorphic function in $v_2, \zeta_3, w_2, \cdots, w_{n-1}$ and is of order $\underline\epsilon_{T^2}$. Therefore, we obtain 
\begin{equation}
\label{e:error in region III-}
\|\Errt\|_{C^{0, \alpha}_{\delta, \nu+2, \mu}({\III_-}\cap \widehat{X}_t)}=\underline \epsilon_{T^2}.
\end{equation}

Region $\III_-$ has two ends. One end intersects Region $\IV-$. 
\begin{proposition}
\label{p:potential difference III- IV-}
On $\III_-\cap \IV_-$, we have for all $k\geq 0$
\begin{equation}
|\nabla^k(\phi_{t, \III_-}-\phi_1)|=\underline\epsilon_{T^2},
\end{equation}
where the derivative and norm are taken with respect to $\omega(t)$. 
\end{proposition}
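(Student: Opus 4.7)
The plan is to split the difference via the partition of unity and reduce to a term-by-term estimate that exploits the smallness of $|t|^{d_1}$ on the overlap. Reading $\phi_1$ in the statement as $\phi_1 \circ \pi_-$ so that both sides are functions on $\widehat{X}_t$, I would write
\[
\phi_{t, \III_-} - \phi_1 \circ \pi_- \;=\; \sum_\beta \chi_\beta^1 \cdot \bigl(\phi_1 \circ \pi_\beta^1 - \phi_1 \circ \pi_-\bigr),
\]
so it suffices to bound each summand $\phi_1 \circ \pi_\beta^1 - \phi_1 \circ \pi_-$ in $C^k$ by $\underline\epsilon_{T^2}$; the cutoffs $\chi_\beta^1$ are pulled back from a fixed partition of unity on $D$ and contribute only uniformly bounded factors under differentiation.

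Next I would establish the pointwise estimate. On $\III_- \cap \IV_-$ we have $1/4 \leq |f_2| \leq 1/2$, so $|v_2|$ is bounded away from both $0$ and $\infty$; the defining relation $v_2 \zeta_3 = t^{d_1}$ then forces $|\zeta_3| = O(|t|^{d_1})$. Since $\pi_\beta^1$ sends $(v_2,\zeta_3,u_i) \mapsto (v_2, 0, u_i)$ by construction and $\pi_-$ is a fixed smooth trivialization that agrees with $\pi_\beta^1$ to leading order as $t \to 0$, the two projections land at nearby points of $Y_1 \setminus D_1$ differing by $O(|t|^{d_1})$ in the local holomorphic coordinates of $U_{\beta,0}^1 \cap Y_1$. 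Using the normalization \eqref{e:t T relation} together with the asymptotic $A_- = (nd_2)^{-1}T^2 + O(1)$ derived in Section \ref{sss:complex manifold} (or directly from \eqref{eqn6667}), I get $|t|^{d_1} = e^{-A_-} = \underline\epsilon_{T^2}$. Since $\IV_-$ stays a definite distance from $D_1$, the Tian-Yau potential $\phi_1$ has uniformly bounded $C^k$-norms there with respect to any fixed background metric, and the mean value theorem immediately gives $|\phi_1 \circ \pi_\beta^1 - \phi_1 \circ \pi_-| = \underline\epsilon_{T^2}$ pointwise.

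For the higher-derivative bound, I would proceed as in the proof of Proposition \ref{p:region II- error on overlapping region}. By Lemma \ref{l:transition function expansion} and the system \eqref{e: s1 nonzero region}, the transition data between $\pi_\beta^1$ and $\pi_-$ is schematically of the form $(v_2, u_i) \mapsto (v_2(1 + v_1 F), u_i + v_1 G)$ with $F,G$ smooth, where $v_1 = f_1$ is constrained by $t^{d_2}\zeta_3 f(u) + f_1(u) = 0$ and hence $|v_1| = O(|t|^{d_2}) = \underline\epsilon_{T^2}$ on $\III_- \cap \IV_-$. Consequently all $\tilde\omega_{TY,1}$-covariant derivatives of $\phi_1 \circ \pi_\beta^1 - \phi_1 \circ \pi_-$ are of order $\underline\epsilon_{T^2}$. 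Since $\omega(t)$ is $C^{k}$-close (in fact $\underline\epsilon_{T^2}$-close) to $(\pi_-)^*\tilde\omega_{TY,1}$ on $\IV_-$, passing from $\tilde\omega_{TY,1}$-derivatives to $\omega(t)$-derivatives only costs multiplicative factors of $1 + \underline\epsilon_{T^2}$.

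The main obstacle is essentially bookkeeping: one must check that every error factor introduced in the transition between charts, in the passage between the two trivializations, and in the exchange of background metrics is of the form $|t|^{d_i} = \underline\epsilon_{T^2}$ rather than, say, a weighted error of the form $T^{\nu+\alpha}$ that would arise near the singular divisors. This is exactly what makes $\III_- \cap \IV_-$ easier than $\III_- \cap \II_-$ handled in Proposition \ref{p:potential difference II- III-} or the neck overlap in Proposition \ref{p:potential difference II- I}: the region sits at definite distance from $D_1$, so no weighted Schauder input is needed and the estimate is genuinely exponential in $T^2$.
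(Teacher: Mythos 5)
Your proposal is correct and follows the route the paper intends: the paper's own proof is just the remark that the claim "is fairly easy to see, by working in a fixed $U^1_\beta$," and your chartwise argument — using $1/4\leq|f_2|\leq1/2$ on the overlap to get $|\zeta_3|=O(|t|^{d_1})$, $|v_1|=\underline\epsilon_{T^2}$, the boundedness of $\phi_1$ in $C^k$ at definite distance from $D_1$, and the $\underline\epsilon_{T^2}$-closeness of $\omega(t)$ to $(\pi_-)^*\tilde\omega_{TY,1}$ on $\IV_-$ — supplies exactly the omitted details. The only cosmetic point is that Lemma \ref{l:transition function expansion} literally treats transitions between the charts $\pi^1_\beta$ and $\pi^1_{\beta'}$ rather than the comparison with the trivialization $\pi_-$, but the analogous smallness for $\pi^1_\beta$ versus $\pi_-$ follows as you indicate from the smoothness of the trivialization in $t$ together with $|t|=\underline\epsilon_{T^2}$, so this is not a gap.
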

This is fairly easy to see, by working in a fixed $U^1_\beta$. 

\

The other end is close to the Region $\II_-$.

\begin{proposition}
\label{p:potential difference III- II-}
On $\III_-\cap \II_-$ we have for all $k\geq 0$
\begin{equation}
|\nabla^k(\phi_{t, \III_-}-\eta_1)|=O(e^{-\delta_{Z_1} T}), 
\end{equation}
where the derivative and norm are taken with respect to $\omega(t)$, and $\delta_{Z_1}$ is the constant in Proposition \ref{t:hein} applied to $Z_1$. 
\end{proposition}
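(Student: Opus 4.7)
The plan is to decompose the difference using the partition of unity and then exploit an (almost) cancellation of the $\eta_1$ terms together with the exponential decay of $\psi_1$ from Proposition \ref{t:hein}.

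First, I will write
\begin{equation*}
\phi_{t,\III_-}(\bx)-\eta_1(\bx)=\sum_\beta\chi_\beta^1(\bx)\cdot\bigl(\psi_1\circ\pi_\beta^1(\bx)\bigr)+\sum_\beta\chi_\beta^1(\bx)\cdot\bigl(\eta_1\circ\pi_\beta^1(\bx)-\eta_1(\bx)\bigr),
\end{equation*}
using the decomposition $\phi_1=\eta_1+\psi_1$. For the second sum, recall that $\pi_\beta^1$ sends $(v_2,\zeta_3,u_i)\mapsto(v_2,0,u_i)$, so the base points of $\bx$ and $\pi_\beta^1(\bx)$ in $\C\P^{n+1}$ differ only in the $v_1$-direction, and by the defining equation \eqref{e: s1 nonzero region} we have $|v_1|=|t^{d_2}\zeta_3 f(u)|\leq \underline\epsilon_{T^2}$. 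Since $\eta_1$ is built out of $-\log|f_2|$ via the fixed hermitian metric on $\cO(d_2)$, a mean value argument (together with the elementary bound $-\log|f_2|\leq CT^2$ in this region) gives $\eta_1\circ\pi_\beta^1(\bx)-\eta_1(\bx)=\underline\epsilon_{T^2}$, which is much smaller than $e^{-\delta_{Z_1}T}$.

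Next, for the first sum, I will use the assumption that on $\III_-\cap\II_-$ the Region $\II_-$ condition $-\log|f_2|\geq -\tfrac{d_1}{2}\log|t|$ holds. Combined with \eqref{eqn6667} (which gives $-\log|t|\geq cT^2$), this yields $(-\log|f_2|^2)^{1/2}\geq c'T$ for a positive constant $c'$. Proposition \ref{t:hein} then provides, for every $k$,
\begin{equation*}
|\nabla^k_{g_{Z_1}}\psi_1|_{g_{Z_1}}=O\bigl(e^{-\delta_{Z_1}(-\log|f_2|^2)^{1/2}}\bigr)=O(e^{-\delta_{Z_1}T}),
\end{equation*}
after absorbing $c'$ into a redefined $\delta_{Z_1}$. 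Pulling back via $\pi_\beta^1$ and using that this map is a bounded diffeomorphism at the relevant scales, we get the same bound for $\psi_1\circ\pi_\beta^1$ in the pulled-back Tian-Yau metric.

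Finally, to transfer derivatives from $(\pi_\beta^1)^*\omega_{TY,1}$ and from the base metric on $D$ (which controls the $\chi_\beta^1$) to the glued metric $\omega(t)$, I will follow the pattern used in the proofs of Propositions \ref{p:region II- error on overlapping region}, \ref{p:potential difference II- I}, and \ref{p:potential difference III- IV-}: invoke Lemma \ref{l: TY cylindrical compare} to bound the Tian-Yau metric against the cylindrical model, verify that $\omega(t)$ and $(\pi_\beta^1)^*\omega_{TY,1}$ are $\underline\epsilon_{T^2}$-close in $C^{1,\alpha}_{\delta,\nu,\mu}$, and apply weighted Schauder estimates (Proposition \ref{p:local-weighted-schauder}, item (2), since we are a definite Tian-Yau distance away from $D_1$) to the harmonic coordinate functions. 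The resulting polynomial-in-$T$ distortion is easily dominated by the $e^{-\delta_{Z_1}T}$ decay. The main obstacle here is bookkeeping: one must verify that all polynomial-in-$T$ losses coming from partition of unity derivatives, metric comparison, and transition corrections stay strictly subleading with respect to the exponential decay; this is secured by a small reduction of the constant $\delta_{Z_1}$, exactly as in the analogous Propositions \ref{p:potential difference II- III-} and \ref{p:potential difference III- IV-}.
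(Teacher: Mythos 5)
Your proposal is correct and follows essentially the same route as the paper's (very terse) proof: work in a fixed chart $U_\beta^1$, use that $\phi_{t,\III_-}$ differs from $\phi_1\circ\pi_\beta^1=\eta_1\circ\pi_\beta^1+\psi_1\circ\pi_\beta^1$ only by partition-of-unity and $v_1=O(\underline\epsilon_{T^2})$ corrections, and convert the Region $\II_-$ bound $-\log|f_2|\geq-\tfrac{d_1}{2}\log|t|\sim cT^2$ into the exponential decay of $\psi_1$ via Proposition \ref{t:hein}, with polynomial-in-$T$ losses from metric comparison and weighted Schauder absorbed into the exponent. Your caveat about shrinking $\delta_{Z_1}$ by a multiplicative constant is consistent with the paper's own conventions (compare Proposition \ref{p:potential difference II- III-} and the choice \eqref{e:global-delta}), so no changes are needed.
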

To see this we only need to work in a fixed $U_\beta^1$ and use the asymptotics of the Tian-Yau metric $\tilde\omega_{TY, 1}$.

Now by Proposition \ref{p:potential difference II- III-} and \ref{p:potential difference III- II-}, we can choose a cut-off function to glue together $\phi_{t, \II_-}$ and $\phi_{t, \III_-}$. Similarly we may also  glue the corresponding weight function $\rho_t(\bx)$. Here we need to use \eqref{e:l function on II-}, the fact that 
\begin{equation}
-\log |f_2|=-\log \frac{|s_1|}{|s_3|}-d_1\log |t|,
\end{equation}
and the relation between $|t|$ and $T$  \eqref{e:t T relation}.

We choose a cut-off function to glue $\phi_{t, \III_-}$ and $\phi_{t, \IV_-}$, and also glue the corresponding weight function $\rho_t(\bx)$. 
Similarly, we can define the metrics $\omega(t)$ on $\II_+, \III_+, \IV_+$ and glue them together in the overlapping regions, and we also glue the weight functions.

\

To sum up, we have constructed a family of $C^{1, \alpha}$ K\"ahler metrics $\omega(t)$ on $\widehat{X}_t$ for $|t|$ small such that in the above defined weighted norm
$\|\Errt\|_{C^{0, \alpha}_{\delta, \nu+2, \mu}(\widehat{X}_t)}=O(T^{\nu+\alpha})$.

\begin{remark}
By the above gluing construction, $\omega(t)$ lies in the cohomology class $T^{\frac{2}{n}}\cdot 2\pi c_1(\mathcal O(1)|_{\widehat{X}_t})$. Hence we get the volume  
\begin{equation}
\int_{\widehat{X}_t}\frac{\omega(t)^n}{n!}=C\cdot T^2\sim (-\log |t|)^{-1}.
\end{equation}
The above error estimate in particular gives 
\begin{equation}
\int_{\widehat{X}_t}\Gamma_t\wedge\bar\Gamma_t\sim T^2\sim (-\log |t|)^{-1}. 
\end{equation}
\end{remark}
For our analysis in the next subsection we define the normalized holomorphic volume form as
\begin{equation}
\Omega(t)\equiv (\frac{2^n\int_{\widehat{X}_t}\omega(t)^n}{(\sq)^{n^2} \int_{\widehat{X}_t}\Gamma_t\wedge\bar\Gamma_t})^{\frac{1}{2}} \cdot \Gamma_t.
\end{equation}
Abusing notation we define $\Errt$ by
\begin{equation}
\frac{(\sq)^{n^2}}{2^n}\Omega(t)\wedge\bar\Omega(t)=(1+\Errt)\frac{\omega(t)^n}{n!}, 
\end{equation}
where 
$
\int_{\widehat{X}_t}\Errt\cdot\omega(t)^n=0
$
and 
\begin{equation}
\|\Errt\|_{C^{0, \alpha}_{\delta, \nu+2, \mu}(\widehat{X}_t)}=O(T^{\nu+\alpha}).\label{e:global-error-estimate}
\end{equation}

\subsection{Global weighted analysis on $\widehat{X}_t$ and the proof of the main theorem}
\label{ss:global analysis}

We are now in a position to set up the global weighted analysis on the glued manifold.

To begin with,
let  $(\omega(t),\Omega(t))$
 be the  $C^{1,\alpha}$-K\"ahler structure on $\widehat{X}_t$ constructed in Section \ref{ss:glued-metrics}. So we define the Banach
\begin{align}
\fS_1 & \equiv \Big\{\sq\p\bp\phi\in\Omega^{1,1}(\widehat{X}_t)\Big| \phi\in C^{2, \alpha}(\widehat{X}_t)\Big\},
\nonumber\\
\fS_2 & \equiv \Big\{f\in C^{0,\alpha}(\widehat{X}_t)\Big| \int_{\widehat{X}_t} f\cdot \frac{\omega(t)^n}{n!}  = 0\Big\},
\end{align}
which are equipped with the weighted norms defined similar to \eqref{e:norm-of-S1-space} and \eqref{e:norm-of-S2-space}, with parameters $\delta, \nu, \mu, \alpha$ given in  
Section \ref{sss:parameters-fixed}

For $|t|\ll1$, we want to solve
\begin{equation}
\label{e:CY-eq}\frac{1}{n!} (\omega(t)+\sqrt{-1}\p\bp\phi)^n = (\sq)^{n^2}2^{-n}\cdot \Omega(t)\wedge\overline{\Omega}(t).\end{equation}
Let $\mathscr{F}: \fS_1\rightarrow \fS_2$ be defined by
\begin{equation}
\mathscr{F}(\sq\p\bp\phi)\cdot\omega(t)^n\equiv (\omega(t)+\sq\p\bp\phi)^n-\omega(t)^n(1-\Errt).
\end{equation}
Then \eqref{e:CY-eq} is equivalent to $
\mathscr{F}(\sq\p\bp\phi) = 0.\label{e:converted-CY-eq}
$

Now we write 
\begin{equation}
\mathscr{F}(v) -  \mathscr{F}(0) = \mathscr{L}(v) + \mathscr{N}(v),
\end{equation}
for any $v\in \fS_1$, where
$\mathscr{L}(\sq\p\bp\phi)  = \Delta \phi$ is the linearization of $\mathscr{F}$ and
\begin{align}
\mathscr{N}(\sq\p\bp\phi)\cdot \omega(t)^n & =(\omega(t)+\sqrt{-1}\p\bp\phi)^n-\omega(t)^n - n \omega(t)^{n-1}\wedge \sq\p\bp\phi.\label{e:nonlinear-term}
\end{align}

The proof of the following is identical to Proposition \ref{p:nonlinear-neck}.

\begin{proposition}
[Nonlinear error estimate]\label{l:nonlinear-error-estimate} 
There exists a constant $C_N>0$   independent of 
$0<|t|\ll 1$ such that for all 
$\vr\in (0,\frac{1}{2})$
 and 
\begin{equation}\sqrt{-1}\p\bp\phi_2\in \overline{B_{\vr}(\bo)} \subset \fS_1, \quad \sqrt{-1}\p\bp\phi_2\in \overline{B_{\vr}(\bo)}\subset\fS_1,\end{equation} we have the pointwise estimate
\begin{align}  \|\mathscr{N}(\sqrt{-1}\p\bp\phi_1)-\mathscr{N}(\sqrt{-1}\p\bp\phi_2)\|_{\fS_2} 
\leq   C_N \cdot \vr \cdot  \|\sqrt{-1}\p\bp(\phi_1-\phi_2)\|_{\fS_1}.
\end{align}

\end{proposition}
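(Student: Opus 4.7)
The plan is to follow the same strategy as in the proof of Proposition \ref{p:nonlinear-neck} in Section \ref{ss:perturbation-framework}, adapting it globally to $\widehat{X}_t$ rather than to the neck $\M_T$. First, I would expand the difference algebraically. Writing $\eta_i \equiv \sq\p\bp\phi_i$ for $i=1,2$, one has the telescoping identity
\begin{equation}
(\omega(t)+\eta_1)^n - (\omega(t)+\eta_2)^n - n\omega(t)^{n-1}\wedge(\eta_1-\eta_2)
= \sum_{k=2}^n \binom{n}{k} \omega(t)^{n-k} \wedge \big(\eta_1^k - \eta_2^k\big),
\end{equation}
and $\eta_1^k - \eta_2^k = \sum_{j=0}^{k-1} \eta_1^{k-1-j}\wedge(\eta_1-\eta_2)\wedge\eta_2^j$. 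Thus each term on the right contains the factor $\eta_1-\eta_2$ together with $k-1 \geq 1$ factors each of $\eta_1$ or $\eta_2$.

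Next I would install the pointwise $L^\infty$ control. By Lemma \ref{l:weight-function-lower-bound-estimate} applied with the parameters fixed in Section \ref{sss:parameters-fixed} — in particular with $\mu = (1-\frac{1}{n})(\nu+2+\alpha)$ — the weight function $\rho^{(\alpha)}_{\delta,\nu+2,\mu}$ is bounded below by $1$ uniformly in $t$ on each region in the subdivision of $\widehat{X}_t$, since the local weight function is built by gluing the neck weight (via $\pi_{\mathcal N}$) and the analogous weights on the Tian–Yau ends where the corresponding lower bound is even more immediate. Consequently, for $\eta_i \in \overline{B_\vr(\bo)} \subset \fS_1$ one has the pointwise bound $|\eta_i|_{\omega(t)} \leq C_0 \vr$ on all of $\widehat{X}_t$ for a constant $C_0$ independent of $t$.

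Substituting these bounds into the expansion above and measuring everything with respect to $\omega(t)$ gives the pointwise estimate
\begin{equation}
\big|\mathscr{N}(\eta_1) - \mathscr{N}(\eta_2)\big| \leq C_N\, \vr\, |\eta_1-\eta_2|
\end{equation}
for a constant $C_N$ independent of $t$. The $C^{0,\alpha}$-seminorm estimate follows by the same expansion, distributing the H\"older difference over one of the factors and using the $C^{0,\alpha}$-bound on the remaining factors together with the uniform $C^{1,\alpha}$-control on $\omega(t)$ (which is built into our construction). Re-expressing these pointwise inequalities in terms of the weighted norms of $\fS_1$ and $\fS_2$ — and using $\rho^{(\alpha)}_{\delta,\nu+2,\mu} \geq 1$ once more so that the product of $k$ weight functions dominates a single weight — yields the desired
\begin{equation}
\|\mathscr{N}(\eta_1)-\mathscr{N}(\eta_2)\|_{\fS_2} \leq C_N\,\vr\,\|\eta_1-\eta_2\|_{\fS_1}.
\end{equation}

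The only substantive point, as in Proposition \ref{p:nonlinear-neck}, is verifying that the chosen weight parameters produce a uniform lower bound on $\rho^{(\alpha)}_{\delta,\nu+2,\mu}$ across all seven regions of the gluing. On the neck piece this is Lemma \ref{l:weight-function-lower-bound-estimate} with the exact choice of $\mu$; on the Tian–Yau pieces the weight degenerates in $T$ only through the prefactor $T^\mu$ together with the Calabi-type factor, and the same algebraic calculation as in Lemma \ref{l:weight-function-lower-bound-estimate} applies. No new ideas beyond those in Section \ref{ss:perturbation-framework} are required, and the argument is purely algebraic modulo this uniform weight bound.
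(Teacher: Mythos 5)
Your proposal is correct and follows essentially the same route as the paper: the paper simply states that the proof is identical to that of Proposition \ref{p:nonlinear-neck}, i.e.\ the algebraic expansion of the Monge--Amp\`ere operator combined with the uniform lower bound $\rho^{(\alpha)}_{\delta,\nu+2,\mu}\geq 1$ guaranteed by the choice $\mu=(1-\tfrac{1}{n})(\nu+2+\alpha)$ in Lemma \ref{l:weight-function-lower-bound-estimate}, which is exactly what you do, transplanted to the glued weight on $\widehat{X}_t$. Your extra verification that the weight lower bound persists across the seven regions of the gluing is a reasonable elaboration of the paper's ``identical'' claim, not a departure from it.
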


The following is an analogue of Proposition \ref{p:neck-weighted-schauder}.

\begin{proposition}[Weighted Schauder estimate, the global version]  \label{p:global-weighted-schauder}
There exists a uniform constant $C>0$ (independent of $0<|t|\ll1 $) such that for every
$u\in C^{2,\alpha}(\widehat{X}_t, \omega(t))$
\begin{align}
\| u\|_{C_{\delta,\nu,\mu}^{2,\alpha}(\widehat{X}_t)}	\leq C \Big( \|\Delta u\|_{C_{\delta,\nu+2,\mu}^{0,\alpha}(\widehat{X}_t)} + \| u\|_{C_{\delta,\nu,\mu}^0(\widehat{X}_t)}	\Big).
\end{align}
 
\end{proposition}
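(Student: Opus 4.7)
The plan is to reduce the global weighted Schauder estimate on $\widehat{X}_t$ to a finite collection of \emph{local} weighted Schauder estimates, one on each of the regions $\I$, $\II_\pm$, $\III_\pm$, $\IV_\pm$ introduced in Section \ref{ss:glued-metrics}, and then patch them together by a covering argument in the style of the proof of Proposition \ref{p:neck-weighted-schauder}. The weighted norms have already been built so that only the localized versions need to be verified in each region, with uniform constants as $|t|\to 0$.

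\textbf{Step 1 (local model on each region).} On Region $\I$, the K\"ahler form $\omega(t)$ is, by construction and by Proposition \ref{p: perturbation of complex structures}, a $C^{1,\alpha}$ perturbation of the neck metric $\omega_T$ pulled back via $\pi_\cN$, and the perturbation has size $\underline\epsilon_{T^2}$ in $\wI$. Hence the interior and boundary local weighted Schauder estimates of Proposition \ref{p:local-weighted-schauder} apply verbatim on $\I\cap\widehat X_t$, with uniform constants independent of $T$. On Regions $\IV_\pm$, the metric $\omega(t)$ is a $C^\infty$ perturbation (of order $\underline\epsilon_{T^2}$) of the fixed Tian-Yau metric $\tilde\omega_{TY,j}$, so the standard Schauder estimate on a compact piece of the Tian-Yau space, together with the asymptotic model description in Proposition \ref{t:hein}, gives a weighted Schauder estimate matched to the weight function $\rho_t(\bx)$ defined there (the weight $\fs(\bx)$ is uniformly bounded below). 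On Regions $\II_\pm$ and $\III_\pm$, by Propositions \ref{p:region II- error on overlapping region} and the analogue in Region $\III_-$, the metric $\omega(t)$ lifted through $\pi_\beta^\cN$ or $\pi_\beta^1$ is a $C^{1,\alpha}$ perturbation of the Calabi-type ansatz. In those regions the rescaled-geometry analysis of Proposition \ref{p:regularity-scale} (extended in the obvious way to incorporate the Calabi model ends $(\Ca_\pm^n,g_{\Ca_\pm^n})$) shows that on the scale $\fs(\bx)$, the universal cover of $B_{\fs(\bx)/2}(\bx)$ has uniformly bounded $C^{k,\alpha}$-geometry. Pulling back the standard Schauder estimate on a unit ball of the universal cover, rescaling, and using that the weight function $\rho_{\delta,\nu,\mu}^{(k+\alpha)}$ is comparable at the two endpoints of any such ball (by the monotonicity of $\fs$ and of $L_t$), we obtain the local weighted Schauder estimate in each of Regions $\II_\pm$ and $\III_\pm$, with constants independent of $|t|$.

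\textbf{Step 2 (patching via a finite cover).} For $|t|$ small, cover $\widehat X_t$ by the balls $\{B_{\fs(\bx)/8}(\bx)\}_{\bx\in\widehat X_t}$ and pass to a finite Besicovitch-type subcover with bounded multiplicity, which exists because by item (2) of Proposition \ref{p:regularity-scale} (again extended globally on $\widehat X_t$) the function $\fs$ has uniformly controlled oscillation on balls of radius $\fs/4$. Applying the local estimate of Step 1 on each ball and summing, using that $\rho_{\delta,\nu,\mu}^{(k+\alpha)}$ is essentially constant on each ball,
\begin{equation}
\|u\|_{C^{2,\alpha}_{\delta,\nu,\mu}(\widehat X_t)}
\le C\bigl(\|\Delta u\|_{C^{0,\alpha}_{\delta,\nu+2,\mu}(\widehat X_t)}+\|u\|_{C^0_{\delta,\nu,\mu}(\widehat X_t)}\bigr),
\end{equation}
with $C$ independent of $|t|$. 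In the overlap of adjacent regions (Region $\I$ with $\II_-$, Region $\II_-$ with $\III_-$, Region $\III_-$ with $\IV_-$, and similarly on the $+$ side), the two candidate local models differ only by terms $\underline\epsilon_{T^2}$ by Propositions \ref{p:potential difference II- I}, \ref{p:potential difference II- III-}, \ref{p:potential difference III- IV-}, \ref{p:potential difference III- II-}, which are absorbed into the constant $C$ after possibly enlarging $T$.

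\textbf{Main obstacle.} The only nontrivial point is establishing, uniformly in $|t|$, the comparability of the weight function $\rho_{\delta,\nu,\mu}^{(k+\alpha)}$ on balls of regularity scale across the different transitions between regions, because the function $L_t(\bx)$ is only $C^0$-glued through the cut-off functions $\chi_\beta^1$, $\chi_\gamma^2$. This is where one must check that the comparisons used in Step 2 survive the gluing. This reduces to the elementary estimate that in each intersection region $\I\cap\II_\pm$, $\II_\pm\cap\III_\pm$, $\III_\pm\cap\IV_\pm$, the two definitions of $L_t$ and of $\fs$ differ multiplicatively by $1+O(T^{-1})$, which follows from the relation \eqref{e:t T relation} and the explicit asymptotics of $\phi_\pm$ and $\eta_j$. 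Once this is done, the combination of the local estimates is routine and yields the stated global inequality.
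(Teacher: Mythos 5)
Your proposal is correct and follows essentially the same route as the paper: the paper itself only sketches this proof, reducing it to the local weighted Schauder estimate (Proposition \ref{p:local-weighted-schauder}) at the regularity scale together with a standard covering argument, noting that the rescaled limit geometries are the same as for the neck except that the Calabi model ends are replaced by the Tian-Yau spaces, and that the $C^{1,\alpha}$ regularity of $\omega(t)$ limits one to $C^{2,\alpha}$ estimates. Your region-by-region verification of the local models and of the comparability of the glued weight function on regularity-scale balls is exactly the content the paper leaves implicit (the only cosmetic slip is invoking boundary estimates, which are not needed since $\widehat X_t$ is closed).
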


The local version of Proposition
\ref{p:global-weighted-schauder} is given in Proposition \ref{p:local-weighted-schauder} and they share very similar proof. From the construction of the metric $\omega(t)$ in Section \ref{ss:glued-metrics} we see that the rescaled limit geometries as $|t|\to 0$ are the same as those of the neck $\M_T$ as $T\to\infty$ studied in Section \ref{ss:regularity-scales}, except that the two incomplete Calabi model space limits are replaced by the two complete Tian-Yau metrics on the ends. Another difference to note is that due to the perturbation of the neck the metric $\omega(t)$ now has only $C^{1,\alpha}$ regularity so that we can only obtain the $C^{2,\alpha}$ estimate here. We omit the details of the proof.

\begin{proposition}[Global injectivity estimates]  \label{p:global-injectivity-estimate}
There exists a uniform constant $C>0$ (independent of $0<|t|\ll1$) such that for every
$u\in C^{2,\alpha}(\widehat{X}_t, \omega(t))$,
\begin{align}
\|\nabla u\|_{C_{\delta,\nu+1,\mu}^{0}(\widehat{X}_t)} + \|\nabla^2 u\|_{C_{\delta,\nu+2,\mu}^{0}(\widehat{X}_t)}	+[u]_{C_{\delta,\nu,\mu}^{2,\alpha}(\widehat{X}_t)}\leq C \cdot \|\Delta u\|_{C_{\delta,\nu+2,\mu}^{0,\alpha}(\widehat{X}_t)}. \label{e:global-injectivity-estimate}
\end{align}
\end{proposition}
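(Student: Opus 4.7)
The plan is to adapt the contradiction/blow-up argument of Proposition \ref{p:neck-uniform-injectivity} to the global setting on $\widehat{X}_t$. The key observation is that the rescaled-limit classification at the regularity scale is essentially the same as that of the neck carried out in Proposition \ref{p:regularity-scale}, except that the two incomplete Calabi model spaces at the boundary are now replaced by the two complete Tian-Yau spaces $(Z_1,\omega_{TY,1})$ and $(Z_2,\omega_{TY,2})$ glued to the neck via the construction in Section \ref{ss:glued-metrics}. This is precisely why we enlarged the choice of $\delta$ in (GP3) to also incorporate the Liouville constants $\epsilon_{Z_j}$ and decay constants $\delta_{Z_j}$ coming from the Tian-Yau ends.

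First I would suppose the estimate fails, so there exist sequences $t_j\to 0$ (with corresponding $T_j\to\infty$ via \eqref{e:t T relation}) and $u_j\in C^{2,\alpha}(\widehat{X}_{t_j})$ such that either the weighted $C^1$-$C^2$ part or the weighted $C^{2,\alpha}$-seminorm of $u_j$ equals $1$ while $\|\Delta u_j\|_{C^{0,\alpha}_{\delta,\nu+2,\mu}}\to 0$. As in Step 1 of Proposition \ref{p:neck-uniform-injectivity}, after renormalizing $v_j\equiv u_j - u_j(\bx_j)$ at a maximizing point $\bx_j\in\widehat{X}_{t_j}$ and rescaling the metrics by $\mathfrak{s}(\bx_j)^{-2}$, we extract a pointed limit $(\cX_\infty,\tilde g_\infty,\bx_\infty)$ together with a limiting harmonic function $\tilde v_\infty$ (harmonic away from the possible singular locus, which is removable by Lemma \ref{l:harmonic-removable-singularity} since $\nu\in(-1,0)$) satisfying the rescaled weighted bounds and $\tilde v_\infty(\bx_\infty)=0$.

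The case analysis has four situations exactly as in the proof of Proposition \ref{p:neck-uniform-injectivity}: when $\bx_j$ lies in the Taub-NUT regime one obtains a bounded harmonic function on $\C_{TN}^2\times\C^{n-2}$ and concludes via Bochner's formula plus the Euclidean Liouville theorem for sub-linear growth (Lemma \ref{l:liouville-product}); when $\bx_j$ sits at an intermediate scale near the singular $S^1$-locus one gets a harmonic function on $\R^3\times\C^{n-2}$ with growth slower than $d(\cdot,\Sigma_{0^3})^{-\nu}$, which vanishes by Lemma \ref{l:liouville-product}; when $\bx_j$ is at a "cylindrical" scale the limit is $D\times\R$ and one uses Lemma \ref{l:liouville-cylinder} together with the upper bound $\delta < \sqrt{\lambda_D}/(n(|k_-|+|k_+|))$. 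The only genuinely new case is when $\bx_j$ is asymptotic to one of the Tian-Yau ends $Z_j$: using the smooth convergence from Section \ref{ss:glued-metrics} (in particular Proposition \ref{t:hein}), the rescaled limit is $(Z_j,\omega_{TY,j})$ and $\tilde v_\infty$ is a harmonic function whose growth is controlled by $e^{\delta\cdot n\cdot |k_\pm| \cdot z^{n/2}}$ in the Calabi-end coordinate $z$. The hard part is precisely this step: we invoke the Liouville theorem on Tian-Yau spaces, Theorem \ref{t:Liouville on Tian-Yau} (with constant $\epsilon_{Z_j}$), which applies because our choice \eqref{e:global-delta} forces $\delta\cdot n\cdot |k_\pm| < \epsilon_{Z_j}$. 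This gives $\tilde v_\infty\equiv$ const, and the renormalization $\tilde v_\infty(\bx_\infty)=0$ yields $\tilde v_\infty\equiv 0$, contradicting the blown-up normalization.

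This establishes the weighted $C^1$ and $C^2$ bounds:
\begin{equation}
\|\nabla u\|_{C^0_{\delta,\nu+1,\mu}(\widehat X_t)}+\|\nabla^2 u\|_{C^0_{\delta,\nu+2,\mu}(\widehat X_t)}\le C\cdot\|\Delta u\|_{C^{0,\alpha}_{\delta,\nu+2,\mu}(\widehat X_t)}.
\end{equation}
Finally, the $C^{2,\alpha}$-seminorm estimate $[u]_{C^{2,\alpha}_{\delta,\nu,\mu}(\widehat X_t)}\le C\,\|\Delta u\|_{C^{0,\alpha}_{\delta,\nu+2,\mu}(\widehat X_t)}$ is obtained by a second contradiction argument identical to Step 2 in the proof of Proposition \ref{p:neck-uniform-injectivity}: one applies the already-established weighted $C^2$-bound, concludes that $v_j\to 0$ in the weighted $C^0$-norm on each regularity ball, and then invokes the local weighted Schauder estimate (the global version of Proposition \ref{p:local-weighted-schauder} provided by Proposition \ref{p:global-weighted-schauder}) to derive a contradiction. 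No Neumann boundary analysis is needed here since $\widehat X_t$ is compact without boundary, which in fact simplifies the argument compared to the incomplete neck case.
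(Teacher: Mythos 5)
Your proposal is correct and follows essentially the same route as the paper: the paper also reduces the statement to the contradiction/blow-up argument of Proposition \ref{p:neck-uniform-injectivity}, noting precisely your two points — that rescaling around Region $\IV_\pm$ now produces the complete Tian-Yau spaces (handled by Theorem \ref{t:Liouville on Tian-Yau}, which is why $\epsilon_{Z_j}$ appears in the choice of $\delta$ in (GP3)), while all other rescaled limits coincide with those of the neck since $\omega(t)$ away from $\IV_\pm$ is a small perturbation of $(\M_T,\omega_T)$. Your Step 2 treatment of the $C^{2,\alpha}$-seminorm via the global weighted Schauder estimate likewise matches the paper's intended argument.
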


The proof is very similar to the proof of Proposition \ref{p:neck-uniform-injectivity},  using a contradiction argument and applying various Liouville theorems. We omit the details and only mention two points. The first point is that from our construction of $\omega(t)$ on $\widehat{X}_t$, if we rescale around points in Region $\IV_\pm$, then we obtain the Tian-Yau spaces as limits, instead of the incomplete Calabi model spaces, and we need to use Theorem \ref{t:Liouville on Tian-Yau}. The second point is that the other rescaled limits will be exactly the same as considered in the proof of Proposition \ref{p:neck-uniform-injectivity}, and this follows from the fact that by construction our metric $\omega(t)$ away from the region $\IV_\pm$ is essentially a small perturbation of the neck region $(\M_T, \omega_T)$.

As $|t|>0$ is sufficiently small, the existence of solution of the Calabi-Yau equation \eqref{e:CY-eq} is the same as the proof of Theorem \ref{t:neck-CY-metric}. In fact, combining Proposition \ref{p:global-injectivity-estimate} with the error estimate \eqref{e:global-error-estimate} and nonlinear estimate in Proposition \ref{l:nonlinear-error-estimate}, one can apply the implicit function theorem (Lemma \ref{l:implicit-function}), which gives a solution  $\phi(t)\in \fS_1$ of  \eqref{e:CY-eq}. Since $\omega(t)$ lies in the cohomology class $T^{\frac{2}{n}}\cdot 2\pi c_1(\mathcal O(1)|_{\widehat{X}_t})$, by the well-known uniqueness of the solution to the Calabi-Yau equation, the rescaled metric $T^{-\frac{2}{n}}\cdot(\omega(t)+\sq \p\bp \phi(t))$ must agree with the Calabi-Yau metric $\omega_{CY, t^{n+2}}$ on $X_{t^{n+2}}\simeq \widehat X_{t}$ in the Introduction. The geometric statements in Theorem \ref{t:main-theorem} then follow from similar arguments as in Section \ref{ss:renormalized-measure}. Notice that  $\diam_{\omega(t)}(\widehat{X}_t)$ is of order $T^{\frac{n+1}{n}}$, and the relation between $T$ and $|t|$ is given by \eqref{eqn6667}. Then we have that 
\begin{align}
C^{-1}(\log|t|^{-1})^{\frac{1}{2}} \leq \diam_{\omega_{TY,t^{n+2}}}(X_{t^{n+2}})\leq C\cdot(\log|t|^{-1})^{\frac{1}{2}},
\end{align}
 where $C>0$ is independent of $T$. We omit other details.

\section{Extensions and conjectures}
\label{s:discussions}

In this section we discuss some possible extensions and questions related to our results in this paper.

\subsection{More general situation}
\label{ss:general-situations}
As discussed in Section \ref{ss:dimension reduction}, our motivation is to more general degenerations of Calabi-Yau manifolds. During the preparation of this paper in the Fall of 2018, we also made some preliminary progress towards understanding the case of maximal degenerations (which is related to the SYZ Conjecture in mirror symmetry), based on similar ideas to that of Section \ref{s:torus-symmetries} and \ref{s:neck}, and partly motivated by \cite{Morr}. We hoped in a future paper to work out the details of constructing local models generalizing the Ooguri-Vafa metric to higher dimensions. In January 2019,  we received a preprint from Yang Li (\cite{Li}) who, partly motivated by \cite{HSVZ}, had essentially achieved most of what we were planning to do (in complex dimension three). Thus we decided not to expand in this direction beyond what we have written at the time we learned about \cite{Li}. On the other hand, we still present the original brief discussions here (so the arguments are rather sketchy and there will be NO theorems). We hope this may still be of some interest to the readers, since it seems to shed a slightly different light from \cite{Li}.

We start with a lemma on Green's function on certain non-compact spaces. 
\begin{lemma}
 \label{l:green-function-asymp}	Let $(X^{m+n},g)\equiv (\dR^m\times K^n, g_{\dR^m}\oplus h)$ be a Riemannian product of a Euclidean space $(\dR^m, g_{\dR^m})$ and a compact Riemannian manifold $(K^n, h)$. For any point $p=(p_1, p_2)\in X^{m+n}$, there exists a Green's function $G_p$  on $X$ such that
\begin{enumerate}
\item $-\Delta_g G_p = 2\pi\delta_p$.
\item There are constants $\epsilon>0$, $R>0$ and $C>0$, independent of $p$,  such that 
\begin{equation}
	|G_p(x) - \Phi_{m, p_1}(x_1)| \leq C \cdot e^{-\epsilon \cdot |x_1-p_1|}
\end{equation}
for any $x=(x_1, x_2)\in X^{m+n}\setminus B_R(p)$, where $\Phi_m:\dR^m\setminus\{0^m\}\to\dR$ is the standard Green's function given by
\begin{align}
\Phi_{m,p_1}(x_1) 
\equiv 
\begin{cases}
\frac{2\pi}{(m-2)\cdot \Area(\p B_1(0^m))}\cdot |x_1 - p_1|^{2-m},& m \geq 3
\\
-\log |x_1 - p_1|, & m = 2,
\\
-2\pi |x_1-p_1|, & m=1.
\end{cases}
\end{align}
 \end{enumerate}
\end{lemma}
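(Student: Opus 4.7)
The plan is to construct $G_p$ explicitly by separation of variables along the compact factor $K^n$, and then isolate the leading Euclidean term in the spectral expansion. Let $\{\varphi_j\}_{j=0}^{\infty}$ be an $L^2$-orthonormal basis of real eigenfunctions of the Laplace-Beltrami operator on $(K^n,h)$ with eigenvalues $0 = \lambda_0 < \lambda_1 \leq \lambda_2 \leq \ldots$, where $\varphi_0 \equiv \Vol_h(K)^{-1/2}$. The Dirac delta at $p_2\in K$ admits the formal expansion $\delta_{p_2} = \sum_{j\geq 0} \varphi_j(p_2)\,\varphi_j$, so because $\Delta_g = \Delta_{\dR^m} + \Delta_K$ splits, the natural candidate is
\begin{equation}
G_p(x_1,x_2) \equiv \sum_{j\geq 0} G_j(x_1)\,\varphi_j(x_2),
\end{equation}
where each coefficient $G_j$ solves the one-variable equation $(-\Delta_{\dR^m} + \lambda_j)G_j = 2\pi\,\varphi_j(p_2)\,\delta_{p_1}$ on $\dR^m$.

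The first mode $j=0$ is by construction the ``Euclidean piece'' of the problem: $G_0 = \Vol_h(K)^{-1/2}\,\Phi_{m,p_1}$, which is precisely the term singled out in the statement of the lemma (up to the normalizing constant, which can be absorbed). For each $j\geq 1$, $G_j$ is the Yukawa potential on $\dR^m$ of mass $\sqrt{\lambda_j}$, expressible through a modified Bessel function, and I would use the classical asymptotics of such potentials to obtain, for $|x_1-p_1|\geq 1$,
\begin{equation}
|G_j(x_1)|\;\leq\; C\,\lambda_j^{(m-2)/4}\,|x_1-p_1|^{(1-m)/2}\,e^{-\sqrt{\lambda_j}\,|x_1-p_1|},
\end{equation}
with analogous bounds on its derivatives. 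These bounds are uniform in $p_1$ because the equation on $\dR^m$ is translation-invariant.

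To turn the formal series into a genuine Green's current with the claimed asymptotics, I would combine the Yukawa estimate with Weyl's law $\lambda_j \sim j^{2/n}$ and the standard elliptic $L^\infty$-bound $\|\varphi_j\|_{L^\infty(K)}\leq C\lambda_j^{N_0}$ (a crude polynomial bound suffices). This shows that for any $\epsilon\in(0,\sqrt{\lambda_1})$ the tail $\sum_{j\geq 1}G_j(x_1)\,\varphi_j(x_2)\varphi_j(p_2)$ converges absolutely and uniformly on $X\setminus B_R(p)$ to a smooth function bounded by $C\,e^{-\epsilon|x_1-p_1|}$, with constants independent of $p$. The constants are independent of $p_2\in K$ because $K$ is compact and the $\varphi_j$'s are continuous, so the pointwise values $\varphi_j(p_2)$ obey the same polynomial bound uniformly.

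It remains to verify that the sum defines a \emph{global} Green's function, i.e., that $-\Delta_g G_p = 2\pi\delta_p$ as distributions on all of $X$. For any test function $\chi\in C_0^\infty(X)$ one expands $\chi$ in the $\varphi_j$-basis along the $K$-factor and integrates against the series term by term; the exponential gap between $\lambda_0$ and the $\lambda_j$'s with $j\geq 1$ makes the interchange of sum and integration routine, and each term contributes the correct pairing by construction of $G_j$. The main technical obstacle in this plan is the uniform Yukawa decay estimate above in a form robust enough to absorb the polynomial prefactors coming from Weyl's law and from the eigenfunction bound; once that is in hand, the remainder of the argument is essentially a bookkeeping exercise combining spectral theory on $K$ with classical facts about the resolvent $(-\Delta_{\dR^m}+\mu^2)^{-1}$.
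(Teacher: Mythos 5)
Your proposal is correct and takes essentially the same route as the paper: separation of variables along the compact factor $K^n$, solving each massive mode on $\dR^m$ by the Yukawa potential expressed through the modified Bessel function $K_\alpha$, and summing the tail using Weyl's law together with polynomial eigenfunction bounds to obtain the uniform exponential decay. The paper's own proof is exactly this sketch (it reduces to $-\Delta_{\dR^m}u_\lambda+\lambda u_\lambda=2\pi\delta_{0^m}$, produces $u_\lambda(r)=K_\alpha(\sqrt{\lambda}r)\,r^\alpha$ with $\alpha=\frac{2-m}{2}$, and appeals to uniform Bessel estimates and the earlier Green's-current argument for convergence), so no further comparison is needed.
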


\begin{proof}
The proof is by separation of variables, and is similar to Proposition \ref{p:existence-Greens-current}. So we will not provide all the details, except pointing out one key point. For simplicity of notation we may assume $p_1=0$. After separation of variables we need to solve a PDE of the form on $\dR^m$
\begin{equation} 
-	\Delta_{\dR^m}	u_{\lambda}  + \lambda \cdot u_{\lambda} =2\pi\cdot \delta_{0^m}, \label{e:delta-Rm}
\end{equation}
where $\lambda$ is non-negative. When $\lambda=0$, a solution is given by the Green's function of $-\Delta_{\dR^m}$, so we only deal with the case $\lambda>0$.  When $m=1$, this is the equation \eqref{eqn3333}. When $m\geq 2$, we look for a radial solution $u_{\lambda}=u_{\lambda}(r)$, then \eqref{e:delta-Rm} reduces to an ODE
\begin{equation}
-u_{\lambda}''(r) - \frac{m-1}{r}\cdot u_{\lambda}'(r)  +  \lambda \cdot u_{\lambda}(r) = 0, r\in (0, \infty)\end{equation}
We make the transformation 
$f(r) \equiv u_{\lambda}(r)\cdot r^{-\alpha}$,
where $\alpha$ is to be determined. 
Then it follows that 
\begin{equation}
r^2f''(r) + (2\alpha+m-1)\cdot r\cdot  f'(r)	 + \Big(\alpha(\alpha+m-2)-\lambda \cdot  r^2\Big)\cdot  f(r) = 0.
\end{equation}
Now let $2\alpha+m-1=1$, i.e., $\alpha=\frac{2-m}{2}$, and let $\sqrt{\lambda}\cdot r =s$, then we get the modified Bessel equation \begin{equation}
	s^2f''(s) + s f'(s)  - (\alpha^2 + s^2) f(s) = 0. 
\end{equation}
Then we get a solution $u(r)=K_\alpha(\sqrt{\lambda} r)\cdot r^{\alpha}$, where $K_{\alpha}$ is the modified Bessel function.
So it follows that \begin{align}
u(r)  \sim 
\begin{cases}
	\vf_{\lambda}(p)\cdot r^{2-m}, & m\geq 3,
\\
\vf_{\lambda}(p)\cdot \log r , & m=2,
\end{cases}
\end{align}
as $r\to0$.  
In particular $u$ satisfies the distribution equation \eqref{e:delta-Rm}. Then we can define $G_p$ using a formal expansion, and the convergence and the asymptotic behavior follow from the uniform estimates on $K_\alpha(\sqrt{\lambda} r)$ for $r\geq 1$ (see Proposition 3.3 in \cite{SZ-Liouville}). We omit the detailed proof. \end{proof}

\begin{remark}
Applying this to $\R\times \dT^2$ and $\R^2\times S^1$, we obtain an alternative treatment to the constructions in \cite{HSVZ} (Theorem 2.6) and \cite{GW} (Lemma 3.1).
\end{remark}

We are interested in studying the Green's currents in the situation of Section \ref{ss:global-existence} with $D$ replaced by the non-compact Calabi-Yau manifold $(\C^*)^n$, and with $H$ replaced by a smooth algebraic hypersurface in $D$ defined by a Laurent polynomial $F$. 
Here $D$ is endowed with the standard flat K\"ahler metric
\begin{equation}\omega_D=\sum_{j=1}^n \frac{1}{2}\sqrt{-1}\p\log w_j \wedge \bp \log w_j,\end{equation}
where  $\{w_1, \cdots, w_n\}$ are standard holomorphic coordinates on $(\C^*)^n$. 

Denote $\xi_j=-\log w_j=u_j+\sq v_j$, which gives an identification $(\C^*)^n$ with $\R^n\times (S^1)^n$ equipped with the standard flat product metric. 
Let $\pi: (\C^*)^n\rightarrow \R^n$ be the projection map. The \emph{amoeba} $\mathcal A(F)$ of $F$ is by definition the image $\pi(H)$.

We want to solve 
$\Delta G_P=2\pi\cdot  \delta_P$. 
In terms of the coordinates $\{\xi_j\}$, we can view $\delta_P$ as a matrix of distributions by the decomposition 
\begin{equation}\delta_P=\sum_{\alpha, \beta} f_{\alpha\beta}\widehat{\delta}_P\frac{\sq}{2}d\xi_\alpha \wedge d\bar\xi_\beta \wedge dz, \end{equation}
where $\widehat{\delta}_P$ is a $2n$-current given by setting
$(\widehat\delta_P,  \phi)= \int_P \phi\dvol_P.$
Then by definition it is not difficult to see that at every point on $P$,
\begin{equation}
\label{eqn812}
f_{\alpha\beta} d\xi_\alpha \wedge d\xi_\beta=\frac{\p F\wedge\bp F}{|\p F|^2}.
\end{equation}
If we decompose 
\begin{equation}G_P=\sum h_{\alpha\beta} \frac{\sq}{2}d\xi_\alpha \wedge d\xi_\beta \wedge dz.\end{equation}
Then we need to solve a matrix of distributional equations
$\Delta h_{\alpha\beta}=f_{\alpha\beta}\widehat\delta_P.$
Let us write
\begin{equation}
\widehat\delta_P=\int_{P} \delta_{y} \dvol_P(y).
\end{equation}
Then one can write down a solution in the form
\begin{equation}h_{\alpha\beta}(x) \equiv \int_P(G_y(x)-\mathcal G(y))f_{\alpha\beta}(y)\dvol_P(y),
\end{equation}
where $G_y(x)$ is the Green's function on $Q$ constructed in Lemma \ref{l:green-function-asymp}, and $\mathcal G(y)$ is a renormalization function to make the integral converge. For example, we can take 
\begin{equation}
\mathcal G(y)=\frac{c_n}{|\pi(y)|^{n-2}+1}.
\end{equation}

Now we consider an illustrating example when $n=2$, and 
\begin{equation}
F(w_1, w_2)\equiv w_1+w_2+1. 
\end{equation}
The amoeba $\mathcal A(F)$ is a well-known shape on $\R^2$ with three branches at infinity. Moreover, it is not difficult to show by direct calculation that $\mathcal A(F)$ converges exponentially fast (in the Hausdorff sense) to its \emph{tropicalization}, $T(F)$ which is given by the union of three half lines $P_1, P_2, P_3$ emanating from $0$ in $\R^2$, along the directions of $e_1, e_2, -e_1-e_2$. 
In this case, one also expects that the Green's current $G_P$, viewed as a matrix $(h_{\alpha\beta})$, is asymptotic to the matrix of Green's functions defined using $T(F)$ on $\R^2$. This asymptotics should hold in suitable regions away from $T(F)$. 

The point is that we should remember more information on $T(F)$ than simply a subspace in $\R^2$. Notice each $P_i$ is a straight half line and it has a unit normal $n_i$ in $\R^2$ (well-defined up to sign), and $n_i$ naturally arises if one notices \eqref{eqn812}. Then there is a well-defined matrix valued distribution  
$\delta_{T(F)}\equiv \sum\limits_{i=1}^3 \widehat\delta_{P_i} \cdot n_i\otimes n_i$ on $\dR^3=\dR^2\times \dR$,
where we view $P_i\subset \dR^3$ as $P_i\times \{0\}$. 
Then we can solve for a matrix value Green's function  $G_{T(F)}$ for $T(F)$ in $\dR^3$ such that 
$-\Delta_{\R^3} G_{T(F)}=2\pi \delta_{T(F)}$.
 For this purpose we first solve the Green's function for $P_1$ in $\dR^3$. Again this is easy to write down explicitly as 
\begin{equation}G_{P_1}(x)=\int_{0}^\infty \Big(\frac{1}{\sqrt{(x_1-t)^2+x_2^2+x_3^2}}-\frac{1}{t+1}\Big)dt=-\log (\sqrt{x_1^2+x_2^2+x_3^2}-x_1)+\log 2.\end{equation}
This has interesting asymptotics. Let us write $r^2=x_1^2+x_2^2+x_3^2$ and $u=(x_2, x_3)$. Then 
\begin{align}G_{P_1}(x) 
\sim \begin{cases}  -\log r+\log 2 +\frac{x_1}{r}-\frac{x_1^2}{r^2}+\cdots, & |x_1|\leq C |u|,
\\
 -2\log |u|+\log |x_1|+O(|u|^2x_1^{-2}), & |x_1|\gg|u|. 
\end{cases}
\end{align}
Now the Green's function for $T(F)$ can be written down as a matrix
\begin{equation}G_{T(F)}= \left[ {\begin{array}{cc}
  G_{P_2}+\frac{1}{2}G_{P_3} & -\frac{1}{2}G_{P_3} \\
   -\frac{1}{2}G_{P_3} & G_{P_1}+\frac{1}{2}G_{P_3} \\
  \end{array} } \right].
\end{equation}
Away from the three direction, the  asymptotics as $r\rightarrow\infty$  is given by 
\begin{equation}
 -\left[ {\begin{array}{cc}  \frac{3}{2}\log r & -\frac{1}{2}\log r \\
   -\frac{1}{2}\log r & \frac{3}{2}\log r \\
  \end{array} } \right].
  \end{equation}
  
 Now using the Green's current $G_P$ and its asymptotics at infinity as described above, one can construct an $S^1$-invariant incomplete three dimensional K\"ahler metrics as in Section \ref{ss:kaehler-structures}. Notice as in  \ref{ss:kaehler-structures} there are various parameters. First one can change the flat metric on $\dR^2$. Also in the equation 
 \begin{equation}
 \p_z^2\tilde\omega+d_Dd_D^ch=0, 
 \end{equation}
one is free to add a function of $z$ to $h$, and add a closed $(1,1)$-form on $D=(\C^*)^2$ to $\tilde\omega$. For appropriate choices of parameters one can make this K\"ahler metric approximately Calabi-Yau, and then the goal is to use weighted analysis to perturb to a family of genuine (incomplete) Calabi-Yau metrics. In appropriate scales, these metrics should collapse to a limit which is given as a domain in $\dR^3$. One unsatisfactory point from our point of view is that comparing with the general expectation in SYZ metric collapsing conjecture, these incomplete metrics live on a too small region, since here the collapsing limit is flat whereas in general we should get a limit which is singular along the union of $P_i$'s. In other words, what one constructs here is only an \emph{infinitesimal} model for the collapsing.

In complex dimension three, one can also consider $\dT^2$-invariant Calabi-Yau metrics. As discussed in Section \ref{ss:higher rank torus}, the corresponding dimension reduced equation has slightly different form and the linearized equation in the case when there are stabilizers also motivates us study certain Green's currents. 

 Again we consider the model case $Q=\R^2\times \C^*$ is the quotient space and over $P=P_1\cup P_2\cup P_3\subset \R^2\times\{1\}$ we have stabilizers. 
In this case we are interested in a matrix valued Dirac current
\begin{equation}\delta_P=\sum\limits_{i=1}^3 \widehat\delta_{P_i}\cdot  n_i\otimes n_i,\end{equation}
where $P_i$ is naturally viewed as a submanifold in $\R^2\times \C^*$,
and the corresponding matrix valued Green's function $G_P$ which satisfies
$\Delta G_P=2\pi\delta_P$.

In large scale this is modeled by the corresponding current in $\R^3=\R^2\times \R$, and this has been discussed in the above. Near the vertex of $P$ one can consider the model $\R^2\times \C$, and find the corresponding Green's function for $P\subset \R^2\times 0$. This is similar to the calculation above. For example, one gets 
\begin{equation}G_{P_1}(x)=\int_{0}^\infty \frac{1}{(x_1-t)^2+x_2^2+x_3^2+x_4^2}dt=\frac{1}{v}(\frac{\pi}{2}+\tan^{-1}\frac{x_1}{v}),\end{equation}
where $u=x_3+\sq x_4$ is the coordinate on $\C$, and 
$v^2=|u|^2+x_2^2$. 
We define 
\begin{align}(W_{ij}) \equiv  \left[ {\begin{array}{cc}
  G_{P_2}+\frac12G_{P_3} & -\frac12G_{P_3} \\
   -\frac12G_{P_3} & G_{P_1}+\frac12G_{P_3} \\
  \end{array} } \right] \quad \text{and}
\quad 
\tilde\omega \equiv \frac{\sq}{2}\Tr(W_{ij}) dw\wedge d\bar w.
\end{align}
Then one can check the equation \eqref{e:higher Tk} is satisfied, and one obtains away from the singular locus a $\dT^2$-invariant K\"ahler metric. 

Naively one expects to compactify this metric along singular locus. We compare this with the standard local holomorphic model, which is the standard flat holomorphic structure $(\omega_{\C^3}, \Omega_{\C^3})$ on $\C^3$ under the natural $\dT^2$-action 
\begin{equation}
(e^{\sq \theta_1}, e^{\sq \theta_2})\cdot (z_1, z_2, z_3)=(e^{\sq(\theta_1+\theta_2)}z_1, e^{-\sq \theta_1}z_2, e^{-\sq\theta_2}z_3). 
\end{equation}
The corresponding quotient map is given by
\begin{equation}\mathcal Q: \C^3\rightarrow \R^2\oplus \C,\quad  (z_1, z_2, z_3)\mapsto (\frac{1}{2}(|z_2|^2-|z_1|^2), \frac{1}{2}(|z_3|^2-|z_1|^2), z_1z_2z_3).\end{equation}
Also one can compute 
\begin{align}W_{ij}&=
\frac{1}{|z_1|^2|z_2|^2+|z_3|^2|z_1|^2+|z_2|^2|z_3|^2}  \left[ {\begin{array}{cc}
   |z_1|^2+|z_3|^2 & -|z_1|^2 \\
   -|z_1|^2 & |z_1|^2+|z_2|^2 \\
  \end{array} } \right]
\\
\tilde\omega &=\frac{1}{|z_1|^2|z_2|^2+|z_3|^2|z_1|^2+|z_2|^2|z_3|^2}\cdot \frac{\sq}{2} {dw}\wedge d\bar w.\end{align}
So comparing with the previous formula they do not naturally match. This suggests  that we might need to do something different near the vertex.

Now if we take the above formula of Green's current, but work instead on $\R^2\times \C$, then one can see the above matrix actually has strictly positive lower bound  at infinity. This makes us suspect the existence of a complete Calabi-Yau metric on $\C^3$ which is approximately the above ansatz at infinity. One approach is by using this ansatz as background metric at infinity and solve the Calabi-Yau equation as in \cite{TY}. This should be similar to the result of Yang Li constructing a complete Calabi-Yau metric $\C^3$ with infinity tangent cone $\C^2/\Z_2\times \C$. If such a metric can be constructed, then it should have a  $\dT^2$-symmetry and at infinity has $r^4$ volume growth and the tangent cone at infinity is $\R^2\oplus\C$ with locus of the singular fibration given by the $Y$-vertex.  The situation is analogous to the Taub-NUT space fibered over $\R\oplus \C$. The difference is that here we need to have discriminant locus essentially due to topological reasons. 

The existence of such a complete Calabi-Yau metric on $\C^3$ also resolves the above concern regarding the bad singularity behavior of the ansatz metric near the vertex; see \cite{Li}.

\subsection{Further remarks} 
\label{ss:a few remarks}
We list several further remarks. 

\

{\bf (1)} From the proof of Theorem \ref{t:main-theorem}, it follows that similar results hold in the following more general situation. We leave it for the readers to check the details.

\begin{itemize}
\item $p:\mathcal X\rightarrow \Delta$ is a proper holomorphic map from an $(n+1)$-dimensional normal complex analytic variety onto a disc $\Delta$ in $\C$. 
\item For $t\neq 0$, $X_t\equiv p^{-1}(t)$ is a smooth $n$ dimensional compact complex manifold.
\item $X_0\equiv p^{-1}(0)$ is a union of two smooth $n$ dimensional Fano manifolds $Y_1$ and $Y_2$, and $Y_1\cap Y_2$ is a smooth $(n-1)$-dimensional Calabi-Yau manifold $D$. 
\item $\mathcal L$ is a relatively ample holomorphic line bundle on $\mathcal X$.
\item Two positive integers $d_1, d_2$, and we denote $k=d_1+d_2$.
\item Holomorphic sections $f_1$, $f_2$, $f$ of $\mathcal L^{d_1}, \mathcal L^{d_2}, \mathcal L^{d_1+d_2}$ respectively satisfying 
$
f_1f_2+tf=0.
$
\item $Y_1\cap Y_2$ is  a smooth $(n-1)$-dimensional Calabi-Yau manifold $D$. 
\item $\mathcal X$ is singular along a smooth divisor $H\subset D$ given by $\{f_1=f_2=f=t=0\}$   and transverse to $H$ the singularity is modeled on $\{x_1x_2+tx_3=0\}$.
\item There is a holomorphic volume form on the smooth locus of $\mathcal X$.
\end{itemize}
 More generally, one may also try to understand the situation when the central fiber is given by a chain of irreducible components. This would require constructing a neck region using a Green's current on $D\times \R$ with singularities along the union of $H_j\times \{z_j\}$ for finitely many $z_j$. 

\

{\bf (2)} In connection with algebro-geometric study of degenerations of Calabi-Yau manifolds, Theorem \ref{t:main-theorem} shows that the normalized Gromov-Hausdorff limit in our setting is \emph{topologically} the same as the \emph{essential skeleton} of the degeneration $\mathcal X$. In the other extremal case, namely, the case of large complex structure limit of Calabi-Yau manifolds, it is a folklore conjecture (see Gross-Wilson \cite{GW} and Kontsevich-Soibelman \cite{KonSo, KS-affine}) that the normalized Gromov-Hausdorff limit is \emph{topologically} the same as the essential skeleton of the degeneration. Motivated by our work in this paper, it is then natural to expect the following more general conjecture: 
\begin{conjecture}\label{cj:generalized-SYZ}
Given a normal flat polarized degenerating family of Calabi-Yau manifolds $(\mathcal X, \mathcal L)\rightarrow \Delta$. Let $\omega_{CY, t}$ be the Calabi-Yau metric on $X_t$ in the class $2\pi c_1(\mathcal L|_{X_t})$. Let $\Delta(\mathcal X)$ be the essential skeleton of the degeneration with $\dim(\Delta(\mathcal X))=d\in\dZ_+$. Let us denote \begin{align}
\tilde\omega_{CY,t} \equiv  (\diam_{\omega_{CY, t}}(X_t))^{-2}\cdot \omega_{CY, t},\quad d\nmv_t\equiv (\Vol_{\omega_{CY, t}}(X_t))^{-1}\cdot\dvol_{\omega_{CY, t}}. 
\end{align}
Then $(X_t, \tilde\omega_{CY, t}, d\underline{\nu}_t)$ converges in the measured Gromov-Hausdorff sense, to a compact measured  metric space $(B,d_{B},d\underline{\nu}_0)$ such that $B$ is homeomorphic to $\Delta(\mathcal X)$, and the dimension of $B$ (in the sense of Colding-Naber \cite{Colding-Naber}) is equal to $d$. 
\end{conjecture}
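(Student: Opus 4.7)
The approach I would take follows the template set by Theorem~\ref{t:main-theorem}: algebraic modification, construction of approximate Calabi-Yau metrics by gluing, and final perturbation via weighted analysis. The main new input needed is a systematic treatment of higher-rank torus-symmetric local models, for which the outline is already anticipated in Section~\ref{ss:higher rank torus}.

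First, after possibly performing a base change and birational modifications, one would pass to a semistable (or more generally dlt) model of the family whose central fiber is a divisor with (relatively) simple normal crossings, and whose dual intersection complex realizes the essential skeleton $\Delta(\mathcal X)$. The dimension $d$ of the essential skeleton encodes the ``depth'' of the collapsing, and near a $(k+1)$-fold transversal intersection on $X_0$ the family is \'etale-locally modelled on
\begin{equation}
z_0 \cdot z_1 \cdots z_k = t \cdot f(z_{k+1}, \ldots, z_n),
\end{equation}
as explained in Section~\ref{ss:dimension reduction}. This gives an intrinsic fibration picture parametrized by strata of the dual complex, with a natural $T^k$-action on neighborhoods of the deepest stratum and conjecturally producing a singular $T^k$-fibration onto a neighborhood of the interior of that stratum in $\Delta(\mathcal X)$.

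Second, for each such local model one must construct a family of $T^k$-invariant approximately Calabi-Yau metrics on the nearby smooth fiber. The expected procedure is to solve the symmetry-reduced non-linear system \eqref{e:higher Tk} together with the real Monge-Amp\`ere-type relation $\det(W_{ij}) = C$ perturbatively near a constant ``background'' solution, using a matrix-valued Green's current to account for the fixed loci of the $T^k$-action (which have real codimension three in the quotient along each lower-dimensional stratum). The existence and precise singular expansion of these Green's currents generalizes Section~\ref{s:Greens-currents} and would require a careful analysis of a diagonal elliptic system with Dirac mass sources supported on submanifolds of varying codimensions. Once these local neck models are in hand, one would glue them together using the partition of unity and cut-off technique of Section~\ref{ss:glued-metrics}, with Tian-Yau type metrics on the complement of the $(d-1)$-skeleton in each top-dimensional component of $X_0$ serving as the boundary matching data. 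The final perturbation to genuine Calabi-Yau metrics would proceed via weighted Schauder theory and the implicit function theorem as in Section~\ref{s:neck-perturbation} and Section~\ref{ss:global analysis}, with weight functions built from multi-scale regularity scales analogous to Proposition~\ref{p:regularity-scale}; the identification of the Gromov-Hausdorff limit as $\Delta(\mathcal X)$ (together with the Colding-Naber dimension equal to $d$) would then follow from the explicit fibration structure by collapsing $T^k$-orbits over the strata.

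The hard part will be several-fold. The most serious obstruction is the genuine non-linearity of the reduced equation when $k \geq 2$: the Green's currents needed for linearization are matrix-valued, and away from the constant background solution, controlling how the cohomology classes of the symplectic reductions vary along the parameter space is substantially more delicate than in the $k = 1$ case treated in this paper. In particular, when $d = n$ (the large complex structure limit), the base $B$ acquires a singular integral affine structure whose regularity near the codimension $\geq 2$ discriminant loci is not yet fully understood even in dimension three, and one would have to contend with the absence of a canonical complete Calabi-Yau model metric playing the role of the Tian-Yau space in higher-rank stratum corners; the tentative examples discussed in Section~\ref{ss:general-situations}, and the partial progress of \cite{Li}, suggest the scope of the analytic difficulties. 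A further obstacle is that in intermediate cases with $0 < d < n$, the top-dimensional strata of $X_0$ need not be Fano, and the existence of Ricci-flat asymptotically Calabi-type metrics on their complements in prescribed boundary divisors is itself an open problem that must be solved before the gluing can even begin. Finally, rigorously matching the measured Gromov-Hausdorff limit with $\Delta(\mathcal X)$ as a topological space (rather than just up to homeomorphism of a neighborhood of each open stratum) would likely require an additional global argument controlling the behavior near the deepest strata of the essential skeleton.
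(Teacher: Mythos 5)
The statement you are addressing is Conjecture \ref{cj:generalized-SYZ}; the paper offers no proof of it, only the motivating case $d=1$ established in Theorem \ref{t:main-theorem} and the informal discussion of possible extensions in Section \ref{s:discussions}. So there is no argument of the paper to compare yours against, and your text must be judged on whether it constitutes a proof on its own. It does not: it is a research program, and the obstructions you yourself list are exactly the points where a proof would have to be produced rather than promised. Concretely, for $k\geq 2$ the symmetry-reduced equation \eqref{e:higher Tk} together with $\det(W_{ij})=C$ is genuinely nonlinear, and there is no existence or regularity theory for the matrix-valued Green's currents you invoke (the paper's Section \ref{s:Greens-currents} only treats the scalar, codimension-three case needed for $k=1$); the paper's own exploratory computation in Section \ref{ss:general-situations} shows that near a higher-rank vertex the naive ansatz does not even match the standard flat $T^2$-model on $\C^3$, and the complete Calabi-Yau metric conjectured there to repair this has not been constructed. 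Likewise, for intermediate $0<d<n$ the top-dimensional components of the modified central fiber need not be Fano, so the Tian-Yau-type end metrics your gluing requires are not known to exist, and for $d=n$ the singular affine structure on the limit near the codimension-two discriminant is an open problem. Each of these is a missing ingredient without which the gluing, the weighted perturbation, and hence the identification of the measured Gromov--Hausdorff limit cannot be carried out.

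A secondary gap, independent of the above: even granting all local models and the perturbation to genuine Calabi-Yau metrics, your final step — identifying the limit $B$ with $\Delta(\mathcal X)$ as a topological space and computing its Colding--Naber dimension — is only asserted ``from the explicit fibration structure.'' In the paper's $d=1$ case this identification rests on the explicit almost-distance-preserving map $\mathcal F_t$ and on the quantitative regularity-scale analysis of Proposition \ref{p:regularity-scale}, which control the metric behavior up to and including the deepest stratum; in higher rank you would need an analogous uniform multi-scale analysis near the lower-dimensional strata of the skeleton, and nothing in your outline supplies it. So the proposal is a reasonable strategy, broadly consistent with what the authors themselves sketch in Section \ref{s:discussions}, but it should be presented as such and not as a proof of the conjecture.
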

We also expect more refined relationship between the metric collapsing and algebraic geometry:
\begin{itemize}
\item It is interesting to understand the algebro-geometric meaning of the normalized limit measure in Theorem \ref{t:main-theorem}, see \cite{BJ} for related algebro-geometric work. Furthermore, it seems plausible to view the limit metric measure space as a solution to certain non-Archimedean Monge-Amp\`ere equation.   In the case $n=2$, there is also a plausible connection with the compactification of moduli space of hyperk\"ahler metrics on K3 manifolds (see \cite{OO}). 
\item It is interesting to understand the algebro-geometric meaning of the rescaled limits. In this paper, the Tian-Yau spaces naturally fit into some components $Y_1, Y_2$ of an algebraic degeneration, and the collapsing limit $D\times \R$ naturally maps to the intersecting divisor $Y_1\cap Y_2$. But the Taub-NUT limit spaces are not easily directly in the degeneration. This picture resembles the familiar relationship between the nodal degeneration of higher genus algebraic curves and degeneration of hyperbolic metrics. Certainly one expects similar phenomenon in more general settings. 
\end{itemize}

\

{\bf (3)}  As is mentioned in the Introduction, it remains an interesting question to directly glue two Tian-Yau metrics with the same divisor $D$, without a priori assuming the existence of the complex family $\mathcal X$. As mentioned in the Introduction, in the case $n=2$ this was done in \cite{HSVZ} using $SU(2)$-structures, and in the case $n=3$ it is possible to use deformations  of $SU(3)$-structures. This would require certain analysis (in particular Liouville theorem) on forms instead of functions. We leave this for future study.

\

{\bf (4)}
 There is a different class of Tian-Yau spaces, constructed on the complement of a smooth anti-canonical divisor in a projective manifold with trivial normal bundle. In particular the ambient manifold can not be Fano. These spaces have different asymptotics at infinity from the ones we considered in this paper. Namely, they are \emph{asymptotically cylindrical}. Given a smooth Fano manifold $Y$ and a pencil of anti-canonical divisors with smooth base locus $B$, let $Y'$ be the blown-up of $Y$ along $B$, and let $D'$ be the proper transform of a smooth element $D$ in the pencil. Then there is such an asymptotically cylindrical Calabi-Yau metric on $Y\setminus B$ (in every K\"ahler class).  Asymptotically cylindrical Tian-Yau spaces have been important ingredients in the \emph{twisted connected sum} construction of examples of compact $G_2$ holonomy manifolds. It is interesting to see whether the ideas of this paper can be used to construct new examples of $G_2$ holonomy manifolds by gluing together a suitably twisted circle fibration over various pieces. 

\

{\bf (5)}
 Our main result  approximately reduces the understanding on the geometry of part of the Calabi-Yau manifolds $(X_t, \omega_{CY, t})$ (the neck region) for $|t|\ll1$ to the geometry of the Calabi-Yau metric on the one lower dimensional space $D$. One expects this can possibly lead to an inductive way to study the geometry of Calabi-Yau metrics in higher dimensions through iterated degenerations. Correspondingly, it is also interesting to relate the submanifold geometry of the neck to that of $D$. For example, suppose we have a special Lagrangian fibration on a region in $D$, can we construct special Lagrangian fibrations on the neck which are $S^1$-invariant?  At the two ends of the neck it is easy to see the pre-image of a special Lagrangian fibration under the projection map is approximately special Lagrangian. Near the singular fibers of the $S^1$-fibration the situation is more complicated and one expects certain singular perturbation techniques are needed. There are also similar discussions in \cite{Li} in the setting of Section \ref{ss:general-situations}.

\

{\bf (6)}
 Theorem \ref{t:main-theorem} can be viewed as understanding the first order expansion of the family of Calabi-Yau metrics on $\hX$ near $t=0$. One may ask whether it is possible to obtain a refined \emph{asymptotic expansion}.  In spirit, it is similar to the case of a family of hyperbolic metrics on nodal degeneration of Riemann surfaces (see \cite{MZ} by Melrose-Zhu), and it is very likely similar techniques will be useful here. We thank Dominic Joyce and Xuwen Zhu for conversations on this.

\bibliographystyle{amsalpha} 
\bibliography{References_SZ}

\providecommand{\bysame}{\leavevmode\hbox to3em{\hrulefill}\thinspace}
\providecommand{\MR}{\relax\ifhmode\unskip\space\fi MR }
\providecommand{\MRhref}[2]{%
  \href{http://www.ams.org/mathscinet-getitem?mr=#1}{#2}
}
\providecommand{\href}[2]{#2}
\begin{thebibliography}{HSVZ22}

\bibitem[BJ17]{BJ}
S\'{e}bastien Boucksom and Mattias Jonsson, \emph{Tropical and
  non-{A}rchimedean limits of degenerating families of volume forms}, J.
  \'{E}c. polytech. Math. \textbf{4} (2017), 87--139.

\bibitem[CC97]{ChC1}
Jeff Cheeger and Tobias~H. Colding, \emph{On the structure of spaces with
  {R}icci curvature bounded below. {I}}, J. Differential Geom. \textbf{46}
  (1997), no.~3, 406--480.

\bibitem[CC00]{ChC3}
\bysame, \emph{On the structure of spaces with {R}icci curvature bounded below.
  {III}}, J. Differential Geom. \textbf{54} (2000), no.~1, 37--74.

\bibitem[CN12]{Colding-Naber}
Tobias~Holck Colding and Aaron Naber, \emph{Sharp {H}\"{o}lder continuity of
  tangent cones for spaces with a lower {R}icci curvature bound and
  applications}, Ann. of Math. (2) \textbf{176} (2012), no.~2, 1173--1229.

\bibitem[Don17]{Don}
Simon Donaldson, \emph{Adiabatic limits of co-associative {K}ovalev-{L}efschetz
  fibrations}, Algebra, geometry, and physics in the 21st century, Progr.
  Math., vol. 324, Birkh\"{a}user/Springer, Cham, 2017, pp.~1--29.

\bibitem[FHN21]{FHN}
Lorenzo Foscolo, Mark Haskins, and Johannes Nordstr\"om, \emph{Complete
  noncompact {$\rm G_2$}-manifolds from asymptotically conical {C}alabi-{Y}au
  3-folds}, Duke Math. J. \textbf{170} (2021), no.~15, 3323--3416.

\bibitem[Gra04]{Gray}
Alfred Gray, \emph{Tubes}, second ed., Progress in Mathematics, vol. 221,
  Birkh\"{a}user Verlag, Basel, 2004, With a preface by Vicente Miquel.

\bibitem[GT01]{GT}
David Gilbarg and Neil~S. Trudinger, \emph{Elliptic partial differential
  equations of second order}, Classics in Mathematics, Springer-Verlag, Berlin,
  2001, Reprint of the 1998 edition.

\bibitem[GW00]{GW}
Mark Gross and P.~M.~H. Wilson, \emph{Large complex structure limits of {$K3$}
  surfaces}, J. Differential Geom. \textbf{55} (2000), no.~3, 475--546.

\bibitem[Hit01]{Hitchin}
Nigel Hitchin, \emph{Lectures on special {L}agrangian submanifolds}, Winter
  {S}chool on {M}irror {S}ymmetry, {V}ector {B}undles and {L}agrangian
  {S}ubmanifolds ({C}ambridge, {MA}, 1999), AMS/IP Stud. Adv. Math., vol.~23,
  Amer. Math. Soc., Providence, RI, 2001, pp.~151--182.

\bibitem[HSVZ22]{HSVZ}
Hans-Joachim Hein, Song Sun, Jeff Viaclovsky, and Ruobing Zhang,
  \emph{Nilpotent structures and collapsing {R}icci-flat metrics on the {K}3
  surface}, J. Amer. Math. Soc. \textbf{35} (2022), no.~1, 123--209.

\bibitem[KP13]{Krantz}
Steven~G. Krantz and Harold~R. Parks, \emph{The implicit function theorem:
  history, theory, and applications}, Modern Birkh\"{a}user Classics,
  Birkh\"{a}user/Springer, New York, 2013, Reprint of the 2003 edition.

\bibitem[KS01]{KonSo}
Maxim Kontsevich and Yan Soibelman, \emph{Homological mirror symmetry and torus
  fibrations}, Symplectic geometry and mirror symmetry ({S}eoul, 2000), World
  Sci. Publ., River Edge, NJ, 2001, pp.~203--263.

\bibitem[KS06]{KS-affine}
\bysame, \emph{Affine structures and non-{A}rchimedean analytic spaces}, The
  unity of mathematics, Progr. Math., vol. 244, Birkh\"{a}user Boston, Boston,
  MA, 2006, pp.~321--385.

\bibitem[LeB91]{LeBrun}
Claude LeBrun, \emph{Complete {R}icci-flat {K}\"ahler metrics on {${\bf C}^n$}
  need not be flat}, Several complex variables and complex geometry, {P}art 2
  ({S}anta {C}ruz, {CA}, 1989), Proc. Sympos. Pure Math., vol.~52, Amer. Math.
  Soc., Providence, RI, 1991, pp.~297--304.

\bibitem[Li23]{Li}
Yang Li, \emph{S{YZ} geometry for {C}alabi-{Y}au 3-folds: {T}aub-{NUT} and
  {O}oguri-{V}afa type metrics}, Mem. Amer. Math. Soc. \textbf{292} (2023),
  no.~1453, v+126.

\bibitem[Mat01]{Matessi}
Diego Matessi, \emph{Constructions of {C}alabi-{Y}au metrics and of special
  {L}agrangian submanifolds ({P}h{D} {T}hesis)}, Coventry (United Kingdom) :
  University of Warwick, 2001.

\bibitem[Mor10]{Morr}
David~R. Morrison, \emph{On the structure of supersymmetric {$T^3$}
  fibrations}, Mirror symmetry and tropical geometry, Contemp. Math., vol. 527,
  Amer. Math. Soc., Providence, RI, 2010, pp.~91--112.

\bibitem[MZ18]{MZ}
Richard Melrose and Xuwen Zhu, \emph{Resolution of the canonical fiber metrics
  for a {L}efschetz fibration}, J. Differential Geom. \textbf{108} (2018),
  no.~2, 295--317.

\bibitem[OO21]{OO}
Yuji Odaka and Yoshiki Oshima, \emph{Collapsing {K}3 surfaces, tropical
  geometry and moduli compactifications of {S}atake, {M}organ-{S}halen type},
  MSJ Memoirs, vol.~40, Mathematical Society of Japan, Tokyo, 2021.

\bibitem[SZ21]{SZ-Liouville}
Song Sun and Ruobing Zhang, \emph{A {L}iouville theorem on asymptotically
  {C}alabi spaces}, Calc. Var. Partial Differential Equations \textbf{60}
  (2021), no.~3, Paper No. 103, 43.

\bibitem[TY90]{TY}
Gang Tian and Shing-Tung Yau, \emph{Complete {K}\"ahler manifolds with zero
  {R}icci curvature. {I}}, J. Amer. Math. Soc. \textbf{3} (1990), no.~3,
  579--609.

\bibitem[Wal13]{Walpuski}
Thomas Walpuski, \emph{{$\rm G_2$}-instantons on generalised {K}ummer
  constructions}, Geom. Topol. \textbf{17} (2013), no.~4, 2345--2388.

\bibitem[Yau78]{Yau}
Shing-Tung Yau, \emph{On the {R}icci curvature of a compact {K}\"ahler manifold
  and the complex {M}onge-{A}mp\`ere equation. {I}}, Comm. Pure Appl. Math.
  \textbf{31} (1978), no.~3, 339--411.

\bibitem[Zha04]{Zharkov}
Ilia Zharkov, \emph{Limiting behavior of local {C}alabi-{Y}au metrics}, Adv.
  Theor. Math. Phys. \textbf{8} (2004), no.~3, 395--420.

\end{thebibliography}

\end{document}